\long\def\forget#1{}
\def\theenumi{(\alph{enumi})}
\def\p@enumii{\theenumi}
\newcommand{\DS}{\displaystyle}
\newcommand{\TS}{\textstyle}
\newcommand{\SC}{\scriptstyle}
\newcommand{\SSC}{\scriptscriptstyle}
\newcommand{\BA}{{\mathbb{A}}}
\newcommand{\BC}{{\mathbb{C}}}
\newcommand{\BF}{{\mathbb{F}}}
\newcommand{\BG}{{\mathbb{G}}}
\newcommand{\BH}{{\mathbb{H}}}
\newcommand{\BN}{{\mathbb{N}}}
\newcommand{\BP}{{\mathbb{P}}}
\newcommand{\BQ}{{\mathbb{Q}}}
\newcommand{\BR}{{\mathbb{R}}}
\newcommand{\BZ}{{\mathbb{Z}}}
\newcommand{\CB}{{\cal{B}}}
\newcommand{\CC}{{\cal{C}}}
\newcommand{\CE}{{\cal{E}}}
\newcommand{\CF}{{\cal{F}}}
\newcommand{\CG}{{\cal{G}}}
\newcommand{\CH}{{\cal{H}}}
\newcommand{\CL}{{\cal{L}}}
\newcommand{\CM}{{\cal{M}}}
\newcommand{\CO}{{\cal{O}}}
\newcommand{\CP}{{\cal{P}}}
\newcommand{\CR}{{\cal{R}}}
\newcommand{\CT}{{\cal{T}}}
\newcommand{\FC}{{\mathfrak{C}}}
\newcommand{\FD}{{\mathfrak{D}}}
\newcommand{\FU}{{\mathfrak{U}}}
\newcommand{\Fp}{{\mathfrak{p}}}
\newcommand{\Fq}{{\mathfrak{q}}}
\newcommand{\Fr}{{\mathfrak{r}}}
\DeclareMathOperator{\Aut}{Aut}
\newcommand{\Betti}{{\rm Betti}}
\DeclareMathOperator{\Cent}{Cent}
\DeclareMathOperator{\End}{End}
\DeclareMathOperator{\Ext}{Ext}
\DeclareMathOperator{\Frob}{Frob}
\DeclareMathOperator{\Gal}{Gal}
\DeclareMathOperator{\GL}{GL}
\DeclareMathOperator{\Gr}{Gr}
\DeclareMathOperator{\Koh}{H}
\DeclareMathOperator{\Hom}{Hom}
\newcommand{\CHom}{{\cal H}om}
\DeclareMathOperator{\Id}{Id}
\DeclareMathOperator{\Lie}{Lie}
\DeclareMathOperator{\Pic}{Pic}
\DeclareMathOperator{\QEnd}{QEnd}
\DeclareMathOperator{\QHom}{QHom}
\DeclareMathOperator{\Quot}{Quot}
\DeclareMathOperator{\Rep}{Rep}
\DeclareMathOperator{\Res}{Res}
\DeclareMathOperator{\Spm}{Sp}
\DeclareMathOperator{\Spec}{Spec}
\DeclareMathOperator{\Sym}{Sym}
\DeclareMathOperator{\Tr}{Tr}
\DeclareMathOperator{\Var}{V}
\newcommand{\ad}{{\rm ad}}
\newcommand{\alg}{{\rm alg}}
\DeclareMathOperator{\coim}{coim}
\DeclareMathOperator{\coker}{coker}
\newcommand{\dR}{{\rm dR}}
\newcommand{\et}{{\rm\acute{e}t}}
\DeclareMathOperator{\id}{\,id}
\DeclareMathOperator{\im}{im}
\renewcommand{\mod}{\;{\rm mod}\;}
\DeclareMathOperator{\rank}{rk}
\newcommand{\rig}{{\rm rig}}
\DeclareMathOperator{\rk}{rk}
\newcommand{\sep}{{\rm sep}}
\newcommand{\tors}{{\rm tors}}
\DeclareMathOperator{\weight}{wt}
\newbox\dotCOdbox
\newbox\dotCOtbox
\newbox\dotCOsbox
\newbox\dotCOssbox
\newcommand{\dotCO}{{\mathchoice{\copy\dotCOdbox}
                               {\copy\dotCOtbox}
                               {\copy\dotCOsbox}
                               {\copy\dotCOssbox}}}
\newbox\dotCdbox
\newbox\dotCtbox
\newbox\dotCsbox
\newbox\dotCssbox
\newcommand{\dotC}{{\mathchoice{\copy\dotCdbox}
                               {\copy\dotCtbox}
                               {\copy\dotCsbox}
                               {\copy\dotCssbox}}}
\newbox\dotFCdbox
\newbox\dotFCtbox
\newbox\dotFCsbox
\newbox\dotFCssbox
\newcommand{\dotFC}{{\mathchoice{\copy\dotFCdbox}
                               {\copy\dotFCtbox}
                               {\copy\dotFCsbox}
                               {\copy\dotFCssbox}}}
\newbox\dotFDdbox
\newbox\dotFDtbox
\newbox\dotFDsbox
\newbox\dotFDssbox
\newcommand{\dotFD}{{\mathchoice{\copy\dotFDdbox}
                               {\copy\dotFDtbox}
                               {\copy\dotFDsbox}
                               {\copy\dotFDssbox}}}
\renewcommand{\phi}{\varphi}
\renewcommand{\epsilon}{\varepsilon}
\def\longto{\longrightarrow}
\def\into{\hookrightarrow}
\let\onto\twoheadrightarrow
\def\isoto{\stackrel{}{\mbox{\hspace{1mm}\raisebox{+1.4mm}{$\SC\sim$}\hspace{-3.5mm}$\longrightarrow$}}}
\def\leftisoto{\stackrel{}{\mbox{\hspace{2.5mm}\raisebox{+1.4mm}{$\SC\sim$}\hspace{-5mm}$\longleftarrow$}}}
\def\longinto{\lhook\joinrel\longrightarrow}
\newbox\mybox
\def\arrover#1{\mathrel{
       \setbox\mybox=\hbox spread 1.4em{\hfil$\scriptstyle#1$\hfil}
       \vbox{\offinterlineskip\copy\mybox
             \hbox to\wd\mybox{\rightarrowfill}}}}
\newcommand{\linj}{ \mbox{\mathsurround=0pt \;$\longleftarrow $ \hspace{-1.07em} \raisebox{0.63ex} {\small $\supset$}\;}}
\newcommand{\longleftmapsto}{\:\mbox{\mathsurround=0pt $\longleftarrow$\hspace{-0.2em}\raisebox{0.1ex}{$\shortmid$}}\:}
\let\setminus\smallsetminus
\newcommand{\es}{\enspace}
\newcommand{\open}{^\circ}
\newcommand{\dual}{^{\SSC\vee}}
\newcommand{\mal}{^{^\times}}
\newcommand{\fdot}{\;\,_{_{\bullet}}\,\;}
\newcommand{\dbl}{{\mathchoice{\mbox{\rm [\hspace{-0.15em}[}}
                              {\mbox{\rm [\hspace{-0.15em}[}}
                              {\mbox{\scriptsize\rm [\hspace{-0.15em}[}}
                              {\mbox{\tiny\rm [\hspace{-0.15em}[}}}}
\newcommand{\dbr}{{\mathchoice{\mbox{\rm ]\hspace{-0.15em}]}}
                              {\mbox{\rm ]\hspace{-0.15em}]}}
                              {\mbox{\scriptsize\rm ]\hspace{-0.15em}]}}
                              {\mbox{\tiny\rm ]\hspace{-0.15em}]}}}}
\newcommand{\dpl}{{\mathchoice{\mbox{\rm (\hspace{-0.15em}(}}
                              {\mbox{\rm (\hspace{-0.15em}(}}
                              {\mbox{\scriptsize\rm (\hspace{-0.15em}(}}
                              {\mbox{\tiny\rm (\hspace{-0.15em}(}}}}
\newcommand{\dpr}{{\mathchoice{\mbox{\rm )\hspace{-0.15em})}}
                              {\mbox{\rm )\hspace{-0.15em})}}
                              {\mbox{\scriptsize\rm )\hspace{-0.15em})}}
                              {\mbox{\tiny\rm )\hspace{-0.15em})}}}}
\newcommand{\invlim}[1][]{\ifthenelse{\equal{#1}{}}% falls Argument leer
{\DS \lim_{\longleftarrow}}%                         verwende niedrige Version
{\DS \lim_{\underset{#1}{\longleftarrow}}}%  sonst:  verwende Argument
}
\newcommand{\dirlim}[1][]{\ifthenelse{\equal{#1}{}}% falls Argument leer
{\DS \lim_{\longrightarrow}}%                        verwende niedrige Version
{\DS \lim_{\underset{#1}{\longrightarrow}}}% sonst:  verwende Argument
}
\newcommand{\ul}[1]{{\underline{#1}}}
\newcommand{\ol}[1]{{\overline{#1}}}
\newcommand{\wt}[1]{{\widetilde{#1}}}
\newcommand{\wh}[1]{{\widehat{#1}}}
\theoremstyle{plain}
\newtheorem{theorem}{Theorem}[section]
\newtheorem{lemma}[theorem]{Lemma}
\newtheorem{corollary}[theorem]{Corollary}
\newtheorem{proposition}[theorem]{Proposition}
\newtheorem{question}[theorem]{Question}
\theoremstyle{definition}
\newtheorem{definition}[theorem]{Definition}
\newtheorem{bigexample}[theorem]{Example}
\newtheorem{remark}[theorem]{Remark}
\newtheorem{example}[theorem]{Example}
\newcommand{\comment}[1]{}
\def\?{\ 
{\bf\color{red}???}\ 
\immediate\write16{}
\immediate\write16{Warning: There was still a question mark . . . }
\immediate\write16{}}
\def\olCM{{\,\,\overline{\!\!\mathcal{M}}}{}}
\def\olF{{\,\overline{\!F}}{}}
\def\olH{{\,\overline{\!H}}{}}
\def\olM{{\,\overline{\!M}}{}}
\def\olN{{\,\overline{\!N}}{}}
\def\oldM{{\,\overline{\!\dM}}{}}
\def\ulE{{\underline{E\!}\,}{}}
\def\ulH{{\underline{H\!}\,}{}}
\def\ulM{{\underline{M\!}\,}{}}
\def\ulN{{\underline{N\!}\,}{}}
\def\ulCE{{\underline{\mathcal{E}\!}\,}{}}
\def\ulCF{{\underline{\mathcal{F}\!}\,}{}}
\def\ulCH{{\underline{\mathcal{H}\!}\,}{}}
\def\ulTE{{\underline{\,\,\wt{\!\!E}\!}\,}{}}
\def\ulTH{{\underline{\,\wt{\!H}\!}\,}{}}
\def\ulTM{{\underline{\,\,\wt{\!\!M}\!}\,}{}}
\def\ulHM{{\underline{\widehat M\!}\,}{}}
\def\uldHM{{\underline{\widehat\dM\!}\,}{}}
\def\wtE{{\,\,\wt{\!\!E}}{}}
\def\whCB{{\widehat{\CB\,}\!}{}}
\newcommand{\charmorph}{c^*}
\newcommand{\con}[1][]{{\mathchoice
           {\TS\langle\frac{z}{\zeta^{#1}}\rangle[z^{-1}]}
           {\TS\langle\frac{z}{\zeta^{#1}}\rangle[z^{-1}]}
           {\SC\langle\frac{z}{\zeta^{#1}}\rangle[z^{-1}]}
           {\SSC\langle\frac{z}{\zeta^{#1}}\rangle[z^{-1}]}}}
\newcommand{\BOne} {{\mathchoice{\hbox{\rm1\kern-2.7pt l\kern.9pt}}
                              {\hbox{\rm1\kern-2.7pt l\kern.9pt}}
                              {\hbox{\scriptsize\rm1\kern-2.3pt l\kern.4pt}}
                              {\hbox{\scriptsize\rm1\kern-2.4pt l\kern.5pt}}}}
\def\UOne{\underline{\BOne}}
\def\dUOne{\underline{\check{\BOne}}}
\def\GlobMotRing{A_{\BC}[J^{-1}]}
\newcommand{\QHodgeCat}{{\mathchoice
           {\mbox{$Q$-{\tt HP}}}
           {\mbox{$Q$-{\tt HP}}}
           {\mbox{$\SC Q$-{\tt\scriptsize HP}}}
           {\mbox{$\SSC Q$-{\tt\tiny HP}}}}}
\newcommand{\Hodge}{\BH}
\newcommand{\ulHodge}{\ul{\BH}}
\newcommand{\AMotCat}{{\mathchoice
           {\mbox{$A$-{\tt Mot}}}
           {\mbox{$A$-{\tt Mot}}}
           {\mbox{$\SC A$-{\tt\scriptsize Mot}}}
           {\mbox{$\SSC A$-{\tt\tiny Mot}}}}}
\newcommand{\AMotCatIsog}{{\mathchoice
           {\mbox{$A$-{\tt MotI}}}
           {\mbox{$A$-{\tt MotI}}}
           {\mbox{$\SC A$-{\tt\scriptsize MotI}}}
           {\mbox{$\SSC A$-{\tt\tiny MotI}}}}}
\newcommand{\dualAMotCat}{{\mathchoice
           {\mbox{$A$-{\tt dMot}}}
           {\mbox{$A$-{\tt dMot}}}
           {\mbox{$\SC A$-{\tt\scriptsize dMot}}}
           {\mbox{$\SSC A$-{\tt\tiny dMot}}}}}
\newcommand{\dualAMotCatIsog}{{\mathchoice
           {\mbox{$A$-{\tt dMotI}}}
           {\mbox{$A$-{\tt dMotI}}}
           {\mbox{$\SC A$-{\tt\scriptsize dMotI}}}
           {\mbox{$\SSC A$-{\tt\tiny dMotI}}}}}
\def\dualAMotCat{A\text{-\tt dMot}}
\def\AMMotCat{A\text{-\tt MMot}}
\def\AMMotCatIsog{A\text{-\tt MMotI}}
\def\AUMotCatIsog{A\text{-\tt UMotI}}
\def\AMUMotCatIsog{A\text{-\tt MUMotI}}
\def\dualAMMotCat{A\text{-\tt dMMot}}
\def\dualAMMotCatIsog{A\text{-\tt dMMotI}}
\def\dualAMUMotCatIsog{A\text{-\tt dMUMotI}}
\def\dualAUMotCatIsog{A\text{-\tt dUMotI}}
\def\QVec{{\tt Vec}_Q}
\def\KVec{{\tt Vec}_K}
\def\RMod{{\tt Mod}_R}
\DeclareMathOperator{\Disc}{\FD_\BC}
\DeclareMathOperator{\PDisc}{\dotFD_\BC}
\def\sq{q}
\def\ssigma{\sigma}
\def\stau{{\tau}}
\def\sdtau{{\check{\tau}}}
\def\sdsigma{\check{\sigma}}
\def\df{\check{f}}
\def\dh{\check{h}}
\def\dm{\check{m}}
\def\dn{\check{n}}
\def\dH{{\check{H}}}
\def\dM{{\check{M}}}
\def\dN{{\check{N}}}
\def\uldM{{\underline{\dM\!}\,}{}}
\def\uldN{{\underline{\dN\!}\,}{}}
\def\llangle{\langle\!\langle}
\def\rrangle{\rangle\!\rangle}
\begin{document}

%%%%%%%%%%%%%%%%%%%%%%%%%%%%%%%%%%%%%%%%%%%%%%%%%%%%%%%%%%%%%%%%%%%%%%

\author{Urs Hartl and Ann-Kristin Juschka}

\title{Pink's theory of Hodge structures and the Hodge conjecture over function fields}
% In the final version I might want to fix the date:
%\date{August 30, 2004} 

\maketitle

\begin{abstract}
\noindent
In 1997 Richard Pink has clarified the concept of Hodge structures over function fields in positive characteristic, which today are called Hodge-Pink structures. They form a neutral Tannakian category over the underlying function field. He has defined Hodge realization functors from the uniformizable abelian $t$-modules and $t$-motives of Greg Anderson to Hodge-Pink structures. This allows one to associate with each uniformizable $t$-motive a Hodge-Pink group, analogous to the Mumford-Tate group of a smooth projective variety over the complex numbers. It further enabled Pink to prove the analog of the Mumford-Tate Conjecture for Drinfeld modules. Moreover, based on unpublished work of Pink and the first author, the second author proved in her Diploma thesis that the Hodge-Pink group equals the motivic Galois group of the $t$-motive as defined by Papanikolas and Taelman. This yields a precise analog of the famous Hodge Conjecture, which is an outstanding open problem for varieties over the complex numbers.

In this report we explain Pink's results on Hodge structures and the proof of the function field analog of the Hodge conjecture. The theory of $t$-motives has a variant in the theory of dual $t$-motives. We clarify the relation between $t$-motives, dual $t$-motives and $t$-modules. We also construct cohomology realizations of abelian $t$-modules and (dual) $t$-motives and comparison isomorphisms between them generalizing Gekeler's de Rham isomorphism for Drinfeld modules. 

\noindent
{\it Mathematics Subject Classification (2010)\/}: 
11G09,  % Drinfeld Modules, higher dimensional motives
%11F80,  % Galois representations (Discontinuous groups and automorphic forms)
%11R58,  % Arithmetic theory of algebraic function fields 
%11S20,  % Galois theory of local and $p$-adic fields
%11S25,  % Galois cohomology of local and $p$-adic fields
(13A35,  % Characteristic $p$ methods (Frobenius endomorphism) ...
%11F80,  % Galois representations (Discontinuous groups and automorphic forms)
%14F30,  % $p$-adic cohomology, crystalline cohomology
%14G20,  % Local ground fields
14G22)   % Rigid analytic geometry
%14G35,  % Modular and Shimura varieties
%14L05,  % Formal groups, $p$-divisible groups
%14M15,  % Grassmannians, Schubert varieties, flag manifolds
%20G25   % Linear algebraic groups over local fields and their integers

\end{abstract}

\setcounter{tocdepth}{2}
\tableofcontents

%%%%%%%%%%%%%%%%%%%%%%%%%%%%%%%%%%%%%%%%%%%%%%%%%%%%%%%%%%%%% 
%% 
%%     Introduction
%% 
%%%%%%%%%%%%%%%%%%%%%%%%%%%%%%%%%%%%%%%%%%%%%%%%%%%%%%%%%%%%% 
 
\section{Introduction}
\setcounter{equation}{0}

According to Deligne~\cite[2.3.8]{DeligneHodge2}, a \emph{rational mixed Hodge structure} $\ulH$ consists of a finite dimensional $\BQ$-vector space $H$, an increasing filtration $W_\bullet H$ of $H$ by $\BQ$-subspaces, called the \emph{weight filtration}, and a decreasing filtration $F^\bullet H_\BC$ of $H_\BC := H\otimes_\BQ \BC$ by $\BC$-subspaces, called the \emph{Hodge filtration}, such that $\Gr_F^p\Gr_{\olF}^q\Gr_n^W H_\BC=(0)$ for $p+q\ne n$ where $\olF^q H_\BC$ is the complex conjugate subspace $\ol{F^q H_\BC}\subset H_\BC$. The rational mixed Hodge structures form a neutral Tannakian category \cite[Definition~II.2.19]{DM82} over $\BQ$, whose fiber functor sends a rational mixed Hodge structure $\ulH$ to its underlying $\BQ$-vector space \cite{Deligne94}. By Tannakian duality \cite[Theorem~II.2.11]{DM82} there is a linear algebraic group $\Gamma_\ulH$ over $\BQ$, called the \emph{Hodge group} of $\ulH$, such that the Tannakian subcategory $\llangle\ulH\rrangle$ generated by $\ulH$ is tensor equivalent to the category of $\BQ$-rational representations of $\Gamma_\ulH$. We give more details and explanations on Tannakian theory in Section~\ref{SectTannaka}.

If $X$ is a smooth projective variety over the complex numbers $\BC$, its Betti cohomology group $\Koh^n_\Betti(X,\BQ)$ is a $\BQ$-vector space. Via the de Rham isomorphism $\Koh^n_\Betti(X,\BQ)\otimes_\BQ\BC\cong\Koh^n_\dR(X/\BC)$ and the Hodge filtration on the latter, it becomes a rational (pure) Hodge structure. This provides a functor from smooth projective varieties over $\BC$ to rational mixed Hodge structures. Deligne \cite[\S\,8.2]{DeligneHodge3} extended this functor to separated schemes of finite type over $\BC$. If $X$ is smooth projective and $Z\subset X$ is a closed subscheme of codimension $p$ then $Z$ defines a cohomology class in $\Koh^{2p}_\Betti(X,\BQ)\cap F^p$. The Hodge conjecture \cite{Hodge52,GrothendieckHodge,Deligne06} states that every cohomology class in $\Koh^{2p}_\Betti(X,\BQ)\cap F^p$ arises from a $\BQ$-rational linear combination of closed subschemes of codimension $p$ in $X$.

Besides the Betti and de Rham cohomology, there are various other cohomology theories for $X$. They are linked to each other via comparison isomorphisms. This inspired Grothendieck to propose a universal cohomology theory he called ``motives'' \cite{GrothendieckStandard}. More precisely Grothendieck conjectured the existence of a Tannakian category of \emph{motives} such that the cohomology functors like $X\mapsto\Koh^n_\Betti(X,\BQ)$ and $X\mapsto\Koh^n_\dR(X/\BC)$ factor through this category of motives; see \cite{DemazureMotives,KleimanMotives,ManinMotives}. The motive associated with $X$ is denoted $h(X)$ and the various cohomology groups attached to $X$ are called the \emph{realizations} of the motive $h(X)$. In particular the Betti realization of $h(X)$ is $\ulH(X):=\bigoplus_{n=0}^{2\dim X}\Koh^n_\Betti(X,\BQ)$ equipped with its rational mixed Hodge structure. In terms of the conjectural category of motives, the Hodge conjecture is equivalent to the statement, that the Betti realization functor $\llangle h(X)\rrangle\to \llangle\ulH(X)\rrangle$ is a tensor equivalence, where $\llangle h(X)\rrangle$ is the Tannakian subcategory generated by $h(X)$. By Tannakian duality  $\llangle h(X)\rrangle$ is tensor equivalent to the category of $\BQ$-rational representations of a linear algebraic group $\Gamma_{h(X)}$ over $Q$ which is called the \emph{motivic Galois group} of $X$. The Betti realization functor corresponds to a homomorphism of algebraic groups $\Gamma_{\ulH(X)}\to \Gamma_{h(X)}$ over $\BQ$. By \cite[Proposition~2.21]{DM82} it is a closed immersion and the Hodge conjecture is equivalent to the statement that this homomorphism is an isomorphism.

\medskip

In this article we want to describe the function field analog of the above. There, a category of motives actually exists in the \emph{$t$-motives} of Anderson~\cite{Anderson86}. We slightly generalize them to $A$-motives in Section~\ref{SectMixedAMotives}. An $A$-motive has various cohomology realizations. In this article we explain the Betti, de Rham and $\ell$-adic realization. The $p$-adic and crystalline realization is discussed in the survey \cite{HartlKim} in this volume. In \cite{PinkHodge} Richard Pink invented mixed Hodge structures over function fields (which we call \emph{mixed Hodge-Pink structures}) as an analog of classical rational mixed Hodge structures. He discovered the crucial fact that instead of a Hodge filtration one needs finer information to obtain a Tannakian category. This information is given in terms of a \emph{Hodge-Pink lattice}. The definition is as follows. 

Let $\BF_p=\BZ/(p)$ for a prime $p$ and let $A=\BF_p[t]$ and $Q=\BF_p(t)$ be the polynomial ring and its fraction field. They are the analogs in the arithmetic of function fields of the integers $\BZ$ and the rational numbers $\BQ$. (The theory is actually developed for slightly more general rings $A$ and $Q$.) Let $Q_\infty=\BF_p\dpl \tfrac{1}{t}\dpr$ be the completion of $Q$ for the valuation $\infty$ of $Q$ which does not correspond to a maximal ideal of $A$. Let $\BC\supset Q_\infty$ be an algebraically closed, complete, rank one valued extension, for example the completion of an algebraic closure of $Q_\infty$. The fields $Q_\infty$ and $\BC$ are the analogs of the usual fields $\BR$ and $\BC$ of real, respectively complex numbers. We denote the image of $t$ in $\BC$ by $\theta$ and consider the ring $\BC\dbl t-\theta\dbr$ of formal power series in the ``variable'' $t-\theta$ and the embedding $Q\to\BC\dbl t-\theta\dbr$, $t\mapsto t=\theta+(t-\theta)$.

\bigskip

\noindent
{\bfseries Definitions~\ref{Def1.1} and \ref{Def1.5}.}\es
A \emph{mixed $Q$-Hodge-Pink structure} is a triple $\ulH=(H,W_\bullet H,\Fq)$ with
\begin{itemize}
\item $H$ a finite dimensional $Q$-vector space,
\item $W_\mu H\subset H$ for $\mu\in\BQ$ an exhaustive and separated increasing filtration by $Q$-subspaces, called the \emph{weight filtration},
\item a $\BC\dbl t-\theta\dbr$-lattice $\Fq\subset H\otimes_Q \BC\dpl t-\theta\dpr$ of full rank, called the \emph{Hodge-Pink lattice},
\end{itemize}
which satisfies a certain semi-stability condition; see Definition~\ref{Def1.5}. The Hodge-Pink lattice induces an exhaustive and separated decreasing \emph{Hodge-Pink filtration} $F^i H_\BC\subset H_\BC:=H\otimes_{Q,\,t\mapsto\theta}\BC$ for $i\in\BZ$ by setting $F^i H_\BC:=\bigl(\Fp\cap(t-\theta)^i\Fq\bigr)\big/\bigl((t-\theta)\Fp\cap(t-\theta)^i\Fq\bigr)$, where $\Fp:=H\otimes_Q\BC\dbl t-\theta\dbr$.

\bigskip

The mixed Hodge-Pink structures with the fiber functor $(H,W_\bullet H,\Fq)\mapsto H$ form a neutral Tannakian category over $Q$; see Theorem~\ref{ThmPinkTannaka}. It was Pink's insight that for this result the Hodge-Pink filtration does not suffice, but one needs the finer information present in the Hodge-Pink lattice. Any Hodge-Pink structure $\ulH$ generates a neutral Tannakian subcategory, and the algebraic group $\Gamma_\ulH$ obtained from Tannakian duality is called the \emph{Hodge-Pink group of $\ulH$}; see Section~\ref{SectTannaka}.

Hodge-Pink structures may arise from \emph{Drinfeld-modules} or more generally from \emph{uniformizable abelian Anderson $A$-modules} $\ulE=(E,\phi)$ over $\BC$, where $E\cong\BG_{a,\BC}^d$ and $\phi\colon A\to\End_\BC(E)$ such that $(\phi_t-\theta)^d$ annihilates the tangent space $\Lie E$ to $E$ at $0$ for some integer $d$; see Definitions~\ref{DefAndersonAModule} and \ref{DefAbelianAMod}. Namely, $\ulE$ possesses an exponential function $\exp_\ulE\colon\Lie E\to E(\BC)$ and if this function is surjective, $\ulE$ is \emph{uniformizable}. In this case the finite (locally) free $A$-module $\Lambda(\ulE):=\ker(\exp_\ulE)$ sits in an exact sequence 
\[
\xymatrix @R=0.1pc {
0 \ar[r] & \Fq \ar[r] & \Lambda(\ulE)\otimes_A \BC\dbl t-\theta\dbr \ar[r]^{\DS\gamma} & \Lie E \ar[r] & 0 \\
& & \lambda\otimes\sum_i b_i(t-\theta)^i \ar@{|->}[r] & \sum_i b_i\cdot(\Lie\phi_t-\theta)^i(\lambda)\,;
}
\]
see \eqref{Eq1.1}. If $\ulE$ is \emph{mixed} (Definition~\ref{DefMixedAModule}) the $Q$-vector space $H:=\Koh_{1,\Betti}(\ulE):=\Lambda(\ulE)\otimes_AQ$ inherits an increasing weight filtration $W_\bullet H$ and we define the \emph{mixed Hodge-Pink structures of $\ulE$} as $\ulHodge_1(\ulE):=(H,W_\bullet H,\Fq)$ and $\ulHodge^1(\ulE):=\ulHodge_1(\ulE)\dual$; see Corollary~\ref{CorHPofEandM}.

Similarly to the classical situation, one can also associate with $\ulE$ a pure (or mixed) $A$-motive $\ulM:=\Hom_\BC(E,\BG_{a,\BC})$; see Definition~\ref{DefAbelianAMod}. By an \emph{$A$-motive of rank $r$} we mean a pair $\ulM=(M,\tau_M)$ where $M$ is a (locally) free $\BC[t]$-module of rank $r$ and $\tau_M\colon\sigma^*M[\tfrac{1}{t-\theta}]\isoto M[\tfrac{1}{t-\theta}]$ is an isomorphism of $\BC[t][\tfrac{1}{t-\theta}]$-modules; see Definition~\ref{DefAMotive}. Here $\sigma^*M:=\Frob_{p,\BC}^*M=M\otimes_{\BC[t],\sigma^*}\BC[t]$ for the endomorphism $\sigma^*$ of $\BC[t]$ sending $t$ to $t$ and $b\in\BC$ to $b^p$. 
For an $A$-motive we define its \emph{$\tau$-invariants} over $\BC\langle t\rangle:=\;\bigl\{\,\sum\limits_{i=0}^\infty b_i t^i\colon b_i\in\BC,\, \lim\limits_{i\to\infty}|b_i|= 0\,\bigr\}$ as
\begin{equation}\label{EqIntroTauInv}
\Lambda(\ulM) \;:=\; \bigl(M\otimes_{\BC[t]}\BC\langle t\rangle\bigr)^\tau\;:=\; \bigl\{\,m\in M\otimes_{\BC[t]}\BC\langle t\rangle:\es \tau_M(\Frob_{p,\BC}^\ast m)=m\,\bigr\}\,.
\end{equation}
An $A$-motive of rank $r$ is \emph{uniformizable} if its $\tau$-invariants form a (locally) free $A$-module of rank $r$; see Definition~\ref{DefLambda} and Lemma~\ref{LemmaUniformizable}. We explain the results of Papanikolas~\cite{Papanikolas} and Taelman~\cite{Taelman} that the category $\AUMotCatIsog$ of uniformizable $A$-motives up to isogeny together with the fiber functor $\ulM\mapsto\Lambda(\ulM)\otimes_AQ$ is a neutral Tannakian category over $Q$; see Theorems~\ref{TheoremAMotTannakian} and \ref{TheoremDualAMotTannakian}. Considering the Tannakian subcategory $\llangle\ulM\rrangle$ generated by $\ulM$, the algebraic group $\Gamma_\ulM$ associated by Tannakian duality, is called the \emph{motivic Galois group of $\ulM$}. 

In unpublished work, the following function field analog of the classical Hodge conjecture was formulated by Pink and proved by him for pure uniformizable $A$-motives and by Pink and the first author for uniformizable mixed $A$-motives. Pink's proof was worked out for dual $A$-motives (see below) by the second author in her Diploma thesis~\cite{JuschkaDipl}. There is a realization functor $\ulHodge^1$ from uniformizable mixed $A$-motives $\ulM$ to mixed Hodge-Pink structures as follows. The $Q$-vector space $H:=\Koh^1_\Betti(\ulM,Q):=\Lambda(\ulM)\otimes_AQ$ inherits an increasing weight filtration $W_\bullet H$ and admits a canonical isomorphism $h\colon H\otimes_Q\BC\dbl t-\theta\dbr\isoto(\sigma^*M)\otimes_{\BC[t]}\BC\dbl t-\theta\dbr$; see Proposition~\ref{PropLambdaConvRadius}. We set $\Fq:=h^{-1}\circ\tau_M^{-1}(M\otimes_{\BC[t]}\BC\dbl t-\theta\dbr)\subset H\otimes_Q\BC\dpl t-\theta\dpr$ and define the \emph{mixed Hodge-Pink structures of $\ulM$} as $\ulHodge^1(\ulM):=(H,W_\bullet H,\Fq)$ and $\ulHodge_1(\ulM):=\ulHodge^1(\ulM)\dual$; see Definition~\ref{Def2.6}. The functor $\ulHodge^1$ restricts to an exact tensor functor from the Tannakian subcategory $\llangle\ulM\rrangle$ of uniformizable mixed $A$-motives generated by $\ulM$ to the Tannakian subcategory $\llangle\ulHodge^1(\ulM)\rrangle$ of mixed Hodge-Pink structures generated by $\ulHodge^1(\ulM)$. This induces a morphism from the Hodge-Pink group $\Gamma_{\ulHodge^1(\ulM)}$ of $\ulHodge^1(\ulM)$ to the motivic Galois group $\Gamma_\ulM$ of $\ulM$.

\bigskip

\noindent
{\bfseries Theorems~\ref{ThmHodgeConjecture} and \ref{ThmHodgeConjectureE} (The Hodge Conjecture over Function Fields).}\es 
{\it The morphism $\Gamma_{\ulHodge^1(\ulM)}\longto\Gamma_\ulM$ is an isomorphism of algebraic groups. Equivalently, $\ulHodge^1\colon\llangle\ulM\rrangle\longto\llangle\ulHodge^1(\ulM)\rrangle$ is an exact tensor equivalence.
}

\bigskip

The crucial part in the proof of this theorem is to show that each Hodge-Pink sub-structure $\ulH'\subset\ulHodge^1(\ulM)$ is isomorphic to $\ulHodge^1(\ulM')$ for an $A$-sub-motive $\ulM'\subset\ulM$. This is achieved by associating with $\ulH'$ a \emph{$\sigma$-bundle} over the punctured open unit disk. The theory of $\sigma$-bundles was developed in \cite{HartlPink1} and is explained in detail in Section~\ref{Sect4}, where we also show how to associate a pair of $\sigma$-bundles with a mixed Hodge-Pink structure, respectively with a uniformizable $A$-motive (or dual $A$-motive; see below). Using the classification \cite[Theorem~11.1]{HartlPink1} of $\sigma$-bundles and the rigid analytic GAGA-principle, one defines an $A$-motive $\ulM'$ such that $\ulHodge^1(\ulM')=\ulH'$.

Large parts of this article are not original but a survey of the existing literature, which tries to be largely self-contained. In Section~\ref{Sect1} we review Pink's theory of mixed Hodge-Pink structures. In Section~\ref{SectMixedAMotives} we define pure and mixed $A$-motives and slightly generalize Anderson's~\cite[\S\,2]{Anderson86} theory of uniformization of $t$-motives to $A$-motives. Also we define and study the mixed Hodge-Pink structure $\ulHodge^1(\ulM)$ of a uniformizable mixed $A$-motive $\ulM$ and its Betti, de Rham and $\ell$-adic cohomology realization, as well as the comparison isomorphisms between them. Actually the $\ell$-adic realization is called ``$v$-adic'' by us where $v\subset A$ is a place taking on the role of the prime $\ell\in\BZ$ and $\Koh^1_v$ is our analog of $\Koh^1_\et(\,.\,,\BZ_\ell)$. 

For applications to transcendence questions like in \cite{ABP,Papanikolas,ChangYu07,ChangPapaYu10,ChangPapaThakurYu,ChangPapa11,ChangPapaYu11,ChangPapa12}, it turns out that \emph{dual $A$-motives} are even more useful than $A$-motives; see the article of Chang~\cite{Chang12} in this volume for an introduction. A \emph{dual $A$-motive of rank $r$} is a pair $\uldM=(\dM,\sdtau_\dM)$ where $\dM$ is a (locally) free $\BC[t]$-module of rank $r$ and $\sdtau_\dM\colon(\sdsigma^*\dM)[\tfrac{1}{t-\theta}]\isoto \dM[\tfrac{1}{t-\theta}]$ is an isomorphism of $\BC[t][\tfrac{1}{t-\theta}]$-modules for $\sdsigma^*=(\sigma^*)^{-1}$. (Beware that a dual $A$-motive is something different then the dual $\ulM\dual$ of an $A$-motive $\ulM$). A dual $A$-motive of rank $r$ is \emph{uniformizable} if its $\sdtau$-invariants $\Lambda(\uldM):=\bigl(\dM\otimes_{\BC[t]}\BC\langle t\rangle\bigr)^\sdtau$, which are defined analogously to \eqref{EqIntroTauInv}, form a (locally) free $A$-module of rank $r$; see Definition~\ref{DefDualUnifGlobal} and Lemma~\ref{LemmaDualUniformizable}. Also the category of uniformizable dual $A$-motives with the fiber functor $\uldM\mapsto\Lambda(\uldM)\otimes_AQ$ is a neutral Tannakian category; see Theorem~\ref{TheoremDualAMotTannakian}. Actually this is the category studied by Papanikolas~\cite{Papanikolas}. If $\uldM$ is uniformizable and mixed, the $Q$-vector space $H:=\Koh_{1,\Betti}(\uldM,Q):=\Lambda(\uldM)\otimes_AQ$ inherits an increasing weight filtration $W_\bullet H$ and admits a canonical isomorphism $h_\uldM\colon H\otimes_Q\BC\dbl t-\theta\dbr\isoto\dM\otimes_{\BC[t]}\BC\dbl t-\theta\dbr$; see Proposition~\ref{Prop4.23}. We set $\Fq:=h_\uldM^{-1}\circ\sdtau_\dM(\sdsigma^*\dM\otimes_{\BC[t]}\BC\dbl t-\theta\dbr)\subset H\otimes_Q\BC\dpl t-\theta\dpr$ and define the \emph{mixed Hodge-Pink structures of $\uldM$} as $\ulHodge_1(\uldM):=(H,W_\bullet H,\Fq)$ and $\ulHodge^1(\uldM):=\ulHodge_1(\uldM)\dual$; see Definition~\ref{DualDef2.6}. This theory of pure and mixed dual $A$-motives, their theory of uniformization, their associated mixed Hodge-Pink structures, and their Betti, de Rham and $v$-adic cohomology realizations, as well as the comparison isomorphisms between them are explained in Section~\ref{SectMixedDualAMotives}.

In the longest Section~\ref{SectAndersonAModules} we recall the theory of \emph{abelian Anderson $A$-modules}, which generalize Anderson's~\cite{Anderson86} \emph{abelian $t$-modules}, and their associated $A$-motives including uniformizability, \emph{scattering matrices} (Remark~\ref{RemScattering}) and \emph{Anderson generating functions} (Corollary~\ref{CorDivisionTower}, Example~\ref{exmp:scatteringmatrix}). Moreover, in Sections~\ref{SectRelDualAMotives}, \ref{SectAnalytThyAFinite} and \ref{SectAModUniformizability} we reproduce from unpublished work of Anderson~\cite{ABP_Rohrlich} the theory of \emph{$A$-finite} Anderson $A$-modules $\ulE$ including uniformization and the description of torsion points. These are the ones for which the $\BC[t]$-module $\uldM(\ulE):=\Hom_\BC(\BG_{a,\BC},E)$ is finitely generated, and hence a dual $A$-motive. As described above, we associate a mixed Hodge-Pink structure with a uniformizable mixed abelian, respectively $A$-finite, Anderson $A$-module and $v$-adic, Betti and de Rham cohomology realizations. The latter go back to Deligne, Anderson, Gekeler, Yu, Goss, Brownawell and Papanikolas. We generalize the approach of these authors in Section~\ref{SectCohAMod} and prove comparison isomorphisms between these cohomology realizations. We also explain in Theorem~\ref{ThmPeriodIsomForE} how to recover Gekeler's comparison isomorphism \cite[\S\,2]{Gekeler89} between Betti and de Rham cohomology from ours. 

Finally, in Section~\ref{Sect3} we briefly report on applications to Galois representations and transcendence questions due to Anderson, Brownawell, Chang, Papanikolas, Pink, Thakur, Yu and others.

Although this article is mainly a review of (un)published work, we nevertheless establish the following new results: the theory of mixed Anderson $A$-modules (Section~\ref{SectAModPurity}) and the construction that associates with a uniformizable mixed (dual) $A$-motive a mixed Hodge-Pink structure (Sections~\ref{SectAMotHPStr}, \ref{SectDualAMotHPStr}). Also we clarify the relation between a uniformizable mixed $A$-motive $\ulM=(M,\tau_M)$ and the associated dual $A$-motive $\uldM(\ulM):=\bigl(\Hom_{\BC[t]}(\sigma^*M,\Omega^1_{\BC[t]/\BC})\,,\,\tau_M\dual\bigr)$ in Propositions~\ref{PropDualizing}, \ref{PropDualMixed}, \ref{PropDualizingUnif}, \ref{PropSameGroup} and Theorem~\ref{ThmHofMandDualM} and most importantly in the following 

\bigskip

\noindent
{\bfseries Theorem~\ref{ThmMandDMofE}.}\es 
{\it 
Let $\ulE$ be an Anderson $A$-module over $\BC$ which is both abelian and $A$-finite, and let $\ulM=(M,\tau_M)=\ulM(\ulE)$ and $\uldM=(\dM,\sdtau_\dM)=\uldM(\ulE)$ be its associated (dual) $A$-motive. Let $\uldM(\ulM)=\bigl(\Hom_{\BC[t]}(\sigma^*M,\Omega^1_{\BC[t]/\BC})\,,\,\tau_M\dual\bigr)$ be the dual $A$-motive associated with $\ulM$. Then there is a canonical isomorphism of dual $A$-motives $\Xi\colon\uldM(\ulM)\; \isoto \; \uldM(\ulE)$.
}

\bigskip

We illustrate the general theory with various examples, most notably Examples~\ref{exampleDModDMotdDmot} and \ref{exmp:scatteringmatrix} which for Drinfeld-modules explain Theorem~\ref{ThmMandDMofE} in concrete terms and relate it to scattering matrices. Moreover, we prove the compatibility of the cohomology realizations and comparison isomorphisms of $A$-motives, dual $A$-motives and abelian, respectively $A$-finite Anderson $A$-modules in Theorems~\ref{ThmCompIsomBettiDRAMotive}, \ref{ThmCompIsomBettiDRDualAMotive}, \ref{ThmPeriodIsomForE}, \ref{ThmPeriodIsomForAFiniteE} and Propositions~\ref{PropCohAMot}, \ref{PropCompTateModEandM}, \ref{PropCompTateModEandDualM}, and we prove the compatibility with a change of the ring $A$ in Remark~\ref{RemChangeOfA}, and with Gekeler's comparison isomorphism \cite[\S\,2]{Gekeler89} in Theorem~\ref{ThmPeriodIsomForE}. In particular, we prove the following theorems.

\bigskip

\noindent
{\bfseries Theorem~\ref{ThmHPofEandDualM}.}\es 
{\it Let $\ulE$ be a uniformizable mixed $A$-finite Anderson $A$-module over $\BC$ and let $\uldM=\uldM(\ulE)$ be its associated mixed dual $A$-motive. Then the mixed Hodge-Pink structures $\ulHodge_1(\ulE)$ and $\ulHodge_1(\uldM)$ are canonically isomorphic.
}

\bigskip

\noindent
{\bfseries Theorem~\ref{ThmHPofEandM}.}\es 
{\it Let $\ulE$ be a uniformizable mixed abelian Anderson $A$-module over $\BC$ and let $\ulM=\ulM(\ulE)$ be its associated mixed $A$-motive. Consider the Hodge-Pink structure $\ul\Omega=(H,W_\bullet H,\Fq)$ which is pure of weight $0$ and given by $H=\Omega^1_{Q/\BF_q}=Q\,dt$ and $\Fq=\BC\dbl t-\theta\dbr dt$. Then the mixed Hodge-Pink structures $\ulHodge_1(\ulE)$ and $\ulHodge_1(\ulM)\otimes\ul\Omega$ are canonically isomorphic.
}

\bigskip

\noindent
{\bfseries Theorem~\ref{ThmCompatXi}.}\es 
{\it Let $\ulE$ be a uniformizable mixed Anderson $A$-module over $\BC$ which is both abelian and $A$-finite, and let $\ulM=\ulM(\ulE)$ and $\uldM(\ulE)$ be the associated (dual) $A$-motive. Then the isomorphisms above are also compatible with the isomorphisms from Theorems~\ref{ThmHofMandDualM}, \ref{ThmHPofEandDualM} and \ref{ThmHPofEandM} and the isomorphism $\Xi\colon\uldM(\ulM)\isoto\uldM(\ulE)$ from Theorem~\ref{ThmMandDMofE}, in the sense that the following diagram commutes
\begin{equation*}
\xymatrix @C=9pc { 
\ulHodge_1\bigl(\uldM(\ulM)\bigr)  \ar[r]_{\TS\cong}^{\TS\ulHodge_1(\Xi)} & \ulHodge_1\bigl(\uldM(\ulE)\bigr) \ar[d]_{\TS\cong}^{\TS\rm Theorem~\ref{ThmHPofEandDualM}\es} \\
\ulHodge_1(\ulM)\otimes\ul\Omega \ar[r]_{\TS\cong}^{\TS\rm\es Theorem~\ref{ThmHPofEandM}} \ar[u]_{\TS\cong}^{\TS\rm Theorem~\ref{ThmHofMandDualM}\es} &  \ulHodge_1(\ulE)
}
\end{equation*}
}

\bigskip

Finally, we give a criterion in Theorem~\ref{ThmImageOfH} which characterizes those mixed Hodge-Pink structures that arise from uniformizable mixed $A$-motives.

\bigskip

Various categories of motives over $\BC$ play a part in this article. To give the reader an overview we list them in the following table. Note that the set of morphisms $\Hom_{\AMotCat}(\ulM,\ulN)$ between two $A$-motives $\ulM$ and $\ulN$ is a finitely generated $A$-module; see Remark~\ref{Rem3.3}(c). The same is true for dual $A$-motives; see Remark~\ref{RemQuillenDualAMot}(d).

\begin{tabular}{lp{19.8em}ll}
Category & Description & Def. & Properties \\
\hline
$\AMotCat$ & $A$-motives over $\BC$ & \ref{DefAMotive} &  \\
$\AMMotCat$ & mixed $A$-motives  & \ref{Def2.1} &  \raisebox{1.5ex}[-1.5ex]{$\left\}\;\raisebox{1.38ex}[-1.38ex]{\parbox[t]{9em}{exact (Rem.\newline \ref{RemQuillenAMot}(b) and \ref{RemQuillenAMMot})}}\right.$}\\[2mm]
$\AMotCatIsog$ & $A$-motives up to isogeny, that is with \newline $\Hom_{\AMotCatIsog}(\ulM,\ulN):=\Hom_{\AMotCat}(\ulM,\ulN)\otimes_AQ$ & \ref{DefAMotive} &   \\
$\AMMotCatIsog$ & mixed $A$-motives  up to isogeny & \ref{Def2.1} & \raisebox{3.4ex}[-3.4ex]{$\left\}\;\raisebox{2.38ex}[-2.38ex]{\parbox[t]{9em}{non-neutral \newline Tannakian \newline (Prop.~\ref{PropAMotTannakianCateg} and \ref{PropPure})}}\right.$} \\[2mm]
%$\AUMotCat$ &  &  & \\
$\AUMotCatIsog$ & uniformizable $A$-motives  up to isogeny & \ref{DefUnifGlobal} &   \\
$\AMUMotCatIsog$ & uniformizable mixed $A$-motives  up to isogeny & \ref{DefUnifGlobal} & \raisebox{1.5ex}[-1.5ex]{$\left\}\;\raisebox{1.38ex}[-1.38ex]{\parbox[t]{9em}{neutral Tannakian \newline (Thm.~\ref{TheoremAMotTannakian})}}\right.$} \\
%$\AMUMotCat$ \\
\hline
$\dualAMotCat$ & dual $A$-motives  & \ref{DefDualAMotive} & \\
$\dualAMMotCat$ & mixed dual $A$-motives  & \ref{DualDef2.1} & \raisebox{1.5ex}[-1.5ex]{$\left\}\;\raisebox{1.38ex}[-1.38ex]{\parbox[t]{9em}{exact (Rem.\newline \ref{RemQuillenDualAMot}(b) and \ref{RemQuillenDualAMMot})}}\right.$}\\[2mm]
$\dualAMotCatIsog$ & dual $A$-motives  up to isogeny, that is with \newline $\Hom_{\dualAMotCatIsog}(\uldM,\uldN):=\Hom_{\dualAMotCat}(\uldM,\uldN)\otimes_AQ$ & \ref{DefDualAMotive} & \\
$\dualAMMotCatIsog$ & mixed dual $A$-motives  up to isogeny & \ref{DualDef2.1} & \raisebox{3.4ex}[-3.4ex]{$\left\}\;\raisebox{2.38ex}[-2.38ex]{\parbox[t]{9em}{non-neutral \newline Tannakian \newline (Prop.~\ref{PropDualizing} and \ref{PropDualPure})}}\right.$} \\[2mm]
$\dualAUMotCatIsog$ & uniformizable dual $A$-motives  up to isogeny & \ref{DefDualUnifGlobal} &  \\
$\dualAMUMotCatIsog$ & uniformizable mixed dual $A$-motives  up to isogeny & \ref{DefDualUnifGlobal} & \raisebox{1.5ex}[-1.5ex]{$\left\}\;\raisebox{1.38ex}[-1.38ex]{\parbox[t]{9em}{neutral Tannakian \newline (Thm.~\ref{TheoremDualAMotTannakian})}}\right.$}
\end{tabular}

\bigskip
\noindent
{\bfseries Acknowledgements.} We are grateful to Richard Pink for teaching us his beautiful theories of mixed Hodge-Pink structures and $\sigma$-bundles and to Greg Anderson, Dale Brownawell and Matthew Papanikolas for sharing their unpublished manuscript \cite{ABP_Rohrlich} with us and allowing us in the present article to reproduce the theory of dual $A$-motives from it. We also profited much from discussions with Dale Brownawell, Matthew Papanikolas, Lenny Taelman. These notes grew out of two lecture series given by the first author in the fall of 2009 at the conference ``$t$-motives: Hodge structures, transcendence and other motivic aspects'' at the Banff International Research Station BIRS in Banff, Canada and at the Centre de Recerca Mathem\`atica CRM in Barcelona, Spain in the spring of 2010. The first author is grateful to BIRS and CRM for their hospitality. He also acknowledges support of the DFG (German Research Foundation) in form of SFB 878 and Germany's Excellence Strategy EXC 2044--390685587 ``Mathematics M\"unster: Dynamics--Geometry--Structure''.

\subsection{Preliminaries}\label{SectPreliminaries}

Throughout this article we will denote by

\begin{tabbing}
$Q_\infty\hookrightarrow\BC\dbl z-\zeta\dbr$ \es\=\kill
$\BF_q$ \> a finite field with $q$ elements and characteristic $p$, \\[1mm]
$C$ \> a smooth projective geometrically irreducible curve over $\BF_q$, \\[1mm]
$\infty\in C(\BF_q)$ \>\parbox[t]{0.8\textwidth}{a fixed closed point, (To simplify the exposition in this article $\infty$ is supposed to be $\BF_q$-rational. The main results we present here hold, and are in fact proved in \cite{PinkHodge,HartlPink2}, without this assumption.)}\\[1mm]
$\dotC=C\setminus\{\infty\}$ \> the associated affine curve,\\[1mm]
$A=\Gamma(\dotC,\CO_{\dotC})$ \> the ring of regular functions on $\dotC$ (the function field analog of $\BZ$), \\[1mm]
$Q =\BF_q(C)$ \> the function field of $C$, viz.\ the field of fractions of $A$ (the analog of ~$\BQ$),\\[1mm]
$z\in Q$\> a uniformizing parameter at $\infty$,\\[1mm]
$Q_\infty=\BF_q\dpl z\dpr$ \> the completion of $Q$ at $\infty$ (the analog of $\BR$),\\[1mm]
$A_\infty=\BF_q\dbl z\dbr$ \> the ring of integers in $Q_\infty$,\\[1mm]
$\BC\supset Q_\infty$\>\parbox[t]{0.8\textwidth}{an algebraically closed, complete, rank one valued extension, for example the completion of an algebraic closure of $Q_\infty$ (the analog of the usual field of complex numbers),}\\[1mm]
$\charmorph\colon Q\to\BC$\> the natural inclusion,\\[1mm]
$\zeta=\charmorph(z)$\> the image of $z$ in $\BC$, which satisfies $0<|\zeta|<1$,\\[1mm]
$A_{\BC}=A\otimes_{\BF_q}\BC$\> the base extension of $A$, \\[1mm]
$Q_\BC=Q\otimes_{\BF_q}\BC$\> \parbox[t]{0.8\textwidth}{the base extension of $Q$, distinguishing between $z$ and $\zeta$ allows us to abbreviate the element $z\otimes1$ of $Q_\BC$ by $z$ and the element $1\otimes\charmorph(z)$ by $\zeta$,}\\[1mm]
$C_\BC=C\times_{\Spec\BF_q}\Spec\BC$ \quad the resulting irreducible curve over~$\BC$,\\[1mm]
$J\subset A_{\BC}$ \>the (maximal) ideal generated by $a\otimes 1-1\otimes \charmorph(a)$ for all $a\in A$,\\[1mm]
$\GlobMotRing$\> \parbox[t]{0.8\textwidth}{the ring of global sections on the open affine subscheme $\Spec A_{\BC}\setminus\Var(J)$ of $C_\BC$,}\\[1mm]
$\BC\dbl z-\zeta\dbr$ \>\parbox[t]{0.8\textwidth}{the formal power series ring in the ``variable'' $z-\zeta$. It is canonically isomorphic to the completion of the local ring of  $C_\BC$ at $\Var(J)$, see Lemma~\ref{LemmaZ-Zeta}, and replaces the ring $\BC\dbl t-\theta\dbr$ from the introduction,}\\[1mm]
$\BC\dpl z-\zeta\dpr$ \>the fraction field of $\BC\dbl z-\zeta\dbr$,\\[1mm]
$Q_\infty\hookrightarrow\BC\dbl z-\zeta\dbr$\>\parbox[t]{0.8\textwidth}{ the natural $\BF_q$-algebra homomorphism satisfying $z\mapsto z=\zeta+(z-\zeta)$ and given by $\sum\limits_i a_iz^i\mapsto \sum\limits_{j=0}^\infty\bigl(\sum\limits_{i=j}^\infty \left(\begin{smallmatrix}i\\j\end{smallmatrix}\right)a_i\zeta^{i-j}\bigr)(z-\zeta)^j$,}\\[1mm]
$\sigma\colon C_\BC\to C_\BC$ \> \parbox[t]{0.8\textwidth}{the product of the identity on $C$ with the $q$-th power Frobenius on $\Spec\BC$, which acts on points and on the coordinates of $C$ as the identity, and on the elements $b\in\BC$ as $b\mapsto b^q$,}\\[1mm]
$\sigma^\ast\colon A_{\BC}\to A_{\BC}$ \> the corresponding endomorphism $a\otimes b\mapsto a\otimes b^q$ for $a\in A$ and $b\in \BC$,\\[1mm]
$\sigma^{i\ast}:=(\sigma^\ast)^i$ \> for a non-negative integer $i\in\BN_0$,\\[1mm]
$\sigma^{i\ast} M$\> the pullback $\sigma^{i\ast} M:=M\otimes_{A_{\BC},\sigma^{i\ast}}A_{\BC}$ of an $A_{\BC}$-module $M$ under $\sigma$,\\[1mm]
$\sigma^\ast(m):=m\otimes1$\> the canonical image of $m\in M$ in $\sigma^\ast M:=M\otimes_{A_{\BC},\sigma^\ast}A_{\BC}$,\\[1mm]
$\sdsigma^\ast:=(\ssigma^\ast)^{-1}$\> \parbox[t]{0.8\textwidth}{the endomorphism of $A_{\BC}$ inverse to $\ssigma^\ast$ sending $a\otimes b$ to $a\otimes \sqrt[q]{b}$ for $a\in A$ and $b\in \BC$ which exists because $\BC$ is perfect,}\\[1mm]
$\sdsigma^{i\ast}:=(\sdsigma^\ast)^i$ \> for a non-negative integer $i\in\BN_0$,\\[1mm]
$\sdsigma^{i\ast} M$\> the tensor product $\sdsigma^{i\ast} M:=M\otimes_{A_{\BC},\sdsigma^{i\ast}}A_{\BC}$ for an $A_{\BC}$-module $M$,\\[1mm]
$\sdsigma^\ast(m):=m\otimes1$\> the canonical image of $m\in M$ in $\sdsigma^\ast M:=M\otimes_{A_{\BC},\sdsigma^\ast}A_{\BC}$, \\[1mm]
$\BC\{\tau\}:=$ \> \parbox[t]{0.8\textwidth}{$\bigl\{\,\sum\limits_{i=0}^nb_i\tau^i\colon n\in\BN_0,b_i\in\BC\,\bigr\}$ the skew polynomial ring in the variable $\tau$ with the commutation rule $\tau b=b^q\tau$ for $b\in\BC$,}\\[1mm]
$\BC\{\sdtau\}:=$ \> \parbox[t]{0.8\textwidth}{$\bigl\{\,\sum\limits_{i=0}^nb_i\sdtau^i\colon n\in\BN_0,b_i\in\BC\,\bigr\}$ the skew polynomial ring in the variable $\sdtau$ with the commutation rule $\sdtau b=b^{1/q}\sdtau$ for $b\in\BC$.}
\end{tabbing}

\noindent For any module  $M$  over an integral domain  $R$  and any non-zero element  $x\in R$  we let $R[\frac{1}{x}]$ and $M[\frac{1}{x}]:= M\otimes_R R[\frac{1}{x}]$ denote the localizations obtained by inverting~$x$. Any homomorphism of $R$-modules $M\to N$ induces a homomorphism of $R[\frac{1}{x}]$-modules $M[\frac{1}{x}] \to N[\frac{1}{x}]$ denoted again by the same letter.

\begin{remark}\label{RemSigmaIsFlat}
The ring homomorphisms $\ssigma^\ast\colon A_{\BC}\to A_{\BC}$ and $\sdsigma^\ast\colon A_{\BC}\to A_{\BC}$ are flat because they arise by base change from the flat homomorphisms $\BC\to\BC,\,b\mapsto b^q$, respectively $\BC\to\BC,\,b\mapsto \sqrt[q]{b}$.
\end{remark}

For later reference we record the following two lemmas.

\begin{lemma}\label{LemmaTSep}
\begin{enumerate}
\item \label{LemmaTSep_A}
If $t\in Q$ is a uniformizing parameter at a closed point $P$ of $C$ then $Q$ is a finite separable field extension of $\BF_q(t)$.
\item \label{LemmaTSep_B}
There exists an element $t\in A$ such that $Q$ is a finite separable field extension of $\BF_q(t)$. For every maximal ideal $v\subset A$ one may even find such a $t\in A$ such that the radical ideal $\sqrt{A\cdot t}$ of $A\cdot t$ is $v$.
\end{enumerate}
\end{lemma}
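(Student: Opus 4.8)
My plan is to reduce both parts to the criterion that, for $f\in Q$ with $f\notin Q^p$, the extension $Q/\BF_q(f)$ is finite and separable. I would establish this (the only implication needed) as follows. Since $C$ is smooth of dimension one over $\BF_q$, the module of Kähler differentials $\Omega^1_{Q/\BF_q}$ is one-dimensional over $Q$; and because $\BF_q$ is perfect — so that $\BF_q\subseteq Q^p$, $\Omega^1_{Q/\BF_q}=\Omega^1_{Q/\BF_p}$ and $[Q:Q^p]=p$ — the kernel of the universal derivation $d\colon Q\to\Omega^1_{Q/\BF_q}$ is exactly $Q^p$. Hence $f\notin Q^p$ forces $df\ne0$, so $df$ is a $Q$-basis of $\Omega^1_{Q/\BF_q}$; in particular $f$ is transcendental over $\BF_q$ (an element algebraic over $\BF_q$ would lie in a perfect subfield of $Q$, hence in $Q^p$), so $Q/\BF_q(f)$ is finite. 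Finally the right-exact sequence $\Omega^1_{\BF_q(f)/\BF_q}\otimes_{\BF_q(f)}Q\to\Omega^1_{Q/\BF_q}\to\Omega^1_{Q/\BF_q(f)}\to0$ has surjective first map (it carries $df$ to $df$), so $\Omega^1_{Q/\BF_q(f)}=0$, which for a finite field extension means separability.

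Granting the criterion, the first assertion is immediate: a uniformizing parameter $t$ at a closed point $P$ of $C$ has $\ord_P(t)=1$, so $t$ cannot be a $p$-th power in $Q$, since any $u^p$ has $\di_C(u^p)=p\cdot\di_C(u)$ with all multiplicities divisible by $p$. Thus $t\notin Q^p$ and the criterion applies.

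For the second assertion I would use the finiteness of the ideal class group. Since $C$ is a curve over the finite field $\BF_q$ and $\infty$ is $\BF_q$-rational, $\Cl(A)=\Pic(\dotC)\cong\Pic(C)/(\BZ\cdot[\infty])\cong\Pic^0(C)$ is finite, being the group of $\BF_q$-rational points of the Jacobian of $C$. Given a maximal ideal $v\subset A$, let $n\ge1$ be the order of the class $[v]$ in $\Cl(A)$ and pick a generator $t\in A$ of the principal ideal $v^n$; then $\sqrt{A\cdot t}=\sqrt{v^n}=v$, as demanded. It remains to see $t\notin Q^p$. Suppose $t=u^p$ with $u\in Q$. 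As $t$ has no poles on $\dotC$, neither does $u$, so $u\in A$; and since the divisor of $t$ on $\dotC$ is exactly $n$ times the closed point corresponding to $v$, that of $u$ is $(n/p)$ times it, so $p\mid n$ and $(u)=v^{n/p}$ is principal — contradicting the minimality of $n$. Hence $t\notin Q^p$, and by the criterion $Q/\BF_q(t)$ is finite separable.

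The substantive inputs are the two standard facts singled out above: the one-dimensionality of $\Omega^1_{Q/\BF_q}$ together with $\ker d=Q^p$ (equivalently, a separating element of $Q/\BF_q$ is precisely one that is not a $p$-th power), and the finiteness of $\Cl(A)$, which rests on the finiteness of $\Pic^0$ of a curve over a finite field. Neither involves any estimate, so once these are invoked the write-up should be short; the only point demanding a little care is keeping track, in the second part, that a generator of $v^n$ with $n$ \emph{minimal} cannot be a $p$-th power.
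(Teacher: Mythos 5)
Your proposal is correct, and it takes a genuinely different route from the paper, so let me compare.

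The paper proves~\ref{LemmaTSep_A} by noting that $P$ is \emph{unramified} over its image under the map $C\to\BP^1_{\BF_q}$ given by $t$ (a uniformizer has $e_P=1$), and that every ramification index is divisible by the inseparability degree of $Q/\BF_q(t)$, forcing that degree to be one. For~\ref{LemmaTSep_B} it starts from an arbitrary $a\in A\setminus\BF_q$; if $Q/\BF_q(a)$ has inseparability degree $p^e>1$ it invokes (via Silverman) the inclusion $\BF_q(a)\subset Q^{p^e}$, extracts $t\in A$ with $a=t^{p^e}$ (using that $A$ is integrally closed), and reads off separability of $Q/\BF_q(t)$ by comparing inseparability degrees in the tower $\BF_q(a)\subset\BF_q(t)\subset Q$; since $\sqrt{(t)}=\sqrt{(a)}$ the second claim follows the same way. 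You instead set up a single uniform criterion — $f\notin Q^p$ implies $Q/\BF_q(f)$ finite separable — via $\dim_Q\Omega^1_{Q/\BF_q}=1$, $\ker d=Q^p$, and the conormal sequence, then check the hypothesis $t\notin Q^p$ in each part: in~\ref{LemmaTSep_A} because $\ord_P(t)=1$ is not divisible by $p$, and in~\ref{LemmaTSep_B} by choosing $t$ to generate $v^n$ with $n$ \emph{minimal} (the order of $[v]$ in the finite group $\Cl(A)$), so that $t=u^p$ would make $v^{n/p}$ principal, a contradiction. The paper's argument is lighter on machinery (no Kähler differentials) and short; yours buys a cleaner unification — both parts become instances of one criterion — and, in part~\ref{LemmaTSep_B}, sidesteps the extraction of $p^e$-th roots and the bookkeeping of inseparability degrees entirely. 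Both are valid; the only additional standard input you use is $\ker d=Q^p$ (equivalently $[Q:Q^p]=p$), and you use the finiteness of $\Cl(A)$ slightly more sharply than the paper (minimality of $n$ rather than just existence of a principal power), which is what makes the $t\notin Q^p$ check go through without further modification of $t$.
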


\begin{proof}
\ref{LemmaTSep_A}
The point $P\in C$ is unramified under the map $C\to\BP^1_{\BF_q}$ corresponding to the inclusion $\BF_q(t)\subset Q$. Since all ramification indices are divisible by the inseparability degree, the latter has to be one. 

\medskip\noindent
\ref{LemmaTSep_B}
Choose some $a\in A\setminus\BF_q$. Then $\BF_q[a]\into A$ is a finite flat ring extension and so $Q/\BF_q(a)$ is a finite field extension. If it is not separable, let $p^e$ be its inseparability degree. Then $\BF_q(a)$ is contained in $Q^{p^e}:=\{x^{p^e}\colon x\in Q\}$ by \cite[Proof of Corollary~II.2.12]{Silverman}. So there is a $t\in Q$ with $a=t\:\!^{p^e}$. We even have $t\in A$ because $A$ is integrally closed in $Q$. By considering the inseparability degree in the tower $\BF_q(a)\subset\BF_q(t)\subset Q$ we see that $Q/\BF_q(t)$ is separable.

If a maximal ideal $v\subset A$ is given, there is a positive integer $n$ such that $v^n=A\cdot a$ is a principal ideal. Continuing as above we obtain an element $t\in A$ with $\sqrt{A\cdot t}=\sqrt{A\cdot a}=v$. 
\end{proof}

\begin{lemma}\label{LemmaZ-Zeta}
Let $K$ be a field and let $\charmorph\colon A\into K$ be an injective ring homomorphism. Let $z\in Q\setminus\BF_q$ be an element such that $Q$ is a finite separable extension of $\BF_q(z)$, and let $\zeta=\charmorph(z)$. Then the power series ring $K\dbl z-\zeta\dbr$ over $K$ in the ``variable'' $z-\zeta$ is canonically isomorphic to the completion of the local ring of $C_K$ at the closed point $\Var(J)$ defined by the ideal $J:=(a\otimes 1-1\otimes \charmorph(a)\colon a\in A)\subset A_K$.
\end{lemma}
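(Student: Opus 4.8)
The plan is to realize the local ring of $C_K$ at $\Var(J)$ as a localization of $A_K$, to show it is a discrete valuation ring in which $z-\zeta$ is a uniformizing parameter, and then to invoke the structure theory of complete discrete valuation rings possessing a coefficient field. To begin, I would unwind what $\Var(J)$ is: the ideal $J$ is the kernel of the $K$-algebra surjection $A_K=A\otimes_{\BF_q}K\onto K$, $a\otimes b\mapsto\charmorph(a)\,b$, so it is a maximal ideal, and the composite $K=\BF_q\otimes_{\BF_q}K\hookrightarrow A_K\onto A_K/J$ identifies the residue field of $\Var(J)$ canonically with $K$. Since $\Spec A_K$ is an open subscheme of $C_K$ containing $\Var(J)$, the local ring to be analysed is $\mathcal O:=(A_K)_J$, so the claim reads $\wh{\mathcal O}\cong K\dbl z-\zeta\dbr$. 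The observation that drives everything is that the image of $\Var(J)$ under the projection $\Spec A_K\to\Spec A$ is the \emph{generic} point of $\dotC$, because the contraction of $J$ to $A$ is $\ker(\charmorph\colon A\into K)=(0)$ by injectivity of $\charmorph$; hence $\Var(J)$ lies over the generic point of $C$ and so avoids every proper closed subscheme of $C$ pulled back to $C_K$.

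Next I would record that $\mathcal O$ is a discrete valuation ring with $z-\zeta$ in its maximal ideal. Indeed $C$ is smooth over $\BF_q$, hence $C_K$ is smooth over $K$, hence regular, so $\mathcal O$ is regular local of dimension one, i.e.\ a discrete valuation ring, with residue field $K$. Writing $z=a/b$ with $a,b\in A\setminus\{0\}$, the element $b\otimes1$ has contraction $b\notin\ker\charmorph$ and is therefore a unit in $\mathcal O$, so $z=(a\otimes1)(b\otimes1)^{-1}\in\mathcal O$; moreover $z-\zeta$ reduces modulo the maximal ideal $\Fm$ of $\mathcal O$ to $\wt\charmorph(z)-\zeta=0$, where $\wt\charmorph\colon Q\into K$ is the canonical extension of $\charmorph$. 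Hence $z-\zeta\in\Fm$.

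The heart of the proof — the step I expect to be the main obstacle, and the only place the separability hypothesis is used — is to upgrade this to: $z-\zeta$ is a \emph{uniformizer} of $\mathcal O$. Because $\Var(J)$ is a $K$-rational point of the smooth $K$-scheme $\Spec A_K$, the universal derivation gives an isomorphism $\Fm/\Fm^2\isoto\Omega^1_{A_K/K}\otimes_{\mathcal O}K$, $f\mapsto df$, so by Nakayama $z-\zeta$ generates $\Fm$ precisely when $dz\neq0$ in this one-dimensional $K$-space. Now $\Omega^1_{A_K/K}=\Omega^1_{A/\BF_q}\otimes_{\BF_q}K$ is a line bundle on $\Spec A_K$, and at the generic point of $\Spec A$ one has $\Omega^1_{Q/\BF_q}=Q\,dz$ because $Q/\BF_q(z)$ is finite separable; thus $dz$ generates $\Omega^1_{A/\BF_q}$ over a dense open $V\subseteq\Spec A$, hence generates $\Omega^1_{A_K/K}$ over the preimage $V_K$, which contains $\Var(J)$ by the observation of the first paragraph. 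Therefore $dz\neq0$ at $\Var(J)$, and $z-\zeta$ is a uniformizer. (Equivalently, and more geometrically, the finite morphism $z\colon C_K\to\BP^1_K$ obtained by base change from $z\colon C\to\BP^1_{\BF_q}$ is generically \'etale since $Q/\BF_q(z)$ is separable, hence \'etale away from the base change of its branch locus; as $\Var(J)$ lies over the generic point of $C$ it avoids that locus, and, the residue-field extension there being trivial, the morphism induces $\wh{\mathcal O}_{\BP^1_K,\,z=\zeta}\isoto\wh{\mathcal O}_{C_K,\Var(J)}$ whose source is $K\dbl z-\zeta\dbr$.)

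Finally I would conclude: $\mathcal O$ is a discrete valuation ring with uniformizer $z-\zeta$ and residue field $K$, and it contains the subfield $1\otimes K\cong K$ that maps isomorphically onto the residue field. Hence $\Fm^n=(z-\zeta)^n\mathcal O$ and $\mathcal O/\Fm^n\cong\bigoplus_{i=0}^{n-1}K\cdot(z-\zeta)^i$ as $K$-modules, and passing to the inverse limit gives a canonical $K$-algebra isomorphism $K\dbl z-\zeta\dbr\isoto\wh{\mathcal O}=\wh{(A_K)_J}$, $\sum_i a_i(z-\zeta)^i\mapsto\sum_i(1\otimes a_i)(z-\zeta)^i$, which sends $z=\zeta+(z-\zeta)$ to $z\otimes1$. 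This is the asserted canonical identification of $K\dbl z-\zeta\dbr$ with the completion of the local ring of $C_K$ at $\Var(J)$.
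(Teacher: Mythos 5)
Your proof is correct, and it follows the same overall strategy as the paper's — identify the local ring at $\Var(J)$ as a one-dimensional regular local ring with residue field $K$ (via smoothness of $C_K$) and reduce everything to showing that $z-\zeta$ is a uniformizer — but it executes the key step by a genuinely different route. The paper proves that $z-\zeta$ generates $J/J^2$ by a direct computation: for each $a\in Q$ it takes the minimal polynomial of $a$ over $\BF_q(z)$, clears denominators to get $F(X,z)\in\BF_q[X,z]$, and Taylor-expands $F$ at $(\charmorph(a),\zeta)$ modulo $J^2$; separability enters through $\frac{\partial F}{\partial X}(\charmorph(a),\zeta)\ne0$, which shows $a-\charmorph(a)$ is a multiple of $z-\zeta$ in $J/J^2$, and a Nakayama argument finishes. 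You instead pass to the dual picture: you invoke the canonical isomorphism $\Fm/\Fm^2\isoto\Omega^1_{A_K/K}\otimes_{A_K}K$ at the $K$-rational point $\Var(J)$ and check that $dz$ does not vanish in the fiber, using that $\Omega^1_{Q/\BF_q}=Q\,dz$ by separability and that $\Var(J)$ lies over the generic point of $C$ because $J\cap(A\otimes1)=\ker\charmorph=(0)$. The two arguments encode the same information (the paper's Taylor expansion is exactly the computation that $da$ is a multiple of $dz$ in $\Omega^1$), but yours trades the explicit polynomial manipulation for the formalism of K\"ahler differentials and the observation about the generic point, which the paper never makes explicit; your parenthetical reformulation via generic \'etaleness of $z\colon C_K\to\BP^1_K$ is arguably the cleanest packaging of all. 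The paper's version is more elementary and self-contained; yours is shorter on computation but leans on standard facts about cotangent spaces at rational points and base change of differentials. Both are complete proofs.
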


\begin{proof}
The completion of the local ring of $C_K$ at $\Var(J)$ is $\invlim A_K/J^n$. Since this is a complete discrete valuation ring with residue field $K$ we only need to show that $z-\zeta$ is a uniformizing parameter. Clearly, $z-\zeta$ is contained in the maximal ideal. To prove the converse, let $a\in Q$. Let $f\in\BF_q(z)[X]$ be the minimal polynomial of $a$ over $\BF_q(z)$ and multiply it with the common denominator to obtain the polynomial $F(X,z)\in\BF_q[X,z]$. The two-variable Taylor expansion of $F$ at $(\charmorph(a),\zeta)\in K^2$ is
\[
F(X,z)\equiv F(\charmorph(a),\zeta)+\frac{\partial F}{\partial X}(\charmorph(a),\zeta)\cdot(X-\charmorph(a))+\frac{\partial F}{\partial z}(\charmorph(a),\zeta)\cdot(z-\zeta)\mod J^2
\]
Plugging in $a$ for $X$ yields $F(a,z)=0$ and $\frac{\partial F}{\partial X}(a,z)\ne0$ by the separability of $Q/\BF_q(z)$. Under the injective homomorphism $\charmorph\colon Q\into K$ we get $F(\charmorph(a),\zeta)=0$ and $\frac{\partial F}{\partial X}(\charmorph(a),\zeta)\ne0$. This shows that the element $a-\charmorph(a)\in J/J^2$ is a multiple of $z-\zeta$, and so $z-\zeta$ generates the $A_K$-module $J/J^2$. By Nakayama's Lemma~\cite[Corollary~4.7]{Eisenbud} there is an element $f\in 1+J$ that annihilates the $A_K$-module $J/(z-\zeta)$. Since $f$ is invertible in $\invlim A_K/J^n$ we have proved that $z-\zeta$ generates the maximal ideal of $\invlim A_K/J^n$.
\end{proof}

\subsection{Tannakian theory}\label{SectTannaka}

As already alluded to in the introduction, a good framework to discuss Hodge structures is the theory of Tannakian categories. Also Pink's results which we explain in this article use this language. Therefore, we briefly recall the definition and some facts about Tannakian categories from the articles of Deligne and Milne~\cite{DM82,DeligneCatTann,Milne92}.

\begin{definition}[{\cite[(A.7.1) and (A.7.2), page~222]{Milne92}}]
\label{defn:tannakiancategory}
Let $K$ be a field. A $K$-linear abelian tensor category $\mathscr{C}$ with unit object $\BOne$ is a \emph{Tannakian category over $K$} if
\begin{enumerate}
\item \label{defn:tannakiancategory_A}
for every object $X$ of $\mathscr{C}$ there exists an object $X\dual$ of $\mathscr{C}$, called the \emph{dual} of $X$, and morphisms ${\rm ev}\colon X\otimes X\dual\to\BOne$ and $\delta\colon\BOne\to X\dual\otimes X$ such that
\begin{eqnarray*}\label{EqEVX}
({\rm ev}\otimes \id_X)\circ(\id_X\otimes\delta) \es = & \id_X\;\;\colon & X\xrightarrow{\;\id_X\otimes\delta\,}X\otimes X\dual\otimes X\xrightarrow{\;{\rm ev}\otimes \id_X\,}X\qquad\text{and}\\[1mm]
\nonumber (\id_{X\dual}\otimes{\rm ev})\circ(\delta\otimes \id_{X\dual}) \es= & \id_{X\dual}\colon & X\dual\xrightarrow{\;\delta\otimes \id_{X\dual}\,}X\dual\otimes X\otimes X\dual\xrightarrow{\;\id_{X\dual}\otimes{\rm ev}\,}X\dual,
\end{eqnarray*}
\item \label{defn:tannakiancategory_B}
and for some non-zero $K$-algebra $L$ there is an exact faithful $K$-linear tensor functor $\omega$ from $\mathscr{C}$ to the category of finitely generated $L$-modules. Any such functor $\omega$ is called an \emph{$L$-rational fiber functor for $\mathscr{C}$}.
\end{enumerate}
A $K$-rational fiber functor for $\mathscr{C}$ is called \emph{neutral}. If $\mathscr{C}$ has a neutral fiber functor it is called a \emph{neutral Tannakian category over $K$}.
\end{definition}

\begin{remark}\label{RemDefTannakian}
(a) According to \cite[\S\,1]{DM82} being a \emph{tensor category} means that there is a ``tensor product'' functor $\mathscr{C}\times\mathscr{C}\to\mathscr{C},\,(X,Y)\mapsto X\otimes Y$ which is associative and commutative, such that $\mathscr{C}$ has a \emph{unit object}. The latter is an object $\BOne\in\mathscr{C}$ together with an isomorphism $\BOne\isoto\BOne\otimes\BOne$ such that $\mathscr{C}\to\mathscr{C},\,X\mapsto\BOne\otimes X$ is an equivalence of categories. A unit object is unique up to unique isomorphism; see \cite[Proposition~1.3]{DM82}. One sets  $X^{\otimes0}:=\BOne$ and $X^{\otimes n}:=X\otimes X^{\otimes n-1}$ for $n\in\BN_{>0}$.

\medskip\noindent
(b) Being \emph{$K$-linear} means that $\Hom_{\mathscr{C}}(X,Y)$ is a $K$-vector space for all $X,Y\in\mathscr{C}$.

\medskip\noindent
(c) Being \emph{abelian} means that $\mathscr{C}$ is an abelian category. Then automatically $\otimes$ is a bi-additive functor and is exact in each factor; see \cite[Proposition~1.16]{DM82}. 

\medskip\noindent
(d) By \cite[\S\S\,2.1--2.5]{DeligneCatTann} the conditions of Definition~\ref{defn:tannakiancategory} imply that $\End_{\mathscr{C}}(\BOne)=K$ and that the tensor product is $K$-bilinear and exact in each variable. It further implies that $\CHom(X,Y):= X\dual\otimes Y$ is an \emph{internal hom} in $\mathcal{C}$, that is an object which represents the functor $\mathscr{C}\open\to\KVec,\,T\mapsto\Hom_{\mathscr{C}}(T\otimes X,Y)$. This means that $\Hom_{\mathscr{C}}(T\otimes X,Y)=\Hom_{\mathscr{C}}(T,\CHom(X,Y))$. Then $\mathscr{C}$ is a \emph{rigid} abelian $K$-linear tensor category in the sense of \cite[Definition~2.19]{DM82}. This further means that the natural morphisms $X\to(X\dual)\dual$ are isomorphisms and that $\bigotimes_{i=1}^n\CHom(X_i,Y_i)=\CHom(\bigotimes_i X_i,\bigotimes_i Y_i)$ for all $X_i,Y_i\in\mathscr{C}$. The definition of a neutral Tannakian category over $K$ in \cite[Definition~2.19]{DM82} as a rigid abelian $K$-linear tensor category possessing a neutral fiber functor is equivalent to Definition~\ref{defn:tannakiancategory}.

\medskip\noindent
(e) A functor $F\colon\mathscr{C}\to\mathscr{C}'$ between rigid abelian $K$-linear tensor categories is a \emph{tensor functor} if $F(\BOne)$ is a unit object in $\mathscr{C}'$ and there are fixed isomorphisms $F(X\otimes Y)\cong F(X)\otimes F(Y)$ compatible with the associativity and commutativity laws. A tensor functor automatically satisfies $F(X\dual)=F(X)\dual$ and $F\bigl(\CHom(X,Y)\bigr)=\CHom\bigl(F(X),F(Y)\bigr)$; see \cite[Proposition~1.9]{DM82}. In particular, for an $L$-rational fiber functor $\omega$ this means $\omega(\BOne)\cong L$.
\end{remark}

If $G$ is an affine group scheme over $K$, let $\Rep_K(G)$ be the category of finite-dimensional $K$-rational representations of $G$, that is $K$-homomorphisms of $K$-group schemes $\rho\colon G\to\GL_K(V)\cong\GL_{\dim_K\!V,\,K}$ for varying finite dimensional $K$-vector spaces $V$. Together with the forgetful functor $\omega^G\colon(V,\rho)\mapsto V$ it is a neutral Tannakian category over $K$; see \cite[Example~1.24]{DM82}. Tannakian duality says that every neutral Tannakian category over $K$ is of this form:

\begin{theorem}[{Tannakian duality \cite[Theorem~2.11]{DM82}}] 
\label{theorem:theorem-deligne-211}
Let $\mathscr{C}$ be a neutral Tannakian category over $K$ with neutral fiber functor $\omega,$ and let $\Aut^\otimes(\omega)$ be the set of automorphisms of tensor functors of $\omega$; see \cite[p.~116]{DM82}. 
\begin{enumerate}
\item There is an affine group scheme $G$ over $K$ that represents the functor $\underline{\Aut}^\otimes(\omega)$ on $K$-algebras 
given by 
\[ \underline{\Aut}^\otimes(\omega)(R) := \Aut^\otimes(\psi_R \circ \omega)\quad\text{for all }K\text{-algebras }R,\]
where $\psi_R\colon  \KVec \to \RMod,\ V\mapsto V \otimes_K R,$  is the canonical tensor functor.
\item \label{theorem:theorem-deligne-211_B}
The fiber functor $\omega$ defines an equivalence of tensor categories $\mathscr{C}\isoto \Rep_K(G)$.
\end{enumerate}
\end{theorem}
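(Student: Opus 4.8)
The plan is to prove this by the standard Tannakian reconstruction argument of Deligne--Milne \cite[\S\,2]{DM82}. First I would discard the tensor structure entirely and recover $\mathscr{C}$ from $\omega$ as a category of comodules over a coalgebra $B$. Then I would feed the tensor product, the unit object and rigidity back in to upgrade $B$ to a commutative Hopf algebra, so that $G:=\Spec B$ becomes an affine group scheme over $K$ and the category of finite-dimensional $B$-comodules is exactly $\Rep_K(G)$. Finally I would identify $\Spec B$ with the functor $\underline{\Aut}^\otimes(\omega)$; this delivers assertions (a) and (b) simultaneously.

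\textbf{Step 1 (coalgebra reconstruction).} Here $\mathscr{C}$ is only an essentially small $K$-linear abelian category and $\omega\colon\mathscr{C}\to\KVec$ is $K$-linear, exact and faithful. For an object $X$ I would set $\langle X\rangle$ to be the full subcategory of $\mathscr{C}$ whose objects are subquotients of finite direct sums $X^{\oplus n}$; it is abelian and every object has finite length (since $\omega$ lands in $\KVec$ and is faithful exact). The essential point is that $A_X:=\End(\omega|_{\langle X\rangle})$ is a \emph{finite-dimensional} $K$-algebra; this is the key finiteness input, using that $\omega(X)$ is finite-dimensional together with exactness and faithfulness of $\omega$. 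Granting it, $\omega$ refines to an equivalence $\langle X\rangle\isoto\mathrm{Mod}^{\mathrm{fd}}_{A_X}$ onto finite-dimensional left $A_X$-modules; dualizing, $B_X:=A_X\dual$ is a finite-dimensional coalgebra and $\langle X\rangle\isoto\mathrm{Comod}^{\mathrm{fd}}_{B_X}$. Since the $\langle X\rangle$ form a filtered system compatible with these equivalences, the $B_X$ glue to a coalgebra $B:=\varinjlim_X B_X$, and $\omega$ becomes an equivalence $\mathscr{C}\isoto\mathrm{Comod}^{\mathrm{fd}}_B$ carrying $\omega$ to the forgetful functor. (This is \cite[Theorem~2.2]{DM82}.)

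\textbf{Steps 2 and 3 (Hopf structure and functor of points).} Now I would reinstate the tensor structure. Because $\omega$ is a tensor functor, the isomorphisms $\omega(X)\otimes_K\omega(Y)\cong\omega(X\otimes Y)$ and $\omega(\BOne)\cong K$, rewritten in terms of $B$-comodules, force a multiplication $B\otimes_K B\to B$ together with a unit; the commutativity constraint on $\otimes$ makes it commutative, so $B$ becomes a commutative bialgebra. Rigidity -- the internal duals $X\dual=\CHom(X,\BOne)$ with their evaluation and coevaluation morphisms, and the isomorphisms $X\isoto(X\dual)\dual$ -- then supplies an antipode, so $B$ is a commutative Hopf algebra and $G:=\Spec B$ is an affine group scheme over $K$. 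Transporting Step 1 across, $B$-comodules become $K$-rational representations of $G$, which would give $\omega\colon\mathscr{C}\isoto\Rep_K(G)$ with $\omega$ the forgetful functor -- assertion (b), once $G$ is confirmed to be the right group. For that I would unwind the comodule description: for a $K$-algebra $R$, a $K$-algebra homomorphism $B\to R$ is the same datum as an $R$-linear endomorphism of $\omega_R:=\psi_R\circ\omega$ compatible with all the $\omega(X\otimes Y)\cong\omega(X)\otimes\omega(Y)$ and with $\omega(\BOne)\cong K$; hence $\Spec B$ represents $\underline{\End}^\otimes(\omega)$ as a monoid-valued functor. Finally, rigidity forces every tensor-compatible endomorphism of $\omega_R$ to be invertible (it must respect the evaluation and coevaluation maps, from which a two-sided inverse is built using the dual), so $\underline{\End}^\otimes(\omega)=\underline{\Aut}^\otimes(\omega)$ is represented by $G=\Spec B$ -- assertion (a) -- and, together with the preceding sentence, assertion (b) as well.

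\textbf{Main obstacle.} The hard part is Step 1, the Tannakian reconstruction of an abelian category from the endomorphisms of a fiber functor: proving that the algebras $A_X=\End(\omega|_{\langle X\rangle})$ are finite-dimensional and that $\omega$ induces an equivalence onto their module categories. Once that is in place, Steps 2 and 3 are essentially a bookkeeping exercise, transporting the tensor product, unit and duality across the equivalence and matching them with the bialgebra, counit and antipode axioms. For the complete argument we refer to \cite[Theorems~2.2 and~2.11]{DM82}.
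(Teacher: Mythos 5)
Your outline is correct and is precisely the argument of the cited reference: the paper itself gives no proof of this theorem but simply invokes \cite[Theorem~2.11]{DM82}, and your three steps (coalgebra reconstruction via the finite-dimensional algebras $\End(\omega|_{\langle X\rangle})$, upgrading to a commutative Hopf algebra using the tensor structure and rigidity, and identifying $\Spec B$ with $\underline{\Aut}^\otimes(\omega)$ after showing tensor endomorphisms are automatically invertible) faithfully reproduce the Deligne--Milne strategy. Nothing further is needed beyond the deferral you already make to \cite[Theorems~2.2 and 2.11]{DM82} for the finiteness input in Step 1.
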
 

\begin{definition}\label{DefSubcategory}
A subcategory $\mathscr{C}'$ of a category $\mathscr{C}$ is \emph{strictly full} if it is full and contains with every $X\in\mathscr{C}'$ also all objects of $\mathscr{C}$ isomorphic to $X$. 

A strictly full subcategory $\mathscr{C}'$ of a rigid tensor category $\mathscr{C}$ is a \emph{rigid tensor subcategory} if $\BOne\in\mathscr{C}'$ and $X\otimes Y, X\dual\in\mathscr{C}'$ for all $X,Y\in\mathscr{C}'$.

If $\mathscr{C}$ is a neutral Tannakian category over $K$ and $X\in\mathscr{C}$, the rigid tensor subcategory of $\mathscr{C}$ containing as objects all subquotients of all $\bigoplus_{i=1}^r X^{\otimes n_i}\otimes(X\dual)^{\otimes m_i}$ for all $r,n_i,m_i\in\BN_0$ is called the \emph{Tannakian subcategory generated by $X$} and is denoted $\llangle X\rrangle$. It is a neutral Tannakian category over $K$.
\end{definition}

\begin{lemma}[{\cite[Proposition~2.20]{DM82}}]
\label{lem:prop-deligne-220}
An affine $K$-group scheme $G$ is (linear) algebraic, that is a closed subscheme of some $\GL_{n,K}$, if and only if there exists an object $X$ in $\Rep_K(G)$ with $\Rep_K(G)=\llangle X\rrangle$. In this case $G=\ul\Aut^\otimes(\omega^G)\into\GL_K\bigl(\omega^G(X)\bigr)$ is a closed immersion, which factors through the centralizer of $\End(X)$ inside $\GL\bigl(\omega^G(X)\bigr)$.
\end{lemma}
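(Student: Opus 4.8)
The plan is to prove the two implications more or less independently, in each case using the tautological embedding of $G$ into $\GL$ of a chosen representation as the dictionary between the group and its representation category, and to read off the two supplementary claims (closed immersion, and factoring through the centralizer of $\End(X)$) from that construction.

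For the implication ``$G$ algebraic $\Rightarrow$ a single object suffices to generate'', assume $G$ is a closed subgroup scheme of $\GL_{n,K}$ and take $X=(V,\rho)$ with $V=K^n$ and $\rho$ the given inclusion. I would prove $\Rep_K(G)=\llangle X\rrangle$ by the classical argument that every representation sits inside the regular one. First, the coordinate ring $\CO(\GL_K(V))=\Sym(V\otimes V\dual)[\det^{-1}]$, viewed as a $G$-representation by right translation, is the directed union of its finite-dimensional subrepresentations, and each of those is a subquotient of a finite direct sum of modules $V^{\otimes a}\otimes(V\dual)^{\otimes b}$: the matrix coefficients span a sum of copies of $V$, the function $\det$ spans a copy of $\wedge^n V$, and $\det^{-1}$ spans a copy of $\wedge^n V\dual$, while $\wedge^n V$ and $\wedge^n V\dual$ are quotients of $V^{\otimes n}$ resp.\ of $(V\dual)^{\otimes n}$. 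Restricting along the surjection of Hopf algebras $\CO(\GL_K(V))\onto\CO(G)$ transports this property to the right regular representation $\CO(G)$. Since every finite-dimensional $W\in\Rep_K(G)$ embeds $G$-equivariantly into a finite direct sum of copies of $\CO(G)$, hence into one of the finite-dimensional pieces just described, $W$ is a subquotient of a finite sum of $V^{\otimes a}\otimes(V\dual)^{\otimes b}$ and thus lies in $\llangle X\rrangle$. Finally, since morphisms in $\Rep_K(G)$ are $G$-equivariant by definition, $\rho$ commutes with the action on $V$ of $\End(X)=\End_{\Rep_K(G)}(X)\subset\End_K(V)$, so $\rho$ factors through the centralizer of $\End(X)$ in $\GL_K(V)$.

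For the converse, assume $\Rep_K(G)=\llangle X\rrangle$, put $V=\omega^G(X)$ and let $\rho_X\colon G\to\GL_K(V)$ be the associated homomorphism. I would first check that $\rho_X$ is faithful: its kernel $N$ acts trivially on $X$, hence on every subquotient of every $\bigoplus_i V^{\otimes n_i}\otimes(V\dual)^{\otimes m_i}$, i.e.\ on every object of $\Rep_K(G)=\llangle X\rrangle$; as $\CO(G)$ is the union of its finite-dimensional subrepresentations, $N$ then acts trivially on $\CO(G)$ for the (faithful) right translation action, forcing $N=1$. Next let $H\subset\GL_K(V)$ be the scheme-theoretic image of $\rho_X$; it is a closed subgroup scheme of $\GL_K(V)$, hence algebraic, and $\rho_X$ factors as $G\to H\into\GL_K(V)$ with $\CO(H)\into\CO(G)$ an injection of commutative Hopf algebras over $K$, which is automatically faithfully flat. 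Hence $G\to H$ is faithfully flat; it is also a monomorphism, since $\rho_X$ is faithful and $H\into\GL_K(V)$ is a monomorphism; and a faithfully flat monomorphism of affine schemes is an isomorphism. Therefore $G\isoto H$, so $G$ is algebraic and $\rho_X\colon G\into\GL_K(V)$ is a closed immersion; by Tannakian duality (Theorem~\ref{theorem:theorem-deligne-211}) this $\rho_X$ is the map $\ul\Aut^\otimes(\omega^G)\into\GL_K(\omega^G(X))$, and the factoring through the centralizer of $\End(X)$ follows as before.

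I expect the main obstacle to be the representation-theoretic analysis of $\CO(\GL_K(V))$ in the first direction — in particular, checking that inverting $\det$ does not leave the tensor subcategory generated by $V$, for which the remark about $\wedge^n V$ and $\wedge^n V\dual$ is the crux — together with the two Hopf-algebraic inputs used above: that a quotient Hopf algebra carries the induced right translation representation, and that an inclusion of commutative Hopf algebras over a field is faithfully flat. Granting these, both implications and the supplementary statements follow by routine manipulation of subquotients and tensor powers together with Tannakian duality.
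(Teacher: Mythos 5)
Your proof is correct. The paper itself only proves the centralizer claim (citing Deligne--Milne, Proposition~2.20, for everything else), and it derives that claim exactly as you do -- from the fact that elements of $G=\ul\Aut^\otimes(\omega^G)$ must commute with the $G$-equivariant endomorphisms $\End(X)$ -- while your write-up additionally reconstructs the standard Deligne--Milne argument (regular representation plus subquotients of $V^{\otimes a}\otimes(V\dual)^{\otimes b}$ in one direction, faithful flatness of Hopf subalgebras and the faithfully-flat-monomorphism trick in the other) for the part the paper handles by citation.
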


\begin{proof}
This was proved in \cite[Proposition~2.20]{DM82} except for the statement about the centralizer, which follows from the fact that $G$ is the automorphism group of the forgetful fiber functor $\omega^G$. 
\end{proof}

A homomorphism $f\colon G\to G'$ of affine $K$-group schemes induces a functor $\omega^f\colon \Rep_K(G')\to\Rep_K(G),\ \rho\,\mapsto\,\rho\circ f$, such that $\omega^G\circ\omega^f=\omega^{G'}$. The same holds in the other direction:

\begin{lemma}[{\cite[Corollary~2.9]{DM82}}]
\label{cor:defnmu}
Let $G$ and $G'$ be affine group schemes over $K$ and consider a tensor functor $F\colon\Rep_K(G')\to\Rep_K(G)$ such that $\omega^G\circ F=\omega^{G'}$. Then there is a unique homomorphism $f\colon G\to G'$ of affine $K$-group schemes such that $F\cong\omega^f$.
\end{lemma}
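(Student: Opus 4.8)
The plan is to construct $f$ directly from $F$ by Tannakian duality (Theorem~\ref{theorem:theorem-deligne-211}), by \emph{whiskering} the automorphisms of the fiber functor $\omega^G$ with $F$. Fix a $K$-algebra $R$. By Theorem~\ref{theorem:theorem-deligne-211}(a) we have $G(R)=\Aut^\otimes(\psi_R\circ\omega^G)$ and $G'(R)=\Aut^\otimes(\psi_R\circ\omega^{G'})$, and the hypothesis $\omega^G\circ F=\omega^{G'}$ gives $\psi_R\circ\omega^{G'}=(\psi_R\circ\omega^G)\circ F$. So I would send a tensor automorphism $g$ of $\psi_R\circ\omega^G$ to the transformation $gF$ of $\psi_R\circ\omega^{G'}$ with components $(gF)_{W'}:=g_{F(W')}$ for $W'\in\Rep_K(G')$. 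This $gF$ is invertible, with inverse $(g^{-1})F$, and because $F$ is a tensor functor and $g$ a tensor automorphism it respects the associativity, commutativity and unit constraints; hence $gF\in\Aut^\otimes(\psi_R\circ\omega^{G'})=G'(R)$. Setting $f_R(g):=gF$, the assignment $f=(f_R)_R$ is functorial in $R$ and is a group homomorphism, the latter because whiskering is compatible with composition of natural transformations: $(g_1g_2)F=(g_1F)(g_2F)$ and $\mathrm{id}\,F=\mathrm{id}$. By Yoneda this produces a homomorphism of affine $K$-group schemes $f\colon G\to G'$.

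Next I would check that $F$ coincides with $\omega^f$. For $W'=(V',\rho')\in\Rep_K(G')$ the underlying vector space of $F(W')$ is $\omega^G\bigl(F(W')\bigr)=\omega^{G'}(W')=V'$, so $F(W')=(V',\sigma)$ for a unique representation $\sigma\colon G\to\GL_K(V')$. Now unwind the construction: by the tautological description of the actions in Theorem~\ref{theorem:theorem-deligne-211}, an $h\in G'(R)$ acts on $W'$ as $\rho'_R(h)$, while $g\in G(R)$ acts on the object $F(W')=(V',\sigma)$ as $\sigma_R(g)$. Since $f_R(g)$ has component $g_{F(W')}$ at $W'$, this yields $\rho'_R\bigl(f_R(g)\bigr)=\sigma_R(g)$ for all $R$ and all $g\in G(R)$, i.e.\ $\rho'\circ f=\sigma$ as morphisms $G\to\GL_K(V')$. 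Hence $F(W')=(V',\rho'\circ f)=\omega^f(W')$; the same bookkeeping on morphisms (whose underlying linear maps are untouched by both $F$ and $\omega^f$, again by $\omega^G\circ F=\omega^{G'}$) and on the monoidal constraints shows $F\cong\omega^f$ as tensor functors.

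For uniqueness, suppose $f'\colon G\to G'$ also satisfies $\omega^{f'}\cong F$ compatibly with the fiber functors. Then $\omega^{f'}\cong\omega^f$ via an isomorphism $\alpha$ with $\omega^G(\alpha)=\mathrm{id}_{\omega^{G'}}$, so for every $(V',\rho')$ the component $\alpha_{(V',\rho')}$ is the identity map of $V'$ but also $G$-equivariant between $\rho'\circ f'$ and $\rho'\circ f$, whence $\rho'\circ f'=\rho'\circ f$; since the finite-dimensional representations of an affine $K$-group scheme are jointly faithful, $f'=f$. I expect the only real work to lie in the monoidal bookkeeping — verifying that $gF$ respects all tensor constraints, which is exactly where one uses that $F$ is a tensor functor compatible with the fiber functors, and verifying that the two a priori different actions of $g$ (via $g_{F(W')}$) and of $f_R(g)$ (via $\rho'_R$) literally agree. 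The substantive inputs, Tannakian duality (Theorem~\ref{theorem:theorem-deligne-211}) and the joint faithfulness of representations, may simply be quoted, so there is no genuine obstacle beyond untangling the $2$-categorical definitions.
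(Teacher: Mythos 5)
The paper does not prove this lemma but simply cites \cite[Corollary~2.9]{DM82}; your whiskering/precomposition argument is correct and is precisely the standard proof one finds there — the representability $G\cong\underline{\Aut}^\otimes(\omega^G)$ makes precomposition by $F$ a morphism of functors of points $G\to G'$, and the tautological description of the action forces $\omega^f=F$. Your uniqueness argument correctly uses the compatibility of the isomorphism $F\cong\omega^f$ with the underlying fiber functors together with the joint faithfulness of the representations of $G'$, which is the right reading of the statement.
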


Under this correspondence various properties of group homomorphisms are reflected on the associated tensor functor.

\begin{proposition}[{\cite[Proposition~2.21]{DM82}}]
\label{prop:prop-deligne-221}
 Let $f\colon G\to G'$ be a homomorphism of affine $K$-group schemes and let $\omega^f\colon \Rep_K(G')\to\Rep_K(G)$ be defined as above.
 \begin{enumerate}
 \item $f$ is faithfully flat if and only if $\omega^f$ is fully faithful and for every object $X'$ in $\Rep_K(G')$ each subobject of $\omega^f(X')$ is isomorphic to the image of a subobject of $X'$.
 \item $f$ is a closed immersion if and only if for every object $X$ of $\Rep_K(G)$ there exists an object $X'$ in $\Rep_K(G')$ such that $X$ is isomorphic to a subquotient of $\omega^f(X')$.
 \end{enumerate}
\end{proposition}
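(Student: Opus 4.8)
\emph{Strategy.} The plan is to prove the two ``only if'' implications by a direct construction of representations, and to derive the two ``if'' implications from them via Tannakian duality. I will use two standard facts. First, the forgetful fibre functors satisfy $\omega^G\circ\omega^f=\omega^{G'}$. Second, every homomorphism $f\colon G\to G'$ of affine group schemes over $K$ factors canonically as $G\xrightarrow{\ \pi\ }H\xrightarrow{\ i\ }G'$ with $\pi$ faithfully flat and $i$ a closed immersion, where $H$ is the ``image'' of $f$, i.e.\ $\mathcal{O}(H)$ is the Hopf subalgebra of $\mathcal{O}(G)$ that is the image of $f^\ast\colon\mathcal{O}(G')\to\mathcal{O}(G)$; here faithful flatness of $\pi$ rests on the fact that a commutative Hopf algebra over a field is faithfully flat over any Hopf subalgebra. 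Correspondingly $\omega^f=\omega^\pi\circ\omega^i$, and for the ``if'' directions the goal will be to show, using the hypothesis, that one of these two factors is an isomorphism.

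\emph{The ``only if'' directions.} If $f$ is faithfully flat then $G'\cong G/\ker f$, so $\omega^f$ identifies $\Rep_K(G')$ with the strictly full subcategory of $\Rep_K(G)$ of those representations on which $\ker f$ acts trivially; this subcategory is stable under subobjects, which gives both assertions of~(a). If $f$ is a closed immersion then $f^\ast\colon\mathcal{O}(G')\onto\mathcal{O}(G)$ is equivariant for the right-translation actions, so writing $\mathcal{O}(G')$ as the filtered union of its finite-dimensional $G'$-subrepresentations $V_\alpha$ exhibits $\mathcal{O}(G)$ as the filtered union of the $G$-representations $\ol{V}_\alpha$, each a quotient of $\omega^f(V_\alpha)$. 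Since an arbitrary object $X$ of $\Rep_K(G)$ embeds $G$-equivariantly into $\mathcal{O}(G)^{\oplus\dim X}$, it embeds into some $\ol{V}_\alpha^{\oplus\dim X}$, which is a quotient of $\omega^f(V_\alpha^{\oplus\dim X})$; hence $X$ is a subquotient of $\omega^f(X')$ with $X'=V_\alpha^{\oplus\dim X}$, proving one implication of~(b).

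\emph{The ``if'' directions.} Suppose the hypotheses of~(a) hold and factor $f=i\circ\pi$ as above. By the implication just proved for~(a) applied to $\pi$, the functor $\omega^\pi$ is fully faithful and lifts subobjects. Using that $\omega^\pi$ is exact and fully faithful one checks that full faithfulness of $\omega^f$ forces full faithfulness of $\omega^i$, and that the subobject-lifting property of $\omega^f$ forces the same property for $\omega^i$. By the implication proved for~(b) applied to the closed immersion $i$, every object of $\Rep_K(H)$ is a subquotient of some $\omega^i(X')$; combined with the subobject-lifting property of $\omega^i$ (which, by iterating, also lifts subquotients) this shows that every object of $\Rep_K(H)$ lies in the essential image of $\omega^i$. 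Thus $\omega^i$ is an exact, fully faithful, essentially surjective tensor functor compatible with the fibre functors, hence an equivalence, so $i$ is an isomorphism by Tannakian duality (Lemma~\ref{cor:defnmu} and Theorem~\ref{theorem:theorem-deligne-211}); consequently $f=i\circ\pi$ is faithfully flat. The second implication of~(b) is symmetric: factoring $f=i\circ\pi$, the subobject-lifting property of $\omega^\pi$ together with the hypothesis that every object of $\Rep_K(G)$ is a subquotient of some $\omega^f(X')=\omega^\pi(\omega^i(X'))$ forces every object of $\Rep_K(G)$ into the essential image of $\omega^\pi$, so $\omega^\pi$ is an equivalence, $\pi$ is an isomorphism, and $f=i$ is a closed immersion.

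\emph{Main obstacle.} The Tannakian bookkeeping above is routine; the real content lies in two foundational points. The first is the canonical factorization $f=i\circ\pi$ with $\pi$ faithfully flat, equivalently the faithful flatness of a commutative Hopf algebra over any Hopf subalgebra over a field. The second is the delicate verification that the hypotheses imposed on $\omega^f$ genuinely propagate through the composition $\omega^f=\omega^\pi\circ\omega^i$ to the corresponding hypotheses on the individual factors. A further point that deserves explicit care is that a fully faithful, essentially surjective tensor functor between representation categories compatible with the fibre functors is induced by an isomorphism of affine group schemes; this follows from Lemma~\ref{cor:defnmu} applied to a quasi-inverse together with its uniqueness clause, but it is not literally one of the statements cited above.
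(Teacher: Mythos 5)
The paper does not prove this proposition; it is quoted directly from \cite[Proposition~2.21]{DM82}, so there is no in-paper argument to compare against. Your reconstruction is correct and is essentially the standard Tannakian proof of the cited result: the ``only if'' directions via the identification of $\Rep_K(G/N)$ with the subobject-closed full subcategory of representations trivial on $N$, and via the embedding of any $X$ into $\CO(G)^{\oplus\dim X}$; the ``if'' directions via the image factorization $f=i\circ\pi$, whose faithfully flat part rests on Takeuchi's theorem that a commutative Hopf algebra over a field is faithfully flat over any Hopf subalgebra. The one point you rightly flag -- upgrading the tensor equivalence $\omega^i$ to an isomorphism of group schemes -- is harmless here, since $i$ is already a closed immersion by construction and one only needs that a closed immersion inducing an equivalence of representation categories compatible with the fibre functors is an isomorphism, which follows from Theorem~\ref{theorem:theorem-deligne-211} applied to $H=\ul\Aut^\otimes(\omega^H)$.
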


%%%%%%%%%%%%%%%%%%%%%%%%%%%%%%%%%%%%%%%%%%%%%%%%%%%%%%%%%%%%% 
%% 
%%     Hodge-Pink structures
%% 
%%%%%%%%%%%%%%%%%%%%%%%%%%%%%%%%%%%%%%%%%%%%%%%%%%%%%%%%%%%%% 
 
\section{Hodge-Pink structures}\label{Sect1}
\setcounter{equation}{0}

In this section we present Pink's definition \cite{PinkHodge} of the Tannakian category of mixed $Q$-Hodge structures. Pink first defines pre-Hodge structures which form an additive tensor category. This category is not abelian, so he introduces a semistability condition for pre-Hodge structures. The semistable ones form a neutral Tannakian category and will be called Hodge structures. Compared to the classical theories of the rational mixed Hodge-structures of Deligne~\cite{DeligneHodge2} and the $p$-adic Hodge theory of Fontaine~\cite{Fontaine82} there is one important difference in Pink's theory. In the classical theories, Hodge structures consist of a vector space over one field (with additional structures like weight filtration or Frobenius endomorphism) and a decreasing Hodge filtration defined over a \emph{separable} extension of this field. In the function field setting $\BC/Q$ is \emph{not} separable and hence a semistability condition solely based on the Hodge filtration cannot be preserved under tensor products. This is Pink's crucial observation and the reason why he replaces Hodge filtrations by finer structures and why we call all these structures \emph{Hodge-Pink structures}.

\begin{definition}\label{DefQFiltr}
An \emph{exhaustive and separated increasing $\BQ$-filtration} $W_\bullet H$ on a finite dimensional $Q$-vector space $H$ is a collection of $Q$-subspaces $W_\mu H\subset H$ for $\mu\in\BQ$ with $W_{\mu'}H\subset W_\mu H$ whenever $\mu'<\mu$, such that the associated $\BQ$-graded vector space
\[
\Gr^W H\;:=\;\bigoplus_{\mu\in\BQ}\Gr_\mu^W H\;:=\;\bigoplus_{\mu\in\BQ}\TS\Bigl(W_\mu H/\bigcup_{\mu'<\mu}W_{\mu'}H\Bigr)
\]
has the same dimension as $H$. 
\end{definition}

\begin{remark}\label{RemQFiltr}
The \emph{jumps} of such a filtration are those real numbers $\mu$ for which 
\[
\TS\bigcup_{\mu'<\mu}W_{\mu'}H \;\subsetneq\;\bigcap_{\tilde\mu>\mu}W_{\tilde\mu}H\,.
\]
The condition $\dim_Q\Gr^WH=\dim_Q H$ is equivalent to the conditions that all jumps lie in $\BQ$, that $W_\mu H \;=\;\bigcap_{\tilde\mu>\mu}W_{\tilde\mu}H$ for all $\mu\in\BQ$, that $W_\mu H=(0)$ for $\mu\ll0$, and that $W_\mu H=H$ for $\mu\gg0$.
\end{remark}

\begin{definition}[{Pink~\cite[Definition~9.1]{PinkHodge}}]\label{Def1.1}
A \emph{(mixed) $Q$-pre Hodge-Pink structure} (\emph{at $\infty$}) is a triple $\ulH=(H,W_\bullet H,\Fq)$ with
\begin{itemize}
\item $H$ a finite dimensional $Q$-vector space,
\item $W_\bullet H$ an exhaustive and separated increasing $\BQ$-filtration,
\item a $\BC\dbl z-\zeta\dbr$-lattice $\Fq\subset H\otimes_Q \BC\dpl z-\zeta\dpr$ of full rank.
\end{itemize}
The filtration $W_\bullet H$ is called the \emph{weight filtration}, $\Fq$ is called the \emph{Hodge-Pink lattice}, and $\rk\ulH:=\dim_Q H$ is called the \emph{rank of $\ulH$}.
The jumps of the weight filtration are called the \emph{weights} of $\ulH$. If $\Gr^W_\mu H=H$, then $\ulH$ is called \emph{pure of weight $\mu$}.

A \emph{morphism} $f\colon (H,W_\bullet H,\Fq)\to(H',W_\bullet H',\Fq')$ of $Q$-pre Hodge-Pink structures consists of a morphism $f\colon H\to H'$ of $Q$-vector spaces satisfying $f(W_\mu H)\subset W_\mu H'$ for all $\mu$ and $(f\otimes\id)(\Fq)\subset\Fq'$. The morphism $f$ is called \emph{strict} if $f(W_\mu H)=f(H)\cap W_\mu H'$ for all $\mu$ and $(f\otimes\id)(\Fq)=\Fq'\cap\bigl(f(H)\otimes_Q\BC\dpl z-\zeta\dpr\bigr)$.
\end{definition}

\begin{remark}\label{Rem1.4}
The Hodge-Pink lattice of a mixed $Q$-pre Hodge-Pink structure $\ulH=(H,W_\bullet H,\Fq)$ induces an exhaustive and separated decreasing $\BZ$-filtration as follows. Define the tautological lattice $\Fp:=H\otimes_Q\BC\dbl z-\zeta\dbr$ inside $H\otimes_Q\BC\dpl z-\zeta\dpr$ and consider the natural projection
\[
\Fp\;\onto\;\Fp/(z -\zeta)\Fp\;=\;H\otimes_{Q,\charmorph}\BC\;=:\;H_\BC\,.
\]
The \emph{Hodge-Pink filtration} $F^\bullet H_\BC=(F^i H_\BC)_{i\in\BZ}$ of $H_\BC$ is defined by letting $F^i H_\BC$ be the image of $\Fp\cap(z-\zeta)^i\Fq$ in $H_\BC$ for all $i\in\BZ$; that is, $F^i H_\BC=\bigl(\Fp\cap(z-\zeta)^i\Fq\bigr)\big/\bigl((z-\zeta)\Fp\cap(z-\zeta)^i\Fq\bigr)$.
One finds that any morphism is also compatible with the Hodge-Pink filtrations, but a strict morphism is not necessarily strictly compatible with the Hodge-Pink filtrations.

The \emph{Hodge-Pink weights} $(\omega_1,\ldots,\omega_{\rk\ulH})$ of $\ulH$ are the jumps of the Hodge-Pink filtration. They are integers. Equivalently they are the elementary divisors of $\Fq$ relative to $\Fp$; that is, they satisfy $\Fq/(z-\zeta)^e\Fp\cong\bigoplus_{i=1}^{\rk\ulH}\BC\dbl z-\zeta\dbr/(z-\zeta)^{e+\omega_i}$ and $\Fp/(z-\zeta)^e\Fq\cong\bigoplus_{i=1}^{\rk\ulH}\BC\dbl z-\zeta\dbr/(z-\zeta)^{e-\omega_i}$ for all $e\gg0$. We usually assume that they are ordered $\omega_1\le\ldots\le\omega_{\rk\ulH}$.
\end{remark}

A main source for Hodge-Pink structures are Drinfeld $A$-modules or more generally uniformizable mixed abelian Anderson $A$-modules (see Section~\ref{SectAndersonAModules}). 

\begin{bigexample}\label{Example1.2}
(a) \es Let $\phi\colon A\to\End_{\BC}(\BG_{a,\BC})$ be a Drinfeld $A$-module~\cite{Drinfeld} of rank $r$ over $\BC$ where $\BG_{a,\BC}$ is the additive group scheme. We set $E=\BG_{a,\BC}$ and $\ulE=(E,\phi)$, and we write $\Lie E$ for the tangent space to $E$ at $0$.
Consider the exponential exact sequence of $A$-modules
\begin{equation}\label{Eq1.A}
0\;\longto\;\Lambda(\ulE)\;\longto\;\Lie E\;\xrightarrow{\;\exp_\ulE}\;E(\BC)\;\longto\;0\,,
\end{equation}
where $\Lambda:=\Lambda(\ulE):=\Lambda(\phi):=\ker(\exp_\ulE)$; see Section~\ref{SectDefAModules} or the survey of Brownawell and Papanikolas \cite[\S\,2.4]{BrownawellPapanikolas16} in this volume. $\Lambda(\ulE)\subset\Lie E=\BC$ is a discrete $A$-submodule of rank $r$. Clearly, $\Lambda(\ulE)$ generates the one dimensional $\BC$-vector space $\Lie E$. Through the identification $\BC\dbl z-\zeta\dbr/(z-\zeta)=\BC$ we make $\Lie E$ into a $\BC\dbl z-\zeta\dbr$-module. We obtain a $\BC\dbl z-\zeta\dbr$-epimorphism on the right in the sequence
\begin{equation}\label{Eq1.0}
\xymatrix @R=0pc {
0 \ar[r] & \Fq \ar[r] & \Lambda\otimes_A \BC\dbl z-\zeta\dbr \ar[r] & \Lie E \ar[r] & 0 \\
& & \lambda\otimes\sum_i b_i(z-\zeta)^i \ar@{|->}[r] & b_0\cdot\lambda
}
\end{equation}
and we let $\Fq$ be its kernel. By sequence \eqref{Eq1.0} the pair $(\Lambda,\Fq)$ determines the $\BC$-vector space $\Lie E$ with the $A$-action on it, and the $A$-lattice $\Lambda$ inside $\Lie E$ as the image of the $A$-homomorphism $\Lambda\into\Lambda\otimes_A \BC\dbl z-\zeta\dbr\onto\Lie E$. Therefore, the pair $(\Lambda,\Fq)$ also determines the Drinfeld $A$-module $\phi$ by sequence \eqref{Eq1.A}. We further set 
\[H\;:=\;\Hodge_1(\ulE)\;:=\;\Lambda(\ulE)\otimes_A Q\qquad \text{and}\qquad 
W_\mu H = \left\{\begin{array}{ll}(0)&\text{if }\mu<-\frac{1}{r}\,,\\[2mm]H&\text{if }\mu\ge-\frac{1}{r}\,.\end{array}\right.
\]
Then $\ulHodge_1(\ulE):=(H,W_\bullet H,\Fq)$ is a pure $Q$-pre Hodge-Pink structure of weight $-\frac{1}{r}$. It satisfies $(z-\zeta)\Fp\subset\Fq\subset\Fp$ and hence $F^{-1}H_\BC=H_\BC\supset F^0H_\BC\supset F^1H_\BC=(0)$. Since $\dim_Q H=r$ and $\dim_\BC(\Fp/\Fq)=\dim_\BC\Lie E=1$ we have $\dim_\BC F^0H_\BC=r-1$. As we will explain in Section~\ref{SectCohAMod} below, $F^0H_\BC\subset H_\BC=\Koh_{1,\Betti}(\ulE,\BC)$ is the Hodge filtration studied by Gekeler \cite[(2.13)]{Gekeler89} using the de Rham isomorphism $\Koh^1_\Betti(\ulE,\BC)\cong\Koh^1_\dR(\ulE,\BC)$. See Example~\ref{Ex1.2Cont} for a continuation of this example. Also in Section~\ref{SectAModHPStr} we will generalize the present construction to Anderson's abelian $t$-modules \cite{Anderson86}. Note that this parallels the case of complex abelian varieties $X$, whose Hodge structure $\Koh_{1,\Betti}(X,\BQ)$ is pure of weight $-\tfrac{1}{2}$.

\medskip
\noindent
(b) \es More specifically, if $C=\BP^1_{\BF_q},A=\BF_q[t],\theta:=\charmorph(t)\in\BC$ and $\ulE$ is the \emph{Carlitz-module} \cite[\S\,2.2]{BrownawellPapanikolas16} with $\phi_t=\theta+\Frob_{q,\BG_a}$, where $\Frob_{q,\BG_a}\colon x\mapsto x^q$ is the relative $q$-Frobenius of $\BG_{a,\BC}=\Spec\BC[x]$ over $\BC$, then $r=1$ and 
\[
H:=\Hodge_1(\ulE)=Q\,,\es\Gr_{-1}^W H=H\,,\es \Fq=(z-\zeta)\cdot\Fp\,,\es F^0H_\BC=(0)\,.
\]

\medskip
\noindent
(c) \es In (a) and (b) the subspace $F^0 H_\BC$ determines $\Fq$ uniquely as its preimage under the surjection $H\otimes_Q\BC\dbl z-\zeta\dbr\onto H_\BC$ because $(z-\zeta)\cdot\Fp\subset\Fq$.

However, note that in general $\Fq$ is \emph{not} determined by $F^\bullet H_\BC$. For example let $H=Q^{\oplus2}$ and $\Fq=(z-\zeta)^2\Fp+\BC\dbl z-\zeta\dbr\cdot\bigl(v_0+(z-\zeta)v_1\bigr)$ for $v_i\in H_\BC$ with $v_0\ne0$. Then 
\[
F^{-2}H_\BC=H_\BC\supset F^{-1}H_\BC=\BC\cdot v_0=F^0H_\BC\supset F^1H_\BC=(0)\,. 
\]
So the information about $v_1$ is not preserved by the Hodge-Pink filtration.
\end{bigexample}

\medskip

To continue with the general theory let $\ulH=(H,W_\bullet H,\Fq)$ be a $Q$-pre Hodge-Pink structure. A \emph{subobject} in the category of $Q$-pre Hodge-Pink structures is a morphism $\ulH'\to\ulH$ whose underlying homomorphism of $Q$-vector spaces is the inclusion $H'\into H$ of a subspace. It is called a \emph{strict subobject} if $\ulH'\to\ulH$ is strict. Likewise a \emph{quotient object} is a morphism $\ulH\to\ulH''$ whose underlying homomorphism of $Q$-vector spaces is the projection $H\onto H''$ onto a quotient space. It is called a \emph{strict quotient object} if $\ulH\to\ulH''$ is strict. 

For any $Q$-subspace $H'\subset H$ one can endow $H'$ with a unique structure of strict subobject $\ulH'$ and $H'':=H/H'$ with a unique structure of strict quotient object $\ulH''$. The sequence $0\to\ulH'\to\ulH\to\ulH''\to0$ and any sequence isomorphic to it is called a \emph{strict exact sequence}.

With these definitions the category of $Q$-pre Hodge-Pink structures is a $Q$-linear additive category. Pink makes a suitable subcategory of it into a Tannakian category. In order to do this, he defines tensor products, internal hom and duals.

\begin{definition}\label{Def1.3}
Let $\ulH_1=(H_1,W_\bullet H_1,\Fq_1)$ and $\ulH_2=(H_2,W_\bullet H_2,\Fq_2)$ be two $Q$-pre Hodge-Pink structures.
\begin{enumerate}
\item \label{Def1.3_A}
The \emph{tensor product} $\ulH_1\otimes\ulH_2$ is the $Q$-pre Hodge-Pink structure consisting of the tensor product $H_1\otimes_Q H_2$ of $Q$-vector spaces, the induced weight filtration $W_\mu(H_1\otimes_Q H_2):=\sum_{\mu_1+\mu_2=\mu}W_{\mu_1}H_1\otimes_Q W_{\mu_2}H_2$ and the lattice $\Fq_1\otimes_{\BC\dbl z-\zeta\dbr}\Fq_2$. One defines for $n\ge1$ the symmetric power $\Sym^n\ulH$ and the alternating power $\wedge^n\ulH$ as the induced strict quotient objects of $\ulH^{\otimes n}$.
\item\label{Def1.3_B}
The \emph{internal hom} $\ulTH=\CHom(\ulH_1,\ulH_2)$ consists of the $Q$-vector space $\wt H:=\Hom_Q(H_1,H_2)$, the induced weight filtration $W_\mu\wt H:=\{\,h\in\wt H:h(W_{\mu_1} H_1)\subset W_{\mu+\mu_1}H_2\es\forall\,\mu_1\,\}$, and the lattice $\tilde\Fq:=\Hom_{\BC\dbl z-\zeta\dbr}(\Fq_1,\Fq_2)$. The latter is a $\BC\dbl z-\zeta\dbr$-lattice in $\wt H\otimes_Q\BC\dpl z-\zeta\dpr$ via the inclusion
\begin{eqnarray*}
\tilde\Fq\ & \longinto & \tilde\Fq\otimes_{\BC\dbl z-\zeta\dbr}\BC\dpl z-\zeta\dpr \\[2mm]
& \isoto & \Hom_{\BC\dpl z-\zeta\dpr}\bigl(\Fq_1\otimes_{\BC\dbl z-\zeta\dbr}\BC\dpl z-\zeta\dpr,\Fq_2\otimes_{\BC\dbl z-\zeta\dbr}\BC\dpl z-\zeta\dpr\bigr)\\[2mm]
& \isoto & \Hom_{\BC\dpl z-\zeta\dpr}\bigl(H_1\otimes_Q\BC\dpl z-\zeta\dpr,H_2\otimes_Q\BC\dpl z-\zeta\dpr\bigr)\\[2mm]
& \isoto & \wt H\otimes_Q\BC\dpl z-\zeta\dpr
\end{eqnarray*}
obtained by applying \cite[Proposition~2.10]{Eisenbud}.
\item \label{Def1.3_C}
The \emph{unit object} $\UOne$ consists of the vector space $Q$ itself together with the lattice $\Fq:=\Fp$ and is pure of weight $0$. The \emph{dual} $\ulH\dual$ of a $Q$-pre Hodge-Pink structure $\ulH$ is then $\CHom(\ulH,\UOne)$.
\end{enumerate}
\end{definition}

The category of $Q$-pre Hodge-Pink structures is an additive tensor category but it is not abelian because not all subobjects and quotient objects are strict. Indeed, the category theoretical image (respectively coimage) of a subobject $\ulH'\into\ulH$ (respectively quotient object $\ulH\onto\ulH'$) is the strict subobject (respectively strict quotient object) with same underlying $Q$-vector space as $\ulH'$ (respectively $\ulH''$). In order to remedy this, Pink defines semistability as follows.

\begin{definition}\label{Def1.5}
Let $\ulH=(H,W_\bullet H,\Fq)$ be a $Q$-pre Hodge-Pink structure.
\begin{enumerate}
\item 
for any $Q_\infty$-subspace $H'_\infty\subset H_\infty:=H\otimes_Q Q_\infty$ consider the \emph{induced strict $Q_\infty$-subobject}
\[
\ulH'_\infty\;:=\;\Bigl(H'_\infty\,,\, W_\mu H'_\infty:=H'_\infty\cap(W_\mu H\otimes_Q Q_\infty)\,,\, \Fq':=\Fq\cap \bigl(H'_\infty\otimes_{Q_\infty}\BC\dpl z-\zeta\dpr\bigr)\Bigr)
\]
and (using the induced Hodge-Pink filtration $F^\bullet H_\BC$ from Remark~\ref{Rem1.4}) set
\begin{eqnarray*}
\deg_\Fq\ulH'_\infty& := &\deg_F H'_\BC\es :=\es \sum_{i\in\BZ} i\cdot\dim_\BC\Gr^i_F H'_\BC\es =\es \dim_\BC\frac{\Fq'}{\Fp'\cap\Fq'}\, -\, \dim_\BC\frac{\Fp'}{\Fp'\cap\Fq'} \\[2mm]
\deg^W\ulH'_\infty&:=&\sum_{\mu\in\BQ} \mu\cdot\dim_{Q_\infty}\Gr_\mu^W H'_\infty
\end{eqnarray*}
\item
$\ulH$ is called \emph{locally semistable} or a \emph{(mixed) $Q$-Hodge-Pink structure} (\emph{at $\infty$}) if for any $Q_\infty$-subspace $H'_\infty\subset H_\infty$ one has $\deg_\Fq\ulH'_\infty\le \deg^W\ulH'_\infty$ with equality for $H'_\infty=(W_\mu H)_\infty$ for all $\mu$.
\item 
We denote by $\QHodgeCat$ the full subcategory of all mixed $Q$-Hodge-Pink structures.
\end{enumerate}
\end{definition}

\begin{remark}\label{RemPolygons}
(a) Alternatively $\deg_\Fq\ulH'_\infty$ can be computed as $\dim_\BC\Fq'/\Fr-\dim_\BC\Fp'/\Fr$ for any $\BC\dbl z-\zeta\dbr$-lattice $\Fr$ which is contained in both $\Fq'$ and $\Fp'$. In particular, if $(z-\zeta)^d\Fp'\subset\Fq'$ for some $d\in\BZ$ with $d\ge0$, then $\deg_\Fq\ulH'_\infty=\dim_\BC\Fq'/(z-\zeta)^d\Fp'-d\dim_{Q_\infty}H'_\infty$, because $\dim_\BC\Fp'/(z-\zeta)^d\Fp'=d\cdot\dim_{Q_\infty}H'_\infty$.

\medskip\noindent
(b) The piecewise linear function on $[0,\rk\ulH]$ whose slope on $[i-1,i]$ is the $i$-th smallest Hodge-Pink weight is called the \emph{Hodge polygon of $\ulH$} and is denoted $HP(\ulH)$. Analogously one defines the \emph{weight polygon $WP(\ulH)$ of $\ulH$} using the weights of $\ulH$. A $Q$-pre Hodge-Pink structure is locally semistable if and only if for every strict $Q_\infty$-subobject $\ulH'_\infty$ the weight polygon lies above the Hodge polygon, and both have the same endpoint whenever $\ulH'_\infty=(W_{\mu\,}\ulH)_\infty$; see \cite[Proposition~6.7]{PinkHodge}.
\end{remark}

\begin{example}\label{Ex1.2Cont} We continue with Example~\ref{Example1.2}.

\medskip
\noindent
(a) \es If $\ulE$ is the Carlitz-module over $A=\BF_q[t]$ then $\ulHodge_1(\ulE)$ is a pure $Q$-Hodge-Pink structure of weight $-1$, because $\deg^W\ulHodge_1(\ulE)=-1=\deg_\Fq\ulHodge_1(\ulE)$ and there are no non-trivial $Q_\infty$-subspaces of $\Hodge_1(\ulE)\otimes_QQ_\infty=Q_\infty$.

\medskip
\noindent
(b) \es The same is true for a Drinfeld $A$-module $\phi$. Indeed, assume that $\ulHodge_1(\ulE)$ is not locally semistable. Then there is a non-trivial $Q_\infty$-subspace $H'_\infty\subset H_\infty$ with $\deg_\Fq\ulH'_\infty>\deg^W\ulH'_\infty$. Since $\ulHodge_1(\ulE)$ is pure of weight $-\frac{1}{r}$ we find $\deg^W\ulH'_\infty=-\frac{1}{r}\cdot\dim_{Q_\infty}H'_\infty>-1$ and $\deg_\Fq\ulH'_\infty\ge0$. Since $(z-\zeta)\Fp\subset\Fq\subset\Fp$ the same is true for $\Fq'=\Fq\cap \bigl(H'_\infty\otimes_{Q_\infty}\BC\dpl z-\zeta\dpr\bigr)$ and $\Fp'=H'_\infty\otimes_{Q_\infty}\BC\dbl z-\zeta\dbr$. So $\deg_\Fq\ulH'_\infty$ can only be non-negative if $\Fp'=\Fp'\cap\Fq'$; that is, $\Fp'=\Fq'$. This implies
\[
H_\infty'\;\subset\;H'_\BC\;=\;\Fp'/(z-\zeta)\Fp'\;=\;\Fq'/(z-\zeta)\Fp'\;\subset\;\Fq/(z-\zeta)\Fp\;=\;\ker(H_\BC\to\Lie E)\,.
\]
But $\Lambda(\phi)\subset\Lie E$ is discrete, which by definition means that the natural morphism $H_\infty=\Lambda(\phi)\otimes_A Q_\infty\to\Lie E$ is injective. Therefore, also $H'_\infty\to\Lie E$ must be injective and we obtain a contradiction.
\end{example}

One of the main results of Pink~\cite{PinkHodge} is the following

\begin{theorem}[{\cite[Theorem~9.3]{PinkHodge}}] \label{ThmPinkTannaka}
The category $\QHodgeCat$ together with the $Q$-rational fiber functor $\omega_0\colon \QHodgeCat\to \QVec$, $(H,W_\bullet H,\Fq)\mapsto H$, is a neutral Tannakian category over $Q$.
\end{theorem}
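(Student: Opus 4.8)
The plan is to verify that $(\QHodgeCat,\omega_0)$ satisfies the three requirements of Definition~\ref{defn:tannakiancategory}: that $\QHodgeCat$ is a rigid abelian $Q$-linear tensor category, and that $\omega_0$ is an exact faithful $Q$-linear tensor functor into $\QVec$. The one preliminary I would establish is that along a strict exact sequence $0\to\ulH'\to\ulH\to\ulH''\to0$ of $Q$-pre Hodge-Pink structures (and likewise after base change to $Q_\infty$) both degree functions are additive, $\deg^W\ulH=\deg^W\ulH'+\deg^W\ulH''$ and $\deg_\Fq\ulH=\deg_\Fq\ulH'+\deg_\Fq\ulH''$; the first is immediate from exactness of the weight-graded sequence, the second from additivity of colengths in the short exact sequences of lattices $0\to\Fq'\to\Fq\to\Fq''\to0$ and $0\to\Fp'\to\Fp\to\Fp''\to0$ associated with a strict subobject.

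\textbf{Abelianness.} Kernels and cokernels in $\QHodgeCat$ will be the obvious ones: the kernel of $f\colon\ulH_1\to\ulH_2$ is $\ker f$ with its strict subobject structure inside $\ulH_1$, and the cokernel is $H_2/f(H_1)$ with its strict quotient structure. The only point using semistability is that the canonical morphism $\coim f\to\im f$ is an isomorphism. Put $V:=f(H_1)\subset H_2$; the categorical image $\ul V^{\im}$ is the strict subobject of $\ulH_2$ on $V$, the categorical coimage $\ul V^{\coim}$ is the strict quotient of $\ulH_1$ on $V$, the canonical morphism is the identity on $V$, hence $W_\mu\ul V^{\coim}\subseteq W_\mu\ul V^{\im}$ for all $\mu$ and $\Fq^{\coim}\subseteq\Fq^{\im}$ with common tautological lattice $\Fp$. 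Applying semistability of $\ulH_2$ to the subspace $V$ gives $\deg_\Fq\ul V^{\im}\le\deg^W\ul V^{\im}$; applying semistability of $\ulH_1$ together with $\deg_\Fq\ulH_1=\deg^W\ulH_1$ (the equality clause of Definition~\ref{Def1.5} with $\mu\gg0$) and additivity along $0\to\ker f\to\ulH_1\to\ul V^{\coim}\to0$ gives $\deg_\Fq\ul V^{\coim}\ge\deg^W\ul V^{\coim}$; and the two inclusions give $\deg_\Fq\ul V^{\coim}\le\deg_\Fq\ul V^{\im}$ and $\deg^W\ul V^{\im}\le\deg^W\ul V^{\coim}$. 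Chaining these four inequalities forces all four quantities to coincide, so $\Fq^{\coim}\subseteq\Fq^{\im}$ have equal colength relative to $\Fp$ and $W_\bullet\ul V^{\coim}\subseteq W_\bullet\ul V^{\im}$ are nested filtrations of $V$ of equal weight degree; both inclusions are therefore equalities, i.e. $\coim f\cong\im f$. A parallel computation using additivity and the duality between strict sub- and quotient objects shows that strict subobjects and strict quotients of semistable objects are again semistable, so $\QHodgeCat$ is closed under strict subquotients; together this makes $\QHodgeCat$ an abelian full subcategory of the $Q$-pre Hodge-Pink structures.

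\textbf{Tensor operations and rigidity.} Next I would show that $\otimes$, $\CHom$ and the dual of Definition~\ref{Def1.3} preserve semistability. At the level of degrees this is bookkeeping: $\deg^W$ and $\deg_\Fq$ are each additive under $\otimes$ — for $\Fq_1\otimes_{\BC\dbl z-\zeta\dbr}\Fq_2$ the elementary divisors relative to $\Fp_1\otimes\Fp_2$ are the pairwise sums of those of $\Fq_1$ and $\Fq_2$ — and passing to duals interchanges the sub- and quotient forms of the semistability inequality. \emph{The substantial step, which I expect to be the main obstacle, is that a tensor product of two semistable objects again satisfies $\deg_\Fq\ul U'_\infty\le\deg^W\ul U'_\infty$ for every $Q_\infty$-subspace $U'_\infty$ of $(H_1\otimes_Q H_2)\otimes_Q Q_\infty$.} Here I would first use the equality clause of Definition~\ref{Def1.5} to see that the weight filtration of a semistable object is a filtration by strict subobjects with pure semistable graded pieces; since the right-hand side $\deg^W$ is computed on $\Gr^W$, this reduces the inequality to the case of pure objects. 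Then I would prove that a tensor product of pure semistable objects is pure semistable of the sum of the two slopes — the function-field analogue of the theorems of Faltings and Totaro that tensor products of weakly admissible (respectively semistable filtered) objects are again such. This is precisely where the Hodge-Pink \emph{lattice} is indispensable: since $\BC/Q$ is inseparable, the corresponding assertion phrased with the Hodge-Pink \emph{filtration} alone is false, which is Pink's reason for passing to lattices. I would run it as a Hilbert--Mumford / geometric-invariant-theory argument: a weight filtration is a one-parameter subgroup and a Hodge-Pink lattice is a point of the affine Grassmannian of $\BC\dbl z-\zeta\dbr$-lattices in $H\otimes_Q\BC\dpl z-\zeta\dpr$, semistability of $\ulH$ translates into numerical semistability of the resulting configuration under $\SL$, and a tensor product of numerically semistable configurations is numerically semistable. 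Granting that $\QHodgeCat$ is closed under $\otimes$, $\CHom$ and duals, rigidity is then formal: $\CHom(X,Y)=Y\otimes X\dual$, the evaluation and biduality morphisms are isomorphisms on underlying $Q$-vector spaces and respect $W_\bullet$ and $\Fq$ by the formulas of Definition~\ref{Def1.3}, and the unit object $\UOne$ with the associativity and commutativity constraints is inherited from the already additive tensor category of pre Hodge-Pink structures.

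\textbf{The fiber functor.} Finally, $\omega_0\colon(H,W_\bullet H,\Fq)\mapsto H$ is visibly $Q$-linear and a tensor functor, since $\omega_0(\ulH_1\otimes\ulH_2)=H_1\otimes_Q H_2$ and $\omega_0(\UOne)=Q$ compatibly with the constraints; it is faithful because a morphism of pre Hodge-Pink structures is determined by its underlying $Q$-linear map; and it is exact because, by the description of kernels and cokernels above, a sequence in $\QHodgeCat$ is exact precisely when it is strict exact, hence precisely when the underlying sequence of $Q$-vector spaces is exact. This establishes all the conditions of Definition~\ref{defn:tannakiancategory}, so $\QHodgeCat$ together with $\omega_0$ is a neutral Tannakian category over $Q$; in particular $\End_{\QHodgeCat}(\UOne)=Q$.
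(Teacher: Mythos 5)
Your skeleton matches the structure that the paper attributes to Pink's proof (the paper itself does not reprove Theorem~\ref{ThmPinkTannaka} but cites \cite[Theorem~9.3]{PinkHodge} and only outlines the two key points in Remark~\ref{RemQ-HPTannakian}): abelianness reduces to the fact that every subobject and quotient object in $\QHodgeCat$ is strict, and the substantial content is closure of local semistability under tensor products. Your argument for the first point is correct and complete — the chain of inequalities comparing $\deg_\Fq$ and $\deg^W$ of $\coim f$ and $\im f$, using additivity along strict exact sequences and the equality clause of Definition~\ref{Def1.5}, is exactly the standard slope argument, and your treatment of the fiber functor is fine.

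The genuine gap is the tensor product step, which you yourself flag as "the main obstacle" and then do not prove. Asserting that one can "run it as a Hilbert--Mumford / GIT argument" in analogy with Faltings and Totaro is not a proof here, because the analogy is precisely what is in question: the datum being tested for semistability is a \emph{pair} consisting of a filtration of $H_\infty$ over $Q_\infty$ and a lattice in $H\otimes_Q\BC\dpl z-\zeta\dpr$ over the ring $\BC\dbl z-\zeta\dbr$, where $\BC/Q$ is inseparable. As the paper stresses, the filtration-only version of the statement is \emph{false} in this setting (\cite[Example 5.16]{PinkHodge}), so any GIT-style argument must explain exactly how replacing the filtration by the lattice circumvents the inseparability obstruction; this is the content of Pink's proof and cannot be delegated to a general principle about "numerically semistable configurations." Two further points you elide: (i) the reduction of the mixed case to the pure case is not a one-line consequence of "$\deg^W$ is computed on $\Gr^W$" — one must show that semistability of a mixed object is equivalent to semistability of its graded pieces together with the equality condition on the $W_\mu$, which requires an argument comparing $\deg_\Fq$ of a subobject of $\ulH_1\otimes\ulH_2$ with the degrees of its intersections with the weight filtration; and (ii) the $\sigma$-bundle machinery of Section~\ref{Sect4} of this paper does not substitute for the missing step, since for a general semistable $\ulH$ the associated $\sigma$-bundle need not be isoclinic of slope $\weight\ulH$ (cf.\ Example~\ref{ExMixedWeights} and Theorem~\ref{ThmImageOfH}), so one cannot simply import the Dieudonn\'e--Manin classification to conclude.
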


See Section~\ref{SectTannaka} for some explanations.

\begin{remark}\label{RemQ-HPTannakian}
(a) The assertion that $\QHodgeCat$ is abelian rests on the relatively easy fact that in $\QHodgeCat$ any subobject and quotient object is strict.

\medskip
\noindent
(b) The difficult part of the proof is to show that the condition of local semistability is closed under tensor products. For this it is essential to work with Hodge-Pink lattices instead of Hodge-Pink filtrations. Indeed, if one works with triples $(H,W_\bullet H,F^\bullet H_\BC)$ consisting of $Q$-vector spaces $H$ with weight filtration $W_\bullet H$ and decreasing Hodge-Pink filtrations $F^\bullet H_\BC$ and defines local semistability analogous to Definition~\ref{Def1.5}, then this local semistability would not be closed under tensor products due to the inseparability of the field extension $\BC/Q$; see \cite[Example~5.16]{PinkHodge}. This is Pink's ingenious insight.
\end{remark}

This theorem allows to associate with each $Q$-Hodge-Pink structure $\ulH=(H,W_\bullet H,\Fq)$ an algebraic group $\Gamma_\ulH$ over $Q$ as follows. Consider the Tannakian subcategory $\llangle\ulH\rrangle$ of $\QHodgeCat$ generated by $\ulH$. By \cite[Theorem~2.11 and Proposition~2.20]{DM82} the category $\llangle\ulH\rrangle$ is tensor equivalent to the category of $Q$-rational representations of a linear algebraic group scheme $\Gamma_\ulH$ over $Q$ which is a closed subgroup of $\GL_Q(H)$.

\begin{definition}\label{DefHPGp}
The linear algebraic $Q$-group scheme $\Gamma_\ulH$ associated with $\ulH$ is called the \emph{Hodge-Pink group of $\ulH$}.
\end{definition}

Pink proves that $\Gamma_\ulH$ is connected and reduced and that any connected semisimple group over $Q$ can occur as $\Gamma_\ulH$ for a $Q$-Hodge-Pink structure \cite[Propositions~9.4 and 9.12]{PinkHodge}. Note however, that in general $\Gamma_\ulH$ does not even need to be reductive.

If the Hodge-Pink structure $\ulH$ comes from a pure (or mixed) uniformizable abelian $t$-module $\ulE$, Pink (respectively Pink and the first author) also proved in unpublished work, that $\Gamma_\ulH$ equals the motivic Galois group of $\ulE$ as considered by Papanikolas~\cite{Papanikolas} and Taelman~\cite{Taelman}; see Remark~\ref{RemPapanikolasGaloisGp}. If $\ulH$ comes from a pure dual $A$-motive, Pink's proof was worked out by the second author in her Diploma thesis~\cite{JuschkaDipl}. We will explain these proofs in Theorems~\ref{ThmHodgeConjecture} and \ref{ThmDualHodgeConjecture} below. In the special case when $\ulE$ is a Drinfeld module, there are further results of Pink on the structure of $\Gamma_\ulH$; see Section~\ref{Sect3}.

%%%%%%%%%%%%%%%%%%%%%%%%%%%%%%%%%%%%%%%%%%%%%%%%%%%%%%%%%%%%% 
%% 
%%     Mixed $A$-motives
%% 
%%%%%%%%%%%%%%%%%%%%%%%%%%%%%%%%%%%%%%%%%%%%%%%%%%%%%%%%%%%%% 
 
\section{Mixed \texorpdfstring{$A$}{A}-motives}\label{SectMixedAMotives}
\setcounter{equation}{0}

The functor $\ulE\mapsto\ulHodge_1(\ulE)$ from Drinfeld $A$-modules to $Q$-Hodge-Pink structures from Examples~\ref{Example1.2} and \ref{Ex1.2Cont} extends to the uniformizable abelian $t$-modules of Anderson~\cite{Anderson86}, the higher dimensional generalizations of Drinfeld-modules. We will define the functor in Section~\ref{SectAModHPStr} below. In order to prove that $\ulHodge_1(\ulE)$ is a pure $Q$-Hodge-Pink structure when $\ulE$ is a pure uniformizable abelian $t$-module, we need to review Anderson's theory of $t$-motives \cite{Anderson86} or more generally $A$-motives. We do this first because it also allows to define mixed abelian $t$-modules and their associated mixed $Q$-Hodge-Pink structures.

\subsection{\texorpdfstring{$A$}{A}-motives}\label{SectAMotives}

Recall that we denote the natural inclusion $Q\into\BC$ by $\charmorph$ and consider the maximal ideal $J:=(a\otimes 1-1\otimes \charmorph(a):a\in A)\subset A_{\BC}:=A\otimes_{\BF_q}\BC$. The open subscheme $\Spec A_{\BC}\setminus{\rm V}(J)$ of $C_\BC$ is affine. We denote its ring of global sections by $\GlobMotRing$. For example if $C=\BP^1_{\BF_q}$ and $A=\BF_q[t]$ then $J=(t-\theta)$ for $\theta:=\charmorph(t)$. In this case $\GlobMotRing=\BC[t][\tfrac{1}{t-\theta}]$.

\begin{definition}\label{DefAMotive}
\begin{enumerate}
\item 
An \emph{$A$-motive} over $\BC$ of characteristic $\charmorph$ is a pair $\ulM=(M,\stau_M)$ consisting of a finite projective $A_{\BC}$-module $M$ and an isomorphism of $\GlobMotRing$-modules
\[
\stau_M\colon \ssigma^\ast M[J^{-1}]\isoto M[J^{-1}]\,.
\]
where we set $\ssigma^\ast M[J^{-1}]:=(\ssigma^\ast M)\otimes_{A_{\BC}}\GlobMotRing$ and $M[J^{-1}]:=M\otimes_{A_{\BC}}\GlobMotRing$. A \emph{morphism} of $A$-motives $f\colon \ulM\to\ulN$ is a homomorphism of the underlying $A_{\BC}$-modules $f\colon M\to N$ that satisfies $f\circ\stau_M=\stau_N\circ\ssigma^\ast f$. 
The category of $A$-motives over $\BC$ is denoted $\AMotCat$.
\item\label{DefAMotive_b}
The rank of the $A_\BC$-module $M$ is called the \emph{rank} of $\ulM$ and is denoted by $\rk\ulM$. The \emph{virtual dimension} $\dim\ulM$ of $\ulM$ is defined as
\[
\dim\ulM\;:=\;\dim_\BC \,M\big/(M\cap\tau_M(\sigma^\ast M))\;-\;\dim_\BC \,\tau_M(\sigma^\ast M)\big/(M\cap\tau_M(\sigma^\ast M))\,.
\]
\item 
An $A$-motive $(M,\stau_M)$ is called \emph{effective} if $\stau_M$ comes from an $A_{\BC}$-homomorphism $\ssigma^\ast M\to M$. An effective $A$-motive has virtual dimension $\ge0$.
\item For two $A$-motives $\ulM$ and $\ulN$ over $\BC$ we call $\QHom(\ulM,\ulN):=\Hom_{\AMotCat}(\ulM,\ulN)\otimes_A Q$ the set of \emph{quasi-morphisms} from $\ulM$ to $\ulN$.
\item 
The category with all $A$-motives as objects and the $\QHom(\ulM,\ulN)$ as $\Hom$-sets is called the \emph{category of $A$-motives over $\BC$ up to isogeny}. It is denoted $\AMotCatIsog$.
\end{enumerate}
\end{definition}

\begin{remark}\label{Rem3.2}
(a) If $C=\BP^1_{\BF_q}$, $A=\BF_q[t]$ and $A_{\BC}=\BC[t]$, we set $\theta:=\charmorph(t)$ and then $J=(t-\theta)$. In this case, our effective $A$-motives are a slight generalization of Anderson's \emph{$t$-motives} \cite{Anderson86}, which are called \emph{abelian $t$-motives} in \cite[\S4.1]{BrownawellPapanikolas16}. Namely, Anderson required in addition, that $M$ is finitely generated over the skew-polynomial ring $\BC\{\tau\}$, where $\tau$ acts on $M$ through $m\mapsto \tau_M(\sigma^\ast m)$. 

\medskip\noindent
(b) We will explain in Remark~\ref{Rem3.3}(c) below, that the set of morphisms $\Hom_{\AMotCat}(\ulM,\ulN)$ between $A$-motives $\ulM$ and $\ulN$ is a finite projective $A$-module of rank at most $(\rk\ulM)\cdot(\rk\ulN)$.

\medskip\noindent
(c) By definition, for every quasi-morphism $f\in\QHom(\ulM,\ulN)$ there is an element $a\in A\setminus\{0\}$ such that $a\cdot f \in\Hom_{\AMotCat}(\ulM,\ulN)$ is a morphism of $A$-Motives. Moreover,
\[
\QHom(\ulM,\ulN)\;=\;\bigl\{\,f\colon M\otimes_{A_\BC}\Quot(A_\BC) \to N\otimes_{A_\BC}\Quot(A_\BC)\text{ such that }f\circ\stau_M=\stau_N\circ\ssigma^\ast f\,\bigr\}\,,
\]
where $\Quot(A_\BC)$ denotes the fraction field of $A_\BC$ and $f$ is a homomorphism of $\Quot(A_\BC)$-vector spaces. Indeed, the inclusion $\subset$ is obvious and the equality was proved in \cite[Corollary~5.4]{BH1} and also follows from \cite[Proposition~3.4.5]{Papanikolas} and \cite[Proposition~3.1.2]{Taelman}. Note that this is \emph{not} equivalent to the inclusion $f(J^n\cdot M)\subset N$ for $n\gg0$, as can be seen from $f=\id_\ulM\otimes\frac{1}{a}\in\QEnd(\ulM)$ for $a\in A\setminus\BF_q$.

\medskip\noindent
(d) The name for the category $\AMotCatIsog$ stems from the fact that a morphism $f\colon \ulM\to\ulN$ in $\AMotCat$ is an \emph{isogeny}, that is injective with torsion cokernel, if and only if it becomes an isomorphism in $\AMotCatIsog$; see for example \cite[Theorem~5.12]{HartlIsog} or \cite[Proposition~3.1.2]{Taelman}.
\end{remark}

The \emph{tensor product} of two $A$-motives $\ulM$ and $\ulN$ is the $A$-motive $\ulM\otimes\ulN$ consisting of the $A_{\BC}$-module $M\otimes_{A_{\BC}}N$ and the isomorphism $\stau_M\otimes\stau_N$. The $A$-motive $\UOne(0)$ with underlying $A_{\BC}$-module $A_{\BC}$ and $\stau=\id_{A_{\BC}}$ is a \emph{unit object} for the tensor product in $\AMotCat$ and $\AMotCatIsog$. Both categories possess finite direct sums in the obvious way. We also define the \emph{tensor powers} of an $A$-motive $\ulM$ as $\ulM^{\otimes0}=\UOne(0)$ and as $\ulM^{\otimes n}:=\ulM^{\otimes n-1}\otimes\ulM$ for $n>0$. 
%
% If $\ulM=(M,\tau_M)$ and $\ulN=(N,\tau_N)$ are $A$-motives the \emph{internal hom} $\CHom(\ulM,\ulN)$ is the $A$-motive with underlying $A_\BC$-module $H:=\Hom_{A_\BC}(M,N)$ and $\tau_H\colon\sigma^*H[J^{-1}]\isoto H[J^{-1}],\, h\mapsto \tau_N\circ h\circ \tau_M^{-1}$. The \emph{dual} of an $A$-motive $\ulM$ is the $A$-motive $\ulM\dual:=\CHom(\ulM,\UOne(0))$ consisting of the $A_{\BC}$-module $M\dual := \Hom_{A_{\BC}}(M,A_{\BC})$ and the isomorphism $(\stau_M\dual)^{-1}$. 
%
The \emph{dual} of an $A$-motive $\ulM$ is the $A$-motive $\ulM\dual=(M\dual,\tau_{M\dual})$ consisting of the $A_{\BC}$-module $M\dual := \Hom_{A_{\BC}}(M,A_{\BC})$ and the isomorphism 
\[
\stau_M\dual\colon \ssigma^*M\dual[J^{-1}]\;=\;\Hom_{A_\BC}(\ssigma^*M,A_\BC)[J^{-1}]\;\isoto\; M\dual[J^{-1}],\quad h\mapsto h\circ\tau_M^{-1}.
\] 
If $\ulM=(M,\tau_M)$ and $\ulN=(N,\tau_N)$ are $A$-motives the \emph{internal hom} $\CHom(\ulM,\ulN)$ is the $A$-motive with underlying $A_\BC$-module $H:=\Hom_{A_\BC}(M,N)$ and $\tau_H\colon\sigma^*H[J^{-1}]\isoto H[J^{-1}],\, h\mapsto \tau_N\circ h\circ \tau_M^{-1}$. In particular, $\ulM\dual=\CHom(\ulM,\UOne(0))$. Moreover, there is a canonical isomorphism of $A$-motives $\ulM\dual\otimes\ulN\cong\CHom(\ulM,\ulN)$ sending $\sum_i m_i\dual\otimes n_i\in M\dual\otimes_{A_\BC}N$ to $[m\mapsto\sum_i m_i\dual(m)\cdot n_i]\in \Hom_{A_\BC}(M,N)$. Indeed, this is an isomorphism on the underlying finite locally free $A_\BC$-modules, and it is obviously compatible with the isomorphisms $\tau$. This implies that there are morphisms in $\AMotCat$
\begin{eqnarray}\label{EqEV1}
& {\rm ev}\colon & \TS\ulM\otimes\ulM\dual\;\longto\;\UOne(0)\,,\quad\sum\limits_i m_i\otimes m_i\dual\;\longmapsto\;\sum\limits_i m_i\dual(m_i)\quad\text{and}\\[2mm]
& \delta\colon & \UOne(0)\;\longto\;\ulM\dual\otimes\ulM \;=\;\CHom(\ulM,\ulM)\,,\quad a\;\longmapsto\;a\cdot\id_\ulM\,,\label{EqEV2}
\end{eqnarray}
which satisfy the conditions of Definition~\ref{defn:tannakiancategory}\ref{defn:tannakiancategory_A}. We also note the following formulas for the rank and the virtual dimension 
\begin{align}\label{EqRkDim}
\rk\UOne(0) & = 1\,, & \dim \UOne(0) & = 0\,,\nonumber\\[2mm]
\rk\CHom(\ulM,\ulN) & = (\rk\ulM)\cdot(\rk\ulN)\,, & \dim\CHom(\ulM,\ulN) & = (\rk\ulM)\cdot(\dim\ulN)-(\rk\ulN)\cdot(\dim\ulM)\,,\nonumber\\[2mm]
\rk\ulM\dual & = \rk\ulM\,, & \dim\ulM\dual & = - \dim\ulM\,,\\[2mm]
\rk\ulM\otimes\ulN & =(\rk\ulM)\cdot(\rk\ulN)\,, & \dim\ulM\otimes\ulN & = (\rk\ulN)\cdot(\dim\ulM)+(\rk\ulM)\cdot(\dim\ulN)\,,\nonumber\\[2mm]
\rk\ulM\oplus\ulN & = (\rk\ulM)+(\rk\ulN)\,, & \dim\ulM\oplus\ulN & = (\dim\ulM)+(\dim\ulN)\,,\nonumber
\end{align}
which follow easily from the elementary divisor theorem.

\begin{proposition}\label{PropImageAMotive}
Let $f\colon\ulM\to\ulM'$ be a morphism of $A$-motives.
\begin{enumerate}
\item \label{PropImageAMotive_A}
Then
\[ 
\ul\ker\,f \; := \; \bigl(\ker f,\,\tau_M|_{(\sigma^*\ker f)[J^{-1}]}\bigr)\qquad\text{and}\qquad \ul\im\,f \; := \; \bigl(\im f,\,\tau_{M'}|_{(\sigma^*\im f)[J^{-1}]}\bigr)
\]
are $A$-motives, which are called the \emph{kernel}, respectively \emph{image} $A$-motive of $f$.
\item \label{PropImageAMotive_B}
Let $N=M'/f(M)$ and let $N_\tors\subset N$ be the $A_\BC$-torsion submodule. Then $\tau_{M'}$ induces an isomorphism $\tau_{N/N_\tors}\colon \ssigma^*(N/N_\tors)[J^{-1}]\isoto N/N_\tors[J^{-1}]$ and
\[
\ul\coker\,f \; := \; (N/N_\tors,\,\tau_{N/N_\tors})\qquad\text{and}\qquad \ul\coim\,f \; := \; \ul\ker(\ulM'\to\ul\coker\, f)
\]
are $A$-motives, which are called the \emph{cokernel}, respectively \emph{coimage} $A$-motive of $f$. The $A$-motive $\ul\coim\,f$ equals the saturation $\{m'\in M'\colon \exists\,h\in A_\BC,h\ne0\text{ with }h\cdot m'\in f(M)\}$ of $\ul\im\,f$ and the natural inclusion $\ul\im\,f\into\ul\coim\,f$ is an isogeny, and hence an isomorphism in $\AMotCatIsog$. In particular, $\rk(\ul\im\,f)=\rk(\ul\coim\,f)$.
\end{enumerate}
\end{proposition}

\begin{proof}
Since $A_\BC$ is a Dedekind domain, the kernel and image of $f$ and $N/N_\tors$ are again finite, locally free $A_\BC$-modules and therefore $A$-motives with the inherited isomorphism $\tau$. That $\ul\coim\,f$ is the saturation of $\ul\im\,f$ follows from the definition of $N_\tors$. Therefore, the inclusion $\ul\im\,f\into\ul\coim\,f$ is injective with torsion cokernel, hence an isogeny and an isomorphism in $\AMotCatIsog$ by \cite[Theorem~5.12]{HartlIsog} or \cite[Proposition~3.1.2]{Taelman}.
\end{proof}

\begin{proposition}\label{PropAMotTannakianCateg}
The category $\AMotCatIsog$ is a $Q$-linear (non-neutral) Tannakian category, and in particular, a rigid abelian tensor category.
\end{proposition}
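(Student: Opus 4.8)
The statement packages several standard verifications, which I would carry out in the following order. The $Q$-linear additive and the tensor structure are essentially built into the definitions recalled above: $\Hom_{\AMotCatIsog}(\ulM,\ulN)=\Hom_{\AMotCat}(\ulM,\ulN)\otimes_A Q$ is a $Q$-vector space on which composition is $Q$-bilinear, finite direct sums are inherited from $\AMotCat$, and the functor $(\ulM,\ulN)\mapsto\ulM\otimes\ulN$ with the associativity and commutativity constraints of $A_\BC$-modules and the unit $\UOne(0)$ makes $\AMotCatIsog$ a tensor category (the functor $X\mapsto\UOne(0)\otimes X$ being isomorphic to the identity). Throughout I would use that any quasi-morphism may be written $a^{-1}f$ with $f$ a morphism of $A$-motives and $0\ne a\in A$, which reduces many questions to $\AMotCat$.

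For rigidity I would take $\CHom(\ulM,\ulN)$ as already defined, with underlying $A_\BC$-module $\Hom_{A_\BC}(M,N)$, which is finite projective since $M$ and $N$ are; using that $\ssigma^\ast$ is flat (Remark~\ref{RemSigmaIsFlat}) and $M$ finitely presented, one has the canonical identification $\ssigma^\ast\Hom_{A_\BC}(M,N)\cong\Hom_{A_\BC}(\ssigma^\ast M,\ssigma^\ast N)$, under which $\stau_H$ is the isomorphism $h\mapsto\stau_N\circ h\circ\stau_M^{-1}$. Representability of $\ulT\mapsto\Hom(\ulT\otimes\ulM,\ulN)$ by $\CHom(\ulM,\ulN)$ then follows from the module adjunction $\Hom_{A_\BC}(T\otimes M,N)=\Hom_{A_\BC}(T,\Hom_{A_\BC}(M,N))$ by matching the two $\stau$-compatibilities, and it passes to $\QHom$ because $-\otimes_A Q$ is exact. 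Biduality $\ulM\isoto(\ulM\dual)\dual$ holds because $M\to(M\dual)\dual$ is an isomorphism for finite projective $M$ and the Frobenius structures correspond, and the identity $\bigotimes_i\CHom(\ulM_i,\ulN_i)\cong\CHom(\bigotimes_i\ulM_i,\bigotimes_i\ulN_i)$, hence $\CHom(\ulM,\ulN)\cong\ulN\otimes\ulM\dual$, is the analogous module identity together with the obvious matching of the $\stau$'s.

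The heart of the matter, and the step I expect to be the main obstacle, is abelianness, which is exactly what forces the passage ``up to isogeny''. I would first observe that $\AMotCat$ already has kernels and images: for $f\colon\ulM\to\ulN$ the submodule $K:=\ker(f\colon M\to N)$ and the quotient $M/K\cong f(M)\subseteq N$ are finitely generated and torsion-free, hence projective since $A_\BC$ is a Dedekind domain ($\Spec A_\BC$ being a smooth affine curve over $\BC$); flatness of $\ssigma^\ast$ gives $\ssigma^\ast K=\ker(\ssigma^\ast f)$, whence $\stau_M$ restricts to $\ssigma^\ast K[J^{-1}]\isoto K[J^{-1}]$ and $\stau_N$ restricts to $\ssigma^\ast f(M)[J^{-1}]\isoto f(M)[J^{-1}]$, so $(K,\stau_M|_K)$ and $(f(M),\stau_N|_{f(M)})$ are $A$-motives. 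But $\AMotCat$ has no cokernels --- already $f=a\cdot\id_{\UOne(0)}$ with $a\in A\setminus\BF_q$ has naive cokernel $A_\BC/aA_\BC$ with torsion --- while such an $f$ becomes invertible in $\AMotCatIsog$. In general I would set $\coker(f):=\bigl(N/\overline{f(M)}\,,\,\text{induced }\stau\bigr)$ with $\overline{f(M)}$ the saturation of $f(M)$ in $N$; since $\BC$ is perfect, $\ssigma^\ast$ is an automorphism of $A_\BC$ and so commutes with saturation, whence $\overline{f(M)}$ is a sub-$A$-motive of $\ulN$, $N/\overline{f(M)}$ is projective, $\coker(f)$ is an $A$-motive, and its universal property in $\AMotCatIsog$ holds because $\overline{f(M)}/f(M)$ is $A$-torsion. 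Finally the canonical morphism $\coim(f)=\ulM/K\to\overline{f(M)}=\im(f)$ has underlying map $M/K\xrightarrow{\,\sim\,}f(M)\hookrightarrow\overline{f(M)}$, injective with $A$-torsion cokernel, i.e.\ an isogeny, hence an isomorphism in $\AMotCatIsog$ by the characterization of isogenies recalled after Definition~\ref{DefAMotive}; together with the existence of kernels and cokernels this makes every monomorphism the kernel of its cokernel and every epimorphism the cokernel of its kernel, so $\AMotCatIsog$ is abelian. The genuinely delicate points are that $\stau$ restricts along kernels, images and saturations --- which hinges on $\ssigma^\ast$ being flat, respectively bijective --- and that the module-level (co)kernels, once equipped with these restricted $\stau$'s, satisfy the universal properties in $\AMotCatIsog$ rather than merely in the category of $A_\BC$-modules.
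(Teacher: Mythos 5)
Your proposal is correct and follows essentially the same route as the paper's proof: clear denominators to reduce to morphisms in $\AMotCat$, use that $A_\BC$ is a Dedekind domain so kernels and images are finite locally free and inherit $\tau$ by flatness of $\ssigma^\ast$, take the torsion-free quotient (equivalently, quotient by the saturation of the image) as cokernel, and conclude that $\coim f\to\im f$ is an isogeny, hence an isomorphism up to isogeny, via the cited characterization of isogenies. The extra detail you supply on the tensor and rigid structure is consistent with what the paper treats as immediate from the definitions.
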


\begin{proof}
Since the $\sigma$-invariants in $A_\BC$ equal $A$, we have $\End_{\AMotCat}\bigl(\UOne(0)\bigr)=A$ and $\End_{\AMotCatIsog}\bigl(\UOne(0)\bigr)=Q$. In particular, $\AMotCatIsog$ is a $Q$-linear tensor category. If $f\colon \ulM\to\ulN$ is a morphism in $\AMotCatIsog$ we may multiply $f$ with an element of $Q$ and assume that $f$ is a morphism in $\AMotCat$. Therefore, it follows from Proposition~\ref{PropImageAMotive} that $\AMotCatIsog$ is abelian.

To show that $\AMotCatIsog$ is Tannakian we use the morphisms \eqref{EqEV1} and \eqref{EqEV2}. In addition, we have to exhibit an exact faithful $Q$-linear fiber functor over some non-zero $Q$-algebra. For example, we can take the quotient field $\Quot(A_\BC)$ of $A_\BC$ and the functor $\ulM=(M,\tau_M)\longmapsto M\otimes_{A_\BC}\Quot(A_\BC)$. This functor is faithful, because $M\subset M\otimes_{A_\BC}\Quot(A_\BC)$. Moreover, it is exact, because a sequence $0\longto\ulM'\xrightarrow{\es f\;}\ulM\xrightarrow{\es g\;}\ulM''\longto0$ in $\AMotCatIsog$ is exact if and only if $f$ is injective, $\ul\im\, f\cong\ul\ker\, g$, and $\ul\im\, g\cong\ulM''$ in $\AMotCatIsog$. By the definition of morphisms in $\AMotCatIsog$ as quasi-morphisms, these isomorphisms are in general not isomorphisms of the underlying $A_\BC$-modules, but they provide isomorphisms of the associated $\Quot(A_\BC)$-vector spaces.
\end{proof}

\begin{remark}\label{RemQuillenAMot}
(a) Fiber functors over $\BC$, respectively $Q_v$, are also provided by the \emph{de Rham cohomology realization} $\Koh^1_\dR(\ulM,\BC)$, respectively the \emph{$v$-adic cohomology realization} $\Koh^1_v(\ulM,Q_v)$; see Section~\ref{CohAMot}. A neutral fiber functor only exists on the full subcategory of \emph{uniformizable} $A$-motives; see Theorem~\ref{TheoremAMotTannakian}

\medskip\noindent
(b) The category $\AMotCat$ is an exact category in the sense of Quillen~\cite[\S2]{Quillen} if one defines the class $E$ of \emph{short exact sequences} to be those sequences $0\longto\ulM'\xrightarrow{\es f\;}\ulM\xrightarrow{\es g\;}\ulM''\longto0$ of $A$-motives whose underlying sequence of $A_\BC$-modules is exact. Then $f$ (respectively $g)$ is called an \emph{admissible monomorphism} (respectively \emph{admissible epimorphism}).

Indeed, this means that $f$ is the kernel of $g$ and $g$ is the cokernel of $f$ in $\AMotCat$, that every canonical split sequence $0\to\ulM'\to\ulM'\oplus\ulM''\to\ulM''\to0$ lies in $E$, that $E$ is closed under isomorphisms, pullbacks via morphisms $\ulN''\to\ulM''$ and pushout via morphisms $\ulM'\to\ulN'$, and that the composition of admissible monomorphisms is an admissible monomorphism and the composition of admissible epimorphisms is an admissible epimorphism. All this is straight forward to prove.

Moreover, with the analogous definition of $E$, also the subcategories of $\AMotCat$ consisting of $A$-motives which are effective, respectively effective and finitely generated over $\BC\{\tau\}$, are exact.
\end{remark}

\begin{example}\label{ExampleCHMotive}
An effective $A$-motive of rank $1$ with $\stau_M(\ssigma^\ast M)=J\cdot M$ is called a \emph{Carlitz-Hayes $A$-motive}. It has virtual dimension $1$.
Carlitz-Hayes $A$-motives can be constructed as follows. Let $P\in C_\BC$ be a ($\BC$-valued) point whose projection onto $C$ is the point $\infty\in C$. (Under our assumption $\infty\in C(\BF_q)$ there is a unique such point $P$.) The divisor $({\rm V}(J))-(P)$ on $C_\BC$ has degree zero and induces a line bundle $\CO\bigl(({\rm V}(J))-(P)\bigr)$. Since the endomorphism $\id-\Frob_\sq$ of the abelian variety $\Pic^0_{C/\BF_\sq}$ is surjective, there is a line bundle $\CL$ of degree zero on $C_\BC$ with $\CO\bigl(({\rm V}(J))-(P)\bigr)=(\id-\Frob_\sq)(\CL)=\CL\otimes\ssigma^\ast\CL\dual$ in $\Pic^0_{C/\BF_\sq}(\BC)$. The $A_{\BC}$-module $M:=\Gamma(\Spec A_{\BC},\CL)$ is locally free of rank one and the isomorphism $\ssigma^\ast\CL\cong\CL\otimes\CO\bigl((P)-({\rm V}(J))\bigr)$ of line bundles yields an isomorphism $\stau_M\colon \ssigma^\ast M[J^{-1}]\isoto M[J^{-1}]$ with $\stau_M(\ssigma^\ast M)=J\cdot M$. So $\ulM=(M,\stau_M)$ is a Carlitz-Hayes $A$-motive.

If $\ulM$ is a Carlitz-Hayes $A$-motive and $\ulM'$ is any $A$-motive of rank $1$, then $\stau_{M'}(\ssigma^\ast M')=J^d\cdot M'$ for a uniquely determined integer $d$ by the elementary divisor theorem. Under our assumption that $\infty$ is $\BF_q$-rational, we claim that $\ulM'$ is isogenous to $\ulM^{\otimes d}$. So in particular all Carlitz-Hayes $A$-motives are isomorphic in the category $\AMotCatIsog$. Namely, consider the $A$-motive $\ulN:=\ulM'\otimes(\ulM^{\otimes d})\dual$ of rank one. It satisfies $\stau_N\colon \ssigma^\ast N\isoto N$ and its $\stau$-invariants $N_0:=\{f\in N: \stau_N(\ssigma^\ast f)=f\}$ form a locally free $A$-module of rank one with $\ulN\cong N_0\otimes_A\UOne(0)$. Indeed, one can extend $N$ to a locally free sheaf $\olN$ on $C_\BC$ of degree zero and, by reasons of degree, $\tau_N$ will extend to an isomorphism $\tau_\olN\colon\sigma^*\olN\isoto\olN$ at the one missing point $\infty_\BC=C_\BC\setminus\Spec A_\BC$. This means that the element $\olN\in\Pic^0_{C/\BF_q}(\BC)$ arises from an $\BF_q$-rational point $\olN_0$ of $\Pic^0_{C/\BF_q}$. It follows that $\ulN\cong N_0\otimes_A\UOne(0)$ for $N_0:=\Gamma(\Spec A,\olN_0)$ as claimed. Now the $A$-module $N_0$ is isomorphic to an ideal of $A$ which we again denote by $N_0$. Tensoring the inclusion $N_0\into A$ with $\ulM^{\otimes d}$ yields the desired isogeny $\ulM'\cong N_0\otimes_A\ulM^{\otimes d}\into \ulM^{\otimes d}$. 

We may therefore denote any Carlitz-Hayes $A$-motive by $\UOne(1)$. We also define $\UOne(n):=\UOne(1)^{\otimes n}$ for $n\ge0$ and $\UOne(n)=\UOne(-n)\dual$ for $n\le0$. Then $\dim\UOne(n)=n$. In the special case where $C=\BP^1_{\BF_q},A=\BF_q[t]$ and $\theta:=\charmorph(t)\in\BC$, all Carlitz-Hayes $A$-motives are already in $\AMotCat$ isomorphic to the \emph{Carlitz $t$-motive} $\ulM=(M,\tau_M)$ with $M=\BC[t]$ and $\tau_M=t-\theta$, because in this case the $A$-module $N_0$ is free and isomorphic to $A$.
\end{example}

\begin{remark}\label{Rem3.3}
(a) Every $A$-motive is isomorphic to the tensor product of an effective $A$-motive and a power of a Carlitz-Hayes $A$-motive.
In fact, if $\ulM$ is an $A$-motive with $\stau_{M}(\ssigma^\ast M)\subset J^{-d}\cdot M$ Then $\ulM':=\ulM\otimes\UOne(1)^{\otimes d}$ satisfies $\stau_{M'}(\ssigma^\ast M')\subset M'$; hence, $\ulM'$ is effective and $\ulM\cong\ulM'\otimes\UOne(1)^{\otimes -d}$. Note that $\rk\ulM'=\rk\ulM$ and $\dim\ulM'=\dim\ulM+d\cdot\rk\ulM$.

\medskip
\noindent
(b) This implies that for $A=\BF_\sq[t]$ the category $\AMotCat$ is equivalent to Taelman's  category $t\CM_\BC$ of $t$-motives \cite[Def.~2.3.2]{Taelman} and $\AMotCatIsog$ is equivalent to Taelman's category $t\CM^{\rm o}_\BC$ of $t$-motives up to isogeny \cite[\S3]{Taelman}. Indeed, Taelman defines $t\CM_\BC$ as the category of effective $A$-motives with the formally adjoined inverse of a Carlitz-Hayes $A$-motive. 

\medskip
\noindent
(c) Let us explain why the set of morphisms $\Hom_{\AMotCat}(\ulM,\ulN)$ between $A$-motives $\ulM$ and $\ulN$ is a finite projective $A$-module of rank at most $(\rk\ulM)\cdot(\rk\ulN)$. By (a) we may write $\ulM\cong\ulM'\otimes\UOne(1)^{\otimes -d}$ and $\ulN\cong\ulN'\otimes\UOne(1)^{\otimes -d}$ for effective $A$-motives $\ulM'$ and $\ulN'$. Then $\Hom_{\AMotCat}(\ulM,\ulN)\cong\Hom_{\AMotCat}(\ulM',\ulN')$ and for the latter the statement was proved by Anderson~\cite[Corollary~1.7.2]{Anderson86}.
\end{remark}

\subsection{Purity and mixedness}\label{SectAMotPurity}

We fix a uniformizing parameter $z\in Q=\BF_q(C)$ of $C$ at $\infty$. For simplicity of the exposition we assume that $\infty\in C(\BF_q)$. The main results we present here hold, and are in fact proved in \cite{PinkHodge,HartlPink2}, without this assumption. The assumption implies that there is a unique point on $C_\BC$ above $\infty\in C$, which we call $\infty_{\SSC\BC}$. The completion of the local ring of $C_\BC$ at $\infty_{\SSC\BC}$ is canonically isomorphic to $\BC\dbl z\dbr$.

\begin{definition}\label{Def2.1}
\begin{enumerate}
\item \label{Def2.1_a}
An $A$-motive $\ulM=(M,\tau_M)$ is called \emph{pure} if $M\otimes_{A_{\BC}}\BC\dpl z\dpr$ contains a $\BC\dbl z\dbr$-lattice $M_\infty$ such that for some integers $d,r$ with $r>0$ the map 
\[
\tau_M^r\;:=\;\tau_M\circ\sigma^*(\tau_M)\circ\ldots\circ\sigma^{r-1*}(\tau_M)\colon\; \sigma^{r*} M\otimes_{A_\BC}\Quot(A_\BC)\;\isoto\; M\otimes_{A_\BC}\Quot(A_\BC)
\]
induces an isomorphism $z^d\tau_M^r\colon \sigma^{r*}M_\infty\isoto M_\infty$. Then the \emph{weight} of $\ulM$ is defined as $\weight\ulM=\frac{d}{r}$. 
\item
An $A$-motive $\ulM$ is called \emph{mixed} if it possesses an increasing \emph{weight filtration} by saturated $A$-sub-motives $W_{\mu\,}\ulM$ for $\mu\in\BQ$ (i.e. $W_\mu M\subset M$ is a saturated $A_{\BC}$-submodule) such that all graded pieces $\Gr_\mu^W\ulM:=W_{\mu\,}\ulM/\bigcup_{\mu'<\mu}W_{\mu'}\ulM$ are pure $A$-motives of weight $\mu$ and $\sum_{\mu\in\BQ}\rk\Gr_\mu^W\ulM=\rk\ulM$. 
\item The full subcategory of $\AMotCat$ consisting of mixed $A$-motives is denoted $\AMMotCat$. The full subcategory of $\AMotCatIsog$ consisting of mixed $A$-motives is denoted $\AMMotCatIsog$. 
\end{enumerate}
\end{definition}

\begin{example}\label{ExCarlitz}
For $A=\BF_q[t]$ the Carlitz $t$-motive $\ulM=(\BC[t],\stau_M=t-\theta)$ is pure of weight $1$ with $M_\infty=\BC\dbl z\dbr$ on which $z\stau_M=1-\theta z$ is an isomorphism, where $z=\tfrac{1}{t}$. For general $A$, any Carlitz-Hayes motive (Example~\ref{ExampleCHMotive}) is pure of weight $1$ by Proposition~\ref{PropWeights}\ref{PropWeights_c} below, because $z\tau_M\colon\sigma^*M_\infty\to M_\infty$ is an isomorphism for the lattice $M_\infty:=\CL\otimes_{\CO_{C_\BC}}\BC\dbl z\dbr$.
\end{example}

\begin{remark}\label{RemAMotWts}
(a) The \emph{weights of $\ulM$} are the jumps of the weight filtration; that is, those real numbers $\mu$ for which 
\[
\TS\bigcup_{\mu'<\mu}W_{\mu'}\ulM \;\subsetneq\;\bigcap_{\tilde\mu>\mu}W_{\tilde\mu\,}\ulM\,.
\]
The condition $\sum_{\mu\in\BQ}\rk\Gr_\mu^W\ulM=\rk\ulM$ is equivalent to the conditions that all weights lie in $\BQ$, that $W_{\mu\,}\ulM \;=\;\bigcap_{\tilde\mu>\mu}W_{\tilde\mu\,}\ulM$ for all $\mu\in\BQ$, that $W_{\mu\,}\ulM=(0)$ for $\mu\ll0$, and that $W_{\mu\,}\ulM=\ulM$ for $\mu\gg0$; compare Remark~\ref{RemQFiltr}.

\medskip\noindent
(b) Every pure $A$-motive of weight $\mu$ is also mixed with $W_{\mu'}\ulM=(0)$ for $\mu'<\mu$, and $W_{\mu'}\ulM=\ulM$ for $\mu'\ge\mu$, and $\Gr_\mu^W\ulM=\ulM$.
\end{remark}

\medskip

To explain this definition we use the notion of \emph{$z$-isocrystals} over $\BC$; see \cite[Definition~5.1]{HartlKim}. These are defined to be pairs $\wh\ulM=(\wh{M},\tau_{\wh{M}})$ consisting of a finite dimensional $\BC\dpl z\dpr$-vector space $\wh{M}$ together with a $\BC\dpl z\dpr$-isomorphism $\tau_{\wh{M}}\colon\ssigma^*\wh{M}\isoto\wh{M}$. They are also called \emph{Dieudonn\'e-$\BF_q\dpl z\dpr$-modules} in \cite[\S\,2.4]{Laumon} and \emph{local isoshtukas} in \cite[\S\,8]{BH1}. Some of the following results were proved by Taelman~\cite{Taelman, Taelman09}.

\begin{proposition}\label{PropPure}
Let $\ulM=(M,\tau_M)$ be an $A$-motive and consider the \emph{$z$-isocrystal} $\ulHM\,:=\,\ulM\otimes_{A_{\BC}}\BC\dpl z\dpr\,=\,\bigl(M\otimes_{A_{\BC}}\BC\dpl z\dpr,\tau_M\otimes\id\bigr)$. Then $\ulHM$ is isomorphic to $\bigoplus_i\ulHM_{d_i,r_i}$ where for $d,r\in\BZ,r>0,(d,r)=1$ and $m:=\lceil\frac{d}{r}\rceil$ we set 
\begin{equation}\label{EqStandardZIsocrystal}
\ulHM_{d,r}\;:=\;\Bigl(\BC\dpl z\dpr^{\oplus r},\tau=\tau_{d,r}:=\left( \raisebox{6.2ex}{$
\xymatrix @C=0pc @R=0.3pc {
0 \ar@{.}[drdrdr] & & z^{-m}\ar@{.}[dr] & \\
& & & z^{-m} \\
z^{1-m} \ar@{.}[dr] & & & \\
 & z^{1-m}  & & 0\\
}$}
\right)\Bigr)
\end{equation}
and where in the matrix the term $z^{1-m}$ occurs exactly $mr-d$ times. In particular,
\begin{enumerate}
\item \label{PropPure_a}
$\ulM$ is pure of weight $\mu$ if and only if $\frac{d_i}{r_i}=\mu$ for all $i$.
\item \label{PropPure_b}
$\ulM$ is mixed if and only if the filtration $\DS W_{\mu\,}\ulHM:=\bigoplus_{\frac{d_i}{r_i}\le\mu}\ulHM_{d_i,r_i}$ comes from a filtration of $\ulM$ by saturated $A$-sub-motives $\wt W_{\mu\,}\ulM\subset\ulM$ with $W_{\mu\,}\ulHM=(\wt W_{\mu\,}\ulM)\otimes_{A_{\BC}}\BC\dpl z\dpr$. In this case the filtration $\wt W_{\mu\,}\ulM$ equals the weight filtration $W_{\mu\,}\ulM$ of $\ulM$ and the $\tfrac{d_i}{r_i}$ are the weights of $\ulM$. In particular, the weight filtration of a mixed $A$-motive $\ulM$ is uniquely determined by $\ulM$.
\item \label{PropPure_c}
Any $A$-sub-motive $\ulM'\into\ulM$ and $A$-quotient motive $f\colon \ulM\onto\ulM''$ of a pure (mixed) $A$-motive $\ulM$ is itself pure (mixed) of the same weight(s), (by letting $W_{\mu\,}\ulM':=\ulM'\cap W_{\mu\,}\ulM$, and letting $W_{\mu\,}\ulM''$ be the saturation of $f(W_{\mu\,}\ulM)$ inside $\ulM''$, if $\ulM$ is mixed).
\item \label{PropPure_f}
Any $A$-motive which is isomorphic in $\AMotCatIsog$ to a pure (mixed) $A$-motive is itself pure (mixed).
\item\label{PropPure_h}
The weight of a pure $A$-motive $\ulM$ is $\weight\ulM=(\dim\ulM)/(\rk\ulM)$. The tensor product of two pure $A$-motives $\ulM$ and $\ulN$ is again pure of weight $(\weight\ulM)+(\weight\ulN)$. 
\item \label{PropPure_g}
The category $\AMMotCatIsog$ is a full $Q$-linear (non-neutral) Tannakian subcategory of $\AMotCatIsog$, and in particular, a rigid abelian tensor category.
\item \label{PropPure_d}
Any morphism $f\colon \ulM'\to\ulM$ between mixed $A$-motives satisfies $f(W_{\mu\,}\ulM')\subset W_{\mu\,}\ulM$. More precisely, the saturation of $f(W_{\mu\,}\ulM')$ inside $f(\ulM')$ equals $f(\ulM')\cap W_{\mu\,}\ulM$.
\end{enumerate}
\end{proposition}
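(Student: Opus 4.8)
\emph{Proof plan.} Everything is driven by the Dieudonn\'e--Manin classification of $z$-isocrystals over the algebraically closed, complete field $\BC$, which is precisely the asserted decomposition $\ulHM\cong\bigoplus_i\ulHM_{d_i,r_i}$. I would cite this (cf.\ \cite[\S\,2.4]{Laumon}, \cite[\S\,8]{BH1}; several of the statements below also appear in Taelman \cite{Taelman,Taelman09}) or else sketch the usual two steps: a Harder--Narasimhan/semistability argument yields a canonical \emph{slope filtration} with isoclinic graded pieces of increasing slopes, and this filtration splits because $\Ext^1$ vanishes between isoclinic isocrystals of different slopes while, within a fixed slope, isoclinic isocrystals over $\BC$ form a semisimple category. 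I then use three consequences repeatedly: (i) the slope filtration $W_\mu\ulHM=\bigoplus_{d_i/r_i\le\mu}\ulHM_{d_i,r_i}$ is canonical, split, and every morphism of $z$-isocrystals is strictly compatible with it (there is no nonzero map from slopes $\le\mu$ to slopes $>\mu$); (ii) sub- and quotient-isocrystals of an isoclinic isocrystal of slope $\mu$ are isoclinic of slope $\mu$; (iii) $\ulHM_{d,r}\otimes\ulHM_{e,s}$ is isoclinic of slope $\tfrac dr+\tfrac es$ and $(\ulHM_{d,r})\dual$ is isoclinic of slope $-\tfrac dr$. Part~\ref{PropPure_a} is then the remark that Definition~\ref{Def2.1}'s condition --- a $\BC\dbl z\dbr$-lattice in $\ulHM$ stable under $z^d\tau_M^r$ --- is, over $\BC$, equivalent to all slopes equalling $d/r$; and~\ref{PropPure_h} follows since $\ulHM_{d_i,r_i}$ contributes $d_i$ to $\dim\ulM$ and $r_i$ to $\rk\ulM$, and tensor products of pure motives are pure by (iii).

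The recurring device for descending from isocrystals to $A$-motives is the elementary fact that $A_\BC$ is a domain with $A_\BC\hookrightarrow\BC\dpl z\dpr$ (the latter contains the function field of $C_\BC$), so every finitely generated torsion-free $A_\BC$-module $M$ embeds into $M\otimes_{A_\BC}\BC\dpl z\dpr$; hence a saturated $A_\BC$-submodule $N\subset M$ equals $(N\otimes_{A_\BC}\BC\dpl z\dpr)\cap M$, and a morphism of $A$-motives that vanishes after $\otimes_{A_\BC}\BC\dpl z\dpr$ is zero. With this I do~\ref{PropPure_f}, \ref{PropPure_c}, \ref{PropPure_d} by transport along $\otimes_{A_\BC}\BC\dpl z\dpr$. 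An isogeny becomes an isomorphism after this base change, so isogenous $A$-motives have isomorphic $z$-isocrystals; thus purity is isogeny-invariant by~\ref{PropPure_a}, and a weight filtration on one of two isogenous motives transports to the other via preimages/saturated images, the graded pieces staying isogenous (hence pure, by the pure case) of the correct weight and the ranks adding up by telescoping --- this is~\ref{PropPure_f}. The same preimage/saturated-image recipe, with $W_\mu\ulM':=\ulM'\cap W_\mu\ulM$ and $W_\mu\ulM'':=$ saturation of $f(W_\mu\ulM)$ in $\ulM''$, yields~\ref{PropPure_c}: the graded pieces are sub- (resp.\ quotient-)motives of the pure $\Gr^W_\mu\ulM$, hence pure of the same weight by (ii), and the rank conditions telescope. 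For~\ref{PropPure_d}, the induced morphism of $z$-isocrystals carries $W_\mu(\ulM'\otimes\BC\dpl z\dpr)$ into $W_\mu\ulHM$ and is strict there by (i), so the recurring device gives $f(W_\mu\ulM')\subset W_\mu\ulM$ and, from the strictness, the sharper statement about saturations.

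Part~\ref{PropPure_b}: if $\ulM$ is mixed, its weight filtration has isoclinic graded pieces of strictly increasing slopes, so after $\otimes_{A_\BC}\BC\dpl z\dpr$ it coincides with the slope filtration $W_\bullet\ulHM$ by uniqueness of the Harder--Narasimhan filtration; this shows the $d_i/r_i$ are the weights, that a mixed structure (if any) is witnessed by a descent of $W_\bullet\ulHM$ to saturated sub-motives, and conversely any such descent is a weight filtration. Uniqueness of the weight filtration is again the recurring device: two weight filtrations induce the same $W_\bullet\ulHM$, so the composite $W_\mu\ulM\hookrightarrow\ulM\twoheadrightarrow\ulM/W'_\mu\ulM$ vanishes after $\otimes_{A_\BC}\BC\dpl z\dpr$, hence is zero, giving $W_\mu\ulM\subset W'_\mu\ulM$ and, symmetrically, equality.

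Finally, part~\ref{PropPure_g}. $Q$-linearity is inherited; $\AMMotCatIsog$ is abelian because kernels, images and cokernels in $\AMotCatIsog$ of a morphism of mixed motives are sub- or quotient-motives of mixed motives, hence mixed by~\ref{PropPure_c}, so the inclusion into $\AMotCatIsog$ is exact; and it is rigid since $\UOne(0)$ is pure of weight $0$ and duals of mixed motives are mixed, with $\Gr^W_\mu(\ulM\dual)\cong(\Gr^W_{-\mu}\ulM)\dual$ pure of weight $\mu$ by (iii). The one genuinely hard point --- and the main obstacle --- is stability under $\otimes$: that $\ulM\otimes\ulN$ is mixed. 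I would put $W_\mu(\ulM\otimes\ulN):=$ the saturation in $M\otimes_{A_\BC}N$ of $\sum_{\mu_1+\mu_2\le\mu}W_{\mu_1}\ulM\otimes_{A_\BC}W_{\mu_2}\ulN$ (a $\tau$-stable saturated $A_\BC$-submodule) and show the natural map $\bigoplus_{\mu_1+\mu_2=\mu}\Gr^W_{\mu_1}\ulM\otimes\Gr^W_{\mu_2}\ulN\to\Gr^W_\mu(\ulM\otimes\ulN)$ is an isogeny: it is an isomorphism after the flat base change $\otimes_{A_\BC}\BC\dpl z\dpr$, where via (iii) and the splitness in (i) it reduces to the slope filtration of $\ulHM\otimes(\ulN\otimes\BC\dpl z\dpr)$, and then --- both sides being torsion-free over the domain $A_\BC$ --- it is injective with torsion cokernel. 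By~\ref{PropPure_f} and~\ref{PropPure_h} the graded piece is pure of weight $\mu$, and $\sum_\mu\rk\Gr^W_\mu(\ulM\otimes\ulN)=(\rk\ulM)(\rk\ulN)=\rk(\ulM\otimes\ulN)$, so $\ulM\otimes\ulN$ is mixed. The delicate bookkeeping hidden here is reconciling the $A_\BC$-saturation with the automatically saturated picture over the field $\BC\dpl z\dpr$ at the place $\infty_{\SSC\BC}$ above $\infty$; this is handled by the recurring device together with the flatness of $\ssigma^\ast$ (Remark~\ref{RemSigmaIsFlat}), and getting it right is, I expect, the technical crux of the whole proposition.
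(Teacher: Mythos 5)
Your proposal is correct and follows essentially the same strategy as the paper: reduce all statements to the Dieudonn\'e--Manin classification of $z$-isocrystals over $\BC$ (semisimplicity, strictness of morphisms with respect to the slope filtration, and the behaviour of slopes under $\otimes$, dual, sub and quotient), and descend to $A$-motives via the observation that torsion-free $A_\BC$-modules embed into their base change to the field $\BC\dpl z\dpr$. The one place where you are slightly more explicit than the paper is in taking the saturation when defining the weight filtration on a tensor product in part~\ref{PropPure_g}; the paper uses the naive sum $\sum_{\mu+\nu=\lambda}W_\mu\ulM\otimes W_\nu\ulN$ and works implicitly up to isogeny (via parts~\ref{PropPure_c} and~\ref{PropPure_f}), which comes to the same thing.
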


\noindent
{\it Remark.} We do not know whether in {\it \ref{PropPure_d}} the submodule $f(W_{\mu\,}\ulM')\subset f(\ulM')$ is always saturated, that is, whether the equality $f(W_{\mu\,}\ulM')=f(\ulM')\cap W_{\mu\,}\ulM$ always holds.

\begin{proof}
The fact that over the algebraically closed field $\BC$ any $z$-isocrystal is isomorphic to a direct sum of standard ones is proved in \cite[Theorem~2.4.5]{Laumon}. It is analogous to the Dieudonn\'e-Manin classification of $F$-isocrystals over an algebraically closed field of positive characteristic \cite{Manin}. That the standard $z$-isocrystals in \cite{Laumon} are isomorphic to our standard ones $\ulHM_{d,r}$ follows by an elementary computation. 

\medskip\noindent
\ref{PropPure_a} If $\ulHM\cong\ulHM_{d,r}^{\oplus(\rk\ulM)/r}$ with $\mu=\frac{d}{r}$ and $(d,r)=1$, we can take for $M_\infty$ the tautological $\BC\dbl z\dbr$-lattice $\BC\dbl z\dbr^{\oplus\rk\ulM}$ inside $\ulHM_{d,r}^{\oplus(\rk\ulM)/r}$ to see that $\ulM$ is pure. Conversely, if there is an $i$ with $\mu=\frac{d}{r}\ne\frac{d_i}{r_i}$, then $z^{r_id}\tau_M^{r_ir}=z^{r_id}z^{-d_ir}$ cannot be an isomorphism for any $\BC\dbl z\dbr$-lattice $M_\infty$ in $\ulHM$. So $\ulM$ is not pure of weight $\mu$. (Compare \cite[Proposition~5.1.4]{Taelman}.)

\medskip\noindent
\ref{PropPure_b} If $\ulM$ has a weight filtration $W_{\mu\,}\ulM\subset\ulM$ with respect to which it is mixed, then \ref{PropPure_a} implies that $(\Gr_\mu^W\ulM)\otimes_{A_{\BC}}\BC\dpl z\dpr\cong\ulHM_{d,r}^{\oplus(\rk\Gr_\mu^W\ulM)/r}$ for $\mu=\frac{d}{r}$ with $(d,r)=1$. Since the category of $z$-isocrystals is semi-simple by \cite[Theorem~2.4.5]{Laumon} the sequences
\[
\xymatrix {
0\ar[r] & (\bigcup_{\mu'<\mu}W_{\mu'}\ulM)\otimes_{A_{\BC}}\BC\dpl z\dpr \ar[r] & (W_{\mu\,}\ulM)\otimes_{A_{\BC}}\BC\dpl z\dpr \ar[r] & (\Gr^W_{\mu\,}\ulM)\otimes_{A_{\BC}}\BC\dpl z\dpr \ar[r] & 0
}
\] 
split canonically for all $\mu$. This inductively yields $\ulHM=\bigoplus_\mu(\Gr^W_{\mu\,}\ulM)\otimes_{A_{\BC}}\BC\dpl z\dpr$ and $(W_{\mu\,}\ulM)\otimes_{A_{\BC}}\BC\dpl z\dpr=\bigoplus_{\mu'\le\mu}(\Gr^W_{\mu'}\ulM)\otimes_{A_{\BC}}\BC\dpl z\dpr=W_{\mu\,}\ulHM$. So the filtration $W_{\mu\,}\ulHM$ comes from $W_{\mu\,}\ulM$.

Conversely, if there is a filtration $\wt W_{\mu\,}\ulM\subset\ulM$ satisfying $W_{\mu\,}\ulHM=(\wt W_{\mu\,}\ulM)\otimes_{A_{\BC}}\BC\dpl z\dpr$ then $(\Gr_\mu^{\wt W}\ulM)\otimes_{A_{\BC}}\BC\dpl z\dpr\cong\bigoplus_{\frac{d_i}{r_i}=\mu}\ulHM_{d_i,r_i}$. So $\Gr_\mu^{\wt W}\ulM$ is pure of weight $\mu$ by \ref{PropPure_a} and $\ulM$ is mixed with $\wt W_{\mu\,}\ulM$ as a weight filtration. 
In this case $\wt W_{\mu\,}\ulM\subset\ulM\cap W_{\mu\,}\ulHM=:\ulN$ are two saturated $A$-sub-motives of $\ulM$. Since 
\[
\wt W_{\mu\,}\ulM\otimes_{A_{\BC}}\BC\dpl z\dpr\;\subset\;\ulN\otimes_{A_{\BC}}\BC\dpl z\dpr\;\subset\;W_{\mu\,}\ulHM\;\subset\;\wt W_{\mu\,}\ulM\otimes_{A_{\BC}}\BC\dpl z\dpr
\]
they have the same rank. This implies $\wt W_{\mu\,}\ulM=\ulM\cap W_{\mu\,}\ulHM$. Thus, the weight filtration $W_{\mu\,}\ulM$ is uniquely determined by $\ulM$ if $\ulM$ is mixed.

\medskip\noindent
\ref{PropPure_c} If $\ulM$ is pure of weight $\mu=\frac{d}{r}$ with $(d,r)=1$, we see that $\ulHM\cong\ulHM_{d,r}^{\oplus(\rk\ulM)/r}$ and $\ulHM'\subset\ulHM$ and $\ulHM\onto\ulHM''$. By \cite[Proposition 1.2.11]{HartlPSp} also $\ulHM'\cong\ulHM_{d,r}^{\oplus(\rk\ulM')/r}$ and $\ulHM''\cong\ulHM_{d,r}^{\oplus(\rk\ulM'')/r}$. So $\ulM'$ and $\ulM''$ are likewise pure of weight $\mu$ by \ref{PropPure_a}. If $\ulM$ is mixed, we set $W_{\mu\,}\ulM':=\ulM'\cap W_{\mu\,}\ulM$ and we let $W_{\mu\,}\ulM''$ be the saturation of $f(W_{\mu\,}\ulM)$ inside $\ulM''$. Then $W_{\mu\,}\ulM'\subset\ulM'$ is a saturated $A$-sub-motive with $\Gr^W_{\mu\,}\ulM'\subset\Gr^W_{\mu\,}\ulM$. Thus the graded piece $\Gr^W_{\mu\,}\ulM'$ is pure by the above and $\ulM'$ is mixed. Also $\Gr^W_\mu\ulM\to \Gr^W_\mu\ulM''$ has torsion cokernel because $W_\mu\ulM\to W_\mu\ulM''$ has, and hence, the $z$-isocrystal $\wh{\Gr^W_\mu\ulM''}$ is a quotient of $\wh{\Gr^W_\mu\ulM}$ and pure by \cite[Proposition~1.2.11]{HartlPSp}. Therefore, $\Gr^W_{\mu\,}\ulM''$ is pure by \ref{PropPure_a} and $\ulM''$ is mixed.

\medskip\noindent
\ref{PropPure_f} If $f\colon \ulM'\to\ulM$ is an isomorphism in $\AMotCatIsog$ and $\ulM$ is pure (mixed), we can multiply $f$ by a non-zero element of $A$ and assume that $f$ is a morphism in $\AMotCat$. Then $f$ realizes $\ulM'$ as an $A$-sub-motive of $\ulM$ and $\ulM'$ is pure (mixed) by \ref{PropPure_c}.

\medskip\noindent
\ref{PropPure_h} If $M_\infty\subset\wh\ulM$ and $N_\infty\subset\wh\ulN$ are $\BC\dbl z\dbr$-lattices on which the maps $z^d\tau_M^r\colon\sigma^{r*}M_\infty\isoto M_\infty$ and \mbox{$z^{d'}\tau_N^{r'}\colon\sigma^{r'*}N_\infty\isoto N_\infty$} are isomorphisms, then $M_\infty\otimes_{\BC\dbl z\dbr}N_\infty\subset\wh\ulM\otimes_{\BC\dpl z\dpr}\wh\ulN$ is a $\BC\dbl z\dbr$-lattice with 
\begin{eqnarray*}
z^{dr'+rd'}\,\tau_{M\otimes N}^{rr'}\bigl(\sigma^{rr'*}(M_\infty\otimes_{\BC\dbl z\dbr} N_\infty)\bigr) & = & z^{dr'}\tau_{M}^{rr'}(\sigma^{rr'*}M_\infty)\otimes_{\BC\dbl z\dbr} z^{rd'}\tau_{N}^{rr'}(\sigma^{rr'*}N_\infty)\\[2mm]
 & = &  M_\infty\otimes_{\BC\dbl z\dbr} N_\infty\,.
\end{eqnarray*}
So $\ulM\otimes\ulN$ is pure of weight $\tfrac{dr'+rd'}{rr'}=\tfrac{d}{r}+\tfrac{d'}{r'}=(\weight\ulM)+(\weight\ulN)$. The formula $\weight\ulM=(\dim\ulM)/(\rk\ulM)$ follows from \cite[Lemma~1.10.1]{Anderson86} if $\ulM$ is effective and from Remark~\ref{Rem3.3} in the general case.

\medskip\noindent
\ref{PropPure_g} Clearly a direct sum $\ulM\oplus\ulN$ of mixed $A$-motives is mixed with the direct sum weight filtration $W_\mu(\ulM\oplus\ulN)=(W_{\mu\,}\ulM)\oplus(W_{\mu\,}\ulN)$. If $f$ is a morphism in $\AMMotCatIsog$, we may multiply it with a non-zero element $a\in A$ to obtain a morphism in $\AMMotCat$. Then its kernel, cokernel, image and coimage in $\AMotCat$ from Proposition~\ref{PropImageAMotive} again belong to $\AMMotCat$ by \ref{PropPure_c}. This shows that $\AMMotCatIsog$ is an abelian subcategory of $\AMotCatIsog$. Moreover, $\AMMotCatIsog$ is strictly full by \ref{PropPure_f} and contains $\UOne(0)$, which is pure of weight $0$. Also the tensor product of two mixed $A$-motives $\ulM$ and $\ulN$, equipped with the weight filtration $W_\lambda(\ulM\otimes\ulN):=\sum_{\mu+\nu=\lambda}W_\mu\ulM\otimes W_\nu\ulN$ is again mixed, because $\Gr^W_\lambda(\ulM\otimes\ulN)$ is a quotient of the pure $A$-motive $\bigoplus_{\mu+\nu=\lambda}\Gr^W_\mu\ulM\otimes \Gr^W_\nu\ulN$ of weight $\lambda$ and therefore is itself pure of weight $\lambda$ by \ref{PropPure_c}. Furthermore, the dual $\ulM\dual$ of a mixed $A$-motive $\ulM$, equipped with the weight filtration $W_\mu\ulM\dual:=\{\, m\dual\in M\dual\colon m\dual(W_\lambda\ulM)=0 \es\forall\,\lambda<-\mu\,\}$ is mixed. Indeed, one easily computes that $\Gr^W_\lambda(\ulM\dual)=(\Gr^W_{-\lambda}\ulM)\dual$ and the latter is pure of weight $\lambda$ by \ref{PropPure_a} and the fact that $(\ulHM_{d,r})\dual\cong\ulHM_{-d,r}$ in the category of $z$-isocrystals. So also the internal hom $\CHom(\ulM,\ulN)\cong\ulN\otimes\ulM\dual$ of two mixed (pure) $A$-motives $\ulM$ and $\ulN$ is mixed (pure).

\medskip\noindent
\ref{PropPure_d} By \ref{PropPure_c} the image $A$-motive $f(\ulM')\subset\ulM$ is mixed both as a sub-motive of $\ulM$ with $W_\mu f(\ulM')=f(\ulM')\cap W_{\mu\,}\ulM$ and as a quotient motive of $\ulM$ with $W_\mu f(\ulM')$ being the saturation of $f(W_{\mu\,}\ulM')$. By \ref{PropPure_b} both filtrations coincide, so $f(W_{\mu\,}\ulM')\subset W_{\mu\,}\ulM$.
\end{proof}

\begin{remark}\label{RemQuillenAMMot}
The category $\AMMotCat$ is an exact subcategory of $\AMotCat$ in the sense of Quillen~\cite[\S2]{Quillen} if one takes as class $E$ of \emph{exact} sequences $0\longto\ulM'\xrightarrow{\es f\;}\ulM\xrightarrow{\es g\;}\ulM''\longto0$ of mixed $A$-motives those which are exact in $\AMotCat$, that is whose underlying sequence of $A_\BC$-modules is exact; see Remark~\ref{RemQuillenAMot}(b). Indeed, the only statement that does not directly follow from Remark~\ref{RemQuillenAMot}(b) is that $E$ is closed under pullbacks via morphisms $h''\colon\ulN''\to\ulM''$ and pushout via morphisms $h'\colon\ulM'\to\ulN'$. For this one has to show that the pullback $(h'')^*\ulM=\ker\bigl((g,-h'')\colon\ulM\oplus\ulN''\to\ulM''\bigr)$ and pushout $h'_*(\ulM)=\coker\bigl((f,h')\colon\ulM'\to\ulM\oplus\ulN'\bigr)$ in the category $\AMotCat$ are mixed. This follows from Proposition~\ref{PropPure}\ref{PropPure_c} and \ref{PropPure_g}, because $(h'')^*\ulM$ is a submotive of $\ulM\oplus\ulN''$, and $h'_*(\ulM)$ is a quotient of $\ulM\oplus\ulN'$.
\end{remark}

\begin{example}\label{ExNotMixed}
Not every $A$-motive is mixed. For example, let $A=\BF_q[t]$, $z=\frac{1}{t}$, $\theta=\charmorph(t)\in\BC$, and $M=A_{\BC}^{\oplus2}$ with $\tau_M=\Phi:=\left( \begin{array}{cc}(t-\theta)^2&1\\0&t-\theta \end{array}\right)$. Then there is an exact sequence of $A$-motives $0\to\ulM'\to\ulM\to\ulM''\to0$ with $\ulM'=(A_{\BC},\tau_{M'}=(t-\theta)^2)$ and $\ulM''=(A_{\BC},\tau_{M''}=(t-\theta))$. Since $\ulHM'\cong\ulHM_{2,1}$ and $\ulHM''\cong\ulHM_{1,1}$ and any sequence of $z$-isocrystals splits, we see that $\ulHM\cong\ulHM_{1,1}\oplus\ulHM_{2,1}$. Hence, if $\ulM$ were mixed, its weights must be $1$ and $2$. But $\ulM$ contains no pure $A$-sub-motive of weight one. Indeed such a sub-motive would be isomorphic to $\ulM''$ and generated by a non-zero vector $\left(\begin{smallmatrix}u\\ v\end{smallmatrix}\right)\in A_{\BC}^2$ with $\Phi\cdot\left(\begin{smallmatrix}\sigma^\ast(u)\\\sigma^\ast(v)\end{smallmatrix}\right)=(t-\theta)\cdot\left(\begin{smallmatrix}u\\v\end{smallmatrix}\right)$. This amounts to $(t-\theta)^2\sigma^\ast(u)+\sigma^\ast(v)=(t-\theta)u$ and $\sigma^\ast(v)=v$, whence $v\in A$. Since $A\to\BC$, $t\mapsto\theta$, is injective $(t-\theta)\nmid v=\sigma^\ast(v)$ in $A_{\BC}$ if $v\ne0$. We conclude that $v=0$ and $(t-\theta)\sigma^\ast(u)=u$, which is impossible by reasons of degree. Hence, $\ulM$ is not mixed. The reason is of course that $\ulM$ is an extension of two pure $A$-motives but ``in the wrong direction''.
\end{example}

\begin{proposition}\label{PropWeights}
Let $\ulM=(M,\tau_M)$ be an $A$-motive whose $z$-isocrystal $\ulHM\,:=\,\ulM\otimes_{A_{\BC}}\BC\dpl z\dpr$ is isomorphic to $\bigoplus_i\ulHM_{d_i,r_i}$ for $d_i,r_i\in\BZ$ with $r_i>0$ and $(d_i,r_i)=1$ for all $i$; see Proposition~\ref{PropPure}.
\begin{enumerate}
\item \label{PropWeights_a} If $\ulM$ is effective and $M$ is a finitely generated module over the skew-polynomial ring $\BC\{\tau\}$, where $\tau$ acts on $M$ through $m\mapsto \tau_M(\sigma^\ast m)$, then $d_i>0$ for all $i$.
\item \label{PropWeights_b}
If $d_i/r_i\le n$ for all $i$ and $\tau_M(\sigma^*M)\subset J^{-d}M$ for $d\in\BZ$, then $\ulM$ extends to a locally free sheaf $\olM$ on $C_\BC$ with $\tau\colon \sigma^\ast\olM\to\olM\bigl(n\cdot\infty_{\SSC\BC}+d\cdot\Var(J)\bigr)$, where the notation $\bigl(n\cdot\infty_{\SSC\BC}+d\cdot\Var(J)\bigr)$ means that we allow poles at $\infty_{\SSC\BC}$ of order less than or equal to $n$ and at $\Var(J)$ of order less than or equal to $d$. 
\item \label{PropWeights_c}
If $\ulM$ is  not necessarily effective, then $\ulM$ is pure of weight $\mu=\tfrac{d}{r}$ with $(d,r)=1$ if and only if $\ulM$ extends to a locally free sheaf $\olM$ on $C_\BC$ such that $z^d\tau_M^r$ is an isomorphism $\sigma^{r\ast}\olM_\infty\isoto\olM_\infty$ on the stalks at $\infty_{\SSC\BC}$.
\end{enumerate}
\end{proposition}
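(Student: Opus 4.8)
The plan is to reduce all three parts to the local picture at $\infty_{\SSC\BC}$, using the Dieudonn\'e--Manin decomposition $\ulHM\cong\bigoplus_i\ulHM_{d_i,r_i}$ from Proposition~\ref{PropPure}, the identification of $\BC\dbl z\dbr$ with the completion of $\mathcal O_{C_\BC,\infty_{\SSC\BC}}$, the embedding $A_\BC\hookrightarrow\BC\dpl z\dpr$, and the resulting embedding $M\hookrightarrow\ulHM=M\otimes_{A_\BC}\BC\dpl z\dpr$ whose image generates $\ulHM$ over $\BC\dpl z\dpr$. The one computation I would record first is that, for coprime $d\in\BZ$ and $r>0$ with $m:=\lceil d/r\rceil$, the matrix of $\stau_{d,r}$ on the standard lattice $\BC\dbl z\dbr^{\oplus r}$ has all its entries among $z^{-m}$ and $z^{1-m}$, is a single $r$-cycle, and satisfies $\stau_{d,r}^r=z^{-d}\cdot\mathrm{id}$ on that lattice.

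For part~\ref{PropWeights_a} I would first observe that effectivity is present exactly to ensure $\stau_M(\ssigma^\ast M)\subseteq M$, so that $\BC\{\stau\}$ genuinely acts on $M$ and the hypothesis is meaningful. Then I would argue by contradiction: suppose $\mu:=\min_i(d_i/r_i)\le0$ and let $N\subseteq\ulHM$ be the isotypic summand of slope $\mu$, with its $\BC\dpl z\dpr$-linear, $\stau$-equivariant projection $\pi\colon\ulHM\onto N$. For each block with $d_i\le0$ one has $m_i=\lceil d_i/r_i\rceil\le0$, so by the opening computation all entries of $\stau_{d_i,r_i}$ lie in $\BC\dbl z\dbr$; hence the standard lattice $\Lambda_0\subset N$ is $\stau$-stable, and so is each $z^{-k}\Lambda_0$. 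Since $\pi$ is $\BC$-linear and intertwines the $\stau$-actions, $\pi(M)$ is the $\BC\{\stau\}$-submodule of $N$ generated by $\pi(g_1),\dots,\pi(g_s)$ for any $\BC\{\stau\}$-generators $g_1,\dots,g_s$ of $M$, hence $\pi(M)\subseteq z^{-k}\Lambda_0$ once $k$ is large enough that all $\pi(g_i)\in z^{-k}\Lambda_0$. On the other hand $\pi(M)$ is a nonzero $A_\BC$-submodule of $N$, hence stable under multiplication by any non-constant $t\in A$; such a $t$ is regular on $\dotC$ and non-constant, so it has a pole at $\infty_{\SSC\BC}$, and therefore $\pi(M)$ contains $t^jw$ for a fixed nonzero $w\in\pi(M)$ and all $j\ge0$, whose $z$-adic valuations relative to $\Lambda_0$ tend to $-\infty$ --- contradicting $\pi(M)\subseteq z^{-k}\Lambda_0$. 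Hence $d_i>0$ for all $i$; this is in substance Anderson's argument, cf.\ \cite[\S\,1.9]{Anderson86}. I expect this to be the main obstacle of the proposition: effectivity alone only legitimizes the $\BC\{\stau\}$-action, and the strict positivity of the slopes has to be extracted from $\BC\{\stau\}$-finiteness by a bounded-growth estimate of this type.

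For part~\ref{PropWeights_b} I would use that $n\in\BZ$ and $d_i/r_i\le n$ give $m_i=\lceil d_i/r_i\rceil\le n$, so by the opening computation every entry of $\stau_{d_i,r_i}$ lies in $z^{-n}\BC\dbl z\dbr$ and hence $\stau_{d_i,r_i}(\BC\dbl z\dbr^{\oplus r_i})\subseteq z^{-n}\BC\dbl z\dbr^{\oplus r_i}$. Transporting the standard lattices through a fixed isomorphism $\ulHM\cong\bigoplus_i\ulHM_{d_i,r_i}$ then produces a $\BC\dbl z\dbr$-lattice $M_\infty\subset\ulHM$ with $\stau(\ssigma^\ast M_\infty)\subseteq z^{-n}M_\infty$. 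I would glue $M$ on $\Spec A_\BC$ to $M_\infty$ at $\infty_{\SSC\BC}$ --- they agree after tensoring up to $\BC\dpl z\dpr$, so this is the standard assembly of a vector bundle on $C_\BC$ from a bundle on the complement of a point together with a lattice there --- obtaining a locally free sheaf $\olM$ on $C_\BC$ extending $\ulM$. That $\stau_M$ extends to $\ssigma^\ast\olM\to\olM(n\cdot\infty_{\SSC\BC}+d\cdot\Var(J))$ I would check stalkwise: it is an isomorphism away from $\infty_{\SSC\BC}$ and $\Var(J)$; at $\Var(J)$ the order-$d$ pole is the hypothesis $\stau_M(\ssigma^\ast M)\subseteq J^{-d}M$; and at $\infty_{\SSC\BC}$ the order-$n$ pole is $\stau(\ssigma^\ast M_\infty)\subseteq z^{-n}M_\infty$. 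This part should be routine once \ref{PropWeights_a} is in hand.

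For part~\ref{PropWeights_c}, the direction ``$\Leftarrow$'' is immediate: if such $\olM$ exists, then $M_\infty:=\olM_{\infty_{\SSC\BC}}$ is a $\BC\dbl z\dbr$-lattice in $\ulHM$, because the restriction of $\olM$ to $\Spec A_\BC$ is $M$, and on it $z^d\stau_M^r$ is an isomorphism, which is precisely the condition of Definition~\ref{Def2.1}\ref{Def2.1_a} that $\ulM$ be pure of weight $d/r$. For ``$\Rightarrow$'', if $\ulM$ is pure of weight $d/r$ with $(d,r)=1$, then $\ulHM\cong\ulHM_{d,r}^{\oplus\rk\ulM/r}$ by Proposition~\ref{PropPure}\ref{PropPure_a}, and the opening computation gives $z^d\stau_{d,r}^r=\mathrm{id}$ on the standard lattice; transporting that lattice yields a $\BC\dbl z\dbr$-lattice $M_\infty\subset\ulHM$ with $z^d\stau_M^r\colon\ssigma^{r\ast}M_\infty\isoto M_\infty$, and gluing $M$ to $M_\infty$ as in part~\ref{PropWeights_b} produces the required sheaf $\olM$.
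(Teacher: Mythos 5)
Your proposal is correct and for parts \ref{PropWeights_b} and \ref{PropWeights_c} is essentially the proof in the paper: in both cases one uses the explicit matrices of the $\ulHM_{d_i,r_i}$ to produce the lattice $M_\infty$ from the tautological $\BC\dbl z\dbr$-lattices, glues to $M$ over $\Spec A_\BC$, and reads off the pole orders at $\infty_{\SSC\BC}$ and at $\Var(J)$ stalkwise; the converse direction of \ref{PropWeights_c} is, as you say, immediate from Definition~\ref{Def2.1}\ref{Def2.1_a}.

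For part \ref{PropWeights_a} you take a slightly different route from the paper, though with the same underlying mechanism. The paper fixes an isomorphism $\ulHM\cong\bigoplus_i\ulHM_{d_i,r_i}$, picks out a single coordinate for the extremal slope, and derives a contradiction with the $z$-adic density of $M[z]=M\otimes_{A_\BC}A_\BC[z]$ in $\ulHM$: if $d_1\le0$ the $\BC\{\tau\}$-finiteness forces a uniform lower bound on the $z$-valuation of that coordinate on $M[z]$, which is incompatible with density. You instead use the $\BC\dpl z\dpr$-linear, $\tau$-equivariant projection $\pi$ onto the minimal-slope isotypic summand $N$ and derive the same uniform lower bound on $z$-valuation for $\pi(M)$ from $\BC\{\tau\}$-finiteness; the ``largeness'' that contradicts it comes from $\pi(M)$ being a nonzero $A_\BC$-submodule (stable under multiplication by a nonconstant $t\in A$, which has negative $z$-valuation at $\infty_{\SSC\BC}$), rather than from density. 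Both approaches buy the same conclusion; yours avoids the density claim (which the paper asserts without proof) at the price of invoking the canonical isotypic decomposition and its $\tau$-equivariant projection. The one step you gloss over is why $\pi(M)\ne 0$: it follows because $M$ generates $\ulHM$ over $\BC\dpl z\dpr$ and $\pi$ is a surjective $\BC\dpl z\dpr$-linear map onto the nonzero summand $N$, so $\pi(M)$ generates $N$. Worth stating explicitly since it is what makes the unboundedness argument bite.
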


\begin{proof}
\ref{PropWeights_a} (compare \cite[Proposition~8]{Taelman09}) We may assume that $\tfrac{d_i}{r_i}\ge\tfrac{d_{i+1}}{r_{i+1}}$ for all $i$. By the explicit description of $\ulHM_{d_i,r_i}$ in \eqref{EqStandardZIsocrystal} there is a $\BC\dpl z\dpr$-basis $\CB$ of $\ulHM$ and an integer $s>0$ such that $(\tau_M\otimes\id)^s$ is a diagonal matrix with entries $z^{-sd_i/r_i}$ with respect to $\CB$. Assume that $d_1\le0$. Since $M$ is finitely generated as a $\BC\{\tau\}$-module there are finitely many elements $m_i\in M$ such that $M=\sum_{i,j\ge0}\BC\cdot\tau_M^{sj}(\sigma^{sj*}m_i)$. By definition of $\ulHM=\ulM\otimes_{A_\BC}\BC\dpl z\dpr$ the set $M[z]=M\otimes_{A_\BC}A_\BC[z]=\sum_{i,j,k\ge0}\BC\cdot z^k\tau_M^{sj}(\sigma^{sj*}m_i)$ is $z$-adically dense in $\ulHM$. We write $m_i$ with respect to the basis $\CB$ as a vector $(m_{i,1},\ldots,m_{i,r})^T\in\BC\dpl z\dpr^{\oplus r}$. Then the first coordinates of the elements of $M[z]$ have the form $\sum_{i,j,k\ge0}b_{i,j,k} z^k\tau_M^{sj}(\sigma^{sj*}m_{i,1})\;=\;\sum_{i,j,k\ge0}b_{i,j,k} z^{k-sjd_1/r_1}\sigma^{sj*}m_{i,1}$ for $b_{i,j,k}\in\BC$. Since $k-sjd_1/r_1\ge0$ for all $j$ and $k$, all these terms lie in $z^N\BC\dbl z\dbr$ for a suitable $N\in\BZ$. In particular, elements of $\ulHM$ with first coordinate outside $z^N\BC\dbl z\dbr$ can not belong to the $z$-adic closure of $M[z]$. This contradiction shows that our assumption was false and $d_1>0$.

\medskip\noindent
\ref{PropWeights_c} If the described extension $\olM$ of $\ulM$ exists, then $\olM\otimes_{\CO_{C_\BC}}\BC\dbl z\dbr$ is a $\BC\dbl z\dbr$-lattice inside $\ulHM$ on which $z^d\tau_M^r$ is an isomorphism and $\ulM$ is pure of weight $\tfrac{d}{r}$ by Definition~\ref{Def2.1}\ref{Def2.1_a}.

\medskip\noindent
We prove \ref{PropWeights_b} and the remaining implication of \ref{PropWeights_c}. Since $m_i:=\lceil\frac{d_i}{r_i}\rceil\le n$ for all $i$, we can define $\olM$ by requiring that $\olM\otimes_{\CO_{C_\BC}}\BC\dbl z\dbr$ is equal to the sum of the tautological $\BC\dbl z\dbr$-lattices $\BC\dbl z\dbr^{\oplus r_i}$ inside $\ulHM_{d_i,r_i}$ in \eqref{EqStandardZIsocrystal}. Then $\olM$ has the desired properties. 
\end{proof}

\begin{proposition}\label{PropTau_MInvZDense}
If $d_i>0$ for all $i$, then the set $\bigcup_{n\in\BN_{>0}}\sdsigma^{n*}\tau_M^{-n}(M)$ is $z$-adically dense in $\wh M\,:=\,M\otimes_{A_{\BC}}\BC\dpl z\dpr$.
\end{proposition}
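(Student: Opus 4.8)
The plan is to work inside $\wh M=M\otimes_{A_\BC}\BC\dpl z\dpr$ and to exploit that the operators $\sdsigma^{n*}\tau_M^{-n}$ are bijective semilinear self-maps of $\wh M$ whose inverses blow up a cleverly chosen lattice. First I would note that $J$, and more generally each $\sigma^{j*}J$, becomes a unit in $\BC\dpl z\dpr$, since $\Var(\sigma^{j*}J)\subset C_\BC$ does not meet $\infty_{\SSC\BC}$ (here $\infty\in C(\BF_q)$ is used); hence $\tau_M$ and all its powers extend to $\BC\dpl z\dpr$-linear automorphisms of $\wh M$. By Remark~\ref{RemSigmaIsFlat} the map $\Psi_n:=\sdsigma^{n*}\circ\tau_M^{-n}$ is then an additive, $\sdsigma^{n*}$-semilinear bijection $\wh M\to\wh M$ which fixes $z$, with inverse $\Phi_n:=\tau_M^{n}\circ\sigma^{n*}$; and unwinding the base change along $A_\BC\bigl[\bigl(\prod_{j<n}\sigma^{j*}J\bigr)^{-1}\bigr]\to\BC\dpl z\dpr$ shows that the set $\sdsigma^{n*}\tau_M^{-n}(M)$ of the statement is precisely the image $\Psi_n(M)$ of the submodule $M\subset\wh M$. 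Thus the assertion becomes: for every $\BC\dbl z\dbr$-lattice $\Lambda\subset\wh M$ and every $N\in\BN_0$ one has $\Psi_n(M)+z^N\Lambda=\wh M$ for all $n\gg0$.

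The hypothesis $d_i>0$ enters only through the following. Using the Dieudonn\'e--Manin decomposition $\ulHM\cong\bigoplus_i\ulHM_{d_i,r_i}$ of Proposition~\ref{PropPure}, let $M_\infty=\bigoplus_i M_\infty^{(i)}$, where $M_\infty^{(i)}\subset\ulHM_{d_i,r_i}$ is the tautological lattice $\BC\dbl z\dbr^{\oplus r_i}$ from \eqref{EqStandardZIsocrystal}; since the decomposition is one of $z$-isocrystals, $\sigma^*$ respects it and $\tau_M^{r_i}(\sigma^{r_i*}M_\infty^{(i)})=z^{-d_i}M_\infty^{(i)}$. Put $R:=\operatorname{lcm}_i r_i$ and $c:=\min_i d_iR/r_i$. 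Because $d_i>0$ and $r_i\mid R$, the numbers $d_iR/r_i$ are positive integers, so $c\ge1$, and iterating the displayed identity (using that $\sigma^{r_i*}$ fixes $z$) gives
\[
\Phi_{Rk}(M_\infty)=\tau_M^{Rk}\bigl(\sigma^{Rk*}M_\infty\bigr)=\bigoplus_i z^{-k\,d_iR/r_i}M_\infty^{(i)}\ \supseteq\ z^{-ck}M_\infty\qquad\text{for all }k\ge1 .
\]

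The second ingredient is global: $M$ is a finite projective $A_\BC$-module; let $\wt M$ be the associated vector bundle on $\Spec A_\BC$ and $\olM$ a coherent extension of $\wt M$ to the proper curve $C_\BC$, with completed stalk $\Lambda_\infty\subset\wh M$ at $\infty_{\SSC\BC}$. Since $\mathcal O(\infty_{\SSC\BC})$ is ample on $C_\BC$, Serre vanishing applied to the twists $\olM(L\cdot\infty_{\SSC\BC})$ yields an $L_0\in\BN_0$ with $H^1\bigl(C_\BC,\olM(L\cdot\infty_{\SSC\BC})\bigr)=0$ for all $L\ge L_0$; the standard exact sequence $0\to H^0(C_\BC,\olM(L\infty_{\SSC\BC}))\to M\oplus z^{-L}\Lambda_\infty\to\wh M\to H^1(C_\BC,\olM(L\infty_{\SSC\BC}))\to0$ then gives $M+z^{-L}\Lambda_\infty=\wh M$ for $L\ge L_0$. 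As any two $\BC\dbl z\dbr$-lattices in $\wh M$ are commensurable, after enlarging $L_0$ we may replace $\Lambda_\infty$ by the lattice $M_\infty$ of the previous step: $M+z^{-L}M_\infty=\wh M$ for all $L\ge L_0$.

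Finally I would combine the two. Given $N$, choose $k$ with $ck\ge N+L_0$ and set $n=Rk$. Since $\Psi_n$ is an additive bijection with inverse $\Phi_n$, and $\Phi_n$ fixes $z$,
\[
\Psi_n(M)+z^N M_\infty=\Psi_n\!\bigl(M+z^N\Phi_n(M_\infty)\bigr)\ \supseteq\ \Psi_n\!\bigl(M+z^{N-ck}M_\infty\bigr)\ \supseteq\ \Psi_n\!\bigl(M+z^{-L_0}M_\infty\bigr)=\Psi_n(\wh M)=\wh M ,
\]
so $\Psi_n(M)+z^NM_\infty=\wh M$. As $N$ was arbitrary and $\{z^NM_\infty\}_{N\ge0}$ is cofinal among the $z$-adic neighbourhoods of $0$, the union $\bigcup_{n\in\BN_{>0}}\Psi_n(M)$ (indeed already $\bigcup_k\Psi_{Rk}(M)$) is $z$-adically dense in $\wh M$. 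The hard part is not this last formal manipulation but setting up the two inputs cleanly: converting ``$d_i>0$'' into the explicit geometrically expanding family $\Phi_{Rk}(M_\infty)\supseteq z^{-ck}M_\infty$ via the normal form \eqref{EqStandardZIsocrystal}, and controlling the non-lattice module $M$ well enough --- through $H^1$-vanishing on $C_\BC$ --- to know that it already fills $\wh M$ modulo a boundedly shrunk lattice; the $\sdsigma$-semilinear bookkeeping needed to justify that $\Psi_n$ is a bijection with inverse $\Phi_n$ and that $\sdsigma^{n*}\tau_M^{-n}(M)=\Psi_n(M)$ is routine but should be carried out with care.
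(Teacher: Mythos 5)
Your proof is correct, but it takes a genuinely different route from the paper's. The paper reduces to $A=\BF_q[t]$ via a finite flat inclusion so that $M$ becomes free over $\BC[t]$, uses the Dieudonn\'e--Manin decomposition together with a bound on the base-change matrix $U$ between the algebraic basis and the isocrystal basis to show that some power $\tau_M^{-s}$ is represented, in the chosen $\BC[t]$-basis, by a matrix with entries in $\tilde z\,\BC\dbl\tilde z\dbr$, and then approximates a given $x$ by an explicit recipe (Lemma~\ref{LemmaTau_MInvZDense}): expand $\tau_M^{sn}(\sigma^{sn*}x)$ as a Laurent series in $\tilde z=\tfrac{1}{t}$ and take its polynomial part $y\in\BC[t]^r\cong M$. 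The splitting $\BC\dpl\tilde z\dpr^r=\BC[t]^r\oplus\tilde z\,\BC\dbl\tilde z\dbr^r$ underlying that truncation is precisely the genus-zero, free-module avatar of your identity $M+z^{-L}M_\infty=\wh M$, which you instead derive from Serre vanishing and the formal-glueing exact sequence on the projective curve $C_\BC$; likewise your lattice estimate $\Phi_{Rk}(M_\infty)\supseteq z^{-ck}M_\infty$ plays the role of the paper's matrix estimate $\sigma^{s*}(U)TU^{-1}\in\tilde z\,\BC\dbl\tilde z\dbr^{r\times r}$. Your version avoids the reduction to $\BF_q[t]$ and the bookkeeping with $U$, and works uniformly for general $A$, at the cost of invoking coherent cohomology on $C_\BC$; the paper's argument is more elementary and produces an explicit approximant. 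Both hinge on the same two inputs: the hypothesis $d_i>0$ forces $\tau_M^{-n}$ to contract a lattice at $\infty$, and $M$ fills $\wh M$ up to a boundedly shrunk lattice.
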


\begin{proof}
We choose a finite flat inclusion $\BF_q[t]\into A$ and set $\tilde z:=\tfrac{1}{t}$. Then $M$ is a finite (locally) free $\BC[t]$-module, say of rank $r$ and $\wh M=M\otimes_{\BC[t]}\BC\dpl\tilde z\dpr$. We choose a $\BC[t]$-basis $\CB$ of $M$. By Proposition~\ref{PropPure} there is a $\BC\dpl z\dpr$-isomorphism $\ulHM\cong\bigoplus_i\ulHM_{d_i,r_i}$. We let $\whCB$ be the $\BC\dpl\tilde z\dpr$-basis of $\wh M$ obtained from the standard basis of the $\ulHM_{d_i,r_i}$ given by \eqref{EqStandardZIsocrystal} and from the choice of a $\BC\dbl\tilde z\dbr$-basis of $\BC\dbl z\dbr$. The base change between $\whCB$ and $\CB$ is given by a matrix $U\in\GL_r\bigl(\BC\dpl\tilde z\dpr\bigr)$. There is an integer $N\ge0$ such that $U,U^{-1}\in\tilde z^{-N}\BC\dbl\tilde z\dbr^{r\times r}$. By our assumption $d_i>0$ and the explicit form of the $\ulHM_{d_i,r_i}$ from \eqref{EqStandardZIsocrystal} there is a positive integer $s$ such that the matrix $T$ representing $\tau_M^{-s}$ with respect to the basis $\whCB$ lies in $\tilde z^{2N+1}\BC\dbl\tilde z\dbr^{r\times r}$. Therefore, the matrix $\sigma^{s*}(U)TU^{-1}$ representing $\tau_M^{-s}$ with respect to the basis $\CB$ lies in $\tilde z\,\BC\dbl\tilde z\dbr^{r\times r}$. Now the proposition is a consequence of the following
\end{proof}

\begin{lemma}\label{LemmaTau_MInvZDense}
If $\tau_M^{-s}\in\tilde z\,\BC\dbl\tilde z\dbr^{r\times r}$. Then for all $x\in\BC\dpl\tilde z\dpr^{r}$ and for all $n\in\BN_0$ there exists a $y\in\BC[t]^{r}$ such that $x-\sdsigma^{sn*}\tau_M^{-sn}(y)\in \tilde z^{n+1}\BC\dbl\tilde z\dbr^r$.
\end{lemma}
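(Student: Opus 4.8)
The plan is to make the matrix of $\sdsigma^{sn*}\tau_M^{-sn}$ explicit enough to see two features of it, and then to conclude by a single truncation. I keep the $\BC[t]$-basis $\CB$ of $M$ chosen in the proof of Proposition~\ref{PropTau_MInvZDense}, write $T$ for the matrix of $\tau_M^{-s}$ with respect to $\CB$ (so the hypothesis reads $T\in\tilde z\,\BC\dbl\tilde z\dbr^{r\times r}$), and let $S_n$ be the matrix of $\sdsigma^{sn*}\tau_M^{-sn}$ with respect to $\CB$ (and $\sdsigma^{sn*}\CB$ on the source), so that in coordinates $\sdsigma^{sn*}\tau_M^{-sn}(y)=S_n\,y$ for $y\in\BC[t]^{r}$. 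Decomposing $\tau_M^{-sn}=\ssigma^{(n-1)s*}(\tau_M^{-s})\circ\cdots\circ\ssigma^{s*}(\tau_M^{-s})\circ\tau_M^{-s}$ and applying the functor $\sdsigma^{sn*}$, using that $\ssigma^{*}$ and $\sdsigma^{*}=(\ssigma^{*})^{-1}$ are mutually inverse endomorphisms of $A_{\BC}$ (so $\sdsigma^{sn*}\circ\ssigma^{js*}=\sdsigma^{(n-j)s*}$ for $0\le j\le n$), one obtains
\[
S_n\;=\;\sdsigma^{s*}(T)\cdot\sdsigma^{2s*}(T)\cdots\sdsigma^{ns*}(T)\,.
\]

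Now I would record the two facts I need. First, $\sdsigma^{*}$ fixes $\tilde z$ and maps $\BC\dbl\tilde z\dbr$ into itself, so each of the $n$ factors of $S_n$ again lies in $\tilde z\,\BC\dbl\tilde z\dbr^{r\times r}$; hence $S_n\in\tilde z^{\,n}\,\BC\dbl\tilde z\dbr^{r\times r}$. Second, $S_n\in\GL_r\bigl(\BC\dpl\tilde z\dpr\bigr)$: since $(M,\tau_M)$ is an $A$-motive, $\tau_M$, hence $\tau_M^{-s}$, its $\ssigma^{*}$-pullbacks, and the composite $\sdsigma^{sn*}\tau_M^{-sn}$ are isomorphisms over the relevant localizations of $A_{\BC}$ at $J$ and its $\ssigma^{*}$-, $\sdsigma^{*}$-translates; each such ideal contains an element $t\otimes1-1\otimes c$ with $c\in\BC$, which maps to $\tilde z^{-1}(1-c\,\tilde z)\in\BC\dpl\tilde z\dpr^{\times}$, so all these localizations become trivial after extending scalars to $\BC\dpl\tilde z\dpr$, and $\sdsigma^{sn*}\tau_M^{-sn}$ base-changes to a $\BC\dpl\tilde z\dpr$-linear isomorphism. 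In particular $\det T\neq0$ in $\BC\dpl\tilde z\dpr$, whence $\det S_n=\prod_{j=1}^{n}\sdsigma^{js*}(\det T)\neq0$.

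Granting these, the conclusion is immediate. Given $x\in\BC\dpl\tilde z\dpr^{r}$, put $y_0:=S_n^{-1}x\in\BC\dpl\tilde z\dpr^{r}$ and let $y\in\BC[t]^{r}=\BC[\tilde z^{-1}]^{r}$ be the sum of those monomials of $y_0$ involving $\tilde z^{\,k}$ with $k\le0$, so that $y_0-y\in\tilde z\,\BC\dbl\tilde z\dbr^{r}$. Then
\[
x-\sdsigma^{sn*}\tau_M^{-sn}(y)\;=\;S_n y_0-S_n y\;=\;S_n(y_0-y)\;\in\;\tilde z^{\,n}\,\BC\dbl\tilde z\dbr^{r\times r}\cdot\tilde z\,\BC\dbl\tilde z\dbr^{r}\;\subseteq\;\tilde z^{\,n+1}\,\BC\dbl\tilde z\dbr^{r}\,,
\]
which is the assertion. (If one prefers to read $\sdsigma^{sn*}\tau_M^{-sn}(y)$ as $S_n\,\sdsigma^{sn*}(y)$, one simply replaces $y$ by $\ssigma^{sn*}(y)$ at the end, still an element of $\BC[t]^{r}$ since $\BC$ is perfect; the case $n=0$ is this argument with $S_0$ the identity matrix.)

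I expect the only places needing care to be the two steps in the middle paragraph: pinning down the composition formula for $S_n$ — in particular that \emph{each} factor, not merely the product, is divisible by $\tilde z$, which is exactly what upgrades $\tilde z^{\,n}$ to $\tilde z^{\,n+1}$ — and checking that $S_n$ is invertible over $\BC\dpl\tilde z\dpr$, which uses both that $\tau_M$ is a genuine isomorphism and that the completion in the variable $\tilde z=1/t$ is taken away from $\Var(J)$ (and its twists). The concluding truncation is purely formal.
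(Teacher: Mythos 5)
Your proof is correct and is essentially the paper's own argument: your $y_0=S_n^{-1}x$ is the paper's $\tau_M^{sn}(\sigma^{sn*}(x))=\sum_i b_i\tilde z^i$, your truncation $y$ is its $\sum_{i\le0}b_i t^{-i}$, and the divisibility $S_n\in\tilde z^{\,n}\,\BC\dbl\tilde z\dbr^{r\times r}$ is exactly the reason the remainder lands in $\tilde z^{\,n+1}\BC\dbl\tilde z\dbr^r$. The extra paragraph verifying invertibility of $S_n$ over $\BC\dpl\tilde z\dpr$ only makes explicit what the paper takes for granted from the $z$-isocrystal structure of $\wh{M}$.
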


\begin{proof}
Write $\tau_M^{sn}(\sigma^{sn*}(x))=\sum_i b_i\tilde z^i$ with $b_i\in\BC^r$ and set $y:=\sum_{i\le0}b_i t^{-i}\in\BC[t]^r$. Then $x-\sdsigma^{sn*}\tau_M^{-sn}(y)=\sdsigma^{sn*}\tau_M^{-sn}\bigl(\sum_{i>0}b_i\tilde z^i\bigr)\in \tilde z^{n+1}\BC\dbl\tilde z\dbr^r$ because $\sum_{i>0}b_i\tilde z^i\in \tilde z\,\BC\dbl\tilde z\dbr^r$.
\end{proof}

\medskip

\subsection{Uniformizability}\label{SectAMotUniformizability}

In order to define the notion of uniformizability (also called rigid analytic triviality) for $A$-motives we have to introduce some notation of rigid analytic geometry as in \cite{HartlPink1,HartlPink2}. See \cite{Bosch} or \cite{BGR} for a general introduction to rigid analytic geometry. With the curve $C_\BC$ and its open affine part $\dotC_\BC$ one can associate by \cite[\S9.3.4]{BGR} rigid analytic spaces $\FC_\BC:=(C_\BC)^\rig$ and $\dotFC_\BC:=(\dotC_\BC)^\rig=\FC_\BC\setminus\{\infty_{\SSC\BC}\}$ where, using our convention that the point $\infty\in C$ is $\BF_q$-rational, $\infty_{\SSC\BC}\in C_\BC$ is the unique point above $\infty\in C$. By construction, the underlying sets of $\FC_\BC$ and $\dotFC_\BC$ are the sets of $\BC$-valued points of $C_\BC$ and $\dotC_\BC$, respectively.
For any open rigid analytic subspace $\FU\subset\FC_\BC$ we let $\CO(\FU) := \Gamma(\FU,\CO_\FU)$ denote the ring of regular functions on~$\FU$. The endomorphism $\ssigma$ of $C_\BC$ induces endomorphisms of $\FC_\BC$ and $\dotFC_\BC$ which we denote by the same symbol $\ssigma$.

Let $\CO_\BC$ be the valuation ring of $\BC$ and let $\kappa_\BC$ be its residue field. By the valuative criterion of properness every point of $\FC_\BC=C_\BC(\BC)=C(\BC)$ extends uniquely to an $\CO_\BC$-valued point of $C$ and in the reduction gives rise to a $\kappa_\BC$-valued point of $C$. This yields a reduction map $red\colon \FC_\BC\to C(\kappa_\BC)$. The curve $C_{\kappa_\BC}$ is non-singular and, due to our convention $\infty\in C(\BF_q)$, the subscheme $\{\infty\}\times_{\Spec\BF_q}\Spec\kappa_\BC\subset C_{\kappa_\BC}$ consists of a single point which we call $\infty_{\kappa_\BC}$. So \cite[Proposition~2.2]{BL85} implies that the preimage $\Disc$ of $\infty_{\kappa_\BC}$ under $red$ is an open rigid analytic unit disc in $\FC_\BC$ around $\infty_{\SSC\BC}$. Without the convention $\infty\in C(\BF_q)$ the subscheme $\{\infty\}\times_{\Spec\BF_q}\Spec\kappa_\BC\subset C_{\kappa_\BC}$ decomposes into finitely many points and there is a corresponding disc for each one of them; see \cite[\S\,11]{HartlPink2}. Let further $\PDisc=\Disc\setminus\{\infty_{\SSC\BC}\}$ be the punctured open unit disc around $\infty_{\SSC\BC}$ in $\FC_\BC$. By \cite[Proposition~2.2]{BL85} both discs have $z$ as a coordinate. By Lemma~\ref{LemmaZ-Zeta} the power series ring $\BC\dbl z-\zeta\dbr$ is also canonically isomorphic to the completion of the local ring of $\FC_\BC$ at the closed point $\Var(J)$, respectively of $\Disc$ and $\PDisc$ at the point $\{z=\zeta\}\in\Disc$. The complement $\FC_\BC\setminus\Disc$ of $\Disc$ in $\FC_\BC$ equals the preimage of the open affine curve $\dotC_{\kappa_\BC}$ under the reduction map $red$ and is hence affinoid. 

For example, if $C=\BP^1_{\BF_q}$ and $A=\BF_q[t]$ we can give the following explicit description 
\begin{equation}\label{EqC<t>}
\CO(\FC_\BC\setminus\Disc)\;=\;\BC\langle t\rangle\;:=\;\bigl\{\,\sum_{i=0}^\infty b_i t^i\colon b_i\in\BC,\, \lim\limits_{i\to\infty}|b_i|= 0\,\bigr\}
\end{equation}
and $\FC_\BC\setminus\Disc=\Spm\BC\langle t\rangle$ is the closed unit disc inside $C(\BC)\setminus\{\infty_{\SSC\BC}\}=\BC$ on which the coordinate $t$ has absolute value less or equal to $1$. Also we can take $z=\tfrac{1}{t}$ as the coordinate on the disc $\Disc$. For general $C$ we may choose an element $a\in A\setminus\BF_q$ and consider the finite flat morphisms $\BF_q[t]\to A$ and $\BC[t]\to A_\BC$ which send $t$ to $a$. Then $\CO(\FC_\BC\setminus\Disc)=A_\BC\otimes_{\BC[t]}\BC\langle t\rangle$ and $\FC_\BC\setminus\Disc=\Spm(A_\BC\otimes_{\BC[t]}\BC\langle t\rangle)$.

The spaces $\dotFC_\BC$, $\Disc$ and $\PDisc$ are quasi-Stein spaces in the sense of Kiehl~\cite[\S2]{KiehlAB}. In particular, the global section functors are equivalences between the categories of locally free coherent sheaves on these spaces and the categories of finitely generated projective modules over their rings of global sections; see Gruson~\cite[Chapter~V, Theorem~1 and Remark on p.~85]{Gruson68}.

\begin{definition}\label{DefLambda}
For an $A$-motive $\ulM$, we define the \emph{$\tau$-invariants}
\[
\Lambda(\ulM) \;:=\; \bigl(M\otimes_{A_{\BC}}\CO(\FC_\BC\setminus\Disc)\bigr)^\tau\;:=\; \bigl\{\,m\in M\otimes_{A_{\BC}}\CO(\FC_\BC\setminus\Disc):\es \tau_M(\sigma^\ast m)=m\,\bigr\}\,.
\]
We also set $\Hodge^1(\ulM):=\Lambda(\ulM)\otimes_A Q$.
\end{definition}

Since the ring of $\ssigma^*$-invariants in $\CO(\FC_\BC\setminus\Disc)$ is equal to $A$, the set $\Lambda(\ulM)$ is an $A$-module. By \cite[Lemma~4.2(b)]{BoeckleHartl}, it is finite projective of rank at most equal to $\rank\ulM$. Therefore, also $\Hodge^1(\ulM)$ is a finite dimensional $Q$-vector space. 

\begin{definition}\label{DefUnifGlobal}
An $A$-motive $\ulM$ is called \emph{uniformizable} (or \emph{rigid analytically trivial}) if the natural homomorphism
\[
\TS h_\ulM\colon \Lambda(\ulM) \otimes_A \CO(\FC_\BC\setminus\Disc) \longto M\otimes_{A_{\BC}}\CO(\FC_\BC\setminus\Disc)\,,\quad \lambda\otimes f\mapsto f\cdot\lambda,
\]
is an isomorphism. The full subcategory of $\AMotCatIsog$ consisting of all uniformizable $A$-motives is denoted $\AUMotCatIsog$. The full subcategory of $\AMMotCatIsog$ consisting of all uniformizable mixed $A$-motives is denoted $\AMUMotCatIsog$.
\end{definition}

\begin{remark}\label{RemTaelman2}
\rm If $A=\BF_\sq[t]$, then the category $\AUMotCatIsog$ is canonically equivalent to the category $t\CM_{\rm a.t.}\open$ of Taelman~\cite[Def.~3.2.8]{Taelman} in view of Remark~\ref{Rem3.3}. 
\end{remark}

\begin{example}\label{ExCarlitzMotiveUnif}
(a) $\UOne(0)=(A_\BC,\tau=\id_{A_\BC})$ is uniformizable, because $\Lambda\bigl(\UOne(0)\bigr)=A$ and $A\otimes_A\CO(\FC_\BC\setminus\Disc)=A_\BC\otimes_{A_\BC}\CO(\FC_\BC\setminus\Disc)$.

\medskip\noindent
(b) Let $C=\BP^1_{\BF_q}$, $A=\BF_q[t]$, $z=\tfrac{1}{t}$, $\theta:=\charmorph(t)=\frac{1}{\zeta}\in\BC$. The \emph{Carlitz $t$-motive} $\ulM=(A_\BC,\tau_M=t-\theta)$ is uniformizable. Namely, we set $\ell_\zeta^{\SSC -}:=\prod_{i=0}^\infty(1-\zeta^{q^i}t)\in \CO(\dotFC_\BC)$ and choose an element $\eta\in\BC$ with $\eta^{q-1}=-\zeta$. Then $\eta\ell_\zeta^{\SSC -}\in\Lambda(\ulM)\setminus\{0\}$. Since $\eta\ell_\zeta^{\SSC -}$ has no zeroes outside $\Disc$ it generates the $\CO(\FC_\BC\setminus\Disc)$-module $M\otimes_{A_{\BC}}\CO(\FC_\BC\setminus\Disc)=\CO(\FC_\BC\setminus\Disc)$ and so $h_\ulM$ is an isomorphism and $\ulM$ is uniformizable.
\end{example}

The following criterion for uniformizability is well known.

\begin{lemma}\label{LemmaUniformizable}
Let $\ulM$ be an $A$-motive of rank $r$. 
\begin{enumerate}
 \item \label{LemmaUniformizableA} The homomorphism $h_\ulM$ is injective and satisfies $h_\ulM\circ(\id_{\Lambda(\ulM)}\otimes\id)=(\stau_M\otimes\id)\circ \ssigma^\ast h_\ulM$.
 \item \label{LemmaUniformizableB} $\ulM$ is uniformizable if and only if $\rank_A\Lambda(\ulM)=r$.
\end{enumerate}
\end{lemma}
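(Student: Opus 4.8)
The plan is to separate the two assertions, with \ref{LemmaUniformizableA} being essentially formal and the nontrivial implication of \ref{LemmaUniformizableB} carrying all the weight. Throughout write $R:=\CO(\FC_\BC\setminus\Disc)$ and $M_R:=M\otimes_{A_{\BC}}R$; since $\Var(J)\subset\Disc$, the isomorphism $\tau_M$ of $\GlobMotRing$-modules restricts to a genuine $\sigma$-semilinear automorphism $\tau$ of $M_R$, and by definition $\Lambda(\ulM)=M_R^\tau$. Recall also that $R^\sigma=A$ and that $R$ is a normal affinoid domain.

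For \ref{LemmaUniformizableA}, the functional equation is obtained by unwinding definitions from the relation $\tau_M(\sigma^\ast\lambda)=\lambda$ for $\lambda\in\Lambda(\ulM)$: it records precisely that $h_\ulM$ intertwines $\tau$ on $M_R$ with the ``constant'' semilinear operator $\id_{\Lambda(\ulM)}\otimes\sigma$ on $\Lambda(\ulM)\otimes_A R$. For the injectivity I would first reduce, using that $\Lambda(\ulM)\otimes_A R$ is $R$-torsion free and that $R\to L:=\mathrm{Frac}(R)$ is flat, to showing that $A$-linearly independent elements of $\Lambda(\ulM)$ stay linearly independent over $L$. This is the classical shortest-relation argument: given a nontrivial $L$-linear relation among such elements, of minimal length and normalised to have a coefficient equal to $1$, apply $\tau$ and subtract to obtain a strictly shorter relation, which forces every coefficient to lie in $L^\sigma$; and $L^\sigma=Q$ because a $\sigma$-invariant element of $L$ has $\sigma$-invariant divisor on the quasi-compact space $\FC_\BC\setminus\Disc$ and is therefore defined over $Q$. (This injectivity, together with the functional equation, also belongs to the circle of ideas of \cite[Lemma~4.2]{BoeckleHartl}.)

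For \ref{LemmaUniformizableB} one implication is immediate: if $h_\ulM$ is an isomorphism then $\Lambda(\ulM)\otimes_A R\cong M_R$ is locally free of rank $r$, so $\rank_A\Lambda(\ulM)=r$. For the converse assume $\rank_A\Lambda(\ulM)=r$. By \ref{LemmaUniformizableA}, $h_\ulM$ is then an injective homomorphism between finite locally free $R$-modules of rank $r$, so its cokernel $C$ is a finite-length $R$-module (torsion of full rank). The crucial point is that $\tau$ stabilises $\im(h_\ulM)$ --- indeed $\tau\bigl(\sum_i f_i\lambda_i\bigr)=\sum_i f_i^\sigma\lambda_i$ for $\lambda_i\in\Lambda(\ulM)$, $f_i\in R$ --- and stabilises it bijectively, $\sigma$ being bijective on $R$; hence $\tau$ descends to a $\sigma$-semilinear automorphism $\bar\tau$ of $C$. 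Now apply the snake lemma to $0\to\im(h_\ulM)\to M_R\to C\to 0$ together with the additive maps ``$\tau-\id$''. Since $h_\ulM$ identifies $\im(h_\ulM)$ with $\Lambda(\ulM)\otimes_A R$ carrying $\tau$ to $\id_{\Lambda(\ulM)}\otimes\sigma$, one has $\im(h_\ulM)^\tau=M_R^\tau=\Lambda(\ulM)$ with the inclusion being the identity; the resulting exact sequence $\im(h_\ulM)^\tau\to M_R^\tau\to C^{\bar\tau}\to\coker\bigl(\tau-\id\colon\im(h_\ulM)\to\im(h_\ulM)\bigr)$ therefore forces the middle arrow to vanish, so that $C^{\bar\tau}$ embeds into $\coker\bigl(\tau-\id\colon\im(h_\ulM)\to\im(h_\ulM)\bigr)\cong\Lambda(\ulM)\otimes_A\bigl(R/(\sigma-1)R\bigr)$, using that $\sigma-\id$ is $A$-linear. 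The latter module vanishes because $\sigma-1$ is surjective on $R$ (additive Artin--Schreier solvability over $R$, valid because $\BC$ is algebraically closed). Hence $C^{\bar\tau}=0$. Finally, a \emph{nonzero} finite-length $R$-module equipped with a $\sigma$-semilinear automorphism always has nonzero $\bar\tau$-invariants: its support is a finite $\sigma$-stable set of maximal ideals, and after reducing modulo one of them one produces a fixed vector by Lang's theorem over the algebraically closed field $\BC$ and lifts it by successive approximation. Therefore $C=0$, $h_\ulM$ is an isomorphism, and $\ulM$ is uniformizable.

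The main obstacle is the reverse implication of \ref{LemmaUniformizableB}, and within it the two ``standard but not free'' inputs just used: the surjectivity of $\sigma-1$ on $\CO(\FC_\BC\setminus\Disc)$ (solvability of additive Artin--Schreier equations with the appropriate growth bounds on this affinoid) and the vanishing statement for finite-length $\sigma$-modules, which rests on Lang's theorem over $\BC$. A minor, purely bookkeeping point is that $\Lambda(\ulM)$ need not be free over $A$; this causes no difficulty, since the $\otimes_A$'s above are along the projective, hence flat, $A$-module $\Lambda(\ulM)$, so the relevant kernels and cokernels are preserved.
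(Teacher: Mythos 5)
Your argument is genuinely different from what the paper does: the paper's own ``proof'' of this lemma is a one-line citation to \cite[Lemma~4.2(b) and (c)]{BoeckleHartl}, so what you are really doing is reconstructing the content of that reference. The overall architecture is sound, and the inputs you invoke (surjectivity of $\sigma-1$ on $\CO(\FC_\BC\setminus\Disc)$, a Lang-type fixed-point statement, flatness of $\Lambda(\ulM)$ over $A$) are the correct ones and are used elsewhere in the paper in exactly this spirit (cf.\ the proof of Lemma~\ref{Lemma2.3}, which cites \cite[Proposition~6.1]{BoeckleHartl} for the surjectivity of $\id-\tau$, and the proof of Lemma~\ref{LemmaDualUniformizable}\ref{LemmaDualUniformizableC}, which carries out the Artin--Schreier solvability in coordinates). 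The snake-lemma computation, including the identification of $\im(h_\ulM)^\tau$ with $M_R^\tau=\Lambda(\ulM)$ and of $\coker\bigl(\tau-\id|_{\im h_\ulM}\bigr)$ with $\Lambda(\ulM)\otimes_A\coker(\sigma-1)$, is correct.

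Two steps are compressed to the point of being incomplete, although both conclusions are true. First, the justification of $L^\sigma=Q$ for $L=\mathrm{Frac}\,\CO(\FC_\BC\setminus\Disc)$: from ``$\sigma$-invariant divisor'' you cannot directly read off ``defined over $Q$''. What the finiteness of the $\sigma$-orbits actually gives you is that the support of the denominator ideal $J=\{r\in R:rf\in R\}$ consists of $\bar\BF_q$-rational points, each projecting to a \emph{closed} point of $\dotC=\Spec A$; from that one gets a nonzero $a\in J\cap A$, hence $af\in R^\sigma=A$ and so $f\in Q$. Without this extra step the ``therefore'' is unjustified. Second, in the final Lang step: you cannot in general ``reduce modulo one'' maximal ideal $\mathfrak m$ in the support of $C$, because $\bar\tau(\mathfrak m C)\subset\sigma(\mathfrak m)C$, so $\bar\tau$ does \emph{not} descend to $C/\mathfrak m C$ unless $\sigma(\mathfrak m)=\mathfrak m$, and the support need not contain a $\sigma$-fixed point. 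The clean fix is more elementary than your lifting/successive-approximation sketch: since $R$ is an affinoid $\BC$-algebra of dimension one with all residue fields equal to $\BC$, a nonzero finite-length $R$-module $C$ is a nonzero finite-dimensional $\BC$-vector space, and $\bar\tau$ is a bijective $q$-semilinear (over $\BC$) endomorphism of it, so $C\cong C^{\bar\tau}\otimes_{\BF_q}\BC$ by Lang's theorem \cite{Lang57} and $C^{\bar\tau}\ne0$ directly; no reduction modulo anything is needed.
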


\begin{proof}
Assertion \ref{LemmaUniformizableA} follows for example from \cite[Lemma~4.2(b)]{BoeckleHartl}, and assertion \ref{LemmaUniformizableB} from \cite[Lemma~4.2(c)]{BoeckleHartl}.
\end{proof}

\begin{lemma} \label{LemmaUniformizableBF_q[t]} 
Let $C=\BP^1_{\BF_q}$, $A=\BF_q[t]$, $A_\BC=\BC[t]$ and $\theta=\charmorph(t)$. Then $\CO(\FC_\BC\setminus\Disc\bigr)=\BC\langle t\rangle$; see \eqref{EqC<t>}. Let $\Phi=(\Phi_{ij})_{ij}\in\GL_r\bigl(\BC[t][\tfrac{1}{t-\theta}]\bigr)$ represent $\stau_M$ with respect to a $\BC[t]$-basis $\CB=(m_1,\ldots,m_r)$ of $M$, that is $\stau_M(\sigma^*m_j)=\sum_{i=1}^r\Phi_{ij}\,m_i$. Then $\ulM$ is uniformizable if and only if there is a matrix $\Psi\in\GL_{r}(\BC\langle t\rangle)$ such that
\[
\ssigma^\ast\Psi^T\;=\;\Psi^T\cdot\Phi\,,
\]
In that case, $\Psi$ is called a \emph{rigid analytic trivialization of $\Phi$}. It is uniquely determined up to multiplication on the right with a matrix in $\GL_r(\BF_q[t])$. The columns of $(\Psi^T)^{-1}$ are the coordinate vectors with respect to $\CB$ of an $\BF_q[t]$-basis $\CC$ of $\Lambda(\ulM)$. Moreover, with respect to the bases $\CC$ and $\CB$ the isomorphism $h_\ulM$ is represented by $(\Psi^T)^{-1}$.
\end{lemma}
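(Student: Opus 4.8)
The plan is to translate the statement into matrix language with respect to the fixed $\BC[t]$-basis $\CB$, reducing it to linear algebra over the Tate algebra $\BC\langle t\rangle$, and to invoke only two facts from the text: (i) $\CO(\FC_\BC\setminus\Disc)=\BC\langle t\rangle$, whose ring of $\ssigma^\ast$-invariants is $A=\BF_q[t]$, and (ii) Lemma~\ref{LemmaUniformizable} ($h_\ulM$ is injective, and $\ulM$ is uniformizable if and only if $\rank_A\Lambda(\ulM)=r$). First I would check that $\stau_M$ actually operates on $M\otimes_{\BC[t]}\BC\langle t\rangle$: since $z=1/t$ and $\zeta=\charmorph(z)=\theta^{-1}$ with $0<|\zeta|<1$, we have $|\theta|=|\zeta|^{-1}>1$, so $\theta$ lies outside the closed unit disc $\FC_\BC\setminus\Disc=\Spm\BC\langle t\rangle$ and $t-\theta$ is a unit there; hence $\stau_M$ extends to a $\ssigma^\ast$-semilinear bijection of $M\otimes_{\BC[t]}\BC\langle t\rangle$. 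Next, spreading the relation $\stau_M(\ssigma^\ast m_j)=\sum_i\Phi_{ij}m_i$ over a general element $m=\sum_jv_jm_j$ shows that, under the identification $M\otimes_{\BC[t]}\BC\langle t\rangle\cong\BC\langle t\rangle^{\oplus r}$ induced by $\CB$, the operator $m\mapsto\stau_M(\ssigma^\ast m)$ sends a coordinate vector $v$ to $\Phi\cdot\ssigma^\ast(v)$, where $\ssigma^\ast$ acts on $\BC\langle t\rangle^{\oplus r}$ coefficientwise by $b\mapsto b^q$. Consequently $m\in\Lambda(\ulM)$ if and only if its coordinate vector $v$ satisfies $\Phi\cdot\ssigma^\ast(v)=v$.

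Assume first that $\ulM$ is uniformizable. Then by Lemma~\ref{LemmaUniformizable} the module $\Lambda(\ulM)$ is projective of rank $r$ over the principal ideal domain $A$, hence free; pick an $A$-basis $\CC=(\lambda_1,\dots,\lambda_r)$ and let $W\in\BC\langle t\rangle^{r\times r}$ be the matrix whose columns are the $\CB$-coordinate vectors of $\lambda_1,\dots,\lambda_r$. Since $h_\ulM$ is an isomorphism, $W\in\GL_r(\BC\langle t\rangle)$. The invariance of the $\lambda_i$ translates, via the first step, into $\Phi\cdot\ssigma^\ast(W)=W$, i.e. $\ssigma^\ast(W)=\Phi^{-1}W$; putting $\Psi:=(W^{-1})^T$ one computes $\ssigma^\ast(\Psi^T)=\ssigma^\ast(W)^{-1}=W^{-1}\Phi=\Psi^T\cdot\Phi$, so $\Psi$ is a rigid analytic trivialization of $\Phi$. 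By construction the columns of $(\Psi^T)^{-1}=W$ are the $\CB$-coordinate vectors of $\CC$, and $h_\ulM$ carries $\CC$ to exactly those columns, so $h_\ulM$ is represented by $(\Psi^T)^{-1}$ with respect to $\CC$ and $\CB$.

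Conversely, given $\Psi\in\GL_r(\BC\langle t\rangle)$ with $\ssigma^\ast(\Psi^T)=\Psi^T\Phi$, set $W:=(\Psi^T)^{-1}$; a short computation gives $\Phi\cdot\ssigma^\ast(W)=W$. Then each of the $r$ columns of $W$ is the coordinate vector of an element of $\Lambda(\ulM)$, and since $W$ is invertible these elements $\lambda_1,\dots,\lambda_r$ are $\BC\langle t\rangle$-linearly independent, in particular $A$-linearly independent; hence $\rank_A\Lambda(\ulM)\ge r$, and as the rank is also $\le r$ by Lemma~\ref{LemmaUniformizable}, $\ulM$ is uniformizable. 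It remains to see that $\lambda_1,\dots,\lambda_r$ is actually an $A$-basis of $\Lambda(\ulM)$, not merely a submodule of finite index: since $W\in\GL_r(\BC\langle t\rangle)$, the $\lambda_i$ form a $\BC\langle t\rangle$-basis of $M\otimes_{\BC[t]}\BC\langle t\rangle$, so writing $\lambda\in\Lambda(\ulM)$ as $\lambda=\sum f_i\lambda_i$ with $f_i\in\BC\langle t\rangle$ and applying the $\ssigma^\ast$-semilinear operator $m\mapsto\stau_M(\ssigma^\ast m)$ gives $\lambda=\sum\ssigma^\ast(f_i)\lambda_i$, whence $\ssigma^\ast(f_i)=f_i$, i.e. $f_i\in A$.

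Finally, uniqueness: if $\Psi_1,\Psi_2$ both satisfy the equation, then $U:=\Psi_2^T(\Psi_1^T)^{-1}\in\GL_r(\BC\langle t\rangle)$ is $\ssigma^\ast$-invariant (a one-line check from $\ssigma^\ast(\Psi_i^T)=\Psi_i^T\Phi$), hence $U\in\GL_r(A)$ and $\Psi_2=\Psi_1U^T$; conversely right multiplication of any solution by $V\in\GL_r(\BF_q[t])$ preserves the equation, since the entries of $V$ are $\ssigma^\ast$-fixed. I do not expect a deep obstacle anywhere: once the matrix dictionary of the first step is in place, the rest is bookkeeping with the semilinear $\stau$-action. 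The one place requiring a genuine small idea rather than pure formalism is the span assertion in the converse direction — that the $r$ invariants produced from $\Psi$ generate $\Lambda(\ulM)$ over $A$ — and the coefficient-comparison argument above handles it cleanly, avoiding any appeal to the arithmetic of units in $\BC\langle t\rangle$; one must also not skip the preliminary check that $t-\theta$ is invertible in $\BC\langle t\rangle$, which is precisely where the hypothesis $0<|\zeta|<1$ enters and which makes $\stau_M$ act on $M\otimes_{\BC[t]}\BC\langle t\rangle$ at all.
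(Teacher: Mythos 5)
Your proof is correct and follows essentially the same route as the paper's: in both directions one identifies $(\Psi^T)^{-1}$ with the matrix whose columns are the $\CB$-coordinate vectors of an $\BF_q[t]$-basis of $\Lambda(\ulM)$, i.e.\ the matrix of $h_\ulM$, and the converse rests on the observation that the $\ssigma^\ast$-invariants of $\BC\langle t\rangle$ are $\BF_q[t]$ (your coefficient-comparison step is exactly the paper's remark that $\tau_M$ becomes the identity matrix in the new basis). The only additions are bookkeeping the paper leaves implicit, namely the invertibility of $t-\theta$ in $\BC\langle t\rangle$ and the uniqueness statement.
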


\begin{remark}
Here $(\ldots)^T$ denotes the transpose matrix. The matrix $\Psi$ will turn out to be Anderson's \emph{scattering matrix} and this is the reason why we work with $\Psi^T$ here; see Remark~\ref{RemScattering} below.
\end{remark}

\begin{proof}[Proof of Lemma~\ref{LemmaUniformizableBF_q[t]}]
Assume that $\ulM$ is rigid analytically trivial and choose an $\BF_q[t]$-basis $\CC$ of $\Lambda(\ulM)$. Let $(\Psi^T)^{-1}$ be the matrix representing the isomorphism $h_\ulM\colon\Lambda(\ulM)\otimes_{\BF_q[t]}\BC\langle t\rangle\isoto M\otimes_{\BC[t]}\BC\langle t\rangle$ with respect to the bases $\CC$ and $\CB$. Then $\Phi\cdot\ssigma^*(\Psi^T)^{-1}=(\Psi^T)^{-1}$ and $\Psi\in\GL_{r}(\BC\langle t\rangle)$ is a rigid analytic trivialization. Conversely, if there is a rigid analytic trivialization $\Psi$, then the columns of $(\Psi^T)^{-1}$ provide a $\BC\langle t\rangle$-basis of $M\otimes_{\BC[t]}\BC\langle t\rangle$, with respect to which $\tau_M$ is represented by the identity matrix $\Psi^T\,\Phi\,\sigma^*(\Psi^T)^{-1}=\Id_r$. Therefore, the columns of $(\Psi^T)^{-1}$ form an $\BF_q[t]$-basis $\CC$ of $\Lambda(\ulM)$ and $h_\ulM$ is represented with respect to the bases $\CC$ and $\CB$ by $(\Psi^T)^{-1}$. Therefore, $h_\ulM$ is an isomorphism and $\ulM$ is uniformizable.
\end{proof}

Before we can conclude that $\AUMotCatIsog$ and $\AMUMotCatIsog$ are neutral Tannakian categories over $Q$ with fiber functor $\ulM\mapsto\Hodge^1(\ulM)$, we need to state the following
\begin{proposition}\label{PropUnifAMotive}
\begin{enumerate}
\item \label{PropUnifAMotiveA}
Every $A$-motive which is isomorphic to a uniformizable $A$-motive in $\AMotCatIsog$ is itself uniformizable.
\item \label{PropUnifAMotiveB}
Every $A$-motive of rank $1$ is uniformizable.
\item \label{PropUnifAMotiveC}
If $\ulM$ and $\ulN$ are uniformizable $A$-motives, then also $\ulM\otimes\ulN$ and $\CHom(\ulM,\ulN)$ and $\ulM\dual$ are uniformizable with 
\begin{eqnarray*}
\Lambda(\ulM\otimes\ulN) & \cong & \Lambda(\ulM)\otimes_A\Lambda(\ulN) \qquad\text{and}\\[2mm]
\Lambda\bigl(\CHom(\ulM,\ulN)\bigr) & \cong & \Hom_A\bigl(\Lambda(\ulM),\Lambda(\ulN)\bigr) \qquad\text{and}\\[2mm]
\Lambda(\ulM\dual) & \cong & \Hom_A(\Lambda(\ulM),A)\,.
\end{eqnarray*}
\item \label{PropUnifAMotiveD}
If $\ulM$ and $\ulN$ are uniformizable, the natural map $\QHom(\ulM,\ulN) \to \Hom_{Q}(\Hodge^1(\ulM),\Hodge^1(\ulN))$,
\[
f\otimes a\;\longmapsto\; \Hodge^1(f\otimes a)\;:=\;a\cdot\bigl(h_\ulN^{-1}\circ (f\otimes\id)\circ h_\ulM|_{\Hodge^1(\ulM)}\bigr)
\]
for $f\in\Hom_{\AMotCat}(\ulM,\ulN)$ and $a\in Q$, is injective.
\end{enumerate}
\end{proposition}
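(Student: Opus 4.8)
The plan is to pass everything to the ring $R:=\CO(\FC_\BC\setminus\Disc)$ over which $\Lambda$ is formed, using that $R$ is flat over $A_\BC$ and that its ring of $\sigma^\ast$-invariants equals $A$. The basic observation, from Lemma~\ref{LemmaUniformizable}, is that $\ulM$ is uniformizable exactly when $h_\ulM$ is an isomorphism of $\sigma$-modules over $R$ from the \emph{trivial} $\sigma$-module $\bigl(\Lambda(\ulM)\otimes_A R,\tau_{\mathrm{triv}}\bigr)$, where $\tau_{\mathrm{triv}}$ is the canonical identification $\sigma^\ast\bigl(\Lambda(\ulM)\otimes_A R\bigr)=\Lambda(\ulM)\otimes_A R$, onto $\bigl(M\otimes_{A_\BC}R,\tau_M\otimes\id\bigr)$; and that the $\tau_{\mathrm{triv}}$-invariants of any trivial $\sigma$-module $(V\otimes_A R,\tau_{\mathrm{triv}})$ are $V$. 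Granting this, part~(c) becomes formal: I would tensor the isomorphisms $h_\ulM$ and $h_\ulN$ over $R$ and take $\sigma^\ast$-invariants to obtain a canonical isomorphism $\Lambda(\ulM\otimes\ulN)\cong\Lambda(\ulM)\otimes_A\Lambda(\ulN)$, which is finite projective of rank $(\rank\ulM)(\rank\ulN)=\rank(\ulM\otimes\ulN)$, so $\ulM\otimes\ulN$ is uniformizable; similarly, applying $\Hom_R(-,R)$ to $h_\ulM$ together with $\Hom_{A_\BC}(M,A_\BC)\otimes_{A_\BC}R=\Hom_R(M\otimes_{A_\BC}R,R)$ gives $\Lambda(\ulM\dual)\cong\Hom_A(\Lambda(\ulM),A)$, and the statement for $\CHom(\ulM,\ulN)\cong\ulN\otimes\ulM\dual$ follows since $\Lambda(\ulM)$ is finite projective over $A$. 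The only point needing care is that at each step the isomorphism of $\sigma$-modules is the tautological one, so that after taking invariants one recovers exactly the natural $A$-linear map of the statement.

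For part~(a), given an isomorphism $f\colon\ulM\to\ulN$ in $\AMotCatIsog$ with $\ulN$ uniformizable, I would write $f=g\otimes\tfrac{1}{c}$ with $g\in\Hom_{\AMotCat}(\ulM,\ulN)$ and $c\in A\setminus\{0\}$; then $g=c\cdot f$ is an isomorphism in $\AMotCatIsog$, hence an isogeny (see the discussion following Definition~\ref{DefAMotive} and \cite[Proposition~3.1.2]{Taelman}), so $g\colon M\hookrightarrow N$ has torsion cokernel, annihilated by some $c'\in A\setminus\{0\}$. This produces an inverse isogeny $g'\colon\ulN\to\ulM$ with $g'g=c'\cdot\id_M$ and $gg'=c'\cdot\id_N$, and applying the functor $\Lambda(-)\otimes_A Q$ shows that the induced maps on $\Hodge^1$ are mutually inverse up to the unit $c'$, so $\Hodge^1(g)\colon\Hodge^1(\ulM)\isoto\Hodge^1(\ulN)$ is an isomorphism. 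Hence $\rank_A\Lambda(\ulM)=\rank_A\Lambda(\ulN)=\rank\ulN=\rank\ulM$, and $\ulM$ is uniformizable by Lemma~\ref{LemmaUniformizable}\ref{LemmaUniformizableB}.

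For part~(b), let $\ulM$ have rank $1$. By the elementary divisor theorem $\tau_M(\sigma^\ast M)=J^d\cdot M$ for a unique $d\in\BZ$, and by Example~\ref{ExampleCHMotive} $\ulM$ is isomorphic in $\AMotCatIsog$ to $\UOne(1)^{\otimes d}$; by part~(a) it suffices to prove $\UOne(1)^{\otimes d}$ uniformizable, and then by part~(c) (tensor powers and duals) it suffices to prove the Carlitz--Hayes motive $\UOne(1)$ uniformizable. For $C=\BP^1_{\BF_q}$ and $A=\BF_q[t]$ this is precisely Example~\ref{ExCarlitzMotiveUnif}(b). For general $C$, realizing $M\otimes_{A_\BC}R=\Gamma(\FC_\BC\setminus\Disc,\CL)$ with $\CL$ as in Example~\ref{ExampleCHMotive}, one must exhibit a $\tau$-invariant section of $\CL$ that is nowhere zero on $\FC_\BC\setminus\Disc$, equivalently solve $f=u\cdot\sigma^\ast(f)$ for a unit $f\in R^\times$, where $u\in R^\times$ represents $\tau_M$ in a local trivialization. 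This is done by an explicit convergent infinite product (a theta-function) on the affinoid $\FC_\BC\setminus\Disc$ generalizing the product of Example~\ref{ExCarlitzMotiveUnif}(b); it is the function-field analog of Drinfeld's uniformization of rank-one shtukas. The hard part of the whole proposition will be precisely this analytic construction for a general curve $C$, everything else being bookkeeping with $\sigma$-modules.

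For part~(d), suppose $g\otimes\tfrac{1}{c}\in\QHom(\ulM,\ulN)$, with $g\in\Hom_{\AMotCat}(\ulM,\ulN)$ and $c\in A\setminus\{0\}$, maps to $0$. Unwinding the definition, its image equals $\tfrac{1}{c}\cdot\bigl(g|_{\Lambda(\ulM)}\otimes_A\id_Q\bigr)$, where $g|_{\Lambda(\ulM)}\colon\Lambda(\ulM)\to\Lambda(\ulN)$ is the restriction of $g\otimes\id_R$ to $\tau$-invariants; since $\Lambda(\ulN)$ is torsion-free over $A$, vanishing forces $g(\Lambda(\ulM))=0$. Because $\ulM$ is uniformizable, $\Lambda(\ulM)$ generates $M\otimes_{A_\BC}R$ as an $R$-module, so the map $g\otimes\id_R\colon M\otimes_{A_\BC}R\to N\otimes_{A_\BC}R$ is zero; and since $\Hom_{A_\BC}(M,N)$ is finite projective over $A_\BC$ and $A_\BC\hookrightarrow R$, the natural map $\Hom_{A_\BC}(M,N)\hookrightarrow\Hom_{A_\BC}(M,N)\otimes_{A_\BC}R=\Hom_R(M\otimes_{A_\BC}R,N\otimes_{A_\BC}R)$ is injective, whence $g=0$ and therefore $g\otimes\tfrac{1}{c}=0$.
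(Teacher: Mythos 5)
Your proposal is correct and follows essentially the same strategy as the paper's proof: manipulate the $\sigma$-module isomorphisms $h_\ulM$, $h_\ulN$ over $R=\CO(\FC_\BC\setminus\Disc)$ and take $\sigma^*$-invariants, using $R^{\sigma^*}=A$. The only substantive deviation is in part~\ref{PropUnifAMotiveA}, where the paper gets by with a single sub-motive inclusion $\Lambda(\ulM)\hookrightarrow\Lambda(\ulN)$ and the rank chain $\rk\ulM=\rk_A\Lambda(\ulM)\le\rk_A\Lambda(\ulN)\le\rk\ulN=\rk\ulM$, with no need for your quasi-inverse isogeny $g'$; both routes are valid. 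For part~\ref{PropUnifAMotiveB} both you and the paper defer the analytic theta-function construction on $\FC_\BC\setminus\Disc$ for a general curve $C$ --- the paper cites \cite[Propositions~12.3(b) and 12.5]{HartlPink2}, and your reduction to the Carlitz--Hayes motive via Example~\ref{ExampleCHMotive} and part~\ref{PropUnifAMotiveC} is exactly the paper's reduction in the case $A=\BF_q[t]$.
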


\begin{proof}
\ref{PropUnifAMotiveA} Let $\ulM$ be uniformizable and let $f\colon\ulM\isoto\ulN$ be an isomorphism of $A$-motives in $\AMotCatIsog$. By multiplying $f$ with an element of $A$ we can assume that $f\colon\ulM\into\ulN$ is an $A$-sub-motive in $\AMotCat$. Then $f\colon\Lambda(\ulM)\into\Lambda(\ulN)$ and $\rk\ulM=\rk_A\Lambda(\ulM)\le\rk_A\Lambda(\ulN)\le\rk\ulN=\rk\ulM$. So $\ulN$ is uniformizable by Lemma~\ref{LemmaUniformizable}\ref{LemmaUniformizableB}.

\medskip\noindent
\ref{PropUnifAMotiveB} is proved in \cite[Propositions 12.3(b) and 12.5]{HartlPink2}. In the special case where $C=\BP^1_{\BF_q}$, $A=\BF_q[t]$ and $\theta=\charmorph(t)\in\BC$, assertion~\ref{PropUnifAMotiveB} follows from \ref{PropUnifAMotiveC} and from Examples~\ref{ExampleCHMotive} and \ref{ExCarlitzMotiveUnif}, because all $t$-motives of rank $1$ are tensor powers of the Carlitz $t$-motive $(\BC[t],t-\theta)$.

\medskip\noindent
\ref{PropUnifAMotiveC}  If $\ulM$ and $\ulN$ are uniformizable, then $h_\ulM$ and $h_\ulN$ induce an isomorphism
\[
\TS\Lambda(\ulM)\otimes_A\Lambda(\ulN)\otimes_A \CO(\FC_\BC\setminus\Disc) \xrightarrow{\enspace h_\ulM\otimes h_\ulN\;} M\otimes_{A_{\BC}}N\otimes_{A_{\BC}}\CO(\FC_\BC\setminus\Disc)
\]
satisfying $(h_\ulM\otimes h_\ulN)\circ(\id_{\Lambda(\ulM)}\otimes\id_{\Lambda(\ulN)}\otimes\id)=(\stau_M\otimes\stau_N\otimes\id)\circ \ssigma^\ast(h_\ulM\otimes h_\ulN)$. Therefore, the $\stau$-invariants are 
\[
\TS\Lambda(\ulM\otimes\ulN)=\bigl(M\otimes_{A_{\BC}}N\otimes_{A_{\BC}}\CO(\FC_\BC\setminus\Disc)\bigr)^\stau \cong \Lambda(\ulM)\otimes_A\Lambda(\ulN)\,.
\]
Likewise, by applying \cite[Proposition~2.10]{Eisenbud}, the uniformizability of $\ulM$ yields an isomorphism
\[
\TS\Hom_A(\Lambda(\ulM),A)\otimes_A \CO(\FC_\BC\setminus\Disc) \xrightarrow{\enspace (h_\ulM\dual)^{-1}\;} M\dual\otimes_{A_{\BC}}\CO(\FC_\BC\setminus\Disc)
\]
satisfying
$(h_\ulM\dual)^{-1}\circ(\id_{\Hom_A(\Lambda(\ulM),A)}\otimes\id)=(\stau_{M\dual}\otimes\id)\circ \ssigma^\ast(h_\ulM\dual)^{-1}$. Therefore, the $\stau$-invariants are 
\[
\TS\Lambda(\ulM\dual)=\bigl(M\dual\otimes_{A_{\BC}}\CO(\FC_\BC\setminus\Disc)\bigr)^\stau \cong \Hom_A(\Lambda(\ulM),A)\,.
\]
From this also the statement about $\CHom(\ulM,\ulN)\cong\ulN\otimes\ulM\dual$ follows.

\medskip\noindent
\ref{PropUnifAMotiveD} Since $h_\ulM$ and $h_\ulN$ are isomorphisms, $f\otimes a$ can be recovered from $\Hodge^1(f\otimes a)$.
\end{proof}

\begin{lemma}\label{Lemma2.3}
Let $0\to\ulM'\to\ulM\to\ulM''\to0$ be a short exact sequence of $A$-motives. Then $\ulM$ is uniformizable if and only if both $\ulM'$ and $\ulM''$ are. In this case the induced sequence of $A$-modules $0\to\Lambda(\ulM')\to\Lambda(\ulM)\to\Lambda(\ulM'')\to0$ is exact.
\end{lemma}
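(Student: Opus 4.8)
The plan is to work over the ring $R:=\CO(\FC_\BC\setminus\Disc)$, for which $\Lambda(\ulN)=(N\otimes_{A_\BC}R)^\tau=\ker(\Phi_N-\id)$, where I abbreviate $\Phi_N(n):=\tau_N(\sigma^\ast n)$, and to keep at hand the two inputs that each $\Lambda(-)$ is a finite projective $A$-module of rank $\le\rk$ by \cite[Lemma~4.2]{BoeckleHartl}, and that an $A$-motive $\ulN$ is uniformizable exactly when $\rk_A\Lambda(\ulN)=\rk\ulN$ by Lemma~\ref{LemmaUniformizable}. First I would record the formal part: since $M''$ is a finite projective $A_\BC$-module, $0\to M'\to M\to M''\to0$ splits and hence stays exact after $-\otimes_{A_\BC}R$, giving a $\Phi$-equivariant short exact sequence $0\to M'_R\to M_R\to M''_R\to0$; because each $\Phi_N-\id$ is $A$-linear, taking $\tau$-invariants is left exact, so
\[
0\;\longto\;\Lambda(\ulM')\;\longto\;\Lambda(\ulM)\;\longto\;\Lambda(\ulM'')
\]
is exact, and the only thing ever in doubt is surjectivity of the last map.

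Next I would dispatch the implication ``$\ulM$ uniformizable $\Rightarrow$ $\ulM',\ulM''$ uniformizable'' by a rank count. Tensoring the displayed sequence with $Q$ over $A$ gives $\rk_A\Lambda(\ulM)=\rk_A\Lambda(\ulM')+d$ with $d:=\dim_Q\mathrm{image}\bigl(\Lambda(\ulM)_Q\to\Lambda(\ulM'')_Q\bigr)\le\rk_A\Lambda(\ulM'')$. Combining with $\rk_A\Lambda(\ulM')\le\rk\ulM'$, $\rk_A\Lambda(\ulM'')\le\rk\ulM''$ and $\rk\ulM=\rk\ulM'+\rk\ulM''$, uniformizability of $\ulM$ forces
\[
\rk\ulM=\rk_A\Lambda(\ulM)=\rk_A\Lambda(\ulM')+d\le\rk\ulM'+\rk_A\Lambda(\ulM'')\le\rk\ulM,
\]
so all inequalities are equalities; in particular $\rk_A\Lambda(\ulM')=\rk\ulM'$ and $\rk_A\Lambda(\ulM'')=\rk\ulM''$, i.e.\ both $\ulM'$ and $\ulM''$ are uniformizable.

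For the converse I would assume $\ulM'$ and $\ulM''$ uniformizable and prove that $\Lambda(\ulM)\to\Lambda(\ulM'')$ is onto. Given $\lambda''\in\Lambda(\ulM'')\subset M''_R$, lift it to $m\in M_R$; then $b:=\Phi_M(m)-m$ maps to $\Phi_{M''}(\lambda'')-\lambda''=0$, hence $b\in M'_R$, on which $\Phi_M$ restricts to $\Phi_{M'}$; if $\Phi_{M'}-\id$ is surjective on $M'_R$ I may pick $n\in M'_R$ with $\Phi_{M'}(n)-n=-b$, and then $m+n\in\Lambda(\ulM)$ maps to $\lambda''$. To see that $\Phi_{M'}-\id$ is surjective on $M'_R$, note that uniformizability of $\ulM'$ makes $h_{\ulM'}\colon\Lambda(\ulM')\otimes_A R\isoto M'_R$ an isomorphism which, by Lemma~\ref{LemmaUniformizable}, carries $\Phi_{M'}$ to $\id_{\Lambda(\ulM')}\otimes\sigma^\ast_R$; as $\Lambda(\ulM')$ is flat over $A$ it is then enough to show that $\sigma^\ast-\id$ is surjective on $R$. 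For this I would choose $t\in A\setminus\BF_q$ as in Lemma~\ref{LemmaTSep}, use that $\BF_q[t]\hookrightarrow A$ is finite so $A=\bigoplus_{i=1}^\delta\BF_q[t]\,e_i$ is free ($\BF_q[t]$ being a PID), whence $R=A\otimes_{\BF_q[t]}\BC\langle t\rangle=\bigoplus_{i=1}^\delta\BC\langle t\rangle\,e_i$ with $\sigma^\ast$ acting only on the $\BC\langle t\rangle$-coefficients (the $e_i$ lying in $A=R^{\sigma^\ast}$), reducing the task to $\BC\langle t\rangle$: given $c=\sum_i c_it^i$ with $|c_i|\to0$, one solves $b_i^q-b_i=c_i$ by taking, for the cofinitely many $i$ with $|c_i|<1$, the root $b_i$ with $|b_i|=|c_i|$ furnished by the Newton polygon of $X^q-X-c_i$; then $|b_i|\to0$, so $b=\sum_ib_it^i\in\BC\langle t\rangle$ solves $\sigma^\ast(b)-b=c$.

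With $\Lambda(\ulM)\onto\Lambda(\ulM'')$ in hand, the left-exact sequence becomes the asserted short exact sequence $0\to\Lambda(\ulM')\to\Lambda(\ulM)\to\Lambda(\ulM'')\to0$, and a rank count gives $\rk_A\Lambda(\ulM)=\rk\ulM'+\rk\ulM''=\rk\ulM$, so $\ulM$ is uniformizable by Lemma~\ref{LemmaUniformizable}; the ``in this case'' exactness of the $\Lambda$-sequence is then immediate, since in either formulation of that case $\ulM'$ is uniformizable and so the surjectivity argument applies. I expect the main obstacle to be precisely the surjectivity of $\sigma^\ast-\id$ on $R$ — equivalently the vanishing of $\coker(\Phi_{M'}-\id)$ for a uniformizable $\ulM'$ — which is the only place a rigid-analytic input (the Newton-polygon argument in $\BC\langle t\rangle$) is needed; the remainder is rank bookkeeping and a short diagram chase.
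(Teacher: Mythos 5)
Your proof is correct, and it is genuinely more self-contained than the one in the paper, which delegates both key inputs to the literature: the equivalence of uniformizability is quoted from Anderson \cite[Lemma~2.7.2 and 2.10.4]{Anderson86}, and the exactness of the $\Lambda$-sequence is obtained by combining the snake lemma with the surjectivity of $\id-\tau$ on $M'\otimes_{A_\BC}\CO(\FC_\BC\setminus\Disc)$, which is cited from \cite[Proposition~6.1]{BoeckleHartl} and holds there for an \emph{arbitrary} $A$-motive $\ulM'$. You replace the first citation by a clean rank count against the bound $\rk_A\Lambda(\ulN)\le\rk\ulN$ and the criterion of Lemma~\ref{LemmaUniformizable}\ref{LemmaUniformizableB}, and you replace the second by proving surjectivity of $\Phi_{M'}-\id$ only in the case where $\ulM'$ is uniformizable — which is all that is needed, since in both directions of the ``in this case'' clause $\ulM'$ has already been shown uniformizable — by transporting the problem through $h_{\ulM'}$ to the Artin--Schreier equation $\sigma^\ast(b)-b=c$ in $\BC\langle t\rangle$, solved termwise exactly as in the paper's own proof of Lemma~\ref{LemmaDualUniformizable}\ref{LemmaDualUniformizableC}. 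The trade-off is that the cited \cite[Proposition~6.1]{BoeckleHartl} is a stronger statement (no uniformizability hypothesis), while your argument is elementary and keeps the whole lemma internal to the tools already developed in the paper; one cosmetic remark is that you do not even need flatness or the direct-summand structure of $\Lambda(\ulM')$ to pass from surjectivity of $\sigma^\ast-\id$ on $R$ to surjectivity of $\id_{\Lambda(\ulM')}\otimes(\sigma^\ast-\id)$, since tensoring over $A$ is right exact, and you do not need the separability furnished by Lemma~\ref{LemmaTSep}, only that $A$ is finite and torsion-free, hence free, over $\BF_q[t]$.
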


\begin{proof}
The first assertion follows from Anderson~\cite[Lemma~2.7.2 and 2.10.4]{Anderson86}. For the second assertion observe that $\Lambda(\ulM')=\ker\bigl(\id-\tau\colon M'\otimes_{A_{\BC}}\CO(\FC_\BC\setminus\Disc)\to M'\otimes_{A_{\BC}}\CO(\FC_\BC\setminus\Disc)\bigr)$. Since the map $\id-\tau$ is surjective by \cite[Proposition~6.1]{BoeckleHartl} the snake lemma proves the exactness of the sequence $0\to\Lambda(\ulM')\to\Lambda(\ulM)\to\Lambda(\ulM'')\to0$.
\end{proof}

\begin{remark}
If a mixed $A$-motive $\ulM$ is uniformizable, then all filtration steps $W_\mu\ulM$ and factors $\Gr^W_\mu\ulM$ of the weight filtration of $\ulM$ are uniformizable by Lemma~\ref{Lemma2.3}. Therefore, $\ulM$ could equivalently be called a \emph{uniformizable mixed $A$-motive} or a \emph{mixed uniformizable $A$-motive}.
\end{remark}

\begin{theorem}\label{TheoremAMotTannakian}
The category $\AUMotCatIsog$ of uniformizable $A$-motives up to isogeny and its rigid tensor subcategory $\AMUMotCatIsog$ of uniformizable mixed $A$-motives up to isogeny are neutral Tannakian categories over $Q$ with fiber functor $\ulM\mapsto \Hodge^1(\ulM)$.
\end{theorem}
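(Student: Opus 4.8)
The plan is to verify the two defining properties in Definition~\ref{defn:tannakiancategory}: that $\AUMotCatIsog$ and $\AMUMotCatIsog$ are $Q$-linear rigid abelian tensor categories, and that $\Hodge^1$ is an exact faithful $Q$-linear tensor functor to $\QVec$. Essentially all of the substance has already been isolated in the preceding results, so the proof will be an assembly of those statements; the only point that needs a little care is that the abelian structure of $\AMotCatIsog$ restricts to these subcategories.

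For the first property I would argue that $\AUMotCatIsog$ and $\AMUMotCatIsog$ are \emph{strictly full} subcategories of the $Q$-linear rigid abelian tensor category $\AMotCatIsog$ (Proposition~\ref{PropAMotTannakianCateg}; for the mixed version one also uses that $\AMMotCatIsog$ is such a subcategory by Proposition~\ref{PropPure}\ref{PropPure_g}) which are stable under the relevant operations, so that the structure restricts. Strict fullness is Proposition~\ref{PropUnifAMotive}\ref{PropUnifAMotiveA} together with Proposition~\ref{PropPure}\ref{PropPure_f}. The unit object $\UOne(0)$ is uniformizable by Example~\ref{ExCarlitzMotiveUnif}(a) and pure of weight $0$, hence lies in both subcategories; and they are closed under $\otimes$, $\CHom$ and $(-)\dual$ by Proposition~\ref{PropUnifAMotive}\ref{PropUnifAMotiveC} and Proposition~\ref{PropPure}\ref{PropPure_g}. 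To see that the subcategories are abelian, I would take a morphism $f$ in $\AUMotCatIsog$ (resp. $\AMUMotCatIsog$) and form its kernel, image and cokernel in $\AMotCatIsog$; each is a sub- or quotient object of a uniformizable (resp. mixed uniformizable) $A$-motive, hence again uniformizable by the ``only if'' direction of Lemma~\ref{Lemma2.3} (resp. also mixed by Proposition~\ref{PropPure}\ref{PropPure_c}). Thus kernels and cokernels computed in $\AMotCatIsog$ stay inside the subcategory, which is therefore an abelian, and by restriction rigid tensor, subcategory.

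For the fiber functor $\omega\colon\ulM\mapsto\Hodge^1(\ulM)=\Lambda(\ulM)\otimes_A Q$: by \cite[Lemma~4.2(b)]{BoeckleHartl} $\Lambda(\ulM)$ is a finite projective $A$-module, and by Lemma~\ref{LemmaUniformizable}\ref{LemmaUniformizableB} its rank equals $\rk\ulM$, so $\omega(\ulM)$ is a $Q$-vector space of dimension $\rk\ulM$ and $\omega$ indeed lands in $\QVec$. Its $Q$-linearity on $\Hom$-sets and its faithfulness are precisely Proposition~\ref{PropUnifAMotive}\ref{PropUnifAMotiveD}. It is a tensor functor: $\omega(\UOne(0))=A\otimes_A Q=Q$ is a unit object of $\QVec$, and the canonical isomorphisms $\Lambda(\ulM\otimes\ulN)\cong\Lambda(\ulM)\otimes_A\Lambda(\ulN)$ of Proposition~\ref{PropUnifAMotive}\ref{PropUnifAMotiveC} induce functorial isomorphisms $\omega(\ulM\otimes\ulN)\cong\omega(\ulM)\otimes_Q\omega(\ulN)$, whose compatibility with the associativity and commutativity constraints is routine, being inherited from the corresponding compatibilities of the tensor product of $A$-modules. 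Finally $\omega$ is exact: a short exact sequence in $\AUMotCatIsog$ is, up to isogeny, induced by a short exact sequence $0\to\ulM'\to\ulM\to\ulM''\to0$ of $A$-motives; Lemma~\ref{Lemma2.3} gives exactness of $0\to\Lambda(\ulM')\to\Lambda(\ulM)\to\Lambda(\ulM'')\to0$, and $-\otimes_A Q$ is exact since $Q$ is the fraction field of the domain $A$. Because $\omega$ is $Q$-linear and faithful, $\End(\UOne(0))$ injects into $\End_{\QVec}(Q)=Q$ and, being a $Q$-algebra, equals $Q$; this confirms that $\omega$ is a neutral fiber functor and finishes the verification.

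The main obstacle is not any single computation but the bookkeeping of why the abelian structure of $\AMotCatIsog$ passes to these subcategories: this is exactly where one needs the equivalence (not merely one implication) in Lemma~\ref{Lemma2.3}, i.e. that sub- and quotient motives of a uniformizable $A$-motive are again uniformizable. The other slightly substantive point, exactness of $\omega$, likewise rests on Lemma~\ref{Lemma2.3}, whose proof used the surjectivity of $\id-\tau$ from \cite[Proposition~6.1]{BoeckleHartl}. Everything else is formal once the cited propositions are in hand, so the theorem is essentially a repackaging of Propositions~\ref{PropAMotTannakianCateg}, \ref{PropPure}, \ref{PropUnifAMotive} and Lemmas~\ref{LemmaUniformizable}, \ref{Lemma2.3}.
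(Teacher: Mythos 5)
Your proposal is correct and follows essentially the same route as the paper: both reduce the theorem to Propositions~\ref{PropAMotTannakianCateg}, \ref{PropPure}\ref{PropPure_g} and \ref{PropUnifAMotive} for the rigid tensor structure and the faithful tensor fiber functor, and to Lemma~\ref{Lemma2.3} (with Lemma~\ref{LemmaUniformizable}\ref{LemmaUniformizableB} for finite-dimensionality) for the abelian property and exactness. Your write-up is merely more explicit about the bookkeeping than the paper's, but the logical content is identical.
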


\begin{proof} 
By Propositions~\ref{PropUnifAMotive} and \ref{PropPure}\ref{PropPure_g}, $\AUMotCatIsog$ and $\AMUMotCatIsog$ are closed under taking tensor products, internal homs and duals, contain the unit object $\UOne(0)$ for the tensor product, and $\ulM\mapsto \Hodge^1(\ulM)$ is a faithful $Q$-linear tensor functor, which is exact by Lemma~\ref{Lemma2.3}. Moreover, $\Hodge^1(\ulM)$ is finite-dimensional for any uniformizable $A$-motive $\ulM$ by Lemma~\ref{LemmaUniformizable}\ref{LemmaUniformizableB}. As strictly full subcategories of the $Q$-linear abelian category $\AMotCatIsog$ also $\AUMotCatIsog$ and $\AMUMotCatIsog$ are $Q$-linear. Let $f\colon \ulM\to\ulN$ be a morphism in $\AUMotCatIsog$. Then the kernel, cokernel, image and coimage of $f$ in $\AMotCatIsog$ are uniformizable by Lemma~\ref{Lemma2.3} and belong to $\AUMotCatIsog$. Therefore, $\AUMotCatIsog$ and $\AMUMotCatIsog$ are abelian.
\end{proof}

This theorem allows to associate with each (mixed) uniformizable $A$-motive $\ulM$ an algebraic group $\Gamma_\ulM$ over $Q$ as follows. Consider the Tannakian subcategory $\llangle\ulM\rrangle$ of $\AUMotCatIsog$, respectively $\AMUMotCatIsog$ generated by $\ulM$. By Tannakian duality \cite[Theorem~2.11 and Proposition~2.20]{DM82}, the category $\llangle\ulM\rrangle$ is tensor equivalent to the category of $Q$-rational representations of a linear algebraic group scheme $\Gamma_\ulM$ over $Q$ which is a closed subgroup of $\GL_Q(\Hodge^1(\ulM))$.

\begin{definition}\label{DefMotGp}
The linear algebraic $Q$-group scheme $\Gamma_\ulM$ associated with $\ulM$ is called the \emph{(motivic) Galois group of $\ulM$}.
\end{definition}

\begin{example}\label{ExGalGpOfCarlitz}
The trivial $A$-motive $\UOne(0)$ has trivial motivic Galois group $\Gamma_{\UOne(0)}=(1)$. 

For any $A$-motive $\UOne(n)$ of rank $1$ with $n\ne0$ (see Example~\ref{ExampleCHMotive}) the motivic Galois group equals $\Gamma_{\UOne(n)}=\BG_{m,Q}$. Indeed, since $\Hodge^1\bigl(\UOne(n)\bigr)\cong Q$, the group $\Gamma_{\UOne(n)}$ is a subgroup of $\GL_Q\bigl(\Hodge^1(\UOne(n))\bigr)=\BG_{m,Q}$. If it were a finite group, it would be annihilated by some positive integer $d$. This implies that it operates trivially on $\UOne(n)^{\otimes d}\cong\UOne(dn)\in\llangle\UOne(n)\rrangle$. Therefore, $\UOne(dn)$ must be a direct sum of the trivial object $\UOne(0)$, that is $\UOne(dn)\cong\UOne(0)$, which is a contradiction.
\end{example}

\subsection{The associated Hodge-Pink structure}\label{SectAMotHPStr}

We associate a mixed $Q$-Hodge-Pink structure with every uniformizable mixed $A$-motive. Note that (a variant of) this is used by Taelman~\cite{Taelman16} in this volume to study $1$-$t$-motives.

For $i\in\BN_0$ we consider the pullbacks $\ssigma^{i*}J=(a\otimes1-1\otimes\charmorph(a)^{q^i}\colon a\in A)\subset A_\BC$ and the points $\Var(\ssigma^{i*}J)$ of $\dotC_\BC$ and $\dotFC_\BC$. They correspond to the points $\Var(z-\zeta^{q^i})\in\dotFD_\BC$ and have $\infty_{\SSC\BC}$ as accumulation point. Therefore, $\dotFC_\BC\setminus\bigcup_{i\in\BN_0}\Var(\sigma^{i\ast}J)$ is an admissible open rigid analytic subspace of $\dotFC_\BC$.

\begin{proposition}\label{PropLambdaConvRadius}
\label{PropMaphM}
Let $\ulM$ be a uniformizable $A$-motive over $\BC$. 
\begin{enumerate}
\item \label{PropMaphMA}
Then $\Lambda(\ulM)$ equals $\bigl\{\,m\in M\otimes_{A_{\BC}}\CO\bigl(\dotFC_\BC\setminus\bigcup_{i\in\BN_0}\Var(\sigma^{i\ast}J)\bigr):\es \tau_M(\sigma^\ast m)=m\,\bigr\}$ and the isomorphisms $h_\ulM$ and $\ssigma^*h_\ulM$ extend to isomorphisms of locally free sheaves
\begin{eqnarray*}
h_\ulM\colon \Lambda(\ulM)\otimes_A \CO_{\dotFC_\BC\setminus\bigcup_{i\in\BN_0}\Var(\sigma^{i\ast}J)} & \isoto & M\otimes_{A_{\BC}}\CO_{\dotFC_\BC\setminus\bigcup_{i\in\BN_0}\Var(\sigma^{i\ast}J)}\,,\\
 \lambda\otimes f & \longmapsto & f\cdot\lambda\,,\\[2mm]
\ssigma^*h_\ulM\colon \Lambda(\ulM)\otimes_A \CO_{\dotFC_\BC\setminus\bigcup_{i\in\BN_{>0}}\Var(\sigma^{i\ast}J)} & \isoto & \ssigma^\ast M\otimes_{A_{\BC}}\CO_{\dotFC_\BC\setminus\bigcup_{i\in\BN_{>0}}\Var(\sigma^{i\ast}J)}\,,\\
 \lambda\otimes f & \longmapsto & f\cdot\sigma^*\lambda\,,
\end{eqnarray*}
satisfying $h_\ulM\circ(\id_{\Lambda(\ulM)}\otimes\id)=(\stau_M\otimes\id)\circ \ssigma^\ast h_\ulM$.
\item \label{PropMaphMB}
If moreover $\ulM$ is effective, then $\Lambda(\ulM)$ equals $\bigl\{\,m\in M\otimes_{A_{\BC}}\CO(\dotFC_\BC):\es \tau_M(\sigma^\ast m)=m\,\bigr\}$ and the isomorphism $h_\ulM$ extends to an injective homomorphism
\[
h_\ulM\colon \Lambda(\ulM)\otimes_A \CO_{\dotFC_\BC}\es\longto\es M\otimes_{A_{\BC}}\CO_{\dotFC_\BC}\,,\quad \lambda\otimes f \mapsto f\cdot\lambda,
\]
with $h_\ulM\circ(\id_{\Lambda(\ulM)}\otimes\id)=(\stau_M\otimes\id)\circ \ssigma^\ast h_\ulM$.
At the point $\Var(J)$ its cokernel satisfies $\coker h_\ulM\otimes\BC\dbl z-\zeta\dbr=M/\stau_M(\ssigma^*M)$. 
\end{enumerate} 
\end{proposition}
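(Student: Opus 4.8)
The plan is to prove both statements by analyzing the defining equation $\tau_M(\sigma^\ast m)=m$ for a section $m$ and using the geometry of the rigid analytic spaces involved. First, recall from the preliminaries that $\FC_\BC\setminus\Disc$ is affinoid and $\dotFC_\BC$ is quasi-Stein, and that $\tau_M$ is an isomorphism $\sigma^\ast M[J^{-1}]\isoto M[J^{-1}]$; its only possible poles and zeros, viewed as a map of sheaves on $\dotFC_\BC$, are concentrated at $\Var(J)$, since on the affine curve $\dotC_\BC$ the ideal $J$ is the only place where $\tau_M$ fails to be an isomorphism of coherent sheaves (the determinant of $\tau_M$, after choosing local bases, is a unit times a power of a generator of $J$). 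Consequently the pullback $\sigma^{i\ast}\tau_M$ relates the sheaf at $\Var(\sigma^{i\ast}J)$ to the sheaf at $\Var(\sigma^{(i+1)\ast}J)$, so iterating the equation $m=\tau_M(\sigma^\ast m)$ pushes any analytic obstruction to invertibility from $\Var(J)$ successively onto $\Var(\sigma^{i\ast}J)$ for all $i\ge 1$.

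For part \ref{PropMaphMA}, I would argue as follows. Clearly $\Lambda(\ulM)$, computed over $\CO(\FC_\BC\setminus\Disc)$, maps into the set computed over the larger domain $\dotFC_\BC\setminus\bigcup_{i\ge 0}\Var(\sigma^{i\ast}J)$, so one inclusion is trivial. For the converse, take $m$ a $\tau$-invariant section over $\dotFC_\BC\setminus\bigcup_{i\ge 0}\Var(\sigma^{i\ast}J)$. Since $\ulM$ is uniformizable, Lemma~\ref{LemmaUniformizable}\ref{LemmaUniformizableB} and $h_\ulM$ give a basis of $\Lambda(\ulM)$ over $A$ trivializing $M$ over $\CO(\FC_\BC\setminus\Disc)$; write $m$ in this basis with coefficients in $\CO(\dotFC_\BC\setminus\bigcup_{i\ge 0}\Var(\sigma^{i\ast}J))$. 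The invariance equation forces those coefficients to be $\sigma^\ast$-invariant, hence to lie in $\CO(\dotFC_\BC\setminus\bigcup_{i\ge 0}\Var(\sigma^{i\ast}J))^{\sigma^\ast}$; because $\sigma$ permutes the punctures $\Var(\sigma^{i\ast}J)$ cyclically upward and has $\infty_{\SSC\BC}$ as its unique fixed accumulation point, a $\sigma$-invariant function on this punctured space actually extends across all the punctures, i.e.\ lies in $A=\CO(\FC_\BC\setminus\Disc)^{\sigma^\ast}$. This gives $m\in\Lambda(\ulM)$. Then the extension of $h_\ulM$ and $\sigma^\ast h_\ulM$ to isomorphisms of locally free sheaves on the stated domains is exactly the statement that $\tau_M$, resp.\ $\sigma^\ast$ of it, is an isomorphism away from $\Var(J)$, resp.\ away from $\Var(\sigma^{i\ast}J)$ for $i\ge 1$; the compatibility $h_\ulM\circ(\id\otimes\id)=(\tau_M\otimes\id)\circ\sigma^\ast h_\ulM$ is inherited from Lemma~\ref{LemmaUniformizable}\ref{LemmaUniformizableA} by the uniqueness of analytic continuation on the connected quasi-Stein space.

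For part \ref{PropMaphMB}, effectivity means $\tau_M$ comes from an honest $A_\BC$-morphism $\sigma^\ast M\to M$, so $h_\ulM$ and all its iterates no longer need to invert anything: the section $f\cdot\lambda$ of $M\otimes\CO_{\dotFC_\BC}$ makes sense for $f\in\CO(\dotFC_\BC)$, giving the injective extension $h_\ulM\colon\Lambda(\ulM)\otimes_A\CO_{\dotFC_\BC}\to M\otimes_{A_\BC}\CO_{\dotFC_\BC}$ (injectivity is preserved because $h_\ulM$ is injective on the dense subdomain by Lemma~\ref{LemmaUniformizable}\ref{LemmaUniformizableA} and $\CO_{\dotFC_\BC}$ is a domain on each connected component). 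The description of $\Lambda(\ulM)$ as $\tau$-invariants over $\CO(\dotFC_\BC)$ follows from part \ref{PropMaphMA}: by the iteration argument a $\tau$-invariant section over $\dotFC_\BC\setminus\bigcup_{i\ge 0}\Var(\sigma^{i\ast}J)$ already had an extension, and effectivity now removes even the puncture at $\Var(J)$ from the coefficient analysis. Finally, for the cokernel at $\Var(J)$: tensoring the exact sequence $\Lambda(\ulM)\otimes_A\CO_{\dotFC_\BC}\xrightarrow{h_\ulM}M\otimes_{A_\BC}\CO_{\dotFC_\BC}\to\coker h_\ulM\to 0$ with $\BC\dbl z-\zeta\dbr$ (the completed local ring at $\Var(J)$, by Lemma~\ref{LemmaZ-Zeta}), and using that $h_\ulM$ becomes an isomorphism after inverting $z-\zeta$ together with the factorization $h_\ulM\circ(\id\otimes\id)=(\tau_M\otimes\id)\circ\sigma^\ast h_\ulM$ in which $\sigma^\ast h_\ulM$ is already an isomorphism at $\Var(J)$ (its obstruction sits at $\Var(\sigma^\ast J)$, a different point), identifies $\coker h_\ulM\otimes\BC\dbl z-\zeta\dbr$ with $\coker\bigl(\tau_M\colon\sigma^\ast M\to M\bigr)\otimes\BC\dbl z-\zeta\dbr = M/\tau_M(\sigma^\ast M)$, where the last equality uses that $M/\tau_M(\sigma^\ast M)$ is already supported at $\Var(J)$ by effectivity.

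The main obstacle I expect is the $\sigma$-invariance-forces-extension step: making precise that a $\sigma^\ast$-invariant rigid analytic function on $\dotFC_\BC\setminus\bigcup_{i\ge 0}\Var(\sigma^{i\ast}J)$ extends to all of $\FC_\BC\setminus\Disc$. This needs the fact that $\sigma$ maps $\Var(\sigma^{i\ast}J)$ to $\Var(\sigma^{(i+1)\ast}J)$, so an invariant function bounded near one puncture is bounded near all of them, and then a Riemann-type extension theorem for the quasi-Stein space $\FC_\BC\setminus\Disc$ (or directly: its ring of functions is the $J$-adic-type completion and invariance kills the principal parts). Everything else is bookkeeping with the geometry of the punctures and the standard behavior of $\tau_M$ away from $\Var(J)$.
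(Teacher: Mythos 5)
Your proposal has the two containments in the $\Lambda(\ulM)$-equality the wrong way round, and this hides the actual content of the proposition. Since $\FC_\BC\setminus\Disc$ is \emph{contained} in $\dotFC_\BC\setminus\bigcup_{i\in\BN_0}\Var(\sigma^{i\ast}J)$, restriction of sections gives the inclusion of the $\tau$-invariants over the larger punctured domain into $\Lambda(\ulM)$ essentially for free; that is the direction you work out with the basis argument. The hard direction is the opposite one: a $\lambda\in\Lambda(\ulM)$ is a priori only a section of $M\otimes\CO$ over the affinoid $\FC_\BC\setminus\Disc$, and one must show it analytically continues across the annuli up to $\infty_{\SSC\BC}$, avoiding only the punctures $\Var(\sigma^{i\ast}J)$. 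You dismiss this as ``trivial'' and, separately, assert that the extension of $h_\ulM$ to an isomorphism over the punctured domain ``is exactly the statement that $\tau_M$ is an isomorphism away from $\Var(J)$.'' It is not: local invertibility of $\tau_M$ does not by itself produce the global continuation of $h_\ulM$. The paper's proof supplies exactly this missing analysis: for effective $\ulM$ it extends $h_\ulM$ to all of $\dotFC_\BC$, sets $D:=\coker h_\ulM$, applies the snake lemma to the functional equation to get $\sigma^\ast D\cong D$ away from $\Var(J)$, and then uses that the support of $D$ is discrete and cannot accumulate on the affinoid $\FC_\BC\setminus\{P:|z(P)|<|\zeta|\}$ to conclude $D$ is supported only on $\bigcup_{i\ge0}\Var(\sigma^{i\ast}J)$. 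That non-accumulation step is the analytic heart of the proposition and is absent from your plan. (The description of $\Lambda(\ulM)$ in the effective case is moreover not re-proved but cited from B\"ockle--Hartl.)

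Two further points. First, your basis argument is circular as stated for sections over the larger domain: expressing such a section in a $\Lambda(\ulM)$-basis with coefficients in $\CO\bigl(\dotFC_\BC\setminus\bigcup_i\Var(\sigma^{i\ast}J)\bigr)$ presupposes that $h_\ulM$ is already an isomorphism there, which is what is being proved; it is only legitimate after restricting to $\FC_\BC\setminus\Disc$, where it again yields only the easy containment. Second, for non-effective $\ulM$ the map $h_\ulM$ need not even be defined over $\dotFC_\BC$ before localizing at the $\Var(\sigma^{i\ast}J)$, so the cokernel argument cannot be run directly; the paper handles part~\ref{PropMaphMA} by writing $\ulM\cong\ulN\otimes\UOne(-n)$ with $\ulN$ and $\UOne(n)$ effective, applying part~\ref{PropMaphMB} to each, and dualizing and tensoring the resulting isomorphisms. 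Your plan contains no such reduction. The part of your proposal treating the cokernel at $\Var(J)$ in the effective case is essentially correct and matches the paper.
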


\begin{proof}
\ref{PropMaphMB} If $\ulM$ is effective, the claimed equality for $\Lambda(\ulM)$ was proved in \cite[Proposition~3.4]{BoeckleHartl}. This allows to extend $h_\ulM$ to a homomorphism
\[
h_\ulM\colon \Lambda(\ulM)\otimes_A \CO(\dotFC_\BC)\;\longto\;M\otimes_{A_{\BC}}\CO(\dotFC_\BC)\,,\quad \lambda\otimes f \mapsto f\cdot\lambda\,.
\]
which satisfies $h_\ulM\circ(\id_{\Lambda(\ulM)}\otimes\id)=(\stau_M\otimes\id)\circ \ssigma^\ast h_\ulM$ and is injective because $\CO(\dotFC_\BC)\subset\CO(\FC_\BC\setminus\Disc)$. Let $D:=\coker h_\ulM$ and consider the following diagram, in which the first row is exact because of the flatness of $\sigma^\ast$; see Remark~\ref{RemSigmaIsFlat}.
\[
\xymatrix @R+1pc @C+2pc {
0\ar[r] & \Lambda(\ulM)\otimes_A \sigma^\ast\CO(\dotFC_\BC)\ar[r]^{\DS\sigma^\ast h_\ulM}\ar[d]_{\DS\id_{\Lambda(\ulM)}\otimes\id}^\cong & \sigma^\ast M\otimes_{A_{\BC}}\CO(\dotFC_\BC)\ar[r]\ar@{^{ (}->}[d]^{\DS\tau_M\otimes\id} & \sigma^\ast D \ar[r]\ar[d]^{\DS\tau_D} & 0\\
0\ar[r] & \Lambda(\ulM)\otimes_A \CO(\dotFC_\BC)\ar[r]^{\DS h_\ulM} &  M\otimes_{A_{\BC}}\CO(\dotFC_\BC)\ar[r] & D \ar[r] & 0
}
\]
By the snake lemma, $\tau_D$ is injective and $\coker\tau_M\cong\coker\tau_D$. The support of $D$ is contained in $\Disc$. So we now look at the points in $\Disc$ and use $z$ as a coordinate on $\Disc$. Let $\alpha\ne\zeta$ and consider the point $\{z=\alpha\}$ in $\Disc$. Since $\{z=\alpha\}\ne\Var(J)$ and $\coker\tau_M$ is supported at $\Var(J)$, we find $\sigma^\ast\bigl(D\otimes\BC\dbl z-\alpha^{q^{-1}}\dbr\bigr)=(\sigma^\ast D)\otimes\BC\dbl z-\alpha\dbr\cong D\otimes\BC\dbl z-\alpha\dbr$. Since the support of $D$ is discrete on $\dotFC_\BC$ it cannot have a limit point on the affinoid space $\FC_\BC\setminus\{P\in\Disc:|z(P)|<|\zeta|\}$. This implies $D\otimes\BC\dbl z-\alpha\dbr=(0)$ for all $\alpha\notin\bigcup_{i\in\BN_0}\{\zeta^{q^i}\}$ and proves that $h_\ulM$ is an isomorphism outside $\bigcup_{i\in\BN_0}\Var(\sigma^{i\ast}J)$. Moreover, $(\sigma^\ast D)\otimes\BC\dbl z-\zeta\dbr=(0)$ and $\coker\tau_M=\coker\tau_M\otimes\BC\dbl z-\zeta\dbr\cong D\otimes\BC\dbl z-\zeta\dbr$, and so $\ssigma^*h_\ulM$ is an isomorphism outside $\bigcup_{i\in\BN_{>0}}\Var(\sigma^{i\ast}J)$.

\medskip\noindent	
\ref{PropMaphMA} If $\ulM$ is not effective, then $\ulM$ is isomorphic to $\ulN\otimes\UOne(-n)$ by Remark~\ref{Rem3.3} for an effective $A$-motive $\ulN$ and some positive integer $n$. By Proposition~\ref{PropUnifAMotive} the $A$-motives $\UOne(n)$ and $\ulN\cong\ulM\otimes\UOne(n)$ are uniformizable. Since $\ulN$ and $\UOne(n)$ are effective, our proof of \ref{PropMaphMB} yields isomorphisms
\begin{eqnarray*}
&&h_\ulN\colon \Lambda(\ulN)\otimes_A \CO_{\dotFC_\BC\setminus\bigcup_{i\in\BN_0}\Var(\sigma^{i\ast}J)}\;\isoto\;N\otimes_{A_{\BC}}\CO_{\dotFC_\BC\setminus\bigcup_{i\in\BN_0}\Var(\sigma^{i\ast}J)}\qquad\text{and}\\[2mm]
&&h_{\UOne(n)}\colon \Lambda({\UOne(n)})\otimes_A \CO_{\dotFC_\BC\setminus\bigcup_{i\in\BN_0}\Var(\sigma^{i\ast}J)}\;\isoto\;\BOne(n)\otimes_{A_{\BC}}\CO_{\dotFC_\BC\setminus\bigcup_{i\in\BN_0}\Var(\sigma^{i\ast}J)}\,.
\end{eqnarray*}
Dualizing and inverting the second isomorphism and tensoring with the first yields the isomorphism
\begin{eqnarray*}
h_\ulN\otimes(h_{\UOne(n)}\dual)^{-1}\colon \Lambda(\ulN)\otimes_A\Lambda(\UOne(n))\dual\otimes_A \CO_{\dotFC_\BC\setminus\bigcup_{i\in\BN_0}\Var(\sigma^{i\ast}J)}\;\isoto\hspace{3cm}\\[2mm]
N\otimes_{A_{\BC}}\BOne(n)\dual\otimes_{A_{\BC}}\CO_{\dotFC_\BC\setminus\bigcup_{i\in\BN_0}\Var(\sigma^{i\ast}J)}
\end{eqnarray*}
which satisfies 
\[
h_\ulN\otimes(h_{\UOne(n)}\dual)^{-1}\circ(\id_{\Lambda(\ulN)}\otimes\id_{\Lambda(\UOne(n))}\otimes\id)\;=\;(\stau_N\otimes(\stau_{\BOne(n)}\dual)^{-1}\otimes\id)\circ\sigma^\ast(h_\ulN\otimes(h_{\UOne(n)}\dual)^{-1})\,.
\]
Combined with the isomorphisms $\ulN\otimes_{A_{\BC}}\UOne(n)\dual\cong\ulM$ and $\Lambda(\ulM)\cong\Lambda(\ulN)\otimes_A\Lambda(\UOne(n))\dual$, this yields the desired extension of $h_\ulM$
\[
\Lambda(\ulM)\otimes_A \CO_{\dotFC_\BC\setminus\bigcup_{i\in\BN_0}\Var(\sigma^{i\ast}J)}\;\isoto\;M\otimes_{A_\BC}\CO_{\dotFC_\BC\setminus\bigcup_{i\in\BN_0}\Var(\sigma^{i\ast}J)}
\]
and proves $\Lambda(\ulM)=\bigl\{\,m\in M\otimes_{A_{\BC}}\CO\bigl(\dotFC_\BC\setminus\bigcup_{i\in\BN_0}\Var(\sigma^{i\ast}J)\bigr):\es \tau_M(\sigma^\ast m)=m\,\bigr\}$.
\end{proof}

\begin{corollary}\label{CorLambdaConvRadius}
In the situation of Lemma~\ref{LemmaUniformizableBF_q[t]} let $\Psi\in\GL_r(\BC\langle t\rangle)$ be a rigid analytic trivialization of $\Phi$. Then the entries of $\Psi$ and $\Psi^{-1}$ converge for all $t\in\BC$ with $|t|<|\theta|$. If $\ulM$ is effective, then the entries of $\Psi^{-1}$ even converge for all $t\in\BC$.
\end{corollary}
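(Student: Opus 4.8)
The plan is to read off both assertions from Proposition~\ref{PropLambdaConvRadius}, using the dictionary supplied by Lemma~\ref{LemmaUniformizableBF_q[t]}. Recall from that lemma that, with respect to the $\BF_q[t]$-basis $\CC$ of $\Lambda(\ulM)$ and the $\BC[t]$-basis $\CB$ of $M$, the map $h_\ulM$ is represented by the matrix $(\Psi^T)^{-1}$, and hence $h_\ulM^{-1}$ by $\Psi^T$. Since $\BC[t]$ is a principal ideal domain, $M$ is free with basis $\CB$ and $\Lambda(\ulM)$ is free over $A=\BF_q[t]$, so after base change both $\Lambda(\ulM)\otimes_A\CO$ and $M\otimes_{\BC[t]}\CO$ are free coherent sheaves of rank $r$; by the equivalence (recalled before Definition~\ref{DefLambda}) between locally free coherent sheaves on a quasi-Stein subspace $U\subset\dotFC_\BC$ and finite projective $\CO(U)$-modules, an $\CO_U$-linear map between them is given by a matrix over $\CO(U)$ and an isomorphism by a matrix in $\GL_r(\CO(U))$.

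First I would make the excluded locus explicit in the coordinate $t$. For $A=\BF_q[t]$ we have $J=(t-\theta)$ and $\sigma^{i\ast}J=(t-\theta^{q^i})$, so each $\Var(\sigma^{i\ast}J)$ is the single point $\{t=\theta^{q^i}\}$ of $\dotFC_\BC$, of absolute value $|\theta|^{q^i}$. Since $\theta=\charmorph(t)$ satisfies $|\theta|=|\zeta|^{-1}>1$, all of these points lie at distance $|\theta|^{q^i}\ge|\theta|$ from the origin, so the open disc $\{t\in\BC:|t|<|\theta|\}$ is contained in the admissible open subspace $U:=\dotFC_\BC\setminus\bigcup_{i\in\BN_0}\Var(\sigma^{i\ast}J)$. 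By Proposition~\ref{PropLambdaConvRadius}\ref{PropMaphMA} the map $h_\ulM$ extends to an isomorphism of free $\CO_U$-modules of rank $r$; hence the matrix $(\Psi^T)^{-1}$ representing it, and therefore also its inverse $\Psi^T$, lies in $\GL_r(\CO(U))$. Restricting along the admissible open immersion $\{t\in\BC:|t|<|\theta|\}\hookrightarrow U$ shows that every entry of $\Psi^T$ and of $(\Psi^T)^{-1}$, equivalently of $\Psi$ and of $\Psi^{-1}$, is a power series converging on $\{t\in\BC:|t|<|\theta|\}$. This is the first assertion.

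When $\ulM$ is effective I would instead use Proposition~\ref{PropLambdaConvRadius}\ref{PropMaphMB}: there $h_\ulM$ extends to an injective $\CO_{\dotFC_\BC}$-linear homomorphism of free modules of rank $r$ over the entire quasi-Stein space $\dotFC_\BC$. Consequently its representing matrix $(\Psi^T)^{-1}$ has all of its entries in $\CO(\dotFC_\BC)$, which in the case $C=\BP^1_{\BF_q}$, $A=\BF_q[t]$ is the ring of power series converging everywhere on $\BC$; equivalently $\Psi^{-1}$ has entire entries, so each of them converges for all $t\in\BC$. This argument yields nothing new for $\Psi$ itself, since now $h_\ulM$ is merely injective and $h_\ulM^{-1}$, whose matrix is $\Psi^T$, need not extend across the point $\Var(J)=\{t=\theta\}$.

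There is no genuine difficulty here: the entire content lies in Proposition~\ref{PropLambdaConvRadius}. The only steps demanding attention are the bookkeeping of transposes and inverses that relates $h_\ulM$ and $h_\ulM^{-1}$ to $\Psi$ and $\Psi^{-1}$, and the identification of $\Var(\sigma^{i\ast}J)$ with the points $\theta^{q^i}$ together with the remark that each of them has absolute value $\ge|\theta|$; the latter is precisely why the radius in the non-effective case is $|\theta|$ and cannot be taken larger, the closest excluded point $\theta=\theta^{q^0}$ sitting on the circle $|t|=|\theta|$.
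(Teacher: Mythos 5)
Your argument is correct and is exactly the paper's proof (which is stated in one line: "In view of $J=(t-\theta)$ this follows from the fact that $h_\ulM$ is represented by the matrix $(\Psi^T)^{-1}$"), only with the bookkeeping made explicit. In particular you correctly identify $\Var(\sigma^{i\ast}J)=\{t=\theta^{q^i}\}$ with $|\theta^{q^i}|\ge|\theta|>1$ and invoke the two parts of Proposition~\ref{PropLambdaConvRadius} for the two assertions, which is precisely what the authors intend.
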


\begin{proof}
In view of $J=(t-\theta)$ this follows from the fact that $h_\ulM$ is represented by the matrix $(\Psi^T)^{-1}$.
\end{proof}

Proposition~\ref{PropLambdaConvRadius} implies that $\sigma^\ast h_\ulM$ is an isomorphism locally at $V(J)=\{z=\zeta\}\subset\dotFD_\BC$. This allows us to associate a $Q$-pre Hodge-Pink structure with any uniformizable mixed $A$-motive as follows. Namely, $ h_\ulM$ induces isomorphisms
\begin{equation}\label{EqhM}
\xymatrix @R+1pc @C+7pc {
\Lambda(\ulM)\otimes_A\BC\dpl z-\zeta\dpr \ar[r]^{\sigma^\ast h_\ulM\otimes\id_{\BC\dpl z-\zeta\dpr}}_\cong \ar[d]_{\id_{\Lambda(\ulM)}\otimes\id_{\BC\dpl z-\zeta\dpr}}^\cong & \sigma^* M\otimes_{A_{\BC}}\BC\dpl z-\zeta\dpr \ar[d]_\cong^{\tau_M\otimes\id_{\BC\dpl z-\zeta\dpr}}\\
\Lambda(\ulM)\otimes_A\BC\dpl z-\zeta\dpr \ar[r]^{h_\ulM\otimes\id_{\BC\dpl z-\zeta\dpr}}_\cong &  M\otimes_{A_{\BC}}\BC\dpl z-\zeta\dpr\,.
}
\end{equation}
Here $h_\ulM\otimes\id_{\BC\dpl z-\zeta\dpr}$ is an isomorphism because the three others are.
Therefore, the preimage $\Fq\,:=\,( h_\ulM\otimes\id_{\BC\dpl z-\zeta\dpr})^{-1}\bigl(M\otimes_{A_{\BC}}\BC\dbl z-\zeta\dbr\bigr)$ is a $\BC\dbl z-\zeta\dbr$-lattice in $\Lambda(\ulM)\otimes_A\BC\dpl z-\zeta\dpr$. The tautological lattice is $\Fp:=\Lambda(\ulM)\otimes_A\BC\dbl z-\zeta\dbr=(\sigma^\ast h_\ulM\otimes\id_{\BC\dbl z-\zeta\dbr})^{-1}\bigl(\sigma^* M\otimes_{A_{\BC}}\BC\dbl z-\zeta\dbr\bigr)$.

\begin{definition}\label{Def2.6}
Let $\ulM$ be a uniformizable mixed $A$-motive with weight filtration $W_{\mu\,}\ulM$.
We set $\ulHodge^1(\ulM):=(H,W_\bullet H,\Fq)$ with
\begin{itemize}
\item $H\;:=\;\Hodge^1(\ulM)\;:=\;\Lambda(\ulM)\otimes_A Q$,
\item $W_\mu H\;:=\;\Hodge^1(W_{\mu\,}\ulM)\;=\;\Lambda(W_{\mu\,}\ulM)\otimes_A Q\;\subset\;H$ for each $\mu\in\BQ$,
\item $\Fq\,:=\,( h_\ulM\otimes\id_{\BC\dpl z-\zeta\dpr})^{-1}\bigl(M\otimes_{A_{\BC}}\BC\dbl z-\zeta\dbr\bigr)$.
\end{itemize}
We call $\ulHodge^1(\ulM)$ the \emph{$Q$-Hodge-Pink structure associated with $\ulM$}. (This name is justified by Theorem~\ref{ThmHodgeConjecture} below.) We also set $\ulHodge_1(\ulM):=\ulHodge^1(\ulM)\dual$ in $\QHodgeCat$. The functor $\ulHodge^1$ is covariant and $\ulHodge_1$ is contravariant in $\ulM$.
\end{definition}

\begin{remark}\label{Rem2.7}
(a) If $\ulM=\ulM(\ulE)$ is the $A$-motive associated with a Drinfeld $A$-module $\ulE$, then $\ulHodge^1(\ulM)\cong\ulHodge_1(\ulE)\dual=:\ulHodge^1(\ulE)$. We will prove this more generally for a uniformizable pure (or mixed) abelian Anderson $A$-module $\ulE$ in Theorem~\ref{ThmHPofEandM} below.

\medskip\noindent
(b) We draw some conclusions from the description of $\Fq$ and $\Fp:=\Lambda(\ulM)\otimes_A\BC\dbl z-\zeta\dbr$ given before the definition: If $J^m\cdot\tau_M(\ssigma^*M)\subset M\subset J^n\cdot\tau_M(\ssigma^*M)$ for some integers $n\le m$, then $(z-\zeta)^m\Fp\subset\Fq\subset(z-\zeta)^n\Fp$. For example, if $\ulM$ is effective, that is $\tau_M(\ssigma^*M)\subset M$, then $\Fp\subset\Fq$ and there is an exact sequence of $\BC\dbl z-\zeta\dbr$-modules
\[
0\longto\Fp\longto\Fq\xrightarrow{\es h_\ulM\otimes\id_{\BC\dpl z-\zeta\dpr}\,} M/\stau_M(\ssigma^*M)\longto0\,.
\]
Note that $M/\stau_M(\ssigma^*M)$ is a $\BC\dbl z-\zeta\dbr$-module because it is annihilated by some power of $z-\zeta$.

\medskip\noindent
(c) In terms of Definition~\ref{Def1.5} the virtual dimension of $\ulM$ is $\dim\ulM=\deg_\Fq\ulHodge^1(\ulM)$.
\end{remark}

The following theorem is the main theorem of \cite{HartlPink2}.

\begin{theorem}\label{ThmHodgeConjecture} 
Consider a uniformizable mixed $A$-motive $\ulM$.
\begin{enumerate}
\item \label{ThmHodgeConjectureA}
$\ulHodge^1(\ulM)$ is locally semistable and hence indeed a $Q$-Hodge-Pink structure.
\item \label{ThmHodgeConjectureB}
The functor $\ulHodge^1\colon \ulM\to\ulHodge^1(\ulM)$ is a $Q$-linear exact fully faithful tensor functor from the category $\AMUMotCatIsog$ to the category $\QHodgeCat$.
\item\label{ThmHodgeConjectureC}
The essential image of $\ulHodge^1$ is closed under the formation of subquotients; that is, if $\ulH'\subset\ulHodge^1(\ulM)$ is a $Q$-Hodge-Pink sub-structure, then there exists a uniformizable mixed $A$-sub-motive $\ulM'\subset\ulM$ in $\AMUMotCatIsog$ with $\ulHodge^1(\ulM')=\ulH'$.
\item\label{ThmHodgeConjectureD}
The functor $\ulHodge^1$ defines an exact tensor equivalence between the Tannakian subcategory $\llangle\ulM\rrangle\subset\AMUMotCatIsog$ generated by $\ulM$ and the Tannakian subcategory $\llangle\ulHodge^1(\ulM)\rrangle\subset\QHodgeCat$ generated by its $Q$-Hodge-Pink structure $\ulHodge^1(\ulM)$.
\end{enumerate}
\end{theorem}

Assertions~\ref{ThmHodgeConjectureC} and \ref{ThmHodgeConjectureD} are the function field analog of the Hodge Conjecture \cite{Hodge52,GrothendieckHodge,Deligne06}. We will prove Theorem~\ref{ThmHodgeConjecture} in Section~\ref{Sect4} and discuss its consequences for the Hodge-Pink group $\Gamma_{\ulHodge^1(\ulM)}$ in Section~\ref{Sect3}.

\begin{example}\label{Ex2.9}
Let $C=\BP^1_{\BF_q}$, $A=\BF_q[t]$, $z=\frac{1}{t}$, $\theta=\charmorph(t)=\frac{1}{\zeta}\in\BC$. Let $M=A_{\BC}^{\oplus2}$ with $\tau_M=\Phi:=\left( \begin{array}{cc}t-\theta&b\\0&(t-\theta)^3 \end{array}\right)$. Then $\ulM=(M,\tau_M)$ is mixed with $\Gr^W_1\ulM=W_1\ulM\cong(A_{\BC},\tau=(t-\theta))$ and $\Gr^W_3\ulM\cong(A_{\BC},\tau=(t-\theta)^3)$. So $\ulM$ has weights $1$ and $3$. Moreover, $\ulM$ is uniformizable by Lemma~\ref{Lemma2.3} and Proposition~\ref{PropUnifAMotive}\ref{PropUnifAMotiveB}.

We set $\ell_\zeta^{\SSC -}:=\prod_{i=0}^\infty(1-\zeta^{q^i}t)\in \CO(\dotFC_\BC)$ and choose an element $\eta\in\BC$ with $\eta^{q-1}=-\zeta$. Then 
\begin{eqnarray*}
\Lambda(W_1\ulM)&=&\{\lambda\in \CO(\dotFC_\BC):(t-\theta)\sigma^\ast(\lambda)\;=\;\lambda\}\es=\es\eta\ell_\zeta^{\SSC -}\cdot\BF_q[t],\\[2mm]
\Lambda(\Gr^W_3\ulM)&=&(\eta\ell_\zeta^{\SSC -})^3\cdot\BF_q[t],\qquad\text{and}\\[2mm]
\Lambda(\ulM)&=&\left(\begin{smallmatrix}\eta\ell_\zeta^{\SSC -}\\0\end{smallmatrix}\right)\cdot\BF_q[t]\oplus\left(\begin{smallmatrix}f\\(\eta\ell_\zeta^{\SSC -})^3\end{smallmatrix}\right)\cdot\BF_q[t]
\end{eqnarray*}
for an $f\in \CO(\dotFC_\BC)$ with $(t-\theta)\sigma^\ast(f)+b\cdot\eta^{3q}\sigma^\ast(\ell_\zeta^{\SSC -})^3=f$. Putting $\lambda_1:=\left(\begin{smallmatrix}\eta\ell_\zeta^{\SSC -}\\0\end{smallmatrix}\right)$ and $\lambda_2:=\left(\begin{smallmatrix}f\\(\eta\ell_\zeta^{\SSC -})^3\end{smallmatrix}\right)$, we get $H(\ulM)=\lambda_1\cdot Q\oplus\lambda_2\cdot Q$ and $W_1H(\ulM)=\lambda_1\cdot Q$. 

With respect to the bases $(\left(\begin{smallmatrix}1\\0\end{smallmatrix}\right),\left(\begin{smallmatrix}0\\1\end{smallmatrix}\right))$ of $\ulM$ and $(\lambda_1,\lambda_2)$ of $\Lambda(\ulM)$ the isomorphism $h_\ulM$ is given by the matrix $(\Psi^T)^{-1}:=\left(\begin{array}{cc}\eta{\ell}_\zeta^{\SSC -}&f\\0&(\eta{\ell}_\zeta^{\SSC -})^3 \end{array}\right)$. Therefore, the Hodge-Pink lattice is described by 
\[
\Fq\;=\;\left(\begin{array}{cc}\eta\ell_\zeta^{\SSC -}& f\\0&(\eta\ell_\zeta^{\SSC -})^3\end{array}\right)^{\!\!-1}\hspace{-0.5em}\cdot\Fp\;=\;\left( \begin{array}{cc}t-\theta&b\\0&(t-\theta)^3 \end{array}\right)^{\!\!-1}\hspace{-0.5em}\cdot\Fp.
\]
Since $\ell_\zeta^{\SSC -}$ has a simple zero at $z=\zeta$, one sees that $\Fq/\Fp$ (which is also isomorphic to $\coker\tau_M$) is isomorphic to $\BC\dbl z-\zeta\dbr/(z-\zeta)\oplus\BC\dbl z-\zeta\dbr/(z-\zeta)^3$ if $(t-\theta)|f$ (equivalently, if $(t-\theta)|b$) and isomorphic to $\BC\dbl z-\zeta\dbr/(z-\zeta)^4$ if $(t-\theta)\nmid f$ (equivalently, if $(t-\theta)\nmid b$). So the Hodge-Pink weights of $\ulHodge^1(\ulM)$ are $(1,3)$ or $(0,4)$, and the weight polygon lies above the Hodge polygon with the same endpoint $WP(\ulM)\ge HP(\ulM)$ in accordance with Theorem~\ref{ThmHodgeConjecture}\ref{ThmHodgeConjectureA} and Remark~\ref{RemPolygons}. 

In particular, if $b=(t-\theta)\cdot b'$ then the equation defining $f$ shows that $f$ vanishes at $t=\theta^{q^i}$ for all $i\in\BN_0$, whence $f=\eta\ell_\zeta^{\SSC -}\tilde f$ for an $\tilde f\in\CO(\dotFC_\BC)$ satisfying $\sigma^*(\tilde f)+b'\cdot\eta^{2q}\sigma^*(\ell_\zeta^{\SSC -})^2=\tilde f$.
\end{example}

\subsection{Cohomology Realizations}\label{CohAMot}

Let $\ulM=(M,\stau_{M})$ be an $A$-motive of rank $r$ over $\BC$.
Anderson defined the \emph{Betti cohomology realization} of $\ulM$ by setting
\[\Koh_\Betti^1(\ulM,B):=\Lambda(\ulM)\otimes_A B
\quad\text{and}\quad \Koh_{1,\Betti}(\ulM,B):=\Hom_A(\Lambda(\ulM),B)
\]
for any $A$-algebra $B$; see \cite[\S\,2.5]{Goss94}. This is most useful when $\ulM$ is uniformizable, in which case both are locally free $B$-modules of rank equal to $\rk\ulM$ and $\Hodge^1(\ulM)=\Koh^1_\Betti(\ulM,Q)$; see Lemma~\ref{LemmaUniformizable}. By Theorem~\ref{TheoremAMotTannakian} this realization provides for $B=Q$ an exact faithful neutral fiber functor on $\AUMotCatIsog$.

\medskip

Moreover, the \emph{de Rham cohomology realization} of $\ulM$ is defined to be
\[
\Koh^1_{\dR}(\ulM,\BC):=\ssigma^\ast M/J\cdot\ssigma^\ast M
 \quad\text{and}\quad \Koh_{1,\dR}(\ulM,\BC):=\Hom_\BC(\ssigma^\ast M/J\cdot\ssigma^\ast M,\,\BC).
\]
We define a decreasing filtration of $\Koh^1_{\dR}(\ulM,\BC)$ by $\BC$-subspaces
\[
F^{i}\Koh^1_{\dR}(\ulM,\BC):=\text{image of }\bigl(\ssigma^\ast M \cap J^i\cdot\tau_M^{-1}(M)\bigr)\quad\text{in }\Koh^1_{\dR}(\ulM,\BC)\quad\text{for all }i\in\BZ,
\]
which we call the \emph{Hodge-Pink filtration of $\ulM$}; see \cite[\S\,2.6]{Goss94}. 

If $\ulM$ satisfies $J\cdot M\subset\tau_M(\sigma^*M)\subset M$ then 
\[
F^0\;=\;\Koh^1_{\dR}(\ulM,\BC) \;\supset\; F^1\;=\;\tau_M^{-1}(J\cdot M)/J\cdot\sigma^*M \;\supset\; F^2\;=\;(0).
\]
For example, this is the case if $\ulM$ is the $A$-motive associated with a Drinfeld $A$-module. In this case the Hodge-Pink filtration coincides with the Hodge filtration studied by Gekeler, see Proposition~\ref{PropCompTateModEandM}\ref{PropCompTateModEandM_C} and Lemma~\ref{LemmaGekeler}.

As noted in Remark~\ref{RemQ-HPTannakian} and Example~\ref{Example1.2}(c), more useful than the Hodge-Pink filtration is actually the Hodge-Pink lattice $\Fq$, and the latter cannot be recovered from the Hodge-Pink filtration in general. We therefore propose to lift the de Rham cohomology to $\BC\dpl z-\zeta\dpr$ and define the \emph{generalized de Rham cohomology realization} of $\ulM$ by
\[
\begin{array}{llll}
\Koh^1_\dR(\ulM,\BC\dbl z-\zeta\dbr) & := & \ssigma^*M\otimes_{A_\BC}\BC\dbl z-\zeta\dbr
 & \quad\text{and}\\[2mm]
\Koh^1_\dR\bigl(\ulM,\BC\dpl z-\zeta\dpr\bigr) & := & \ssigma^*M\otimes_{A_\BC}\BC\dpl z-\zeta\dpr & \quad\text{and}\\[2mm]
\Koh_{1,\dR}(\ulM,\BC\dbl z-\zeta\dbr) & := & \Hom_{A_\BC}(\ssigma^*M,\,\BC\dbl z-\zeta\dbr)
 & \quad\text{and}\\[2mm]
\Koh_{1,\dR}\bigl(\ulM,\BC\dpl z-\zeta\dpr\bigr) & := & \Hom_{A_\BC}\bigl(\ssigma^*M,\,\BC\dpl z-\zeta\dpr\bigr)\,.
\end{array}
\]
In particular by tensoring with the morphism $\BC\dbl z-\zeta\dbr\onto\BC,\, z-\zeta\mapsto0$ we get back $\Koh^1_\dR(\ulM,\BC)=\Koh^1_\dR\bigl(\ulM,\BC\dbl z-\zeta\dbr\bigr)\otimes_{\BC\dbl z-\zeta\dbr}\BC$ and $\Koh_{1,\dR}(\ulM,\BC)=\Koh_{1,\dR}\bigl(\ulM,\BC\dbl z-\zeta\dbr\bigr)\otimes_{\BC\dbl z-\zeta\dbr}\BC$.
We define the \emph{Hodge-Pink lattices} of $\ulM$ as the $\BC\dbl z-\zeta\dbr$-submodules
\[
\begin{array}{ccccl}
\Fq^\ulM & := & \tau_M^{-1}(M)\otimes_{A_\BC}\BC\dbl z-\zeta\dbr & \subset & \Koh^1_{\dR}\bigl(\ulM,\BC\dpl z-\zeta\dpr\bigr)\quad\text{and}\\[2mm]
\Fq_\ulM & := & (\tau_M\dual\otimes\id_{\BC\dpl z-\zeta\dpr})\bigl(\Hom_{A_\BC}(M,\,\BC\dbl z-\zeta\dbr)\bigr) & \subset & \Koh_{1,\dR}\bigl(\ulM,\BC\dpl z-\zeta\dpr\bigr)\,.
\end{array}
\]
Then the Hodge-Pink filtrations $F^i \Koh^1_{\dR}(\ulM,\BC)$ and $F^i \Koh_{1,\dR}(\ulM,\BC)$ of $\ulM$ are recovered as the images of $\Koh^1_{\dR}\bigl(\ulM,\BC\dbl z-\zeta\dbr\bigr)\cap(z-\zeta)^i\Fq^\ulM$ in $\Koh^1_{\dR}(\ulM,\BC)$, respectively of $\Koh_{1,\dR}\bigl(\ulM,\BC\dbl z-\zeta\dbr\bigr)\cap(z-\zeta)^i\Fq_\ulM$ in $\Koh_{1,\dR}(\ulM,\BC)$ like in Remark~\ref{Rem1.4}. All these structures are compatible with the natural duality between $H^1_\dR$ and $H_{1,\dR}$. The de Rham realization provides (covariant) exact faithful tensor functors
\begin{align}\label{EqDeRhamFiberFunctor}
& \Koh_\dR^1(\,.\,,\BC)\colon & \hspace{-1.1cm}\AMotCatIsog & \longto \es {\tt Vect}_{\BC}\,, & & \ulM \;\longmapsto\; \Koh_\dR^1(\ulM,\BC)\qquad\text{and}\\[2mm]
& \Koh_\dR^1(\,.\,,\BC\dbl z-\zeta\dbr)\colon & \hspace{-1.1cm}\AMotCatIsog & \longto \es {\tt Mod}_{\BC\dbl z-\zeta\dbr}\,, & & \ulM \;\longmapsto\; \Koh_\dR^1(\ulM,\BC\dbl z-\zeta\dbr)\,. \nonumber
\end{align}
This is clear for $\Koh_\dR^1(\,.\,,\BC\dbl z-\zeta\dbr)$ and for $\Koh_\dR^1(\,.\,,\BC)$ exactness follows from the snake lemma applied to multiplication with $z-\zeta$ on $\Koh_\dR^1(\,.\,,\BC\dbl z-\zeta\dbr)$. To prove faithfulness for $\Koh_\dR^1(\,.\,,\BC)$ note that every morphism $f\colon\ulM'\to\ulM$ can in $\AMotCatIsog$ be factored into $\ulM'\onto\im(f)\isoto\coim(f)\into\ulM$. If $\Koh_\dR^1(f,\BC)$ is the zero map the exactness of $\Koh_\dR^1(\,.\,,\BC)$ shows that $\Koh_\dR^1(\im(f),\BC)=(0)$. Since $\dim_\BC\Koh_\dR^1(\ulM,\BC)=\rk\ulM$ it follows that the $A$-motive $\im(f)$ has rank zero and therefore $\im(f)=(0)$ and $f=0$.

\medskip

Finally, let $v\in\dotC$ be a closed point. We say that $v$ is a \emph{finite place} of $C$. Let $A_v$ be the $v$-adic completion of $A$, and let $Q_v$ be the fraction field of $A_v$. Consider the $v$-adic completions $A_{\BC,v}:=\invlim A_\BC/v^nA_\BC$ of $A_\BC$ and $M_v:= \varprojlim M/v^n M$ of $M$. Note that $\tau\colon m\mapsto\stau_M(\ssigma^\ast m)$ for $m\in M$ induces a $\ssigma^\ast\wh\otimes\id_{A_v}$-linear map $\stau\colon M_v\to M_v$. We let the \emph{$\stau$-invariants of $M_v$} be the $A_v$-module 
\[M_v^\stau :=\{m\in M_v\;|\;\stau(m)=m\}.\]
It is isomorphic to $A_v^{\oplus\rk\ulM}$ and the inclusion $M_v^\tau\subset M_v$  induces a canonical $\stau$-equivariant isomorphism $M_v^\stau\otimes_{A_v}A_{\BC,v}\isoto M_v$ by \cite[Proposition~6.1]{TW}. The \emph{$v$-adic cohomology realizations of $\ulM$} are given by 
\[
\begin{array}{lllllll}
\Koh_v^1(\ulM,A_v) & := & M_v^\stau & \quad\text{and}\quad & \Koh_v^1(\ulM,Q_v) & := & M_v^\stau\otimes_{A_v}Q_v\qquad\text{and}\\[2mm]
\Koh_{1,v}(\ulM,A_v) & := & \Hom_{A_v}(M_v^\stau,A_v) & \quad\text{and}\quad & \Koh_{1,v}(\ulM,Q_v) & := & \Hom_{A_v}(M_v^\stau,Q_v)\,;
\end{array}
\]
see \cite[\S\,2.3]{Goss94}. If $\ulM$ is defined over a subfield $L$ of $\BC$ (with $L=\BC$ allowed) then they carry a continuous action of $\Gal(L^\sep/L)$ and the $v$-adic realization provides (covariant) exact faithful tensor functors
\begin{eqnarray}\label{EqVAdicFiberFunctor}
\Koh_v^1(\,.\,,A_v)\colon & \AMotCat & \longto \es {\tt Mod}_{A_v[\Gal(L^\sep/L)]}\,,\quad \ulM \;\longmapsto\; \Koh_v^1(\ulM,A_v)\qquad\text{and}\\[2mm]
\Koh_v^1(\,.\,,Q_v)\colon & \AMotCatIsog & \longto \es {\tt Mod}_{Q_v[\Gal(L^\sep/L)]}\,,\quad \ulM \;\longmapsto\; \Koh_v^1(\ulM,Q_v)\,. \nonumber
\end{eqnarray}
This follows from the isomorphism $\Koh_v^1(\ulM,A_v)\otimes_{A_v}A_{\BC,v}\isoto M_v$ because $A_v\subset A_{\BC,v}$ is faithfully flat. Moreover, if $L$ is a \emph{finitely generated} field then Taguchi~\cite{Taguchi95b} and Tamagawa~\cite[\S\,2]{Tamagawa} proved that
\begin{equation}\label{EqTateConjAMotives}
\Koh_v^1(\,.\,,A_v)\colon \; \Hom(\ulM,\ulM')\otimes_A A_v \;\isoto\; \Hom_{A_v[\Gal(L^\sep/L)]}\bigl(\Koh_v^1(\ulM,A_v),\Koh_v^1(\ulM',A_v)\bigr)
\end{equation}
is an isomorphism for $A$-motives $\ulM$ and $\ulM'$. This is the analog of the \emph{Tate conjecture} for $A$-motives. 

\begin{proposition}\label{PropWeightsTateModule}
Let $\ulM$ be a pure or mixed $A$-motive, which is defined over a \emph{finite field extension} $L$ of $Q$. Let $\CP$ be a finite place of $L$, not lying above $\infty$ or $v$, where $\ulM$ has good reduction, and let $\BF_\CP$ be its residue field. Then the geometric Frobenius $\Frob_\CP$ of $\CP$ has a well defined action on $\Koh_v^1(\ulM,A_v)$ and each of its eigenvalues lies in the algebraic closure of $Q$ in $\BC$ and has absolute value $(\#\BF_\CP)^\mu$ for a weight $\mu$ of $\ulM$. These eigenvalues are independent of $v$.
\end{proposition}

\noindent
{\it Remark.} The \emph{geometric Frobenius} $\Frob_\CP$ of $\CP$ is the inverse of the \emph{arithmetic Frobenius} $\Frob_\CP^{-1}$, which satisfies $\Frob_\CP^{-1}(x) \equiv x^{\#\BF_\CP}\mod\CP$ for $x\in\CO_L$.

\begin{proof}
Let $\rho_{v,\ulM}\colon\Gal(L^\sep/L)\to\Aut_{A_v}\Koh_v^1(\ulM,A_v)$ be the associated Galois representation. By Gardeyn's criterion \cite[Theorem~1.1]{Gardeyn2} for good reduction, the inertia group of $\Gal(L^\sep/L)$ at $\CP$ acts trivially on $\Koh_v^1(\ulM,A_v)$ for every $v\ne\infty$ not lying below $\CP$, and therefore the Frobenius $\Frob_\CP$ of $\CP$ has a well defined action $\rho_{v,\ulM}(\Frob_\CP)$ on $\Koh_v^1(\ulM,A_v)$. Let $\ulM_\CP$ be the reduction of $\ulM$ at $\CP$. Then there is a canonical isomorphism $\Koh_v^1(\ulM,A_v)\isoto\Koh_v^1(\ulM_\CP,A_v)$ under which the action of $\Frob_\CP$ corresponds to the action of the Frobenius endomorphism 
%
%By Gardeyn's criterion \cite[Theorem~1.1]{Gardeyn2} for good reduction, the inertia group of $\Gal(L^\sep/L)$ at $\CP$ acts trivially on $\Koh_v^1(\ulM,A_v)$ for every $v\ne\infty$ not lying below $\CP$, and therefore the Frobenius $\Frob_\CP$ of $\CP$ has a well defined action $\rho_{v,\ulM}(\Frob_\CP)$ on $\Koh_v^1(\ulM,A_v)$ where $\rho_{v,\ulM}\colon\Gal(L^\sep/L)\to\Aut_{A_v}\Koh_v^1(\ulM,A_v)$ is the Galois representation. Let $\ulM_\CP$ be the reduction of $\ulM$ at $\CP$. Then there is a canonical isomorphism $\Koh_v^1(\ulM,A_v)\isoto\Koh_v^1(\ulM_\CP,A_v)$ under which the action of $\Frob_\CP^{-1}$ corresponds to the action of the Frobenius endomorphism 
\[
\tau_{M_\CP}^{[\BF_\CP:\BF_q]}\;:=\;\tau_{M_\CP}\circ\sigma^*\tau_{M_\CP}\circ\ldots\circ\sigma^{([\BF_\CP:\BF_q]-1)*}\tau_{M_\CP}\colon \ulM_\CP[J^{-1}]\;=\;\sigma^{[\BF_\CP:\BF_q]*}\ulM_\CP[J^{-1}]\isoto\ulM_\CP[J^{-1}]\,.
\]
Indeed, the action of $\Frob_\CP^{-1}=\sigma^{[\BF_\CP:\BF_q]*}$ on $\Koh_v^1(\ulM_\CP,A_v)$ is computed as $\rho_{v,\ulM}(\Frob_\CP^{-1}):=h^{-1}\circ(\Frob_\CP^{-1})^*h$ via the vertical isomorphisms $h$ in the following commutative diagram
\[
\xymatrix @C+2pc {
(\ulM_\CP)_v\otimes_{A_{\BF_\CP,v}}A_{\BF_\CP^\alg,v} \ar@{=}[r] & \sigma^{[\BF_\CP:\BF_q]*}(\ulM_\CP)_v\otimes_{A_{\BF_\CP,v}}A_{\BF_\CP^\alg,v} \ar[r]^{\qquad\tau_{M_\CP}^{[\BF_\CP:\BF_q]}}_\cong & (\ulM_\CP)_v\otimes_{A_{\BF_\CP,v}}A_{\BF_\CP^\alg,v} \\
(\ulM_\CP)_v^\tau\otimes_{A_v}A_{\BF_\CP^\alg,v} \ar[u]^h_\cong & (\ulM_\CP)_v^\tau\otimes_{A_v}A_{\BF_\CP^\alg,v} \ar[u]^{(\Frob_\CP^{-1})^*h}_\cong \ar[l]_{\rho_{v,\ulM}(\Frob_\CP^{-1})\otimes\id}^\cong \ar@{=}[r] & (\ulM_\CP)_v^\tau\otimes_{A_v}A_{\BF_\CP^\alg,v}\,. \ar[u]^h_\cong
}
\]
In particular $h\circ\rho_{v,\ulM}(\Frob_\CP)=\tau_{M_\CP}^{[\BF_\CP:\BF_q]}\circ h$ on $\Koh_v^1(\ulM_\CP,A_v)$. Since $Q\otimes_A\End_{\BF_\CP}(\ulM_\CP)$ is a finite dimensional $Q$-algebra, $\tau_{M_\CP}^{[\BF_\CP:\BF_q]}$ satisfies a polynomial equation with coefficients in $Q$ and its eigenvalues on $\Koh_v^1(\ulM_\CP,A_v)$ satisfy the same equation. In particular, these eigenvalues are independent of the place $v\ne\infty$ not lying below $\CP$. Now our formula for the absolute values of the eigenvalues was proved for pure $\ulM$ by Goss~\cite[Theorem~5.6.10]{Goss} and follows for mixed $\ulM$, because the eigenvalues of $\Frob_\CP$ coincide with the eigenvalues on the graded pieces $\Gr^W_\mu\ulM$ of $\ulM$ by considerations of triangular matrices. This motivates our convention that the weights of an effective $A$-motive are non-negative; see Proposition~\ref{PropPure}\ref{PropPure_h}.
\end{proof}

\bigskip

The morphism $h_\ulM$ from Proposition~\ref{PropMaphM} induces comparison isomorphisms between the Betti and the $v$-adic, respectively the de Rham realizations as follow.

\begin{theorem}\label{ThmCompIsomBettiDRAMotive}
If $\ulM$ is a uniformizable $A$-motive there are canonical \emph{comparison isomorphisms}, sometimes also called \emph{period isomorphisms}
\[
h_{\Betti,\,v}\colon\Koh^1_\Betti(\ulM,A_v)\;=\;\Lambda(\ulM)\otimes_A A_v\;\isoto\;\Koh^1_v(\ulM,A_v)\,,\quad\lambda\otimes f\longmapsto (f\cdot\lambda \mod v^n)_{n\in\BN}
\]
and
\[
\begin{array}[b]{rcc@{\hspace{-0em}}cccl}
h_{\Betti,\,\dR} & := & \sigma^\ast h_\ulM\otimes\id_{\BC\dbl z-\zeta\dbr} & \colon &\Koh^1_\Betti\bigl(\ulM,\BC\dbl z-\zeta\dbr\bigr) & \isoto & \Koh^1_{\dR}\bigl(\ulM,\BC\dbl z-\zeta\dbr\bigr)\,,\\[2mm]
h_{\Betti,\,\dR} & := & \sigma^\ast h_\ulM\mod J & \colon &\Koh^1_\Betti(\ulM,\BC) & \isoto & \Koh^1_{\dR}(\ulM,\BC)\,.
\end{array}
\]
The latter are compatible with the Hodge-Pink lattices, respectively the Hodge-Pink filtration provided on the Betti realization $\Koh^1_\Betti(\ulM,Q)=\Hodge^1(\ulM)$ via the associated Hodge-Pink structure $\ulHodge^1(\ulM)$.
\end{theorem}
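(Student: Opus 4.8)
The plan is to construct the comparison isomorphisms directly from the morphism $h_\ulM$ of Proposition~\ref{PropMaphM} and then verify compatibility with the extra structures by tracing through the definitions. For the $v$-adic comparison, I would start from the extension of $h_\ulM$ to an isomorphism of locally free sheaves away from the points $\Var(\sigma^{i\ast}J)$ given in Proposition~\ref{PropMaphM}\ref{PropMaphMA}. Since the place $v\in\dotC$ is distinct from $\infty$ and from all the $\sigma^{i\ast}J$ (the latter specialize to $\infty_{\SSC\BC}$, hence lie in $\Disc$, whereas $v$ lies in the affinoid complement $\FC_\BC\setminus\Disc$), base-changing $h_\ulM$ to the $v$-adic completion $A_{\BC,v}$ yields an isomorphism $\Lambda(\ulM)\otimes_A A_{\BC,v}\isoto M_v$ which intertwines $\id_{\Lambda(\ulM)}\otimes\mathrm{id}$ with $\tau$ on $M_v$ by the identity $h_\ulM\circ(\id\otimes\id)=(\tau_M\otimes\id)\circ\sigma^\ast h_\ulM$. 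Taking $\tau$-invariants and using that $A$ is the ring of $\sigma^\ast$-invariants in $A_{\BC}$ (so $A_v$ is the ring of invariants in $A_{\BC,v}$) gives $\Lambda(\ulM)\otimes_A A_v\isoto M_v^\tau=\Koh^1_v(\ulM,A_v)$; this is exactly the map $\lambda\otimes f\mapsto(f\cdot\lambda\bmod v^n)_n$. Tensoring with $Q_v$ gives the rational statement. If $\ulM$ is defined over a subfield $L\subset\BC$, the Galois action on $M_v$ commutes with $\tau$ and fixes $\Lambda(\ulM)$ (the latter consisting of $\tau$-invariant sections over the $L$-rational space $\FC_\BC\setminus\Disc$), so $h_{\Betti,v}$ is Galois-equivariant.

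For the de Rham comparison, the isomorphism $\sigma^\ast h_\ulM\otimes\id_{\BC\dbl z-\zeta\dbr}$ is already exhibited as the left vertical (and hence well-defined) map in diagram~\eqref{EqhM}: Proposition~\ref{PropMaphM} shows $\sigma^\ast h_\ulM$ is an isomorphism locally at $\Var(J)=\{z=\zeta\}$, so it base-changes to an isomorphism $\Lambda(\ulM)\otimes_A\BC\dbl z-\zeta\dbr\isoto\sigma^\ast M\otimes_{A_\BC}\BC\dbl z-\zeta\dbr=\Koh^1_\dR(\ulM,\BC\dbl z-\zeta\dbr)$. Reducing modulo $J$, i.e.\ tensoring with $\BC\dbl z-\zeta\dbr\onto\BC$, gives the isomorphism $\Koh^1_\Betti(\ulM,\BC)\isoto\Koh^1_\dR(\ulM,\BC)$. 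The remaining assertion is the compatibility with the Hodge-Pink structures. By Definition~\ref{Def2.6}, the Hodge-Pink lattice of $\ulHodge^1(\ulM)$ is $\Fq=(h_\ulM\otimes\id_{\BC\dpl z-\zeta\dpr})^{-1}(M\otimes_{A_\BC}\BC\dbl z-\zeta\dbr)$, while the Hodge-Pink lattice of the de Rham realization is, by definition, $\Fq^\ulM=\tau_M^{-1}(M)\otimes_{A_\BC}\BC\dbl z-\zeta\dbr\subset\sigma^\ast M\otimes_{A_\BC}\BC\dpl z-\zeta\dpr$. Chasing diagram~\eqref{EqhM} over the fraction field $\BC\dpl z-\zeta\dpr$: the top horizontal map $\sigma^\ast h_\ulM\otimes\id$ carries the $\BC\dbl z-\zeta\dbr$-span of $\Lambda(\ulM)$ to $\sigma^\ast M\otimes\BC\dbl z-\zeta\dbr$, and it carries $\Fq$ — which via the other three maps of the square equals the preimage under $\tau_M\otimes\id$ of $M\otimes\BC\dbl z-\zeta\dbr$, namely $\tau_M^{-1}(M)\otimes\BC\dbl z-\zeta\dbr=\Fq^\ulM$. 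Hence $h_{\Betti,\dR}$ sends $\Fq$ onto $\Fq^\ulM$ (and $\Fp$ onto the tautological lattice), and reducing modulo $J$ yields compatibility of the induced Hodge-Pink filtrations by the recipe of Remark~\ref{Rem1.4}, which is identical on both sides.

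I would then remark that all the maps are canonical (independent of the choice of $\BC[t]$-basis of $M$ and of $A$-basis of $\Lambda(\ulM)$) because they are built from $h_\ulM$, which is itself canonical, and that the compatibility with duality between $\Koh^1$ and $\Koh_1$ follows formally from the fact that $h_\ulM$ is compatible with the dual $A$-motive structure, as recorded after Definition~\ref{DefLambda} and used in the proof of Proposition~\ref{PropUnifAMotive}\ref{PropUnifAMotiveC}. The only genuinely delicate point is the localization argument placing $v$ and the $\Var(\sigma^{i\ast}J)$ on opposite sides of $\Disc$, so that Proposition~\ref{PropMaphM}\ref{PropMaphMA} applies at $v$; once that is in place, everything else is a diagram chase. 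I expect the main obstacle to be bookkeeping rather than mathematical: one must be careful that the various completions and localizations ($A_{\BC,v}$ versus $\CO(\FC_\BC\setminus\Disc)$ versus $\BC\dbl z-\zeta\dbr$) are compatible with $\sigma^\ast$ and with the formation of $\tau$-invariants, invoking Remark~\ref{RemSigmaIsFlat} for flatness and \cite[Proposition~6.1]{TW} for the identification $M_v^\tau\otimes_{A_v}A_{\BC,v}\isoto M_v$.
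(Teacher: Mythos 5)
Your proposal is correct and follows essentially the same route as the paper: for the $v$-adic isomorphism, localize $h_\ulM$ at $v$ (which is legitimate because $\Var(v)\subset\FC_\BC\setminus\Disc$, away from $\infty$ and the points $\Var(\sigma^{i\ast}J)$), obtain a $\tau$-equivariant isomorphism onto $M_v$, and take $\tau$-invariants; for the de Rham isomorphism, read everything off diagram~\eqref{EqhM}. Your explicit chase showing $(\sigma^\ast h_\ulM\otimes\id)(\Fq)=\tau_M^{-1}(M)\otimes_{A_\BC}\BC\dbl z-\zeta\dbr=\Fq^\ulM$ merely spells out what the paper compresses into ``follows from diagram~\eqref{EqhM}''.
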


\begin{proof}
Since $v\ne\infty$ the points in the closed subscheme $\{v\}\times_{\BF_q}\BC\subset C_\BC$ do not specialize to $\infty_{\kappa_\BC}\in C_{\kappa_\BC}$ and so this closed subscheme lies in the rigid analytic space $\FC_\BC\setminus\FD_\BC$. This yields isomorphisms $\CO(\FC_\BC\setminus\FD_\BC)/v^n\CO(\FC_\BC\setminus\FD_\BC)\isoto A_\BC/v^nA_\BC$ for all $n$. The isomorphism $h_\ulM$ induces a $\tau$-equivariant isomorphism
\[
\Lambda(\ulM)\otimes_A \invlim\CO(\FC_\BC\setminus\FD_\BC)/v^n\CO(\FC_\BC\setminus\FD_\BC) \;\isoto\; M\otimes_{A_\BC}\invlim A_\BC/v^n A_\BC\;=\;M_v\,.
\]
Taking $\tau$-invariants on both sides and observing 
\[
\bigl(\invlim\CO(\FC_\BC\setminus\FD_\BC)/v^n\CO(\FC_\BC\setminus\FD_\BC)\bigr)^{\ssigma=\id}\;=\;\bigl(\invlim A_\BC/v^nA_\BC\bigr)^{\ssigma=\id}=A_v
\]
provides $h_{\Betti,\,v}$.

The compatibility of the Betti--de Rham comparison isomorphism with the Hodge-Pink lattice and the Hodge-Pink filtration follows from diagram~\eqref{EqhM}.
\end{proof}

\begin{remark}
(a) If $\ulM=\ulM(\ulE)$ is the $A$-motive associated with a Drinfeld $A$-module $\ulE$, the isomorphism $h_{\Betti,\,\dR}$ coincides with the period isomorphism studied by Gekeler \cite[Theorem~5.14]{Gekeler89}; see Section~\ref{SectCohAMod}, in particular Theorem~\ref{ThmPeriodIsomForE} and Proposition~\ref{PropCompTateModEandM}.

\medskip\noindent
(b) Note that there are no $A$-homomorphisms between $A_v$ and $\BC$ and therefore no comparison isomorphism between $\Koh^1_v(\ulM,A_v)$ and $\Koh^1_{\dR}(\ulM,\BC)$ or $\Koh^1_{\dR}\bigl(\ulM,\BC\dbl z-\zeta\dbr\bigr)$. However, if one considers $A$-motives $\ulM$ over an algebraically closed, complete extension $K$ of the $v$-adic completion $Q_v$ instead of over $\BC$, there is a comparison isomorphism  between $\Koh^1_v(\ulM,A_v)$ and $\Koh^1_{\dR}\bigl(\ulM,K\dpl z-\zeta\dpr\bigr)$; see \cite[Remark~4.16]{HartlKim}.
\end{remark}

\begin{example}\label{ExCarlitzPeriod}
Let $C=\BP^1_{\BF_q}$, $A=\BF_q[t]$, $z=\frac{1}{t}$, $\theta=\charmorph(t)=\frac{1}{\zeta}\in\BC$, and let $\ulM=(\BC[t],\tau_M=t-\theta)$ be the Carlitz $t$-motive from Example~\ref{ExampleCHMotive}. As in Example~\ref{ExCarlitzMotiveUnif}(b) we obtain 
\[
\Lambda(\ulM)\;=\;\{\lambda\in \CO(\dotFC_\BC):(t-\theta)\sigma^\ast(\lambda)\;=\;\lambda\}\;=\;\eta\ell_\zeta^{\SSC -}\cdot\BF_q[t]
\]
for $\ell_\zeta^{\SSC -}:=\prod_{i=0}^\infty(1-\zeta^{q^i}t)\in \CO(\dotFC_\BC)$ and $\eta\in\BC$ with $\eta^{q-1}=-\zeta$. The comparison isomorphism $h_{\Betti,\,\dR}=\sigma^*h_\ulM\otimes\id_{\BC\dbl z-\zeta\dbr}$ sends the basis $\eta\ell_\zeta^{\SSC -}$ of $\Koh^1_\Betti(\ulM,\BF_q[t])=\Lambda(\ulM)$ to the element $\sigma^*(\eta\ell_\zeta^{\SSC -})=-\zeta\eta\sigma^*(\ell_\zeta^{\SSC -})\in\Koh^1_\dR(\ulM,\BC\dbl z-\zeta\dbr)=\BC\dbl z-\zeta\dbr$, respectively to the element $-\zeta\eta\sigma^*(\ell_\zeta^{\SSC -})|_{t=\theta}=-\zeta\eta\prod_{i=1}^\infty(1-\zeta^{q^i-1})\in\Koh^1_\dR(\ulM,\BC)=\BC$. The latter is the function field analog of the complex number $(2i\pi)^{-1}$, the inverse of the period of the multiplicative group $\BG_{m,\BQ}$. It is transcendental over $\BF_q(\theta)$ by a result of Wade~\cite{Wade41}. See Example~\ref{ExCarlitzModulePeriod} for more explanations.
\end{example}

%%%%%%%%%%%%%%%%%%%%%%%%%%%%%%%%%%%%%%%%%%%%%%%%%%%%%%%%%%%%% 
%% 
%%     Mixed dual $A$-motives
%% 
%%%%%%%%%%%%%%%%%%%%%%%%%%%%%%%%%%%%%%%%%%%%%%%%%%%%%%%%%%%%% 

\section{Mixed dual \texorpdfstring{$A$}{A}-motives}\label{SectMixedDualAMotives}
\setcounter{equation}{0}

For applications to transcendence questions like in \cite{ABP,Papanikolas,ChangYu07,ChangPapaYu10,ChangPapaThakurYu,ChangPapa11,ChangPapaYu11,ChangPapa12}, it turns out that \emph{dual $A$-motives} are even more useful than $A$-motives; see the article of Chang~\cite{Chang12} in this volume for an introduction. Beware that a dual $A$-motive is something different then the dual $\ulM\dual$ of an $A$-motive $\ulM$. We clarify the relation between dual $A$-motives and $A$-motives, also in view of purity, mixedness and uniformizability in this section.

\subsection{Dual \texorpdfstring{$A$}{A}-motives} \label{SectDualAMotives}

We continue with the conventions made in Section~\ref{SectAMotives}. In particular, we denote the natural inclusion $Q\into\BC$ by $\charmorph$ and consider the maximal ideal $J:=(a\otimes 1-1\otimes \charmorph(a):a\in A)\subset A_{\BC}:=A\otimes_{\BF_q}\BC$. The open subscheme $\Spec A_{\BC}\setminus{\rm V}(J)$ of $C_\BC$ is affine. We denote its ring of global sections by $\GlobMotRing$.

\begin{definition}\label{DefDualAMotive}
\begin{enumerate}
\item 
A \emph{dual $A$-motive} over $\BC$ of characteristic $\charmorph$ is a pair $\uldM=(\dM,\sdtau_\dM)$ consisting of a finite projective $A_{\BC}$-module $\dM$ and an isomorphism of $\GlobMotRing$-modules
\[
\sdtau_\dM\colon \sdsigma^\ast \dM[J^{-1}]\isoto \dM[J^{-1}]
\]
where we set $\sdsigma^\ast \dM[J^{-1}]:=(\sdsigma^\ast \dM)\otimes_{A_{\BC}}\GlobMotRing$ and $\dM[J^{-1}]:=\dM\otimes_{A_{\BC}}\GlobMotRing$. A \emph{morphism} of dual $A$-motives $\df\colon \uldM\to\uldN$ is a homomorphism of the underlying $A_{\BC}$-modules $\df\colon \dM\to \dN$ that satisfies $\df\circ\sdtau_\dM=\sdtau_\dN\circ\sdsigma^\ast \df$. 
The category of dual $A$-motives over $\BC$ is denoted $\dualAMotCat$.
\item
The rank of the $A_\BC$-module $\dM$ is called the \emph{rank} of $\uldM$ and is denoted by $\rk\uldM$. The \emph{virtual dimension} $\dim\uldM$ of $\uldM$ is defined as
\[
\dim\uldM\;:=\;\dim_\BC \,\dM\big/(\dM\cap\sdtau_\dM(\sdsigma^\ast \dM))\;-\;\dim_\BC \,\sdtau_\dM(\sdsigma^\ast \dM)\big/(\dM\cap\sdtau_\dM(\sdsigma^\ast \dM))\,.
\]
\item 
A dual $A$-motive $(\dM,\sdtau_\dM)$ is called \emph{effective} if $\sdtau_\dM$ comes from an $A_{\BC}$-homomorphism $\sdsigma^\ast \dM\to \dM$. An effective $A$-motive has virtual dimension $\ge0$.
\item For two dual $A$-motives $\uldM$ and $\uldN$ over $\BC$ we call $\QHom(\ul\dM,\ul\dN):=\Hom_{\dualAMotCat}(\uldM,\uldN)\otimes_A Q$ the set of \emph{quasi-morphisms} from $\uldM$ to $\uldN$.
\item 
The category with all dual $A$-motives as objects and the $\QHom(\ulM,\ulN)$ as $\Hom$-sets is called the \emph{category of dual $A$-motives over $\BC$ up to isogeny}. It is denoted $\dualAMotCatIsog$.
\end{enumerate}
\end{definition}

Again, if $C=\BP^1_{\BF_q}$ and $A=\BF_q[t]$, our effective dual $A$-motives are a slight generalization of the \emph{abelian dual $t$-motives} in \cite[\S4.4]{BrownawellPapanikolas16}, who in addition require that $\dM$ is finitely generated over $\BC\{\sdtau\}$ where $\sdtau$ acts on $\dM$ through $\dm\mapsto \sdtau_\dM(\sdsigma^\ast \dm)$. 

\medskip

The \emph{tensor product} of two dual $A$-motives $\uldM$ and $\uldN$ is the dual $A$-motive $\uldM\otimes\uldN$ consisting of the $A_{\BC}$-module $\dM\otimes_{A_{\BC}}\dN$ and the isomorphism $\sdtau_\dM\otimes\sdtau_\dN$. 
The dual $A$-motive $\dUOne(0)$ with underlying $A_{\BC}$-module $A_{\BC}$ and $\sdtau=\id_{A_{\BC}}$ is a unit object for the \emph{tensor product} in $\dualAMotCat$ and $\dualAMotCatIsog$. Both categories possess finite direct sums in the obvious way. We also define the \emph{tensor powers} of a dual $A$-motive $\uldM$ as $\uldM^{\otimes0}=\dUOne(0)$ and as $\uldM^{\otimes n}:=\uldM^{\otimes n-1}\otimes\uldM$ for $n>0$. If $\uldM=(\dM,\sdtau_\dM)$ and $\uldN=(\dN,\sdtau_\dN)$ are dual $A$-motives the \emph{internal hom} $\CHom(\uldM,\uldN)$ is the dual $A$-motive with underlying $A_\BC$-module $\dH:=\Hom_{A_\BC}(\dM,\dN)$ and $\sdtau_\dH\colon\sdsigma^*\dH[J^{-1}]\isoto \dH[J^{-1}],\, \dh\mapsto \sdtau_\dN\circ \dh\circ \sdtau_\dM^{-1}$. The \emph{dual} of a dual $A$-motive $\uldM$ is the dual $A$-motive $\uldM\dual:=\CHom(\uldM,\dUOne(0))$ consisting of the $A_{\BC}$-module $\dM\dual := \Hom_{A_{\BC}}(\dM,A_{\BC})$ and the isomorphism $(\sdtau_\dM\dual)^{-1}$.

\begin{remark}\label{RemImportant}
The reader should be careful not to confuse dual $A$-motives $\uldM$ with the duals $\ulM\dual$ of $A$-motives $\ulM$, which are again $A$-motives. In fact, the relation between $A$-motives and dual $A$-motives is the following. Let $\Omega^1_{A/\BF_q}$ be the $A$-module of K\"ahler differentials. Then $\Omega^1_{A_\BC/\BC}=\Omega^1_{A/\BF_q}\otimes_{\BF_q}\BC=\sigma^*\Omega^1_{A_\BC/\BC}=\sdsigma^*\Omega^1_{A_\BC/\BC}$ under the $\BF_q$-isomorphism $\Frob_{q,\BC}\colon \BC\isoto\BC$.
\end{remark}

\begin{proposition}\label{PropDualizing}
Every $A$-motive $\ulM=(M,\tau_M)$ induces a dual $A$-motive $\uldM(\ulM):=(\dM,\sdtau_\dM)$ where 
\begin{eqnarray*}
\dM & := & \Hom_{A_\BC}(\sigma^\ast M,\,\Omega^1_{A_\BC/\BC}), \quad\text{hence,}\quad \sdsigma^*\dM \es = \es \Hom_{A_\BC}(M,\,\Omega^1_{A_\BC/\BC}), \quad\text{and}\\
\sdtau_\dM & := & (\tau_M)\dual\es:=\es\Hom_{A_\BC}(\tau_M,\,\Omega^1_{A_\BC/\BC})\colon\es (\sdsigma^*\dM)\otimes_{A_{\BC}}\GlobMotRing\isoto\dM\otimes_{A_{\BC}}\GlobMotRing\,,\\
& & \qquad\qquad\qquad\qquad\qquad\qquad\qquad\qquad\qquad\qquad\,\, \sdsigma^*\dm\quad\longmapsto\quad \sdsigma^*\dm\circ\tau_M\,.
\end{eqnarray*}
Every morphism $f\colon\ulM\to\ulN$ of $A$-motives induces a morphism $\df:=\Hom_{A_\BC}(\sigma^\ast f,\,\Omega^1_{A_\BC/\BC})\colon$ $\uldM(\ulN)\to\uldM(\ulM)$ of the associated dual $A$-motives. 

Conversely, every dual $A$-motive $\uldM=(\dM,\sdtau_\dM)$ induces an $A$-motive $\ulM(\uldM):=(M,\tau_M)$ where
\begin{eqnarray*}
M & := & \Hom_{A_\BC}(\sdsigma^\ast \dM,\,\Omega^1_{A_\BC/\BC}), \quad\text{hence,}\quad \sigma^*M \es = \es \Hom_{A_\BC}(\dM,\,\Omega^1_{A_\BC/\BC}), \qquad\text{and}\\
\tau_M & := & (\sdtau_\dM)\dual\es:=\es\Hom_{A_\BC}(\sdtau_\dM,\,\Omega^1_{A_\BC/\BC})\colon\es (\sigma^*M)\otimes_{A_{\BC}}\GlobMotRing\isoto M\otimes_{A_{\BC}}\GlobMotRing\bigr)\,,\\
& & \qquad\qquad\qquad\qquad\qquad\qquad\qquad\qquad\qquad\qquad\,\, \sigma^*m\quad\longmapsto\quad \sigma^*m\circ\sdtau_\dM\,.
\end{eqnarray*}
Every morphism $\df\colon\uldM\to\uldN$ of dual $A$-motives induces a morphism $f:=\Hom_{A_\BC}(\sdsigma^*\df,\,\Omega^1_{A_\BC/\BC})\colon$ $\ulM(\uldN)\to\ulM(\uldM)$ of the associated $A$-motives. 

These mutually inverse functors induce exact tensor-anti-equivalences of categories $\AMotCat\longleftrightarrow \dualAMotCat$ and $\AMotCatIsog\longleftrightarrow \dualAMotCatIsog$. They map effective $A$-motives to effective dual $A$-motives and vice versa. In particular, the category $\dualAMotCatIsog$ is a $Q$-linear (non-neutral) Tannakian category, and hence a rigid abelian tensor category.
\end{proposition}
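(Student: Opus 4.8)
The plan is to verify in turn that the four assignments $\ulM\mapsto\uldM(\ulM)$, $\uldM\mapsto\ulM(\uldM)$, $f\mapsto\df$, $\df\mapsto f$ are well defined, that the functors $\uldM(\,\cdot\,)$ and $\ulM(\,\cdot\,)$ are quasi-inverse, that they are additive and exact, that they carry the tensor structure (and internal homs and duals) to the tensor structure, and that they preserve effectivity; finally one notes that each verification is compatible with $-\otimes_AQ$ so that everything descends to the isogeny categories. First I would check well-definedness of $\uldM(\ulM)$: since $\dotC$ is smooth, $\Omega^1_{A_\BC/\BC}=\Omega^1_{A/\BF_q}\otimes_{\BF_q}\BC$ is an invertible $A_\BC$-module, and $\ssigma^*M$ is finite projective because $\ssigma^*$ is flat (Remark~\ref{RemSigmaIsFlat}), hence so is $\dM=\Hom_{A_\BC}(\ssigma^*M,\Omega^1_{A_\BC/\BC})$. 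One then identifies $\sdsigma^*\dM=\Hom_{A_\BC}(M,\Omega^1_{A_\BC/\BC})$, using that $\sdsigma^*$ commutes with internal $\Hom$ of finite projective modules (flat base change), that $\sdsigma^*\ssigma^*=\id$, and that $\sdsigma^*\Omega^1_{A_\BC/\BC}=\Omega^1_{A_\BC/\BC}$ (Remark~\ref{RemImportant}). With this identification, $\sdtau_\dM:=\Hom_{A_\BC}(\tau_M,\Omega^1_{A_\BC/\BC})$ is exactly the map $\sdsigma^*\dM[J^{-1}]\to\dM[J^{-1}]$ written in the statement, and it is an isomorphism of $\GlobMotRing$-modules because $\tau_M$ is one and $\Hom_{\GlobMotRing}(\,\cdot\,,\Omega^1_{A_\BC/\BC}[J^{-1}])$ is a contravariant exact self-equivalence of finite projective modules. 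The same computation with $\ssigma^*\sdsigma^*=\id$ shows that $\ulM(\uldM)$ is a well-defined $A$-motive.

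\textbf{Morphisms and the anti-equivalence.} Given $f\colon\ulM\to\ulN$ with $f\circ\tau_M=\tau_N\circ\ssigma^*f$, applying $\Hom_{A_\BC}(\,\cdot\,,\Omega^1_{A_\BC/\BC})$ to $\ssigma^*f$ and using the identifications of the previous step produces $\df:=\Hom_{A_\BC}(\ssigma^*f,\Omega^1_{A_\BC/\BC})\colon\uldM(\ulN)\to\uldM(\ulM)$ with $\df\circ\sdtau_{\dM(\ulN)}=\sdtau_{\dM(\ulM)}\circ\sdsigma^*\df$; this is a purely formal consequence of the functoriality of $\Hom$ and of $\ssigma^*$, $\sdsigma^*$, and likewise in the other direction. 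To see the two functors are quasi-inverse I would invoke the standard fact that for a finite projective $A_\BC$-module $P$ and an invertible $A_\BC$-module $L$ the evaluation map $P\isoto\Hom_{A_\BC}(\Hom_{A_\BC}(P,L),L)$ is an isomorphism (check Zariski-locally on $\Spec A_\BC$, where $P$ is free and $L\cong A_\BC$). Taking $P=M$ and $L=\Omega^1_{A_\BC/\BC}$ identifies the underlying module of $\ulM(\uldM(\ulM))$ with $M$, and transports $(\tau_M\dual)\dual$ to $\tau_M$ by naturality of evaluation; since this identification is itself natural in $\ulM$, it assembles into a natural isomorphism $\ulM\circ\uldM\cong\id_{\AMotCat}$, and symmetrically $\uldM\circ\ulM\cong\id_{\dualAMotCat}$. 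In particular both functors are fully faithful, and being essentially surjective by the quasi-inverse, they are anti-equivalences; they are additive by construction.

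\textbf{Exactness, tensor structure, effectivity, isogeny, and the main obstacle.} For exactness, a short exact sequence in $\AMotCatIsog$ can, as in the proof of Proposition~\ref{PropAMotTannakianCateg}, be represented by $A$-motives whose underlying sequence $0\to M'\to M\to M''\to0$ of finite projective $A_\BC$-modules is exact; since $\ssigma^*$ is flat and $\Hom_{A_\BC}(\,\cdot\,,\Omega^1_{A_\BC/\BC})$ is exact on finite projective modules (the relevant $\Ext^1$ vanishes), $\uldM$ carries it to a short exact sequence, and likewise $\ulM$. For the tensor structure one uses $\ssigma^*(M\otimes_{A_\BC}N)=\ssigma^*M\otimes_{A_\BC}\ssigma^*N$ together with the canonical isomorphism $\Hom_{A_\BC}(P\otimes_{A_\BC}P',L)\cong\Hom_{A_\BC}(P,L)\otimes_{A_\BC}\Hom_{A_\BC}(P',L)\otimes_{A_\BC}L\dual$ for $L$ invertible, and the analogous identities for internal homs and duals; in the principal case $C=\BP^1_{\BF_q}$, $A=\BF_q[t]$ one has $\Omega^1_{A_\BC/\BC}=A_\BC\,dt$ free, so $L\dual$ is canonically trivial and one obtains an honest tensor-anti-equivalence exchanging $\UOne(0)$ and $\dUOne(0)$, while in general one simply keeps the canonically invertible twisting datum $(\Omega^1_{A_\BC/\BC},\id)$ in the bookkeeping. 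Effectivity is immediate: $\ulM$ effective means $\tau_M$ is induced by an honest $A_\BC$-homomorphism $\ssigma^*M\to M$, whence $\tau_M\dual$ is induced by the honest homomorphism $\Hom_{A_\BC}(M,\Omega^1_{A_\BC/\BC})=\sdsigma^*\dM\to\dM=\Hom_{A_\BC}(\ssigma^*M,\Omega^1_{A_\BC/\BC})$, and conversely. Finally, every step is $A$-linear and compatible with $-\otimes_AQ$ on $\Hom$-modules, so the whole picture descends to $\AMotCatIsog\longleftrightarrow\dualAMotCatIsog$. I do not expect a single deep step: the work is organizational, and the point demanding the most care is that the duality $\Hom_{A_\BC}(\,\cdot\,,\Omega^1_{A_\BC/\BC})$ genuinely interchanges $\ssigma^*$ with $\sdsigma^*$ and intertwines $\tau_M$ with $\sdtau_\dM$ — this rests on $\sdsigma^*\ssigma^*=\ssigma^*\sdsigma^*=\id$ and on $\Omega^1_{A_\BC/\BC}$ being fixed by both pullbacks (Remark~\ref{RemImportant}) — together with the verification that the canonical double-duality isomorphisms are compatible with these semilinear $\tau$-structures and natural in the objects, so that they assemble into natural isomorphisms of functors; tracking the (in general nontrivial) invertible module $\Omega^1_{A_\BC/\BC}$ through the tensor compatibility is the only spot where one must be genuinely attentive.
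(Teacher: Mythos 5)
Your proposal is correct and follows essentially the same route as the paper, whose entire written proof consists of the key observation that flatness of $\sigma^\ast$ and $\sdsigma^\ast$ (Remark~\ref{RemSigmaIsFlat}) together with \cite[Proposition~2.10]{Eisenbud} yields $\sdsigma^*\Hom_{A_\BC}(\sigma^\ast M,\Omega^1_{A_\BC/\BC})=\Hom_{A_\BC}(M,\Omega^1_{A_\BC/\BC})$ and its counterpart, after which everything is declared ``straightforward''; you have simply supplied those straightforward verifications. Your attention to the invertible twist by $\Omega^1_{A_\BC/\BC}$ in the tensor compatibility is, if anything, more careful than the paper's bare assertion of a tensor-anti-equivalence.
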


The motivation to throw in the Kähler differentials is given by Theorem~\ref{ThmMandDMofE} below.

\begin{proof}[Proof of Proposition~\ref{PropDualizing}]
Since $\sigma^\ast$ and $\sdsigma^\ast$ are flat by Remark~\ref{RemSigmaIsFlat} and $M$ and $\check M$ are locally free, it follows from \cite[Proposition~2.10]{Eisenbud} that $\sdsigma^*\Hom_{A_\BC}(\sigma^\ast M,\,\Omega^1_{A_\BC/\BC})=\Hom_{A_\BC}(M,\,\Omega^1_{A_\BC/\BC})$ and $\sigma^*\Hom_{A_\BC}(\sdsigma^\ast \dM,\,\Omega^1_{A_\BC/\BC})=\Hom_{A_\BC}(\dM,\,\Omega^1_{A_\BC/\BC})$. With this observation the proposition is straight forward to prove, and the final statements about the category $\dualAMotCatIsog$ follow from Proposition~\ref{PropAMotTannakianCateg}. 
\end{proof}

\begin{remark}\label{RemQuillenDualAMot}
(a) A neutral fiber functor only exists on the full subcategory of \emph{uniformizable} dual $A$-motives; see Theorem~\ref{TheoremDualAMotTannakian}

\medskip\noindent
(b) The category $\dualAMotCat$ is an exact category in the sense of Quillen~\cite[\S2]{Quillen} if one calls a sequence of dual $A$-motives \emph{exact} when its underlying sequence of $A_\BC$-modules is exact; compare Remark~\ref{RemQuillenAMot}(b). The same is true for the subcategories of dual $A$-motives which are effective, respectively effective and finitely generated over $\BC\{\sdtau\}$.

\medskip\noindent
(c) For the rank and (virtual) dimension of dual $A$-motives the formulas \eqref{EqRkDim} hold correspondingly and $\rk\uldM(\ulM)=\rk\ulM$ and $\dim\uldM(\ulM)=\dim\ulM$.

\medskip\noindent
(d) It can be proved directly, but also follows from Proposition~\ref{PropDualizing} and Remark~\ref{Rem3.3}(c) that the set of morphisms $\Hom_{\dualAMotCat}(\uldM,\uldN)$ between two dual $A$-motives $\uldM$ and $\uldN$ is a finite projective $A$-module of rank at most $(\rk\uldM)\cdot(\rk\uldN)$.
\end{remark}

\begin{example}\label{ExDualCHMotive}
An effective dual $A$-motive of rank $1$ with $\sdtau_\dM(\sdsigma^\ast \dM)=J\cdot \dM$ is called a \emph{dual Carlitz-Hayes $A$-motive}. Clearly, $\uldM(\UOne(1))$ is a dual Carlitz-Hayes $A$-motive for any (non-dual) Carlitz-Hayes $A$-motive $\UOne(1)$.
Therefore, Example~\ref{ExampleCHMotive} proves the existence of dual Carlitz-Hayes $A$-motives and that they are all isomorphic in $\dualAMotCatIsog$. So we may denote any one of them by $\check\UOne(1)$. We also define $\check\UOne(n):=\check\UOne(1)^{\otimes n}$ for $n>0$ and $\check\UOne(n):=\check\UOne(-n)\dual$ for $n<0$. 

If $C=\BP^1_{\BF_q}$, $A=\BF_q[t]$ and $\theta=\charmorph(t)\in\BC$, again all dual Carlitz-Hayes $A$-motives are already in $\dualAMotCat$ isomorphic to the \emph{dual Carlitz $t$-motive} with $\dM=\BC[t]$ and $\sdtau_\dM=t-\theta$. The latter is obtained via the functor $\uldM(\,.\,)$ from the Carlitz $t$-motive $\ulM=(\BC[t],\tau_M=t-\theta)$ from Example~\ref{ExampleCHMotive}.

Every dual $A$-motive is isomorphic to the tensor product of an effective dual $A$-motive and a power of a dual Carlitz-Hayes $A$-motive.
In fact, if $\uldM$ is a dual $A$-motive with $\sdtau_{\dM}(\sdsigma^\ast \dM)\subset J^{-d}\cdot \dM$, then $\uldN:=\uldM\otimes\check\UOne(1)^{\otimes d}$ satisfies $\sdtau_\dN(\sdsigma^\ast \dN)\subset \dN$; hence, $\uldN$ is effective and $\uldM\cong\uldN\otimes\check\UOne(1)^{\otimes -d}$. Note that $\rk\uldN=\rk\uldM$ and $\dim\uldN=\dim\uldM+d\cdot\rk\uldM$.
\end{example}

\subsection{Purity and mixedness}\label{SectDualAMotPurity}

As in Section~\ref{SectAMotPurity} we fix a uniformizing parameter $z\in Q=\BF_q(C)$ of $C$ at $\infty$ and assume that $\infty\in C(\BF_q)$. We denote the unique point on $C_\BC$ above $\infty\in C$ by $\infty_{\SSC\BC}$. The completion of the local ring of $C_\BC$ at $\infty_{\SSC\BC}$ is canonically isomorphic to $\BC\dbl z\dbr$.

\begin{definition}\label{DualDef2.1}
\begin{enumerate}
\item 
A dual $A$-motive $\uldM=(\dM,\sdtau_\dM)$ is called \emph{pure} if $\dM\otimes_{A_\BC}\BC\dpl z\dpr$ contains a $\BC\dbl z\dbr$-lattice $\dM_\infty$ such that for some integers $d,r$ with $r>0$ the map
\[
\sdtau_\dM^r\;:=\;\sdtau_\dM\circ\sdsigma^*(\sdtau_\dM)\circ\ldots\circ\sdsigma^{r-1*}(\sdtau_\dM)\colon\; \sdsigma^{r*} \dM\otimes_{A_\BC}\Quot(A_\BC)\;\isoto\; \dM\otimes_{A_\BC}\Quot(A_\BC)
\]
induces an isomorphism $z^d\sdtau_\dM^r\colon \sdsigma^{r*}\dM_\infty\isoto \dM_\infty$. Then the \emph{weight} of $\uldM$ is defined as $\weight\uldM=-\frac{d}{r}$.
\item
A dual $A$-motive $\uldM$ is called \emph{mixed} if it possesses an increasing \emph{weight filtration} by saturated dual $A$-sub-motives $W_{\mu\,}\uldM$ for $\mu\in\BQ$ (i.e. $W_\mu \dM\subset \dM$ is a saturated $A_\BC$-submodule) such that all graded pieces $\Gr_\mu^W\uldM:=W_{\mu\,}\uldM/\bigcup_{\mu'<\mu}W_{\mu'}\uldM$ are pure dual $A$-motives of weight $\mu$ and $\sum_{\mu\in\BQ}\rk\Gr_\mu^W\uldM=\rk\uldM$. 
\item The full subcategory of $\dualAMotCat$ consisting of mixed dual $A$-motives is denoted $\dualAMMotCat$. The full subcategory of $\dualAMotCatIsog$ consisting of mixed dual $A$-motives is denoted $\dualAMMotCatIsog$. 
\end{enumerate}
\end{definition}

\begin{example}\label{ExDualCarlitz}
For example the dual Carlitz $t$-motive $\uldM=(\BC[t],\sdtau_\dM=t-\theta)$ is pure of weight $-1$ with $\dM_\infty=\BC\dbl z\dbr$ on which $z\sdtau_\dM=1-\theta z$ is an isomorphism, where $z=\tfrac{1}{t}$.
\end{example}

\begin{remark}\label{RemDualAMotWts}
(a) The \emph{weights of $\uldM$} are the jumps of the weight filtration; that is, those real numbers $\mu$ for which 
\[
\TS\bigcup_{\mu'<\mu}W_{\mu'}\uldM \;\subsetneq\;\bigcap_{\tilde\mu>\mu}W_{\tilde\mu\,}\uldM\,.
\]
The condition $\sum_{\mu\in\BQ}\rk\Gr_\mu^W\uldM=\rk\uldM$ is equivalent to the conditions that all weights lie in $\BQ$, that $W_{\mu\,}\uldM \;=\;\bigcap_{\tilde\mu>\mu}W_{\tilde\mu\,}\uldM$ for all $\mu\in\BQ$, that $W_{\mu\,}\uldM=(0)$ for $\mu\ll0$, and that $W_{\mu\,}\uldM=\uldM$ for $\mu\gg0$; compare Remark~\ref{RemQFiltr}.

\medskip\noindent
(b) Every pure dual $A$-motive of weight $\mu$ is also mixed with $W_{\mu'}\uldM=(0)$ for $\mu'<\mu$, and $W_{\mu'}\uldM=\uldM$ for $\mu'\ge\mu$, and $\Gr_\mu^W\uldM=\uldM$.
\end{remark}

Proposition~\ref{PropDualizing} extends to mixed (dual) $A$-motives as follows.

\begin{proposition}\label{PropDualMixed}
 A dual $A$-motive $\uldM$ is mixed (pure) if and only if the corresponding $A$-motive $\ulM(\uldM)$ from Proposition~\ref{PropDualizing} is mixed (pure). In that case the weights of $\ulM(\uldM)$ are the negatives of the weights of $\uldM$. More precisely, if $\uldM$ is mixed with weights $\mu_1<\ldots<\mu_n$ then the weight filtration on $\ulM=(M,\tau_M)=\ulM(\uldM):=\bigl(\Hom_{A_\BC}(\sdsigma^\ast \dM,\,\Omega^1_{A_\BC/\BC}),\sdtau_\dM\dual\bigr)$ is given by 
\begin{equation}\label{EqWeightOfDualM}
W_{-\mu\,}\ulM \; := \; \bigl\{\,m\in M=\Hom_{A_\BC}(\sdsigma^*\dM,\,\Omega^1_{A_\BC/\BC})\colon m(\sdsigma^*W_{\mu'}\dM)=0\es\text{for all }\mu'<\mu\,\bigr\}\,,
\end{equation}
that is, by $W_{-\mu}\ulM=(0)$ for all $-\mu<-\mu_n$, by $W_{-\mu}\ulM=\ker\bigl(\ulM\onto\ulM(W_{\mu_i}\uldM)\bigr)$ for all $-\mu_{i+1}\le-\mu<-\mu_i$, and by $W_{-\mu_1}\ulM=\ulM$. In particular the functors $\uldM\mapsto\ulM(\uldM)$ and $\ulM\mapsto\uldM(\ulM)$ from Proposition~\ref{PropDualizing} induce exact tensor-anti-equivalences of categories $\dualAMMotCat\longleftrightarrow \AMMotCat$ and $\dualAMMotCatIsog\longleftrightarrow \AMMotCatIsog$.
\end{proposition}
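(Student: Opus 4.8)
The plan is to transport both notions to the Tannakian category of $z$-isocrystals over $\BC$, where purity and mixedness are read off from the Dieudonn\'e--Manin slopes via Propositions~\ref{PropPure} and~\ref{PropDualPure}. The decisive first step will be to identify the $z$-isocrystal of $\ulM(\uldM)$ with the internal dual of the $z$-isocrystal $\uldHM$ of $\uldM$. Since $z\in Q$ is fixed by $\ssigma$ and $\sdsigma$, and since $\Omega^1_{A_\BC/\BC}$ is an invertible $A_\BC$-module with $\ssigma^\ast\Omega^1_{A_\BC/\BC}=\sdsigma^\ast\Omega^1_{A_\BC/\BC}=\Omega^1_{A_\BC/\BC}$ (Remark~\ref{RemImportant}), the module $\Omega^1_{A_\BC/\BC}\otimes_{A_\BC}\BC\dpl z\dpr$ is free of rank one, generated by $dz$, and $\ssigma^\ast$ and $\sdsigma^\ast$ act on $dz$ trivially. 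Tensoring the defining formulas $M=\Hom_{A_\BC}(\sdsigma^\ast\dM,\Omega^1_{A_\BC/\BC})$ and $\tau_M=(\sdtau_\dM)\dual$ of Proposition~\ref{PropDualizing} with $\BC\dpl z\dpr$, using that $\sdsigma^\ast$ commutes with $\Hom$ of locally free modules (Remark~\ref{RemSigmaIsFlat} and \cite[Proposition~2.10]{Eisenbud}) and that a $z$-isocrystal is canonically isomorphic to its $\ssigma^\ast$-pullback (via the structure map), should then yield a canonical isomorphism $\ulM(\uldM)\otimes_{A_\BC}\BC\dpl z\dpr\cong\uldHM\dual$ of $z$-isocrystals. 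Since $(\ulHM_{d,r})\dual\cong\ulHM_{-d,r}$ (recalled in the proof of Proposition~\ref{PropPure}\ref{PropPure_g}), it follows that $\uldHM\cong\bigoplus_i\ulHM_{d_i,r_i}$ implies $\ulM(\uldM)\otimes_{A_\BC}\BC\dpl z\dpr\cong\bigoplus_i\ulHM_{-d_i,r_i}$.

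With this in hand the pure case is immediate from Propositions~\ref{PropDualPure}\ref{PropDualPure_a} and~\ref{PropPure}\ref{PropPure_a}: the dual $A$-motive $\uldM$ is pure of weight $\mu$ if and only if all $\tfrac{d_i}{r_i}$ equal $\mu$, if and only if $\ulM(\uldM)$ is pure of weight $-\mu$. For the mixed case I would start from a mixed $\uldM$ with weights $\mu_1<\ldots<\mu_n$ and weight filtration $W_{\mu\,}\uldM$, observe that each $\bigcup_{\mu'<\mu}W_{\mu'\,}\uldM$ is a saturated dual $A$-sub-motive (it equals some $W_{\mu_i\,}\uldM$ or $(0)$, the weights being discrete), and apply the exact tensor-anti-equivalence $\uldN\mapsto\ulM(\uldN)$ of Proposition~\ref{PropDualizing} to the inclusions $\bigcup_{\mu'<\mu}W_{\mu'\,}\uldM\into\uldM$. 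This gives surjections $\ulM=\ulM(\uldM)\onto\ulM\bigl(\bigcup_{\mu'<\mu}W_{\mu'\,}\uldM\bigr)$; unwinding the description of $\ulM(-)$ on morphisms shows that their kernels are precisely the submodules $W_{-\mu\,}\ulM$ defined by the right-hand side of~\eqref{EqWeightOfDualM}, and these are saturated $A$-sub-motives. By exactness of $\ulM(-)$, each graded piece $\Gr^{W}_{-\mu_i\,}\ulM$ is isomorphic to $\ulM(\Gr^W_{\mu_i\,}\uldM)$, hence pure of weight $-\mu_i$ by the pure case; since $\ulM(-)$ preserves ranks, the ranks of the graded pieces sum to $\rk\ulM$. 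Thus $\ulM(\uldM)$ is mixed, and by the uniqueness of the weight filtration (Proposition~\ref{PropPure}\ref{PropPure_b}) its weight filtration is the one just constructed, which proves~\eqref{EqWeightOfDualM}. The converse implication, and the analogous statement for $\ulM\mapsto\uldM(\ulM)$, follow by the symmetric argument (interchanging the roles of Propositions~\ref{PropPure} and~\ref{PropDualPure}), or directly by applying the above to $\ulM$ in place of $\uldM$ together with the natural isomorphism $\uldM(\ulM(\uldM))\cong\uldM$.

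Finally, since $\uldM\mapsto\ulM(\uldM)$ and $\ulM\mapsto\uldM(\ulM)$ are already mutually inverse exact tensor-anti-equivalences $\AMotCat\longleftrightarrow\dualAMotCat$ and $\AMotCatIsog\longleftrightarrow\dualAMotCatIsog$ by Proposition~\ref{PropDualizing}, and the previous step shows that they restrict to mutually inverse bijections between the mixed objects on each side, they induce exact tensor-anti-equivalences $\dualAMMotCat\longleftrightarrow\AMMotCat$ and $\dualAMMotCatIsog\longleftrightarrow\AMMotCatIsog$, the mixed subcategories being strictly full by Propositions~\ref{PropPure}\ref{PropPure_f} and~\ref{PropDualPure}\ref{PropDualPure_f}. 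I expect the main obstacle to be the first step: establishing the $z$-isocrystal identification $\ulM(\uldM)\otimes_{A_\BC}\BC\dpl z\dpr\cong\uldHM\dual$ with all the $\ssigma^\ast$ and $\sdsigma^\ast$ twists and the structure maps tracked correctly, and then keeping the negated indexing and the half-open intervals $-\mu_{i+1}\le-\mu<-\mu_i$ in~\eqref{EqWeightOfDualM} consistent throughout.
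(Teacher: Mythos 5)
Your proof is correct and follows essentially the same route as the paper: the mixed case and the categorical conclusion are handled exactly as in the paper's proof, by applying the exact anti-equivalence to the weight filtration, identifying $W_{-\mu\,}\ulM$ with $\ker\bigl(\ulM\onto\ulM(\bigcup_{\mu'<\mu}W_{\mu'}\uldM)\bigr)$, and reading off the graded pieces $\Gr^W_{-\mu\,}\ulM\cong\ulM(\Gr^W_{\mu\,}\uldM)$ and the uniqueness of the weight filtration. The only (minor) deviation is the pure case, where the paper simply dualizes the lattice into $M_\infty:=\Hom_{\BC\dbl z\dbr}(\sdsigma^*\dM_\infty,\BC\dbl z\dbr\,dz)$ and checks $z^d\tau_M^r$ directly, whereas you route through the Dieudonn\'e--Manin classification via the identification $\ulM(\uldM)\otimes_{A_\BC}\BC\dpl z\dpr\cong\sdsigma^*(\uldHM\dual)\cong\uldHM\dual$; both arguments rest on the same dualization and your version is sound.
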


\begin{proof} 
First assume that $\uldM$ is pure of weight $\mu=-\frac{d}{r}$. This means that there is a $\BC\dbl z\dbr$-lattice $\dM_\infty\subset\dM\otimes_{A_\BC}\BC\dpl z\dpr$ such that $z^d\sdtau_\dM^r$ is an isomorphism $\sdsigma^{r\ast}\dM_\infty\isoto\dM_\infty$. Then $M_\infty:=\Hom_{\BC\dbl z\dbr}(\sdsigma^*\dM_\infty,\BC\dbl z\dbr dz)$ is a $\BC\dbl z\dbr$-lattice in 
\[
M(\uldM)\otimes_{A_\BC}\BC\dpl z\dpr\;=\;\Hom_{A_\BC}(\sdsigma^\ast \dM,\,\Omega^1_{A_\BC/\BC})\otimes_{A_\BC}\BC\dpl z\dpr\;=\;\Hom_{\BC\dpl z\dpr}(\sdsigma^\ast \dM\otimes_{A_\BC}\BC\dpl z\dpr,\,\BC\dpl z\dpr dz)
\]
such that $\ssigma^{r-1*}(z^d\sdtau_{\dM}^r)\dual=z^d\stau_{M(\uldM)}^r$ defines an isomorphism $\ssigma^{r\ast} M_\infty\isoto M_\infty$. Therefore, $\ulM(\uldM)$ is pure of weight $-\mu=\frac{d}{r}$. 

Conversely, a $\BC\dbl z\dbr$-lattice $M_\infty\subset M(\uldM)\otimes_{A_\BC}\BC\dpl z\dpr$ with $z^d\stau_{M(\uldM)}^r\colon\ssigma^{r\ast} M_\infty\isoto M_\infty$ induces the lattice $\dM_\infty:=\Hom_{\BC\dbl z\dbr}(\ssigma^*M_\infty,\BC\dbl z\dbr dz)\subset\dM\otimes_{A_\BC}\BC\dpl z\dpr$ with $\sdsigma^{r-1*}(z^d\stau_{M(\uldM)}^r)\dual=z^d\sdtau_{\dM}^r\colon\sdsigma^{r\ast}\dM_\infty\isoto\dM_\infty$. This proves that $\uldM$ is pure of weight $\mu$ if and only if $\ulM(\uldM)$ is pure of weight $-\mu$.

Now we consider a mixed dual $A$-motive $\uldM$. Applying the exact contravariant functor $\uldM\mapsto\ulM(\uldM)$ gives for all $\mu$ exact sequences
\[
0\longrightarrow\ulM(\Gr^W_{\mu\,}\uldM)\longrightarrow\ulM(W_{\mu\,}\uldM)\longrightarrow\ulM\Bigl(\TS\bigcup\limits_{\mu'<\mu}W_{\mu'}\uldM\Bigr)\longrightarrow 0
\] 
Thus we can define an increasing filtration $W_\bullet\ulM$ of $\ulM$ by saturated $A$-sub-motives by letting
\begin{eqnarray*}
\TS W_{-\mu\,}\ulM & := & \ker\Bigl(\ulM\onto\ulM\bigl(\TS\bigcup\limits_{\mu'<\mu}W_{\mu'}\uldM\bigr)\Bigr) \\
& = & \bigl\{\,m\in M=\Hom_{A_\BC}(\sdsigma^*\dM,\,\Omega^1_{A_\BC/\BC})\colon m(\sdsigma^*W_{\mu'}\dM)=0\es\text{for all }\mu'<\mu\,\bigr\}\,. 
\end{eqnarray*}
More explicitly, if $\mu_1<\ldots<\mu_n$ are the jumps of the weight filtration $W_\bullet\uldM$, we set in addition $\mu_0:=-\infty, \mu_{n+1}:=+\infty$, and $W_{\mu_0\,}\uldM=(0)$. Then $W_{\mu_i\,}\uldM=W_{\mu'}\uldM\subsetneq W_{\mu_{i+1}\,}\uldM$ for all $\mu_i\le\mu'<\mu_{i+1}$ and hence, for any $\mu$ with $\mu_i<\mu\le\mu_{i+1}$ we have the equalities $\bigcup\limits_{\mu'<\mu}W_{\mu'}\uldM=W_{\mu_i\,}\uldM$ and $W_{-\mu\,}\ulM=\ker\bigl(\ulM\onto\ulM(W_{\mu_i\,}\uldM)\bigr)$. In particular, if $\mu_i\le\mu<\tilde\mu\le\mu_{i+1}$, then 
\begin{equation}\label{EqWtsEandM}
W_{-\tilde\mu\,}\ulM\;=\;\ker\bigl(\ulM\onto\ulM(W_{\mu_i\,}\uldM)\bigr)\;=\;\ker\bigl(\ulM\onto\ulM(W_{\mu\,}\uldM)\bigr)\,.
\end{equation}
This yields the following diagram with exact rows
\[
\xymatrix {
0 \ar[r] & *!U(0.5) \objectbox{\TS\bigcup\limits_{-\tilde\mu<-\mu}W_{-\tilde\mu\,}\ulM} \ar[r] \ar@{^{ (}->}[d] & \ulM \ar[r] \ar@{=}[d] & \ulM(W_{\mu\,}\uldM) \ar[r] \ar@{->>}[d] & 0 \\
0 \ar[r] & W_{-\mu\,}\ulM \ar[r] & \ulM \ar[r] & **{!U(0.3) =<6.7pc,2pc>} \objectbox{\DS\ulM\bigl(\TS\bigcup\limits_{\mu'<\mu}W_{\mu'}\uldM\bigr)} \ar[r] & 0 \,.
}
\]
By the snake lemma $\Gr_{-\mu}^W\ulM\cong\ker\Bigl(\ulM(W_{\mu\,}\uldM)\onto\ulM(\bigcup\limits_{\mu'<\mu}W_{\mu'}\uldM)\Bigr)\cong\ulM(\Gr_\mu^W\uldM)$ is pure of weight $-\mu$ and $\ulM(\uldM)$ is mixed with weight filtration $W_\bullet\ulM$ which jumps at $-\mu_n<\ldots<-\mu_1$. A similar argument for the inverse functor $\ulM\mapsto\uldM(\ulM)$ shows that conversely $\uldM$ is mixed provided $\ulM(\uldM)$ is mixed. This proves the proposition.
\end{proof}

The following proposition follows from Propositions~\ref{PropDualMixed}, \ref{PropPure} and \ref{PropWeights}, but can also be proved directly along the lines of Propositions~\ref{PropPure} and \ref{PropWeights} without using (non-dual) $A$-motives.

\begin{proposition}\label{PropDualPure}
Let $\uldM$ be a dual $A$-motive and consider the \emph{$z$-isocrystal} 
\[
\uldHM\;:=\;(\wh M,\wh\tau)\;:=\;\bigl(\dM\otimes_{A_\BC}\BC\dpl z\dpr\,,\,\ssigma^\ast\sdtau_\dM^{-1}\otimes\id\colon \ssigma^\ast\wh M\isoto\wh M\bigr)\,.
\]
Then $\uldHM$ is isomorphic to $\bigoplus_i\ulHM_{d_i,r_i}$ where for $d,r\in\BZ,r>0,(d,r)=1,m:=\lceil\frac{d}{r}\rceil$ we set 
\[
\ulHM_{d,r}\;:=\;\Bigl(\BC\dpl z\dpr^{\oplus r},\wh\stau=\left( \raisebox{6.2ex}{$
\xymatrix @C=0pc @R=0.3pc {
0 \ar@{.}[drdrdr] & & z^{-m}\ar@{.}[dr] & \\
& & & z^{-m} \\
z^{1-m} \ar@{.}[dr] & & & \\
 & z^{1-m}  & & 0\\
}$}
\right)\Bigr)
\]
and where in the matrix the term $z^{1-m}$ occurs exactly $mr-d$ times. In particular,
\begin{enumerate}
\item \label{PropDualPure_a}
$\uldM$ is pure of weight $\mu$ if and only if $\mu=\frac{d_i}{r_i}$ for all $i$.
\item \label{PropDualPure_b}
$\uldM$ is mixed if and only if the filtration $\DS W_{\mu\,}\ulHM:=\bigoplus_{\frac{d_i}{r_i}\le\mu}\ulHM_{d_i,r_i}$ comes from a filtration of $\uldM$ by saturated dual $A$-sub-motives $\wt W_{\mu\,}\uldM\subset\uldM$ with $W_{\mu\,}\ulHM=\wh{\wt W_{\mu\,}\uldM}$. In this case the filtration $\wt W_{\mu\,}\uldM$ equals the weight filtration $W_{\mu\,}\uldM$ of $\uldM$ and the $\tfrac{d_i}{r_i}$ are the weights of $\uldM$. In particular, the weight filtration of a mixed dual $A$-motive $\uldM$ is uniquely determined by $\uldM$.
\item \label{PropDualPure_c}
Any dual $A$-sub-motive $\uldM'\into\uldM$ and dual $A$-quotient motive $\df\colon \uldM\onto\uldM''$ of a pure (mixed) dual $A$-motive $\uldM$ is itself pure (mixed) of the same weight(s), (by letting $W_{\mu\,}\uldM':=\uldM'\cap W_{\mu\,}\uldM$, and $W_{\mu\,}\uldM''$ be the saturation of $\df(W_{\mu\,}\uldM)$ inside $\uldM''$, if $\uldM$ is mixed).
\item \label{PropDualPure_f}
Any dual $A$-motive which is isomorphic in $\dualAMotCatIsog$ to a pure (mixed) dual $A$-motive is itself pure (mixed).
\item\label{PropDualPure_h}
The weight of a pure dual $A$-motive $\uldM$ is $\weight\uldM=-(\dim\uldM)/(\rk\uldM)$. The tensor product of two pure dual $A$-motives $\uldM$ and $\uldN$ is again pure of weight $(\weight\uldM)+(\weight\uldN)$. 
\item \label{PropDualPure_g}
The category $\dualAMMotCatIsog$ is a full $Q$-linear (non-neutral) Tannakian subcategory of $\dualAMotCatIsog$, and in particular, a rigid abelian tensor category.
\item \label{PropDualPure_d}
Any morphism $\df\colon \uldM'\to\uldM$ between mixed dual $A$-motives satisfies $\df(W_{\mu\,}\uldM')\subset W_{\mu\,}\uldM$. More precisely, the saturation of $\df(W_{\mu\,}\uldM')$ inside $\df(\uldM')$ equals $\df(\uldM')\cap W_{\mu\,}\uldM$.
\item \label{PropDualPure_i} If $\uldM$ is effective and $\dM$ is a finitely generated module over the skew-polynomial ring $\BC\{\sdtau\}$, where $\sdtau$ acts on $\dM$ through $\dm\mapsto \sdtau_M(\sdsigma^\ast\dm)$, then $d_i<0$ for all $i$.
\item \label{PropDualPure_e}
If $\uldM$ is effective and $d_i/r_i\ge-n$ for all $i$, then $\uldM$ extends to a locally free sheaf $\oldM$ on $C_\BC$ with $\sdtau\colon \sdsigma^\ast\oldM\to\oldM(\mbox{$n\cdot\infty_{\SSC\BC}$})$, where the notation $(\mbox{$n\cdot\infty_{\SSC\BC}$})$ means that we allow poles at $\infty_{\SSC\BC}$ of order less than or equal to $n$. Moreover $\uldM$ is pure of weight $\mu=-\frac{d}{r}$ with $(d,r)=1$ if and only if there is an $\oldM$ such that in addition, $z^d\sdtau_\dM^r$ is an isomorphism $\sdsigma^{r\ast}\oldM_\infty\isoto\oldM_\infty$ on the stalks at $\infty_{\SSC\BC}$.
\end{enumerate}
\end{proposition}

\begin{remark}\label{RemQuillenDualAMMot}
The category $\dualAMMotCat$ is an exact category in the sense of Quillen~\cite[\S2]{Quillen} if one calls a sequence of mixed dual $A$-motives \emph{exact} when its underlying sequence of $A_\BC$-modules is exact; compare Remarks~\ref{RemQuillenDualAMot}(b) and \ref{RemQuillenAMMot}.
\end{remark}

We have seen in Example \ref{ExNotMixed} that not every $A$-motive is mixed; thus the same is true for dual $A$-motives.

\begin{example}\label{ExDualMixed2.9}
Let $C=\BP^1_{\BF_q}$, $A=\BF_q[t]$, $z=\frac{1}{t}$, $\theta=\charmorph(t)=\frac{1}{\zeta}\in\BC$, and recall the mixed $A$-motive $\ulM$ from Example~\ref{Ex2.9} with $\Gr^W_1 \ulM=W_1 \ulM=(A_\BC,t-\theta)$ and $\Gr^W_3 \ulM=(A_\BC,(t-\theta)^3)$. Via an identification $\Omega^1_{A_\BC/\BC}=\BC[t]dt\cong\BC[t]$ its corresponding dual $A$-motive is isomorphic to 
\[
\uldM=\Bigl(A_{\BC}^{\oplus 2},\check\Phi:=\left(\begin{array}{cc}t-\theta&0\\b&(t-\theta)^3 \end{array}\right)\Bigr).
\]
As in the previous proposition, we set
\[
 \TS W_{-1\,}\uldM\;:=\;\ker\left(\uldM\onto\uldM\left(\bigcup_{\mu'<1}W_{\mu'}\ulM\right)\right)\;
 =\;\uldM
\]
and
\[
 \TS W_{-3\,}\uldM\;:
 =\;\ker\left(\uldM\stackrel{(1,0)}{\onto}\uldM(W_{1}\ulM)\right)
 =\;\begin{pmatrix} 0 \\ 1\end{pmatrix}\cdot\left(A_\BC, (t-\theta)^3\right),
\]
such that $\Gr^W_{-1} \uldM\cong\left(A_\BC,(t-\theta)\right)$ and $\Gr^W_{-3} \uldM\cong\left(A_\BC,(t-\theta)^3\right)$. Thus $\uldM$ has weights -3 and -1.
\end{example}

\medskip

\subsection{Uniformizability}\label{SectDualAMotUniformizability}

Recall the notation introduced at the beginning of Section \ref{SectAMotUniformizability}.

\begin{definition}\label{DualDefLambda}
For a dual $A$-motive $\uldM$, we define the \emph{$\sdtau$-invariants}
\[
\Lambda(\uldM) \;:=\; \bigl(\dM\otimes_{A_\BC}\CO(\FC_\BC\setminus\Disc)\bigr)^\sdtau\;:=\; \bigl\{\,\dm \in \dM\otimes_{A_\BC}\CO(\FC_\BC\setminus\Disc):\es \sdtau_\dM(\sdsigma^\ast \dm )=\dm \,\bigr\}\,,
\]
and let $\Hodge_1(\uldM):=\Lambda(\uldM)\otimes_A Q$.
\end{definition}

Since the ring of $\sdsigma$-invariants in $\CO(\FC_\BC\setminus\Disc)$ equals $A$, the set $\Lambda(\uldM)$ is an $A$-module. It is finite projective of rank at most equal to $\rank\uldM$ by the analog for dual $A$-motives of \cite[Lemma~4.2(b)]{BoeckleHartl}. Therefore, also $H(\uldM)$ is a finite dimensional $Q$-vector space. 

\begin{definition}\label{DefDualUnifGlobal}
A dual $A$-motive $\uldM$ is called \emph{uniformizable} (or \emph{rigid analytically trivial}) if the natural homomorphism
\[
\TS h_\uldM\colon \Lambda(\uldM) \otimes_A \CO(\FC_\BC\setminus\Disc) \longto \dM\otimes_{A_\BC}\CO(\FC_\BC\setminus\Disc)\,,\quad\lambda\otimes f\mapsto f\cdot\lambda,
\]
is an isomorphism. The full subcategory of $\dualAMotCatIsog$ consisting of all uniformizable dual $A$-motives is denoted $\dualAUMotCatIsog$. The full subcategory of $\dualAMotCatIsog$ consisting of all uniformizable mixed dual $A$-motives is denoted $\dualAMUMotCatIsog$.
\end{definition}

\begin{remark}\label{RemPapanikolasCateg}
In \cite[3.4.10]{Papanikolas} Papanikolas defines a neutral Tannakian category $\CT$ over $\BF_\sq(t)$ which is equivalent to $\dualAUMotCatIsog$ if $A=\BF_\sq[t]$. This can be seen as follows. Let $\uldM$ be an object of $\dualAUMotCatIsog$. Then $\uldM\otimes_{A_\BC}\Quot(A_\BC)$ is a rigid analytically trivial (dual) pre-$t$-motive in the language of \cite[\S3.3.1]{Papanikolas}. The latter form a neutral Tannakian category $\CR$ over $Q$ by \cite[Theorem~3.3.15]{Papanikolas} and $\uldM\mapsto\uldM\otimes_{A_\BC}\Quot(A_\BC)$ is a fully faithful functor $\dualAUMotCatIsog\to\CR$. Papanikolas defines the category $\CT$ as the Tannakian subcategory of $\CR$ generated by the effective dual $A$-motives in $\dualAUMotCatIsog$. It thus follows from Example~\ref{ExDualCHMotive} and Proposition~\ref{PropUnifDualAMotive} below that $\CT$ coincides with the image of $\dualAUMotCatIsog$ in $\CR$. By Proposition~\ref{PropDualizingUnif} below, the category $\dualAUMotCatIsog$ is also anti-equivalent to $\AUMotCatIsog$ and hence also to Taelman's category $t\CM_{\rm a.t.}\open$ by Remark~\ref{RemTaelman2}.
\end{remark}

\begin{lemma}\label{LemmaDualUniformizable}
Let $\uldM$ be a dual $A$-motive of rank $r$. 
\begin{enumerate}
\item \label{LemmaDualUniformizableA} 
The homomorphism $h_\uldM$ is injective.
\item \label{LemmaDualUniformizableB}
$\uldM$ is uniformizable if and only if $\rank_A\Lambda(\uldM)=r$.
\item \label{LemmaDualUniformizableC}
 If $\uldM$ is uniformizable then the following sequence of $A$-modules is exact
\[
\xymatrix @C+1pc @R=0pc {
0 \ar[r] & \Lambda(\uldM) \ar[r]^{h_\uldM\qquad\quad} & \dM\otimes_{A_\BC}\CO(\FC_\BC\setminus\Disc) \ar[r] & \dM\otimes_{A_\BC}\CO(\FC_\BC\setminus\Disc) \ar[r] & 0\,. \\
& & \dm \ar@{|->}[r] & \sdtau_\dM(\sdsigma^*\dm)-\dm
}
\]
\end{enumerate}
\end{lemma}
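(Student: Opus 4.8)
The plan is to deduce all three assertions from their known counterparts for $A$-motives — Lemmas~\ref{LemmaUniformizable} and~\ref{Lemma2.3} — by exploiting that $\ssigma$ and $\sdsigma$ are mutually inverse flat automorphisms of the affinoid space $\FC_\BC\setminus\Disc$. Write $R:=\CO(\FC_\BC\setminus\Disc)$. Since $\Var(J)=\{z=\zeta\}$ lies inside $\Disc$, the ideal $J$ is the unit ideal in $R$, so $\sdtau_\dM$ induces a $\sdsigma$-semilinear automorphism of the finite projective $R$-module $\dM\otimes_{A_\BC}R$, whose fixed points form precisely $\Lambda(\uldM)$. Because $\ssigma^\ast\sdsigma^\ast=\id$, the inverse of this automorphism is $\ssigma$-semilinear, hence ``looks like'' the $\tau$-structure of an $A$-motive; and since it is bijective, the operator $\sdtau_\dM\circ\sdsigma^\ast-\id$ on $\dM\otimes_{A_\BC}R$ has the same kernel as, and isomorphic cokernel to, the corresponding $\ssigma$-semilinear operator. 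Thus everything to be proved is a statement about an $\ssigma$-semilinear structure on a finite projective $R$-module, of exactly the type handled in the $A$-motive case, and $\Lambda(\uldM)$ and $h_\uldM$ are the objects $\Lambda$ and $h$ attached to it.

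More precisely, the results cited in those lemmas — \cite[Lemma~4.2(b),(c)]{BoeckleHartl} and \cite[Proposition~6.1]{BoeckleHartl} (surjectivity of $\id-\tau$ on $M\otimes_{A_\BC}\CO(\FC_\BC\setminus\Disc)$) — are proved using only that $\ssigma^\ast$ is faithfully flat, that its invariants in $\CO(\FC_\BC\setminus\Disc)$ are $A$, and that $\BC$ is algebraically closed; all three hold verbatim for $\sdsigma^\ast$ (flatness is Remark~\ref{RemSigmaIsFlat}, and the $\sdsigma$-invariants coincide with the $\ssigma$-invariants because $\sdsigma=\ssigma^{-1}$). So the ``dual'' analogs of those statements are available, and from them~\ref{LemmaDualUniformizableA}--\ref{LemmaDualUniformizableC} follow just as in the $A$-motive case. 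Indeed,~\ref{LemmaDualUniformizableA} is the dual of the injectivity part of \cite[Lemma~4.2(b)]{BoeckleHartl} — the same result already invoked in the text to see that $\Lambda(\uldM)$ is finite projective of rank $\le r$ — whose proof is the standard argument that a shortest $R$-linear relation among an $A$-basis of $\Lambda(\uldM)$ inside $\dM\otimes_{A_\BC}R$ is killed by applying $\sdtau_\dM\circ\sdsigma^\ast$. Assertion~\ref{LemmaDualUniformizableB} is the dual of \cite[Lemma~4.2(c)]{BoeckleHartl}: together with~\ref{LemmaDualUniformizableA} it says that $h_\uldM$ is an isomorphism if and only if source and target have the same rank~$r$, i.e.\ if and only if $\rank_A\Lambda(\uldM)=r$. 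Finally, in~\ref{LemmaDualUniformizableC}, exactness at $\Lambda(\uldM)$ and at the first copy of $\dM\otimes_{A_\BC}\CO(\FC_\BC\setminus\Disc)$ is tautological from the definition of $\Lambda(\uldM)$ as $\{\dm:\sdtau_\dM(\sdsigma^\ast\dm)=\dm\}$, while surjectivity of $\sdtau_\dM\circ\sdsigma^\ast-\id$ at the last copy is the dual of \cite[Proposition~6.1]{BoeckleHartl}; this is exactly the ingredient used in the proof of Lemma~\ref{Lemma2.3}. (As there, the hypothesis that $\uldM$ be uniformizable is in fact not needed for the exactness in~\ref{LemmaDualUniformizableC}.)

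The one point that genuinely has to be checked rather than merely transcribed is that the proof of \cite[Proposition~6.1]{BoeckleHartl} — the surjectivity of $\id-\tau$, which rests on approximation arguments in the affinoid algebra of $\FC_\BC\setminus\Disc$ — invokes the Frobenius $\ssigma$ only through its being a flat automorphism of $\FC_\BC\setminus\Disc$ with fixed ring $A$, and not through any feature distinguishing the $q$-power map from its inverse. Granting this, assertions~\ref{LemmaDualUniformizableA}--\ref{LemmaDualUniformizableC} are immediate, and the rest is the bookkeeping described above.
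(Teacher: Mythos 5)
Your treatment of parts \ref{LemmaDualUniformizableA} and \ref{LemmaDualUniformizableB} is essentially the paper's: the paper likewise defers to the standard Anderson-style arguments (it cites Papanikolas, Lemma~3.3.7 and Proposition~3.3.8, which are exactly these statements for dual motives), and you are right that the ``shortest relation'' argument and the rank criterion use $\sdsigma^\ast$ only through its being a flat automorphism with invariant ring $A$ over an algebraically closed base.

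Part \ref{LemmaDualUniformizableC} has a genuine gap. Its entire content is the surjectivity of $\dm\mapsto\sdtau_\dM(\sdsigma^\ast\dm)-\dm$ on $\dM\otimes_{A_\BC}\CO(\FC_\BC\setminus\Disc)$, and you propose to obtain it by transcribing the proof of surjectivity of $\id-\tau$ for $A$-motives, asserting that that proof invokes the Frobenius only through its being a flat automorphism with fixed ring $A$. That assertion is exactly where the symmetry between $\ssigma$ and $\sdsigma$ breaks. The approximation argument for a $\ssigma$-semilinear $\tau$ rests on the contraction $|b^{q^n}|=|b|^{q^n}\to0$ for $|b|<1$, which makes the series $-\sum_n\tau^n(b)$ converge; the inverse Frobenius satisfies $|b^{q^{-n}}|=|b|^{q^{-n}}\to1$, so the analogous series for a $\sdsigma$-semilinear operator does not converge, and the transcription cannot simply be ``granted.'' Your alternative reduction in the first paragraph --- replacing $\sdtau_\dM\circ\sdsigma^\ast-\id$ by $\id-\psi^{-1}$, where $\psi^{-1}$ is the inverse, $\ssigma$-semilinear, bijection --- is sound at the level of kernels and cokernels, but $\psi^{-1}$ has its pole along $\ssigma^\ast J$ rather than $J$ and is not the $\tau$ of an $A$-motive, so the cited proposition does not apply to it off the shelf; redoing the estimate with the matrix $\ssigma^\ast(\check\Phi^{-1})$ one finds that the iterated products grow like $\|\check\Phi^{-1}\|^{(q^{n+1}-q)/(q-1)}$, which only yields convergence for $b$ of sufficiently small norm. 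The paper instead uses the uniformizability hypothesis in an essential way: after choosing a finite flat $\BF_q[t]\into A$ and an $A$-basis of $\Lambda(\uldM)$, the module $\dM\otimes_{A_\BC}\CO(\FC_\BC\setminus\Disc)$ is identified with a free $\BC\langle t\rangle$-module in such a way that $\sdtau_\dM$ becomes the identity matrix; the equation then decouples into scalar Artin--Schreier equations over $\BC$, each admitting a solution whose absolute value equals that of the datum whenever the latter is $<1$, and this norm control is what puts the resulting power series back into $\BC\langle t\rangle$. Your parenthetical claim that uniformizability is not needed in \ref{LemmaDualUniformizableC} is therefore unsupported, and dropping that hypothesis removes precisely the step that makes the proof work.
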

\begin{proof}
Assertions \ref{LemmaDualUniformizableA} and \ref{LemmaDualUniformizableB} can be proved for general $A$ as in \cite[Lemma~3.3.7 and Proposition~3.3.8]{Papanikolas}; see also the proof of Lemma~\ref{LemmaUniformizable}.

To establish \ref{LemmaDualUniformizableC} we must prove exactness on the right. We choose a finite flat ring homomorphism $\BF_q[t]\into A$ of degree $d$. Then $\CO(\FC_\BC\setminus\Disc)=A\otimes_{\BF_q[t]}\BC\langle t\rangle$; see \eqref{EqC<t>}. We view $\dM$ as a (locally) free $\BC[t]$-module of rank $dr$ and $\Lambda(\uldM)$ as a (locally) free $\BF_q[t]$-module of rank $dr$. With respect to a basis of the latter we identify $\dM\otimes_{A_\BC}\CO(\FC_\BC\setminus\Disc)\cong\Lambda(\uldM)\otimes_A\CO(\FC_\BC\setminus\Disc)\cong\BC\langle t\rangle^{\oplus dr}$. In this basis $\sdtau_\dM$ is given by the identity matrix. Now let $\dm\in\dM\otimes_{A_\BC}\CO(\FC_\BC\setminus\Disc)$ be given as $\sum_i b_it^i$ with $b_i=(b_{i,1},\ldots,b_{i,dr})^T\in\BC^{dr}$. Since $\BC$ is algebraically closed there is for every $i$ and $j$ a $c_{i,j}\in\BC$ with $c_{i,j}^q-c_{i,j}=b_{i,j}$. If $|b_{i,j}|<1$ we may even take $c_{i,j}=-\sum_{n=0}^\infty b_{i,j}^{q^n}$, whence $|c_{i,j}|=|b_{i,j}|$. With this choice $\dm':=\sum_{i=0}^\infty(c_{i,1},\ldots,c_{i,dr})^Tt^i\in\BC\langle t\rangle^{\oplus dr}$ satisfies $\sdtau_\dM(\sdsigma^*\dm')-\dm'=\dm$. This proves \ref{LemmaDualUniformizableC}.
\end{proof}

\begin{proposition}\label{PropDualizingUnif}
 A dual $A$-motive $\uldM$ is uniformizable if and only if the corresponding $A$-motive $\ulM:=\ulM(\uldM)$ from Proposition~\ref{PropDualizing} is uniformizable. Moreover, 
\[
h_\uldM\;=\;(\ssigma^\ast h_\ulM\dual)^{-1}\;:=\;\Hom_{\CO(\FC_\BC\setminus\Disc)}\bigl(\ssigma^\ast h_\ulM,\,\Omega^1_{A_\BC/\BC}\otimes_{A_\BC}\CO(\FC_\BC\setminus\Disc)\bigr)^{-1}
\]
and $\Lambda(\uldM)\cong\Hom_A(\Lambda(\ulM),\Omega^1_{A/\BF_q})$ under the perfect pairing
\[
\Lambda(\uldM)\times\Lambda(\ulM)\,\longto\, \Omega^1_{A/\BF_q},\quad(\check\lambda,\lambda)\,\longmapsto\, h_\uldM(\check\lambda)\bigl(\sigma^*h_\ulM(\lambda)\bigr)
\]
obtained from $\dM=\Hom_{A_\BC}(\sigma^*M,\,\Omega^1_{A_\BC/\BC})$. In particular, the functors $\uldM\mapsto\ulM(\uldM)$ and $\ulM\mapsto\uldM(\ulM)$ from Proposition~\ref{PropDualizing} restrict to exact tensor-anti-equivalences $\dualAUMotCatIsog\longleftrightarrow\AUMotCatIsog$ and $\dualAMUMotCatIsog\longleftrightarrow\AMUMotCatIsog$.
\end{proposition}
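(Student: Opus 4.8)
The statement of Proposition~\ref{PropDualizingUnif} asserts that the anti-equivalence $\ulM\leftrightarrow\uldM(\ulM)$ from Proposition~\ref{PropDualizing} is compatible with uniformizability, and gives an explicit formula for $h_\uldM$ in terms of $\ssigma^\ast h_\ulM$ together with the induced identification of $\tau$- and $\sdtau$-invariants.

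\medskip\noindent
\textbf{Plan of proof.} The plan is to start from the formula $h_\ulM\circ(\id_{\Lambda(\ulM)}\otimes\id)=(\stau_M\otimes\id)\circ\ssigma^\ast h_\ulM$ of Lemma~\ref{LemmaUniformizable}\ref{LemmaUniformizableA} and apply the exact contravariant functor $N\mapsto\Hom_{\CO(\FC_\BC\setminus\Disc)}\bigl(N,\,\Omega^1_{A_\BC/\BC}\otimes_{A_\BC}\CO(\FC_\BC\setminus\Disc)\bigr)$, using the flatness of $\ssigma^\ast$ (Remark~\ref{RemSigmaIsFlat}) and \cite[Proposition~2.10]{Eisenbud} to commute this $\Hom$ with $\ssigma^\ast$ and with $\otimes_{A_\BC}\CO(\FC_\BC\setminus\Disc)$, exactly as in the proof of Proposition~\ref{PropDualizing}. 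First I would dualize the isomorphism $\ssigma^\ast h_\ulM\colon\Lambda(\ulM)\otimes_A\CO(\FC_\BC\setminus\Disc)\isoto\ssigma^\ast M\otimes_{A_\BC}\CO(\FC_\BC\setminus\Disc)$ to get an isomorphism
\[
(\ssigma^\ast h_\ulM)\dual\colon\;\dM\otimes_{A_\BC}\CO(\FC_\BC\setminus\Disc)\;\isoto\;\Hom_A(\Lambda(\ulM),\Omega^1_{A/\BF_q})\otimes_A\CO(\FC_\BC\setminus\Disc)\,,
\]
where on the left I used $\dM=\Hom_{A_\BC}(\sigma^\ast M,\,\Omega^1_{A_\BC/\BC})$ and on the right I used that $\Lambda(\ulM)$ is finite projective over $A$ with $\Omega^1_{A_\BC/\BC}=\Omega^1_{A/\BF_q}\otimes_{\BF_q}\BC$ and $\CO(\FC_\BC\setminus\Disc)=A_\BC\otimes_{\BF_q[t]}\BC\langle t\rangle$ (Remark~\ref{RemImportant} and \eqref{EqC<t>}). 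Taking inverses defines $h_\uldM:=(\ssigma^\ast h_\ulM\dual)^{-1}$, and dualizing the cocycle relation shows that this $h_\uldM$ satisfies $h_\uldM\circ(\id\otimes\id)=(\sdtau_\dM\otimes\id)\circ\sdsigma^\ast h_\uldM$, which forces the source $\Hom_A(\Lambda(\ulM),\Omega^1_{A/\BF_q})$ to land in the $\sdtau$-invariants of $\dM\otimes_{A_\BC}\CO(\FC_\BC\setminus\Disc)$; comparing ranks (both equal $r$) and using Lemma~\ref{LemmaDualUniformizable}\ref{LemmaDualUniformizableB} identifies it with $\Lambda(\uldM)$ and shows $\uldM$ is uniformizable. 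The pairing statement is then just the unwinding of the definition of $\dM$ as an internal hom: the perfect $A$-bilinear pairing $\Lambda(\uldM)\times\Lambda(\ulM)\to\Omega^1_{A/\BF_q}$, $(\check\lambda,\lambda)\mapsto h_\uldM(\check\lambda)\bigl(\sigma^\ast h_\ulM(\lambda)\bigr)$, is well-defined because $h_\uldM(\check\lambda)$ is a $\sdtau$-invariant section of $\Hom(\sigma^\ast M,\Omega^1)\otimes\CO(\FC_\BC\setminus\Disc)$ evaluated on a $\tau$-invariant section $\sigma^\ast h_\ulM(\lambda)$, so the result is $\sigma$-invariant, hence in $\Gamma(\dotC,\Omega^1)=\Omega^1_{A/\BF_q}$. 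For the converse direction I would run the same argument through the inverse functor $\uldM\mapsto\ulM(\uldM)$, or simply observe that the two functors are mutually inverse anti-equivalences by Proposition~\ref{PropDualizing}, so ``$\ulM$ uniformizable $\Leftrightarrow$ $\uldM(\ulM)$ uniformizable'' is symmetric.

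\medskip\noindent
\textbf{Conclusion of the statement.} Having established the uniformizability equivalence and the formula for $h_\uldM$, the final sentence — that the functors restrict to exact tensor-anti-equivalences $\dualAUMotCatIsog\leftrightarrow\AUMotCatIsog$ and $\dualAMUMotCatIsog\leftrightarrow\AMUMotCatIsog$ — follows by combining three facts already in hand: Proposition~\ref{PropDualizing} gives the exact tensor-anti-equivalences $\AMotCatIsog\leftrightarrow\dualAMotCatIsog$; what we just proved shows these restrict to the full subcategories of uniformizable objects; and Proposition~\ref{PropDualMixed} shows they restrict to the full subcategories of mixed objects (with weights negated), hence also to mixed uniformizable objects. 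Since $\AUMotCatIsog$, $\AMUMotCatIsog$ and their dual counterparts are already known to be (neutral Tannakian, in particular abelian) categories by Theorem~\ref{TheoremAMotTannakian} and the analog for dual $A$-motives, no further exactness check is needed beyond what the restriction of an exact functor automatically gives.

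\medskip\noindent
\textbf{Main obstacle.} The only genuinely delicate point is the bookkeeping with the Kähler differentials and the base ring $\CO(\FC_\BC\setminus\Disc)$: one must check that $\Hom$ commutes with $\ssigma^\ast$ and with $-\otimes_{A_\BC}\CO(\FC_\BC\setminus\Disc)$ (both valid by flatness and finite projectivity via \cite[Proposition~2.10]{Eisenbud}), and that the $\ssigma^\ast$ appearing in $\dM=\Hom_{A_\BC}(\sigma^\ast M,\Omega^1_{A_\BC/\BC})$ matches the $\ssigma^\ast$ in $h_\uldM=(\ssigma^\ast h_\ulM\dual)^{-1}$ so that the dualized cocycle relation genuinely reads as the defining relation for $\sdtau_\dM=\tau_M\dual$. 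This is entirely routine once one is careful about which pullback sits on which side, and presents no conceptual difficulty; everything else is formal transport of structure along an anti-equivalence.
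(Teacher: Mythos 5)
Your proposal is correct and follows essentially the same route as the paper's proof: apply $\ssigma^\ast$ and $\Hom_{\CO(\FC_\BC\setminus\Disc)}(\,.\,,\Omega^1_{A_\BC/\BC}\otimes_{A_\BC}\CO(\FC_\BC\setminus\Disc))$ to $h_\ulM$, identify the $\sdtau$-invariants of $\dM\otimes_{A_\BC}\CO(\FC_\BC\setminus\Disc)$ with $\Hom_A(\Lambda(\ulM),\Omega^1_{A/\BF_q})$, set $h_\uldM:=(\ssigma^\ast h_\ulM\dual)^{-1}$, argue the converse symmetrically, and deduce the last sentence from Propositions~\ref{PropDualizing} and \ref{PropDualMixed}. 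Your version is merely a bit more explicit about the dualized cocycle relation and the rank count via Lemma~\ref{LemmaDualUniformizable}, which the paper leaves implicit.
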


\begin{proof}
 We assume $\ulM$ is uniformizable, that is
\[
 \TS h_\ulM\colon \Lambda(\ulM) \otimes_A \CO(\FC_\BC\setminus\Disc) \isoto M\otimes_{A_\BC}\CO(\FC_\BC\setminus\Disc)\,,\quad\lambda\otimes f\mapsto f\cdot\lambda,
\]
is an isomorphism. Applying $\ssigma^\ast$ and $\Hom_{\CO(\FC_\BC\setminus\Disc)}\bigl(\ .\ ,\,\Omega^1_{A_\BC/\BC}\otimes_{A_\BC}\CO(\FC_\BC\setminus\Disc)\bigr)$ yields an isomorphism
\[
 \TS \ssigma^\ast h_\ulM\dual\colon  \dM\otimes_{A_\BC}\CO(\FC_\BC\setminus\Disc)\isoto\Hom_A(\Lambda(\ulM),\Omega^1_{A/\BF_q}) \otimes_A \CO(\FC_\BC\setminus\Disc)\,.
\]
Since the $\sdtau$-invariants of $\uldM=\uldM(\ulM)$ are
\[
\Lambda\bigl(\uldM(\ulM)\bigr) = \bigl(\dM \otimes_{A_\BC}\CO(\FC_\BC\setminus\Disc)\bigr)^\sdtau \cong \Hom_A(\Lambda(\ulM),\Omega^1_{A/\BF_q}),
\]
$h_\uldM:=(\ssigma^\ast h_\ulM\dual)^{-1}$ provides a rigid analytic trivialization for $\uldM$. 

The converse assertion follows similarly and the statement about the exact tensor-anti-equi\-valence follows from Propositions~\ref{PropDualizing} and \ref{PropDualMixed}.
\end{proof}

\begin{lemma}\label{LemmaDualUniformizableBF_q[t]}
Let $C=\BP^1_{\BF_q}$, $A=\BF_q[t]$, $A_\BC=\BC[t]$ and $\theta=\charmorph(t)$. Then $\CO(\FC_\BC\setminus\Disc\bigr)=\BC\langle t\rangle$; see \eqref{EqC<t>}. Let $\check\Phi=(\check\Phi_{ij})_{ij}\in\GL_r\bigl(\BC[t][\tfrac{1}{t-\theta}]\bigr)$ represent $\sdtau_\dM$ with respect to a $\BC[t]$-basis $\check\CB=(\dm_1,\ldots,\dm_r)$ of $\dM$, that is $\sdtau_\dM(\sdsigma^*\dm_j)=\sum_{i=1}^r\check\Phi_{ij}\,\dm_i$. Then $\uldM$ is uniformizable if and only if there is a matrix $\check\Psi\in\GL_{r}(\BC\langle t\rangle)$ such that
\[
\sdsigma^\ast\check\Psi=\check\Psi\cdot\check\Phi\,.
\]
In that case, $\check\Psi$ is called a \emph{rigid analytic trivialization of $\check\Phi$}. It is uniquely determined up to multiplication on the left with a matrix in $\GL_r(\BF_q[t])$. The columns of $\check\Psi^{-1}$ are the coordinate vectors with respect to $\check\CB$ of an $\BF_q[t]$-basis $\check\CC$ of $\Lambda(\uldM)$. Moreover, with respect to the bases $\check\CC$ and $\check\CB$ the isomorphism $h_\uldM$ is represented by $\check\Psi^{-1}$.

If $\ulM(\uldM)=(M,\tau_M)$ and $\Phi\in\GL_r\bigl(\BC[t][\tfrac{1}{t-\theta}]\bigr)$ is the matrix representing $\tau_M$ with respect to the basis $\CB$ of $M$ which is dual to $\check\CB$, then $\Phi=\check\Phi^T$ and $\Psi:=(\sdsigma^*\check\Psi)^{-1}$ is a rigid analytic trivialization of $\Phi$.
\end{lemma}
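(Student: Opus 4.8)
The plan is to transcribe the proof of Lemma~\ref{LemmaUniformizableBF_q[t]} into the dual setting, and to obtain the final assertion about $\ulM(\uldM)$ from Proposition~\ref{PropDualizingUnif} together with a short matrix computation. Recall from \eqref{EqC<t>} that $\CO(\FC_\BC\setminus\Disc)=\BC\langle t\rangle$ and that the subring of $\sdsigma^\ast$-invariants of $\BC\langle t\rangle$ equals $\BF_q[t]$. First I would treat the forward direction. Assume $\uldM$ is uniformizable and fix an $\BF_q[t]$-basis $\check\CC=(\check\lambda_1,\dots,\check\lambda_r)$ of $\Lambda(\uldM)$, which is free of rank $r$ by Lemma~\ref{LemmaDualUniformizable}\ref{LemmaDualUniformizableB}. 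Let $\check\Psi^{-1}\in\GL_r(\BC\langle t\rangle)$ be the matrix of $h_\uldM$ with respect to $\check\CC$ and $\check\CB$, so $\check\lambda_j=\sum_i(\check\Psi^{-1})_{ij}\dm_i$. Expanding the identity $\sdtau_\dM(\sdsigma^\ast\check\lambda_j)=\check\lambda_j$ in the basis $\check\CB$, using $\sdtau_\dM(\sdsigma^\ast\dm_j)=\sum_i\check\Phi_{ij}\dm_i$ and the fact that passing to $\sdsigma^\ast$ twists coordinate vectors entrywise, gives the matrix identity $\check\Phi\cdot\sdsigma^\ast(\check\Psi^{-1})=\check\Psi^{-1}$; inverting and using that $\sdsigma^\ast$ is a ring homomorphism yields $\sdsigma^\ast\check\Psi=\check\Psi\cdot\check\Phi$, and $\check\Psi\in\GL_r(\BC\langle t\rangle)$ because $\check\Psi^{-1}$ is.

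Conversely, given $\check\Psi\in\GL_r(\BC\langle t\rangle)$ with $\sdsigma^\ast\check\Psi=\check\Psi\check\Phi$, I would let $\check\lambda_1,\dots,\check\lambda_r\in\dM\otimes_{A_\BC}\BC\langle t\rangle$ be the columns of $\check\Psi^{-1}$ and reverse the computation: the change-of-basis formula for $\sdsigma^\ast$-semilinear maps shows that in the $\BC\langle t\rangle$-basis $(\check\lambda_j)_j$ the map $\sdtau_\dM$ is represented by $\check\Psi\,\check\Phi\,\sdsigma^\ast(\check\Psi^{-1})=\Id_r$, so each $\check\lambda_j$ is $\sdtau_\dM$-invariant, and since the $\sdsigma^\ast$-invariants of $\BC\langle t\rangle$ are $\BF_q[t]$ one gets $\Lambda(\uldM)=\bigoplus_j\BF_q[t]\check\lambda_j$. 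Hence $\rank_A\Lambda(\uldM)=r$, so $\uldM$ is uniformizable by Lemma~\ref{LemmaDualUniformizable}\ref{LemmaDualUniformizableB}, the columns of $\check\Psi^{-1}$ form an $\BF_q[t]$-basis $\check\CC$ of $\Lambda(\uldM)$, and $h_\uldM$ is by construction represented by $\check\Psi^{-1}$ with respect to $\check\CC$ and $\check\CB$. For uniqueness, if $\check\Psi_1,\check\Psi_2$ are two trivializations then $U:=\check\Psi_1\check\Psi_2^{-1}$ is $\sdsigma^\ast$-invariant, hence lies in $\mathrm{Mat}_r(\BF_q[t])$, and is invertible over $\BC\langle t\rangle$; since a polynomial over $\BF_q$ of positive degree has a root in $\overline{\BF_q}$, which lies in the closed unit disc $\Spm\BC\langle t\rangle$, the determinant of $U$ is a nonzero constant, so $U\in\GL_r(\BF_q[t])$.

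Finally, to identify $\ulM(\uldM)$: taking $\CB$ to be the basis of $M=\Hom_{A_\BC}(\sdsigma^\ast\dM,\Omega^1_{A_\BC/\BC})$ dual to $\check\CB$ under $\Omega^1_{A_\BC/\BC}=\BC[t]\,dt\cong\BC[t]$, I would evaluate $\tau_M(\ssigma^\ast m_j)=\ssigma^\ast m_j\circ\sdtau_\dM$ on $\sdsigma^\ast\dm_i$ and compare with $\tau_M(\ssigma^\ast m_j)=\sum_i\Phi_{ij}m_i$, using $\ssigma^\ast\Omega^1_{A_\BC/\BC}=\Omega^1_{A_\BC/\BC}$ from Remark~\ref{RemImportant}, to obtain $\Phi=\check\Phi^T$. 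Then for $\Psi:=(\sdsigma^\ast\check\Psi)^{-1}$ one has $\Psi^T=(\sdsigma^\ast(\check\Psi^T))^{-1}$, hence $\ssigma^\ast\Psi^T=(\check\Psi^T)^{-1}$, while, since $\sdsigma^\ast(\check\Psi^T)=(\sdsigma^\ast\check\Psi)^T=\check\Phi^T\check\Psi^T$, also $\Psi^T\Phi=(\check\Phi^T\check\Psi^T)^{-1}\check\Phi^T=(\check\Psi^T)^{-1}$; so $\ssigma^\ast\Psi^T=\Psi^T\Phi$ and $\Psi$ is a rigid analytic trivialization of $\Phi$ in the sense of Lemma~\ref{LemmaUniformizableBF_q[t]}. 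Alternatively, this last point is precisely the matrix form of the identity $h_\uldM=(\ssigma^\ast h_\ulM\dual)^{-1}$ of Proposition~\ref{PropDualizingUnif} written in the bases $\check\CB$, $\check\CC$ and their duals. The content here is a dictionary translation of results already established for $A$-motives, so I do not expect a serious obstacle; the only delicate bookkeeping is the direction of the $\sdsigma^\ast$-semilinearity, the placement of transposes (which is exactly the asymmetry recorded in the accompanying Remark on scattering matrices), and the elementary fact $\GL_r(\BC\langle t\rangle)\cap\mathrm{Mat}_r(\BF_q[t])=\GL_r(\BF_q[t])$.
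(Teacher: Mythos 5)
Your proposal is correct and follows essentially the same route as the paper, which simply cites Papanikolas (Proposition~3.3.9) together with the proof of Lemma~\ref{LemmaUniformizableBF_q[t]} and calls the final identity $\ssigma^\ast\Psi^T=\Psi^T\Phi$ "an elementary calculation" — you have merely written out the transcription to the dual setting and that calculation explicitly. All the delicate points (the direction of the $\sdsigma^\ast$-semilinear base change, the left-multiplication ambiguity by $\GL_r(\BF_q[t])$, and the transpose bookkeeping giving $\Phi=\check\Phi^T$) are handled correctly.
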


\begin{proof}
This was proved by Papanikolas~\cite[Proposition~3.3.9]{Papanikolas} (in terms of row vectors, whereas we use column vectors); see also the proof of Lemma~\ref{LemmaUniformizableBF_q[t]}. The formula $\Psi=(\sdsigma^*\check\Psi)^{-1}$ follows from an elementary calculation.
\end{proof}

\begin{example}\label{ExDualizingCarlitz}
Let $C=\BP^1_{\BF_q}$, $A=\BF_q[t]$, $z=\frac{1}{t}$, $\theta=\charmorph(t)=\frac{1}{\zeta}\in\BC$. Via the identification $\Omega^1_{A_\BC/\BC}=\BC[t]dt\cong\BC[t],\,dt\mapsto 1$, the Carlitz $t$-motive $\ulM=(\BC[t],\tau_M=t-\theta)$ and the dual Carlitz $t$-motive $\uldM=(\BC[t],\sdtau_\dM=t-\theta)$ from Examples~\ref{ExCarlitz} and \ref{ExDualCarlitz} satisfy $\uldM\cong\uldM(\ulM)$ and $\ulM\cong\ulM(\uldM)$. Furthermore, $\ulM$ is pure of weight $1$ and $\uldM$ is pure of weight $-1$. In Example~\ref{ExCarlitzPeriod} we saw that $\ulM$ is uniformizable with $\Lambda(\ulM)=\eta\ell_\zeta^{\SSC -}\cdot\BF_q[t]$ for $\ell_\zeta^{\SSC -}:=\prod_{i=0}^\infty(1-\zeta^{q^i}t)\in \CO(\dotFC_\BC)$ and an element $\eta\in\BC$ with $\eta^{q-1}=-\zeta$. It follows that
\[
\Lambda(\uldM)\;=\;\{\check\lambda\in \CO(\FC_\BC\setminus\Disc):(t-\theta)\sdsigma^\ast(\check\lambda)\;=\;\check\lambda\}\;=\;\sigma^*(\eta\ell_\zeta^{\SSC -})^{-1}\cdot\BF_q[t]
\]
The pairing $\Lambda(\uldM)\times\Lambda(\ulM)\to \Omega^1_{A/\BF_q}=\BF_q[t]dt$, $(\check\lambda,\lambda)\mapsto h_\uldM(\check\lambda)\bigl(\sigma^*h_\ulM(\lambda)\bigr)$ sends $\bigl(\sigma^*(\eta\ell_\zeta^{\SSC -})^{-1},\eta\ell_\zeta^{\SSC -}\bigr)$ to $dt$, because $h_\uldM\bigl(\sigma^*(\eta\ell_\zeta^{\SSC -})^{-1}\bigr)=\sigma^*(\eta\ell_\zeta^{\SSC -})^{-1}$ and $\sigma^*h_\ulM(\eta\ell_\zeta^{\SSC -})=\sigma^*(\eta\ell_\zeta^{\SSC -})$.
\end{example}

Before we conclude that $\dualAUMotCatIsog$ and $\dualAMUMotCatIsog$ are Tannakian categories over $Q$ with fiber functors $\uldM\mapsto\Hodge_1(\uldM)$, we note that Proposition~\ref{PropDualizingUnif} together with Proposition~\ref{PropUnifAMotive} and Lemma~\ref{Lemma2.3} implies the following

\begin{proposition}\label{PropUnifDualAMotive}
\begin{enumerate}
\item Every dual $A$-motive which in $\dualAMotCatIsog$ is isomorphic to a uniformizable dual $A$-motive is itself uniformizable.
\item Every dual $A$-motive of rank $1$ is uniformizable.
\item 
If $\uldM$ and $\uldN$ are uniformizable dual $A$-motives, then also $\uldM\otimes\uldN$ and $\CHom(\uldM,\uldN)$ and $\uldM\dual$ are uniformizable with 
\begin{eqnarray*}
\Lambda(\uldM\otimes\uldN) & \cong & \Lambda(\uldM)\otimes_A\Lambda(\uldN) \qquad\text{and}\\[2mm]
\Lambda\bigl(\CHom(\uldM,\uldN)\bigr) & \cong & \Hom_A\bigl(\Lambda(\uldM),\Lambda(\uldN)\bigr) \qquad\text{and}\\[2mm]
\Lambda(\uldM\dual) & \cong & \Hom_A(\Lambda(\uldM),A)\,.
\end{eqnarray*}
\item
If $\uldM$ and $\uldN$ are uniformizable, the natural map $\QHom(\uldM,\uldN) \to \Hom_{Q}(\Hodge_1(\uldM),\Hodge_1(\uldN))$,
\[
\df\otimes a\;\longmapsto\; \Hodge_1(\df\otimes a)\;:=\;a\cdot\bigl(h_\uldN^{-1}\circ (\df\otimes\id)\circ h_\uldM|_{\Hodge_1(\uldM)}\bigr)
\]
for $\df\in\Hom_{\dualAMotCat}(\uldM,\uldN)$ and $a\in Q$, is injective.\qed
\end{enumerate}
\end{proposition}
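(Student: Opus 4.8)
The plan is to deduce all four assertions by transporting the corresponding facts for $A$-motives (Proposition~\ref{PropUnifAMotive}, together with Lemma~\ref{Lemma2.3}) through the exact tensor-anti-equivalences $\ulM\mapsto\uldM(\ulM)$ and $\uldM\mapsto\ulM(\uldM)$ of Propositions~\ref{PropDualizing} and \ref{PropDualizingUnif}. Write $\ulM:=\ulM(\uldM)$ and $\ulN:=\ulM(\uldN)$; recall from Proposition~\ref{PropDualizingUnif} that $\uldM$ is uniformizable if and only if $\ulM$ is, that $\rk\ulM=\rk\uldM$, and that $\Lambda(\uldM)\cong\Hom_A(\Lambda(\ulM),\Omega^1_{A/\BF_q})$ under the perfect pairing given there. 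For (1): if $\uldM\cong\uldN$ in $\dualAMotCatIsog$ with $\uldN$ uniformizable, then $\ulM\cong\ulN$ in $\AMotCatIsog$ with $\ulN$ uniformizable, so $\ulM$ is uniformizable by Proposition~\ref{PropUnifAMotive}\ref{PropUnifAMotiveA}, hence so is $\uldM$. For (2): $\rk\uldM=1$ forces $\rk\ulM=1$, so $\ulM$ is uniformizable by Proposition~\ref{PropUnifAMotive}\ref{PropUnifAMotiveB}, hence so is $\uldM$.

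For (3), I would note that under $\uldM\mapsto\ulM(\uldM)$ the objects $\uldM\otimes\uldN$, $\uldM\dual=\CHom(\uldM,\dUOne(0))$ and $\CHom(\uldM,\uldN)\cong\uldN\otimes\uldM\dual$ are carried to tensor products, internal homs and duals of $\ulM$, $\ulN$ and $\ulM(\dUOne(0))=(\Omega^1_{A_\BC/\BC},\id)$, the last being a uniformizable $A$-motive of rank $1$ by Proposition~\ref{PropUnifAMotive}\ref{PropUnifAMotiveB}. Since the uniformizable $A$-motives are stable under $\otimes$, $\CHom$ and $\dual$ by Proposition~\ref{PropUnifAMotive}\ref{PropUnifAMotiveC}, so are their images, and Proposition~\ref{PropDualizingUnif} then gives that $\uldM\otimes\uldN$, $\CHom(\uldM,\uldN)$ and $\uldM\dual$ are uniformizable. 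For the formulas for $\Lambda$ I would argue directly, repeating the proof of Proposition~\ref{PropUnifAMotive}\ref{PropUnifAMotiveC} with $\tau$ replaced by $\sdtau$: $h_\uldM\otimes h_\uldN$ is a $\sdtau$-equivariant isomorphism $\Lambda(\uldM)\otimes_A\Lambda(\uldN)\otimes_A\CO(\FC_\BC\setminus\Disc)\isoto\dM\otimes_{A_\BC}\dN\otimes_{A_\BC}\CO(\FC_\BC\setminus\Disc)$, whose $\sdtau$-invariants give $\Lambda(\uldM\otimes\uldN)\cong\Lambda(\uldM)\otimes_A\Lambda(\uldN)$; applying \cite[Proposition~2.10]{Eisenbud} as in the $A$-motive case yields $\Lambda(\uldM\dual)\cong\Hom_A(\Lambda(\uldM),A)$, and then $\Lambda(\CHom(\uldM,\uldN))\cong\Hom_A(\Lambda(\uldM),\Lambda(\uldN))$ follows from $\CHom(\uldM,\uldN)\cong\uldN\otimes\uldM\dual$. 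For (4), the argument is the one-liner of Proposition~\ref{PropUnifAMotive}\ref{PropUnifAMotiveD}: since $h_\uldM$ and $h_\uldN$ become isomorphisms after extension of scalars from $Q$ to $\CO(\FC_\BC\setminus\Disc)$, the element $\df\otimes a\in\QHom(\uldM,\uldN)$ can be recovered from $\Hodge_1(\df\otimes a)=a\cdot(h_\uldN^{-1}\circ(\df\otimes\id)\circ h_\uldM)|_{\Hodge_1(\uldM)}$, so the map is injective.

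The proof is essentially formal, so I do not expect a serious obstacle. The only point that needs a little care is that the anti-equivalence $\uldM\leftrightarrow\ulM$ of Proposition~\ref{PropDualizing} is a tensor functor only up to a twist by the rank-$1$ object $(\Omega^1_{A_\BC/\BC},\id)$ — concretely because $\Hom_{A_\BC}(P\otimes Q,\Omega^1_{A_\BC/\BC})\cong\Hom_{A_\BC}(P,\Omega^1_{A_\BC/\BC})\otimes_{A_\BC}\Hom_{A_\BC}(Q,\Omega^1_{A_\BC/\BC})\otimes_{A_\BC}(\Omega^1_{A_\BC/\BC})\dual$ for finite projective $P$, $Q$. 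This is harmless, precisely because $(\Omega^1_{A_\BC/\BC},\id)$ is uniformizable (being of rank $1$) and the class of uniformizable objects is stable under the tensor operations; and in any case part (3) can be carried out entirely via the direct $\sdtau$-invariants computation above, which never invokes the tensor structure of the anti-equivalence.
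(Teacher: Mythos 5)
Your proof is correct and follows exactly the route the paper intends: the paper gives no separate argument but simply asserts that the proposition follows from Proposition~\ref{PropDualizingUnif} together with Proposition~\ref{PropUnifAMotive} and Lemma~\ref{Lemma2.3}, which is precisely the transport-through-the-anti-equivalence argument you spell out. Your extra care about the $\Omega^1_{A_\BC/\BC}$-twist in the tensor compatibility, and the fallback direct computation of the $\sdtau$-invariants for part (3), fill in details the paper leaves implicit but introduce nothing genuinely different.
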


\begin{lemma}\label{LemmaDual2.3}
Let $0\to\uldM'\to\uldM\to\uldM''\to0$ be a short exact sequence of dual $A$-motives. Then $\uldM$ is uniformizable if and only if both $\uldM'$ and $\uldM''$ are. In this case the induced sequence of $A$-modules $0\to\Lambda(\uldM')\to\Lambda(\uldM)\to\Lambda(\uldM'')\to0$ is exact.\qed
\end{lemma}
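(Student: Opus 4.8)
The plan is to deduce Lemma~\ref{LemmaDual2.3} from its $A$-motive counterpart (Lemma~\ref{Lemma2.3}) via the tensor-anti-equivalence $\uldM\mapsto\ulM(\uldM)$ of Proposition~\ref{PropDualizing}, together with the compatibility of uniformizability established in Proposition~\ref{PropDualizingUnif}. First I would apply the exact contravariant functor $\uldM\mapsto\ulM(\uldM)$ to the short exact sequence $0\to\uldM'\to\uldM\to\uldM''\to0$. Since this functor is exact (Proposition~\ref{PropDualizing}), we obtain a short exact sequence of $A$-motives $0\to\ulM(\uldM'')\to\ulM(\uldM)\to\ulM(\uldM')\to0$. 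By Proposition~\ref{PropDualizingUnif} each of $\uldM',\uldM,\uldM''$ is uniformizable if and only if the corresponding $\ulM(\uldM'),\ulM(\uldM),\ulM(\uldM'')$ is. Applying Lemma~\ref{Lemma2.3} to the latter sequence, $\ulM(\uldM)$ is uniformizable iff both $\ulM(\uldM')$ and $\ulM(\uldM'')$ are; translating back through Proposition~\ref{PropDualizingUnif} gives the first assertion: $\uldM$ is uniformizable iff both $\uldM'$ and $\uldM''$ are.

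For the second assertion about exactness of $0\to\Lambda(\uldM')\to\Lambda(\uldM)\to\Lambda(\uldM'')\to0$, I see two routes and would present the more direct one. The direct route mimics the proof of Lemma~\ref{Lemma2.3}: note that $\Lambda(\uldM')=\ker\bigl(\sdtau_\dM-\id\colon \dM'\otimes_{A_\BC}\CO(\FC_\BC\setminus\Disc)\to \dM'\otimes_{A_\BC}\CO(\FC_\BC\setminus\Disc)\bigr)$, and likewise for $\uldM$ and $\uldM''$. Tensoring the short exact sequence of dual $A$-motives with the flat ring $\CO(\FC_\BC\setminus\Disc)$ over $A_\BC$ (the modules are locally free, hence flatness is not even needed, the sequence stays exact) and applying the snake lemma to the endomorphism $\sdtau-\id$ acting vertically, one gets a six-term exact sequence relating the kernels (the $\Lambda$'s) and the cokernels. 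The cokernel of $\sdtau_\dM-\id$ on $\dM\otimes_{A_\BC}\CO(\FC_\BC\setminus\Disc)$ vanishes by Lemma~\ref{LemmaDualUniformizableC} (which shows $\sdtau_\dM-\id$ is surjective when $\uldM$ is uniformizable, and we are now in the case where all three are uniformizable); the same applies to $\uldM'$. Hence the connecting map is forced to be zero and the sequence $0\to\Lambda(\uldM')\to\Lambda(\uldM)\to\Lambda(\uldM'')\to0$ is exact.

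Alternatively, and perhaps more in the spirit of the preceding propositions, one can again transport through $\ulM(\,.\,)$: Lemma~\ref{Lemma2.3} applied to $0\to\ulM(\uldM'')\to\ulM(\uldM)\to\ulM(\uldM')\to0$ gives exactness of $0\to\Lambda(\ulM(\uldM''))\to\Lambda(\ulM(\uldM))\to\Lambda(\ulM(\uldM'))\to0$, and then invoke the identification $\Lambda(\uldM)\cong\Hom_A(\Lambda(\ulM(\uldM)),\Omega^1_{A/\BF_q})$ of Proposition~\ref{PropDualizingUnif}. Since $\Omega^1_{A/\BF_q}$ is an invertible (in particular flat and injective-on-this-category) $A$-module and all the $\Lambda$'s are finite projective, applying $\Hom_A(\,.\,,\Omega^1_{A/\BF_q})$ is exact and reverses the arrows, yielding exactness of $0\to\Lambda(\uldM')\to\Lambda(\uldM)\to\Lambda(\uldM'')\to0$ with the maps in the correct direction.

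The only mild subtlety—and what I would flag as the one point needing a line of care—is bookkeeping of variances: the anti-equivalence reverses the order of the sub/quotient in the sequence, so one must check that after applying it twice (or after the final $\Hom_A(\,.\,,\Omega^1_{A/\BF_q})$) the arrows point the right way and that the induced maps $\Lambda(\uldM')\to\Lambda(\uldM)$ and $\Lambda(\uldM)\to\Lambda(\uldM'')$ are indeed the natural inclusions/projections coming from $\uldM'\into\uldM\onto\uldM''$ and not their transposes. This is routine but worth one sentence. Everything else is a formal consequence of the exactness of the functors involved (Propositions~\ref{PropDualizing} and \ref{PropDualizingUnif}), the surjectivity statement in Lemma~\ref{LemmaDualUniformizableC}, and Lemma~\ref{Lemma2.3}; no new computation is required. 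I would therefore write the proof in essentially three lines, citing Propositions~\ref{PropDualizing}, \ref{PropDualizingUnif}, Lemma~\ref{LemmaDualUniformizableC}, and Lemma~\ref{Lemma2.3}, exactly as the paper's \texttt{\textbackslash qed}-only style for this lemma suggests is intended.
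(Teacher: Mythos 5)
Your proposal is correct and matches the paper's intent exactly: Lemma~\ref{LemmaDual2.3} is stated with a bare \verb|\qed| precisely because it follows formally from the exact tensor-anti-equivalence of Propositions~\ref{PropDualizing} and \ref{PropDualizingUnif} together with Lemma~\ref{Lemma2.3}, which is the route you take for the first assertion. Your direct snake-lemma argument for the exactness of $0\to\Lambda(\uldM')\to\Lambda(\uldM)\to\Lambda(\uldM'')\to0$, using the surjectivity from Lemma~\ref{LemmaDualUniformizable}\ref{LemmaDualUniformizableC}, is the faithful dual analog of the paper's proof of Lemma~\ref{Lemma2.3} and is equally valid.
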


\begin{remark}
If a mixed dual $A$-motive $\uldM$ is uniformizable, then all filtration steps $W_\mu\uldM$ and factors $\Gr^W_\mu\uldM$ of the weight filtration of $\uldM$ are uniformizable by Lemma~\ref{LemmaDual2.3}. Therefore, $\uldM$ could equivalently be called a \emph{uniformizable mixed dual $A$-motive} or a \emph{mixed uniformizable dual $A$-motive}.
\end{remark}

Summarizing these properties of $\dualAUMotCatIsog$, we obtain the analog of Theorem~\ref{TheoremAMotTannakian}.

\begin{theorem}\label{TheoremDualAMotTannakian}
The categories $\dualAUMotCatIsog$ and $\dualAMUMotCatIsog$ of (mixed) uniformizable dual $A$-motives up to isogeny are neutral Tannakian categories over $Q$ with fiber functor $\uldM\mapsto \Hodge_1(\uldM)$.\qed
\end{theorem}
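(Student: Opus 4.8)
The plan is to deduce Theorem~\ref{TheoremDualAMotTannakian} from Theorem~\ref{TheoremAMotTannakian} by transporting the Tannakian structure along the exact tensor-anti-equivalences established in Propositions~\ref{PropDualizing}, \ref{PropDualMixed} and \ref{PropDualizingUnif}. Concretely, recall from Proposition~\ref{PropDualizingUnif} that the functors $\uldM\mapsto\ulM(\uldM)$ and $\ulM\mapsto\uldM(\ulM)$ restrict to mutually inverse exact tensor-anti-equivalences $\dualAUMotCatIsog\longleftrightarrow\AUMotCatIsog$ and $\dualAMUMotCatIsog\longleftrightarrow\AMUMotCatIsog$, and that under these equivalences $\Lambda(\uldM)\cong\Hom_A(\Lambda(\ulM),\Omega^1_{A/\BF_q})$, hence $\Hodge_1(\uldM)\cong\Hom_Q(\Hodge^1(\ulM),\Omega^1_{Q/\BF_q})$ as $Q$-vector spaces. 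Since $\AUMotCatIsog$ and $\AMUMotCatIsog$ are neutral Tannakian over $Q$ by Theorem~\ref{TheoremAMotTannakian}, and the image of a rigid abelian $K$-linear tensor category under an exact tensor-anti-equivalence is again such a category (an anti-equivalence exchanges the roles of $X\otimes Y$ with itself but reverses arrows; rigidity and abelianness are self-dual properties, the unit object is preserved, and $\Hom$-groups remain $Q$-vector spaces), it follows that $\dualAUMotCatIsog$ and $\dualAMUMotCatIsog$ are rigid abelian $Q$-linear tensor categories.

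First I would verify directly, without appeal to the anti-equivalence, the intrinsic tensor-categorical properties, using the material already assembled: $\dualAUMotCatIsog$ and $\dualAMUMotCatIsog$ are $Q$-linear as strictly full subcategories of $\dualAMotCatIsog$ (which is $Q$-linear and abelian, arguing exactly as in Proposition~\ref{PropAMotTannakianCateg}); they are closed under tensor products, internal homs and duals and contain $\dUOne(0)$ by Proposition~\ref{PropUnifDualAMotive}(3) together with Example~\ref{ExDualCHMotive}; and they are abelian because, by Lemma~\ref{LemmaDual2.3}, kernels, cokernels, images and coimages of morphisms of uniformizable dual $A$-motives in $\dualAMotCatIsog$ are again uniformizable (and, using Proposition~\ref{PropDualPure}, again mixed in the mixed case), hence belong to the respective subcategory. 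This is the verbatim analog of the proof of Theorem~\ref{TheoremAMotTannakian}, and is routine.

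The substantive point is to exhibit a neutral fiber functor, and this is where I would argue most carefully. The candidate is $\omega\colon\uldM\mapsto\Hodge_1(\uldM)=\Lambda(\uldM)\otimes_AQ$. That $\Hodge_1(\uldM)$ is finite-dimensional over $Q$ follows from Lemma~\ref{LemmaDualUniformizableB}; exactness of $\omega$ is Lemma~\ref{LemmaDual2.3}; $Q$-linearity is clear; faithfulness follows from Proposition~\ref{PropUnifDualAMotive}(4). The one thing that genuinely needs the anti-equivalence is that $\omega$ is a tensor functor, i.e.\ that there are functorial isomorphisms $\Lambda(\uldM\otimes\uldN)\otimes_AQ\cong(\Lambda(\uldM)\otimes_AQ)\otimes_Q(\Lambda(\uldN)\otimes_AQ)$ compatible with associativity and commutativity constraints: this is precisely Proposition~\ref{PropUnifDualAMotive}(3), and the compatibility of these isomorphisms with the constraints is inherited from the compatibility already known for $\Hodge^1$ on $\AUMotCatIsog$ via the anti-equivalence $\uldM\mapsto\ulM(\uldM)$ (the contravariance is harmless here, since on a rigid category an anti-equivalence composed with the duality functor is an equivalence, and $\omega$ on dual $A$-motives corresponds to $\Hodge^1(\,\cdot\,)\dual\otimes\Omega^1_{Q/\BF_q}$, which is still a tensor functor). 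The main — though still modest — obstacle is thus bookkeeping: tracking the Kähler-differential twist $\Omega^1_{A/\BF_q}$ through the pairing of Proposition~\ref{PropDualizingUnif} and confirming it does not spoil the multiplicativity constraints, since $\Omega^1_{A_\BC/\BC}=\sigma^*\Omega^1_{A_\BC/\BC}=\sdsigma^*\Omega^1_{A_\BC/\BC}$ (Remark~\ref{RemImportant}) makes $\Omega^1$ a $\tau$- and $\sdtau$-equivariant trivializable line, so the twist contributes only a harmless rank-one factor. Having checked all this, Definition~\ref{defn:tannakiancategory} is satisfied and the theorem follows; alternatively one may simply invoke that an exact tensor-anti-equivalence carries a neutral Tannakian category to a neutral Tannakian category and quote Theorem~\ref{TheoremAMotTannakian} directly.
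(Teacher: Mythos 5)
Your proposal is correct and follows essentially the route the paper intends: the theorem is stated with an immediate \qed precisely because the direct verification you carry out in your second paragraph --- $Q$-linearity and abelianness via Lemma~\ref{LemmaDual2.3}, closure under tensor, internal hom and duals plus exactness and faithfulness of $\uldM\mapsto\Hodge_1(\uldM)$ via Proposition~\ref{PropUnifDualAMotive} --- is the verbatim analog of the proof of Theorem~\ref{TheoremAMotTannakian}. The only superfluous worry is the K\"ahler-differential bookkeeping at the end: the tensor compatibility $\Lambda(\uldM\otimes\uldN)\cong\Lambda(\uldM)\otimes_A\Lambda(\uldN)$ is already given intrinsically by Proposition~\ref{PropUnifDualAMotive}, so one never needs to transport the constraints through the anti-equivalence and the $\Omega^1$-twist.
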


This theorem allows to associate with each (mixed) uniformizable dual $A$-motive $\uldM$ an algebraic group $\Gamma_\uldM$ over $Q$ as follows. Consider the Tannakian subcategory $\llangle\uldM\rrangle$ of $\dualAUMotCatIsog$, respectively $\dualAMUMotCatIsog$ generated by $\uldM$. By Tannakian duality \cite[Theorem~2.11 and Proposition~2.20]{DM82}, the category $\llangle\uldM\rrangle$ is tensor equivalent to the category of $Q$-rational representations of a linear algebraic group scheme $\Gamma_\uldM$ over $Q$ which is a closed subgroup of $\GL_Q(\Hodge_1(\uldM))$.

\begin{definition}\label{DefMotGpDual}
The linear algebraic $Q$-group scheme $\Gamma_\uldM$ associated with $\uldM$ is called the \emph{(motivic) Galois group of $\uldM$}.
\end{definition}

\begin{proposition}\label{PropSameGroup}
If $\uldM$ is a uniformizable mixed dual $A$-motive and $\ulM:=\ulM(\uldM)$ is the associated uniformizable mixed $A$-motive, then the motivic Galois groups $\Gamma_\uldM$ and $\Gamma_\ulM$ are canonically isomorphic.
\end{proposition}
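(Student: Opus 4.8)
The plan is to deduce the equality of Galois groups from the anti-equivalences of Propositions~\ref{PropDualizing}, \ref{PropDualMixed} and \ref{PropDualizingUnif}, by promoting the anti-equivalence $\uldN\mapsto\ulM(\uldN)$ to a \emph{covariant} tensor equivalence between the Tannakian subcategories $\llangle\uldM\rrangle$ and $\llangle\ulM\rrangle$ which is compatible with the fibre functors, and then invoking Tannakian duality.

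First, by Propositions~\ref{PropDualizing}, \ref{PropDualMixed} and \ref{PropDualizingUnif} the assignment $\uldN\mapsto\ulM(\uldN)$ is an exact tensor-anti-equivalence $\dualAMUMotCatIsog\longleftrightarrow\AMUMotCatIsog$. Composing it with the internal duality $\ulN\mapsto\ulN\dual$ of $\AMUMotCatIsog$ yields a covariant exact tensor equivalence
\[
\mathcal{D}\colon\dualAMUMotCatIsog\longto\AMUMotCatIsog,\qquad \uldN\longmapsto\ulM(\uldN)\dual.
\]
Since a tensor equivalence sends the Tannakian subcategory generated by an object to the one generated by its image, $\mathcal{D}$ restricts to a tensor equivalence $\llangle\uldM\rrangle\isoto\llangle\mathcal{D}(\uldM)\rrangle$; and $\mathcal{D}(\uldM)=\ulM(\uldM)\dual=\ulM\dual$ by hypothesis, so $\llangle\mathcal{D}(\uldM)\rrangle=\llangle\ulM\dual\rrangle=\llangle\ulM\rrangle$ because $\llangle X\rrangle=\llangle X\dual\rrangle$ for any object $X$ (Definition~\ref{DefSubcategory}). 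By Theorem~\ref{theorem:theorem-deligne-211} and Lemma~\ref{cor:defnmu} it now suffices to produce a tensor isomorphism between the two neutral fibre functors $\Hodge^1\circ\mathcal{D}$ and $\Hodge_1$ on $\llangle\uldM\rrangle$: any such isomorphism identifies $\Gamma_\uldM=\ul\Aut^\otimes(\Hodge_1|_{\llangle\uldM\rrangle})$ with $\ul\Aut^\otimes(\Hodge^1|_{\llangle\ulM\rrangle})=\Gamma_\ulM$.

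To compare the fibre functors, recall from Proposition~\ref{PropDualizingUnif} that the perfect pairing $\Lambda(\uldN)\times\Lambda(\ulM(\uldN))\to\Omega^1_{A/\BF_q}$ identifies $\Lambda(\ulM(\uldN))$ with $\Hom_A(\Lambda(\uldN),\Omega^1_{A/\BF_q})$, naturally in $\uldN$. Tensoring with $Q$, using that $\Lambda(\uldN)$ is finite projective over $A$ and that $\Hodge^1$ commutes with duality, gives a natural isomorphism
\[
\Hodge^1\bigl(\mathcal{D}(\uldN)\bigr)\;=\;\Hodge^1\bigl(\ulM(\uldN)\bigr)\dual\;\cong\;\Hom_Q\bigl(\Hodge_1(\uldN),\Omega^1_{Q/\BF_q}\bigr)\dual\;\cong\;\Hodge_1(\uldN)\otimes_Q\bigl(\Omega^1_{Q/\BF_q}\bigr)\dual.
\]
Now $\Omega^1_{Q/\BF_q}$ is a one-dimensional $Q$-vector space (Lemma~\ref{LemmaTSep}), so a choice of basis $\omega_0\in\Omega^1_{Q/\BF_q}$ — equivalently, a choice of isomorphism $\ulM(\dUOne(0))\isoto\UOne(0)$ in $\AMUMotCatIsog$, which exists because $\ulM(\dUOne(0))$ is a rank-one effective $A$-motive of virtual dimension zero and hence isogenous to $\UOne(0)$ by the argument of Example~\ref{ExampleCHMotive} — turns the above into a natural isomorphism $\eta_{\omega_0}\colon\Hodge_1\isoto\Hodge^1\circ\mathcal{D}$. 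That $\eta_{\omega_0}$ respects tensor structures follows from the multiplicativity $\Lambda(\uldN_1\otimes\uldN_2)\cong\Lambda(\uldN_1)\otimes_A\Lambda(\uldN_2)$ of Proposition~\ref{PropUnifDualAMotive}, which makes the pairings of Proposition~\ref{PropDualizingUnif} compatible with $\otimes$; over $Q$ no residual twist survives since $\Omega^1_{Q/\BF_q}$ and all its tensor powers are one-dimensional. Hence $\eta_{\omega_0}$ induces an isomorphism $\Gamma_\uldM\isoto\Gamma_\ulM$. Finally, replacing $\omega_0$ by $c\,\omega_0$ with $c\in Q^\times$ replaces $\eta_{\omega_0}$ by the \emph{uniform} scalar multiple $c^{-1}\eta_{\omega_0}$, and conjugation by a scalar that is the same on every object of $\llangle\uldM\rrangle$ is the identity on $\ul\Aut^\otimes$; therefore the resulting isomorphism $\Gamma_\uldM\isoto\Gamma_\ulM$ does not depend on $\omega_0$, that is, it is canonical.

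The one delicate point is the bookkeeping with the module of Kähler differentials: the functor $\ulM(\,\cdot\,)$ involves the invertible — but not necessarily free — $A_\BC$-module $\Omega^1_{A_\BC/\BC}$, so it becomes a strict tensor-anti-equivalence only after identifying $\ulM(\dUOne(0))$ with the unit object in the isogeny category. Once this is done, and one works with $Q$-coefficients where $\Omega^1_{Q/\BF_q}$ is merely a line, the tensor compatibilities and the independence of the chosen trivialization are routine, and the argument above goes through.
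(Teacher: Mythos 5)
Your proof is correct and follows essentially the same route as the paper, which likewise deduces the statement from the tensor-anti-equivalence $\uldN\mapsto\ulM(\uldN)$ of Propositions~\ref{PropDualizing}, \ref{PropDualMixed}, \ref{PropDualizingUnif} and the compatibility $\Hodge_1(\uldM)\cong\Hodge_1\bigl(\ulM(\uldM)\bigr)\otimes_A\Omega^1_{A/\BF_q}$ of the fibre functors. You merely spell out what the paper leaves implicit — passing to the covariant functor via internal duality, trivializing the line $\Omega^1_{Q/\BF_q}$, and checking that a change of trivialization only conjugates by a central scalar and hence does not affect the resulting isomorphism of groups — all of which is handled correctly.
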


\begin{proof}
This follows from the anti-equivalence of the categories $\dualAMUMotCatIsog\longleftrightarrow\AMUMotCatIsog$ and the compatibility of the fiber functors $\Hodge_1(\uldM)=\Hodge_1\bigl(\ulM(\uldM)\bigr)\otimes_A\Omega^1_{A/\BF_q}$ from Proposition~\ref{PropDualizingUnif}.
\end{proof}

\begin{remark}\label{RemPapanikolasGaloisGp}
Let $\uldM$ be a uniformizable dual $A$-motive. By Remark~\ref{RemPapanikolasCateg} the Tannakian subcategories $\llangle\uldM\rrangle$ of $\dualAUMotCatIsog$, respectively of Papanikolas's category $\CT$, generated by $\uldM$ are canonically equivalent. Therefore, our motivic Galois group $\Gamma_\uldM$ is canonically isomorphic to the one defined by Papanikolas~\cite[\S\,3.5.2]{Papanikolas}.
\end{remark}

\subsection{The associated Hodge-Pink structure}\label{SectDualAMotHPStr}

We keep the notation introduced at the beginning of Section \ref{SectAMotHPStr}, where we associated a mixed Hodge-Pink structure $\ulHodge^1(\ulM)$ with a uniformizable mixed effective $A$-motive $\ulM$.

\begin{proposition}\label{Prop4.23}
\label{PropMaphdM}
Let $\uldM$ be a uniformizable dual $A$-motive over $\BC$.
\begin{enumerate}
\item \label{PropMaphdMA}
Then $\Lambda(\uldM)$ equals $\bigl\{\,\dm \in \dM\otimes_{A_\BC}\CO\bigl(\dotFC_\BC\setminus\bigcup_{i\in\BN_{>0}}\Var(\ssigma^{i\ast}J)\bigr):\es \sdtau_\dM(\sdsigma^\ast \dm )=\dm \,\bigr\}$ and the isomorphisms $h_\uldM$ and $\sdsigma^*h_\uldM$ extend to isomorphisms of locally free sheaves
\begin{eqnarray*}
h_\uldM\colon\Lambda(\uldM)\otimes_A\CO_{\dotFC_\BC\setminus\bigcup_{i\in\BN_{>0}}\Var(\ssigma^{i\ast}J)} & \isoto & \dM\otimes_{A_\BC} \CO_{\dotFC_\BC\setminus\bigcup_{i\in\BN_{>0}}\Var(\ssigma^{i\ast}J)}\,,\\[2mm]
\sdsigma^*h_\uldM\colon\Lambda(\uldM)\otimes_A\CO_{\dotFC_\BC\setminus\bigcup_{i\in\BN_0}\Var(\ssigma^{i\ast}J)} & \isoto & \sdsigma^*\dM\otimes_{A_\BC} \CO_{\dotFC_\BC\setminus\bigcup_{i\in\BN_0}\Var(\ssigma^{i\ast}J)}\,,
\end{eqnarray*}
satisfying $(\sdtau_\dM\otimes\id)\circ\sdsigma^\ast h_\uldM=h_\uldM\circ(\id_{\Lambda(\uldM)}\otimes\id)$.
\item \label{PropMaphdMB}
If moreover $\uldM$ is effective, then the isomorphism $(h_\uldM)^{-1}$ extends to an injective homomorphism
\[
h_\uldM^{-1}\colon\dM\otimes_{A_\BC} \CO_{\dotFC_\BC} \es \longto \es \Lambda(\uldM)\otimes_A\CO_{\dotFC_\BC}\,,
\]
with $h_\uldM^{-1}\circ(\sdtau_\dM\otimes\id) =(\id_{\Lambda(\uldM)}\otimes\id)\circ\sdsigma^\ast h_\uldM^{-1}$ and $\coker\sdsigma^\ast h_\uldM^{-1}\otimes\BC\dbl z-\zeta\dbr=\dM/\sdtau_\dM(\sdsigma^*\dM)$. 
\end{enumerate} 
\end{proposition}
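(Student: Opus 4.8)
The plan is to deduce the proposition from its analog for $A$-motives, Proposition~\ref{PropLambdaConvRadius}, by transporting everything through the exact tensor-anti-equivalences $\uldM\mapsto\ulM(\uldM)$ and $\ulM\mapsto\uldM(\ulM)$ of Propositions~\ref{PropDualizing} and \ref{PropDualizingUnif}. Set $\ulM=(M,\tau_M):=\ulM(\uldM)$; by Proposition~\ref{PropDualizingUnif} it is a uniformizable $A$-motive with $h_\uldM=(\ssigma^\ast h_\ulM\dual)^{-1}$ under the identifications $\dM=\Hom_{A_\BC}(\ssigma^\ast M,\Omega^1_{A_\BC/\BC})$ and $\Lambda(\uldM)\cong\Hom_A(\Lambda(\ulM),\Omega^1_{A/\BF_q})$, and by Proposition~\ref{PropDualizing} it is effective precisely when $\uldM$ is. Applying $\sdsigma^\ast$ and using $\sdsigma^\ast\ssigma^\ast=\id$ together with $\sdsigma^\ast\Omega^1_{A_\BC/\BC}=\Omega^1_{A_\BC/\BC}$ (Remark~\ref{RemImportant}) one obtains in addition $\sdsigma^\ast h_\uldM=(h_\ulM\dual)^{-1}$, $h_\uldM^{-1}=\ssigma^\ast h_\ulM\dual$ and $\sdsigma^\ast h_\uldM^{-1}=h_\ulM\dual$. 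These four identities are the bridge between the two propositions, and the $\sdtau$-equivariances claimed for $h_\uldM$ and $\sdsigma^\ast h_\uldM$ are simply the $\Omega^1_{A_\BC/\BC}$-duals of the $\tau$-equivariance of $h_\ulM$ recorded in Proposition~\ref{PropLambdaConvRadius}.

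For part~\ref{PropMaphdMA} I would invoke Proposition~\ref{PropLambdaConvRadius}\ref{PropMaphMA}: $h_\ulM$ extends to an isomorphism of locally free sheaves over $\dotFC_\BC\setminus\bigcup_{i\in\BN_0}\Var(\ssigma^{i\ast}J)$ and $\ssigma^\ast h_\ulM$ over $\dotFC_\BC\setminus\bigcup_{i\in\BN_{>0}}\Var(\ssigma^{i\ast}J)$. Applying the contravariant functor $\Hom_{A_\BC}(\,\cdot\,,\Omega^1_{A_\BC/\BC}\otimes_{A_\BC}\CO)$ over the respective rigid analytic spaces and inverting turns these into the asserted extensions of $h_\uldM=(\ssigma^\ast h_\ulM\dual)^{-1}$ over $\dotFC_\BC\setminus\bigcup_{i\in\BN_{>0}}\Var(\ssigma^{i\ast}J)$ and of $\sdsigma^\ast h_\uldM=(h_\ulM\dual)^{-1}$ over $\dotFC_\BC\setminus\bigcup_{i\in\BN_0}\Var(\ssigma^{i\ast}J)$; the two punctured loci exchange roles precisely because $\sdsigma^\ast=(\ssigma^\ast)^{-1}$ carries the divisor $\Var(\ssigma^{(i+1)\ast}J)$ to $\Var(\ssigma^{i\ast}J)$. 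Finally, the points $\Var(\ssigma^{i\ast}J)$ all lie in $\Disc$ (they are the points $z=\zeta^{q^i}$ and $|\zeta|<1$), so $\FC_\BC\setminus\Disc$ is contained in $\dotFC_\BC\setminus\bigcup_{i\in\BN_{>0}}\Var(\ssigma^{i\ast}J)$; since the $\sdsigma^\ast$-invariants of $\CO\bigl(\dotFC_\BC\setminus\bigcup_{i\in\BN_{>0}}\Var(\ssigma^{i\ast}J)\bigr)$ are $A$ — sandwiched between $A$ and $\CO(\FC_\BC\setminus\Disc)^{\sdsigma^\ast}=A$, exactly as in the proof of Proposition~\ref{PropLambdaConvRadius}\ref{PropMaphMA} — the $\sdtau$-invariant sections of $\dM$ over this larger locus coincide with $\Lambda(\uldM)$, giving the description of $\Lambda(\uldM)$.

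For part~\ref{PropMaphdMB}, $\ulM$ is now effective, so Proposition~\ref{PropLambdaConvRadius}\ref{PropMaphMB} gives that $h_\ulM$ extends to an injective sheaf homomorphism over $\dotFC_\BC$ with torsion cokernel and $\coker h_\ulM\otimes\BC\dbl z-\zeta\dbr=M/\tau_M(\ssigma^\ast M)$; applying the flat functor $\ssigma^\ast$, the map $\ssigma^\ast h_\ulM$ stays injective over $\dotFC_\BC$ with torsion cokernel, and by part~\ref{PropMaphdMA} it is an isomorphism at $\Var(J)$. Dualizing, $h_\uldM^{-1}=\ssigma^\ast h_\ulM\dual$ extends to an injective homomorphism $\dM\otimes_{A_\BC}\CO_{\dotFC_\BC}\into\Lambda(\uldM)\otimes_A\CO_{\dotFC_\BC}$ — injectivity because the $\Hom$ of a torsion sheaf into $\Omega^1_{A_\BC/\BC}\otimes\CO$ vanishes — and this extension is an isomorphism at $\Var(J)$. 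Since $\sdtau_\dM\colon\sdsigma^\ast\dM\otimes_{A_\BC}\CO_{\dotFC_\BC}\into\dM\otimes_{A_\BC}\CO_{\dotFC_\BC}$ is injective (its kernel is torsion, hence zero) and the compatibility $h_\uldM^{-1}\circ(\sdtau_\dM\otimes\id)=(\id\otimes\id)\circ\sdsigma^\ast h_\uldM^{-1}$ exhibits $\sdsigma^\ast h_\uldM^{-1}$ as $h_\uldM^{-1}$ precomposed with $\sdtau_\dM$, one obtains the short exact sequence of $\CO_{\dotFC_\BC}$-modules
\[
0\longto\bigl(\dM/\sdtau_\dM(\sdsigma^\ast\dM)\bigr)\otimes_{A_\BC}\CO_{\dotFC_\BC}\longto\coker(\sdsigma^\ast h_\uldM^{-1})\longto\coker(h_\uldM^{-1})\longto0\,.
\]
The left-hand term is supported at $\Var(J)$ (for effective $\uldM$, $\sdtau_\dM$ is an isomorphism off $\Var(J)$) while $\coker(h_\uldM^{-1})$ is supported away from $\Var(J)$; tensoring with $\BC\dbl z-\zeta\dbr$ therefore kills the right-hand term and leaves $\coker(\sdsigma^\ast h_\uldM^{-1})\otimes\BC\dbl z-\zeta\dbr=\dM/\sdtau_\dM(\sdsigma^\ast\dM)$.

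The argument is essentially formal once Propositions~\ref{PropDualizing}, \ref{PropDualizingUnif} and \ref{PropLambdaConvRadius} are available; the two points that require care are the index bookkeeping in part~\ref{PropMaphdMA} (checking that $\sdsigma^\ast$ shifts the exceptional divisors by one, so that the two punctured loci genuinely swap) and, in part~\ref{PropMaphdMB}, the verification that the two cokernels in the displayed sequence have disjoint supports, which is what licenses the final localization at $\Var(J)$. As an alternative one could bypass the anti-equivalence entirely and reprove both parts directly, imitating the proof of Proposition~\ref{PropLambdaConvRadius} with $\sdsigma^\ast$ in place of $\ssigma^\ast$: Lemma~\ref{LemmaDualUniformizable}\ref{LemmaDualUniformizableC} supplies the surjectivity of $\dm\mapsto\sdtau_\dM(\sdsigma^\ast\dm)-\dm$ needed to describe $\Lambda(\uldM)$, and the same snake-lemma-plus-affinoid-support argument then goes through verbatim.
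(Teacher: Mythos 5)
Your argument is correct and follows essentially the same route as the paper's own proof: pass to $\ulM=\ulM(\uldM)$, use $h_\uldM=(\ssigma^\ast h_\ulM\dual)^{-1}$ and $\sdsigma^\ast h_\uldM=(h_\ulM\dual)^{-1}$ from Proposition~\ref{PropDualizingUnif}, and deduce both parts by dualizing Proposition~\ref{PropLambdaConvRadius}. The paper's version is much terser (it leaves the invariant computation for $\Lambda(\uldM)$ and the cokernel bookkeeping in part~(b) implicit), and your closing alternative of redoing the argument directly with $\sdsigma^\ast$ in place of $\ssigma^\ast$ is exactly the option the paper mentions and declines.
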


\begin{proof}
It would be possible to adapt the proof of Proposition~\ref{PropMaphM} to the dual setting.

Instead we use the associated $A$-motive $\ulM=\ulM(\uldM)$ and recall from Proposition~\ref{PropDualizingUnif} that $h_\uldM=(\ssigma^*h_\ulM\dual)^{-1}$. We deduce from Proposition~\ref{PropMaphM} that $h_\uldM$ is an isomorphism outside the discrete set $\bigcup_{i\in\BN_{>0}}\Var(\ssigma^{i\ast}J)$ and that $\sdsigma^*h_\uldM=(h_\ulM\dual)^{-1}$ is an isomorphism outside $\bigcup_{i\in\BN_0}\Var(\ssigma^{i\ast}J)$. By dualizing the equation $h_\ulM\circ(\id_{\Lambda(\ulM)}\otimes\id)=(\tau_M\otimes\id)\circ\ssigma^*h_\ulM$ and observing $\sdtau_\dM=(\stau_M)\dual$ we obtain $h_\uldM\circ(\id_{\Lambda(\uldM)}\otimes\id)=(\sdtau_\dM\otimes\id)\circ \sdsigma^\ast h_\uldM$. The given description of $\Lambda(\uldM)$ follows from that.

Moreover, if $\uldM$ is effective, then also $\ulM$ is effective. So \ref{PropMaphdMB} follows from Proposition~\ref{PropMaphM}\ref{PropMaphMB}. 
\end{proof}

\begin{remark}
Note that if $\uldM$ is effective then it is in general not true that $\Lambda(\uldM)$ is equal to the $A$-module $\bigl\{\,\dm \in \dM\otimes_{A_\BC}\CO(\dotFC_\BC):\es \sdtau_\dM(\sdsigma^\ast \dm )=\dm \,\bigr\}$. Namely, this is true if and only if $\sdtau_\dM(\sdsigma^*\dM)\supset\dM$, and hence equivalent to $\sdtau_\dM(\sdsigma^*\dM)=\dM$ by the effectivity of $\uldM$.
\end{remark}

\begin{corollary}\label{CorLambdaDualConvRadius}
In the situation of Lemma~\ref{LemmaDualUniformizableBF_q[t]} let $\check\Psi\in\GL_r(\BC\langle t\rangle)$ be a rigid analytic trivialization of $\check\Phi$. Then the entries of $\check\Psi$ and $\check\Psi^{-1}$ converge for all $t\in\BC$ with $|t|<|\theta|^{1/q}$. If $\uldM$ is effective, then the entries of $\check\Psi$ even converge for all $t\in\BC$.
\end{corollary}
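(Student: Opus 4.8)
The plan is to imitate the one-line proof of Corollary~\ref{CorLambdaConvRadius}, feeding in Proposition~\ref{PropMaphdM} and the explicit description of $h_\uldM$ from Lemma~\ref{LemmaDualUniformizableBF_q[t]}. First I would record that in the situation of Lemma~\ref{LemmaDualUniformizableBF_q[t]} one has $A=\BF_q[t]$, $A_\BC=\BC[t]$ and $J=(t-\theta)$, so that $\ssigma^{i\ast}J=(t-\theta^{q^i})$ and $\Var(\ssigma^{i\ast}J)$ is the single $\BC$-point $t=\theta^{q^i}$ of $\dotFC_\BC$. Since $|\theta|=|\zeta|^{-1}>1$, for every $i\in\BN_{>0}$ this point satisfies $|t|=|\theta|^{q^i}\ge|\theta|^q$, so the open disc $\{t\in\BC:|t|<|\theta|^q\}$ — which contains the smaller disc $\{|t|<|\theta|^{1/q}\}$ occurring in the statement — is an admissible open subspace of $\dotFC_\BC\setminus\bigcup_{i\in\BN_{>0}}\Var(\ssigma^{i\ast}J)$, and over it both $\dM\otimes_{A_\BC}\CO$ and $\Lambda(\uldM)\otimes_A\CO$ are free with the bases $\check\CB$ and $\check\CC$.

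Next I would use Lemma~\ref{LemmaDualUniformizableBF_q[t]}: with respect to $\check\CB$ and the $\BF_q[t]$-basis $\check\CC$ of $\Lambda(\uldM)$ whose coordinate vectors are the columns of $\check\Psi^{-1}$, the homomorphism $h_\uldM$ is represented by $\check\Psi^{-1}$ and hence $h_\uldM^{-1}$ by $\check\Psi$; replacing $\check\Psi$ by another rigid analytic trivialization only multiplies it on the left by a matrix in $\GL_r(\BF_q[t])$, which together with its inverse has polynomial entries, so the claim does not depend on the choice. Now Proposition~\ref{PropMaphdM}\ref{PropMaphdMA} says that $h_\uldM$ extends to an \emph{isomorphism} of locally free sheaves over $\dotFC_\BC\setminus\bigcup_{i\in\BN_{>0}}\Var(\ssigma^{i\ast}J)$; restricting this isomorphism to the disc above and reading off the representing matrices in the bases $\check\CB,\check\CC$ shows that $\check\Psi^{-1}$ and its inverse $\check\Psi$ both have all entries converging for $|t|<|\theta|^q$, in particular for $|t|<|\theta|^{1/q}$. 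For the effective case I would instead cite Proposition~\ref{PropMaphdM}\ref{PropMaphdMB}, where $h_\uldM^{-1}$ extends to a homomorphism $\dM\otimes_{A_\BC}\CO_{\dotFC_\BC}\to\Lambda(\uldM)\otimes_A\CO_{\dotFC_\BC}$; its representing matrix $\check\Psi$ therefore has entries in $\CO(\dotFC_\BC)$, i.e.\ converging for all $t\in\BC$.

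I do not expect a genuine obstacle: all of the analytic content already sits in Proposition~\ref{PropMaphdM}, and the corollary is just its translation into matrix language. The single point deserving a moment's care is the standard dictionary between ``the sheaf isomorphism $h_\uldM$ extends over the domain $\dotFC_\BC\setminus\bigcup_i\Var(\ssigma^{i\ast}J)$'' and ``the matrices $\check\Psi^{\pm1}$ representing $h_\uldM^{\mp1}$ in the fixed polynomial and constant bases have entries in the ring of analytic functions on that domain'', which is the quasi-Stein / rigid-GAGA correspondence used throughout Section~\ref{SectMixedAMotives}. Alternatively one can argue through Proposition~\ref{PropDualizingUnif}: the associated $A$-motive $\ulM(\uldM)$ is uniformizable, $\Phi=\check\Phi^T$ represents its $\tau$, and $\Psi=(\sdsigma^\ast\check\Psi)^{-1}$ is a rigid analytic trivialization of $\Phi$; Corollary~\ref{CorLambdaConvRadius} then gives convergence of $\Psi$ and $\Psi^{-1}$ on $\{|t|<|\theta|\}$ (and of $\Psi^{-1}$ on all of $\BC$ when $\uldM$ is effective), and since $\check\Psi=\ssigma^\ast(\Psi^{-1})$ and $\check\Psi^{-1}=\ssigma^\ast(\Psi)$, while applying $\ssigma^\ast$ (raising every coefficient to the $q$-th power) sends a power series of radius of convergence $R$ to one of radius $R^q$, the same conclusions follow.
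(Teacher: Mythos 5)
Your proposal is correct and is essentially the paper's own proof, which consists precisely of the observation that $h_{\uldM}^{-1}$ is represented by $\check\Psi$ so that Proposition~\ref{PropMaphdM} (together with $J=(t-\theta)$) gives the convergence; your alternative route through Corollary~\ref{CorLambdaConvRadius} and $\check\Psi=\ssigma^*(\Psi^{-1})$ is a valid repackaging of the same facts. Note that either way you actually obtain convergence on the larger disc $\{|t|<|\theta|^{q}\}$ (the removed points $t=\theta^{q^i}$, $i\ge1$, all have $|t|\ge|\theta|^q$), which contains the disc $\{|t|<|\theta|^{1/q}\}$ asserted in the corollary, so the stated claim follows a fortiori.
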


\begin{proof}
In view of $J=(t-\theta)$ this follows from the fact that $(h_\uldM)^{-1}$ is represented by the matrix~$\check\Psi$.
\end{proof}

In order to encode the relative position of $\sdsigma^*\dM$ and $\dM$ under $\sdtau_\dM$ at the point $\Var(J)$, we make the following

\begin{definition}\label{DualDef2.6}
Let $\uldM$ be a uniformizable mixed dual $A$-motive with weight filtration $W_{\mu\,}\uldM$.
We set $\ulHodge_1(\uldM):=(H,W_\bullet H,\Fq)$ with
\begin{itemize}
\item $H\;:=\;\Hodge_1(\uldM)\;:=\;\Lambda(\uldM)\otimes_A Q$,
\item $W_\mu H\;:=\;\Hodge_1(W_{\mu\,}\uldM)\;=\;\Lambda(W_{\mu\,}\uldM)\otimes_A Q\;\subset\;H$ for each $\mu\in\BQ$,
\item $\Fq\,:=\,(\sdsigma^\ast h_\uldM\otimes\id_{\BC\dpl z-\zeta\dpr})^{-1}\bigl(\sdsigma^\ast \dM\otimes_{A_{\BC}}\BC\dbl z-\zeta\dbr\bigr)$.
\end{itemize}
We call $\ulHodge_1(\uldM)$ the \emph{$Q$-Hodge-Pink structure associated with $\uldM$}. (This name is justified by Theorem~\ref{ThmDualHodgeConjecture} below.) We also set $\ulHodge^1(\uldM):=\ulHodge_1(\uldM)\dual$ in $\QHodgeCat$. The functor $\ulHodge_1$ is covariant and $\ulHodge^1$ is contravariant in $\uldM$.
\end{definition}

\begin{remark}\label{DualRem2.7} 
(a) If $\uldM=\uldM(\ulE)$ is the dual $A$-motive associated with a Drinfeld $A$-module $\ulE$ then $\ulHodge_1(\uldM)\cong\ulHodge_1(\ulE)$. We will prove this more generally for a uniformizable pure (or mixed) $A$-finite Anderson $A$-module $\ulE$ in Theorem~\ref{ThmHPofEandDualM} below.

\medskip\noindent
(b) If $\uldM$ is effective, that is $\sdtau_\dM(\sdsigma^*\dM)\subset \dM$, then $\Fq\subset\Fp:=\Lambda(\uldM)\otimes_A\BC\dbl z-\zeta\dbr$. More generally, if $J^m\cdot\sdtau_\dM(\sdsigma^*\dM)\subset\dM\subset J^n\cdot\sdtau_\dM(\sdsigma^*\dM)$ for integers $n\le m$, then $(z-\zeta)^{-n}\Fp\subset\Fq\subset(z-\zeta)^{-m}\Fp$. Indeed, from Proposition~\ref{PropMaphdM} we obtain a commutative diagram of isomorphisms
\begin{equation}\label{EqhdM}
\xymatrix @R+1pc @C+7pc {
\Lambda(\uldM)\otimes_A\BC\dpl z-\zeta\dpr \ar[r]^{\sdsigma^\ast h_\uldM\otimes\id_{\BC\dpl z-\zeta\dpr}}_\cong \ar[d]_{\id_{\Lambda(\uldM)}\otimes\id_{\BC\dpl z-\zeta\dpr}}^\cong & \sdsigma^* \dM\otimes_{A_{\BC}}\BC\dpl z-\zeta\dpr \ar[d]_\cong^{\sdtau_\dM\otimes\id_{\BC\dpl z-\zeta\dpr}}\\
\Lambda(\uldM)\otimes_A\BC\dpl z-\zeta\dpr \ar[r]^{h_\uldM\otimes\id_{\BC\dpl z-\zeta\dpr}}_\cong &  \dM\otimes_{A_{\BC}}\BC\dpl z-\zeta\dpr\,.
}
\end{equation}
Here $\sdsigma^*h_\uldM\otimes\id_{\BC\dpl z-\zeta\dpr}$ is an isomorphism because the three others are. This implies 
\begin{eqnarray*}
\Fq & = & (h_\uldM\otimes\id_{\BC\dpl z-\zeta\dpr})^{-1}\circ(\sdtau_\dM\otimes\id_{\BC\dpl z-\zeta\dpr})(\sdsigma^* \dM\otimes_{A_{\BC}}\BC\dbl z-\zeta\dbr)\qquad\text{and}\\[2mm]
\Fp & = & (h_\uldM\otimes\id_{\BC\dpl z-\zeta\dpr})^{-1}(\dM\otimes_{A_{\BC}}\BC\dbl z-\zeta\dbr)\,.
\end{eqnarray*}

\medskip\noindent
(c) In terms of Definition~\ref{Def1.5} the virtual dimension of $\uldM$ is $\dim\uldM=-\deg_\Fq\ulHodge_1(\uldM)=\deg_\Fq\ulHodge^1(\uldM)$.
\end{remark}

\begin{theorem}\label{ThmHofMandDualM}
Let $\uldM$ be a uniformizable mixed dual $A$-motive and let $\ulM=\ulM(\uldM)$ be its associated uniformizable mixed $A$-motive from Proposition~\ref{PropDualizing}. Consider the $Q$-Hodge-Pink structure $\ul\Omega=(H,W_\bullet H,\Fq)$ which is pure of weight $0$ and given by $H=\Omega^1_{Q/\BF_q}=Q\,dz$ and $\Fq=\BC\dbl z-\zeta\dbr dz$. Then there are canonical isomorphisms in $\QHodgeCat$
\begin{eqnarray*}
\ulHodge^1(\ulM)\es=&\CHom(\ulHodge_1(\uldM),\ul\Omega) & =\es\ulHodge^1(\uldM)\otimes\ul\Omega \qquad\text{and}\\[2mm] 
\ulHodge_1(\uldM)\es=&\CHom(\ulHodge^1(\ulM),\ul\Omega) & =\es\ulHodge_1(\ulM)\otimes\ul\Omega\,.
\end{eqnarray*}
\end{theorem}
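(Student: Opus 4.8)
The plan is to trace through the definitions and use the compatibility of the dualizing functor $\uldM\mapsto\ulM(\uldM)$ with all the structures involved. Recall from Proposition~\ref{PropDualizing} that $\ulM=\ulM(\uldM)=\bigl(\Hom_{A_\BC}(\sdsigma^*\dM,\Omega^1_{A_\BC/\BC}),\sdtau_\dM\dual\bigr)$, so that $\sigma^*M=\Hom_{A_\BC}(\dM,\Omega^1_{A_\BC/\BC})$. Passing to $\tau$-invariants, Proposition~\ref{PropDualizingUnif} gives the perfect pairing $\Lambda(\uldM)\times\Lambda(\ulM)\to\Omega^1_{A/\BF_q}$, $(\check\lambda,\lambda)\mapsto h_\uldM(\check\lambda)\bigl(\sigma^*h_\ulM(\lambda)\bigr)$, which after tensoring with $Q$ identifies $\Hodge_1(\uldM)$ with $\Hom_Q\bigl(\Hodge^1(\ulM),\Omega^1_{Q/\BF_q}\bigr)$, i.e. with $\CHom\bigl(H,\Omega^1_{Q/\BF_q}\bigr)$ on the level of $Q$-vector spaces. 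The first step is therefore to verify that this identification is compatible with weight filtrations: by Proposition~\ref{PropDualMixed}, equation~\eqref{EqWeightOfDualM}, the weight filtration of $\ulM$ is exactly the annihilator filtration of the weight filtration of $\uldM$ (with signs reversed), and $\ul\Omega$ is pure of weight $0$, so the weight filtration on $\CHom(\ulHodge_1(\uldM),\ul\Omega)$ — which by Definition~\ref{Def1.3}\ref{Def1.3_B} is $W_\mu\wt H=\{h:h(W_{\mu_1}H_1)\subset W_{\mu+\mu_1}\ul\Omega\,\forall\mu_1\}$ — matches $W_\bullet\Hodge^1(\ulM)$ under the pairing. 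This is essentially a restatement of \eqref{EqWeightOfDualM}.

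Next I would check the Hodge-Pink lattices. By Definition~\ref{Def2.6}, $\Fq^{\ulHodge^1(\ulM)}=(h_\ulM\otimes\id)^{-1}\bigl(M\otimes_{A_\BC}\BC\dbl z-\zeta\dbr\bigr)$, while by Definition~\ref{DualDef2.6}, $\Fq^{\ulHodge_1(\uldM)}=(\sdsigma^*h_\uldM\otimes\id)^{-1}\bigl(\sdsigma^*\dM\otimes_{A_\BC}\BC\dbl z-\zeta\dbr\bigr)$. Using $h_\uldM=(\sigma^*h_\ulM\dual)^{-1}$ from Proposition~\ref{PropDualizingUnif} and the identifications $\dM=\Hom_{A_\BC}(\sigma^*M,\Omega^1_{A_\BC/\BC})$, $\sdsigma^*\dM=\Hom_{A_\BC}(M,\Omega^1_{A_\BC/\BC})$, one computes that $\sdsigma^*h_\uldM=(h_\ulM\dual)^{-1}$, hence $\Fq^{\ulHodge_1(\uldM)}$ is carried, under the $\Omega^1$-pairing, precisely to $\Hom_{\BC\dbl z-\zeta\dbr}\bigl(\Fq^{\ulHodge^1(\ulM)},\BC\dbl z-\zeta\dbr dz\bigr)$, which is the Hodge-Pink lattice of $\CHom(\ulHodge^1(\ulM),\ul\Omega)$ by Definition~\ref{Def1.3}\ref{Def1.3_B} applied with the $\BC\dbl z-\zeta\dbr$-module $\Omega^1_{\BC\dbl z-\zeta\dbr}=\BC\dbl z-\zeta\dbr dz$ (and noting $\Omega^1_{A_\BC/\BC}=\Omega^1_{A/\BF_q}\otimes_{\BF_q}\BC$, so its completion at $\Var(J)$ is $\BC\dbl z-\zeta\dbr dz$). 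Taking duals of the resulting isomorphism $\ulHodge^1(\ulM)\cong\CHom(\ulHodge_1(\uldM),\ul\Omega)=\ulHodge^1(\uldM)\otimes\ul\Omega$ — using that $\ul\Omega$ is invertible with $\ul\Omega\otimes\ul\Omega\dual\cong\UOne$ since it is rank-one pure of weight $0$ with $\Fq=\Fp$ — gives the companion isomorphism $\ulHodge_1(\uldM)\cong\CHom(\ulHodge^1(\ulM),\ul\Omega)=\ulHodge_1(\ulM)\otimes\ul\Omega$.

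One small point to settle is the identification $\ul\Omega\cong\CHom(\UOne,\ul\Omega)$ and the rewriting $\CHom(\ulHodge_1(\uldM),\ul\Omega)\cong\ulHodge_1(\uldM)\dual\otimes\ul\Omega=\ulHodge^1(\uldM)\otimes\ul\Omega$, which is the rigidity identity $\CHom(X,Y)=Y\otimes X\dual$ from remark~(d) on Tannakian categories, valid in $\QHodgeCat$ by Theorem~\ref{ThmPinkTannaka}; this is formal. The main obstacle is none of these individual identifications but rather the bookkeeping of the four $\sigma$-twists: one must be careful that $\sdsigma^*h_\uldM$ (not $h_\uldM$) is the map appearing in Definition~\ref{DualDef2.6}, that it equals the dual-inverse of $h_\ulM$ (not of $\sigma^*h_\ulM$), and that the lattice $\sdsigma^*\dM\otimes\BC\dbl z-\zeta\dbr$ on the dual side corresponds to $M\otimes\BC\dbl z-\zeta\dbr$ on the $A$-motive side (since $\sdsigma^*\dM=\Hom_{A_\BC}(M,\Omega^1)$), so that the na\"ive-looking matching is in fact the correct one. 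I expect this twist-tracking, together with confirming that the perfect pairing extends to a perfect pairing of $\BC\dbl z-\zeta\dbr$-lattices compatibly with diagrams \eqref{EqhM} and \eqref{EqhdM}, to be the only genuinely delicate part; everything else is a direct unwinding of Definitions~\ref{Def1.3}, \ref{Def2.6}, \ref{DualDef2.6} and Propositions~\ref{PropDualizing}, \ref{PropDualMixed}, \ref{PropDualizingUnif}.
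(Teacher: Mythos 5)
Your proposal is correct and follows essentially the same route as the paper's proof: identify the underlying $Q$-vector spaces via the perfect pairing of Proposition~\ref{PropDualizingUnif}, match the weight filtrations through the annihilator description \eqref{EqWeightOfDualM} of Proposition~\ref{PropDualMixed}, and match the Hodge-Pink lattices via the identity $\sdsigma^*h_\uldM=(h_\ulM\dual)^{-1}$ (equivalently $\sdsigma^*h_\uldM\dual=h_\ulM^{-1}$), which is exactly the twist-tracking the paper carries out. The only cosmetic difference is that you state the lattice comparison in the dual (but equivalent, by reflexivity of $\BC\dbl z-\zeta\dbr$-lattices) direction, and the paper makes the descent of \eqref{EqWeightOfDualM} from the motive level to the $\Lambda$-level explicit with a commutative diagram involving $\sdsigma^*h_\uldM$, a step you compress into ``essentially a restatement.''
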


\begin{proof}
By Proposition~\ref{PropDualizingUnif} there is a canonical identification $\Lambda(\ulM)=\Hom_A(\Lambda(\uldM),\Omega^1_{A/\BF_q})$ which gives rise to $\Hodge^1(\ulM)=\Hom_Q(\Hodge_1(\uldM),\Omega^1_{Q/\BF_q})=\Hodge^1(\uldM)\otimes_Q\;\Omega^1_{Q/\BF_q}$. By Definition~\ref{Def1.3}\ref{Def1.3_B} the weight filtration of $\wt H:=\Hom_Q(\Hodge_1(\uldM),\Omega^1_{Q/\BF_q})$ is given by 
\[
W_{-\mu}\wt H\;=\;\bigl\{\,\lambda\in\wt H\colon\;\lambda\bigl(W_{\mu'}\Hodge_1(\uldM)\bigr)=0\es\text{for all }\mu'<\mu\,\bigr\}\,.
\]
On the other hand the weight filtration on $\Hodge^1(\ulM)$ is given by $W_{-\mu}\Hodge^1(\ulM)=\Lambda(W_{-\mu}\ulM)\otimes_AQ$. From \eqref{EqWeightOfDualM} in Proposition~\ref{PropDualMixed} we know that
\[
W_{-\mu\,}\ulM \; := \; \bigl\{\,m\in M=\Hom_{A_\BC}(\sdsigma^*\dM,\,\Omega^1_{A_\BC/\BC})\colon m(\sdsigma^*W_{\mu'}\dM)=0\es\text{for all }\mu'<\mu\,\bigr\}\,.
\]
Tensoring this with $\CO(\FC_\BC\setminus\Disc)$ over $A_\BC$ it follows from the commutative diagram
\[
\xymatrix @R-1pc @C+1pc {
\Lambda(W_{\mu'}\dM)\otimes_A \CO(\FC_\BC\setminus\Disc) \ar[r]^{\cong} \ar@{^{ (}->}[d] & \sdsigma^*W_{\mu'}\dM\otimes_{A_\BC}\CO(\FC_\BC\setminus\Disc) \ar@{^{ (}->}[d]\\
\Lambda(\dM)\otimes_A \CO(\FC_\BC\setminus\Disc) \ar[r]^{\sdsigma^*h_\uldM}_{\cong} & \sdsigma^*\dM\otimes_{A_\BC}\CO(\FC_\BC\setminus\Disc)
}
\]
that $W_{-\mu}\Hodge^1(\ulM)=\Lambda(W_{-\mu}\ulM)\otimes_AQ=W_{-\mu}\Hom_Q(\Hodge_1(\uldM),\Omega^1_{Q/\BF_q})$ for all $-\mu\in\BQ$.

Finally, since $\check\Fq=(\sdsigma^\ast h_\uldM\otimes\id_{\BC\dpl z-\zeta\dpr})^{-1}\bigl(\sdsigma^\ast \dM\otimes_{A_{\BC}}\BC\dbl z-\zeta\dbr\bigr)$ is the Hodge-Pink lattice of $\ulHodge_1(\uldM)$, the Hodge-Pink lattice $\Hom_{\BC\dbl z-\zeta\dbr}(\check\Fq,\BC\dbl z-\zeta\dbr dz)$ of $\CHom(\ulHodge_1(\uldM),\ul\Omega)$ equals by Definition~\ref{Def1.3}\ref{Def1.3_B} the image in $\Hom_Q(\Hodge_1(\uldM),\Omega^1_{Q/\BF_q})\otimes_Q\BC\dpl z-\zeta\dpr=\Hodge^1(\ulM)\otimes_Q\BC\dpl z-\zeta\dpr dz$ of the map
\begin{eqnarray*}
&\sdsigma^*h_\uldM\dual\otimes\id_{\BC\dpl z-\zeta\dpr dz}\colon & \Hom_{\BC\dbl z-\zeta\dbr}(\sdsigma^\ast \dM\otimes_{A_{\BC}}\BC\dbl z-\zeta\dbr,\BC\dbl z-\zeta\dbr dz)\\[2mm]
& & \qquad\qquad\qquad \into \es\Hom_{\BC\dpl z-\zeta\dpr}\bigl(\Hodge_1(\uldM)\otimes_Q\BC\dpl z-\zeta\dpr,\BC\dpl z-\zeta\dpr dz\bigr)\\[2mm]
& & \qquad\qquad\qquad = \es \Hodge^1(\ulM)\otimes_Q\BC\dpl z-\zeta\dpr dz\,.
\end{eqnarray*}
Since $\sdsigma^*h_\uldM\dual=h_\ulM^{-1}$ and $M=\Hom_{A_\BC}(\sdsigma^*\dM,\,\Omega^1_{A_\BC/\BC})$ by Proposition~\ref{PropDualizingUnif}, we conclude that 
\[
\Hom_{\BC\dbl z-\zeta\dbr}(\check\Fq,\BC\dbl z-\zeta\dbr dz)=(h_\ulM\otimes\id_{\BC\dpl z-\zeta\dpr})^{-1}(M\otimes_{A_\BC}\BC\dbl z-\zeta\dbr)
\]
equals the Hodge-Pink lattice of $\ulHodge^1(\ulM)$ as desired.
\end{proof}

The main theorem of \cite{HartlPink2} also holds for uniformizable mixed dual $A$-motives:

\begin{theorem}\label{ThmDualHodgeConjecture}
Consider a uniformizable mixed dual $A$-motive $\uldM$.
\begin{enumerate}
\item \label{ThmDualHodgeConjectureA}
$\ulHodge_1(\uldM)$ is locally semistable and hence indeed a $Q$-Hodge-Pink structure.
\item \label{ThmDualHodgeConjectureB}
The functor $\ulHodge_1\colon \uldM\to\ulHodge_1(\uldM)$ is a $Q$-linear exact fully faithful tensor functor from the category $\dualAMUMotCatIsog$ to the category $\QHodgeCat$.
\item\label{ThmDualHodgeConjectureC}
The essential image of $\ulHodge_1$ is closed under the formation of subquotients; that is, if $\ulH'\subset\ulHodge_1(\uldM)$ is a $Q$-Hodge-Pink sub-structure, then there exists a uniformizable mixed dual $A$-sub-motive $\uldM'\subset\uldM$ in $\dualAMUMotCatIsog$ with $\ulHodge_1(\uldM')=\ulH'$.
\item\label{ThmDualHodgeConjectureD}
The functor $\ulHodge_1$ defines an exact tensor equivalence between the Tannakian subcategory $\llangle\uldM\rrangle\subset\dualAMUMotCatIsog$ generated by $\uldM$ and the Tannakian subcategory $\llangle\ulHodge_1(\uldM)\rrangle\subset\QHodgeCat$ generated by its $Q$-Hodge-Pink structure $\ulHodge_1(\uldM)$.
\end{enumerate}
\end{theorem}

\begin{proof}
By Theorem~\ref{ThmHofMandDualM} the functor $\ulHodge_1\colon\uldM\mapsto\ulHodge_1(\uldM)$ is naturally isomorphic to the composition of functors $\uldM\mapsto\ulM(\uldM)\mapsto\CHom\bigl(\ulHodge^1\bigl(\ulM(\uldM)\bigr),\ul\Omega\bigr)$, the first of which is an exact tensor-anti-equivalence by Proposition~\ref{PropDualizingUnif}. Thus the theorem follows from Theorems~\ref{ThmPinkTannaka} and \ref{ThmHodgeConjecture}.
\end{proof}

Assertions~\ref{ThmDualHodgeConjectureC} and \ref{ThmDualHodgeConjectureD} are the function field analog of the Hodge Conjecture \cite{Hodge52,GrothendieckHodge,Deligne06}. We will discuss its consequences for the Hodge-Pink group $\Gamma_{\ulHodge_1(\uldM)}$ in Section~\ref{Sect3}.

\begin{example}\label{ExDual2.9}
To continue with Example~\ref{ExDualMixed2.9}, we let $A=\BF_q[t]$, $z=\tfrac{1}{t}$, $\theta:=\charmorph(t)=\frac{1}{\zeta}\in\BC$, and $\dM=A_\BC^{\oplus2}$ with $\sdtau_\dM=\check\Phi:=\left(\begin{array}{cc}t-\theta&0\\b&(t-\theta)^3 \end{array}\right)$. Thus $\Gr^W_{-1} \uldM\cong\left(A_\BC,(t-\theta)\right)$ and $\Gr^W_{-3} \uldM\cong\left(A_\BC,(t-\theta)^3\right)$, and $\uldM$ has weights -3 and -1.

Similarly as in Example~\ref{Ex2.9}, we set $\check{\ell}_\zeta^{\SSC -}:=\prod_{i=1}^\infty(1-\zeta^{q^i}t)=\sigma^*(\ell_\zeta^{\SSC -})\in \CO(\dotFC_\BC)$ and choose an $\eta\in\BC$ with $\eta^{q-1}=-\zeta$. Then 
\begin{eqnarray*}
\Lambda(\Gr^W_{-1}\uldM)&=&\{\lambda\in \CO(\dotFC_\BC):(t-\theta)\sdsigma^\ast(\lambda)\;=\;\lambda\}\es=\es(\eta^q\check{\ell}_\zeta^{\SSC -})^{-1}\cdot\BF_q[t],\\[2mm]
\Lambda(\Gr^W_{-3}\uldM)&=&(\eta^q\check{\ell}_\zeta^{\SSC -})^{-3}\cdot\BF_q[t],\qquad\text{and}\\[2mm]
\Lambda(\uldM)&=&\left(\begin{smallmatrix}(\eta^q\check{\ell}_\zeta^{\SSC -})^{-1}\\ (\eta^q\check{\ell}_\zeta^{\SSC -})^{-4}g\end{smallmatrix}\right)\cdot\BF_q[t]\oplus\left(\begin{smallmatrix}0\\(\eta^q\check{\ell}_\zeta^{\SSC -})^{-3}\end{smallmatrix}\right)\cdot\BF_q[t]
\end{eqnarray*}
for $g\in \CO(\dotFC_\BC)$ with $b\cdot(\eta^q\check{\ell}_\zeta^{\SSC -})^3+\sdsigma^\ast(g)=(t-\theta)\cdot g$. Note that $g=-\sigma^*(f)$ for the $f$ from Example~\ref{Ex2.9}. Putting $\lambda_1:=\left(\begin{smallmatrix}(\eta^q\check{\ell}_\zeta^{\SSC -})^{-1}\\ (\eta^q\check{\ell}_\zeta^{\SSC -})^{-4}g\end{smallmatrix}\right)$ and $\lambda_2:=\left(\begin{smallmatrix}0\\(\eta^q\check{\ell}_\zeta^{\SSC -})^{-3}\end{smallmatrix}\right)$, we get $H(\uldM)=\lambda_1\cdot Q\oplus\lambda_2\cdot Q$ and $W_{-3}H(\uldM)=\lambda_2\cdot Q$. 

Thus $\check\Psi=\left(\begin{array}{cc}(\eta^q\check{\ell}_\zeta^{\SSC -})^{-1}&0\\(\eta^q\check{\ell}_\zeta^{\SSC -})^{-4}g&(\eta^q\check{\ell}_\zeta^{\SSC -})^{-3} \end{array}\right)^{\!\!-1}\hspace{-0.5em}=\left(\begin{array}{cc}\eta^q\check{\ell}_\zeta^{\SSC -}&0\\-g&(\eta^q\check{\ell}_\zeta^{\SSC -})^3 \end{array}\right)\in \CO(\dotFC_\BC)^{2\times 2}$ gives the rigid analytic trivialization of $\check\Phi$, which represents $h_\uldM^{-1}$. According to Lemma~\ref{LemmaDualUniformizableBF_q[t]} we have $\check\Psi=(\sigma^*\Psi)^{-1}$ for the matrix $\Psi$ from Example~\ref{Ex2.9}. Now the Hodge-Pink lattice of $\ulHodge_1(\uldM)$ is described by 
\[
\Fq\;=\;\sdsigma^\ast\check\Psi\cdot\Fp\;=\;\left(\begin{array}{cc}\eta{\ell}_\zeta^{\SSC -}&0\\ -\sdsigma^\ast g&(\eta{\ell}_\zeta^{\SSC -})^{3}\end{array}\right)\cdot\Fp. 
\]
Since ${\ell}_\zeta^{\SSC -}$ has a simple zero at $z=\zeta$, one sees that $\Fp/\Fq$ (which is also isomorphic to $\coker\sdtau_\dM$) is isomorphic to $\BC\dbl z-\zeta\dbr/(z-\zeta)\oplus\BC\dbl z-\zeta\dbr/(z-\zeta)^3$ if $(t-\theta)|\sdsigma^\ast g$ (equivalently, if $(t-\theta)|b$) and isomorphic to $\BC\dbl z-\zeta\dbr/(z-\zeta)^{4}$ if $(t-\theta)\nmid \sdsigma^\ast g$ (equivalently, if $(t-\theta)\nmid b$). So the Hodge-Pink weights of $\ulHodge_1(\uldM)$ are $(-1,-3)$ or $(-4,0)$, and the weight polygon lies above the Hodge polygon with the same endpoint $WP(\uldM)\ge HP(\uldM)$ in accordance with Theorem~\ref{ThmDualHodgeConjecture}\ref{ThmDualHodgeConjectureA} and Remark~\ref{RemPolygons}.
\end{example}

\subsection{Cohomology Realizations}\label{CohdualAMot}

Let $\uldM=(\dM,\sdtau_{\dM})$ be a dual $A$-motive over $\BC$. Similarly as in Section~\ref{CohAMot}, the \emph{Betti cohomology realization} of $\uldM$ is given by
\[
\Koh_{1,\Betti}(\uldM,B):=\Lambda(\uldM)\otimes_A B
\quad\text{and}\quad \Koh_\Betti^1(\uldM,B):=\Hom_A(\Lambda(\uldM),B)
\]
for any $A$-algebra $B$. This is most useful when $\uldM$ is uniformizable, in which case both are locally free $B$-modules of rank equal to $\rk\uldM$ and $\Hodge_1(\uldM)=\Koh_{1,\Betti}(\uldM,Q)$. By Theorem~\ref{TheoremDualAMotTannakian} this realization provides for $B=Q$ an exact faithful neutral fiber functor on $\dualAUMotCatIsog$.

\medskip

Moreover, the \emph{de Rham cohomology realization} of $\uldM$ is defined to be
\[
\Koh_{1,\dR}(\uldM,\BC):=\dM/J\cdot\dM
\quad\text{and}\quad \Koh_{\dR}^1(\uldM,\BC):=\Hom_\BC(\dM/J\cdot\dM,\,\BC).
\]
We define a decreasing filtration of $\Koh_{1,\dR}(\uldM,\BC)$ by $\BC$-subspaces
\[
F^{i}\Koh_{1,\dR}(\uldM,\BC):=\text{image of }\dM \cap J^i\cdot\sdtau_\dM(\sdsigma^\ast\dM)\quad\text{in }\Koh_{1,\dR}(\uldM,\BC)\quad\text{for all }i\in\BZ\,,
\]
which we call the \emph{Hodge-Pink filtration of $\uldM$}. 

If $\uldM$ satisfies $J\cdot\dM\subset\sdtau_\dM(\sdsigma^*\dM)\subset \dM$, for example if $\uldM$ is the dual $A$-motive associated with a Drinfeld $A$-module, then 
\[
F^{-1}\;=\;\Koh_{1,\dR}(\uldM,\BC) \;\supset\; F^0\;=\;\sdtau_\dM(\sdsigma^*\dM)/J\cdot\dM \;\supset\; F^1\;=\;(0).
\]

As noted in Remark~\ref{RemQ-HPTannakian} and Example~\ref{Example1.2}(c), more useful than the Hodge-Pink filtration is actually the Hodge-Pink lattice $\Fq$, and the latter cannot be recovered from the Hodge-Pink filtration in general. We therefore propose to lift the de Rham cohomology to $\BC\dbl z-\zeta\dbr$ and define the \emph{generalized de Rham cohomology realization} of $\ulM$ by
\[
\begin{array}{llll}
\Koh_{1,\dR}(\uldM,\BC\dbl z-\zeta\dbr) & := & \dM\otimes_{A_\BC}\BC\dbl z-\zeta\dbr
 & \quad\text{and}\\[2mm]
\Koh_{1,\dR}\bigl(\uldM,\BC\dpl z-\zeta\dpr\bigr) & := & \dM\otimes_{A_\BC}\BC\dpl z-\zeta\dpr & \quad\text{and}\\[2mm]
\Koh^1_{\dR}(\uldM,\BC\dbl z-\zeta\dbr) & := & \Hom_{A_\BC}(\dM,\,\BC\dbl z-\zeta\dbr)
 & \quad\text{and}\\[2mm]
\Koh^1_{\dR}\bigl(\uldM,\BC\dpl z-\zeta\dpr\bigr) & := & \Hom_{A_\BC}\bigl(\dM,\,\BC\dpl z-\zeta\dpr\bigr)\,.
\end{array}
\]
In particular by tensoring with the morphism $\BC\dbl z-\zeta\dbr\onto\BC,\, z-\zeta\mapsto0$ we get back $\Koh_{1,\dR}(\uldM,\BC)=\Koh_{1,\dR}\bigl(\uldM,\BC\dbl z-\zeta\dbr\bigr)\otimes_{\BC\dbl z-\zeta\dbr}\BC$ and $\Koh^1_\dR(\uldM,\BC)=\Koh^1_\dR\bigl(\uldM,\BC\dbl z-\zeta\dbr\bigr)\otimes_{\BC\dbl z-\zeta\dbr}\BC$.
We define the \emph{Hodge-Pink lattices} of $\uldM$ as the $\BC\dbl z-\zeta\dbr$-submodules
\[
\begin{array}{ccccl}
\Fq^\uldM & := & (\sdtau_\dM\dual)^{-1}\bigl(\Hom_{A_\BC}(\sdsigma^*\dM,\,\BC\dbl z-\zeta\dbr)\bigr) & \subset & \Koh^1_\dR\bigl(\uldM,\BC\dpl z-\zeta\dpr\bigr)\quad\text{and}\\[2mm]
\Fq_\uldM & := & \sdtau_\dM(\sdsigma^*\dM)\otimes_{A_\BC}\BC\dbl z-\zeta\dbr & \subset & \Koh_{1,\dR}\bigl(\uldM,\BC\dpl z-\zeta\dpr\bigr)\,.
\end{array}
\]
Then the Hodge-Pink filtrations $F^i \Koh^1_\dR(\uldM,\BC)$ and $F^i \Koh_{1,\dR}(\uldM,\BC)$ of $\uldM$ are recovered as the images of $\Koh^1_\dR\bigl(\uldM,\BC\dbl z-\zeta\dbr\bigr)\cap(z-\zeta)^i\Fq^\uldM$ in $\Koh^1_\dR(\uldM,\BC)$ and of $\Koh_{1,\dR}\bigl(\uldM,\BC\dbl z-\zeta\dbr\bigr)\cap(z-\zeta)^i\Fq_\uldM$ in $\Koh_{1,\dR}(\uldM,\BC)$ like in Remark~\ref{Rem1.4}. All these structures are compatible with the natural duality between $H^1_\dR$ and $H_{1,\dR}$. The de Rham realization provides (covariant) exact faithful tensor functors
\begin{align}\label{EqDualDeRhamFiberFunctor}
& \Koh_{1,\dR}(\,.\,,\BC)\colon & \hspace{-0.9cm}\dualAMotCatIsog & \longto \es {\tt Vect}_{\BC}\,, & & \uldM \;\longmapsto\; \Koh_{1,\dR}(\uldM,\BC)\qquad\text{and}\\[2mm]
& \Koh_{1,\dR}(\,.\,,\BC\dbl z-\zeta\dbr)\colon & \hspace{-0.9cm}\dualAMotCatIsog & \longto \es {\tt Mod}_{\BC\dbl z-\zeta\dbr}\,, & & \uldM \;\longmapsto\; \Koh_{1,\dR}(\uldM,\BC\dbl z-\zeta\dbr)\,. \nonumber
\end{align}
This is clear for $\Koh_{1,\dR}(\,.\,,\BC\dbl z-\zeta\dbr)$ and for $\Koh_{1,\dR}(\,.\,,\BC)$ exactness follows from the snake lemma applied to multiplication with $z-\zeta$ on $\Koh_{1,\dR}(\,.\,,\BC\dbl z-\zeta\dbr)$. To prove faithfulness for $\Koh_{1,\dR}(\,.\,,\BC)$ note that every morphism $f\colon\uldM'\to\uldM$ can in $\dualAMotCatIsog$ be factored into $\uldM'\onto\im(f)\isoto\coim(f)\into\uldM$. If $\Koh_{1,\dR}(f,\BC)$ is the zero map the exactness of $\Koh_{1,\dR}(\,.\,,\BC)$ shows that $\Koh_{1,\dR}(\im(f),\BC)=(0)$. Since $\dim_\BC\Koh_{1,\dR}(\uldM,\BC)=\rk\uldM$ it follows that the dual $A$-motive $\im(f)$ has rank zero and therefore $\im(f)=(0)$ and $f=0$.

\medskip

Finally, let $v\in\dotC$ be a closed point. We say that $v$ is a \emph{finite place} of $C$. Let $A_v$ be the $v$-adic completion of $A$, and let $Q_v$ be the fraction field of $A_v$. Consider the $v$-adic completions $A_{\BC,v}:=\invlim A_\BC/v^nA_\BC$ of $A_\BC$ and $\dM_v:= \varprojlim \dM/v^n \dM=\dM\otimes_{A_\BC}A_{\BC,v}$ of $\dM$. Note that $\sdtau\colon \dm \mapsto\sdtau_\dM(\sdsigma^*\dm )$ for $\dm \in\dM$ induces a $\sdsigma^\ast\wh\otimes\id_{A_{v}}$-linear map $\sdtau\colon \dM_v\to\dM_v$. We let the \emph{$\sdtau$-invariants of $\dM_v$} be the $A_v$-module 
\[
\dM_v^\sdtau :=\{\dm \in\dM_v\;|\;\sdtau(\dm )=\dm \}.
\]
It is isomorphic to $A_v^{\oplus\rk\uldM}$ and the inclusion $\dM_v^\sdtau\subset\dM_v$ induces a canonical $\sdtau$-equivariant isomorphism $\dM_v^\sdtau\otimes_{A_v}A_{\BC,v}\isoto\dM_v$ by an argument similar to \cite[Proposition~6.1]{TW}. Then the \emph{$v$-adic cohomology realization of $\uldM$} is given by 
\[
\begin{array}{lllllll}
\Koh_{1,v}(\uldM,A_v) & := & \dM_v^\sdtau & \quad\text{and}\quad & \Koh_{1,v}(\uldM,Q_v) & := & \dM_v^\sdtau\otimes_{A_v}Q_v\qquad\text{and}\\[2mm]
\Koh_v^1(\uldM,A_v) & := & \Hom_{A_v}(\dM_v^\sdtau,A_v) & \quad\text{and}\quad & \Koh_v^1(\uldM,Q_v) & := & \Hom_{A_v}(\dM_v^\sdtau,Q_v).
\end{array}
\]
If $\uldM$ is defined over a subfield $L$ of $\BC$ then they carry a continuous action of $\Gal(L^\sep/L)$ and the $v$-adic realization provides (covariant) exact faithful tensor functors
\begin{eqnarray}\label{EqVAdicFiberFunctorDual}
\Koh_{1,v}(\,.\,,A_v)\colon & \dualAMotCat & \longto \es {\tt Mod}_{A_v[\Gal(L^\sep/L)]}\,,\quad \uldM \;\longmapsto\; \Koh_{1,v}(\uldM,A_v)\qquad\text{and}\\[2mm]
\Koh_{1,v}(\,.\,,Q_v)\colon & \dualAMotCatIsog & \longto \es {\tt Mod}_{Q_v[\Gal(L^\sep/L)]}\,,\quad \uldM \;\longmapsto\; \Koh_{1,v}(\uldM,Q_v)\,. \nonumber
\end{eqnarray}
This follows from the isomorphism $\Koh_{1,v}(\uldM,A_v)\otimes_{A_v}A_{\BC,v}\isoto \dM_v$ because $A_v\subset A_{\BC,v}$ is faithfully flat. Moreover, if $L$ is a \emph{finitely generated} field then
\begin{equation}\label{EqTateConjDualAMotives}
\Koh_{1,v}(\,.\,,A_v)\colon \; \Hom(\uldM,\uldM')\otimes_A A_v \;\isoto\; \Hom_{A_v[\Gal(L^\sep/L)]}\bigl(\Koh_{1,v}(\uldM,A_v),\Koh_{1,v}(\uldM',A_v)\bigr)
\end{equation}
is an isomorphism for dual $A$-motives $\uldM$ and $\uldM'$. This is the analog of the \emph{Tate conjecture} for dual $A$-motives and follows by Proposition~\ref{PropDualizing} from the analogous result \eqref{EqTateConjAMotives} of Taguchi~\cite{Taguchi95b} and Tamagawa~\cite[\S\,2]{Tamagawa} for $A$-motives.

\begin{proposition}\label{PropWeightsTateModuleDual}
Let $\uldM$ be a pure or mixed dual $A$-motive, which is defined over a \emph{finite field extension} $L$ of $Q$. Let $\CP$ be a finite place of $L$, not lying above $\infty$ or $v$, where $\uldM$ has good reduction, and let $\BF_\CP$ be its residue field. Then the geometric Frobenius $\Frob_\CP$ of $\CP$ has a well defined action on $\Koh_{1,v}(\uldM,A_v)$ and each of its eigenvalues lies in the algebraic closure of $Q$ in $\BC$ and has absolute value $(\#\BF_\CP)^\mu$ for a weight $\mu$ of $\uldM$. Dually every eigenvalue of $\Frob_\CP$ on $\Koh^1_v(\uldM,A_v)$ has absolute value $(\#\BF_\CP)^{-\mu}$ for a weight $\mu$ of $\uldM$. These eigenvalues are independent of $v$.
\end{proposition}

\noindent
{\it Remark.} The \emph{geometric Frobenius} $\Frob_\CP$ of $\CP$ is the inverse of the \emph{arithmetic Frobenius} $\Frob_\CP^{-1}$, which satisfies $\Frob_\CP^{-1}(x) \equiv x^{\#\BF_\CP}\mod\CP$ for $x\in\CO_L$.

\begin{proof}
This follows by Proposition~\ref{PropDualMixed} from the corresponding fact for $\ulM(\uldM)$ proved in Proposition~\ref{PropWeightsTateModule}.
\end{proof}

\bigskip

The morphism $h_\uldM$ from Proposition~\ref{PropMaphdM} induces comparison isomorphisms between the Betti and the $v$-adic, respectively the de Rham realizations similarly to Theorem~\ref{ThmCompIsomBettiDRAMotive}.

\begin{theorem}\label{ThmCompIsomBettiDRDualAMotive}
If $\uldM$ is a uniformizable dual $A$-motive there are canonical \emph{comparison isomorphisms}, sometimes also called \emph{period isomorphisms}
\[
h_{\Betti,\,v}\colon\Koh_{1,\Betti}(\uldM,A_v)\;=\;\Lambda(\uldM)\otimes_A A_v\;\isoto\;\Koh_{1,v}(\uldM,A_v)\,,\quad\check\lambda\otimes f\longmapsto (f\cdot\check\lambda \mod v^n)_{n\in\BN}
\]
and
\[
\begin{array}[b]{rcc@{\hspace{-0em}}cccl}
h_{\Betti,\,\dR} & := & h_\uldM\otimes\id_{\BC\dbl z-\zeta\dbr} & \colon &\Koh_{1,\Betti}\bigl(\uldM,\BC\dbl z-\zeta\dbr\bigr) & \isoto & \Koh_{1,\dR}\bigl(\uldM,\BC\dbl z-\zeta\dbr\bigr)\,,\\[2mm]
h_{\Betti,\,\dR} & := & h_\uldM\mod J & \colon &\Koh_{1,\Betti}(\uldM,\BC) & \isoto & \Koh_{1,\dR}(\uldM,\BC)\,.
\end{array}
\]
By diagram \eqref{EqhdM} the latter are compatible with the Hodge-Pink lattices, respectively the Hodge-Pink filtration provided on the Betti realization $\Koh_{1,\Betti}(\uldM,Q)=\Hodge_1(\uldM)$ via the associated Hodge-Pink structure $\ulHodge_1(\uldM)$.\qed
\end{theorem}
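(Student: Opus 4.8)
\es The plan is to follow the same scheme as in the proof of Theorem~\ref{ThmCompIsomBettiDRAMotive}, now using the global rigid analytic trivialization $h_\uldM$ from Definition~\ref{DefDualUnifGlobal} together with its extensions established in Proposition~\ref{Prop4.23}. (An alternative route is to deduce the whole statement from Theorem~\ref{ThmCompIsomBettiDRAMotive} applied to the associated $A$-motive $\ulM=\ulM(\uldM)$, via the identity $h_\uldM=(\sdsigma^\ast h_\ulM\dual)^{-1}$ of Proposition~\ref{PropDualizingUnif} and the duality isomorphisms of Theorem~\ref{ThmHofMandDualM}; but the direct argument avoids the bookkeeping with the K\"ahler-differential twist.)

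For the $v$-adic comparison isomorphism I would argue as follows. Since $v\ne\infty$, the closed subscheme $\{v\}\times_{\BF_q}\BC$ of $C_\BC$ does not specialize to $\infty_{\kappa_\BC}\in C_{\kappa_\BC}$ under the reduction map, hence it lies in the affinoid $\FC_\BC\setminus\Disc$. This yields isomorphisms $\CO(\FC_\BC\setminus\Disc)/v^n\CO(\FC_\BC\setminus\Disc)\isoto A_\BC/v^nA_\BC$ for all $n$. As $\uldM$ is uniformizable, $h_\uldM$ is an isomorphism (Definition~\ref{DefDualUnifGlobal}); reducing it modulo $v^n$ and passing to the inverse limit produces a $\sdtau$-equivariant isomorphism
\[
\Lambda(\uldM)\otimes_A\invlim\CO(\FC_\BC\setminus\Disc)/v^n\CO(\FC_\BC\setminus\Disc)\;\isoto\;\dM\otimes_{A_\BC}\invlim A_\BC/v^nA_\BC\;=\;\dM_v\,.
\]
Taking $\sdtau$-invariants on both sides, and using that the $\sdsigma$-invariants of both completed rings equal $A_v$, gives $h_{\Betti,\,v}\colon\Lambda(\uldM)\otimes_AA_v\isoto\dM_v^\sdtau=\Koh_{1,v}(\uldM,A_v)$, with the stated explicit form $\check\lambda\otimes f\mapsto(f\cdot\check\lambda\bmod v^n)_n$.

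For the de Rham comparison isomorphisms I would invoke Proposition~\ref{PropMaphdM}\ref{PropMaphdMA}, according to which $h_\uldM$ extends to an isomorphism of locally free sheaves over $\dotFC_\BC\setminus\bigcup_{i\in\BN_{>0}}\Var(\sdsigma^{i\ast}J)$. Since $\Var(J)=\Var(\sdsigma^{0\ast}J)$ is \emph{not} in this exceptional locus, $h_\uldM$ is an isomorphism in a neighbourhood of $\Var(J)$; tensoring with $\BC\dbl z-\zeta\dbr$ over $A_\BC$ then gives the isomorphism $h_{\Betti,\,\dR}\colon\Koh_{1,\Betti}(\uldM,\BC\dbl z-\zeta\dbr)\isoto\Koh_{1,\dR}(\uldM,\BC\dbl z-\zeta\dbr)$, and reduction modulo $z-\zeta$ (equivalently modulo $J$) gives its counterpart over $\BC$. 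Compatibility with the Hodge-Pink structures is then read off from the commutative diagram~\eqref{EqhdM}: writing $h_\uldM\otimes\id=(\sdtau_\dM\otimes\id)\circ(\sdsigma^\ast h_\uldM\otimes\id)$ and applying the $\BC\dpl z-\zeta\dpr$-linear extension of $h_{\Betti,\,\dR}$ to the Hodge-Pink lattice $\Fq=(\sdsigma^\ast h_\uldM\otimes\id)^{-1}(\sdsigma^\ast\dM\otimes_{A_\BC}\BC\dbl z-\zeta\dbr)$ of $\ulHodge_1(\uldM)$ yields exactly $\sdtau_\dM(\sdsigma^\ast\dM)\otimes_{A_\BC}\BC\dbl z-\zeta\dbr=\Fq_\uldM$; passing to the fibre at $z-\zeta=0$ turns both lattices into the corresponding Hodge-Pink filtrations as in Remark~\ref{Rem1.4}, so the mod-$J$ comparison isomorphism respects the Hodge-Pink filtrations as well.

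The argument is almost entirely routine and parallel to the non-dual case; the only point requiring genuine care — and the one I expect to be the main (minor) obstacle — is to keep precise track of which Frobenius twists $\Var(\sdsigma^{i\ast}J)$ enter the bad locus of $h_\uldM$ as opposed to that of $\sdsigma^\ast h_\uldM$, since it is exactly this asymmetry (already recorded in Proposition~\ref{Prop4.23}) that guarantees $h_\uldM$ itself, and not merely $\sdsigma^\ast h_\uldM$, is invertible at $\Var(J)$, which is what makes the de Rham comparison isomorphism well defined.
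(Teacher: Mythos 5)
Your proposal is correct and follows exactly the route the paper intends: the paper records this theorem with no separate proof, the $v$-adic comparison being a verbatim transcription of the proof of Theorem~\ref{ThmCompIsomBettiDRAMotive} with $\tau$ replaced by $\sdtau$, and the de Rham comparison and its compatibility with the Hodge-Pink lattices resting on Proposition~\ref{Prop4.23} and diagram~\eqref{EqhdM}, which is precisely what you carry out. One cosmetic slip: the exceptional locus of $h_\uldM$ in Proposition~\ref{Prop4.23} is $\bigcup_{i\in\BN_{>0}}\Var(\ssigma^{i\ast}J)$ (with $\ssigma$, not $\sdsigma$), but since $\Var(J)$ lies in neither set, your conclusion that $h_\uldM$ is invertible at $\Var(J)$ while only $\sdsigma^\ast h_\uldM$ fails to be is unaffected.
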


\begin{example}\label{ExDualCarlitzPeriod}
Let $C=\BP^1_{\BF_q}$, $A=\BF_q[t]$, $z=\frac{1}{t}$, $\theta=\charmorph(t)=\frac{1}{\zeta}\in\BC$, and let $\uldM=(\BC[t],\sdtau_\dM=t-\theta)$ be the dual Carlitz $t$-motive from Example~\ref{ExDualCHMotive}. As in Example~\ref{ExDualizingCarlitz} we obtain $\Lambda(\uldM)=(\eta^q\check\ell_\zeta^{\SSC -})^{-1}\cdot\BF_q[t]$ for $\check\ell_\zeta^{\SSC -}:=\prod_{i=1}^\infty(1-\zeta^{q^i}t)=\sigma^*(\ell_\zeta^{\SSC -})\in \CO(\dotFC_\BC)$ and an $\eta\in\BC$ with $\eta^{q-1}=-\zeta$. The comparison isomorphism $h_{\Betti,\,\dR}=h_\uldM\otimes\id_{\BC\dbl z-\zeta\dbr}$ sends the basis $(\eta^q\check\ell_\zeta^{\SSC -})^{-1}$ of $\Koh_{1,\Betti}(\uldM,\BF_q[t])=\Lambda(\uldM)$ to the element $(\eta^q\check\ell_\zeta^{\SSC -})^{-1}=\sigma^*(\eta\ell_\zeta^{\SSC -})^{-1}\in\Koh_{1,\dR}(\uldM,\BC\dbl z-\zeta\dbr)=\BC\dbl z-\zeta\dbr$, respectively to the \emph{Carlitz period} $(\eta^q\check\ell_\zeta^{\SSC -})^{-1}|_{t=\theta}=\bigl(-\zeta\eta\prod_{i=1}^\infty(1-\zeta^{q^i-1})\bigr)^{-1}\in\Koh_{1,\dR}(\ulM,\BC)=\BC$. The latter is the function field analog of the complex number $2i\pi$, the period of the multiplicative group $\BG_{m,\BQ}$, and is likewise transcendental over $\BF_q(\theta)$ by a result of Wade~\cite{Wade41}. See Example~\ref{ExCarlitzModulePeriod} for more explanations.
\end{example}

To formulate the next result let $\wh\Omega^1_{A_v/\BF_v}$ and $\wh\Omega^1_{A_{\BC,v}/\BC}$ and $\wh\Omega^1_{\BC\dbl z-\zeta\dbr/\BC}=\BC\dbl z-\zeta\dbr dz$ denote the modules of continuous differentials. They equal $\Omega_{A/\BF_q}\otimes_A A_v$, respectively $\Omega_{A/\BF_q}\otimes_A A_{\BC,v}$, respectively $\Omega_{A/\BF_q}\otimes_A\BC\dbl z-\zeta\dbr$. See also Remark~\ref{RemContDiff} below.

\begin{proposition}\label{PropCohAMot}
 Let $\uldM$ be a dual $A$-motive and let $\ulM=\ulM(\uldM)$ be the corresponding $A$-motive from Proposition~\ref{PropDualizing}. Then there are canonical isomorphisms 
\begin{eqnarray*}
\Koh^1_v(\ulM,A_v) & \cong & \Koh^1_v(\uldM,A_v)\otimes_{A_v}\wh\Omega^1_{A_v/\BF_v}\qquad\text{and}\\[2mm]
\Koh^1_\dR(\ulM,\BC\dbl z-\zeta\dbr) & \cong & \Koh^1_\dR(\uldM,\BC\dbl z-\zeta\dbr)\otimes_{\BC\dbl z-\zeta\dbr}\BC\dbl z-\zeta\dbr dz\,.
\end{eqnarray*}
The latter is compatible with the Hodge-Pink lattices. If $\uldM$ and $\ulM$ are uniformizable then in addition, 
\[
\Koh^1_\Betti(\ulM,A)\es\cong\es\Koh^1_\Betti(\uldM,A)\otimes_A\Omega^1_{A/\BF_q}
\]
and these isomorphisms are compatible with the period isomorphisms from Theorems~\ref{ThmCompIsomBettiDRAMotive} and \ref{ThmCompIsomBettiDRDualAMotive}.
\end{proposition}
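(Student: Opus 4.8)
The plan is to read everything off from Proposition~\ref{PropDualizing} and Proposition~\ref{PropDualizingUnif}. Recall that they identify the $A$-motive $\ulM=\ulM(\uldM)$ with $\bigl(\Hom_{A_\BC}(\sdsigma^\ast\dM,\,\Omega^1_{A_\BC/\BC}),\,\sdtau_\dM\dual\bigr)$; equivalently $\ssigma^\ast M=\Hom_{A_\BC}(\dM,\,\Omega^1_{A_\BC/\BC})$ and $\tau_M$ is precomposition with $\sdtau_\dM$. In all three realizations the twist by $\Omega^1$ occurring in the statement is precisely the module $\Omega^1_{A_\BC/\BC}=\Omega^1_{A/\BF_q}\otimes_{\BF_q}\BC$ of Remark~\ref{RemImportant}, which is fixed by $\ssigma^\ast$ and $\sdsigma^\ast$. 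So for each realization I would carry the description of $M$ through the appropriate base change, keeping track of the semilinear operators, and then read off the Hodge-Pink lattices and the period isomorphisms.

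For the $v$-adic realization I would first note that $v$ is a place of $A$, so $\ssigma^\ast$ and $\sdsigma^\ast$ are $v$-adically continuous on $A_\BC$ and commute with $v$-adic completion; hence $\ssigma^\ast(\dM_v)=(\ssigma^\ast\dM)_v$, and similarly for $M$. Applying $\Hom$-base change \cite[Proposition~2.10]{Eisenbud} (legitimate since $\dM$ is finite projective and $\sdsigma^\ast$ is flat, Remark~\ref{RemSigmaIsFlat}) to $\ssigma^\ast M=\Hom_{A_\BC}(\dM,\Omega^1_{A_\BC/\BC})$ gives a canonical $A_{\BC,v}$-isomorphism $M_v\isoto\Hom_{A_{\BC,v}}\bigl(\sdsigma^\ast\dM_v,\,\wh\Omega^1_{A_{\BC,v}/\BC}\bigr)$ which, via $\tau_M=\sdtau_\dM\dual$, intertwines the $\ssigma^\ast$-semilinear $\tau$ on $M_v$ with precomposition by $\sdtau_\dM$. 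Then I would plug in the $\sdtau$-equivariant trivialization $\dM_v^{\sdtau}\otimes_{A_v}A_{\BC,v}\isoto\dM_v$ quoted in the text (proved as in \cite[Proposition~6.1]{TW}) together with $\wh\Omega^1_{A_{\BC,v}/\BC}=\Omega^1_{A/\BF_q}\otimes_A A_{\BC,v}$ (with trivial $\ssigma^\ast$-action), so that the right-hand side becomes $\Hom_{A_v}\bigl(\dM_v^{\sdtau},\,\wh\Omega^1_{A_v/\BF_v}\bigr)\otimes_{A_v}A_{\BC,v}$ with $\tau=\id\otimes\ssigma^\ast$; taking $\ssigma^\ast$-invariants (and using $A_{\BC,v}^{\,\ssigma^\ast=\id}=A_v$) yields the desired $\Koh^1_v(\ulM,A_v)=M_v^{\tau}\isoto\Koh^1_v(\uldM,A_v)\otimes_{A_v}\wh\Omega^1_{A_v/\BF_v}$.

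For the generalized de Rham realization I would run the same argument with $\BC\dbl z-\zeta\dbr$ replacing $A_{\BC,v}$, using $\Omega^1_{A_\BC/\BC}\otimes_{A_\BC}\BC\dbl z-\zeta\dbr\cong\BC\dbl z-\zeta\dbr\,dz$ (valid because $z$ is a uniformizing parameter at $\infty$, Lemma~\ref{LemmaTSep}, and $z-\zeta$ is a uniformizer of $\BC\dbl z-\zeta\dbr$, Lemma~\ref{LemmaZ-Zeta}, so $dz$ generates $\wh\Omega^1_{\BC\dbl z-\zeta\dbr/\BC}$); this produces a canonical isomorphism
\[
\Koh^1_\dR(\ulM,\BC\dbl z-\zeta\dbr)=\ssigma^\ast M\otimes_{A_\BC}\BC\dbl z-\zeta\dbr\;\isoto\;\Koh^1_\dR(\uldM,\BC\dbl z-\zeta\dbr)\otimes_{\BC\dbl z-\zeta\dbr}\BC\dbl z-\zeta\dbr\,dz
\]
and its analogue over $\BC\dpl z-\zeta\dpr$. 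Writing elements of the target as $\tilde g\,dz$ and using $\tau_M=\sdtau_\dM\dual$ once more, the condition $g\in\Fq^\ulM=\tau_M^{-1}(M)\otimes_{A_\BC}\BC\dbl z-\zeta\dbr$, i.e. $\tau_M(g)\in M\otimes_{A_\BC}\BC\dbl z-\zeta\dbr$, translates into $\sdtau_\dM\dual(\tilde g)\in\Hom_{A_\BC}(\sdsigma^\ast\dM,\BC\dbl z-\zeta\dbr)$, i.e. $\tilde g\in\Fq^\uldM$; hence $\Fq^\ulM$ is carried onto $\Fq^\uldM\otimes_{\BC\dbl z-\zeta\dbr}\BC\dbl z-\zeta\dbr\,dz$, which is the stated compatibility with the Hodge-Pink lattices (and, reducing modulo $z-\zeta$, with the Hodge-Pink filtrations). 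For the Betti realization, when $\ulM$ (equivalently $\uldM$) is uniformizable, the perfect pairing $\Lambda(\uldM)\times\Lambda(\ulM)\to\Omega^1_{A/\BF_q}$ of Proposition~\ref{PropDualizingUnif} immediately gives $\Koh^1_\Betti(\ulM,A)=\Lambda(\ulM)\cong\Hom_A\bigl(\Lambda(\uldM),\Omega^1_{A/\BF_q}\bigr)=\Koh^1_\Betti(\uldM,A)\otimes_A\Omega^1_{A/\BF_q}$.

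The last point is compatibility with the period isomorphisms of Theorems~\ref{ThmCompIsomBettiDRAMotive} and \ref{ThmCompIsomBettiDRDualAMotive}, which I would extract from the identity $h_\uldM=(\ssigma^\ast h_\ulM\dual)^{-1}$ of Proposition~\ref{PropDualizingUnif}: it shows that the three isomorphisms constructed above are, after extending scalars to $A_v$, to $\BC\dbl z-\zeta\dbr$, and to $\CO(\FC_\BC\setminus\Disc)$ respectively, all induced by the $\Omega^1$-dual of $\ssigma^\ast h_\ulM$, which is precisely the map relating $h_{\Betti,\,v}$ and $h_{\Betti,\,\dR}$ for $\ulM$ to those for $\uldM$. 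I expect the main obstacle to be purely organizational: ensuring that every base-change and $\Hom$-adjunction isomorphism used above is compatible with the operators $\ssigma^\ast$, $\sdsigma^\ast$, $\tau$, $\sdtau$ and with the natural pairings, so that all the relevant diagrams commute on the nose. No new input beyond Propositions~\ref{PropDualizing} and \ref{PropDualizingUnif} should be required.
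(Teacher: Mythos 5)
Your proposal is correct and follows essentially the same route as the paper's proof: the $v$-adic case via Hom-base-change from $M=\Hom_{A_\BC}(\sdsigma^*\dM,\Omega^1_{A_\BC/\BC})$, the trivialization $\dM_v^\sdtau\otimes_{A_v}A_{\BC,v}\isoto\dM_v$ and passage to $\tau$-invariants; the de Rham and Betti cases read off from Propositions~\ref{PropDualizing} and \ref{PropDualizingUnif}; and compatibility with the period isomorphisms from $h_\uldM=(\ssigma^*h_\ulM\dual)^{-1}$. Your explicit verification that $\Fq^\ulM$ maps onto $\Fq^\uldM\otimes\BC\dbl z-\zeta\dbr\,dz$ is a correct unwinding of what the paper leaves implicit.
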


\begin{proof}
If $\uldM$ and $\ulM$ are uniformizable, then the isomorphism between $\Lambda(\ulM)=\Koh_\Betti^1(\ulM,A)$ and $\Koh_\Betti^1(\uldM,A)\otimes_A\Omega^1_{A/\BF_q}=\Hom_A(\Lambda(\uldM),\Omega^1_{A/\BF_q})$ was established in Proposition~\ref{PropDualizingUnif}.

To establish the isomorphism for the $v$-adic realizations, note that $M=\Hom_{A_\BC}(\sdsigma^*\dM,\,\Omega^1_{A_\BC/\BC})$. By applying \cite[Proposition~2.10]{Eisenbud} this yields a chain of canonical isomorphisms
\begin{eqnarray}\label{EqPropCohAMot}
& M_v\;=\;\Hom_{A_\BC}(\sdsigma^*\dM,\,\Omega^1_{A_\BC/\BC})\otimes_{A_\BC}A_{\BC,v}\;=\;\Hom_{A_{\BC,v}}(\sdsigma^*\dM_v,\,\wh\Omega^1_{A_{\BC,v}/\BC})\;\isoto \\[2mm]
& \isoto\;\Hom_{A_{\BC,v}}(\dM_v^\sdtau\otimes_{A_v}A_{\BC,v},\,\wh\Omega^1_{A_{\BC,v}/\BC})\;=\;\Hom_{A_v}(\dM_v^\sdtau,\,\wh\Omega^1_{A_v/\BF_v})\otimes_{A_v}A_{\BC,v} \nonumber
\end{eqnarray}
under which the $\sigma^*$-linear endomorphism $m\mapsto\tau_M(\sigma^*m)$ of $M_v$ corresponds to the $\sigma^*$-linear endomorphism $\check\lambda\otimes f\mapsto\check\lambda\otimes\sigma^*(f)$ of $\Hom_{A_v}(\dM_v^\sdtau,\,\wh\Omega^1_{A_v/\BF_v})\otimes_{A_v}A_{\BC,v}$. By taking the invariants under these endomorphisms and observing that $(A_{\BC,v})^\tau=A_v$ we obtain the canonical isomorphism $\Koh^1_v(\ulM,A_v):=M_v^\tau\isoto\Hom_{A_v}(\dM_v^\sdtau,\,\wh\Omega^1_{A_v/\BF_v})=:\Koh^1_v(\uldM,A_v)\otimes_{A_v}\wh\Omega^1_{A_v/\BF_v}$. If moreover $\uldM$ and $\ulM$ are uniformizable this isomorphism is compatible with the period isomorphisms $h_{\Betti,\,v}$ because \eqref{EqPropCohAMot} is compatible with $h_\uldM$ and $h_\ulM=(\sdsigma^*h_\uldM\dual)^{-1}$; see Proposition~\ref{PropDualizingUnif}.

Finally, the equalities $M=\Hom_{A_\BC}(\sdsigma^*\dM,\,\Omega^1_{A_\BC/\BC})$ and $\tau_M=(\sdtau_\dM)\dual$ yield the isomorphism for the de Rham realization 
\begin{eqnarray*}
& \Koh^1_\dR(\ulM,\BC\dbl z-\zeta\dbr)\;:=\;\sigma^*M\otimes_{A_\BC}\BC\dbl z-\zeta\dbr\;=\;\\[2mm]
& \;=\;\Hom_{A_\BC}(\dM,\BC\dbl z-\zeta\dbr dz)=:\;\Koh^1_\dR(\uldM,\BC\dbl z-\zeta\dbr)\otimes_{\BC\dbl z-\zeta\dbr}\BC\dbl z-\zeta\dbr dz
\end{eqnarray*}
and its compatibility with the Hodge-Pink lattices.
If moreover $\uldM$ and $\ulM$ are uniformizable, its compatibility with the period isomorphisms $h_{\Betti,\,\dR}$ follows from the equation $h_\uldM=(\sigma^*h_\ulM\dual)^{-1}$ that was established in Proposition~\ref{PropDualizingUnif}.
\end{proof}

%%%%%%%%%%%%%%%%%%%%%%%%%%%%%%%%%%%%%%%%%%%%%%%%%%%%%%%%%%%%% 
%% 
%%     Anderson $A$-modules
%% 
%%%%%%%%%%%%%%%%%%%%%%%%%%%%%%%%%%%%%%%%%%%%%%%%%%%%%%%%%%%%% 
 
\section{Anderson \texorpdfstring{$A$}{A}-modules}\label{SectAndersonAModules}
\setcounter{equation}{0}

A main source from which effective $A$-motives arise are Drinfeld $A$-modules~\cite{Drinfeld} and abelian Anderson $A$-modules. For $C=\BP^1_{\BF_q}$ and $A=\BF_q[t]$ the latter were introduced by Anderson~\cite{Anderson86} under the name \emph{abelian $t$-module}; see also \cite[\S3.1]{BrownawellPapanikolas16}. In this section we review the notion of abelian Anderson $A$-modules and their associated $A$-motives. Likewise we review the notion and analytic theory of $A$-finite Anderson $A$-modules and their associated dual $A$-motives which was developed by Greg Anderson in unpublished work \cite{ABP_Rohrlich}. Also for Anderson $A$-modules which are both abelian and $A$-finite we prove the compatibility between the associated $A$-motive and dual $A$-motive.

\subsection{Definition of Anderson $A$-modules} \label{SectDefAModules}

 To recall the definition for general $A$ we need the following notation. For a smooth commutative group scheme $E$ over $\BC$ we let $\Lie E:=\Hom_\BC(e^*\Omega^1_{E/\BC},\BC)$ be its tangent space at the neutral element $e\colon\Spec\BC\to E$. It is a vector space over $\BC$. The differential $d\colon\Hom_{\BC}(E,E')\to\Hom_\BC(\Lie E,\Lie E')$ associates with each homomorphism $f\colon E\to E'$ of smooth group schemes the induced homomorphism $\Lie f\colon\Lie E\to\Lie E'$ of tangent spaces. We consider the additive group scheme $\BG_{a,\BC}=\Spec\BC[X]$ as a $\BC$-module scheme via the action of $b\in\BC$ by $\psi_b^\ast\colon\BC[X]\to\BC[X], X\mapsto bX$. Its relative $q$-Frobenius endomorphism $\Frob_{q,\BG_a}$ is given by $\Frob_{q,\BG_a}^\ast\colon\BC[X]\to\BC[X], X\mapsto X^q$. Let $\BC\{\tau\}:=\bigl\{\,\sum_{i=0}^nb_i\tau^i\colon n\in\BN_0,b_i\in\BC\,\bigr\}$ be the non-commutative polynomial ring in the variable $\tau$ with the commutation rule $\tau b=b^q\tau$ for $b\in\BC$. 

\begin{lemma}\label{LemmaLucas}
There is a natural isomorphism of $\BC$-modules between the $d'\times d$-matrix space $\BC\{\tau\}^{d'\times d}$ and the $\BC$-module $\Hom_{\BF_q,\BC}(\BG_{a,\BC}^d,\BG_{a,\BC}^{d'})$ of $\BF_q$-linear homomorphisms of group schemes over $\BC$, which sends the matrix $F=(f_{ij})_{i,j}\in\BC\{\tau\}^{d'\times d}$ to the $\BF_q$-homomorphism $f\colon\BG_{a,\BC}^d\to\BG_{a,\BC}^{d'}$ with $f^*(y_i)=\sum_j f_{ij}(x_j)$ where $\BG_{a,\BC}^d=\Spec \BC[x_1,\ldots,x_d]$ and $\BG_{a,\BC}^{d'}=\Spec \BC[y_1,\ldots,y_{d'}]$. Under this isomorphism the map $f\mapsto \Lie f$ is given by the map $\BC\{\tau\}^{d'\times d}\to \BC^{d'\times d},\,F=\sum_n F_n\tau^n\mapsto F_0$.
\end{lemma}

\begin{proof}
This is straight forward to prove using Lucas's theorem \cite{Lucas1878}, see also \cite[p.~589]{Fine47}, on congruences of binomial coefficients which states that $\left(\begin{smallmatrix}pn+t\\pm+s\end{smallmatrix}\right)\equiv\left(\begin{smallmatrix}n\\m\end{smallmatrix}\right)\left(\begin{smallmatrix}t\\s\end{smallmatrix}\right)\mod p$ for all $n,m,t,s\in\BN_0$, and implies that $\left(\begin{smallmatrix}n\\i\end{smallmatrix}\right)\equiv 0\mod p$ for all $0<i<n$ if and only if $n=p^e$ for an $e\in\BN_0$.
\end{proof}

\begin{definition}\label{DefAndersonAModule}
Let $d$ be a positive integer.
\begin{enumerate}
\item \label{DefAndersonAModule_A}
An \emph{Anderson $A$-module $\ulE=(E,\phi)$ of dimension $d$} over $\BC$ consists of a group scheme $E$ isomorphic to the $d$-th power $\BG_{a,\BC}^d$ of $\BG_{a,\BC}$ together with a ring homomorphism $\phi\colon A\to \End_{\BC}(E), a\mapsto\phi_a$ such that
\begin{equation}\label{EqDefAndersonAModuleA}
\bigl(\Lie\phi_a-\charmorph(a)\bigr)^d=0\quad\text{on}\quad\Lie E\,.
\end{equation}
Under $a\mapsto\Lie\phi_a$ the tangent space $\Lie E$ becomes an $A_\BC$-module. Note that the commutativity of $A$ implies that $\phi_a$ lies in the ring $\End_{\BF_q,\BC}(E)$ of $\BF_q$-linear endomorphisms for the structure of $\BF_q$-module scheme on $E$ provided via $\phi$. Note further that there always exists an isomorphism $E\isoto\BG_{a,\BC}^d$ of $\BF_q$-module schemes.
\item \label{DefAndersonAModule_B}
An Anderson $A$-module of dimension $1$ is called a \emph{Drinfeld $A$-module of rank $r\in\BN_{>0}$} if under such an isomorphism $E\isoto\BG_{a,\BC}$ and the induced identification $\End_{\BF_q,\BC}(E)\cong\BC\{\tau\}$ from Lemma~\ref{LemmaLucas}, the $\tau$-degree of $\phi_a$ equals $r$ times the order of pole of $a$ at $\infty$ for all $a$.
\item \label{DefAndersonAModule_C}
A \emph{morphism} of Anderson $A$-modules $f\colon(E',\phi')\to(E,\phi)$ is a homomorphism of group schemes $f\colon E'\to E$ satisfying $\phi_a\circ f=f\circ\phi'_a$ for all $a\in A$. We say that $f\colon(E',\phi')\to(E,\phi)$ or simply $(E',\phi')$ is an \emph{Anderson $A$-submodule} if $f$ is a closed immersion.
\item \label{DefAndersonAModule_D}
For $a\in A$, we define the closed subgroup scheme $\ulE[a]:=\ker(\phi_a\colon E\to E)$.
\end{enumerate}
\end{definition}

Every Anderson $A$-module over $\BC$ possesses a unique exponential function $\exp_\ulE\colon\Lie E\to E(\BC)$ satisfying $\phi_a(\exp_\ulE(x))=\exp_\ulE(\Lie\phi_a(x))$ for all $x\in\Lie E$ and $a\in A$. Under an isomorphism $\rho\colon E\isoto\BG_{a,\BC}^d$ of $\BF_q$-module schemes and the induced isomorphism $\Lie\rho\colon\Lie E\isoto\BC^d$ the exponential function $\exp_\ulE$ is given by matrices $E_i\in\BC^{d\times d}$ with $E_0=\Id_d$ such that the series $(\rho\circ\exp_\ulE\circ(\Lie\rho)^{-1})(\xi)=\sum_{i=0}^\infty E_i\,\sigma^{i*}(\xi)$ converges for all $\xi\in\BC^d$; see \cite[Theorem~3]{Anderson86} for $A=\BF_q[t]$ and \cite[\S\,8.6]{BoeckleHartl} for the passage to general $A$. In loc.\ cit.\ these facts are formulated and proved under the additional condition that $\ulE$ is abelian (see Definition~\ref{DefAbelianAMod} below), which is actually unnecessary. We also define 
\[
\Lambda(\ulE)\;:=\;\ker(\exp_\ulE)\,.
\]
It is an $A$-module via the action of $a\in A$ as $\Lie\phi_a$ on $\Lambda(\ulE)\subset\Lie E$. Moreover, the exponential map $\exp_\ulE$ and $\Lambda(\ulE)$ are covariant functorial in $\ulE$, in the sense that $f\circ\exp_{\ulE'}=\exp_\ulE\circ\Lie f$ and $\Lie f\colon\Lambda(\ulE')\to\Lambda(\ulE)$ when $f\colon\ulE'\to\ulE$ is a morphism of Anderson $A$-modules. The following lemmas are well known.

\begin{lemma}\label{LemmaConvergent}
For every $\xi\in\Lie E$ and $a\in A$ the sequence $\Lie\phi_a^{-n}(\xi)$ converges to $0$ as $n\to\infty$.
\end{lemma}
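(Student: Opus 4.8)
\textbf{Proof proposal for Lemma~\ref{LemmaConvergent}.}

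The plan is to reduce the statement to the analogous assertion about $A$-motives or $z$-isocrystals that has already been set up. First I would recall that $\Lie\phi_a$ is a $\BC$-linear endomorphism of the finite dimensional $\BC$-vector space $\Lie E$, which by \eqref{EqDefAndersonAModuleA} satisfies $(\Lie\phi_a-\charmorph(a))^d=0$. If $a\notin\BF_q$, then $z_a:=1/a$ is a uniformizing parameter at the point of $C$ corresponding to the pole of $a$, and $\charmorph(a)\ne 0$ in $\BC$ (since $\charmorph$ is injective on $A$); hence $\Lie\phi_a$ is invertible on $\Lie E$ and all its eigenvalues equal $\charmorph(a)$, which has absolute value $|\charmorph(a)|$. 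Consequently the eigenvalues of $\Lie\phi_a^{-1}$ all have absolute value $|\charmorph(a)|^{-1}$.

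The key point is then to show $|\charmorph(a)|>1$ for $a\in A\setminus\BF_q$, i.e.\ that $a$ has strictly positive pole order at $\infty$ and hence $|\charmorph(a)|=|\zeta|^{-\ord_\infty(a)}>1$ because $0<|\zeta|<1$; this is just the fact that a non-constant regular function on $\dotC$ must have a pole at $\infty$, the single point removed from the projective curve $C$. Given this, the $\BC$-linear operator $\Lie\phi_a^{-1}$ has spectral radius $|\charmorph(a)|^{-1}<1$, so in the non-archimedean setting (where the operator norm with respect to any lattice is submultiplicative) the iterates $\Lie\phi_a^{-n}$ converge to the zero operator. More concretely, choosing an $\CO_\BC$-lattice $L\subset\Lie E$, after replacing $a$ by a suitable power one arranges $\Lie\phi_a^{-1}(L)\subset\Fm_\BC\cdot L$ where $\Fm_\BC$ is the maximal ideal, and then $\Lie\phi_a^{-n}(\xi)\to 0$ for every $\xi\in\Lie E$; the case of general $a$ follows since the finitely many residue classes modulo the chosen power are each handled by one more application of $\Lie\phi_a^{-1}$. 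The case $a\in\BF_q$ is vacuous since $A\setminus\BF_q$ is what matters (and for $a\in\BF_q^\times$ one has $\Lie\phi_a$ a scalar of absolute value $1$, so the sequence does not converge to $0$ — presumably the intended reading of the lemma restricts to $a\notin\BF_q$, or more precisely to $a$ with $\ord_\infty(a)>0$).

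I expect the only mild obstacle to be bookkeeping: making precise, in the rank-one non-archimedean valued field $\BC$, that "all eigenvalues of modulus $<1$" implies operator-norm contraction of a high iterate, which is standard (triangularize over the algebraic closure, or use that the characteristic polynomial of $\Lie\phi_a$ is $(X-\charmorph(a))^d$ by \eqref{EqDefAndersonAModuleA} so $\Lie\phi_a^{-n}$ is a polynomial in $\Lie\phi_a^{-1}$ with controlled coefficients). Everything else is immediate from the definitions and from the geometry of $C$ encoded in $0<|\zeta|<1$.
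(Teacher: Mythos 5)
Your proposal is correct and follows essentially the same route as the paper: the paper also exploits $(\Lie\phi_a-\charmorph(a))^d=0$ by writing $\Lie\phi_a=\charmorph(a)(\Id_d+N)$ in Jordan form with $N$ nilpotent having entries $0$ and $1$, so that with the maximum norm one gets $\|\Lie\phi_a^{-1}(\xi)\|\le|\charmorph(a)|^{-1}\|\xi\|$ directly (no need to pass to a high power of $a$), and then concludes from $|\charmorph(a)|>1$. The bookkeeping step you flag is exactly this normalization, and your tacit restriction to $a\in A\setminus\BF_q$ matches the paper's implicit reading.
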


\begin{proof}
Identify $\Lie E\cong\BC^d$ and write $\Lie\phi_a=\charmorph(a)(\Id_d+N)$ with strictly upper triangular (nilpotent) $N$ having only entries $0$ and $1$. Then $\|\Lie\phi_a^{-1}(\xi)\|\le|\charmorph(a)|^{-1}\cdot\|\xi\|$ with respect to the maximum norm $\|\,.\,\|$ on $\BC^d$. Now the lemma follows from $|\charmorph(a)|>1$.
\end{proof}

\begin{lemma}\label{LemmaLog}
For every isomorphism $\rho\colon E\isoto\BG_{a,\BC}^d$ of $\BF_q$-module schemes and every norm $\|\,.\,\|$ on $\BC^d$ there exists a constant $C>0$ such that $\exp_\ulE$ maps $\{\xi\in\Lie E\colon \|\Lie\rho(\xi)\|< C\}$ isometrically onto $\{x\in E(\BC)\colon \|\rho(x)\|< C\}$. The inverse of this isometry is a rigid analytic function
\[
\log_\ulE\colon \{x\in E(\BC)\colon \|\rho(x)\|< C\} \isoto \{\xi\in\Lie E\colon \|\Lie\rho(\xi)\|\le C\}
\]
satisfying $\log_\ulE(\phi_a(x))=(\Lie\phi_a)(\log_\ulE(x))$ for all $a\in A$ and all $x\in E(\BC)$ subject to the condition $\|\rho(x)\|,\|\rho(\phi_a(x))\|< C$. It is called the \emph{logarithm of $\ulE$}.

In particular $\Lambda(\ulE)=\ker(\exp_\ulE)\subset\Lie E$ is a discrete $A$-submodule.
\end{lemma}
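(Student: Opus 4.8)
\textbf{Proof proposal for Lemma~\ref{LemmaLog}.} The plan is to deduce the existence of the logarithm from the formal inversion of the exponential power series together with a convergence estimate coming from the shape of the coefficient matrices $E_i$.

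First I would fix an isomorphism $\rho\colon E\isoto\BG_{a,\BC}^d$ of $\BF_q$-module schemes and a norm $\|\,.\,\|$ on $\BC^d$, say the maximum norm, and recall that in these coordinates $\exp_\ulE$ is given by an everywhere-convergent series $\xi\mapsto\sum_{i\ge0}E_i\,\sigma^{i*}(\xi)$ with $E_0=\Id_d$, where $\sigma^{i*}$ denotes the coordinatewise $q^i$-power map. Since $E_0=\Id_d$ is invertible, the formal $\BF_q$-linear inverse series $\log_\ulE(x)=\sum_{i\ge0}L_i\,\sigma^{i*}(x)$ exists and is uniquely determined by the recursion $\sum_{j=0}^i E_j\,\sigma^{j*}(L_{i-j})=0$ for $i\ge1$, i.e.\ $L_i=-\sum_{j=1}^i E_j\,\sigma^{j*}(L_{i-j})$ with $L_0=\Id_d$. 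Next I would establish a growth bound on the $L_i$: because the $E_i$ grow at most like $|E_i|\le c^{q^i}$ for some constant $c$ (as follows from the everywhere-convergence of $\exp_\ulE$ and a standard estimate), an induction on the recursion yields $|L_i|\le C_0^{\,q^i-1}$ for a suitable constant $C_0>0$. Consequently the series $\sum_i L_i\,\sigma^{i*}(x)$ converges on the open ball $\{\|\rho(x)\|<C\}$ for $C:=C_0^{-1}$, possibly after shrinking $C$, and defines a rigid analytic function there. Formal inversion then gives $\log_\ulE\circ\exp_\ulE=\id$ and $\exp_\ulE\circ\log_\ulE=\id$ wherever both sides converge, so after further shrinking $C$ we obtain that $\exp_\ulE$ restricts to a bijection, and in fact (since all higher coefficients contribute terms of norm $<\|\rho(x)\|$ once $C$ is small enough) an isometry, between $\{\|\Lie\rho(\xi)\|<C\}$ and $\{\|\rho(x)\|<C\}$ with inverse $\log_\ulE$.

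The functional equation $\log_\ulE(\phi_a(x))=(\Lie\phi_a)(\log_\ulE(x))$ I would derive by uniqueness: both sides are rigid analytic on a small enough ball (shrink $C$ so that $\phi_a$ maps the relevant ball into the domain of $\log_\ulE$), and applying $\exp_\ulE$ to the left side gives $\phi_a(x)=\exp_\ulE\bigl((\Lie\phi_a)(\log_\ulE x)\bigr)$ by the defining intertwining property $\phi_a\circ\exp_\ulE=\exp_\ulE\circ\Lie\phi_a$ of the exponential; since $\exp_\ulE$ is injective on this ball, the two sides agree. Finally, discreteness of $\Lambda(\ulE)=\ker(\exp_\ulE)$ in $\Lie E$ is immediate: on the ball $\{\|\Lie\rho(\xi)\|<C\}$ the map $\exp_\ulE$ is an isometry, hence injective, so $0$ is the only element of $\Lambda(\ulE)$ in that ball, which means $\Lambda(\ulE)$ is discrete.

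I expect the only genuine obstacle to be the convergence estimate for the inverse coefficients $L_i$, i.e.\ turning the everywhere-convergence of $\exp_\ulE$ into an explicit radius of convergence for $\log_\ulE$; this is a routine but slightly delicate non-archimedean power-series manipulation (Newton-polygon or direct induction on the recursion), and everything else — formal inversion, the isometry statement after shrinking, the functional equation by uniqueness, and discreteness of $\Lambda(\ulE)$ — follows formally. Since the lemma is stated as well known, I would in practice cite \cite{Anderson86} or \cite{BoeckleHartl} for the construction of $\exp_\ulE$ and only spell out the inversion argument.
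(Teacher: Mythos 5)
Your argument is essentially the paper's: the proof there also passes to the maximum norm, inverts $\sum_{i}E_i\,\tau^i$ formally in the twisted power series ring $\BC\{\{\tau\}\}^{d\times d}$ (via the geometric series $\sum_{n}\bigl(-\sum_{i\ge1}E_i\,\tau^i\bigr)^n$ rather than your recursion, which amounts to the same thing), and reads off the radius from the growth of the coefficients. The only difference is that the paper makes the constant explicit, $C=\sup\{\,\|E_i\|^{1/(q^i-1)}\colon i\ge1\,\}^{-1}$, with which your induction gives $\|L_i\|\le C^{1-q^i}$ and hence the isometry on the full ball $\{\|\rho(x)\|<C\}$ without any further shrinking.
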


\begin{proof}
Since all norms on $\BC^d$ are equivalent by \cite[Theorem~13.3]{Schikhof}, we may assume that $\|\,.\,\|$ is the maximum norm on $\BC^d$ and on $\BC^{d\times d}$. If $\rho\circ\exp_\ulE\circ(\Lie\rho)^{-1}=\sum_{i=0}^\infty E_i\,\tau^i$ then the constant $C:=\sup\{\!\sqrt[q^i-1]{\|E_i\|}\colon i\ge1\,\}^{-1}$ suffices and $\log_\ulE$ equals $(\sum_{i=0}^\infty E_i\,\tau^i)^{-1}=\sum_{n=0}^\infty\bigl(-\sum_{i=1}^\infty E_i\,\tau^i\bigr)^n\in\BC\{\!\{\tau\}\!\}^{d\times d}$ where $\BC\{\!\{\tau\}\!\}:=\bigl\{\,\sum_{i=0}^\infty b_i\tau^i\colon b_i\in\BC\,\bigr\}$ is the non-commutative power series ring with $\tau b=b^q\tau$ for $b\in \BC$.
\end{proof}

With every Anderson $A$-module $\ulE=(E,\phi)$ is associated an $A_\BC$-module as follows. This construction is due to Anderson \cite{Anderson86}; see also \cite[\S\,4.1]{BrownawellPapanikolas16}. Let $M:=M(\ulE):=\Hom_{\BF_q,\BC}(E,\BG_{a,\BC})$ be the $A_{\BC}$-module of $\BF_q$-linear homomorphisms of group schemes, where $a\in A$ and $b\in\BC$ act on $m\in M$ via
\[
a\colon m\mapsto m\circ\phi_a \qquad\text{and}\qquad b\colon m\mapsto \psi_b\circ m\,.
\]
The $\sigma^*$-semi-linear endomorphism of $M$ given by $m\mapsto\Frob_{q,\BG_a}\circ m$ yields an $A_{\BC}$-linear homomorphism $\tau_M\colon \sigma^\ast M\to M$. Note that after choosing an isomorphism $E\cong\BG_{a,\BC}^d$ of $\BF_q$-module schemes we obtain $M(\ulE)\cong\BC\{\tau\}^{1\times d}$ from Lemma~\ref{LemmaLucas}, where $\sum_ib_i\tau^i\in\BC\{\tau\}^{1\times d}$ with $b_i=(b_{i,1},\ldots,b_{i,d})\in\BC^{1\times d}$ corresponds to the morphism $\BG_{a,\BC}^d\to\BG_{a,\BC}$ given by $(x_1,\ldots,x_d)^T\mapsto\sum_{i,j}b_{i,j}x_j^{q^i}$. In particular the endomorphism $m\mapsto\tau_M(\sigma^*m)=\Frob_{q,\BG_a}\circ m$ of $M$ corresponds to the endomorphism $\sum_ib_i\tau^i\mapsto\tau\cdot(\sum_ib_i\tau^i)$ of $\BC\{\tau\}^{1\times d}$ which is injective. Since $\BC$ is perfect, $\sigma^*$ is an automorphism of $A_\BC$. So $\sigma^*\colon M\to\sigma^*M$ is an isomorphism and hence, $\tau_M$ is injective.

There is a natural isomorphism of $A_{\BC}$-modules
\begin{equation}\label{EqIsomLieE}
M/\tau_M(\sigma^*M)\;\isoto\;\Hom_\BC(\Lie E,\BC),\quad m\mod\tau_M(\sigma^*M)\;\longmapsto\; \Lie m\,.
\end{equation}
see \cite[Lemma~1.3.4]{Anderson86}, where $a\in A$ acts on $\Lie E$ via $\Lie\phi_a$. Condition~\eqref{EqDefAndersonAModuleA} in Definition~\ref{DefAndersonAModule}\ref{DefAndersonAModule_A} implies that $J^d=0$ on $M/\tau_M(\sigma^*M)$, where $J:=(a\otimes1-1\otimes\charmorph(a)\colon a\in A)\subset A_\BC$. Therefore, $\tau_M$ induces an isomorphism $\tau_M\colon \sigma^\ast M\otimes_{A_{\BC}}\GlobMotRing\isoto M\otimes_{A_{\BC}}\GlobMotRing$.

\begin{definition}\label{DefAbelianAMod}
Let $\ulE$ be an Anderson $A$-module over $\BC$ and define $\ulM(\ulE):=\bigl(M(\ulE),\tau_M\bigr)$ as above. If $M(\ulE)$ is a finite locally free $A_\BC$-module then $\ulE$ is called \emph{abelian} and $\ulM(\ulE)$ is the \emph{(effective) $A$-motive associated with $\ulE$}. The rank of $\ulM(\ulE)$ is called the \emph{rank of $\ulE$} and is denoted $\rk\ulE$.
\end{definition}

For example, if $C=\BP^1_{\BF_q}$, $A=\BF_q[t]$, $\theta=\charmorph(t)\in\BC$, and $\ulE=(\BG_{a,\BC},\phi_t=\theta+\tau)$ is the \emph{Carlitz-module}, then $\ulE$ is abelian of rank $1$ and $\ulM(\ulE)=(\BC[t],\tau_M=t-\theta)$ is the \emph{Carlitz $t$-motive} from Example~\ref{ExampleCHMotive}.

\begin{remark}\label{RemRankE}
(a) By \cite[Proposition~1.8.3]{Anderson86} the rank of $E$ is characterized by the isomorphism $\ulE[a](\BC)\cong \bigl(A/(a)\bigr)^{\oplus\rk\ulE}$ for every $a\in A$.

\medskip\noindent
(b) If $\ulE$ is a Drinfeld $A$-module the rank of $\ulE$ from Definition~\ref{DefAndersonAModule}\ref{DefAndersonAModule_B} equals the rank from Definition~\ref{DefAbelianAMod} by \cite[\S\,4.5]{Goss}.
\end{remark}

Anderson~\cite[Theorem~1]{Anderson86} proved the following

\begin{theorem}\label{ThmAnderson}
The contravariant functor $\ulE\mapsto\ulM(\ulE)$ is an anti-equivalence from the category of abelian Anderson $A$-modules onto the full subcategory of $\AMotCat$ consisting of those effective $A$-motives $(M,\tau_M)$ that are finitely generated over $\BC\{\tau\}$, where $\tau$ acts on $M$ through $m\mapsto \tau_M(\sigma^\ast m)$.
\end{theorem}

\subsection{The Relation with dual $A$-motives}\label{SectRelDualAMotives}

In unpublished work \cite{ABP_Rohrlich} Greg Anderson has clarified the relation between Anderson $A$-modules and dual $A$-motives. For convenience of the reader we reproduce some of his results here (in our own words); see also \cite[\S\,4.4]{BrownawellPapanikolas16}.

Let $E$ be a group scheme over $\BC$ isomorphic to $\BG_{a,\BC}^d$, and let $\phi\colon A\to\End_\BC(E)$ be a ring homomorphism. The set $\dM:=\dM(\ulE):=\Hom_{\BF_q,\BC}(\BG_{a,\BC},E)$ of $\BF_q$-linear homomorphisms of group schemes is an $A_\BC$-module, where $a\in A$ and $b\in\BC$ act on $\dm\in\dM$ via
\[
a\colon \dm\mapsto \phi_a\circ\dm \qquad\text{and}\qquad b\colon\dm\mapsto \dm\circ \psi_b\,.
\]
There is a $\sdsigma^\ast$-semi-linear endomorphism of $\dM=\dM(\ulE)$ given by $\dm\mapsto\dm\circ \Frob_{q,\BG_a}$, which induces an $A_{\BC}$-linear homomorphism $\sdtau_\dM\colon \sdsigma^\ast \dM\to \dM$. Note that after choosing an isomorphism $E\cong\BG_{a,\BC}^d$ of $\BF_q$-module schemes we obtain $\dM(\ulE)\cong\BC\{\tau\}^d$ from Lemma~\ref{LemmaLucas}, where $\sum_ib_i\tau^i\in\BC\{\tau\}^d$ with $b_i\in\BC^d$ corresponds to the morphism $\BG_{a,\BC}\to\BG_{a,\BC}^d$ given by $x\mapsto\sum_ib_ix^{q^i}$. In particular the endomorphism $\dm\mapsto\sdtau_\dM(\sdsigma^*\dm)=\dm\circ \Frob_{q,\BG_a}$ of $\dM$ corresponds to the endomorphism $\sum_ib_i\tau^i\mapsto(\sum_ib_i\tau^i)\cdot\tau$ of $\BC\{\tau\}^d$ which is injective. Since $\BC$ is perfect, $\sdsigma$ is an automorphism of $A_\BC$. So $\sdsigma^*\colon \dM\to\sdsigma^*\dM$ is an isomorphism and hence, $\sdtau_\dM$ is injective.

There is the following alternative description of $\dM(\ulE)$. Let $\BC\{\sdtau\}$ be the non-commutative polynomial ring over $\BC$ in the variable $\sdtau$ with $\sdtau b=\sqrt[q]{b}\,\sdtau$ for $b\in\BC$. Consider the $\dagger$-operation (called $*$-operation in \cite[\S\,4.4]{BrownawellPapanikolas16}) which sends a matrix $B=\sum_i B_i\tau^i\in\BC\{\tau\}^{r\times r'}$ with $B_i\in\BC^{r\times r'}$ to the matrix $B^\dagger:=(\sum_i\sdsigma^{i*}(B_i)\sdtau^i)^T \in\BC\{\sdtau\}^{r'\times r}$. Here $(\ldots)^T$ denotes the transpose. The $\dagger$-operation satisfies $(BC)^\dagger=C^\dagger B^\dagger$ for matrices $B\in\BC\{\tau\}^{r\times r'}$ and $C\in\BC\{\tau\}^{r'\times r''}$. It induces an isomorphism of $A_\BC$-modules
\begin{equation}\label{EqDagger}
\dagger\colon\;\dM(\ulE)\;\cong\;\BC\{\tau\}^d\;\isoto\;\BC\{\sdtau\}^{1\times d},\quad\dm\;\mapsto\;\dm^\dagger,
\end{equation}
where $a\in A$ and $b\in\BC$ act on $\dm^\dagger\in\BC\{\sdtau\}^{1\times d}$ via
\[
a\colon \dm^\dagger\mapsto \dm^\dagger\cdot\Delta_a^{\,\dagger} \qquad\text{and}\qquad b\colon\dm^\dagger\mapsto b\cdot\dm^\dagger\,.
\]
Here $\Delta_a\in\BC\{\tau\}^{d\times d}=\End_{\BF_q,\BC}(\BG_{a,\BC}^d)\cong\End_{\BF_q,\BC}(E)$ is the matrix corresponding to $\phi_a$. Under this isomorphism $\dagger\colon\dM(\ulE)\isoto\BC\{\sdtau\}^{1\times d}$ the $\sdsigma^*$-semi-linear endomorphism $\dm\mapsto\dm\circ \Frob_{q,\BG_a}$ of $\dM(\ulE)$ corresponds to the $\sdsigma^*$-semi-linear endomorphism $\dm^\dagger\mapsto\sdtau\cdot\dm^\dagger$ of $\BC\{\sdtau\}^{1\times d}$. This gives $\dM(\ulE)$ the structure of a finite free left $\BC\{\sdtau\}$-module which is independent of the isomorphism $E\cong\BG_{a,\BC}^d$.

\begin{proposition}\label{prop:commutingdiagrams}
Let $E$ be a group scheme over $\BC$ isomorphic to $\BG_{a,\BC}^d$, and let $\phi\colon A\to\End_\BC(E)$ be a ring homomorphism. Set $\ulE=(E,\varphi)$ and let $\dM=\dM(\ulE)$ and $\sdtau_\dM\colon \sdsigma^\ast \dM\to \dM$ be as above. Then there is a canonical exact sequence of $A$-modules
\begin{equation}\label{EqDualMAndE1}
\xymatrix @R-2pc {
0\es \ar[r] & \es\dM \es\ar[r] & \qquad\qquad\qquad\es\dM\qquad \ar[r]^{\qquad\quad\delta_1} & \es E(\BC)\es \ar[r] & \es 0 \,,\\
& \es\dm\es \ar@{|->}[r] & \es\sdtau_\dM(\sdsigma^*\dm)-\dm\,,\quad\dm\es \ar@{|->}[r] & \es\dm(1)\es
}
\end{equation}
and a canonical exact sequence of $A_\BC$-modules
\begin{equation}\label{EqDualMAndE2}
\xymatrix @R-2pc @C+1pc{
0\es \ar[r] & \es\sdsigma^*\dM\es \ar[r]^{\es\sdtau_\dM} & \;\,\dM\;\, \ar[r]^{\delta_0\es\quad} & \quad\;\Lie E\;\quad \ar[r] & \es0\,. \\
& & \es\dm\es \ar@{|->}[r] & \es(\Lie\dm)(1)
}
\end{equation}
In particular, $\ulE=(E,\phi)$ is an Anderson $A$-module if and only if $\sdtau_\dM$ induces an isomorphism $\sdtau_\dM\colon \sdsigma^\ast \dM\otimes_{A_{\BC}}\GlobMotRing\isoto \dM\otimes_{A_{\BC}}\GlobMotRing$. In this case, $\delta_0$ factors through $\dM/J^d\dM$ and extends to an $A_\BC$-homomorphism $\delta_0\colon\dM\otimes_{A_\BC}\CO\bigl(\dotFC_\BC\setminus\bigcup_{i\in\BN_{>0}}\Var(\ssigma^{i\ast}J)\bigr)\onto \Lie E$.

Under the above identifications $E(\BC)\cong\BC^d$ and $\Lie E\cong\BC^d$ and $\dagger\colon\dM(\ulE)\isoto\BC\{\sdtau\}^{1\times d}$ these sequences take the form
\[
\xymatrix @R-2pc {
0\es \ar[r] & \es\BC\{\sdtau\}^{1\times d}\es\ar[r]^{\sdtau-1\;\qquad} & \qquad\quad\BC\{\sdtau\}^{1\times d}\qquad \ar[r]^{\quad\;\delta_1} & \qquad\; \BC^d\;\qquad \ar[r] & \es 0 \,,\\
& \quad\es\;\;\dm^\dagger\;\es\quad \ar@{|->}[r] & \es\sdtau\dm^\dagger-\dm^\dagger\,,\quad\sum\limits_i c_i\sdtau^i\es \ar@{|->}[r] & \es\sum\limits_i\sigma^{i*}(c_i)^T\es
}
\]
and
\[
\xymatrix @R-2pc @C+1pc {
0\es \ar[r] & \es\BC\{\sdtau\}^{1\times d}\es \ar[r]^{\sdtau\quad} & \quad\es\BC\{\sdtau\}^{1\times d}\es\quad \ar[r]^{\qquad\delta_0} & \es\BC^d\es \ar[r] & \es0\,. \\
& \quad\es\;\;\dm^\dagger\;\es\quad \ar@{|->}[r] & \es\sdtau\dm^\dagger\,,\quad\sum\limits_i c_i\sdtau^i\es \ar@{|->}[r] & \es c_0^T \es
}
\]
\end{proposition}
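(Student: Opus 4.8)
The proof is essentially a computation with the explicit polynomial description of $\dM(\ulE)$, followed by a short module-theoretic argument for the Anderson condition; I would organize it in three steps.

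\emph{Step 1 (coordinates and linearity).} Fix an isomorphism $E\cong\BG_{a,\BC}^d$ of $\BF_q$-module schemes, identifying $\dM(\ulE)$ with $\BC\{\tau\}^d$ so that $\dm=\sum_i b_i\tau^i$ (with $b_i\in\BC^d$) corresponds to $x\mapsto\sum_i b_ix^{q^i}$, and use the $\dagger$-isomorphism \eqref{EqDagger} to pass to $\BC\{\sdtau\}^{1\times d}$. In these coordinates $\delta_1(\dm)=\dm(1)=\sum_i b_i$, $\delta_0(\dm)=(\Lie\dm)(1)=b_0$ (the higher coefficients drop out since $\tfrac{d}{dx}x^{q^i}=0$ in characteristic $p$ for $i\ge1$), and $\sdtau_\dM$ is the coefficient shift $\sum_i b_i\tau^i\mapsto\sum_i b_i\tau^{i+1}$, i.e.\ $\dm\mapsto\dm\circ\Frob_{q,\BG_a}$; applying $\dagger$ rewrites these as the displayed formulas. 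Since $\delta_0$ and $\delta_1$ are defined by evaluation they are independent of the chosen coordinates, and one checks directly that $\delta_1$ and $\dm\mapsto\sdtau_\dM(\sdsigma^\ast\dm)-\dm$ are $A$-linear while $\delta_0$ and $\sdtau_\dM$ are $A_\BC$-linear; the only point to note is that $\dm\mapsto\dm\circ\Frob_{q,\BG_a}$ is $\sdsigma^\ast$-semilinear, because $\psi_b\circ\Frob_{q,\BG_a}=\Frob_{q,\BG_a}\circ\psi_{b^{1/q}}$.

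\emph{Step 2 (exactness).} For \eqref{EqDualMAndE2}: under the canonical bijection $\dM\to\sdsigma^\ast\dM$, $\dm\mapsto\dm\otimes1$ (bijective because $\BC$ is perfect, so $\sdsigma^\ast$ is an automorphism of $A_\BC$), the map $\sdtau_\dM$ becomes the coefficient shift, which is injective with image the $A_\BC$-submodule $\{\sum_{i\ge1}b_i\tau^i\}$ of polynomials with vanishing constant term; this is exactly $\ker\delta_0$, and $\delta_0$ is surjective by plugging in constants. For \eqref{EqDualMAndE1}: the map $\dm\mapsto\sdtau_\dM(\sdsigma^\ast\dm)-\dm$ is $\sum_i b_i\tau^i\mapsto -b_0+\sum_{i\ge1}(b_{i-1}-b_i)\tau^i$, visibly injective; given $\dm=\sum_i b_i\tau^i$ with $\delta_1(\dm)=\sum_i b_i=0$, the telescoping recursion $a_i:=-(b_0+\cdots+b_i)$ is eventually zero precisely because $\sum_i b_i=0$, and $\sum_i a_i\tau^i$ maps to $\dm$, so $\ker\delta_1$ is the image of $\dm\mapsto\sdtau_\dM(\sdsigma^\ast\dm)-\dm$; $\delta_1$ is surjective by constants, and the composite vanishes by the identity $\sigma^{(i+1)\ast}\sdsigma^\ast=\sigma^{i\ast}$. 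Both sequences are exact as abelian groups, hence as $A$- resp.\ $A_\BC$-modules by Step~1.

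\emph{Step 3 (the Anderson condition and the extension of $\delta_0$).} From \eqref{EqDualMAndE2}, $\delta_0$ induces an $A_\BC$-linear isomorphism $\dM/\sdtau_\dM(\sdsigma^\ast\dM)\isoto\Lie E$, under which $a\otimes1-1\otimes\charmorph(a)\in J$ acts on $\Lie E$ as $\Lie\phi_a-\charmorph(a)$; this is the dual counterpart of \eqref{EqIsomLieE}. Now $\Lie E$ is a $\BC$-vector space of dimension $d$, hence a finite-length $A_\BC$-module, and I claim \eqref{EqDefAndersonAModuleA} holds for all $a\in A$ if and only if $\Supp(\Lie E)=\{\Var(J)\}$: if $\Lie E$ is supported only at the maximal ideal $J$, then, being of $\BC$-dimension $d$ over the discrete valuation ring $(A_\BC)_J$, it is annihilated by $J^d$, hence by $(a\otimes1-1\otimes\charmorph(a))^d$ for every $a$; conversely, localizing at a support point and noting that the two nilpotent endomorphisms $\Lie\phi_a-\charmorph(a)$ and $\Lie\phi_a-a(x)$ of a finite dimensional space differ by a scalar, that scalar must vanish, forcing the point to be $\Var(J)$. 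Thus \eqref{EqDefAndersonAModuleA} is equivalent to $J^d\cdot\Lie E=0$, equivalently to $\Lie E\otimes_{A_\BC}\GlobMotRing=0$, equivalently (since $\sdtau_\dM$ is injective and localization is flat) to $\sdtau_\dM$ becoming an isomorphism over $\GlobMotRing$. Assuming this, $\delta_0$ kills $J^d\dM$ and factors through $\dM/J^d\dM$; since $A_\BC/J^d\cong\BC\dbl z-\zeta\dbr/(z-\zeta)^d$ by Lemma~\ref{LemmaZ-Zeta} and $\Var(J)$ is a point of $\dotFC_\BC\setminus\bigcup_{i\in\BN_{>0}}\Var(\ssigma^{i\ast}J)$ with the same completed local ring, tensoring $\delta_0$ with $\CO\bigl(\dotFC_\BC\setminus\bigcup_{i\in\BN_{>0}}\Var(\ssigma^{i\ast}J)\bigr)$ over $A_\BC$ gives the asserted surjective extension. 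The one step needing genuine care is this last equivalence — extracting the pointwise eigenvalue condition \eqref{EqDefAndersonAModuleA} from the vanishing of $\coker(\sdtau_\dM)$ over $\GlobMotRing$ and conversely; the rest is bookkeeping.
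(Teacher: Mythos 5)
Your proof is correct and follows essentially the same route as the paper: explicit coordinates $\dM\cong\BC\{\sdtau\}^{1\times d}$ via the $\dagger$-operation, the telescoping primitive $a_i=-(b_0+\cdots+b_i)$ for exactness of \eqref{EqDualMAndE1}, and the identification of $\ker\delta_0$ with the image of the coefficient shift for \eqref{EqDualMAndE2}. Your Step~3 merely fleshes out, via the support of $\Lie E$ as a finite-length $A_\BC$-module, the equivalence that the paper asserts in one sentence (``since $\Lie E$ is a $d$-dimensional $\BC$-vector space, nilpotence of $J$ on $\Lie E$ is equivalent to \eqref{EqDefAndersonAModuleA}''), and your argument for it is sound.
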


\begin{proof}
The map $\delta_1$ is $A$-linear because $a\cdot\dm=\phi_a\circ\dm\mapsto(\phi_a\circ\dm)(1)=\phi_a(\dm(1))$. The map $\delta_0$ is a homomorphism of $A_\BC$-modules because $a\cdot\dm=\phi_a\circ\dm\mapsto\Lie(\phi_a\circ\dm)(1)=\Lie\phi_a(\Lie\dm(1))$ and 
\[
b\cdot\dm \;=\; \dm\circ\psi_b \;\longmapsto\; \Lie(\dm\circ\psi_b)(1) \;=\; (\Lie\dm\circ\Lie\psi_b)(1) \;=\; b\cdot(\Lie\dm)(1)\,. 
\]
To prove that the composition of the two morphisms in \eqref{EqDualMAndE1} is zero, we compute $(\sdtau_\dM(\sdsigma^*\dm)-\dm)(1):=\dm\circ \Frob_{q,\BG_a}(1) - \dm(1) = \dm(1)-\dm(1)=0$ for all $\dm\in\dM$.  To prove that $\delta_0\circ\sdtau_\dM=0$ in \eqref{EqDualMAndE2}, note that since $\BC$ is perfect, $\sdsigma^*\colon\dM\to\sdsigma^*\dM$ is an isomorphism. Therefore, every element of $\sdtau_\dM(\sdsigma^*\dM)$ is of the form $\sdtau_\dM(\sdsigma^*\dm)=\dm\circ \Frob_{q,\BG_a}$ and satisfies $\Lie(\dm\circ \Frob_{q,\BG_a})=(\Lie\dm)\circ(\Lie\Frob_{q,\BG_a})=0$.

Furthermore, $\delta_1$ is surjective because through every point $x\in E(\BC)$ there is a morphism $\dm\colon\BG_{a,\BC}\to E$ with $\dm(1)=x$. For example if we identify the $\BF_q$-module schemes $\rho\colon E\isoto\BG_{a,\BC}^d=\Spec\BC[X_1,\ldots,X_d]$ and $\BG_{a,\BC}=\Spec\BC[Y]$ we can take $\dm\colon X_i\mapsto x_iY$ where $\rho(x)=(x_1,\ldots,x_d)^T$. This $\dm$ also satisfies $(\Lie\dm)(1)=x$ and this shows that $\delta_0$ is surjective.

To show that \eqref{EqDualMAndE1} and \eqref{EqDualMAndE2} are exact, we keep this identification and the induced isomorphism $\dM(\ulE)\cong\BC\{\tau\}^{d}$. If $\dm=\sum_i b_i\tau^i\in\BC\{\tau\}^d$ satisfies $0=\dm(1)=\sum_i b_i$, then 
\[
\dm \;=\; \sdtau_\dM(\sdsigma^*\dm')-\dm' \;=\; \dm'\tau-\dm'
\]
for $\dm'=\sum_i b_i(1+\tau+\ldots+\tau^{i-1})$. This proves that \eqref{EqDualMAndE1} is exact in the middle. Exactness on the left holds because multiplication with $\tau-1$ is injective on $\BC\{\tau\}^{d}$. Clearly, \eqref{EqDualMAndE2} is exact on the left because $\sdtau_\dM$ is injective. If $\dm=\sum_i b_i\tau^i$ satisfies $0=(\Lie\dm)(1)=b_0$ then $\dm=(\sum_ib_i\tau^{i-1})\cdot\tau=\sdtau_\dM(\sdsigma^*\sum_ib_i\tau^{i-1})\in\sdtau_\dM(\sdsigma^*\dM)$, and this proves the exactness of \eqref{EqDualMAndE2}. Moreover, under the $\dagger$-operation $\dm=\sum_i b_i\tau^i$ is sent to $\dm^\dagger=\sum_ic_i\sdtau^i$ for $c_i=\sdsigma^{i*}(b_i)^T$ and so $\delta_1(\dm)=\sum_ib_i=\sum_i\sigma^{i*}(c_i)^T$ and $\delta_0(\dm)=b_0=c_0^T$.

Finally, $\sdtau_\dM$ induces an isomorphism $\sdtau_\dM\colon \sdsigma^\ast \dM\otimes_{A_{\BC}}\GlobMotRing\isoto \dM\otimes_{A_{\BC}}\GlobMotRing$ if and only if the elements of $J$ are nilpotent on $\Lie E$. Since $\Lie E$ is a $d$-dimensional $\BC$-vector space, the latter is equivalent to condition \eqref{EqDefAndersonAModuleA} in Definition~\ref{DefAndersonAModule}\ref{DefAndersonAModule_A}. If this holds, the morphism $\delta_0$ factors through $\dM/J^d\dM$, and extends to a homomorphism $\delta_0\colon\dM\otimes_{A_\BC}\CO\bigl(\dotFC_\BC\setminus\bigcup_{i\in\BN_{>0}}\Var(\ssigma^{i\ast}J)\bigr)\onto \Lie E$ because $\CO\bigl(\dotFC_\BC\setminus\bigcup_{i\in\BN_{>0}}\Var(\ssigma^{i\ast}J)\bigr)/(J^d)\;=\;A_\BC/J^d$.
\end{proof}

\begin{definition}\label{DefDualMOfE}
Let $\ulE$ be an Anderson $A$-module over $\BC$ and define $\uldM(\ulE):=\bigl(\dM(\ulE),\sdtau_\dM\bigr)$ as above. If $\dM(\ulE)$ is a finite locally free $A_\BC$-module then $\ulE$ is called \emph{$A$-finite} and $\uldM(\ulE)$ is the \emph{(effective) dual $A$-motive associated with $\ulE$}. The rank of $\uldM(\ulE)$ is called the \emph{rank of $\ulE$} and is denoted $\rk\ulE$.
\end{definition}

\begin{remark}\label{Rem2RankE}
By the analog of \cite[Proposition~1.8.3]{Anderson86} (see Proposition~\ref{PropSwitcheroo} below) the rank of $E$ is characterized by $\ulE[a](\BC)\cong \bigl(A/(a)\bigr)^{\oplus\rk\ulE}$ for every $a\in A$, where $\ulE[a]:=\ker(\phi_a\colon E\to E)$. Together with Remark~\ref{RemRankE} this shows that for an Anderson $A$-module $\ulE$ which is both abelian and $A$-finite the Definitions~\ref{DefAbelianAMod} and \ref{DefDualMOfE} of the rank of $\ulE$ coincide.
\end{remark}

The assignment $\ulE\mapsto\uldM(\ulE)=\left(\Hom_{\BF_q,\BC}(\BG_{a,\BC},E),\sdsigma^\ast\dm \mapsto \dm\circ\Frob_{q,\BG_a}\right)$ is a covariant functor because a morphism $f\colon\ulE=(E,\phi)\to\ulE'=(E',\phi')$ between abelian Anderson $A$-modules (which satisfies $f\circ\phi_a=\phi'_a\circ f$) is sent to 
\[
\uldM(f)\colon\dM(\ulE)\longto\uldM(E'),\quad \dm\mapsto f\circ\dm\,,
\]
which satisfies $a\cdot\uldM(f)(\dm)=\phi'_a\circ (f\circ\dm)=f\circ(\phi_a\circ\dm)=\uldM(f)(a\cdot\dm)$ and $b\cdot\uldM(f)(\dm)=(f\circ\dm)\circ\psi_b=\uldM(f)(b\cdot\dm)$ and $(\sdtau_{\dM(\ulE')}\circ\sdsigma^\ast\uldM(f))(\sdsigma^*\dm)=(f\circ\dm)\circ\Frob_{q,\BG_a}=(\uldM(f)\circ\sdtau_{\dM(\ulE)})(\sdsigma^*\dm)$ for $a\in A,b\in\BC$ and $\dm\in\dM(\ulE)$. The following result is due to Anderson; see \cite[Theorem~4.4.1]{BrownawellPapanikolas16}.

\begin{theorem}
\label{theorem:equivalence}
\begin{enumerate}
\item \label{theorem:equivalence_A}
The functor $\uldM(\,.\,)\colon\ulE\mapsto\uldM(\ulE)$ from the category of Anderson $A$-modules to the category of pairs $(\dM,\sdtau_\dM)$ consisting of an $A_\BC$-module $\dM$ and an isomorphism of $\GlobMotRing$-modules $\sdtau_\dM\colon \sdsigma^\ast \dM[J^{-1}]\isoto \dM[J^{-1}]$ is fully faithful.
\item \label{theorem:equivalence_B}
The functor $\uldM(\,.\,)$ restricts to an equivalence from the category of $A$-finite Anderson $A$-modules onto the full subcategory of $\dualAMotCat$ consisting of those effective dual $A$-motives $(\dM,\sdtau_\dM)$ which are finitely generated as left $\BC\{\sdtau\}$-modules, where $\sdtau$ acts on $\dM$ through $\dm\mapsto \sdtau_\dM(\sdsigma^\ast \dm)$.
\end{enumerate}
\end{theorem}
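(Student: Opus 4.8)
The plan is to build the proof of Theorem~\ref{theorem:equivalence} by a combination of the anti-equivalence $\AMotCat\leftrightarrow\dualAMotCat$ from Proposition~\ref{PropDualizing}, Anderson's theorem recalled in Remark~\ref{RemRankE}.3 for abelian Anderson $A$-modules, and a direct construction of the quasi-inverse functor. For part~\ref{theorem:equivalence_A}, faithfulness is immediate: if $f\colon\ulE\to\ulE'$ induces $\uldM(f)=0$, then $f\circ\dm=0$ for all $\dm\in\dM(\ulE)$, and since the identity $\BG_{a,\BC}\to\BG_{a,\BC}$ composed into each coordinate produces maps $\dm$ whose images generate $E$, we get $f=0$. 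For fullness, given a morphism $g\colon\uldM(\ulE)\to\uldM(\ulE')$ of pairs, one recovers $f$ using the canonical exact sequences \eqref{EqDualMAndE1} and \eqref{EqDualMAndE2} of Proposition~\ref{prop:commutingdiagrams}: $g$ is $A_\BC$-linear and commutes with $\sdtau$, hence descends to maps $\coker(\sdtau_{\dM(\ulE)})\to\coker(\sdtau_{\dM(\ulE')})$ and $\coker(\sdtau_{\dM(\ulE)}-\id)\to\coker(\sdtau_{\dM(\ulE')}-\id)$, i.e.\ to an $A_\BC$-linear map $\Lie E\to\Lie E'$ and an $A$-linear map $E(\BC)\to E'(\BC)$ that is $\BC$-affine in an appropriate sense. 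The key point is that such a pair of maps, compatible via $\exp$, comes from a unique homomorphism of group schemes $f\colon E\to E'$; this is because $g$ respects the $\BC\{\sdtau\}$-module structure via the $\dagger$-isomorphism \eqref{EqDagger}, and the $\dagger$-operation identifies $\Hom_{\BC\{\sdtau\}}\bigl(\BC\{\sdtau\}^{1\times d},\BC\{\sdtau\}^{1\times d'}\bigr)$ with $\Hom_{\BF_q,\BC}(E,E')^\dagger$ (a matrix $B\in\BC\{\tau\}^{d'\times d}$ acting by left multiplication on column vectors corresponds under $\dagger$ to right multiplication by $B^\dagger$). Thus $g=\uldM(f)$ for a unique $\BF_q$-linear $f$, and commutation of $g$ with the $A$-action forces $f\circ\phi_a=\phi'_a\circ f$.

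For part~\ref{theorem:equivalence_B} the essential-image statement requires showing that an effective dual $A$-motive $\uldM=(\dM,\sdtau_\dM)$ that is finitely generated over $\BC\{\sdtau\}$ arises as $\uldM(\ulE)$ for an $A$-finite Anderson $A$-module $\ulE$. First, since $\BC\{\sdtau\}$ is a (left and right) principal ideal domain and $\dM$ is finite projective over $A_\BC$ hence $\BF_q$-torsion-free, $\dM$ is a finite free left $\BC\{\sdtau\}$-module; let $d$ be its rank. Choose a $\BC\{\sdtau\}$-basis and use it to identify $\dM\cong\BC\{\sdtau\}^{1\times d}$ with $\sdtau$ acting by left multiplication by $\sdtau$. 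The $A_\BC$-module structure is then given by commuting right-multiplication operators, i.e.\ for each $a\in A$ a matrix $\Delta_a^{\,\dagger}\in\BC\{\sdtau\}^{d\times d}$ with $a\mapsto\Delta_a^{\,\dagger}$ a ring homomorphism; applying the inverse $\dagger$-operation produces matrices $\Delta_a\in\BC\{\tau\}^{d\times d}$, i.e.\ a ring homomorphism $\phi\colon A\to\End_{\BF_q,\BC}(\BG_{a,\BC}^d)$, and we set $E:=\BG_{a,\BC}^d$, $\ulE:=(E,\phi)$. By construction $\uldM(\ulE)\cong\uldM$ as pairs. It remains to check that $\ulE$ is actually an Anderson $A$-module, i.e.\ that condition~\eqref{EqDefAndersonAModuleA} holds; by Proposition~\ref{prop:commutingdiagrams} this is equivalent to $\sdtau_\dM$ inducing an isomorphism after inverting $J$, which is exactly the hypothesis that $\uldM$ is a dual $A$-motive. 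Then $\dM(\ulE)=\dM$ is finite locally free over $A_\BC$, so $\ulE$ is $A$-finite. Full faithfulness of the restricted functor is inherited from part~\ref{theorem:equivalence_A}.

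The main obstacle I anticipate is the careful bookkeeping in part~\ref{theorem:equivalence_A} around the two ``faces'' of $\dM(\ulE)$: as an $A_\BC$-module it sees $\Lie E$ and the $J$-adic structure, while as a left $\BC\{\sdtau\}$-module it sees $E$ as an $\BF_q$-module scheme, and the $\dagger$-operation is the dictionary between homomorphisms of group schemes and $\BC\{\sdtau\}$-linear maps. One must verify that a morphism $g$ of pairs $(\dM,\sdtau_\dM)$ — which by definition is only $A_\BC$-linear and $\sdtau$-equivariant — is automatically left $\BC\{\sdtau\}$-linear; this is where $\sdtau$-equivariance is used, since $\sdtau$-equivariance plus $\BC$-semilinearity of $\sdtau$ forces compatibility with all of $\BC\{\sdtau\}$. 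Concretely: $g$ commutes with left multiplication by $\sdtau$ (that is $\sdtau$-equivariance) and with multiplication by scalars $b\in\BC$ (that is part of $A_\BC$-linearity, using that $\BC\subset A_\BC$), and these together generate $\BC\{\sdtau\}$. Once this is in place the $\dagger$-dictionary does the rest. I would also make sure to invoke that $\BC$ is perfect throughout (so $\sdsigma^*$ is an automorphism and $\sdtau_\dM$ is injective, as already noted before Proposition~\ref{prop:commutingdiagrams}), and that finite projective $A_\BC$-modules are reflexive so that the constructions in Proposition~\ref{PropDualizing} and the present argument are compatible — but I would not belabor these routine points.
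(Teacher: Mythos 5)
Your proposal is correct and follows essentially the same route as the paper: both arguments identify $\dM(\ulE)$ with a free module over the twisted polynomial ring, observe that a morphism of pairs commutes with the scalar action of $\BC$ and with $\sdtau$ and hence with all of the twisted polynomial ring, so that it is given by a matrix, i.e.\ by a homomorphism of $\BF_q$-module schemes (the paper works directly with $\BC\{\tau\}^{d}$ and right multiplication, you with $\BC\{\sdtau\}^{1\times d}$ via $\dagger$ — an equivalent bookkeeping), and for essential surjectivity both transport the $A$-action through the same dictionary and read off condition~\eqref{EqDefAndersonAModuleA} from Proposition~\ref{prop:commutingdiagrams}. The only nit is that freeness of $\dM$ over $\BC\{\sdtau\}$ needs torsion-freeness over $\BC\{\sdtau\}$ (deduced from $\dM$ being a torsion-free $A_\BC$-module, as in the $\sdtau$-analog of Anderson's Lemma~1.4.5), not merely $\BF_q$-torsion-freeness; your unnecessary detour through $\exp$ in the fullness argument is harmless since you land on the correct $\dagger$-mechanism.
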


\begin{proof}
\ref{theorem:equivalence_A} \es Let $\ulE$ and $\ulE'$ be Anderson $A$-modules and fix isomorphisms $E\cong\BG_{a,\BC}^d$ and $E'\cong\BG_{a,\BC}^{d'}$ of $\BF_q$-module schemes. Then under the identification $\Hom_{\BF_q,\BC}(E,E')\cong\BC\{\tau\}^{d'\times d}$ from Lemma~\ref{LemmaLucas}, a morphism $f\colon\ulE\to\ulE'$ corresponds to a matrix $F\in\BC\{\tau\}^{d'\times d}$ and the induced morphism $\uldM(f)\colon\BC\{\tau\}^d\cong\uldM(\ulE)\to\uldM(\ulE')\cong\BC\{\tau\}^{d'}$ corresponds to multiplication on the left with the matrix $F$. 

Conversely, let $g\colon\BC\{\tau\}^d\cong\uldM(\ulE)\to\uldM(\ulE')\cong\BC\{\tau\}^{d'}$ be a morphism, that is $\sdtau_{\dM(\ulE')}\circ\sdsigma^*g=g\circ\sdtau_{\dM(\ulE)}$. Since $\sdtau_{\dM(\ulE)}(\sdsigma^*\dm):=\dm\circ\Frob_{q,\BG_a}=\dm\cdot\tau$ in $\uldM(\ulE)\cong\BC\{\tau\}^d$, this means that the map $g\colon\BC\{\tau\}^d\to\BC\{\tau\}^{d'}$ is compatible with multiplication by $\BC\{\tau\}$ on the right. Therefore, $g$ corresponds to multiplication on the left by a matrix $G\in\BC\{\tau\}^{d'\times d}$. This means that $g$ induces a morphism of $\BF_q$-module schemes $f\colon E\to E'$ with $\uldM(f)=g$. Since $g$ commutes with the $A$-action on $\uldM(\ulE)$ and $\uldM(\ulE')$, also $f$ commutes with the $A$-action on $\ulE$ and $\ulE'$, that is $f$ is a morphism of Anderson $A$-modules. This proves the full faithfulness of $\uldM(\,.\,)$.

\medskip\noindent
\ref{theorem:equivalence_B} \es
Let $\uldM$ be a dual $A$-motive which is finitely generated over $\BC\{\sdtau\}$. Then $\dM$ is a finite free $\BC\{\sdtau\}$-module by the $\BC\{\sdtau\}$-analog of \cite[Lemma~1.4.5]{Anderson86}, because it is a torsion free $A_\BC$-module. Any $\BC\{\sdtau\}$-basis of $\dM$ provides an isomorphism $\dM\cong\Hom_{\BF_q,\BC}(\BG_{a,\BC},E)=:\dM(E)$ compatible with $\sdtau_\dM$ and $\sdtau_{\dM(E)}$, where $E:=\BG_{a,\BC}^d$ with $d:=\rk_{\BC\{\sdtau\}}\dM$. The action of $a\in A$ on $\dM$ commutes with $\sdtau_\dM$. Therefore, it is given by multiplication on $\dM\cong\BC\{\sdtau\}^{1\times d}$ on the right by a matrix $\Delta_a^\dagger=\sum_iB_i\sdtau^i\in\BC\{\sdtau\}^{d\times d}$. The map $\phi\colon A\to\BC\{\tau\}^{d\times d}=\End_{\BF_q,\BC}(E)$, $a\mapsto\Delta_a:=(\sum_i\sigma^{i*}(B_i)\tau^i)^T$ makes $E$ into an $A$-module scheme. Sequence \eqref{EqDualMAndE2} shows that $\ulE=(E,\phi)$ is an Anderson $A$-module which is $A$-finite, because $\uldM\cong\uldM(\ulE)$.
\end{proof}

Let $\ulE=(E,\phi)$ be a (not necessarily $A$-finite) Anderson $A$-module and let $\uldM=(\dM,\sdtau_\dM)=\uldM(\ulE)$ be as in Definition~\ref{DefDualMOfE}. The following crucial description of the torsion points of $\ulE$ is Anderson's ``switcheroo''; see \cite{ABP_Rohrlich} or \cite[Lemma~4.1.23]{JuschkaDipl}. 

\begin{proposition}\label{PropSwitcheroo}
Let $\dm\in\dM$ and let $x=\delta_1(\dm)=\dm(1)\in E(\BC)$. Let $a\in A\setminus\BF_q$. Then there is a canonical bijection
\begin{eqnarray}
\bigl\{\,\dm'\in \dM/a\dM\colon \sdtau_\dM(\sdsigma^*\dm')-\dm'=\dm \text{ in }\dM/a\dM\,\bigr\} & \isoto &  \bigl\{\,x'\in E(\BC)\colon\phi_a(x')=x\,\bigr\}\nonumber \\[2mm]
\dm' \qquad \qquad \qquad \qquad & \longmapsto & \delta_1\bigl(a^{-1}(\dm+\dm'-\sdtau_\dM(\sdsigma^*\dm'))\bigr)\,,\label{EqSwitcheroo}
\end{eqnarray}
where $x':=\delta_1\bigl(a^{-1}(\dm+\dm'-\sdtau_\dM(\sdsigma^*\dm'))\bigr)$ is defined by choosing any representative $\dm'\in\dM$ of $\dm'\in\dM/a\dM$, taking $\dm''\in\dM$ as the unique element with $\dm+\dm'-\sdtau_\dM(\sdsigma^*\dm')=a\dm''$, and setting $x':=\delta_1(\dm'')$.

If $\dm=0$ both sides are $A/(a)$-modules and the bijection is an isomorphism of $A/(a)$-modules
\[
(\dM/a\dM)^\sdtau \es\isoto\es \ulE[a](\BC)\,,\quad \dm'\es\longmapsto\es\delta_1\bigl(a^{-1}(\dm'-\sdtau_\dM(\sdsigma^*\dm'))\bigr)\,.
\]
\end{proposition}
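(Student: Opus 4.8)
\textbf{Proof proposal for Proposition~\ref{PropSwitcheroo}.}

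The plan is to exploit the two exact sequences \eqref{EqDualMAndE1} and \eqref{EqDualMAndE2} established just above, reducing the point-counting statement to a diagram chase. First I would reduce \eqref{EqDualMAndE1} modulo $a$. Since $A_\BC$ is flat over $A$, tensoring the exact sequence $0\to\dM\xrightarrow{\sdtau_\dM(\sdsigma^*\,\cdot\,)-\id}\dM\xrightarrow{\delta_1}E(\BC)\to0$ with $A/(a)$ over $A$ is not quite what we want because $E(\BC)$ is not the right kind of module; instead I would directly observe that $\phi_a$ acts compatibly with all three terms (the action of $a\in A$ on $\dM$ being $\dm\mapsto\phi_a\circ\dm$, hence commuting with both $\sdtau_\dM$ and $\delta_1$), so we get a commutative ladder with rows \eqref{EqDualMAndE1} and vertical maps multiplication by $a$. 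Because $a\in A\setminus\BF_q$ is a nonzerodivisor on $\dM$ (which is $A_\BC$-torsion-free) and $\phi_a\colon E(\BC)\to E(\BC)$ is surjective (its kernel $\ulE[a](\BC)$ is finite and $E\cong\BG_{a,\BC}^d$ so $\phi_a$ is a nonconstant polynomial map, hence surjective on $\BC$-points, $\BC$ being algebraically closed), the snake lemma applied to this ladder yields an exact sequence
\begin{equation*}
0\longto \frac{\dM}{a\dM}\xrightarrow{\;\sdtau_\dM(\sdsigma^*\,\cdot\,)-\id\;}\frac{\dM}{a\dM}\xrightarrow{\;\ol\delta_1\;}\ulE[a](\BC)\longto 0\,,
\end{equation*}
where $\ol\delta_1$ is induced by $\delta_1$ together with division by $a$ in the sense made precise in the statement. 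This is exactly the content of the displayed isomorphism in the case $\dm=0$: the $\sdtau$-invariants $(\dM/a\dM)^\sdtau=\ker(\sdtau_\dM(\sdsigma^*\,\cdot\,)-\id)$ map isomorphically onto $\ulE[a](\BC)$, and the formula $\dm'\mapsto\delta_1(a^{-1}(\dm'-\sdtau_\dM(\sdsigma^*\dm')))$ is precisely the connecting homomorphism of the snake lemma.

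For the general statement with arbitrary $\dm\in\dM$ and $x=\delta_1(\dm)$, I would argue as follows. Fix a representative $\dm'\in\dM$ of a class $\dm'\in\dM/a\dM$ satisfying $\sdtau_\dM(\sdsigma^*\dm')-\dm'\equiv\dm\pmod{a\dM}$; then $\dm+\dm'-\sdtau_\dM(\sdsigma^*\dm')=a\dm''$ for a unique $\dm''\in\dM$ (uniqueness because $a$ is a nonzerodivisor on $\dM$), and one computes $\phi_a(\delta_1(\dm''))=\delta_1(a\dm'')=\delta_1(\dm)+\delta_1(\dm')-\delta_1(\sdtau_\dM(\sdsigma^*\dm'))=\delta_1(\dm)=x$, using that $\delta_1$ is $A$-linear and that $\delta_1\circ(\sdtau_\dM(\sdsigma^*\,\cdot\,)-\id)=0$ by \eqref{EqDualMAndE1}; so the map is well-defined into $\{x'\colon\phi_a(x')=x\}$ and independent of the choice of representative $\dm'$ (a different representative changes $\dm''$ by an element of $\ker(\sdtau_\dM(\sdsigma^*\,\cdot\,)-\id)\subset\dM$, which $\delta_1$ may not kill — here I must be careful). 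Surjectivity and injectivity then follow by a torsor argument: the set of solutions $\dm'$ is either empty or a coset of $(\dM/a\dM)^\sdtau$, the target $\{x'\colon\phi_a(x')=x\}$ is either empty or a coset of $\ulE[a](\BC)$, and the map is equivariant for the $\dm=0$ isomorphism just proved; so once one checks nonemptiness of the source whenever the target is nonempty (equivalently, surjectivity of $\ol\delta_1$ from the snake sequence, suitably twisted) one gets the bijection. Concretely, given $x'$ with $\phi_a(x')=x$, pick $\dm''\in\dM$ with $\delta_1(\dm'')=x'$; then $\delta_1(a\dm''-\dm)=\phi_a(x')-x=0$, so by exactness of \eqref{EqDualMAndE1} there is $\dm'\in\dM$ with $a\dm''-\dm=\sdtau_\dM(\sdsigma^*\dm')-\dm'$, and this $\dm'$, reduced mod $a\dM$, is the required preimage.

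The main obstacle I anticipate is the bookkeeping around the connecting map: one must verify that the formula $\dm'\mapsto\delta_1(a^{-1}(\dm+\dm'-\sdtau_\dM(\sdsigma^*\dm')))$ genuinely descends to $\dM/a\dM$ (i.e. changing $\dm'$ by $a\dm_0$ changes the output by $\delta_1(\dm_0+a^{-1}(a\dm_0-\sdtau_\dM(\sdsigma^*(a\dm_0))))$... which one must see lands in $\ulE[a](\BC)$ the right way, or rather that the class in $E(\BC)$ modulo the appropriate identification is unchanged). This is a routine but slightly delicate $a$-torsion computation, and the cleanest route is to never exit the snake-lemma framework: set up the ladder once, extract both the $\dm=0$ isomorphism and the general bijection as the ``fiber over $x$'' of the connecting map, and let functoriality of the snake lemma do the verification automatically. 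The remaining facts needed — flatness, $\phi_a$ surjective on $\BC$-points, $a$ a nonzerodivisor on $\dM$ — are all immediate from $E\cong\BG_{a,\BC}^d$, $\BC$ algebraically closed, and $\dM$ being $A_\BC$-torsion-free (which follows since $\dM$ is locally free, or in general from $\sdtau_\dM$ being injective as noted before Proposition~\ref{prop:commutingdiagrams}).
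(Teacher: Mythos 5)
Your overall strategy is sound and is essentially the paper's argument repackaged: the paper also deduces everything from the exactness of \eqref{EqDualMAndE1} by checking well-definedness, surjectivity and injectivity directly, which amounts to the snake-lemma ladder you describe. However, two points in your write-up are genuinely wrong as stated. First, your displayed ``snake sequence'' is backwards. Applying the snake lemma to the ladder with rows \eqref{EqDualMAndE1} and vertical maps $a\cdot$ (resp.\ $\phi_a$) gives
\[
0\longto \ulE[a](\BC)\xrightarrow{\;\partial\;}\dM/a\dM\xrightarrow{\;\sdtau_\dM(\sdsigma^*\,\cdot\,)-\id\;}\dM/a\dM\longto 0\,,
\]
so $\ulE[a](\BC)$ is identified with the \emph{kernel} $(\dM/a\dM)^\sdtau$ of $\sdtau_\dM(\sdsigma^*\,\cdot\,)-\id$ (which is what the proposition asserts, and what your next sentence correctly says), not with its cokernel. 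As displayed, your sequence claims $\sdtau_\dM(\sdsigma^*\,\cdot\,)-\id$ is injective modulo $a$, which is false whenever $\ulE[a](\BC)\neq(0)$. Note also that this route uses surjectivity of $\phi_a$ on $\BC$-points, an extra (true, but not content-free) input that the direct argument avoids, since it only needs surjectivity of $\delta_1$.

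Second, the descent to $\dM/a\dM$, which you single out as ``the main obstacle'' and leave open, is both misanalyzed and easy. Replacing the representative $\dm'$ by $\dm'+a\dn$ replaces $\dm''$ by $\dm''+\dn-\sdtau_\dM(\sdsigma^*\dn)$: the change lies in the \emph{image} of the first map of \eqref{EqDualMAndE1} (up to sign), hence in $\ker\delta_1$, so $x'=\delta_1(\dm'')$ is unchanged. Your claim that the change lies in $\ker\bigl(\sdtau_\dM(\sdsigma^*\,\cdot\,)-\id\bigr)$ is incorrect and, even if true, would not help, since $\delta_1$ does not kill that kernel. With this one-line computation inserted, your argument closes; the torsor argument for injectivity is fine (and is a mild repackaging of the paper's direct injectivity check), though beware a sign slip in your surjectivity step: from $\delta_1(a\dm''-\dm)=0$ you get $a\dm''-\dm=\sdtau_\dM(\sdsigma^*\dm')-\dm'$, so it is $-\dm'$ (not $\dm'$) that satisfies $\sdtau_\dM(\sdsigma^*\,\cdot\,)-\id\equiv\dm$ and maps to $x'$.
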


\begin{proof}
First note that the map is well defined. Namely, any two representatives of $\dm'\in\dM/a\dM$ differ by $a\dn$ for an element $\dn\in\dM$. Then the corresponding elements $\dm''$ differ by $\dn-\sdtau_\dM(\sdsigma^*\dn)$ which lies in the kernel of $\delta_1$. Therefore, $x'$ is independent of the representative $\dm'\in\dM$. Moreover, $x':=\delta_1(\dm'')$ satisfies $\phi_a(x')=\phi_a(\delta_1(\dm''))=\delta_1(a\dm'')=\delta_1(\dm)=x$. If $\dm=0$, then the map clearly is an $A/(a)$-homomorphism.

If $x'\in E(\BC)$ with $\phi_a(x')=x$ is given, there is an $\dm''\in\dM$ with $\delta_1(\dm'')=x'$ by \eqref{EqDualMAndE1} in Proposition~\ref{prop:commutingdiagrams} and then $\delta_1(a\dm'')=\phi_a(\delta_1(\dm''))=\phi_a(x')=x=\delta_1(\dm)$ implies that $\dm-a\dm''=\sdtau_\dM(\sdsigma^*\dm')-\dm'$ for an element $\dm'\in\dM$. This proves the surjectivity.

To prove injectivity let $\dm'_1,\dm'_2\in\dM$ be mapped to the same element $x'\in E(\BC)$ and let $\dm''_i=a^{-1}\bigl(\dm+\dm'_i-\sdtau_\dM(\sdsigma^*\dm'_i)\bigr)$ for $i=1,2$. Then $\delta_1(\dm''_1)=x'=\delta_1(\dm''_2)$ implies by \eqref{EqDualMAndE1} in Proposition~\ref{prop:commutingdiagrams} that $\dm''_2=\dm''_1+\sdtau_\dM(\sdsigma^*\dn)-\dn$ for an element $\dn\in\dM$. From this it follows that $\sdtau_M\bigl(\sdsigma^*(\dm'_2+a\dn-\dm'_1)\bigr)-(\dm'_2+a\dn-\dm'_1)=0$ and the exactness of \eqref{EqDualMAndE1} on the left implies $\dm'_1=\dm'_2+a\dn$. 
\end{proof}

The relation between $\ulM(\ulE)$ and $\uldM(\ulE)$ of an abelian and $A$-finite Anderson $A$-module $\ulE$ is described by the following

\begin{theorem}\label{ThmMandDMofE}
Let $\ulE$ be an abelian Anderson $A$-module over $\BC$, let $\ulM=(M,\tau_M)=\ulM(\ulE)$ and $\uldM=(\dM,\sdtau_\dM)=\uldM(\ulE)$ be as in Definitions~\ref{DefAbelianAMod} and \ref{DefDualMOfE}. Let $\uldM(\ulM)=\bigl(\Hom_{A_\BC}(\sigma^*M,\Omega^1_{A_\BC/\BC})\,,\,\tau_M\dual\bigr)$ be the dual $A$-motive from Proposition~\ref{PropDualizing}. Then there is a canonical injective $A_\BC$-homomorphism
\[
\Xi\colon\Hom_{A_\BC}(\sigma^*M,\Omega^1_{A_\BC/\BC})\es \longinto \es \dM\,,\quad \eta\longmapsto \dm_\eta
\]
such that for every $m\in M$ 
\begin{equation}\label{EqThmMandDMofE}
m\circ\dm_\eta\;=\;\sum\limits_{i=0}^\infty \Bigl(\Res_\infty\eta\bigl(\sdsigma^{i*}(\tau_M^{-i-1}m)\bigr)\Bigr)^{q^i}\!\!\cdot\tau^i\;\in\;\End_{\BF_q,\BC}(\BG_{a,\BC})\;=\;\BC\{\tau\}\,.
\end{equation}
It is compatible with $\tau_M$ and $\sdtau_\dM$, that is, the following diagram commutes:
\begin{equation}\label{EqThmMandDMofECompatib}
\xymatrix @C-2pc {
\Hom_{A_\BC}(\sigma^*M,\Omega^1_{A_\BC/\BC}) \ar[r]^{\qquad\qquad\TS\Xi} & \dM \\
**{!R(0.55) =<19pc,2pc>} \objectbox{\sdsigma^*\Hom_{A_\BC}(\sigma^*M,\Omega^1_{A_\BC/\BC})\;=\; \Hom_{A_\BC}(M,\Omega^1_{A_\BC/\BC})} \ar[r]^{\qquad\qquad\TS\sdsigma^*\Xi} \ar[u]^{\TS.\circ\tau_M} & \sdsigma^*\dM \ar[u]_{\TS\sdtau_\dM}
}
\end{equation}
Moreover, $\Xi$ is an isomorphism if and only if $\ulE$ is $A$-finite. In this case $\Xi$ is an isomorphism of dual $A$-motives $\Xi\colon\uldM(\ulM)\isoto\uldM(\ulE)$. 
\end{theorem}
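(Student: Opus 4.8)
The plan is to construct the map $\Xi$ explicitly via the formula \eqref{EqThmMandDMofE} and then verify its properties step by step. First I would make sense of the right-hand side of \eqref{EqThmMandDMofE}: for $\eta\in\Hom_{A_\BC}(\sigma^*M,\Omega^1_{A_\BC/\BC})$ and $m\in M$, the element $\tau_M^{-i-1}m$ lies in $M[J^{-1}]$, its image $\sdsigma^{i*}(\tau_M^{-i-1}m)$ under the appropriate twist lies in $\sigma^*M$ after localizing, $\eta$ applied to it gives a Kähler differential, and $\Res_\infty$ of that differential is a well-defined element of $\BC$. The key analytic input is that these residues decay fast enough for $\sum_i(\Res_\infty(\ldots))^{q^i}\tau^i$ to define an element of $\BC\{\tau\}$ rather than a mere power series in $\tau$; this follows from Proposition~\ref{PropTau_MInvZDense} (or rather its proof, which controls the $z$-adic valuation of $\tau_M^{-n}(M)$) together with the fact that $\ulE$ abelian forces the weights of $\ulM$ to be positive, so only finitely many residues are nonzero. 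One then checks that $\eta\mapsto(m\mapsto m\circ\dm_\eta)$ is $A_\BC$-linear in $\eta$, giving a well-defined $A_\BC$-homomorphism $\Xi\colon\Hom_{A_\BC}(\sigma^*M,\Omega^1_{A_\BC/\BC})\to\Hom_{\BF_q,\BC}(\BG_{a,\BC},E)=\dM$, where we use that $M=\Hom_{\BF_q,\BC}(E,\BG_{a,\BC})$ so that an element of $\dM$ is determined by how it pairs with all $m\in M$.

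Second I would verify the compatibility diagram \eqref{EqThmMandDMofECompatib}. Precomposing $\eta$ with $\tau_M$ shifts the index $i$ in the residue sum, and applying $\sdtau_\dM$ (which is $\dm\mapsto\dm\circ\Frob_{q,\BG_a}$, i.e.\ multiplication by $\tau$ on the $\BC\{\tau\}$-side) likewise shifts $\tau$-degrees; a direct comparison of the two resulting expansions, using the commutation rule $\tau b=b^q\tau$ and the relation $\tau_M\circ\sigma^*(\tau_M^{-i-1}m)$-type identities, shows the diagram commutes. This is a bookkeeping computation of the same flavor as the residue manipulations in Gekeler's de Rham theory.

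Third, for injectivity of $\Xi$: if $\dm_\eta=0$ then all residues $\Res_\infty\eta(\sdsigma^{i*}(\tau_M^{-i-1}m))$ vanish for every $m\in M$ and every $i\ge0$. Since $\tau_M$ is an isomorphism over $\GlobMotRing$ and the $\tau_M^{-i-1}m$ together span $M[J^{-1}]$ over the relevant ring (again using Proposition~\ref{PropTau_MInvZDense}, which says the $\sdsigma^{i*}\tau_M^{-i}(M)$ are $z$-adically dense), and since $\Res_\infty$ is a perfect pairing between $M_\infty$-type lattices and their duals in the module of differentials, we conclude $\eta=0$. Finally, for the equivalence ``$\Xi$ an isomorphism $\iff$ $\ulE$ is $A$-finite'': both source and target are finite projective $A_\BC$-modules of the same rank $r=\rk\ulE$ (by Remark~\ref{Rem2RankE} and the rank formulas \eqref{EqRkDim}), so $\Xi$ injective implies it is an isomorphism onto its image, which is a saturated (hence equal, once we match ranks) submodule precisely when $\dM(\ulE)$ is itself finite projective, i.e.\ $\ulE$ $A$-finite; conversely when $\ulE$ is both abelian and $A$-finite, Theorem~\ref{theorem:equivalence}\ref{theorem:equivalence_B} and Remark~\ref{Rem3.3} identify $\uldM(\ulE)$ with an effective dual $A$-motive of rank $r$, and an injective map between finite projective modules of equal rank over the Dedekind-like ring $A_\BC$ with torsion-free cokernel is an isomorphism; one checks the cokernel is torsion-free by localizing at $\Var(J)$ and using that both $\tau_M$ and $\sdtau_\dM$ become isomorphisms there, so that \eqref{EqThmMandDMofECompatib} forces $\Xi$ to be an isomorphism over $\GlobMotRing$ as well, whence $\Xi$ is an isomorphism of dual $A$-motives.

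The main obstacle I anticipate is the convergence/finiteness claim underlying the very definition of $\Xi$ — showing that \eqref{EqThmMandDMofE} lands in $\BC\{\tau\}$ and not just in the completion $\BC\{\{\tau\}\}$ — together with identifying the residue pairing with honest $\BC[t]$-module duality in a way that makes both $A_\BC$-linearity and injectivity transparent. The abelianness hypothesis enters exactly here (via positivity of weights and the density statement of Proposition~\ref{PropTau_MInvZDense}), and getting the residue estimates uniform in $m$ is the delicate point; once the map is known to be a well-defined $A_\BC$-homomorphism, the remaining steps are formal linear algebra over $A_\BC$ combined with the rank count.
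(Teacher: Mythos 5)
Your outline for the first half of the theorem---constructing $\Xi$, checking that the right side of \eqref{EqThmMandDMofE} lies in $\BC\{\tau\}$, verifying $A_\BC$-linearity, the compatibility diagram, and injectivity---tracks the paper's proof well. The paper likewise reduces the convergence of \eqref{EqThmMandDMofE} to valuation estimates coming from the positivity of the weights of $\ulM(\ulE)$ (via Propositions~\ref{PropPure} and \ref{PropWeights}\ref{PropWeights_a}), defines $\Xi$ on a $\BC\{\tau\}$-basis of $M$ and extends by checking compatibility with addition, $\BC$-scaling and $\tau$-multiplication (the last via the vanishing of $\Res_\infty$ on elements of $\Omega^1_{A_\BC/\BC}$), proves $A_\BC$-linearity and the diagram \eqref{EqThmMandDMofECompatib} by similar telescoping residue manipulations, and deduces injectivity from the $z$-adic density result of Proposition~\ref{PropTau_MInvZDense}. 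One small discrepancy: you invoke Proposition~\ref{PropTau_MInvZDense} for the convergence of the sum, whereas the paper uses Propositions~\ref{PropPure}/\ref{PropWeights} for that and reserves Proposition~\ref{PropTau_MInvZDense} for injectivity; both come down to the same $z$-adic control, so this is not a real gap.

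The final equivalence, however, does have a genuine gap in your argument. The claim that one checks the cokernel is torsion-free ``by localizing at $\Var(J)$ and using that both $\tau_M$ and $\sdtau_\dM$ become isomorphisms there'' is wrong: $\tau_M$ and $\sdtau_\dM$ are isomorphisms \emph{away} from $\Var(J)$ (over $\GlobMotRing = A_\BC[J^{-1}]$), not at $\Var(J)$; at $\Var(J)$ they degenerate by the very definition of an effective (dual) $A$-motive. All the compatibility diagram gives you is a $\sdtau$-structure on $\coker\Xi$, i.e.\ an isomorphism $\sdsigma^*(\coker\Xi)[J^{-1}]\isoto(\coker\Xi)[J^{-1}]$. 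This constrains the support of the torsion cokernel but does not by itself force it to vanish: a priori it could be supported at $\Var(J)$ or at fibers over closed points of $\Spec A$. The paper's surjectivity proof is substantive: it first shows (an argument in the style of \cite[Proposition~3.1.2]{Taelman} and \cite[Corollary~5.4]{BH1}, exploiting the $\sdtau$-structure on the cokernel) that $\coker\Xi$ is killed by an element $a\in A$, and then carries out a careful residue computation (reproduced as Corollary~\ref{CorMandDMofE}, diagram~\eqref{EqDiagMandDMofE}) identifying the induced map on $\sdtau$-invariants mod $a$ with the composite of Anderson's isomorphism $\Hom_{A/(a)}\bigl((\ulM/a\ulM)^\tau,\Omega^1_{A/\BF_q}/a\Omega^1_{A/\BF_q}\bigr)\isoto\ulE[a](\BC)$ and the ``switcheroo'' $(\uldM/a\uldM)^\sdtau\isoto\ulE[a](\BC)$ from Proposition~\ref{PropSwitcheroo}. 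Since both sides are identified with $\ulE[a](\BC)$ and have the same finite $\BF_q$-dimension $d\cdot\rk\ulE\cdot[\BF_q[t]/(a):\BF_q]$ by $A$-finiteness, this map is bijective, and one concludes $\coker\Xi=0$ by $\sdtau$-invariant descent. This torsion-point comparison is the heart of the surjectivity argument and is entirely absent from your sketch.

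Your treatment of the forward direction (``$\Xi$ isomorphism $\Rightarrow$ $\ulE$ $A$-finite'') is also circular as written: you assert at the outset that source and target are finite projective of rank $r=\rk\ulE$, but the right side $\dM(\ulE)$ is known to be finite projective only once $\ulE$ is $A$-finite, which is the conclusion you are after. The correct argument here is short: if $\Xi$ is an isomorphism, then $\dM(\ulE)\cong\Hom_{A_\BC}(\sigma^*M,\Omega^1_{A_\BC/\BC})$ is finite projective because $M$ is, hence $\ulE$ is $A$-finite by Definition~\ref{DefDualMOfE}.
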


\begin{proof}
1. To show that the sum in \eqref{EqThmMandDMofE} belongs to $\BC\{\tau\}$ we have to show that
\[
\Res_\infty\eta\bigl(\sdsigma^{i*}(\tau_M^{-i-1}m)\bigr)\;=\;0 \quad\text{for all}\quad i\,\gg\,0\,. 
\]
By Proposition~\ref{PropPure} the $z$-isocrystal $\ulHM:=\ulM\otimes_{A_\BC}\BC\dpl z\dpr$ is isomorphic to $\bigoplus_i\ulHM_{d_i,r_i}$ with all $d_i>0$ by Proposition~\ref{PropWeights}\ref{PropWeights_a}. The explicit description of $\ulHM_{d_i,r_i}$ in \eqref{EqStandardZIsocrystal} shows that there is a $\BC\dbl z\dbr$-lattice $V$ of full rank in $\wh{M}:=M\otimes_{A_\BC}\BC\dpl z\dpr$ such that $V\subset\tau_M^j(\sigma^{j*}V)$ for all $j\ge0$, and a positive integer $s$ with $z^{-1}V\subset\tau_M^s(\sigma^{s*}V)$. This implies 
\[
\sdsigma^{(ns+j-1)*}(\tau_M^{-ns-j}V)\;\subset\; z^n\sigma^*V \quad\text{for all integers} \quad n\,\ge\,0\quad\text{and}\quad 0\,\le\, j<s\,. 
\]
We extend $\eta\in\Hom_{A_\BC}(\sigma^*M,\Omega^1_{A_\BC/\BC})$ to $\eta\in\Hom_{\BC\dpl z\dpr}(\sigma^*\wh{M},\BC\dpl z\dpr dz)$. In particular, $\eta(\sigma^*V)\subset z^{-N}\BC\dbl z\dbr dz$ for an integer $N$. For every $m\in M$, there is an integer $e$ with $m\in z^{-e}V$ so that $\eta\bigl(\sdsigma^{(ns+j-1)*}(\tau_M^{-ns-j}m)\bigr)\in z^{n-e-N}\BC\dbl z\dbr dz$. It follows that $\Res_\infty\eta\bigl(\sdsigma^{ns+j-1*}(\tau_M^{-ns-j}m)\bigr)=0$ for all $n\ge N+e$ and all $0\le j<s$.

\medskip\noindent 
2. Fix an $\eta\in\Hom_{A_\BC}(\sigma^*M,\Omega^1_{A_\BC/\BC})$. To define $\dm_\eta\in\uldM$ we choose an isomorphism $\rho\colon E\isoto\BG_{a,\BC}^d$ of $\BF_q$-module schemes, let $pr_j\colon\BG_{a,\BC}^d\to\BG_{a,\BC}$ be the projection onto the $j$-th factor, and set $m_j:=pr_j\circ\rho\in M(\ulE)=\Hom_{\BF_q,\BC}(E,\BG_{a,\BC})$ for $j=1,\ldots,d$. We define $\dm_\eta\in\dM=\Hom_{\BF_q,\BC}(\BG_{a,\BC},E)$ via
\[
\rho\circ\dm_\eta\;:=\;\left(\sum\limits_{i=0}^\infty \Bigl(\Res_\infty\eta\bigl(\sdsigma^{i*}(\tau_M^{-i-1}m_j)\bigr)\Bigr)^{q^i}\!\!\cdot\tau^i\right)_{j=1}^d\!\in\;\BC\{\tau\}^{\oplus d}\,.
\]
In particular, \eqref{EqThmMandDMofE} holds when $m=m_j$ for $j=1,\ldots,d$. To prove that \eqref{EqThmMandDMofE} holds for all $m\in M$ we use that $m_1,\ldots,m_d$ form a $\BC\{\tau\}$-basis of $M$. Thus it suffices to show that \eqref{EqThmMandDMofE} is compatible with
\begin{enumerate}
\item \label{WithAddition} addition in $M$,
\item \label{WithScalMult} scalar multiplication by elements of $\BC$, and
\item \label{WithTau} multiplication with $\tau$.
\end{enumerate}
Since both sides of \eqref{EqThmMandDMofE} are additive in $m$, \ref{WithAddition} is clear.

\medskip\noindent
\ref{WithScalMult} Let $m\in M$ and $b\in\BC$ and assume that \eqref{EqThmMandDMofE} holds for $m$. The left hand side equals $(bm)\circ\dm_\eta=b\cdot (m\circ\dm_\eta)$. On the right hand side we have $\Res_\infty\eta\bigl(\sdsigma^{i*}(\tau_M^{-i-1}(bm))\bigr)=b^{q^{-i}}\cdot\Res_\infty\eta\bigl(\sdsigma^{i*}(\tau_M^{-i-1}m)\bigr)$. Therefore, \eqref{EqThmMandDMofE} also holds for $bm$.

\medskip\noindent
\ref{WithTau} We assume that \eqref{EqThmMandDMofE} holds for some $m\in M$. The left hand side equals $(\tau m)\circ\dm_\eta=\tau\cdot (m\circ\dm_\eta)$. The right hand side for $\tau m=\tau_M(\sigma^*m)$ equals 
\begin{eqnarray*}
\sum\limits_{i=0}^\infty \Bigl(\Res_\infty\eta\bigl(\sdsigma^{i*}(\tau_M^{-i-1}\circ\tau_M(\sigma^*m))\bigr)\Bigr)^{q^i}\!\!\cdot\tau^i 
& = & \sum\limits_{i=1}^\infty \Bigl(\Res_\infty\eta\bigl(\sdsigma^{(i-1)*}(\tau_M^{-i}m)\bigr)\Bigr)^{q^{(i-1)}q}\!\!\cdot\tau^i\\
& = & \tau\cdot\bigl(\sum\limits_{i=1}^\infty \Bigl(\Res_\infty\eta\bigl(\sdsigma^{(i-1)*}(\tau_M^{-i}m)\bigr)\Bigr)^{q^{i-1}}\!\!\cdot\tau^{i-1}\bigr),
\end{eqnarray*}
because $\tau_M^{-i-1}=\sigma^{i*}\tau_M^{-1}\circ\ldots\circ\sigma^*\tau_M^{-1}\circ\tau_M^{-1}$ and in the first line the term $\Res_\infty\eta(\sigma^*m)$ for $i=0$ vanishes by \cite[Theorem~9.3.22]{VillaSalvador} as $\eta(\sigma^*m)\in\Omega^1_{A_\BC/\BC}$. Therefore, \eqref{EqThmMandDMofE} also holds for $\tau m$.

This establishes \eqref{EqThmMandDMofE} for all $m\in M$.

\medskip\noindent 
3. To prove that the assignment $\Xi\colon\eta\mapsto\dm_\eta$ defined in step 2 is $\BC$-linear, note that additivity is clear. Let $b\in\BC$. Then $b\eta$ is sent to $b\cdot\dm_\eta$ because
\begin{eqnarray*}
\rho\circ\dm_{(b\eta)} & := & \left(\sum\limits_{i=0}^\infty \Bigl(\Res_\infty(b\eta)\bigl(\sdsigma^{i*}(\tau_M^{-i-1}m_j)\bigr)\Bigr)^{q^i}\!\!\cdot\tau^i\right)_{j=1}^d\\
& = & \left(\sum\limits_{i=0}^\infty \Bigl(\Res_\infty\eta\bigl(\sdsigma^{i*}(\tau_M^{-i-1}m_j)\bigr)\Bigr)^{q^i}\!\!\cdot b^{q^i}\cdot\tau^i\right)_{j=1}^d\\[2mm]
& = & \rho\circ\dm_\eta\circ\psi_b\\[2mm]
& =: & \rho\circ(b\dm_\eta)\,.
\end{eqnarray*}

\medskip\noindent 
4. The map $\Xi$ is also $A$-linear. Indeed, let $a\in A$. Then $a\eta$ is sent to $a\cdot\dm_\eta$ because
\begin{eqnarray*}
pr_j\circ\rho\circ\dm_{(a\eta)} & := & \sum\limits_{i=0}^\infty \Bigl(\Res_\infty(a\eta)\bigl(\sdsigma^{i*}(\tau_M^{-i-1}m_j)\bigr)\Bigr)^{q^i}\!\!\cdot\tau^i\\
& := & \sum\limits_{i=0}^\infty \Bigl(\Res_\infty\eta\bigl(a\cdot\sdsigma^{i*}(\tau_M^{-i-1}m_j)\bigr)\Bigr)^{q^i}\!\!\cdot\tau^i\\
& = & \sum\limits_{i=0}^\infty \Bigl(\Res_\infty\eta\bigl(\sdsigma^{i*}(\tau_M^{-i-1}(m_j\circ\phi_a))\bigr)\Bigr)^{q^i}\!\!\cdot\tau^i\\
& = & m_j\circ\phi_a\circ\dm_\eta\\[2mm]
& =: & pr_j\circ\rho\circ(a\dm_\eta)\,.
\end{eqnarray*}

\medskip\noindent 
5. To prove that $\Xi$ is compatible with $\tau_M$ and $\sdtau_\dM$ we must show that $\Xi(\sdsigma^*\eta\circ\tau_M)=\sdtau_\dM(\sdsigma^*\dm_\eta)$. This is true because $\tau_M^{-i-1}=\sigma^{i*}\tau_M^{-1}\circ\ldots\circ\sigma^*\tau_M^{-1}\circ\tau_M^{-1}$ implies $\tau_M\circ\sdsigma^{i*}\tau_M^{-i-1}=\sdsigma^{i*}\tau_M^{-i}$, and hence,
\begin{eqnarray*}
\rho\circ\dm_{(\sdsigma^*\eta\,\circ\,\tau_M)} & := & \left(\sum\limits_{i=0}^\infty \Bigl(\Res_\infty(\sdsigma^*\eta\circ\tau_M)\bigl(\sdsigma^{i*}(\tau_M^{-i-1}m_j)\bigr)\Bigr)^{q^i}\!\!\cdot\tau^i\right)_{j=1}^d\\
& = & \left(\sum\limits_{i=0}^\infty \Bigl(\Res_\infty(\sdsigma^*\eta)\bigl(\sdsigma^{i*}(\tau_M^{-i}m_j)\bigr)\Bigr)^{q^i}\!\!\cdot\tau^i\right)_{j=1}^d\\
& = & \left(\sum\limits_{i=0}^\infty \Bigl(\Res_\infty\sdsigma^*\bigl(\eta(\sdsigma^{(i-1)*}(\tau_M^{-i}m_j))\bigr)\Bigr)^{q^i}\!\!\cdot\tau^i\right)_{j=1}^d\\
& = & \left(\sum\limits_{i=1}^\infty \Bigl(\Res_\infty\eta\bigl(\sdsigma^{(i-1)*}(\tau_M^{-i}m_j)\bigr)\Bigr)^{q^{i-1}}\!\!\cdot\tau^{i-1}\right)_{j=1}^d\cdot\tau\\[2mm]
& = & \rho\circ\dm_\eta\circ\Frob_{q,\BG_a} \\[2mm]
& =: & \rho\circ\sdtau_\dM(\sdsigma^*\dm_\eta)\,,
\end{eqnarray*}
where in the fourth line the term $\Res_\infty\eta(\sigma^*m_j)$ for $i=0$ vanishes again by \cite[Theorem~9.3.22]{VillaSalvador} as $\eta(\sigma^*m_j)\in\Omega^1_{A_\BC/\BC}$.

\medskip\noindent 
6. We prove that the $A_\BC$-homomorphism $\Xi$ is injective. If $\dm_\eta=0$, then formula \eqref{EqThmMandDMofE} implies that $\Res_\infty\eta\bigl(\sdsigma^{i*}(\tau_M^{-i-1}m)\bigr)=0$ for all $i\ge0$ and all $m\in M$. We must show that $\eta=0$. Since $\eta\;\in\;\Hom_{A_\BC}(\sigma^*M,\Omega^1_{A_\BC/\BC})\;\subset\;\Hom_{\BC\dpl z\dpr}(\sigma^*\wh{M},\BC\dpl z\dpr dz)$ is $z$-adically continuous with $\sigma^*\wh M:=\sigma^*M\otimes_{A_\BC}\BC\dpl z\dpr$, the preimage $U:=\eta^{-1}(\BC\dbl z\dbr dz)$ is a $z$-adically open neighborhood of $0$ in $\sigma^*\wh M$. By Proposition~\ref{PropTau_MInvZDense}, $\sigma^*\wh M=U+\bigcup_{i\in\BN_0}\sdsigma^{i*}\tau_M^{-i-1}(M)$. Since the $\BC$-linear map $\Res_\infty\!\circ\,\eta$ is zero on $U$ and also on the second summand, it is zero on all of $\sigma^*\wh M$. This implies that $\eta=0$.

\medskip\noindent 
7. If $\Xi$ is an isomorphism, then $\dM=\dM(\ulE)$ is locally free over $A_\BC$ of rank equal to $\rk\ulE$, because $M$ and hence $\Hom_{A_\BC}(\sigma^*M,\Omega^1_{A_\BC/\BC})$ are, as $\ulE$ is abelian. So $\ulE$ is $A$-finite.

\medskip\noindent 
8. Conversely, assume that $\ulE$ is $A$-finite, that is, $\dM$ is locally free over $A_\BC$ of rank equal to~$\rk\ulE$. Since also $M$ and hence, $\Hom_{A_\BC}(\sigma^*M,\Omega^1_{A_\BC/\BC})$ are locally free over $A_\BC$ of rank equal to~$\rk\ulE$, as $\ulE$ is abelian, an argument analogous to \cite[Proposition~3.1.2]{Taelman} shows that $\coker\Xi$ is annihilated by an element $a\in A$ (and not just by an element of $A_\BC$); see also \cite[Corollary~5.4]{BH1}. We use this to prove the surjectivity of $\Xi$ in the next step.

\medskip\noindent 
9. To prove that $\Xi$ is surjective, when $\ulE$ is $A$-finite, take for the moment an arbitrary element $a\in A\setminus\BF_q$ and let $\eta\in\Hom_{A_\BC}(\sigma^*M,\Omega^1_{A_\BC/\BC})$ be such that $\eta-(\sdsigma^*\eta\circ\tau_M)=a\,\eta'$ for some $\eta'\in\Hom_{A_\BC}(\sigma^*M,\Omega^1_{A_\BC/\BC})$, where $\sdsigma^*\eta\in\Hom_{A_\BC}(M,\Omega^1_{A_\BC/\BC})$ and $\sdsigma^*\eta\circ\tau_M\in\Hom_{A_\BC}(\sigma^*M,\Omega^1_{A_\BC/\BC})$, as $\ulM(\ulE)$ is effective. Then $\dm_\eta-\sdtau_\dM(\sdsigma^*\dm_\eta)=a\,\dm_{\eta'}$ by parts~4 and 5 above. Moreover, let $m\in M$ be such that $m-\tau_M(\sigma^*m)=a\,m'$ for some $m'\in M$. Then we have a telescoping sum
\[
\sdsigma^{i*}(\tau_M^{-i-1}m)-\sigma^*m\;=\;a\sum_{j=0}^i\sdsigma^{j*}(\tau_M^{-j-1}m') \quad\text{for all}\quad i\,\ge\,0\,. 
\]
Since $\eta'(\sigma^*m),\,\sdsigma^*\bigl(\eta(\sigma^*m')\bigr)\in\Omega^1_{A_\BC/\BC}$ we have $\Res_\infty\eta'(\sigma^*m)=\Res_\infty\sdsigma^*\bigl(\eta(\sigma^*m')\bigr)=0$ by \cite[Theorem~9.3.22]{VillaSalvador}. Finally, by part 1 above there is an integer $N$ such that $\eta\bigl(\sdsigma^{n*}(\tau_M^{-n-1}m)\bigr)$ and $\eta'\bigl(\sdsigma^{n*}(\tau_M^{-n-1}m)\bigr)$ lie in $\BC\dbl z\dbr dz$ for all $n\ge N$. Since $a^{-1}\in z\BF_q\dbl z\dbr$, also $a^{-1}\eta\bigl(\sdsigma^{N*}(\tau_M^{-N-1}m)\bigr)\,\in\,\BC\dbl z\dbr dz$. For all such $n> N$ this implies
\begin{eqnarray}
(m\circ\dm_{\eta'})(1) & = & {\TS\sum\limits_{i=0}^n} \Bigl(\Res_\infty\eta'\bigl(\sdsigma^{i*}(\tau_M^{-i-1}m)\bigr)\Bigr)^{q^i} \nonumber \\
& = & {\TS\sum\limits_{i=0}^n} \Bigl({\TS\sum\limits_{j=0}^i}\Res_\infty a\,\eta'\bigl(\sdsigma^{j*}(\tau_M^{-j-1}m')\bigr)\Bigr)^{q^i} \nonumber \\
& = & {\TS\sum\limits_{i=0}^n} \Bigl({\TS\sum\limits_{j=0}^i}\Res_\infty \bigl(\eta-(\sdsigma^*\eta\circ\tau_M)\bigr)\bigl(\sdsigma^{j*}(\tau_M^{-j-1}m')\bigr)\Bigr)^{q^i} \nonumber \\
& = & {\TS\sum\limits_{i=0}^n} \Bigl({\TS\sum\limits_{j=0}^i}\Res_\infty \eta\bigl(\sdsigma^{j*}(\tau_M^{-j-1}m')\bigr)-{\TS\sum\limits_{j=0}^i}\Res_\infty \sdsigma^*\bigl(\eta(\sdsigma^{(j-1)*}(\tau_M^{-j}m'))\bigr)\Bigr)^{q^i} \nonumber \\
& = & {\TS\sum\limits_{i=0}^n} \Bigl({\TS\sum\limits_{j=0}^i}\Res_\infty \eta\bigl(\sdsigma^{j*}(\tau_M^{-j-1}m')\bigr)\Bigr)^{q^i} - {\TS\sum\limits_{i=0}^n} \Bigl({\TS\sum\limits_{j=0}^{i-1}}\Res_\infty \eta\bigl(\sdsigma^{j*}(\tau_M^{-j-1}m')\bigr)\Bigr)^{q^{i-1}} \label{Eqm(dm'_eta)(1)}\\
& = & \Bigl({\TS\sum\limits_{j=0}^n}\Res_\infty \eta\bigl(\sdsigma^{j*}(\tau_M^{-j-1}m')\bigr)\Bigr)^{q^n}\nonumber \\
& = & \Bigl({\TS\sum\limits_{j=0}^N}\Res_\infty \eta\bigl(\sdsigma^{j*}(\tau_M^{-j-1}m')\bigr)\Bigr)^{q^n}\nonumber \\
& = & {\TS\sum\limits_{j=0}^N}\Res_\infty \eta\bigl(\sdsigma^{j*}(\tau_M^{-j-1}m')\bigr) \nonumber \\
& = & \Res_\infty a^{-1}\eta\bigl(\sdsigma^{N*}(\tau_M^{-N-1}m)\bigr) - \Res_\infty a^{-1}\eta(\sigma^*m) \nonumber \\[2mm]
& = & - \Res_\infty a^{-1}\eta(\sigma^*m)\,,\nonumber 
\end{eqnarray}
where the independence of $n\ge N$ of the expression in the seventh line implies that this expression lies in $\BF_q$. Since $(m\circ\dm_{\eta'})(1)=m\bigl(\delta_1(a^{-1}(\dm_\eta-\sdtau_\dM(\sdsigma^*\dm_\eta))\bigr)$ by definition of $\delta_1$, it follows that the diagram \eqref{EqDiagMandDMofE} described in the next corollary is commutative. In this diagram the left horizontal arrow is injective, because if $\eta\in\bigl(\uldM(\ulM)/a\uldM(\ulM)\bigr)^\sdtau$ satisfies $\eta(\sigma^*m)\in a\Omega^1_{A/\BF_q}$ for all $m\in(\ulM/a\ulM)^\tau$, then $(\ulM/a\ulM)^\tau\otimes_{\BF_q}\BC\cong\ulM/a\ulM$ implies that $\eta(\sigma^*m)\in a\Omega^1_{A_\BC/\BC}$ for all $m\in M$, whence $\eta\in a\uldM(\ulM)$. This arrow is surjective because both $\Hom_{A/(a)}\bigl((\ulM/a\ulM)^\tau,\,\Omega^1_{A/\BF_q}/a\Omega^1_{A/\BF_q}\bigr)$ and $\bigl(\uldM(\ulM)/a\uldM(\ulM)\bigr)^\sdtau$ are locally free $A/(a)$-modules of rank $\rk\ulE$, and hence, are finite dimensional $\BF_q$-vector spaces of the same dimension, because $\ulE$ is $A$-finite.

This implies that $\Xi$ induces an isomorphism $\bigl(\uldM(\ulM)/a\uldM(\ulM)\bigr)^\sdtau\isoto\bigl(\uldM(\ulE)/a\uldM(\ulE)\bigr)^\sdtau$. Since $(\uldM/a\uldM)^\sdtau\otimes_{\BF_q}\BC\cong\uldM/a\uldM$ for every dual $A$-motive $\uldM$ over $\BC$, we conclude that $\Xi$ is an isomorphism $\uldM(\ulM)/a\uldM(\ulM)\isoto\uldM(\ulE)/a\uldM(\ulE)$. In particular if we take the element $a\in A$ from part 8 which annihilates the cokernel of $\Xi$ this shows that $\coker(\Xi)=0$ and that $\Xi$ is an isomorphism. Altogether we have proved the theorem
\end{proof}

Along with the proof of the theorem we also showed the following

\begin{corollary}\label{CorMandDMofE}
Let $\ulE$ be an abelian and $A$-finite Anderson $A$-module, and let $\uldM(\ulE)$ and $\ulM=\ulM(\ulE)$ be its associated (dual) $A$-motive. Let $a\in A$ and consider the dual $A$-motive $\uldM(\ulM):=\bigl(\Hom_{A_\BC}(\sigma^*M,\Omega^1_{A_\BC/\BC})\,,\,\tau_M\dual\bigr)$ from Proposition~\ref{PropDualizing}. Then the following diagram consisting of isomorphisms of $A/(a)$-modules is commutative
\begin{equation}\label{EqDiagMandDMofE}
\xymatrix  { 
\Hom_{A/(a)}\bigl((\ulM/a\ulM)^\tau,\,\Omega^1_{A/\BF_q}/a\Omega^1_{A/\BF_q}\bigr) \ar[d]^{\cong} & \bigl(\uldM(\ulM)/a\uldM(\ulM)\bigr)^\sdtau \ar[l]^{\qquad\qquad\cong}\ar[r]_{\cong}^{\TS\Xi} & \bigl(\uldM(\ulE)/a\uldM(\ulE)\bigr)^\sdtau \ar[d]_{\cong} \\
\ulE[a](\BC) \ar@{=}[rr] & & \ulE[a](\BC)\,, \\
\qquad \bigl(h_\eta\colon m\mapsto\eta(\sigma^*m)\bigr) \qquad \ar@{|->}[d] & \qquad \eta \qquad \ar@{|->}[l]\ar@{|->}[r]^{\TS\Xi} & \qquad \dm_\eta \qquad \ar@{|->}[d] \\
\qquad P \qquad \ar@{=}[rr] & & **{!R(0.55) =<13pc,2pc>} \objectbox{\delta_1\bigl(a^{-1}(\dm_\eta-\sdtau_\dM(\sdsigma^*\dm_\eta)\bigr)\,,}
}
\end{equation}
where the left horizontal arrow sends $\eta\in\bigl(\uldM(\ulM)/a\uldM(\ulM)\bigr)^\sdtau$ to $h_\eta:=(\eta\circ\sigma^*)|_{(\ulM/a\ulM)^\tau}$, where the right horizontal arrow is the isomorphism $\Xi$ from Theorem~\ref{ThmMandDMofE}, where the left vertical map is (up to a minus sign motivated by Theorem~\ref{ThmPeriodIsomForE} below) Anderson's isomorphism \cite[Proposition~1.8.3]{Anderson86} which sends %an element 
$h\in\Hom_{A/(a)}\bigl((\ulM/a\ulM)^\tau,\,\Omega^1_{A/\BF_q}/a\Omega^1_{A/\BF_q}\bigr)$ to the point $P\in\ulE[a](\BC)$ satisfying $m(P)=-\Res_\infty a^{-1}h(m)$ for all $m\in(\ulM/a\ulM)^\tau$, and where the right vertical map is the isomorphism $\dm\mapsto \delta_1\bigl(a^{-1}(\dm-\sdtau_\dM(\sdsigma^*\dm)\bigr)$ from Proposition~\ref{PropSwitcheroo}.
\end{corollary}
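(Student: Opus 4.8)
\textbf{Proof proposal for Corollary~\ref{CorMandDMofE}.}
The plan is to observe that almost everything in the diagram has already been established in the proof of Theorem~\ref{ThmMandDMofE}, so the remaining work is to identify the three isomorphisms explicitly and to check that the square built from them commutes. First I would record that the two vertical maps and the right horizontal map are isomorphisms of $A/(a)$-modules: the left vertical map is Anderson's isomorphism from \cite[Proposition~1.8.3]{Anderson86} (up to the sign normalization fixed in the statement), the right vertical map is the instance of Anderson's ``switcheroo'' from Proposition~\ref{PropSwitcheroo} applied with $\dm=0$, and the right horizontal map is $\Xi$ modulo $a$, which was shown to be an isomorphism $\bigl(\uldM(\ulM)/a\uldM(\ulM)\bigr)^\sdtau\isoto\bigl(\uldM(\ulE)/a\uldM(\ulE)\bigr)^\sdtau$ in step~9 of the proof of Theorem~\ref{ThmMandDMofE}. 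Then I would treat the left horizontal map $\eta\mapsto h_\eta:=(\eta\circ\sigma^*)|_{(\ulM/a\ulM)^\tau}$: injectivity follows because $(\ulM/a\ulM)^\tau\otimes_{\BF_q}\BC\cong\ulM/a\ulM$ forces $\eta(\sigma^*m)\in a\Omega^1_{A_\BC/\BC}$ for all $m\in M$ once it holds on $\tau$-invariants, hence $\eta\in a\uldM(\ulM)$; surjectivity then follows by counting $\BF_q$-dimensions, since both source and target are locally free $A/(a)$-modules of rank $\rk\ulE$ because $\ulE$ is $A$-finite. This is exactly the argument already given at the end of step~9, so I would simply cite it.

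The substantive point is commutativity of the square, and here the computation is precisely the long telescoping identity \eqref{Eqm(dm'_eta)(1)} from the proof of Theorem~\ref{ThmMandDMofE}. Concretely, fixing $\eta\in\bigl(\uldM(\ulM)/a\uldM(\ulM)\bigr)^\sdtau$, one must check that the point $\delta_1\bigl(a^{-1}(\dm_\eta-\sdtau_\dM(\sdsigma^*\dm_\eta))\bigr)\in\ulE[a](\BC)$ obtained by going right-then-down agrees with the point $P$ obtained by going down-then-left, i.e.\ the point characterized by $m(P)=-\Res_\infty a^{-1}h_\eta(m)=-\Res_\infty a^{-1}\eta(\sigma^*m)$ for all $m\in(\ulM/a\ulM)^\tau$. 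Writing $\dm_{\eta'}$ for the element with $\dm_\eta-\sdtau_\dM(\sdsigma^*\dm_\eta)=a\,\dm_{\eta'}$, the defining property of $\delta_1$ gives $m(P') = (m\circ\dm_{\eta'})(1)$ where $P'$ is the point computed along the top-right path, and the chain of equalities in \eqref{Eqm(dm'_eta)(1)} evaluates this to $-\Res_\infty a^{-1}\eta(\sigma^*m)$; since $m$ ranges over a set that spans, $P'=P$. So the proof of the corollary consists of quoting these already-proved facts and assembling them, together with the observation that the top-left triangle is precisely the commutative diagram \eqref{EqDiagMandDMofE} whose commutativity step~9 established.

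The main obstacle is essentially bookkeeping rather than mathematics: one must be careful about the sign in Anderson's isomorphism \cite[Proposition~1.8.3]{Anderson86} (the statement deliberately inserts a minus sign, motivated by the later Theorem~\ref{ThmPeriodIsomForE}), and one must make sure the residue-pairing identification $\Hom_{A/(a)}\bigl((\ulM/a\ulM)^\tau,\Omega^1_{A/\BF_q}/a\Omega^1_{A/\BF_q}\bigr)\cong\ulE[a](\BC)$ is exactly the one Anderson uses, so that the middle equality $\ulE[a](\BC)=\ulE[a](\BC)$ is literally the identity and not merely an abstract isomorphism. Once the normalizations are pinned down, everything reduces to the displayed computation \eqref{Eqm(dm'_eta)(1)}, which was carried out in full generality (for arbitrary $a\in A\setminus\BF_q$) during the proof of Theorem~\ref{ThmMandDMofE}; hence the corollary follows with no new input. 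I would therefore keep the written proof short, phrased as ``this was shown along with the proof of Theorem~\ref{ThmMandDMofE}; we only make the maps explicit,'' and refer back to steps~8 and~9 for the details.
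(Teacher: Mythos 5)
Your proposal is correct and takes exactly the same route as the paper: the paper's entire proof is a one-line reference back to step~9 of the proof of Theorem~\ref{ThmMandDMofE}, which is precisely where you locate the telescoping computation \eqref{Eqm(dm'_eta)(1)} and the injectivity/surjectivity argument for the left horizontal arrow. Your fleshed-out account of what step~9 establishes, including the sign convention and the identification of $P$ via residues, is accurate.
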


\begin{proof}
The proof of the corollary was given in step 9 of the proof of Theorem~\ref{ThmMandDMofE}.
\end{proof}

The theorem naturally leads to the following

\begin{question}\label{QuestMandDMofE}
If $\ulE$ is an abelian and $A$-finite Anderson $A$-module, the inverse of the isomorphism $\Xi$ from Theorem~\ref{ThmMandDMofE} defines a perfect pairing of $A_\BC$-modules
\[
\dM(\ulE)\otimes_{A_\BC}\sigma^* M(\ulE)\;\longto\;\Omega^1_{A_\BC/\BC}\,,\quad \dm\otimes\sigma^* m\;\longmapsto\;\Xi^{-1}(\dm)(\sigma^*m)\,.
\]
Is it possible to give a direct description of this pairing, that is an explicit formula of the differential form $\Xi^{-1}(\dm)(\sigma^*m)$ in terms of $\dm$ and $m$ ?
\end{question}

\comment{Prove Question \ref{QuestMandDMofE} for uniformizable mixed abelian and $A$-finite Anderson $A$-modules as an application of the Hodge-theory.}

For Drinfeld $\BF_q[t]$-modules the question has an affirmative answer as follows.

\begin{example}\label{exampleDModDMotdDmot}
Let $C=\BP^1_{\BF_q}$, $A=\BF_q[t]$, $A_{\BC}=\BC[t]$, $\theta:=\charmorph(t)$, and $J=(t-\theta)$. Also we choose $z=\tfrac{1}{t}$ as the uniformizing parameter at $\infty$. Then $\Omega^1_{A_\BC/\BC}=\BC[t]\cdot dt$ and $dt=-\tfrac{1}{z^2}\,dz$. Let $\ulE=(E,\phi)$ be a Drinfeld $\BF_q[t]$-module given by $E=\BG_{a,\BC}$ and 
\[
\phi_t\;=\;\psi_{\theta}+\psi_{\alpha_1}\circ \tau+\ldots+\psi_{\alpha_r}\circ \tau^r
\]
with $\alpha_i\in\BC$ and $\alpha_r\ne0$. Then the powers $\dm_k:=\tau^k$ for $k=0,\ldots,r-1$ form a $\BC[t]$-basis of $\dM=\Hom_{\BF_q,\BC}(\BG_a,E)$ on which $\sdtau_\dM$ acts via $\sdtau_\dM(\tau^i)=\tau^{i+1}$ for $0\le i<r-1$ and 
\begin{eqnarray*}
\sdtau_\dM(\tau^{r-1})\es=\es \tau^r&=& \phi_{t}\circ\psi_{1/\alpha_r^{q^{-r}}}-\psi_{\theta/\alpha_r^{q^{-r}}}-\tau\circ\psi_{\alpha_1^{q^{-1}}/\alpha_r^{q^{-r}}}-\ldots-\tau^{r-1}\circ\psi_{\alpha_{r-1}^{q^{-(r-1)}}/\alpha_r^{q^{-r}}}\\[2mm]
&=& (t-\theta)/\alpha_r^{q^{-r}}-\alpha_1^{q^{-1}}/\alpha_r^{q^{-r}}\cdot \tau-\ldots-\alpha_{r-1}^{q^{-(r-1)}}/\alpha_r^{q^{-r}}\cdot \tau^{r-1}\,.
\end{eqnarray*}
Thus with respect to this basis of $\dM$ and the induced basis of $\sdsigma^\ast\dM$ the $\BC[t]$-linear map $\sdtau_\dM$ is given by the matrix
\[
\check\Phi
\;=\;\left( \raisebox{9.6ex}{$
\xymatrix @C=0.6pc @R=0.3pc {
0 \ar@{.}[drrdrrdrrd] \ar@{.}[rrrrrr] && && && 0 \ar@{.}[dddd]& (t-\theta)/\alpha_r^{q^{-r}}\\
1\ar@{.}[drrdrrdrrd] && && && & -\alpha_1^{q^{-1}}/\alpha_r^{q^{-r}}\ar@{.}[dddd]\\
0\ar@{.}[ddd]\ar@{.}[drrdrrd] && && && &  \\
\\
&& && && 0 &\\
0 \ar@{.}[rrrr] && && 0 && 1 &\hspace{-3mm} \es-\alpha_{r-1}^{q^{-(r-1)}}/\alpha_r^{q^{-r}} \\
}$}
\right)
\]
In particular $\ulE$ is $A$-finite.

On the other hand the powers $m_j:=\tau^j$ for $j=0,\ldots,r-1$ also form a $\BC[t]$-basis of $M=\Hom_{\BF_q,\BC}(E,\BG_a)$ on which $\tau_M$ acts via $\tau_M(\tau^i)=\tau^{i+1}$ for $0\le i<r-1$ and 
\begin{eqnarray*}
\tau_M(\tau^{r-1})\es=\es \tau^r&=&\psi_{1/\alpha_r}\circ\phi_t-\psi_{\theta/\alpha_r}-\psi_{\alpha_1/\alpha_r}\circ \tau-\ldots-\psi_{\alpha_{r-1}/\alpha_r}\circ \tau^{r-1}\\[2mm]
&=& (t-\theta)/\alpha_r-\alpha_1/\alpha_r\cdot \tau-\ldots-\alpha_{r-1}/\alpha_r\cdot \tau^{r-1}\,.
\end{eqnarray*}
Thus with respect to this basis of $M$ and the induced basis of $\sigma^\ast M$ the $\BC[t]$-linear map $\tau_M$ is given by the matrix
\[
\Phi\;=\;\left( \raisebox{9ex}{$
\xymatrix @C=0.6pc @R=0.3pc {
0 \ar@{.}[drrdrrdrrd] \ar@{.}[rrrrrr] && && && 0 \ar@{.}[dddd]& (t-\theta)/\alpha_r\\
1\ar@{.}[drrdrrdrrd] && && && & -\alpha_1/\alpha_r\ar@{.}[dddd]\\
0\ar@{.}[ddd]\ar@{.}[drrdrrd] && && && &  \\
\\
&& && && 0 &\\
0 \ar@{.}[rrrr] && && 0 && 1 &\hspace{-3mm} \quad-\alpha_{r-1}/\alpha_r 
}$}
\right)
\]
In particular $\ulE$ is also abelian.

Let $\eta_\ell\in\dM(\ulM)=\Hom_{A_\BC}(\sigma^*M,\Omega^1_{A_\BC/\BC})$ for $\ell=0,\ldots,r-1$ be the basis dual to $(\sigma^*m_j)_j$ which is given by $\eta_\ell(\sigma^*m_j)=\delta_{j\ell}\,dt=-\tfrac{\delta_{j\ell}}{z^2}\,dz$, where $\delta_{j,\ell}$ is the Kronecker delta. We want to compute the matrix representing the isomorphism $\Xi$ from Theorem~\ref{ThmMandDMofE} with respect to the bases $(\eta_\ell)_\ell$ and $(\dm_k)_k$. For this purpose we have to compute $\sdsigma^{i*}(\stau_M^{-i-1}m_j)\in \sigma^*M\otimes_{A_\BC}\BC\dpl z\dpr$ modulo $z^2$, because $\eta_\ell\bigl(\bigoplus_j z^2\,\BC\dbl z\dbr\cdot\sigma^* m_j\bigr)\subset\BC\dbl z\dbr dz$ and the elements of the latter have residue $0$ at $\infty$. We set $\alpha_i:=0$ for $i>r$ and observe $\tfrac{1}{t-\theta}=\tfrac{z}{1-\theta z}\in z\,\BC\dbl z\dbr$. By induction on $i$ one easily verifies that the matrix $\Phi^{-1}\cdot\ldots\cdot\sdsigma^{i*}\Phi^{-1}$, which represents $\sdsigma^{i*}\stau_M^{-i-1} \;:=\; \stau_M^{-1}\circ\ldots\circ\sdsigma^{i*}\stau_M^{-1}$ with respect to the basis $(m_j)_j$, is congruent to
\[
\left( \raisebox{18ex}{$
\xymatrix @C=0.6pc @R=0.3pc {
\DS\sdsigma^{i*}\Bigl(\frac{\alpha_{1+i}\,z}{1-\theta z}\Bigr) & \DS\sdsigma^{(i-1)*}\Bigl(\frac{\alpha_i\,z}{1-\theta z}\Bigr) \ar@{.}[rr] & & \DS\frac{\alpha_1\,z}{1-\theta z} & 1 \ar@{.}[drrdrrdrrd] && 0 \ar@{.}[drrdrrd] \ar@{.}[rrrr] && && 0 \ar@{.}[ddd]\\
\DS\sdsigma^{i*}\Bigl(\frac{\alpha_{2+i}\,z}{1-\theta z}\Bigr) \ar@{.}[ddd] & \DS\sdsigma^{(i-1)*}\Bigl(\frac{\alpha_{1+i}\,z}{1-\theta z}\Bigr) \ar@{.}[ddd] \ar@{.}[rr] & & \DS\frac{\alpha_2\,z}{1-\theta z} \ar@{.}[ddd] & 0\ar@{.}[ddddddd]\ar@{.}[drrdrrdrrd] \\
\\
& & & & && && && 0 \\
\DS\sdsigma^{i*}\Bigl(\frac{\alpha_{r-1}\,z}{1-\theta z}\Bigr) & \DS\sdsigma^{(i-1)*}\Bigl(\frac{\alpha_{r-2}\,z}{1-\theta z}\Bigr) \ar@{.}[rr] & & \DS\frac{\alpha_{r-i-1}\,z}{1-\theta z} & && && && 1\\
\DS\sdsigma^{i*}\Bigl(\frac{\alpha_r\,z}{1-\theta z}\Bigr) \ar@{.}[ddd] & \DS\sdsigma^{(i-1)*}\Bigl(\frac{\alpha_{r-1}\,z}{1-\theta z}\Bigr) \ar@{.}[ddd] \ar@{.}[rr] & & \DS\frac{\alpha_{r-i}\,z}{1-\theta z} \ar@{.}[ddd] & && && && 0 \ar@{.}[ddd]\\
\\
\\
\DS\sdsigma^{i*}\Bigl(\frac{\alpha_{r+i}\,z}{1-\theta z}\Bigr) & \DS\sdsigma^{(i-1)*}\Bigl(\frac{\alpha_{r-1+i}\,z}{1-\theta z}\Bigr) \ar@{.}[rr] & & \DS\frac{\alpha_r\,z}{1-\theta z} & 0 \ar@{.}[rrrrrr] && && && 0
}$}
\right)
\]
modulo $z^2\,\BC\dbl z\dbr^{r\times r}$ for $i=0,\ldots,r-1$, and to
\[
\left( \raisebox{9.3ex}{$
\xymatrix @C=0.3pc @R=0.3pc {
0 \ar@{.}[rrrr] \ar@{.}[ddddd] && && 0 \ar@{.}[drdrdrr] && \DS\sdsigma^{(r-1)*}\Bigl(\frac{\alpha_r\,z}{1-\theta z}\Bigr) \ar@{.}[rr] \ar@{.}[drdr] & & \DS\sdsigma^{(i-r+1)*}\Bigl(\frac{\alpha_{i-r+2}\,z}{1-\theta z}\Bigr) \ar@{.}[dd] \\
\\
&& && && & & \DS\sdsigma^{(i-r+1)*}\Bigl(\frac{\alpha_r\,z}{1-\theta z}\Bigr) \\
&& && && & & 0 \ar@{.}[dd] \\
\\
0 \ar@{.}[rrrrrrrr] && && && & & 0
}$}
\right)
\]
for $i=r-1,\ldots,2r-2$, and to the zero matrix for $i\ge 2r-1$. It follows that 
\[
\Res_\infty \eta_\ell\bigl(\sdsigma^{i*}(\tau_M^{-i-1}m_j)\bigr)\;=\;\left\{\begin{array}{cl} -\sdsigma^{(i-j)*}(\alpha_{\ell+1+i-j}) & \quad\text{for } j\le i\,,\\ 0 & \quad\text{for } j>i\,,\end{array}\right.
\]
and hence, $m_j\circ\dm_{\eta_\ell}=-\sum_{i=j}^{2r-2}\alpha_{\ell+1+i-j}^{q^j}\tau^i=-\tau^j\cdot\sum_{k=0}^{r-1-\ell}\tau^k\alpha_{k+\ell+1}^{q^{-k}}$ for $k=i-j$. These equations are equivalent to $\Xi(\eta_\ell)=\dm_{\eta_\ell}=-\sum_{k=0}^{r-1-\ell}\alpha_{k+\ell+1}^{q^{-k}}\dm_k$. Therefore, $\Xi$ is represented with respect to the bases $(\eta_\ell)_\ell$ and $(\dm_k)_k$ by the matrix
\begin{equation}\label{EqMatrixXi}
X\;:=\;-\left( \raisebox{9.3ex}{$
\xymatrix @C=0.3pc @R=0.3pc {
\alpha_1\!\!\!\!\!\! & \alpha_2 \ar@{.}[rrr] & & & \alpha_r \\
\alpha_2^{q^{-1}}\!\!\!\!\!\! \ar@{.}[ddd] & \alpha_3^{q^{-1}} \ar@{.}[rr] \ar@{.}[dd] & & \alpha_r^{q^{-1}} & 0 \ar@{.}[ddd]\\
\\
& \alpha_r^{q^{2-r}} \ar@{.}[ruru] \\
\alpha_r^{q^{1-r}}\!\!\!\!\!\! & 0 \ar@{.}[rrr] \ar@{.}[rururu] & & & 0
}$}
\right)\;\in\;\GL_r(\BC)\;\subset\;\GL_r(\BC[t])\,.
\end{equation}
Note that the compatibility of $\Xi$ with $\tau_M$ and $\sdtau_\dM$ from equation~\eqref{EqThmMandDMofECompatib} corresponds to the equation
\begin{equation}\label{EqRelationMatrixXi}
X\cdot\Phi^T\;=\;
-\left( \raisebox{6ex}{$
\xymatrix @C=0pc @R=0pc {
t-\theta & 0 \ar@{.}[rrr] & & & 0 \\
0 \ar@{.}[ddd] & \alpha_2^{q^{-1}} \ar@{.}[rrr] \ar@{.}[ddd] & & & \alpha_r^{q^{-1}} \\
& & & & 0 \ar@{.}[dd]\\
\\
0 & \alpha_r^{q^{1-r}} \ar@{.}[rururu] & 0 \ar@{.}[rr] \ar@{.}[ruru] & & 0
}$}
\right)
\;=\;\check\Phi\cdot\sdsigma^*(X)\,,
\end{equation}
which is easily verified. In particular, if 
\[
X^{-1}\;=\;\left( \raisebox{5ex}{$
\xymatrix @C=0pc @R=0.3pc {
0 \ar@{.}[rrr]\ar@{.}[dd] & & & 0 & \beta_{0,r-1}\ar@{.}[ddd] \\
\\
0 \ar@{.}[rurur] \\
\beta_{r-1,0} \ar@{.}[rururur] \ar@{.}[rrrr] & & & & \beta_{r-1,r-1}
}$}
\right)
\]
denotes the inverse of the matrix $X$ from \eqref{EqMatrixXi} then the pairing from Question~\ref{QuestMandDMofE} is explicitly given by
\[
\sum_{k=0}^{r-1}\df_k\,\dm_k \;\otimes\;\sum_{j=0}^{r-1}f_j\,\sigma^*m_j \es\longmapsto\es \sum_{j=0}^{r-1}\sum_{k=r-1-j}^{r-1} f_j\beta_{j,k}\df_k\,dt
\]
with $\df_k,f_j\in\BC[t]$ for $0\le j,k\le r-1$.

\medskip\noindent
(b) More generally let $\ulE=(E,\phi)$ be an Anderson $\BF_q[t]$-module given by $E=\BG_{a,\BC}^d$ and 
\[
\phi_t\;=\;\Delta_0+\Delta_1\circ \tau+\ldots+\Delta_s\circ \tau^s
\]
with $\Delta_i\in\BC^{d\times d}$, such that $(\Delta_0-\theta)^d=0$. Assume that $\Delta_s\in\GL_d(\BC)$. Then (with the Kronecker delta) the elements 
\[
\dm_{k,\nu}\;:=\;\left(\begin{array}{c}\TS\delta_{1,\nu}\tau^k \\ \vdots\\ \TS\delta_{d,\nu}\tau^k \end{array}\right)\!:\;x\;\longmapsto\;\left(\begin{array}{c}\TS\delta_{1,\nu}x^{q^k} \\ \vdots\\ \TS\delta_{d,\nu}x^{q^k} \end{array}\right)
\]
for $\nu=1,\ldots,d$ and $k=0,\ldots,s-1$ form a $\BC[t]$-basis of $\dM=\Hom_{\BF_q,\BC}(\BG_a,E)$. And the elements 
\[
m_{j,\nu}\;:=\;(\delta_{1,\nu}\tau^j,\ldots,\delta_{d,\nu}\tau^j):\;\left(\begin{array}{c} \TS x_1\\ \vdots \\ \TS x_d \end{array}\right)\;\longmapsto\;x_\nu^{q^j}
\]
for $\nu=1,\ldots,d$ and $j=0,\ldots,s-1$ form a $\BC[t]$-basis of $M=\Hom_{\BF_q,\BC}(E,\BG_a)$. A similar computation as in (a) shows that with respect to these bases of $\dM$ and $M$ and the induced bases of $\sdsigma^\ast\dM$ and $\sigma^\ast M$ the $\BC[t]$-linear maps $\sdtau_\dM$ and $\tau_M$ are given by the matrices
\[
\check\Phi
\;=\;\left( \raisebox{9.6ex}{$
\xymatrix @C=0.6pc @R=0.3pc {
0 \ar@{.}[drrdrrdrrd] \ar@{.}[rrrrrr] && && && 0 \ar@{.}[dddd]& (t-\Delta_0)\cdot\sdsigma^{s*}(\Delta_s^{-1})\\
\Id_d\ar@{.}[drrdrrdrrd] && && && & -\sdsigma^*(\Delta_1)\cdot\sdsigma^{s*}(\Delta_s^{-1})\ar@{.}[dddd]\\
0\ar@{.}[ddd]\ar@{.}[drrdrrd] && && && &  \\
\\
&& && && 0 &\\
0 \ar@{.}[rrrr] && && 0 && \Id_d &\hspace{-3mm} \es -\sdsigma^{(s-1)*}(\Delta_{s-1})\cdot\sdsigma^{s*}(\Delta_s^{-1}) \\
}$}
\right)
\]
and 
\[
\Phi\;=\;\left( \raisebox{9ex}{$
\xymatrix @C=0.6pc @R=0.3pc {
0 \ar@{.}[drrdrrdrrd] \ar@{.}[rrrrrr] && && && 0 \ar@{.}[dddd]& (t-\Delta_0^T)\cdot(\Delta_s^{-1})^T\\
\Id_d\ar@{.}[drrdrrdrrd] && && && & -\Delta_1^T\cdot(\Delta_s^{-1})^T \ar@{.}[dddd]\\
0\ar@{.}[ddd]\ar@{.}[drrdrrd] && && && &  \\
\\
&& && && 0 &\\
0 \ar@{.}[rrrr] && && 0 && \Id_d &\hspace{-3mm} -\Delta_{s-1}^T\cdot(\Delta_s^{-1})^T 
}$}
\right)
\]
In particular $\ulE$ is $A$-finite and abelian of dimension $d$ and rank $r:=sd$ and pure of weight $-s$.

Let $\eta_{\ell,\lambda}\in\dM(\ulM)=\Hom_{A_\BC}(\sigma^*M,\Omega^1_{A_\BC/\BC})$ for $\lambda=1,\ldots,d$ and $\ell=0,\ldots,s-1$ be the basis dual to $(m_{j,\nu})_{(j,\nu)}$ which is given by $\eta_{\ell,\lambda}(\sigma^*m_{j,\nu})=\delta_{j,\ell}\delta_{\nu,\lambda}\,dt=-\tfrac{\delta_{j,\ell}\delta_{\nu,\lambda}}{z^2}\,dz$. A similar computation as in (a) then shows that $\Xi$ is represented with respect to the bases $(\eta_{\ell,\lambda})_{(\ell,\lambda)}$ and $(\dm_{k,\nu})_{(k,\nu)}$ by the matrix
\[
X\;:=\;-\left(\qquad \raisebox{7.3ex}{$
\xymatrix @C=0.3pc @R=0.3pc {
\!\!\!\!\!\Delta_1\!\!\!\!\! & \Delta_2 \ar@{.}[rrr] & & & \Delta_s \\
\!\!\!\!\!\sdsigma^*\Delta_2\!\!\!\!\! \ar@{.}[ddd] & \sdsigma^*\Delta_3 \ar@{.}[rr] \ar@{.}[dd] & & \sdsigma^*\Delta_s & 0 \ar@{.}[ddd]\\
\\
& \sdsigma^{(s-2)*}\Delta_s \ar@{.}[ruru] \\
\!\!\!\!\!\!\!\!\!\!\sdsigma^{(s-1)*}\Delta_s\!\!\!\!\!\!\!\!\!\! & 0 \ar@{.}[rrr] \ar@{.}[rururu] & & & 0
}$}
\right)\;\in\;\GL_r(\BC)\;\subset\;\GL_r(\BC[t])
\]
which satisfies
\[
X\cdot\Phi^T\;=\;
-\left( \raisebox{5.2ex}{$
\xymatrix @C=0pc @R=0pc {
t-\Delta_0 & 0 \ar@{.}[rrrrrr] && && && 0 \\
0 \ar@{.}[ddd] & \sdsigma^*\Delta_2 \ar@{.}[rrrrrr] \ar@{.}[ddd] && && && \sdsigma^*\Delta_s \\
& && && && 0 \ar@{.}[dd]\\
\\
0 & \sdsigma^{(s-1)*}\Delta_s \ar@{.}[rrurrurru] && 0 \ar@{.}[rrrr] \ar@{.}[rrurru] && && 0
}$}
\right)
\;=\;\check\Phi\cdot\sdsigma^*(X)\,.
\]
\end{example}

\begin{corollary}\label{CorDriModisAFinite}
Let $\ulE=(E,\phi)$ be a Drinfeld $A$-module. Then $\ulE$ is abelian and $A$-finite.
\end{corollary}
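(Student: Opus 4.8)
\textbf{Proof proposal for Corollary~\ref{CorDriModisAFinite}.}

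The plan is to reduce the statement to the known case $A=\BF_q[t]$, which is precisely what Example~\ref{exampleDModDMotdDmot}(a) settles: there one checks by an explicit computation that for a Drinfeld $\BF_q[t]$-module $\ulE=(\BG_{a,\BC},\phi)$ with $\phi_t=\psi_\theta+\psi_{\alpha_1}\circ\tau+\ldots+\psi_{\alpha_r}\circ\tau^r$ and $\alpha_r\ne0$, the powers $m_j=\tau^j$ for $j=0,\ldots,r-1$ form a $\BC[t]$-basis of $M(\ulE)=\Hom_{\BF_q,\BC}(\BG_a,\BG_a)$, so $\ulE$ is abelian, and dually the powers $\dm_k=\tau^k$ for $k=0,\ldots,r-1$ form a $\BC[t]$-basis of $\dM(\ulE)=\Hom_{\BF_q,\BC}(\BG_a,\BG_a)$, so $\ulE$ is $A$-finite. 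Thus for $A=\BF_q[t]$ there is nothing more to do. (Alternatively one may invoke Theorem~\ref{ThmMandDMofE}: a Drinfeld module is abelian by the classical theory, and the invertibility of the leading coefficient $\alpha_r$ makes $\Xi$ an isomorphism, hence $\ulE$ is $A$-finite; but the direct basis computation is cleaner here.)

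For general $A$, first I would use Lemma~\ref{LemmaTSep}\ref{LemmaTSep_B} to choose an element $t\in A\setminus\BF_q$ such that $Q/\BF_q(t)$ is a finite separable extension; equivalently, the inclusion $\BF_q[t]\into A$ is finite flat. Writing $A$ as a finite free $\BF_q[t]$-module of rank $n=[Q:\BF_q(t)]$, restriction of scalars turns the Drinfeld $A$-module $\ulE=(\BG_{a,\BC},\phi)$ into an Anderson $\BF_q[t]$-module $\ulE'=(\BG_{a,\BC},\phi\!\restriction_{\BF_q[t]})$; the condition $(\Lie\phi_t-\charmorph(t))^{d}=0$ needed in Definition~\ref{DefAndersonAModule}\ref{DefAndersonAModule_A} holds with $d=1$ since $\Lie\phi_t=\charmorph(t)$ on the one-dimensional $\Lie E$. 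Moreover the $\tau$-degree of $\phi_t$ is $r$ times the pole order of $t$ at $\infty$, so $\ulE'$ has positive rank and $\Delta_t\in\BC\{\tau\}$ has nonzero leading coefficient; hence Example~\ref{exampleDModDMotdDmot}(a) applies to $\ulE'$ and shows $\ulE'$ is abelian and $A$-finite over $\BF_q[t]$.

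It remains to transfer abelianness and $A$-finiteness back across the finite flat map $\BF_q[t]\into A$. The key point is that the $A_\BC$-module $M(\ulE)=\Hom_{\BF_q,\BC}(\BG_{a,\BC},\BG_{a,\BC})$ underlying $\ulM(\ulE)$ is literally the same abelian group as the $\BF_q[t]\otimes_{\BF_q}\BC$-module $M(\ulE')$, with the $A$-action through $\phi$ extending the $\BF_q[t]$-action; likewise for $\dM(\ulE)$ and $\dM(\ulE')$. Since $M(\ulE')$ is finite locally free over $\BF_q[t]\otimes_{\BF_q}\BC=\BC[t]$ and the finite ring extension $\BC[t]=\BF_q[t]\otimes_{\BF_q}\BC\into A\otimes_{\BF_q}\BC=A_\BC$ is such that $A_\BC$ is itself finite locally free over $\BC[t]$, a finitely generated $A_\BC$-module is finite locally free over $A_\BC$ as soon as it is finite locally free over $\BC[t]$: indeed it is then automatically finitely generated and torsion-free over the Dedekind domain $A_\BC$ (torsion-freeness over $A_\BC$ follows from torsion-freeness over $\BC[t]$, as any $a\in A_\BC\setminus\{0\}$ divides some nonzero element of $\BC[t]$ after multiplying by a suitable cofactor, using that $A_\BC$ is integral over $\BC[t]$), and a finitely generated torsion-free module over a Dedekind domain is locally free. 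Applying this to $M(\ulE)$ gives that $\ulE$ is abelian, and applying it to $\dM(\ulE)$ gives that $\ulE$ is $A$-finite. The main obstacle is not any deep input but rather the bookkeeping in this last transfer step — verifying cleanly that the $A_\BC$-module structure is recovered from the $\BC[t]$-module structure and that local freeness descends along $\BC[t]\into A_\BC$; once that is in place the corollary follows immediately.
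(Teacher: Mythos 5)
Your proof is correct and follows the same route as the paper: restrict scalars along a finite flat inclusion $\BF_q[t]\into A$, invoke the explicit bases from Example~\ref{exampleDModDMotdDmot}(a) to get finite freeness of $M(\ulE)$ and $\dM(\ulE)$ over $\BC[t]$, and then transfer to $A_\BC$ by noting these modules are finitely generated and torsion-free over the Dedekind domain $A_\BC$. One small inaccuracy: the inclusion $\BF_q[t]\into A$ is finite flat for \emph{any} $t\in A\setminus\BF_q$ (not ``equivalently'' to separability of $Q/\BF_q(t)$), so the appeal to Lemma~\ref{LemmaTSep}\ref{LemmaTSep_B} is unnecessary here — the paper simply fixes an arbitrary $t\in A\setminus\BF_q$.
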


\begin{proof}
Fix an element $t\in A\setminus\BF_q$ and consider the finite flat ring homomorphism $\wt A:=\BF_q[t]\into A$. By restricting $\phi|_{\wt A}\colon\wt A\to\End_{\BF_q,\BC}(E)$ we view $\ulE$ as a Drinfeld $\BF_q[t]$-module. Then $M(\ulE)$ and $\dM(\ulE)$ are finite free modules over $\wt A_\BC=\BC[t]$ by Example~\ref{exampleDModDMotdDmot}. Therefore, they are finite and torsion free, hence locally free modules over the Dedekind domain $A_\BC$.
\end{proof}

\subsection{Analytic theory of $A$-finite Anderson $A$-modules}\label{SectAnalytThyAFinite}

We equip the $\BC$-vector spaces of matrices $\BC^{d'\times d}$ and vectors $\BC^d=\BC^{d\times1}$ with the maximum norm $\|\,.\,\|$ given by $\|(x_{ij})\|:=\max\{\,|x_{ij}|\colon \text{all }i,j\,\}$. Then $\|BC\|\le\|B\|\cdot\|C\|$ for all matrices $B,C$. All norms on these spaces are equivalent by \cite[Theorem~13.3]{Schikhof} and induce the same topology.

\begin{lemma}\label{Lemma5.16}
Let  $f\colon\BG_{a,\BC}^d\to\BG_{a,\BC}^{d'}$ be a homomorphism of $\BF_q$-module schemes over $\BC$. Then $f$ induces a continuous $\BF_q$-linear map $f\colon \BG_{a,\BC}^d(\BC)=\BC^d\longto \BG_{a,\BC}^{d'}(\BC)=\BC^{d'}$. More precisely, there is a constant $C\in\BR_{\ge0}$ such that $\|f(y)\|\le C\cdot\|y\|$ for every $y\in\BC^d$ with $\|y\|\le1$.
\end{lemma}

\begin{proof}
Under the isomorphism $\Hom_{\BF_q,\BC}(\BG_{a,\BC}^d,\BG_{a,\BC}^{d'})\cong\BC\{\tau\}^{d'\times d}$ from Lemma~\ref{LemmaLucas} we write $f=\sum_{i\ge0}B_i\tau^i$ with $B_i\in\BC^{d'\times d}$ and $B_i=0$ for $i\gg0$. Let $C:=\max\{\,\|B_i\|\colon i\ge0\,\}$. For $y\in\BC^d$ with $\|y\|\le 1$ we have $\|\sigma^{i*}(y)\|=\|y\|^{q^i}\le\|y\|$, and therefore 
\begin{eqnarray*}
\|f(y)\| & = & \Bigl\|\,\sum_{i\ge0}B_i\sigma^{i*}(y)\Bigr\| \\[2mm]
& \le & \max\bigl\{\,\|B_i\sigma^{i*}(y)\|\colon i\ge0\,\bigr\} \\[2mm]
& \le & \max\bigl\{\,\|B_i\|\cdot\|\sigma^{i*}(y)\|\colon i\ge0\,\bigr\} \\[2mm]
& \le & C\cdot\|y\|\,.
\end{eqnarray*}
% \[
% \|f(y)\| \;= \;\Bigl\|\,\sum_{i\ge0}B_i\sigma^{i*}(y)\Bigr\| 
% \; \le \;\max\bigl\{\,\|B_i\sigma^{i*}(y)\|\colon i\ge0\,\bigr\} 
% \; \le \;\max\bigl\{\,\|B_i\|\cdot\|\sigma^{i*}(y)\|\colon i\ge0\,\bigr\}
% \; \le \;C\cdot\|y\|\,.
% \]
Since $f\colon \BC^d\to\BC^{d'}$ is $\BF_q$-linear, this shows that $f$ is continuous.
\end{proof}

\begin{definition}\label{DefDivisionTower}
Fix an $a\in A\setminus\BF_q$ and an $x\in E(\BC)$. 
\begin{enumerate}
\item \label{DefDivisionTower_A}
A sequence $x_{(0)}, x_{(1)},x_{(2)},\ldots\index{tdt@$\tdt$} \in E(\BC)$ is an \emph{$a$-division tower above $x$} if
\[
\phi_a(x_{(n)})= x_{(n-1)} \quad \text{for all } n>0 \qquad\text{and} \qquad \phi_a(x_{(0)}) = x \,.
\]
\item \label{DefDivisionTower_B}
An $a$-division tower $(x_{(n)})_{n\ge0}$ is said to be \emph{convergent} if for some (or, equivalently, any) isomorphism $\rho\colon E\isoto\BG_{a,\BC}^d$ of $\BF_q$-module schemes, $\lim\limits_{n\to\infty}\rho(x_{(n)})=0$ in the $\BC$-vector space $\BG_{a,\BC}^d(\BC)=\BC^d$.
\end{enumerate}
\end{definition}

\begin{proof}
We must explain, why the definition in~\ref{DefDivisionTower_B} is independent of $\rho$. For this purpose let $\tilde\rho\colon E\isoto\BG_{a,\BC}^d$ be another isomorphism. Then $\tilde\rho\circ\rho^{-1}\in\Aut_{\BF_q,\BC}(\BG_{a,\BC}^d)$ induces a homeomorphism $\tilde\rho\circ\rho^{-1}\colon\BC^d\to\BC^d$ by Lemma~\ref{Lemma5.16}. It follows that $\lim_{n\to\infty}\|\rho(x_{(n)})\|=0$ if and only if $\lim_{n\to\infty}\|\tilde\rho(x_{(n)})\|=0$ as claimed.
\end{proof}

If $\ulE$ is $A$-finite (or abelian) then $a$-division towers exist above every $x$. This follows from Theorem~\ref{ThmDivisionTower} (or respectively Proposition~\ref{PropCompTateModEandM}\ref{PropCompTateModEandM_B} below). But there may or may not exist convergent ones.

\begin{theorem}[{\cite{ABP_Rohrlich}}]\label{ThmDivTowersAndLie}
Let $\ulE$ be an Anderson $A$-module over $\BC$, let $x\in E(\BC)$, and let $a\in A\setminus\BF_q$. Then there is a canonical bijection
\begin{eqnarray}\label{EqDivTowersAndLie}
\{\,\xi\in\Lie E\colon \exp_\ulE(\xi)=x\,\} & \isoto & \{\,\text{convergent $a$-division towers above $x$}\,\}\nonumber \\[2mm]
\xi\qquad\qquad & \longmapsto & \Bigl(\exp_\ulE\bigl(\Lie\phi_a^{-n-1}(\xi)\bigr)\Bigr)_{n\in\BN_0}
\end{eqnarray}
If $\rho\colon E\isoto\BG_{a,\BC}^d$ is an isomorphism of $\BF_q$-module schemes and $\Lie\rho\colon\Lie E\isoto\BC^d$ is the induced isomorphism of Lie algebras then 
\begin{equation}\label{EqDivTowersAndLie2}
\lim_{n\to\infty}\Lie(\rho\circ\phi_a^{n+1}\circ\rho^{-1})\bigl(\rho(x_{(n)})\bigr)\;=\;(\Lie\rho)(\xi)
\end{equation}
holds in $\BC^d$ for all $\xi\in\Lie E$ with $x_{(n)}:=\exp_\ulE\bigl(\Lie\phi_a^{-n-1}(\xi)\bigr)$ for $n\ge0$.
\end{theorem}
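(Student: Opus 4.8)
\textbf{Proof proposal for Theorem~\ref{ThmDivTowersAndLie}.}
The plan is to show both that the assignment in \eqref{EqDivTowersAndLie} lands in the set of convergent $a$-division towers and that it is a bijection, and to establish the limit formula \eqref{EqDivTowersAndLie2} along the way. First I would check the map is well defined: if $\xi\in\Lie E$ satisfies $\exp_\ulE(\xi)=x$, then setting $x_{(n)}:=\exp_\ulE\bigl(\Lie\phi_a^{-n-1}(\xi)\bigr)$ and using the functional equation $\phi_a\circ\exp_\ulE=\exp_\ulE\circ\Lie\phi_a$ one gets $\phi_a(x_{(n)})=\exp_\ulE\bigl(\Lie\phi_a^{-n}(\xi)\bigr)=x_{(n-1)}$ for $n>0$ and $\phi_a(x_{(0)})=\exp_\ulE(\xi)=x$, so $(x_{(n)})_{n\ge0}$ is an $a$-division tower above $x$. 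Convergence follows from Lemma~\ref{LemmaConvergent}: the sequence $\Lie\phi_a^{-n-1}(\xi)$ tends to $0$ in $\Lie E$, so it eventually lies in the isometric neighbourhood of $0$ provided by Lemma~\ref{LemmaLog}, on which $\exp_\ulE$ is an isometry; hence $\rho(x_{(n)})=\rho\bigl(\exp_\ulE(\Lie\phi_a^{-n-1}(\xi))\bigr)\to 0$, and by the reduction just before the theorem this is independent of $\rho$.

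Next I would prove injectivity. Suppose $\xi,\xi'$ both satisfy $\exp_\ulE(\xi)=\exp_\ulE(\xi')=x$ and give the same tower. Then $\exp_\ulE\bigl(\Lie\phi_a^{-n-1}(\xi)\bigr)=\exp_\ulE\bigl(\Lie\phi_a^{-n-1}(\xi')\bigr)$ for all $n$; since $\Lie\phi_a^{-n-1}(\xi-\xi')\to0$ by Lemma~\ref{LemmaConvergent}, for $n\gg0$ both $\Lie\phi_a^{-n-1}(\xi)$ and $\Lie\phi_a^{-n-1}(\xi')$ lie in the neighbourhood where $\exp_\ulE$ is injective (Lemma~\ref{LemmaLog}), forcing $\Lie\phi_a^{-n-1}(\xi)=\Lie\phi_a^{-n-1}(\xi')$, hence $\xi=\xi'$ because $\Lie\phi_a$ is invertible on $\Lie E$. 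For surjectivity, let $(x_{(n)})_{n\ge0}$ be a convergent $a$-division tower above $x$. By convergence and Lemma~\ref{LemmaLog}, for $n\ge n_0$ the points $x_{(n)}$ lie in the domain of $\log_\ulE$, so $\eta_n:=\log_\ulE(x_{(n)})\in\Lie E$ is defined and satisfies $\Lie\phi_a(\eta_n)=\log_\ulE(\phi_a(x_{(n)}))=\log_\ulE(x_{(n-1)})=\eta_{n-1}$ for $n>n_0$ (the compatibility of $\log_\ulE$ with $\phi_a$ holds once both arguments are in the domain, which is the case for $n$ large). Define $\xi:=\Lie\phi_a^{\,n+1}(\eta_n)$ for any $n\ge n_0$; the relation $\Lie\phi_a(\eta_n)=\eta_{n-1}$ shows this is independent of $n$. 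Then $\Lie\phi_a^{-n-1}(\xi)=\eta_n=\log_\ulE(x_{(n)})$ for all $n\ge n_0$, so $\exp_\ulE\bigl(\Lie\phi_a^{-n-1}(\xi)\bigr)=x_{(n)}$ for $n\ge n_0$; applying $\phi_a^{\,n+1}$ (equivalently using the division-tower relations downward and the functional equation) gives $\exp_\ulE(\xi)=x$ and $\exp_\ulE\bigl(\Lie\phi_a^{-n-1}(\xi)\bigr)=x_{(n)}$ for all $n\ge0$. Thus $\xi$ is a preimage of the tower, and the map \eqref{EqDivTowersAndLie} is bijective.

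Finally, for the limit formula \eqref{EqDivTowersAndLie2}: with $x_{(n)}=\exp_\ulE\bigl(\Lie\phi_a^{-n-1}(\xi)\bigr)$ and $n$ large enough that $\Lie\phi_a^{-n-1}(\xi)$ lies in the isometric neighbourhood, we have $\log_\ulE(x_{(n)})=\Lie\phi_a^{-n-1}(\xi)$, hence applying the Lie-algebra map $\Lie\phi_a^{\,n+1}$ and then $\Lie\rho$ yields $\Lie(\rho\circ\phi_a^{n+1}\circ\rho^{-1})\bigl(\Lie\rho(\log_\ulE x_{(n)})\bigr)=\Lie\rho(\xi)$ for all such $n$; identifying $\Lie\rho(\log_\ulE x_{(n)})$ with the ``coordinate'' of $\rho(x_{(n)})$ and noting that the relevant leading term agrees, one gets that $\Lie(\rho\circ\phi_a^{n+1}\circ\rho^{-1})(\rho(x_{(n)}))$ differs from $\Lie\rho(\xi)$ by a quantity governed by the higher-order terms of $\exp_\ulE$ evaluated at $\Lie\phi_a^{-n-1}(\xi)\to0$, which tends to $0$. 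This gives \eqref{EqDivTowersAndLie2}. The main obstacle I anticipate is the bookkeeping around the constant $C$ of Lemma~\ref{LemmaLog}: one must be careful that the compatibility $\log_\ulE\circ\,\phi_a=\Lie\phi_a\circ\log_\ulE$ is only valid when \emph{both} $x_{(n)}$ and $\phi_a(x_{(n)})=x_{(n-1)}$ lie in the domain, so the inverse construction of $\xi$ should be set up starting from a sufficiently deep level $n_0$ and then extended downward using the division-tower relations and the global functional equation of $\exp_\ulE$ rather than of $\log_\ulE$.
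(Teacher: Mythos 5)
Your construction of the bijection is the same as the paper's: well-definedness and convergence via Lemmas~\ref{LemmaConvergent} and \ref{LemmaLog}, injectivity by pushing both preimages into the neighbourhood where $\exp_\ulE$ is injective, and surjectivity by setting $\xi:=\Lie\phi_a^{n_0+1}\bigl(\log_\ulE(x_{(n_0)})\bigr)$ and propagating down to $n<n_0$ with $\phi_a$ and the functional equation of $\exp_\ulE$ rather than of $\log_\ulE$ --- exactly the care the paper also takes. The one place where your argument is not yet a proof is the limit formula \eqref{EqDivTowersAndLie2}. You assert that the error is ``governed by the higher-order terms of $\exp_\ulE$ evaluated at $\Lie\phi_a^{-n-1}(\xi)\to0$, which tends to $0$''; but the error is $\Lie\phi_a^{n+1}$ \emph{applied to} those higher-order terms, and the operator norm of $\Lie\phi_a^{n+1}$ grows like $|\charmorph(a)|^{n+1}\to\infty$, so the mere fact that its argument tends to $0$ proves nothing. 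The missing step is the quantitative comparison: after adjusting $\rho$ so that $\Lie(\rho\circ\phi_a\circ\rho^{-1})=\charmorph(a)(\Id_d+N)$ with $N$ nilpotent with entries $0,1$ (so that $(\Id_d+N)^{\pm(n+1)}$ has maximum norm $1$), the $i$-th term of the error for $i\ge1$ equals $\charmorph(a)^{n+1}(\Id_d+N)^{n+1}E_i\,\charmorph(a)^{-q^i(n+1)}(\Id_d+N)^{-n-1}\sigma^{i*}\bigl(\Lie\rho(\xi)\bigr)$, of norm at most $\|E_i\|\,\|\Lie\rho(\xi)\|^{q^i}\,|\charmorph(a)|^{-(q^i-1)(n+1)}$. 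Since $q^i-1\ge q-1>0$, $|\charmorph(a)|>1$, and $\|E_i\|\,\|\Lie\rho(\xi)\|^{q^i}\to0$ by convergence of $\exp_\ulE$ on all of $\Lie E$, these terms tend to $0$ uniformly in $i$ as $n\to\infty$; this is precisely the estimate the paper carries out, and with it inserted your proof coincides with the paper's.
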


\noindent
{\it Remark.} Equation~\eqref{EqDivTowersAndLie2} is the analog of the fact, that for a real or complex Lie group $G$ the exponential function $\exp_G\colon\Lie G\to G$ has derivative $1$ near the identity element of $G$ (with respect to any coordinate system). For example $\lim\limits_{n\to\infty}a^n\bigl(\exp(a^{-n}\xi) - 1\bigr)=\xi$ for $G=\BG_m$, where $\xi$ is any complex number and $a\in\BZ\setminus\{-1,0,1\}$.

\begin{proof}
The element $x_{(n)}:=\exp_\ulE\bigl(\Lie\phi_a^{-n-1}(\xi)\bigr)\in E(\BC)$ satisfies $\phi_a(x_{(n)})=\exp_\ulE\bigl(\Lie\phi_a^{-n}(\xi)\bigr)$. This equals $x_{(n-1)}$ when $n>0$ and it equals $x$ when $n=0$, hence, $(x_{(n)})_n$ is an $a$-division tower above $x$. By Lemmas~\ref{LemmaConvergent} and \ref{LemmaLog}, it is convergent and so the map is well defined.

If $\xi,\xi'\in\Lie E$ satisfy $\exp_\ulE\bigl(\Lie\phi_a^{-n}(\xi)\bigr)=\exp_\ulE\bigl(\Lie\phi_a^{-n}(\xi')\bigr)$ for all $n\ge0$ then Lemma~\ref{LemmaConvergent} implies that $\Lie\phi_a^{-n}(\xi)$ and $\Lie\phi_a^{-n}(\xi')$ converge to $0$ in $\Lie E$ and therefore $\Lie\phi_a^{-n}(\xi)=\Lie\phi_a^{-n}(\xi')$ for $n\gg0$ by Lemma~\ref{LemmaLog}. This implies $\xi=\xi'$, and hence the map is injective.

To prove surjectivity, let $(x_{(n)})_n$ be a convergent $a$-division tower above $x$. Since $(x_{(n)})$ converges to $0$ there is an $n_0\in\BN_0$ such that $\log_\ulE(x_{(n)})$ exists by Lemma~\ref{LemmaLog} for all $n\ge n_0$. We set $\xi:=\Lie\phi_a^{n_0+1}\bigl(\log_\ulE(x_{(n_0)})\bigr)$. Then $\Lie\phi_a^{n+1}\bigl(\log_\ulE(x_{(n)})\bigr)=\Lie\phi_a^{n_0+1}\bigl(\log_\ulE(\phi_a^{n-n_0}(x_{(n)}))\bigr)=\Lie\phi_a^{n_0+1}\bigl(\log_\ulE(x_{(n_0)})\bigr)=\xi$ for $n\ge n_0$ by Lemma~\ref{LemmaLog}. Therefore, $x_{(n)}=\exp_\ulE\bigl(\Lie\phi_a^{-n-1}(\xi)\bigr)$ for all $n\ge n_0$, and for $n<n_0$ we compute $x_{(n)}=\phi_a^{n_0-n}(x_{(n_0)})=\phi_a^{n_0-n}\bigl(\exp_\ulE(\Lie\phi_a^{-n_0-1}(\xi))\bigr)=\exp_\ulE\bigl(\Lie\phi_a^{-n-1}(\xi)\bigr)$.

It remains to prove \eqref{EqDivTowersAndLie2}. With respect to the coordinate system $\rho$ and $\Lie\rho$ we write $\phi_a$ as a matrix $\Delta_a:=\rho\circ\phi_a\circ\rho^{-1}=\sum_{i\ge0}\Delta_{a,i}\,\tau^i\in\BC\{\tau\}^{d\times d}$ and $\exp_\ulE$ as a matrix $\sum_{i=0}^\infty E_i\,\tau^i:=\rho\circ\exp_\ulE\circ(\Lie\rho)^{-1}$ with $\Delta_{a,i},E_i\in\BC^{d\times d}$ and $\Delta_{a,0}=\Lie(\rho\circ\phi_a\circ\rho^{-1})$ and $E_0=\Id_d$. By replacing $\rho$ by $\tilde\rho:=B\circ\rho$ for a matrix $B\in\GL_d(\BC)\subset\BC\{\tau\}^{d\times d}=\End_{\BF_q,\BC}(\BG_{a,\BC}^d)$ we can write $\Delta_{a,0}=\charmorph(a)(\Id_d+N)$ with strictly upper triangular (nilpotent) $N$ having only entries $0$ and $1$. This replacement is allowed because $\tilde\rho\circ\rho^{-1}=B$ is an automorphism of the $\BC$-vector space $\BC^d$. Then $\Lie(\rho\circ\phi_a^{n+1}\circ\rho^{-1})\bigl(\rho(x_{(n)})\bigr)=\Delta_{a,0}^{n+1}\sum_{i=0}^\infty E_i\sigma^{i*}\bigl(\Delta_{a,0}^{-n-1}\Lie\rho(\xi)\bigr)$. We consider the maximum norm $\|\,.\,\|$ on $\BC^d$ and $\BC^{d\times d}$. For $i>0$ the term $\Delta_{a,0}^{n+1}E_i\sigma^{i*}\bigl(\Delta_{a,0}^{-n-1}\Lie\rho(\xi)\bigr)$ equals
\begin{equation}\label{ExTermI}
\charmorph(a)^{n+1}(\Id_d+N)^{n+1}E_i\,\charmorph(a)^{-q^i(n+1)}(\Id_d+N)^{-n-1}\sigma^{i*}\bigl(\Lie\rho(\xi)\bigr)\,,
\end{equation}
and has norm less or equal to $\|E_i\|\,\|\Lie\rho(\xi)\|^{q^i}|\charmorph(a)|^{-(q^i-1)(n+1)}$, because $\|\,.\,\|$ is compatible with matrix multiplication and $\|\Id_d+N\|=1$. Since $|\charmorph(a)|>1$ and $\exp_\ulE$ converges on all of $\Lie E$, that is $\lim_{i\to\infty}\|E_i\|\,\|\Lie\rho(\xi)\|^{q^i}=0$, the terms \eqref{ExTermI} go to zero uniformly in $i$ when $n\to\infty$. Therefore, $\lim_{n\to\infty}\Delta_{a,0}^{n+1}\sum_{i=0}^\infty E_i\sigma^{i*}\bigl(\Delta_{a,0}^{-n-1}\Lie\rho(\xi)\bigr)=\Delta_{a,0}^{n+1}E_0\Delta_{a,0}^{-n-1}\Lie\rho(\xi)=\Lie\rho(\xi)$, proving \eqref{EqDivTowersAndLie2}. 
\end{proof}

From now on we assume that $\ulE$ is $A$-finite. The following theorem of Anderson \cite{ABP_Rohrlich} is crucial for the theory of uniformizability. Let $a\in A\setminus\BF_q$ and set $\dM_a:= \varprojlim \dM/a^n \dM$. If $v_1,\ldots,v_s$ are the maximal ideals of $A$ which contain $a$ then $\varprojlim A_\BC/(a^n)=\prod_{i=1}^sA_{\BC,v_i}$ and $\dM_a=\prod_{i=1}^s\dM\otimes_{A_\BC}A_{\BC,v_i}$. The latter equals the completion of $\dM$ at the closed subscheme $\Var(a)\subset\Spec A_\BC$. Since $\Var(a)\subset\FC_\BC\setminus\Disc$ there are natural inclusions $\CO(\FC_\BC\setminus\Disc)\into\prod_{i=1}^sA_{\BC,v_i}$ and $\dM\otimes_{A_\BC}\CO(\FC_\BC\setminus\Disc)\into\dM_a$. 

\begin{theorem}[{\cite{ABP_Rohrlich}}]\label{ThmDivisionTower}
\begin{enumerate}
\item \label{ThmDivisionTower_PartA}
Let $\ulE$ be an $A$-finite Anderson $A$-module and let $(\dM,\sdtau_\dM)=\uldM(\ulE)$ be its dual $A$-motive. Let $\dm\in\dM$ and $x:=\delta_1(\dm)=\dm(1)\in E(\BC)$. Then Proposition~\ref{PropSwitcheroo} defines a canonical bijection 
\begin{equation}\label{EqThmDivisionTower}
\bigl\{\,\dm'\in \dM_a\colon \sdtau_\dM(\sdsigma^*\dm')-\dm'=\dm\,\bigr\} \es \isoto \es \bigl\{\,\text{$a$-division towers }(x_{(n)})_n\text{ above }x\,\bigr\}
\end{equation}
as follows. Let $\dm'\in\dM_a$ satisfy $\sdtau_\dM(\sdsigma^*\dm')-\dm'=\dm$. For each $n\in\BN_0$ choose an $\dm'_n\in\dM$ with $\dm'\equiv\dm'_n\mod a^{n+1}\dM_a$. There is a uniquely determined $\dm''_n\in\dM$ with $a^{n+1}\dm''_n=\dm+\dm'_n-\sdtau_\dM(\sdsigma^*\dm'_n)$. Then $x_{(n)}:=\delta_1(\dm''_n)$.
\item \label{ThmDivisionTower_PartB}
Let $\dm'$ correspond to the $a$-division tower $(x_{(n)})_n$ under the bijection \eqref{EqThmDivisionTower}. Then the following are equivalent:
\begin{enumerate}
\item \label{ThmDivisionTower_A}
$\dm'\in\dM\otimes_{A_\BC}\CO(\FC_\BC\setminus\Disc)\subset\dM_a$,
\item \label{ThmDivisionTower_B}
$\dm'\in\dM\otimes_{A_\BC}\CO\bigl(\dotFC_\BC\setminus\bigcup_{i\in\BN_{>0}}\Var(\ssigma^{i\ast}J)\bigr)\subset\dM_a$,
\item \label{ThmDivisionTower_C}
$(x_{(n)})_n$ is convergent,
\item \label{ThmDivisionTower_D}
with respect to some (or, equivalently, any) isomorphism $\rho\colon E\isoto\BG_{a,\BC}^d$ of $\BF_q$-module schemes the sequence $\charmorph(a)^n\cdot\rho(x_{(n)})$ is bounded in the $\BC$-vector space $\BG_{a,\BC}^d(\BC)=\BC^d$.
\end{enumerate}
If these conditions hold and if $\xi\in\Lie E$ is the element from Theorem~\ref{ThmDivTowersAndLie} that corresponds to the convergent $a$-division tower $(x_{(n)})_n$, that is $x_{(n)}=\exp_\ulE\bigl(\Lie\phi_a^{-n-1}(\xi)\bigr)$ for all $n$, then $\xi=\delta_0(\dm'+\dm)$ for the map $\delta_0\colon\dM\to\Lie E$ from Proposition~\ref{prop:commutingdiagrams}.
\end{enumerate}
\end{theorem}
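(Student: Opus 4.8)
\textbf{Proof proposal for Theorem~\ref{ThmDivisionTower}.}
The plan is to build the bijection in part~\ref{ThmDivisionTower_PartA} directly from Proposition~\ref{PropSwitcheroo} applied level by level in the $a$-adic tower, and then to identify the analytic conditions in part~\ref{ThmDivisionTower_PartB} with membership in the subrings $\CO(\FC_\BC\setminus\Disc)$ and $\CO\bigl(\dotFC_\BC\setminus\bigcup_{i\in\BN_{>0}}\Var(\ssigma^{i\ast}J)\bigr)$ of $\dM_a$, using the growth estimates from Lemmas~\ref{LemmaConvergent} and \ref{LemmaLog} together with Theorem~\ref{ThmDivTowersAndLie}. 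For part~\ref{ThmDivisionTower_PartA}, first I would fix $\dm'\in\dM_a$ with $\sdtau_\dM(\sdsigma^*\dm')-\dm'=\dm$ and, for each $n$, reduce modulo $a^{n+1}$: the image of $\dm'$ in $\dM/a^{n+1}\dM$ is a solution of $\sdtau_\dM(\sdsigma^*\dm')-\dm'=\dm$ in $\dM/a^{n+1}\dM$, so Proposition~\ref{PropSwitcheroo} (with $a$ replaced by $a^{n+1}$) attaches to it a point $x_{(n)}\in E(\BC)$ with $\phi_{a^{n+1}}(x_{(n)})=x$. Compatibility of these points across $n$, i.e.\ $\phi_a(x_{(n)})=x_{(n-1)}$, follows because the reductions of $\dm'$ mod $a^{n}$ and mod $a^{n+1}$ are compatible and the formula $x_{(n)}=\delta_1(\dm''_n)$ with $a^{n+1}\dm''_n=\dm+\dm'_n-\sdtau_\dM(\sdsigma^*\dm'_n)$ is compatible with multiplication by $a$; here one checks that $\phi_a(\delta_1(\dm''_n))=\delta_1(a\dm''_n)=\delta_1(\dm''_{n-1})$ modulo the kernel of $\delta_1$, using that two choices of representative $\dm'_n$ differ by an element of $a^{n+1}\dM$ and hence change $\dm''_n$ by something in $\ker\delta_1$ (exactly as in the well-definedness argument of Proposition~\ref{PropSwitcheroo}). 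Injectivity and surjectivity of \eqref{EqThmDivisionTower} then reduce to the bijectivity of the maps in Proposition~\ref{PropSwitcheroo} at each finite level, passing to the limit over $n$; since $\dM_a=\invlim \dM/a^{n+1}\dM$ this is a purely formal inverse-limit argument.

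For part~\ref{ThmDivisionTower_PartB}, the implication \ref{ThmDivisionTower_B}$\Rightarrow$\ref{ThmDivisionTower_A} is trivial from the inclusion $\CO\bigl(\dotFC_\BC\setminus\bigcup_{i\in\BN_{>0}}\Var(\ssigma^{i\ast}J)\bigr)\subset\CO(\FC_\BC\setminus\Disc)$ noted before Proposition~\ref{PropMaphdM}. The heart is to show \ref{ThmDivisionTower_A}$\Leftrightarrow$\ref{ThmDivisionTower_C} and \ref{ThmDivisionTower_C}$\Leftrightarrow$\ref{ThmDivisionTower_D}, and then to upgrade \ref{ThmDivisionTower_A} to \ref{ThmDivisionTower_B} and identify $\xi$. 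I would argue \ref{ThmDivisionTower_C}$\Leftrightarrow$\ref{ThmDivisionTower_D} first: given a convergent tower, $x_{(n)}\to 0$, so eventually $\log_\ulE(x_{(n)})$ is defined and $\Lie\phi_a^{n+1}(\log_\ulE x_{(n)})$ is the constant $\xi$ from Theorem~\ref{ThmDivTowersAndLie}; since $\Lie\phi_a=\charmorph(a)(\Id+N)$ with $\|\Id+N\|=1$ one gets $\|\Lie\rho(x_{(n)})\|\asymp |\charmorph(a)|^{-n}\|\Lie\rho(\xi)\|$ up to the isometry of $\log_\ulE$, so $\charmorph(a)^n\rho(x_{(n)})$ is bounded; conversely boundedness of $\charmorph(a)^n\rho(x_{(n)})$ forces $\rho(x_{(n)})\to 0$ because $|\charmorph(a)|>1$, giving convergence. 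For \ref{ThmDivisionTower_A}$\Leftrightarrow$\ref{ThmDivisionTower_C} I would use the pairing between $\dM$ and $\sigma^*M$ provided by Theorem~\ref{ThmMandDMofE} (or, when $\ulE$ is only assumed $A$-finite, the intrinsic maps $\delta_0,\delta_1$): expanding $\dm'\in\dM_a$ in the $a$-adic coordinates and comparing with the telescoping identity in the proof of Theorem~\ref{ThmMandDMofE}, one sees that $\dm'$ has entries in $\CO(\FC_\BC\setminus\Disc)=A\otimes_{\BF_q[t]}\BC\langle t\rangle$ (i.e.\ coefficients tending to $0$) if and only if the values $x_{(n)}=\delta_1(\dm''_n)$ tend to $0$. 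Finally, once $\dm'\in\dM\otimes_{A_\BC}\CO(\FC_\BC\setminus\Disc)$, the equation $\sdtau_\dM(\sdsigma^*\dm')-\dm'=\dm$ with $\dm\in\dM$ together with Proposition~\ref{PropMaphdM}\ref{PropMaphdMA} (which identifies $\Lambda(\uldM)$ and extends $h_\uldM$ over $\dotFC_\BC\setminus\bigcup_{i>0}\Var(\ssigma^{i\ast}J)$) forces $\dm'$ to actually lie in $\dM\otimes_{A_\BC}\CO\bigl(\dotFC_\BC\setminus\bigcup_{i\in\BN_{>0}}\Var(\ssigma^{i\ast}J)\bigr)$, because $\dm'$ differs from a solution over that larger space by a $\sdtau$-invariant element, i.e.\ an element of $\Lambda(\uldM)\otimes_AQ$; this gives \ref{ThmDivisionTower_A}$\Rightarrow$\ref{ThmDivisionTower_B}. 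The identification $\xi=\delta_0(\dm'+\dm)$ then comes out of \eqref{EqDualMAndE2}: applying $\delta_0$ to $\dm+\dm'=\sdtau_\dM(\sdsigma^*\dm')$ shows $\delta_0(\dm+\dm')=\delta_0(\sdtau_\dM(\sdsigma^*\dm'))$, and comparing with $x_{(n)}=\exp_\ulE(\Lie\phi_a^{-n-1}(\xi))$ via \eqref{EqDivTowersAndLie2} pins down $\xi$.

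The main obstacle I anticipate is the precise matching in \ref{ThmDivisionTower_A}$\Leftrightarrow$\ref{ThmDivisionTower_C}: one must convert the $a$-adic convergence of the coefficients of $\dm'$ (membership in $\BC\langle t\rangle$ after a finite flat base change $\BF_q[t]\into A$) into the analytic convergence $\rho(x_{(n)})\to 0$, and this requires carefully tracking how the maps $\delta_0,\delta_1$ interact with the $\sdtau$-action and with the growth of $\exp_\ulE$ — essentially re-running the telescoping computation from the proof of Theorem~\ref{ThmMandDMofE} in the present setting and controlling the error terms uniformly in $n$. The rest is either formal (the inverse-limit bookkeeping in part~\ref{ThmDivisionTower_PartA}) or a direct application of the norm estimates already established in Lemmas~\ref{LemmaConvergent}, \ref{LemmaLog} and Theorem~\ref{ThmDivTowersAndLie}.
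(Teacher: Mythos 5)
Your part~\ref{ThmDivisionTower_PartA} and the easy implications \ref{ThmDivisionTower_B}$\Rightarrow$\ref{ThmDivisionTower_A}, \ref{ThmDivisionTower_D}$\Leftrightarrow$\ref{ThmDivisionTower_C} match the paper's argument, but the analytic core of part~\ref{ThmDivisionTower_PartB} is missing and two of your shortcuts fail. First, the direction \ref{ThmDivisionTower_C}/\ref{ThmDivisionTower_D}$\Rightarrow$\ref{ThmDivisionTower_A} cannot be read off from the telescoping identity in Theorem~\ref{ThmMandDMofE}: that theorem assumes $\ulE$ abelian, while here $\ulE$ is only $A$-finite. The actual work is to produce, from the points $x_{(n)}$, elements $y_n\in\dM$ with $t\dm''_n-\dm''_{n-1}=y_n-\sdtau_\dM(\sdsigma^*y_n)$, to show via the explicit Artin--Schreier solution $c_j=\sum_{k\le j}\sdsigma^{(j-k)*}(\tilde c_k)$ that all $y_n$ lie in one fixed finite-dimensional $\BC$-subspace, and to convert the hypothesis that $|\charmorph(a)|^n\|\rho(x_{(n)})\|$ is bounded into a decay $\|y_n\|\le|\charmorph(a)|^{-n/q^\ell}C$ using norm equivalence on that subspace. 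Note also that what is needed and used is the \emph{boundedness} in \ref{ThmDivisionTower_D}, not mere convergence, to get a positive radius of convergence $s$ for $\dm'=\sum_n y_nt^n$; your sketch never produces such an estimate.

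Second, your upgrade \ref{ThmDivisionTower_A}$\Rightarrow$\ref{ThmDivisionTower_B} is circular: Proposition~\ref{PropMaphdM} is stated only for \emph{uniformizable} $\uldM$, which is not a hypothesis of this theorem (and cannot be assumed, since Theorem~\ref{ThmUnifAModandDM} deduces uniformizability criteria from the present result), and your argument also presupposes the existence of a solution over $\dotFC_\BC\setminus\bigcup_{i>0}\Var(\ssigma^{i\ast}J)$, which is exactly what has to be proved. The paper instead bootstraps the radius of convergence by multiplying the functional equation by $\check\Phi^\ad$ and a power of $\sigma^*(\ell_\zeta^{\SSC-})$ to pass from $\BC\langle\tfrac{t}{\theta^s}\rangle$ to $\BC\langle\tfrac{t}{\theta^{qs}}\rangle$, and then removes the spurious poles at the zeros of $\sigma^*(\ell_\zeta^{\SSC-})$ pointwise, using that $\sdtau_\dM$ is an isomorphism away from $\Var(J)$. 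Finally, your identification of $\xi$ is not a proof: since $\delta_0\circ\sdtau_\dM=0$ by \eqref{EqDualMAndE2}, ``applying $\delta_0$ to $\dm+\dm'=\sdtau_\dM(\sdsigma^*\dm')$'' would formally yield $\delta_0(\dm+\dm')=0$, which is not the claim. The correct argument computes $\Lie(\rho\circ\phi_a^{n+1}\circ\rho^{-1})\bigl(\rho(x_{(n)})\bigr)$ up to an error tending to $0$, identifies it with $(\Lie\rho)\bigl(\delta_0(\dm+\dm'_n)\bigr)$, and passes to the limit using continuity of $\delta_0$ and equation \eqref{EqDivTowersAndLie2}.
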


\begin{proof}
1. By Proposition~\ref{PropSwitcheroo} the definition of $x_{(n)}$ is independent of the chosen $\dm'_n$. In particular we can take $\dm'_{n-1}=\dm'_n$ and $\dm''_{n-1}=a\dm''_n$ to obtain $\phi_a(x_{(n)})=\delta_1(a\dm''_n)=\delta_1(\dm''_{n-1})=x_{(n-1)}$ and $\phi_a(x_{(0)})=\delta_1(a\dm''_0)=\delta_1(\dm)=x$. This defines the bijection \eqref{EqThmDivisionTower}. Note that we explicitly describe its inverse in part 5 below.

\medskip\noindent
2. To prove \ref{ThmDivisionTower_PartB}, note that trivially \ref{ThmDivisionTower_B}$\Longrightarrow$\ref{ThmDivisionTower_A} and \ref{ThmDivisionTower_D}$\Longrightarrow$\ref{ThmDivisionTower_C}, because $|\charmorph(a)|>1$.

\medskip\noindent
3. To prove \ref{ThmDivisionTower_C}$\Longrightarrow$\ref{ThmDivisionTower_D} for any isomorphism $\rho\colon E\isoto\BG_{a,\BC}^d$ of $\BF_q$-module schemes, we write $\rho\circ\phi_a\circ\rho^{-1}=:\Delta_a=\sum_{j\ge0}\Delta_{a,j}\,\tau^j\in\BC\{\tau\}^{d\times d}=\End_{\BF_q,\BC}(\BG_{a,\BC}^d)$ with $\Delta_{a,j}\in\BC^{d\times d}$ and $\Delta_{a,j}=0$ for $j\gg0$. By replacing $\rho$ by $\tilde\rho:=B\circ\rho$ for a matrix $B\in\GL_d(\BC)\subset\BC\{\tau\}^{d\times d}$ we can write $\Delta_{a,0}=\charmorph(a)(\Id_d+N)$ with strictly upper triangular (nilpotent) $N$ having only entries $0$ and $1$. This replacement is allowed because $\tilde\rho\circ\rho^{-1}=B$ is an automorphism of the $\BC$-vector space $\BG_{a,\BC}^d(\BC)$. Consider the maximum norm $\|x\|=\max\{|x_i|\colon i=0\ldots d\}$ for $x=(x_1,\ldots,x_d)^T\in\BC^d$ and the norm $\|y\|:=\|\rho(y)\|$ on $y\in E(\BC)$ induced via $\rho$. As $\Delta_{a,0}^{-1}=\charmorph(a)^{-1}(\Id_d-N+N^2-\ldots)$ we find $\|x\|=\|\Delta_{a,0}^{-1}\Delta_{a,0}\,x\|\le|\charmorph(a)|^{-1}\|\Delta_{a,0}\,x\|\le|\charmorph(a)|^{-1}|\charmorph(a)|\cdot\|x\|=\|x\|$, whence $\|\Delta_{a,0}\,x\|=|\charmorph(a)|\cdot\|x\|$. If $n\gg0$ then $\|x_{(n)}\|\ll 1$ by assumption \ref{ThmDivisionTower_C}, whence $\|\sigma^{j*}\rho(x_{(n)})\|=\|x_{(n)}\|^{q^j}\ll\|x_{(n)}\|$ for $j>0$. So $|\charmorph(a)|>1$ implies $\|\Delta_{a,j}\,\sigma^{j*}\rho(x_{(n)})\|<|\charmorph(a)|\cdot\|x_{(n)}\|=\|\Delta_{a,0}\,\rho(x_{(n)})\|$ for $n\gg0$ and all $j>0$. Thus $\|x_{(n-1)}\|=\|\phi_a(x_{(n)})\|=\|\sum_{j\ge0}\Delta_{a,j}\,\sigma^{j*}\rho(x_{(n)})\|=|\charmorph(a)|\cdot\|x_{(n)}\|$ for $n\gg0$, and this yields the boundedness of  $|\charmorph(a)|^n\cdot \|x_{(n)}\|$ and $\charmorph(a)^n\cdot\rho(x_{(n)})$.

\medskip\noindent
4. To prove \ref{ThmDivisionTower_D}$\Longrightarrow$\ref{ThmDivisionTower_B} and \ref{ThmDivisionTower_A}$\Longrightarrow$\ref{ThmDivisionTower_C} we choose an isomorphism $\rho\colon E\isoto\BG_{a,\BC}^d$ of $\BF_q$-module schemes and consider the induced $A_\BC$-isomorphism $\dagger\colon\dM\isoto\BC\{\sdtau\}^{1\times d}$, $\dm\mapsto\dm^\dagger$ from \eqref{EqDagger}. Moreover, under the finite flat ring homomorphism $\BF_q[t]\to A,\,t\mapsto a$ we have $A_\BC/(a^n)=A_\BC\otimes_{\BC[t]}\BC[t]/(t^n)$ and $\prod_{i=1}^sA_{\BC,v_i}=A_\BC\otimes_{\BC[t]}\BC\dbl t\dbr$, as well as $\CO(\FC_\BC\setminus\Disc)=A_\BC\otimes_{\BC[t]}\BC\langle t\rangle$; see \eqref{EqC<t>}. We also abbreviate $\theta:=\charmorph(a)$ and for a real number $s$ we use the notation
\begin{equation}\label{EqConvPowerSeries}
\BC\langle\tfrac{t}{\theta^s}\rangle\;:=\;\bigl\{\,{\TS\sum\limits_{i=0}^\infty} b_it^i\colon b_i\in\BC,\, \lim\limits_{i\to\infty}|b_i|\cdot|\theta|^{si}= 0\,\bigr\}\qquad\text{and}\qquad\BC\langle t\rangle\;:=\;\BC\langle\tfrac{t}{\theta^0}\rangle\,.
\end{equation}
We consider $\dM$ as a finite (locally) free module over $\BC[t]$ of rank $r$. We choose a $\BC[t]$-basis $\CB$ of $\dM$ and use it to identify $\dM_a\cong\BC\dbl t\dbr^{\oplus r}$ and $\dM\otimes_{A_\BC}\CO(\FC_\BC\setminus\Disc)\cong\BC\langle t\rangle^{\oplus r}$. Let $\|\,.\,\|$ denote the maximum norms on $\BC^r$, $\BC^d$ and $\BC^{1\times d}$, and consider the norm $\|y\|:=\|\rho(y)\|$ on $y\in E(\BC)$ and the norm $\|\sum_j c_j\sdtau^j\|_{_\sdtau}:=\sup\{\|c_i\|\colon i\ge0\}$ on $\BC\{\sdtau\}^{1\times d}$ and $\dM$ where $c_j\in\BC^{1\times d}$. For all $s\in\BR$ consider also the norm $\|\sum_i b_i t^i\|_{_s}:=\sup\{\|b_i\|\,|\charmorph(a)|^{si}\colon i\ge0\}$ on $\BC[t]^{\oplus r}$ and $\dM$ where $b_i\in\BC^r$. When $s\le s'$ these norms satisfy the inequalities $\|\,.\,\|_{_s}\le\|\,.\,\|_{_{s'}}$. Note that $\BC\langle\tfrac{t}{\theta^s}\rangle^{\oplus r}$ is the completion of $\BC[t]^{\oplus r}$ with respect to the norm $\|\,.\,\|_{_s}$, which therefore extends to $\BC\langle\tfrac{t}{\theta^s}\rangle^{\oplus r}$. 

\medskip\noindent
5. We now assume that \ref{ThmDivisionTower_D} holds for our fixed isomorphism $\rho$. For each $n\in\BN_0$ we let 
\[
(\dm''_n)^\dagger\;:=\;\rho(x_{(n)})^T\cdot\sdtau^0\;\in\;\BC^{1\times d}\,\sdtau^0\;\subset\;\BC\{\sdtau\}^{1\times d}\;\cong\;\dM\,. 
\]
We set $\dm''_{-1}:=\dm$ and $x_{(-1)}:=x$. Then $\delta_1(\dm''_n)=x_{(n)}$ for all $n\ge-1$, and hence, $\delta_1(t\dm''_n-\dm''_{n-1})=\phi_a(x_{(n)})-x_{(n-1)}=0$ implies that $t\dm''_n-\dm''_{n-1}=y_n-\sdtau_\dM(\sdsigma^*y_n)$ for an element $y_n\in\dM$ for $n\ge0$. Moreover, the elements $(t\dm''_n-\dm''_{n-1})^\dagger=(\dm''_n)^\dagger\cdot(\Delta_a)^\dagger-(\dm''_{n-1})^\dagger=\rho(x_{(n)})^T\cdot(\Delta_a)^\dagger-\rho(x_{(n-1)})^T$ lie in the finite dimensional $\BC$-vector space $W:=\bigoplus_{j=0}^\ell\BC^{1\times d}\,\sdtau^j$ where $\ell$ is the maximal $\sdtau$-degree of the entries of the matrix $(\Delta_a)^\dagger\in\BC\{\sdtau\}^{d\times d}$ corresponding to $\phi_a$. If $(y_n)^\dagger=:\sum_j c_j\sdtau^j\in\BC\{\sdtau\}^{1\times d}$ then $(t\dm''_n-\dm''_{n-1})^\dagger=\sum_j c_j\sdtau^j-\sdtau\cdot\sum_j c_j\sdtau^j=\sum_j\bigl(c_j-\sdsigma^*(c_{j-1})\bigr)\sdtau^j$. Writing $(t\dm''_n-\dm''_{n-1})^\dagger=:\sum_{j=0}^\ell\tilde c_j\sdtau^j$ we compute $\sdsigma^*(c_{j-1})=c_j-\tilde c_j$. Together with $c_j=0$ for $j\gg 0$ this implies $c_j=0$ for all $j\ge \ell$ and $c_j=\sum_{k=0}^j\sdsigma^{(j-k)*}(\tilde c_k)$ for $j<\ell$. So $(y_n)^\dagger\in W$. In particular, the series $\sum_{n=0}^\infty y_nt^n$ in $\dM_a\cong\BC\dbl t\dbr^{\oplus r}$ satisfies
\begin{equation}\label{EqFordm'}
\sdtau_\dM\bigl(\sdsigma^*({\TS\sum\limits_{n=0}^\infty}y_nt^n)\bigr)-({\TS\sum\limits_{n=0}^\infty}y_nt^n) \; = \; {\TS\sum\limits_{n=0}^\infty}(t^n\dm''_{n-1}-t^{n+1}\dm''_n) \; = \; \dm''_{-1}\; = \; \dm \,,
\end{equation}
whence $\dm'=\sum_{n=0}^\infty y_nt^n$ by Proposition~\ref{PropSwitcheroo}.

Moreover, our assumption \ref{ThmDivisionTower_D} that $|\charmorph(a)|^n\cdot\|\rho(x_{(n)})^T\|_{_\sdtau}$ is bounded together with $|\charmorph(a)|^n\cdot\|t\dm''_n\|_{_\sdtau}=|\charmorph(a)|^n\cdot\|\rho(x_{(n)})^T\cdot(\Delta_a)^\dagger\|_{_\sdtau}\le|\charmorph(a)|^n\cdot\|\rho(x_{(n)})^T\|_{_\sdtau}\cdot\|(\Delta_a)^\dagger\|_{_\sdtau}$ implies that $|\charmorph(a)|^n\cdot\|(t\dm''_n-\dm''_{n-1})^\dagger\|_{_\sdtau}=\|\sum_{j=0}^\ell\charmorph(a)^n\cdot\tilde c_j\sdtau^j\|_{_\sdtau}=\max\{\|\charmorph(a)^n\cdot\tilde c_j\|\colon 0\le j\le \ell\}$ is bounded, say by a constant $C_1\ge 1$. Therefore, $\|\sdsigma^{(j-k)*}(\tilde c_k)\|\le|\charmorph(a)|^{-nq^{k-j}}C_1^{q^{k-j}}\le|\charmorph(a)|^{-n/q^\ell}C_1$ for $0\le k\le j$ and thus $\|c_j\|\le|\charmorph(a)|^{-n/q^\ell}C_1$, whence $\|y_n\|_{_\sdtau}\le|\charmorph(a)|^{-n/q^\ell}C_1$. Fix an $s$ with $0<s<q^{-\ell}$. Since $\BC$ is complete with respect to $|\,.\,|$ the restrictions of the norms $\|\,.\,\|_{_\sdtau}$ and $\|\,.\,\|_{_s}$ to the finite dimensional $\BC$-vector space $W$ are equivalent by \cite[Theorem~13.3]{Schikhof}. Thus there is a constant $C_2$ with $\|\,.\,\|_s\le C_2\cdot\|\,.\,\|_\sdtau$ on $W$. Since $\dm''_n\in W$ we obtain in particular $\|t^{n+1}\dm''_n\|_{_s}=|\charmorph(a)|^{s(n+1)}\|\dm''_n\|_{_s}\le|\charmorph(a)|^{s(n+1)}\|\dm''_n\|_{_\sdtau}C_2=|\charmorph(a)|^{s(n+1)}\|\rho(x_{(n)})\|C_2=|\charmorph(a)|^{-n(1-s)+s}|\charmorph(a)|^n\|\rho(x_{(n)})\|C_2$ for all $n$, and hence, $\lim_{n\to\infty}\|t^{n+1}\dm''_n\|_{_s}=0$. Moreover, $\|y_n\|_{_s}\le\|y_n\|_{_\sdtau}C_2\le|\charmorph(a)|^{-n/q^\ell}C_1C_2$ for all $n$, whence $\lim_{n\to\infty}\|y_nt^n\|_{_s}=\lim_{n\to\infty}\|y_n\|_{_s}\,|\charmorph(a)|^{sn}=0$. This shows that even $\dm'\in\BC\langle\tfrac{t}{\theta^s}\rangle^{\oplus r}$ and equation \eqref{EqFordm'} holds in $\BC\langle\tfrac{t}{\theta^s}\rangle^{\oplus r}$. 

The matrix $\check\Phi\in\BC[t]^{r\times r}$ representing $\sdtau_\dM$ with respect to the basis $\CB$ has determinant $c\cdot(t-\theta)^d$ for a $c\in\BC\mal$ due to the elementary divisor theorem and the condition that $\coker\check\Phi\cong\dM/\sdtau_\dM(\sdsigma^*\dM)\cong\BC^d$ is annihilated by $(t-\theta)^d\in J^d$. Let $\check\Phi^\ad\in\BC[t]^{r\times r}$ be the adjoint matrix which satisfies $\check\Phi^\ad\check\Phi=c\cdot(t-\theta)^d\Id_r$. Recall the element $\ell_\zeta^{\SSC -}:=\prod_{i=0}^\infty\bigl(1-\tfrac{t}{\theta^{q^i}}\bigr)\in \CO(\dotFC_\BC)$ from Example~\ref{ExCarlitzMotiveUnif}(b) which satisfies $\ell_\zeta^{\SSC -}=-\tfrac{1}{\theta}(t-\theta)\cdot\sigma^*\ell_\zeta^{\SSC -}$. Multiplying \eqref{EqFordm'} with $\tfrac{\sigma^*(\ell_\zeta^{\SSC -})^d}{(-\theta)^d c}\check\Phi^\ad$, setting $y':=\sigma^*(\ell_\zeta^{\SSC -})^d\cdot\dm'\in\BC\langle\tfrac{t}{\theta^s}\rangle^{\oplus r}$ and applying $\sigma^*$ we obtain
\[
y'\;=\;\sigma^*\Bigl(\frac{1}{(-\theta)^d c}\sigma^*(\ell_\zeta^{\SSC -})^d\check\Phi^\ad\dm+\frac{1}{(-\theta)^d c}\check\Phi^\ad y'\Bigr)
\]
Since $\ssigma^*(y')\in\BC\langle\tfrac{t}{\theta^{qs}}\rangle^{\oplus r}$ this shows that $y'\in\BC\langle\tfrac{t}{\theta^{qs}}\rangle^{\oplus r}$ and iteratively $y'\in\BC\langle\tfrac{t}{\theta^{s'}}\rangle^{\oplus r}$ for all $s'=q^ks$, whence $y'\in\dM\otimes_{A_\BC}\CO(\dotFC_\BC)$ and $\dm'=\ssigma^*(\ell_\zeta^{\SSC -})^{-d}y'\in\dM\otimes_{A_\BC}\ssigma^*(\ell_\zeta^{\SSC -})^{-d}\CO(\dotFC_\BC)$. If $P\in\dotFC_\BC\setminus\bigcup_{i\in\BN_{>0}}\Var(\ssigma^{i\ast}J)$ is a point, that is $P=\Var(I)$ for a maximal ideal $I\subset\CO(\dotFC_\BC)$ with $I\ne\ssigma^{i\ast}J$ for all $i\in\BN_{>0}$, such that $P$ lies in the zero locus of $\sigma^*(\ell_\zeta^{\SSC -})$, then we make the

\medskip\noindent
Claim: $\dm'\in\dM\otimes_{A_\BC}\CO_{\dotFC_\BC,P}$ for the local ring $\CO_{\dotFC_\BC,P}$ of $\dotFC_\BC$ at $P$

\medskip\noindent
When the claim holds for all those $P$, we derive $\dm'\in\dM\otimes_{A_\BC}\CO\bigl(\dotFC_\BC\setminus\bigcup_{i\in\BN_{>0}}\Var(\ssigma^{i\ast}J)\bigr)$, that is assertion \ref{ThmDivisionTower_B}.

To prove the claim let $n\in\BN_{>0}$ be the integer with $t-\theta^{q^n}\in I$, which exists because $\sigma^*(\ell_\zeta^{\SSC -})$ vanishes at $P$. Then $t-\theta^{q^{n-j}}\in \sdsigma^{j*}I$ and $\sdsigma^{j*}I\ne J$ for all $0<j\le n$. Thus $\sdtau_\dM\colon\sdsigma^*\dM\otimes_{A_\BC}\CO_{\dotFC_\BC,\Var(\sdsigma^{j*}I)}\isoto\dM\otimes_{A_\BC}\CO_{\dotFC_\BC,\Var(\sdsigma^{j*}I)}$ is an isomorphism. For $j=n$ it follows from $\sigma^*(\ell_\zeta^{\SSC -})|_{t=\theta}\ne0$ that $\sigma^*(\ell_\zeta^{\SSC -})\in(\CO_{\dotFC_\BC,\Var(\sdsigma^{n*}I)})\mal$ and $\dm'=\sigma^*(\ell_\zeta^{\SSC -})^{-d}y'\in\dM\otimes_{A_\BC}\CO_{\dotFC_\BC,\Var(\sdsigma^{n*}I)}$. Therefore,
\[
\dm'\;=\;\ssigma^*\bigl(\sdtau_\dM^{-1}(\dm+\dm')\bigr)\;\in\;\ssigma^*\bigl(\sdtau_\dM^{-1}(\dM\otimes_{A_\BC}\CO_{\dotFC_\BC,\Var(\sdsigma^{n*}I)})\bigr)\;=\;\dM\otimes_{A_\BC}\CO_{\dotFC_\BC,\Var(\sdsigma^{(n-1)*}I)}
\]
and iteratively this yields $\dm'\in\dM\otimes_{A_\BC}\CO_{\dotFC_\BC,\Var(\sdsigma^{j*}I)}$ for $j=n,\ldots,0$. So our claim and with it assertion \ref{ThmDivisionTower_B} is proved.

\medskip\noindent
6. Conversely, to prove \ref{ThmDivisionTower_A}$\Longrightarrow$\ref{ThmDivisionTower_C}, we keep the notation from part 4 above and write $\dm'$ as $\sum_{i=0}^\infty b_it^i\in\BC\dbl t\dbr^{\oplus r}$ with $b_i\in\BC^r$ and assume \ref{ThmDivisionTower_A}, that is $\lim_{i\to\infty}b_i=0$ in $\BC^r$. For each $n\in\BN$ we set $\dm'_{n}:=\sum\limits_{i=0}^n b_it^i\in\dM$ and $\dm'_{> n}:=\sum\limits_{i=n+1}^\infty b_it^i$. Then 
\[
\dm''_n\;:=\;t^{-n-1}(\dm-\sdtau_\dM(\sdsigma^*\dm'_{n})+\dm'_{n})\;=\;t^{-n-1}(\sdtau_\dM(\sdsigma^*\dm'_{> n})-\dm'_{> n})\;\in\;\dM\,. 
\]
Note that the entries of $\dm''_n$ are polynomials in $\BC[t]$ whose degree is bounded by a bound which is independent of $n$ and only depends on the degrees of the entries of $\dm$ and of the matrix $\check\Phi\in\BC[t]^{r\times r}$ representing $\sdtau_\dM$ with respect to the basis $\CB$. It follows that all $\dm''_n$ lie in a finite dimensional $\BC$-vector space $V$. By \cite[Theorem~13.3]{Schikhof} the restrictions of $\|\,.\,\|_{_0}$ and $\|\,.\,\|_{_\sdtau}$ to $V$ are equivalent. From $\lim_{i\to\infty}b_i=0$ it follows that $\lim_{n\to\infty}\|\dm'_{>n}\|_{_0}=0$. Thus $\|\sdtau_\dM(\sdsigma^*\dm'_{>n})\|_{_0}\le \|\check\Phi\|_{_0}\|\dm'_{>n}\|_{_0}^{1/q}$ implies $\lim_{n\to\infty}\|\dm''_n\|_{_0}=0$, and hence, $\lim_{n\to\infty}\|\dm''_n\|_{_\sdtau}=0$. If $(\dm''_n)^\dagger=\sum_j c_j\sdtau^j\in\BC\{\sdtau\}^{1\times d}$ then $n\gg0$ implies $\|\dm''_n\|_{_\sdtau}=\max\{\|c_j\|\colon j\ge0\}\le 1$ and thus $\rho(x_{(n)})=\rho\bigl(\delta_1(\dm''_n)\bigr)=\sum_j\sigma^{j*}(c_j)^T$ satisfies \[
\|x_{(n)}\|\;\le\;\max\{\|\sigma^{j*}(c_j)\|\colon j\ge0\}\;\le\;\max\{\|c_j\|\colon j\ge0\}\;=\;\|\dm''_n\|_{_\sdtau}\,. 
\]
Therefore, $(x_{(n)})_n$ is convergent. Thus \ref{ThmDivisionTower_A} implies \ref{ThmDivisionTower_C}.

\medskip\noindent
7. Finally, for the last statement of the theorem we keep the notation from parts 4 and 6 above and assume moreover, that $\dm'=\sum_{i=0}^\infty b_it^i$ satisfies \ref{ThmDivisionTower_B}. Let $1<s<q$. Then $\Spm\BC\langle\tfrac{t}{\theta^s}\rangle\subset\dotFC_\BC\setminus\bigcup_{i\in\BN_{>0}}\Var(\ssigma^{i\ast}J)$ and this implies $\sum_{i=0}^\infty b_it^i\in\BC\langle\tfrac{t}{\theta^s}\rangle^{\oplus r}$, that is $\lim_{i\to\infty}\|b_i\|\,|\charmorph(a)|^{si}=0$; see \eqref{EqConvPowerSeries}. Fix a real number $\epsilon>0$ with $\epsilon\le\|\check\Phi\|_{_{s/q}}^{q/(q-1)}$. Then there is an $n_0\in\BN$ such that $\|b_i\|\,|\charmorph(a)|^{is/q}\le\|b_i\|\,|\charmorph(a)|^{is}<\epsilon$ for all $i\ge n_0$. So $n\ge n_0$ implies $\|\dm'_{>n}\|_{_{s/q}}\le\|\dm'_{>n}\|_{_s}<\epsilon\le\|\check\Phi\|_{_{s/q}}\epsilon^{1/q}$ and 
\[
\|\sdtau_\dM(\sdsigma^*\dm'_{>n})\|_{_{s/q}}\;\le\;\|\check\Phi\|_{_{s/q}}\cdot\|\sdsigma^*\dm'_{>n}\|_{_{s/q}}\;=\;\|\check\Phi\|_{_{s/q}}\cdot\|\dm'_{>n}\|_{_s}^{1/q}\;<\;\|\check\Phi\|_{_{s/q}}\epsilon^{1/q},
\] 
and hence, $\|\dm''_n\|_{_{s/q}}<|\charmorph(a)|^{-(n+1)s/q}\|\check\Phi\|_{_{s/q}}\epsilon^{1/q}$. We write $(\dm''_n)^\dagger=\sum_ic_i\sdtau^i\in\BC\{\sdtau\}^{1\times d}$. This time we use that by \cite[Theorem~13.3]{Schikhof} the restrictions of $\|\,.\,\|_{_{s/q}}$ and $\|\,.\,\|_{_\sdtau}$ to $V$ are equivalent. So there is a constant $C_3$ such that $\|\dm''_n\|_{_\sdtau}:=\sup\{\|c_i\|\colon i\ge0\}<|\charmorph(a)|^{-(n+1)s/q}\|\check\Phi\|_{_{s/q}}\epsilon^{1/q}C_3$ for all $n\ge n_0$. By enlarging $n_0$ we may assume that $|\charmorph(a)|^{-(n+1)s/q}\|\check\Phi\|_{_{s/q}}\epsilon^{1/q}C_3\le1$. Therefore, $\|c_i\|\le1$, whence $\|\sigma^{i*}c_i\|=\|c_i\|^{q^i}\le\|c_i\|^q$ for all $i\ge1$. So 
\[
\|\rho\bigl(\delta_1(\dm''_n)\bigr)-(\Lie\rho)\bigl(\delta_0(\dm''_n)\bigr)\|\;=\;\|\sum_{i\ge1}\sigma^{i*}(c_i)^T\|\;<\;|\charmorph(a)|^{-(n+1)s}\|\check\Phi\|_{_{s/q}}^q\epsilon C_3^q. 
\]
By choosing the isomorphism $\rho\colon E\isoto\BG_{a,\BC}^d$ appropriately in the beginning we may assume that $\Lie(\rho\circ\phi_a\circ\rho^{-1})=\charmorph(a)(\Id_d+N)$ for a nilpotent matrix $N$ with only $0$ and $1$ as entries. This yields 
\begin{eqnarray}\label{EqLastStatement1}
& \lim_{n\to\infty}\bigl\|\Lie(\rho\circ\phi_a^{n+1}\circ\rho^{-1})\bigl(\rho(\delta_1(\dm''_n))-(\Lie\rho)(\delta_0(\dm''_n))\bigr)\bigr\|\es\le\\[2mm]
& \mbox{ }\qquad\qquad\qquad\qquad\qquad\qquad\le\es\lim_{n\to\infty}|\charmorph(a)|^{(n+1)(1-s)}\|\check\Phi\|_{_{s/q}}^q\epsilon C_3^q\es=\es0\nonumber
\end{eqnarray}
By Theorem~\ref{ThmDivTowersAndLie} we have $\lim_{n\to\infty}\Lie(\rho\circ\phi_a^{n+1}\circ\rho^{-1})\bigl(\rho(\delta_1(\dm''_n))\bigr)=(\Lie\rho)(\xi)$. So we must compute 
\begin{eqnarray}\label{EqLastStatement2}
\Lie(\rho\circ\phi_a^{n+1}\circ\rho^{-1})\circ(\Lie\rho)(\delta_0(\dm''_n)) & = & (\Lie\rho)\bigl(\delta_0(t^{n+1}\dm''_n)\bigr)\\[2mm]
& = & (\Lie\rho)\bigl(\delta_0(\dm+\dm'_n-\sdtau_\dM(\sdsigma^*\dm'_n))\bigr)\nonumber\\[2mm]
& = & (\Lie\rho)\bigl(\delta_0(\dm+\dm'_n)\bigr)\,.\nonumber
\end{eqnarray}
Since the projection $\delta_0\colon\BC\langle\tfrac{t}{\theta^s}\rangle^{\oplus d}\onto\dM/J^d\dM\isoto\Lie E$ from Proposition~\ref{prop:commutingdiagrams} is continuous with respect to $\|\,.\,\|_{_s}$ and $\lim_{n\to\infty}\|\dm'-\dm'_n\|_{_s}=\lim_{n\to\infty}\|\dm'_{>n}\|_{_s}=0$, we find $\lim_{n\to\infty}\delta_0(\dm+\dm'_n)=\delta_0(\dm+\dm')$. In combination with \eqref{EqLastStatement1} and \eqref{EqLastStatement2} this proves that $\xi=\delta_0(\dm+\dm')$ and establishes the theorem.
\end{proof}

\begin{corollary}[{\cite{ABP_Rohrlich}}]\label{CorDivisionTower}
Let $C=\BP^1_{\BF_q}$, $A=\BF_q[t]$, $A_\BC=\BC[t]$ and $\theta=\charmorph(t)$. Then $\CO(\FC_\BC\setminus\Disc\bigr)=\BC\langle t\rangle$. Fix an isomorphism $\rho\colon E\isoto\BG_{a,\BC}^d$ of $\BF_q$-module schemes and write $\rho\circ\phi_t\circ\rho^{-1}=:\Delta_t=\sum_{j\ge0}\Delta_{t,j}\,\tau^j\in\BC\{\tau\}^{d\times d}=\End_{\BF_q,\BC}(\BG_{a,\BC}^d)$ with $\Delta_{t,j}\in\BC^{d\times d}$ and $\Delta_{t,j}=0$ for $j\gg0$. For $\nu\ge0$ consider the columns of the matrix $\sum_{j\ge0}\Delta_{t,\nu+j}\tau^j\in\BC\{\tau\}^{d\times d}$ as elements of $\BC\{\tau\}^d\cong\dM$ via $\rho$. Note that this matrix is zero for $\nu\gg1$. In the situation of Theorem~\ref{ThmDivisionTower} let $(x_{(n)})_n$ be a $t$-division tower above $x$ and let 
\[
f\;:=\;\sum_{n=0}^\infty \rho(x_{(n)})t^n \;\in\; \BC\dbl t\dbr^d
\]
be the associated \emph{Anderson generating function}. Then the bijection \eqref{EqThmDivisionTower} from Theorem~\ref{ThmDivisionTower} sends $(x_{(n)})_n$ to the element 
\begin{equation}\label{EqCorDivisionTower}
\dm'\;=\;-\sum_{\nu\ge1}\,\Bigl(\sum_{j\ge0}\Delta_{t,\nu+j}\tau^j\Bigr)\sigma^{\nu*}(f)\;\in\;\dM_t\;=\;\dM\otimes_{\BC[t]}\BC\dbl t\dbr\,.
\end{equation}
Moreover, the $t$-division tower $(x_{(n)})_n$ is convergent if and only if $f\in\BC\langle t\rangle^d$.
\end{corollary}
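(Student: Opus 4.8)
The plan is to derive Corollary~\ref{CorDivisionTower} as a direct specialization of Theorem~\ref{ThmDivisionTower} to the case $A=\BF_q[t]$, $a=t$. The first task is to identify the element $\dm'\in\dM_t$ produced by the bijection~\eqref{EqThmDivisionTower} in terms of the Anderson generating function $f=\sum_{n\ge0}\rho(x_{(n)})t^n$. Recall from the proof of Theorem~\ref{ThmDivisionTower} (step~5, equation~\eqref{EqFordm'}) that $\dm'=\sum_{n\ge0}y_nt^n$ where $y_n\in\dM$ satisfies $t\dm''_n-\dm''_{n-1}=y_n-\sdtau_\dM(\sdsigma^*y_n)$ with $(\dm''_n)^\dagger=\rho(x_{(n)})^T\sdtau^0$. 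Rather than re-derive $y_n$, I would instead verify directly that the proposed right-hand side of~\eqref{EqCorDivisionTower}, call it $\dm'_{\rm prop}:=-\sum_{\nu\ge1}\bigl(\sum_{j\ge0}\Delta_{t,\nu+j}\tau^j\bigr)\sigma^{\nu*}(f)$, satisfies $\sdtau_\dM(\sdsigma^*\dm'_{\rm prop})-\dm'_{\rm prop}=\dm$ in $\dM_t$; by the injectivity statement in Proposition~\ref{PropSwitcheroo} (and Lemma~\ref{LemmaDualUniformizable}\ref{LemmaDualUniformizableA} read $t$-adically) this characterizes $\dm'$ uniquely, so the two agree.

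Second, I would carry out that verification. The key algebraic input is the defining recursion of the division tower, $\phi_t(x_{(n)})=x_{(n-1)}$ for $n>0$ and $\phi_t(x_{(0)})=x$, which in the coordinates $\rho$ reads $\sum_{j\ge0}\Delta_{t,j}\,\sigma^{j*}(\rho(x_{(n)}))=\rho(x_{(n-1)})$. Packaging this into the generating series: if we set $F:=\sum_{n\ge0}\rho(x_{(n)})t^{n}$ then $\sum_{j\ge0}\Delta_{t,j}\,t^{-?}\sigma^{j*}(\dots)$ — more carefully, multiplying the recursion by $t^n$ and summing over $n\ge1$ yields a relation of the form $\sum_{j\ge0}\Delta_{t,j}\,t\,\sigma^{j*}(F) = F - \rho(x_{(0)})\cdot?$ up to the $n=0$ term which encodes $x$; here I must be attentive to how $\sdsigma^*$ versus $\sigma^*$ and the shift $t\cdot\sigma^{j*}$ interact, since $\dagger$ converts $\tau^j$-multiplication into $\sdtau^j$ and $\sdsigma^*$-twisting. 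Using the identity $\sdtau_\dM(\sdsigma^*\dm)=\dm\circ\Frob_{q,\BG_a}$, i.e. right multiplication by $\tau$ on $\BC\{\tau\}^d\cong\dM$, the operator $\sdtau_\dM(\sdsigma^*\,\cdot\,)-\mathrm{id}$ applied to the telescoping sum $\sum_{\nu\ge1}(\sum_j\Delta_{t,\nu+j}\tau^j)\sigma^{\nu*}(f)$ collapses, by the shift $\nu\mapsto\nu-1$ and the recursion, to exactly $\dm=\dm''_{-1}$; this mirrors the telescoping already seen in~\eqref{EqFordm'}. I expect this to be a bookkeeping computation with Frobenius twists, not conceptually difficult, but it is the main place where one can slip on a sign or an index.

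Third, for the convergence statement I would invoke the equivalence \ref{ThmDivisionTower_A}$\Leftrightarrow$\ref{ThmDivisionTower_C} of Theorem~\ref{ThmDivisionTower}\ref{ThmDivisionTower_PartB}: the tower $(x_{(n)})_n$ is convergent if and only if $\dm'\in\dM\otimes_{\BC[t]}\BC\langle t\rangle=\dM\otimes_{A_\BC}\CO(\FC_\BC\setminus\Disc)$. It then suffices to observe that $\dm'\in\dM\otimes_{\BC[t]}\BC\langle t\rangle$ if and only if $f\in\BC\langle t\rangle^d$: the formula~\eqref{EqCorDivisionTower} is a \emph{finite} sum (only finitely many $\nu$ occur since $\Delta_{t,\nu+j}=0$ for $\nu+j\gg0$) of $\BC[t][\tau]$-operators applied to Frobenius twists $\sigma^{\nu*}(f)$, and $f\in\BC\langle t\rangle^d$ iff $\sigma^{\nu*}(f)\in\BC\langle t\rangle^d$ (Frobenius twisting only raises coefficients to the $q^\nu$-th power, preserving the condition $|b_i|\to0$), so $\dm'\in\dM\otimes_{\BC[t]}\BC\langle t\rangle$ whenever $f\in\BC\langle t\rangle^d$; conversely, since $\dm''_0=t^{-1}(\dm+\dm'-\sdtau_\dM(\sdsigma^*\dm'))$ and inductively $\rho(x_{(n)})$ is recovered from $\dm'$ by the $\delta_1$-formula applied to $\dm''_n$, membership of $\dm'$ in $\BC\langle t\rangle$ forces $\|\rho(x_{(n)})\|\to0$, hence $f\in\BC\langle t\rangle^d$. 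The main obstacle, as noted, is keeping the Frobenius-twist indices and the $\sdtau$ versus $\tau$ conventions consistent throughout the telescoping identity; once that is pinned down, the corollary is immediate from Theorem~\ref{ThmDivisionTower}.
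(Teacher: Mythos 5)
There is a genuine gap in your identification step. You propose to verify that $\dm'_{\rm prop}:=-\sum_{\nu\ge1}\bigl(\sum_{j\ge0}\Delta_{t,\nu+j}\tau^j\bigr)\sigma^{\nu*}(f)$ satisfies $\sdtau_\dM(\sdsigma^*\dm'_{\rm prop})-\dm'_{\rm prop}=\dm$ and then conclude $\dm'_{\rm prop}=\dm'$ "by injectivity". But injectivity of the bijection \eqref{EqThmDivisionTower} (or of Proposition~\ref{PropSwitcheroo}) says that distinct solutions map to distinct division towers; it does \emph{not} say the equation has a unique solution. The solution set $\{\dn\in\dM_t:\sdtau_\dM(\sdsigma^*\dn)-\dn=\dm\}$ is a torsor under $\dM_t^{\,\sdtau}\cong T_t\ulE\cong A_t^{\oplus\rk\ulE}\ne(0)$, and is in bijection with the set of \emph{all} $t$-division towers above $x$. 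So verifying the functional equation only shows that $\dm'_{\rm prop}$ corresponds to \emph{some} tower above $x$, not to the given one. To close the gap you must additionally check that the reconstruction recipe of Theorem~\ref{ThmDivisionTower}(a) applied to $\dm'_{\rm prop}$ returns $(x_{(n)})_n$: for each $n$, with $\dm'_n:=\dm'_{\rm prop}\bmod t^{n+1}$ and $\dm''_n:=t^{-n-1}\bigl(\dm+\dm'_n-\sdtau_\dM(\sdsigma^*\dm'_n)\bigr)$ one needs $\delta_1(\dm''_n)=x_{(n)}$. This is exactly why the paper argues in the opposite direction: it starts from step~5 of the proof of Theorem~\ref{ThmDivisionTower}, where $\dm'=\sum_n y_nt^n$ with $(\dm''_n)^\dagger=\rho(x_{(n)})^T\sdtau^0$ (so $\delta_1(\dm''_n)=x_{(n)}$ holds by construction) and $t\dm''_n-\dm''_{n-1}=y_n-\sdtau_\dM(\sdsigma^*y_n)$, and then computes $y_n=-\sum_{\nu\ge1}\sum_{j\ge0}\Delta_{t,\nu+j}\tau^j\sigma^{\nu*}\rho(x_{(n)})$ by expanding $(t\dm''_n-\dm''_{n-1})^\dagger$ via the recursion $\rho(x_{(n-1)})=\sum_j\Delta_{t,j}\sigma^{j*}\rho(x_{(n)})$ and factoring out $(\sdtau-1)$. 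Your telescoping computation is essentially the same identity read in the other direction, so the fix is not far away, but as stated your logic does not establish which solution you have produced.

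The convergence statement is handled correctly and essentially as in the paper: one direction is immediate since convergence means $\rho(x_{(n)})\to0$, i.e.\ $f\in\BC\langle t\rangle^d$ by the very definition of $\BC\langle t\rangle$ (you do not need to re-derive the implication \ref{ThmDivisionTower_A}$\Rightarrow$\ref{ThmDivisionTower_C} for this), and the converse follows because \eqref{EqCorDivisionTower} is a finite sum of operators applied to the $\sigma^{\nu*}(f)\in\BC\langle t\rangle^d$, so $\dm'\in\dM\otimes_{\BC[t]}\BC\langle t\rangle$, which is equivalent to convergence by Theorem~\ref{ThmDivisionTower}.
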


\begin{proof}
In step 5 of the proof of Theorem~\ref{ThmDivisionTower} we obtain $\rho(x_{(n-1)})=\sum_{j\ge0}\Delta_{t,j}\cdot\sigma^{j*}\rho(x_{(n)})$ and 
\begin{eqnarray*}
(t\dm''_n-\dm''_{n-1})^\dagger & = & \rho(x_{(n)})^T\cdot(\Delta_t)^\dagger-\rho(x_{(n-1)})^T\\[2mm]
& = &  \rho(x_{(n)})^T\cdot\sum_{j\ge0}\sdsigma^{j*}(\Delta_{t,j})^T\sdtau^j-\sigma^{j*}\rho(x_{(n)})^T\cdot\sum_{j\ge0}\Delta_{t,j}^T\\[2mm]
& = & \sum_{j\ge1} (\sdtau^j-1)\cdot\sigma^{j*}\rho(x_{(n)})^T\cdot\Delta_{t,j}^T\\
& = & \sum_{j\ge1}(\sdtau-1)\cdot\Bigl(\sum_{i=0}^{j-1}\sdtau^i\Bigr)\cdot\sigma^{j*}\rho(x_{(n)})^T\cdot\Delta_{t,j}^T \\[2mm]
& \stackrel{j=\nu+i}{=} & (\sdtau-1)\cdot\Bigl(\sum_{\nu\ge1}\sum_{i\ge0}\sigma^{\nu*}\rho(x_{(n)})^T\cdot\sdtau^i\cdot\Delta_{t,\nu+i}^T\Bigr) \\[2mm]
& = & (\sdtau-1)\cdot\Bigl(\sum_{\nu\ge1}\sum_{i\ge0}\Delta_{t,\nu+i}\cdot\tau^i\cdot\sigma^{\nu*}\rho(x_{(n)})\Bigr)^\dagger\,.
\end{eqnarray*}
Since also $(t\dm''_n-\dm''_{n-1})^\dagger=(y_n-\sdtau(\sdsigma^*y_n))^\dagger=(1-\sdtau)\cdot (y_n)^\dagger$ Proposition~\ref{prop:commutingdiagrams} implies that $y_n=-\sum_{\nu\ge1}\sum_{i\ge0}\Delta_{t,\nu+i}\cdot\tau^i\cdot\sigma^{\nu*}\rho(x_{(n)})$. Multiplying with $t^n$ and summing over all $n\ge0$ yields 
\[
\dm'\;=\;\sum_{n=0}^\infty y_n t^n\;=\;-\sum_{\nu\ge1}\Bigl(\sum_{i\ge0}\Delta_{t,\nu+i}\cdot\tau^i\Bigr)\cdot\sum_{n=0}^\infty \sigma^{\nu*}\rho(x_{(n)})t^n
\]
and establishes \eqref{EqCorDivisionTower}. Finally, if $(x_{(n)})_n$ is convergent then by definition $f\in\BC\langle t\rangle^d$. Conversely, the latter together with \eqref{EqCorDivisionTower} implies that $\dm'\in\dM\otimes_{\BC[t]}\BC\langle t\rangle$. By Theorem~\ref{ThmDivisionTower} this is equivalent to $(x_{(n)})_n$ being convergent.
\end{proof}

The following corollary is the analog in terms of dual $A$-motives of Sinha's diagram \cite[4.2.3]{Sinha97}.

\begin{corollary}\label{Cor=Boiler2.5.5}
Let $\ulE$ be an $A$-finite Anderson $A$-module and let $(\dM,\sdtau_\dM)=\uldM(\ulE)$ be its dual $A$-motive. For every $\dm'\in\dM\otimes_{A_\BC}\CO\bigl(\dotFC_\BC\setminus\bigcup_{i\in\BN_{>0}}\Var(\ssigma^{i\ast}J)\bigr)$ such that $\dm:=\sdtau_\dM(\sdsigma^*\dm')-\dm'\in\dM$ we have
\[
\exp_\ulE\bigl(\delta_0(\dm'+\dm)\bigr)\;=\;\delta_1(\dm)\,.
\]
\end{corollary}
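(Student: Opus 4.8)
The plan is to deduce Corollary~\ref{Cor=Boiler2.5.5} directly from Theorem~\ref{ThmDivisionTower} together with Theorem~\ref{ThmDivTowersAndLie}. First I would fix an element $a\in A\setminus\BF_q$; by Lemma~\ref{LemmaTSep} one can even arrange $\sqrt{A\cdot a}$ to be whatever maximal ideal is convenient, but any $a$ outside $\BF_q$ will do. Given $\dm'\in\dM\otimes_{A_\BC}\CO\bigl(\dotFC_\BC\setminus\bigcup_{i\in\BN_{>0}}\Var(\ssigma^{i\ast}J)\bigr)$ with $\dm:=\sdtau_\dM(\sdsigma^*\dm')-\dm'\in\dM$, I note that via the natural inclusion $\dM\otimes_{A_\BC}\CO(\FC_\BC\setminus\Disc)\into\dM_a$ (coming from $\Var(a)\subset\FC_\BC\setminus\Disc$), the element $\dm'$ lies in $\dM_a$ and satisfies $\sdtau_\dM(\sdsigma^*\dm')-\dm'=\dm$ in $\dM_a$. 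Hence Proposition~\ref{PropSwitcheroo} and Theorem~\ref{ThmDivisionTower}\ref{ThmDivisionTower_PartA} associate with $\dm'$ an $a$-division tower $(x_{(n)})_n$ above $x:=\delta_1(\dm)=\dm(1)$.

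Next I would invoke the equivalence of conditions in Theorem~\ref{ThmDivisionTower}\ref{ThmDivisionTower_PartB}: since by hypothesis $\dm'\in\dM\otimes_{A_\BC}\CO\bigl(\dotFC_\BC\setminus\bigcup_{i\in\BN_{>0}}\Var(\ssigma^{i\ast}J)\bigr)$, condition \ref{ThmDivisionTower_B} holds, so all of \ref{ThmDivisionTower_A}--\ref{ThmDivisionTower_D} hold; in particular the $a$-division tower $(x_{(n)})_n$ is convergent. Theorem~\ref{ThmDivisionTower}\ref{ThmDivisionTower_PartB} then identifies the element $\xi\in\Lie E$ from Theorem~\ref{ThmDivTowersAndLie} corresponding to this convergent tower as $\xi=\delta_0(\dm'+\dm)$. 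On the other hand, by the bijection \eqref{EqDivTowersAndLie} of Theorem~\ref{ThmDivTowersAndLie}, the element $\xi$ corresponding to $(x_{(n)})_n$ is precisely one with $\exp_\ulE(\xi)=x$, because $x_{(n)}=\exp_\ulE\bigl(\Lie\phi_a^{-n-1}(\xi)\bigr)$ forces (taking $n=-1$ formally, or rather applying $\phi_a$ to $x_{(0)}$) $\exp_\ulE(\xi)=\phi_a(x_{(0)})=x$. Substituting $\xi=\delta_0(\dm'+\dm)$ and $x=\delta_1(\dm)$ gives $\exp_\ulE\bigl(\delta_0(\dm'+\dm)\bigr)=\delta_1(\dm)$, which is the claim.

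The one point that needs a little care is the passage from ``$x_{(n)}=\exp_\ulE(\Lie\phi_a^{-n-1}(\xi))$ for all $n\ge0$'' to ``$\exp_\ulE(\xi)=x$''. This follows by applying $\phi_a$ to $x_{(0)}=\exp_\ulE(\Lie\phi_a^{-1}(\xi))$ and using the defining intertwining property $\phi_a\circ\exp_\ulE=\exp_\ulE\circ\Lie\phi_a$ of the exponential, together with $\phi_a(x_{(0)})=x$ from the $a$-division tower relations. I expect this to be the main (and only mildly technical) obstacle; everything else is a direct concatenation of the cited results. I would also remark that the statement is independent of the auxiliary choice of $a$, since both sides of the asserted equality, $\exp_\ulE\bigl(\delta_0(\dm'+\dm)\bigr)$ and $\delta_1(\dm)$, are manifestly defined without reference to $a$; but this independence is automatic once the equality is proved for one $a$.
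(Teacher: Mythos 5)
Your proof is correct and follows essentially the same route as the paper's own (one-line) proof: the statement is obtained by combining the identification $\xi=\delta_0(\dm'+\dm)$ from the last part of Theorem~\ref{ThmDivisionTower} with the bijection of Theorem~\ref{ThmDivTowersAndLie}, whose domain is by definition $\{\xi\in\Lie E\colon \exp_\ulE(\xi)=x\}$. The extra care you take in re-deriving $\exp_\ulE(\xi)=x$ from $x_{(n)}=\exp_\ulE(\Lie\phi_a^{-n-1}(\xi))$, and the remark about independence of $a$, are both fine but unnecessary, since the former is built into the statement of the bijection and the latter is immediate once the identity is proved.
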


\begin{proof}
This follows from the last statement of Theorem~\ref{ThmDivisionTower} and Theorem~\ref{ThmDivTowersAndLie}.
\end{proof}

\begin{corollary}\label{CorTo_Boiler2.5.5}
The morphism $\delta_0\colon\dM\to\Lie E$ from Proposition~\ref{prop:commutingdiagrams} restricts to an $A$-iso\-morphism
\[
\delta_0\colon \bigl(\dM\otimes_{A_\BC}\CO\bigl(\dotFC_\BC\setminus{\TS\bigcup_{i\in\BN_{>0}}}\Var(\ssigma^{i\ast}J)\bigr)\bigr)^\sdtau \; \isoto \; \Lambda(\ulE)\;=\;\ker(\exp_\ulE)\,.
\]
\end{corollary}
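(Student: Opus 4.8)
The plan is to show that $\delta_0$ restricts to a well-defined $A$-linear map on $\sdtau$-invariants, that its image lands in $\Lambda(\ulE)$, and that it is bijective. First I would check that the restriction makes sense and that the target is correct. Let $\dm'$ be a $\sdtau$-invariant element of $\dM\otimes_{A_\BC}\CO\bigl(\dotFC_\BC\setminus\bigcup_{i\in\BN_{>0}}\Var(\ssigma^{i\ast}J)\bigr)$, so $\sdtau_\dM(\sdsigma^*\dm')=\dm'$, i.e. $\dm:=\sdtau_\dM(\sdsigma^*\dm')-\dm'=0$. Then Corollary~\ref{Cor=Boiler2.5.5} applies with this $\dm=0$ and gives $\exp_\ulE\bigl(\delta_0(\dm')\bigr)=\delta_1(0)=0$, so $\delta_0(\dm')\in\ker(\exp_\ulE)=\Lambda(\ulE)$. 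This shows the map lands in $\Lambda(\ulE)$; $A$-linearity is immediate from the $A_\BC$-linearity of $\delta_0$ established in Proposition~\ref{prop:commutingdiagrams} (and the fact that the $\sdtau$-invariants form an $A$-module). Here one should also note that $\delta_0$ indeed extends to the ring $\CO\bigl(\dotFC_\BC\setminus\bigcup_{i\in\BN_{>0}}\Var(\ssigma^{i\ast}J)\bigr)$, as recorded at the end of Proposition~\ref{prop:commutingdiagrams}, so the restriction is meaningful.

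Next I would prove injectivity. Suppose $\dm'$ is $\sdtau$-invariant with $\delta_0(\dm')=0$. The exact sequence \eqref{EqDualMAndE2} of Proposition~\ref{prop:commutingdiagrams}, tensored with $\CO\bigl(\dotFC_\BC\setminus\bigcup_{i\in\BN_{>0}}\Var(\ssigma^{i\ast}J)\bigr)$ (using flatness of $\sdsigma^\ast$, Remark~\ref{RemSigmaIsFlat}), shows $\ker(\delta_0)=\sdtau_\dM\bigl(\sdsigma^*\dM\otimes_{A_\BC}\CO(\cdots)\bigr)$; since $\dm'=\sdtau_\dM(\sdsigma^*\dm')$ anyway, write $\dm'=\sdtau_\dM(\sdsigma^*\dn)$ with $\dn=\sdsigma^*{}^{-1}$ of the relevant thing, and then $\delta_0(\dm')=0$ together with the sequence forces $\dm'\in\sdtau_\dM(\sdsigma^\ast(\text{its source}))$; combined with $\sdtau$-invariance one iterates $\dm'=\sdsigma^\ast\bigl(\sdtau_\dM^{-1}\dm'\bigr)$ and pushes the element into $\dM\otimes_{A_\BC}\CO$-sections that are $\sdtau$-invariant and in the image of arbitrarily high powers of $\sdtau$, forcing $\dm'=0$. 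Alternatively, and more cleanly, I would invoke the bijection of Theorem~\ref{ThmDivisionTower} combined with Theorem~\ref{ThmDivTowersAndLie}: for a fixed $a\in A\setminus\BF_q$, the $\sdtau$-invariant sections over $\CO\bigl(\dotFC_\BC\setminus\bigcup_{i\in\BN_{>0}}\Var(\ssigma^{i\ast}J)\bigr)$ are exactly those $\dm'$ with $\dm=0$, and by Theorem~\ref{ThmDivisionTower}\ref{ThmDivisionTower_PartB} these correspond bijectively to convergent $a$-division towers above $0$, which by Theorem~\ref{ThmDivTowersAndLie} correspond bijectively to $\{\xi\in\Lie E:\exp_\ulE(\xi)=0\}=\Lambda(\ulE)$; moreover the last sentence of Theorem~\ref{ThmDivisionTower}\ref{ThmDivisionTower_PartB} identifies the composite of these bijections with $\dm'\mapsto\delta_0(\dm'+\dm)=\delta_0(\dm')$. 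This simultaneously gives injectivity and surjectivity.

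For surjectivity standing alone, given $\lambda\in\Lambda(\ulE)$, i.e. $\xi:=\lambda\in\Lie E$ with $\exp_\ulE(\xi)=0$, Theorem~\ref{ThmDivTowersAndLie} produces the convergent $a$-division tower $x_{(n)}=\exp_\ulE\bigl(\Lie\phi_a^{-n-1}(\xi)\bigr)$ above $x=0$, and Theorem~\ref{ThmDivisionTower}\ref{ThmDivisionTower_PartB} (the equivalence \ref{ThmDivisionTower_C}$\Leftrightarrow$\ref{ThmDivisionTower_B}) yields a $\sdtau$-invariant $\dm'\in\dM\otimes_{A_\BC}\CO\bigl(\dotFC_\BC\setminus\bigcup_{i\in\BN_{>0}}\Var(\ssigma^{i\ast}J)\bigr)$ whose associated tower is $(x_{(n)})_n$, with $\xi=\delta_0(\dm'+\dm)=\delta_0(\dm')$. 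Hence $\delta_0(\dm')=\lambda$. I would close by remarking that $A$-linearity of the resulting bijection is built into all the maps involved.

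The main obstacle I anticipate is not the logical skeleton — which is essentially "assemble Theorems~\ref{ThmDivTowersAndLie} and \ref{ThmDivisionTower}" — but making precise the claim that $\sdtau$-invariant sections over $\CO\bigl(\dotFC_\BC\setminus\bigcup_{i\in\BN_{>0}}\Var(\ssigma^{i\ast}J)\bigr)$ are governed by the $a$-division-tower machinery, which is stated in Theorem~\ref{ThmDivisionTower} only after fixing $a$ and working in $\dM_a$; one must check that the equivalence of conditions \ref{ThmDivisionTower_A}–\ref{ThmDivisionTower_D} there, specialized to $\dm=0$, genuinely produces an element of $\dM\otimes_{A_\BC}\CO(\cdots)$ and not merely of $\dM_a$, and that this is independent of the auxiliary choice of $a$. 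This independence is already implicit in the statement (both source and target of the asserted isomorphism are $a$-free), so the real work is bookkeeping: tracking that $\delta_0$ applied to the section equals the $\xi$ from the tower, which is exactly the content of the final sentence of Theorem~\ref{ThmDivisionTower}\ref{ThmDivisionTower_PartB} with $\dm=0$. Everything else is routine.
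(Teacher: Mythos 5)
Your proposal is correct and, in its ``cleaner'' form, is exactly the paper's proof: both sides are identified with the set of convergent $a$-division towers above $0$ via Theorems~\ref{ThmDivisionTower} and \ref{ThmDivTowersAndLie}, and the last statement of Theorem~\ref{ThmDivisionTower}\ref{ThmDivisionTower_PartB} (with $\dm=0$) identifies the composite bijection with $\delta_0$, whose $A$-linearity comes from Proposition~\ref{prop:commutingdiagrams}. The preliminary well-definedness check via Corollary~\ref{Cor=Boiler2.5.5} and the first, sketchier injectivity attempt are superfluous once this identification is in place.
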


\begin{proof}
Let $\dm'\in\bigl(\dM\otimes_{A_\BC}\CO\bigl(\dotFC_\BC\setminus\bigcup_{i\in\BN_{>0}}\Var(\ssigma^{i\ast}J)\bigr)\bigr)^\sdtau$, that is $\dm:=\sdtau_\dM(\sdsigma^*\dm')-\dm'=0$. Then $x:=\delta_1(\dm)=0$. By Theorems~\ref{ThmDivisionTower} and \ref{ThmDivTowersAndLie} both sides of the claimed isomorphism are in bijection with the set of convergent $a$-division towers above $0$. By the last statement of Theorem~\ref{ThmDivisionTower} the combined bijection equals $\delta_0$, which is $A$-linear by Proposition~\ref{prop:commutingdiagrams}.
\end{proof}

\subsection{Purity and mixedness}\label{SectAModPurity}
Before we define purity of Anderson $A$-modules which are abelian or $A$-finite in terms of the corresponding (dual) $A$-motives, we show that the functors $\ulE\mapsto\ulM(\ulE)$ and $\ulE\mapsto\uldM(\ulE)$ are exact.

\begin{proposition}\label{PropQuotientAMod}
Let $\ulE'\subset\ulE$ be an Anderson $A$-submodule. Then the quotient $\ulE'':=\ulE/\ulE'$ exists as an Anderson $A$-module with $\dim\ulE''=\dim\ulE-\dim\ulE'$. 
\begin{enumerate}
\item \label{PropQuotientAMod_A}
$\ulE$ is abelian if and only if  both $\ulE'$ and $\ulE''$ are abelian. In this case $\rk\ulE''=\rk\ulE-\rk\ulE'$ and the induced sequence of $A$-motives
\[
\xymatrix {
0 \ar[r] & \ulM(\ulE'') \ar[r] & \ulM(\ulE) \ar[r] & \ulM(\ulE') \ar[r] & 0 
}
\]
is exact in the sense of Remark~\ref{RemQuillenAMot}(b) (that is, the sequence of the underlying $A_{\BC}$-modules is exact).
\item \label{PropQuotientAMod_B}
$\ulE$ is $A$-finite if and only if both $\ulE'$ and $\ulE''$ are $A$-finite. In this case $\rk\ulE''=\rk\ulE-\rk\ulE'$ and the induced sequence of dual $A$-motives
\[
\xymatrix {
0 \ar[r] & \uldM(\ulE') \ar[r] & \uldM(\ulE) \ar[r] & \uldM(\ulE'') \ar[r] & 0 
}
\]
is exact in the sense of Remark~\ref{RemQuillenDualAMot}(b) (that is, the sequence of the underlying $A_{\BC}$-modules is exact).
\end{enumerate}
\end{proposition}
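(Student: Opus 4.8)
The statement concerns an Anderson $A$-submodule $\ulE'\subset\ulE$ with quotient $\ulE''$, and I want to establish three things: existence of the quotient as an Anderson $A$-module with $\dim\ulE''=\dim\ulE-\dim\ulE'$; the equivalence of abelianness (resp. $A$-finiteness) of $\ulE$ with that of $\ulE'$ and $\ulE''$ simultaneously; and exactness of the associated sequences of (dual) $A$-motives. The plan is to handle the underlying group-scheme quotient first, then pass to the (dual) motive functors, exploiting that both $M(\,\cdot\,)=\Hom_{\BF_q,\BC}(\,\cdot\,,\BG_{a,\BC})$ and $\dM(\,\cdot\,)=\Hom_{\BF_q,\BC}(\BG_{a,\BC},\,\cdot\,)$ are additive functors on $\BF_q$-linear group schemes, together with the local freeness criteria for abelian (Definition~\ref{DefAbelianAMod}) and $A$-finite (Definition~\ref{DefDualMOfE}) modules.

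\textbf{Step 1: the quotient as a group scheme and Anderson $A$-module.} The closed immersion $f\colon E'\into E$ of $\BF_q$-module schemes (each isomorphic to a power of $\BG_{a,\BC}$) has a quotient $E'':=E/E'$ in the category of $\BF_q$-module schemes over $\BC$; since $E'$ is smooth connected and $E$ is isomorphic to $\BG_{a,\BC}^d$, the quotient $E''$ is again isomorphic to $\BG_{a,\BC}^{d''}$ with $d''=d-d'$, and $\Lie$ applied to the short exact sequence $0\to E'\to E\to E''\to 0$ gives a short exact sequence of $\BC$-vector spaces, hence $\dim\ulE''=\dim\ulE-\dim\ulE'$. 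The action $\phi$ descends to $E''$ because $E'$ is $\phi$-stable, and the nilpotency condition \eqref{EqDefAndersonAModuleA} on $\Lie E''$ follows from the one on $\Lie E$ (a quotient of a module on which $J$ is nilpotent still has $J$ nilpotent). So $\ulE''=(E'',\phi'')$ is an Anderson $A$-module. This step I expect to be essentially formal.

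\textbf{Step 2: exactness of the module sequences.} Applying the \emph{contravariant} additive functor $M(\,\cdot\,)=\Hom_{\BF_q,\BC}(\,\cdot\,,\BG_{a,\BC})$ to $0\to E'\to E\to E''\to 0$ gives a left-exact sequence $0\to M(\ulE'')\to M(\ulE)\to M(\ulE')$; the surjectivity of $M(\ulE)\onto M(\ulE')$ on the right I would get from the fact that $E'\into E$ is a direct summand as $\BF_q$-module schemes (any closed immersion $\BG_{a,\BC}^{d'}\into\BG_{a,\BC}^{d}$ of $\BF_q$-module schemes splits, since $\Ext^1$ of additive groups over a field vanishes here — equivalently, an $\BF_q$-linear surjection $\BC\{\tau\}^{1\times d}\onto\BC\{\tau\}^{1\times d'}$ of free left $\BC\{\tau\}$-modules splits), so $M(\ulE)\cong M(\ulE')\oplus M(\ulE'')$ as $A_\BC$-modules and the sequence is split exact. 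Compatibility with $\tau_M$ is automatic from functoriality. For the dual side, applying the \emph{covariant} additive functor $\dM(\,\cdot\,)=\Hom_{\BF_q,\BC}(\BG_{a,\BC},\,\cdot\,)$ to the same split exact sequence of group schemes gives the split exact sequence $0\to\dM(\ulE')\to\dM(\ulE)\to\dM(\ulE'')\to 0$, again compatibly with $\sdtau_\dM$.

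\textbf{Step 3: the abelian / $A$-finite equivalences and ranks.} Given the split exact sequences of $A_\BC$-modules from Step~2, an $A_\BC$-module in the middle is finite locally free if and only if both outer terms are (using that $A_\BC$ is a Dedekind domain, so finite torsion-free $=$ finite locally free, and a direct summand of a finite locally free module is finite locally free, while a direct sum of two finite locally free modules is finite locally free). This yields: $\ulE$ abelian $\iff$ $\ulE'$ and $\ulE''$ abelian, and likewise for $A$-finite. The rank additivity $\rk\ulE''=\rk\ulE-\rk\ulE'$ then follows from rank additivity in the split exact sequence of the underlying $A_\BC$-modules together with Remarks~\ref{RemRankE} and~\ref{Rem2RankE} identifying $\rk\ulE$ with $\rk M(\ulE)$ resp. $\rk\dM(\ulE)$. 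The main obstacle I anticipate is the splitting claim in Step~2 — verifying that a closed immersion of $\BF_q$-linear $\BG_a$-powers splits as $\BF_q$-module schemes (equivalently the freeness/projectivity statement over the skew polynomial ring $\BC\{\tau\}$, cf.\ \cite[Lemma~1.4.5]{Anderson86} and the $\BC\{\sdtau\}$-analog used in the proof of Theorem~\ref{theorem:equivalence}); everything else is a consequence of additivity of the two $\Hom$-functors and standard commutative algebra over the Dedekind domain $A_\BC$.
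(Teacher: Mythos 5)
Your overall strategy — deduce everything from exactness of the two Hom-functors applied to $0\to E'\to E\to E''\to 0$, then use that $A_\BC$ is Dedekind — is attractive and genuinely different from the paper's proof, but it hinges entirely on the one claim you yourself flag as the obstacle, and your justification for that claim does not work. You assert that the closed immersion $E'\into E$ splits as $\BF_q$-module schemes "since $\Ext^1$ of additive groups over a field vanishes here --- equivalently, an $\BF_q$-linear surjection $\BC\{\tau\}^{1\times d}\onto\BC\{\tau\}^{1\times d'}$ of free left $\BC\{\tau\}$-modules splits". The second clause is circular: to invoke projectivity of $\BC\{\tau\}^{1\times d'}$ you must already know that $M(\ulE)\to M(\ulE')$ is \emph{surjective}, and that surjectivity is precisely the point at issue (a priori the image is only some left $\BC\{\tau\}$-submodule of $M(E')$, and it is not obvious that "closed immersion" forces it to be everything). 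The first clause is unsubstantiated and, as stated, false: $\Ext^1(\BG_a,\BG_a)$ does not vanish for commutative group schemes in characteristic $p$ (Witt vectors); what you need is the vanishing in the category of $\BF_q$-module schemes killed by Verschiebung, which requires a real input — either the Demazure--Gabriel exact anti-equivalence $G\mapsto\Hom_{\rm gr}(G,\BG_a)$ for $V=0$ groups, or an explicit Smith-normal-form argument over the left-and-right Euclidean domain $\BC\{\tau\}$ showing that a morphism $\BG_{a,\BC}^{d'}\to\BG_{a,\BC}^{d}$ is a closed immersion if and only if the induced map on $M(\cdot)$ is surjective. Without one of these, Step 2 (and its mirror for $\dM$ and the surjectivity of $\dM(\ulE)\to\dM(\ulE'')$) is unproved, and this is exactly where all the work in the statement lies. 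A secondary error: even granting a splitting of the group schemes, the retraction $E\to E'$ is $\BF_q$-linear but not $\phi$-equivariant, so you do \emph{not} get $M(\ulE)\cong M(\ulE')\oplus M(\ulE'')$ as $A_\BC$-modules, only as $\BC\{\tau\}$-modules; fortunately your Step 3 only needs exactness of the $A_\BC$-linear sequence (submodules and torsion-free quotients of finite modules over the Noetherian Dedekind domain $A_\BC$), not an $A_\BC$-linear splitting, so this is repairable by rephrasing.

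It is worth noting how the paper sidesteps this issue: it only proves left- and middle-exactness of $0\to M(\ulE'')\to M(\ulE)\to M(\ulE')$ formally, and establishes the surjectivity on the right \emph{under the abelian hypothesis} by forming the quotient $\ulTM=\ulM(\ulE)/\ulM(\ulE'')$, checking it is an effective $A$-motive finitely generated over $\BC\{\tau\}$, invoking Anderson's anti-equivalence to produce an abelian Anderson $A$-module $\ulTE$ with $\ulM(\ulTE)=\ulTM$, and then comparing $\ulTE$ with $\ulE'$ inside $\ulE$ (and dually with $\BC\{\sdtau\}$ for part (b)). Your route, if the surjectivity claim were properly established, would prove the stronger unconditional statement and would shorten the argument considerably; but as written the crucial step is asserted rather than proved.
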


\begin{proof}
Let $\ulE=(E,\phi)$ and $\ulE'=(E',\phi')$. Then the quotient $E'':=E/E'$ is a smooth irreducible group scheme with $\dim\ulE''=\dim\ulE-\dim\ulE'$ by \cite[Theorem~II.6.8]{Borel} and isomorphic to a power of $\BG_{a,\BC}$ by \cite[Proposition~VII.11]{Serre}. It inherits an action $\phi''\colon A\to\End_{\BC}(E'')$ of $A$ satisfying \eqref{EqDefAndersonAModuleA} in Definition \ref{DefAndersonAModule}\ref{DefAndersonAModule_A}, because $\Lie E''=\Lie E/\Lie E'$. Indeed, $E\to E''$ is smooth because $E'$ is smooth over $\BC$ and so $\Lie E\to\Lie E''$ is surjective by \cite[\S\,2.2, Proposition~8]{BLR} with $\Lie E'$ contained in its kernel. By reasons of dimension $\Lie E'$ equals the kernel of $\Lie E\onto\Lie E''$. We obtain an exact sequence of Anderson $A$-modules
\begin{equation}\label{EqExactE}
\xymatrix {
0 \ar[r] & \ulE' \ar[r]^{f'} & \ulE \ar[r]^{f''\;} & \ulE'' \ar[r] & 0 \,.
}
\end{equation}

\medskip\noindent
\ref{PropQuotientAMod_A} We apply the contravariant functor $\ulM(\,.\,)$ from Definition \ref{DefAbelianAMod}. This yields an exact sequence of $A_{\BC}$-modules
\begin{equation}\label{EqExactM}
\xymatrix {
0 \ar[r] & M(\ulE'') \ar[r] & M(\ulE) \ar[r] & M(\ulE')  \,.
}
\end{equation}
It is exact on the left because $E\onto E''$ is surjective. It is also exact in the middle by the universal mapping property of the quotient $E''$; see \cite[II.6.1]{Borel}. If $\ulE'$ and $\ulE''$ are abelian, that is $M(\ulE')$ and $M(\ulE'')$ are finite locally free over the Dedekind domain $A_\BC$, then also $M(\ulE)$ is finite locally free and $\ulE$ is abelian. Conversely, if $M(\ulE)$ is finite locally free, then also $M(\ulE'')$ is, and $\ulE''$ is abelian.

If $\ulE$ is abelian it remains to prove that $M(\ulE)\to M(\ulE')$ is surjective and $\ulE'$ is abelian. We consider the quotient $\ulTM:=\ulM(\ulE)/\ulM(\ulE'')$ which injects into $\ulM(\ulE')$. Since $\ulM(\ulE)$ is finitely generated both over $A_\BC$ and over $\BC\{\tau\}$, so is $\ulTM$. Since $\ulM(\ulE')$ has no $\BC\{\tau\}$-torsion the same holds for $\ulTM$, and so $\ulTM$ is locally free over $A_\BC$ by \cite[Lemma~1.4.5]{Anderson86}. Therefore, $\ulTM$ is an effective $A$-motive. If $\ulTM\cong\ulM(\ulE')$ this will imply that $\ulE'$ is abelian. By \cite[Theorem~1]{Anderson86} there exists an abelian Anderson $A$-module $\ulTE$ with $\ulTM=\ulM(\ulTE)$ and a morphism $\ulTE\to\ulE$ induced from $\ulM(\ulE)\onto\ulTM$. Any $\BC\{\tau\}$-basis $(\tilde m_1,\ldots,\tilde m_{\tilde d})$ of $\ulTM$ provides an isomorphism $\tilde m_1\times\ldots\times\tilde m_{\tilde d}\colon \wtE\isoto\BG_{a,\BC}^{\tilde d}$ of $\BF_q$-module schemes, and if $\BG_{a,\BC}=\Spec\BC[x]$ then the $\tilde x_j:=\tilde m_j^*(x)$ for $j=1,\ldots,\tilde\delta$ are free generators of the polynomial algebra $\Gamma(\wtE,\CO_{\wtE})=\BC[\tilde x_1,\ldots,\tilde x_{\tilde d}]$ over $\BC$. Since $\ulTM$ is a quotient of $\ulM(\ulE)$ the $\tilde m_j^*(x)$ lie in the image of $\Gamma(E,\CO_E)$. Therefore, $\ulTE\to\ulE$ is a closed immersion. Let $m_j'$ be the image of $\tilde m_j$ in $M(\ulE')$. Sending $\tilde x_j$ to $(m_j')^*(x)$ defines a $\BC$-homomorphism $\Gamma(\wtE,\CO_{\wtE})\to\Gamma(E',\CO_{E'})$. In this way the maps $\ulM(\ulE)\onto\ulTM\into\ulM(\ulE')$ induce morphisms
\[
\xymatrix {
\ulE'\ar[r] & \ulTE\ar[r] & \ulE\ar[r] & \ulE''\,.
}
\]
Since the composite map $M(\ulE'')\to M(\ulE)\to\ulTM$ is the zero map, the closed immersion $\wtE\into E$ factors through the kernel of $E\to E''$, which equals $E'$. So $\ulE'\to\ulTE$ must be an isomorphism. This shows that $\ulM(\ulE')=\ulM(\ulTE)=\ulTM$ onto which $\ulM(\ulE)$ surjects. Thus the sequence \eqref{EqExactM} is also exact on the right. From this also the formula for $\rk\ulE''$ follows.

\medskip\noindent
\ref{PropQuotientAMod_B} We apply the covariant functor $\uldM(\,.\,)$ from Definition \ref{DefDualMOfE} to the sequence \eqref{EqExactE}. This yields an exact sequence of $A_{\BC}$-modules
\begin{equation}\label{EqExactDM}
\xymatrix {
0 \ar[r] & \dM(\ulE') \ar[r] & \dM(\ulE) \ar[r] & \dM(\ulE'')  \,.
}
\end{equation}
It is exact on the left because $E'\into E$ is a closed immersion. It is also exact in the middle because $E'$ equals the fiber of $E\onto E''$ above $0$. If $\ulE'$ and $\ulE''$ are $A$-finite, that is $\dM(\ulE')$ and $\dM(\ulE'')$ are finite locally free over the Dedekind domain $A_\BC$, then also $\dM(\ulE)$ is finite locally free and $\ulE$ is $A$-finite. Conversely, if $\dM(\ulE)$ is finite locally free, then also $\dM(\ulE')$ is, and $\ulE'$ is $A$-finite.

If $\ulE$ is $A$-finite it remains to prove that $\dM(\ulE)\to\dM(\ulE'')$ is surjective and $\ulE''$ is $A$-finite. We consider the quotient $\uldN:=\uldM(\ulE)/\uldM(\ulE')$ which injects into $\uldM(\ulE'')$. Since $\uldM(\ulE)$ is finitely generated both over $A_\BC$ and over $\BC\{\sdtau\}$, so is $\uldN$. Since $\uldM(\ulE'')$ has no $\BC\{\sdtau\}$-torsion the same holds for $\uldN$, and so $\uldN$ is locally free over $A_\BC$ by the $\sdtau$-analog of \cite[Lemma~1.4.5]{Anderson86}. Therefore, $\uldN$ is an effective dual $A$-motive. If $\uldN\cong\uldM(\ulE'')$ this will imply that $\ulE''$ is $A$-finite. By Theorem~\ref{theorem:equivalence} there exists an $A$-finite Anderson $A$-module $\ulTE$ with $\uldN=\uldM(\ulTE)$ and morphisms $\tilde f\colon\ulE\to\ulTE$ and $g\colon\ulTE\to\ulE''$ induced from $\uldM(\ulE)\onto\uldN\into\uldM(\ulE'')$ and satisfying $f''=g\circ\tilde f$. 

Since the composite map $\uldM(\ulE')\to\uldM(\ulE)\to\uldN$ is the zero map, the morphism $\tilde f\circ f'\colon \ulE'\into \ulE\to \ulTE$ is the zero morphism by Theorem~\ref{theorem:equivalence}. By the universal mapping property \cite[II.6.1]{Borel} of the quotient $E/E'=E''$ the morphism $\tilde f\colon E\to \wtE$ factors as $\tilde f=h\circ f''$ for a morphism $h\colon E''\to\wtE$. Again by the universal mapping property, $f''=gh\circ f''$ implies that $gh=\id_{E''}$. Therefore, $\uldM(g)\circ\uldM(h)=\id_{\uldM(\ulE'')}$ and $\uldM(g)$ is surjective. As it is injective by construction we have $\uldN\cong\uldM(\ulE'')$ and the proposition is proved.
\end{proof}

\begin{corollary}
The category of abelian, respectively $A$-finite, Anderson $A$-modules is an exact category in the sense of Quillen~\cite[\S2]{Quillen} (see Remark~\ref{RemQuillenAMot}(b) for explanations) if one calls the sequences $\ulE'\to\ulE\to\ulE''$ of Anderson $A$-modules \emph{exact} where $\ulE'\subset\ulE$ is an Anderson $A$-submodule and $\ulE'':=\ulE/\ulE'$ is the quotient from Proposition~\ref{PropQuotientAMod}. The functors $\ulE\mapsto\ulM(\ulE)$ from Theorem~\ref{ThmAnderson}, respectively $\ulE\mapsto\uldM(\ulE)$ from Theorem~\ref{theorem:equivalence}\ref{theorem:equivalence_B}, are exact equivalences, that is, a sequence $\ulE'\to\ulE\to\ulE''$ is exact if and only if the induced sequence of $A$-motives, respectively dual $A$-motives, is exact. 
\end{corollary}

\begin{proof}
We start by proving the second assertion. By Proposition~\ref{PropQuotientAMod} the functors map exact sequences to exact sequences. 

Let $\ulE'\to\ulE\to\ulE''$ be a sequence of abelian Anderson $A$-modules whose associated sequence of $A$-motives $0\to\ulM(\ulE'')\to\ulM(\ulE)\to\ulM(\ulE')\to0$ is exact in the sense of Remark~\ref{RemQuillenAMot}(b). Consider an isomorphism $\rho'=(\rho'_1,\ldots,\rho'_{d'})\colon E'\isoto\BG_{a,\BC}^{d'}$ where $\rho'_i\colon E'\to\BG_{a,\BC}=\Spec\BC[x]$ is the projection onto the $i$-th coordinate. Then $\rho'_i\in\ulM(\ulE')$ and $\Gamma(E',\CO_{E'})$ is generated by $\rho_i^*(x)$. Since $\ulM(\ulE)$ surjects onto $\ulM(\ulE')$, we see that $\rho'_i$ lies in the image of $\Gamma(E,\CO_{E})\to\Gamma(E',\CO_{E'})$, and hence $\ulE'\to \ulE$ is a closed immersion. Let $\,\wt{\!\ulE}:=\ulE/\ulE'$ be the quotient from Proposition~\ref{PropQuotientAMod}. Then the $A$-motives $\ulM(\ulE'')$ and $\ulM(\,\wt{\!\ulE})$ both equal the kernel of $\ulM(\ulE)\onto\ulM(\ulE')$ by Proposition~\ref{PropQuotientAMod}. By Theorem~\ref{ThmAnderson} this shows that $\ulE''\cong\,\wt{\!\ulE}$, and hence the sequence $\ulE'\to\ulE\to\ulE''$ is exact as desired.

On the other hand, let $\ulE'\to\ulE\to\ulE''$ be a sequence of $A$-finite Anderson $A$-modules whose associated sequence of effective dual $A$-motives $0\to\uldM(\ulE')\to\uldM(\ulE)\to\uldM(\ulE'')\to0$ is exact in the sense of Remark~\ref{RemQuillenDualAMot}(b), that is on the underlying $A_\BC$-modules. Applying the snake lemma to 
\[
\xymatrix {
0 \ar[r] & \sdsigma^*\dM(\ulE') \ar[r] \ar@{^{ (}->}[d]_{\sdtau_{\dM(\ulE')}} & \sdsigma^*\dM(\ulE) \ar[r] \ar@{^{ (}->}[d]_{\sdtau_{\dM(\ulE)}} & \sdsigma^*\dM(\ulE'') \ar[r] \ar@{^{ (}->}[d]_{\sdtau_{\dM(\ulE'')}} & 0 \\
0 \ar[r] & \dM(\ulE') \ar[r] & \dM(\ulE) \ar[r] & \dM(\ulE'') \ar[r] & 0
}
\]
yields by \eqref{EqDualMAndE2} that the sequence on tangent spaces at the origin $0\to\Lie E'\to\Lie E\to\Lie E''\to0$ is exact. Analogously, \eqref{EqDualMAndE1} yields that the sequence $0\to E'(\BC)\to E(\BC)\to E''(\BC)\to0$ is exact. Both sequences together show that $E'\into E$ is a closed immersion. Let $\,\wt{\!\ulE}:=\ulE/\ulE'$ be the quotient from Proposition~\ref{PropQuotientAMod}. Then the dual $A$-motives $\uldM(\ulE'')$ and $\uldM(\,\wt{\!\ulE})$ both equal the cokernel of $\uldM(\ulE')\into\uldM(\ulE)$ by Proposition~\ref{PropQuotientAMod}. By Theorem~\ref{theorem:equivalence}\ref{theorem:equivalence_B} this shows that $\ulE''\cong\,\wt{\!\ulE}$, and hence the sequence $\ulE'\to\ulE\to\ulE''$ is exact as desired.

The first statement now follows from Remark~\ref{RemQuillenAMot}(b), respectively Remark~\ref{RemQuillenDualAMot}(b).
\end{proof}

\begin{definition}\label{DefMixedAModule}
\begin{enumerate}
\item \label{DefMixedAModuleA}
An abelian Anderson $A$-module $\ulE$ of dimension $d$ and rank $r$ is \emph{pure} if $\ulM(\ulE)$ is pure. In this case, we set $\weight\ulE=-\weight\ulM(\ulE)=-\frac{d}{r}$; see \cite[Lemma~1.10.1]{Anderson86}.
\item \label{DefMixedAModuleB}
An abelian Anderson $A$-module $\ulE$ is \emph{mixed} if it possesses an increasing \emph{weight filtration} by abelian Anderson $A$-submodules $W_{\mu\,}\ulE$ for $\mu\in\BQ$ such that $\Gr_\mu^W\ulE:=W_{\mu\,}\ulE/\bigl(\bigcup_{\mu'<\mu}W_{\mu'}\ulE\bigr)$ is a pure abelian Anderson $A$-module of weight $\mu$ for all $\mu\in\BQ$, and such that $\dim\ulE=\sum_{\mu\in\BQ}\dim\Gr_\mu^W\ulE$.
\item \label{DefMixedAModuleC}
An $A$-finite Anderson $A$-module $\ulE$ of dimension $d$ and rank $r$ is \emph{pure} if $\uldM(\ulE)$ is pure. In this case, we set $\weight\ulE=\weight\uldM(\ulE)=-\frac{d}{r}$. (This formula follows from the analog of \cite[Lemma~1.10.1]{Anderson86} using Proposition~\ref{prop:commutingdiagrams}.)
\item \label{DefMixedAModuleD}
An $A$-finite Anderson $A$-module $\ulE$ is \emph{mixed} if it possesses an increasing \emph{weight filtration} by $A$-finite Anderson $A$-submodules $W_{\mu\,}\ulE$ for $\mu\in\BQ$ such that $\Gr_\mu^W\ulE:=W_{\mu\,}\ulE/\bigl(\bigcup_{\mu'<\mu}W_{\mu'}\ulE\bigr)$ is a pure $A$-finite Anderson $A$-module of weight $\mu$ for all $\mu\in\BQ$, and such that $\dim\ulE=\sum_{\mu\in\BQ}\dim\Gr_\mu^W\ulE$.
\end{enumerate}
\end{definition}

\begin{remark}
(a) The set $\{W_{\mu\,}\ulE:\mu\in\BQ\}$ of closed irreducible and reduced subschemes of $E$ is totally ordered by inclusion, and hence finite by reasons of dimension. Therefore, also $\bigcup_{\mu'<\mu}W_{\mu'}\ulE$ and $\bigcap_{\tilde\mu>\mu}W_{\tilde\mu\,}\ulE$ belong to this set and are Anderson $A$-submodules of $\ulE$. By Proposition~\ref{PropQuotientAMod} they are abelian, respectively $A$-finite if $\ulE$ is, and the quotient $\Gr_\mu^W\ulE:=W_{\mu\,}\ulE/\bigl(\bigcup_{\mu'<\mu}W_{\mu'}\ulE\bigr)$ is again an abelian, respectively $A$-finite Anderson $A$-module of dimension $\dim\Gr_\mu^W\ulE\;=\;\dim W_{\mu\,}\ulE-\dim\bigl(\bigcup_{\mu'<\mu}W_{\mu'}\ulE\bigr)$.

\medskip\noindent
(b) The \emph{weights of $\ulE$} are the jumps of the weight filtration; that is, those real numbers $\mu$ for which 
\[
\TS\bigcup_{\mu'<\mu}W_{\mu'}\ulE \;\subsetneq\;\bigcap_{\tilde\mu>\mu}W_{\tilde\mu\,}\ulE\,.
\]
By (a) the condition $\sum_{\mu\in\BQ}\dim\Gr_\mu^W\ulE=\dim\ulE$ in Definition \ref{DefMixedAModule}\ref{DefMixedAModuleB} is equivalent to the conditions that all jumps lie in $\BQ$, that $W_{\mu\,}\ulE \;=\;\bigcap_{\tilde\mu>\mu}W_{\tilde\mu\,}\ulE$ for all $\mu\in\BQ$, that $W_{\mu\,}\ulE=(0)$ for $\mu\ll0$, and that $W_{\mu\,}\ulE=\ulE$ for $\mu\gg0$; compare Remarks~\ref{RemQFiltr} and \ref{RemAMotWts}.

\medskip\noindent
(c) By Definition~\ref{DefMixedAModule}\ref{DefMixedAModuleA} and \ref{DefMixedAModuleC} all weights of a mixed abelian, respectively $A$-finite Anderson $A$-module are negative. In particular, a Drinfeld $A$-module of rank $r$ is pure of weight $-\tfrac{1}{r}$ !
 
\medskip\noindent
(d) Every pure abelian, respectively $A$-finite Anderson $A$-module of weight $\mu$ is also mixed with $W_{\mu'}\ulE=(0)$ for $\mu'<\mu$, and $W_{\mu'}\ulE=\ulE$ for $\mu'\ge\mu$, and $\Gr_\mu^W\ulE=\ulE$.
\end{remark}

\begin{theorem}\label{ThmMixedEandM}
\begin{enumerate}
\item 
An abelian (respectively $A$-finite) Anderson $A$-module $\ulE$ is mixed if and only if its associated $A$-motive $\ulM(\ulE)$ (respectively dual $A$-motive $\uldM(\ulE)$) is mixed. In this case the weights of $\ulE$ are the negatives of the weights of $\ulM(\ulE)$ (respectively equal to the weights of $\uldM(\ulE)$). 
\item
If an Anderson $A$-module $\ulE$ is both abelian and $A$-finite, then it is mixed (respectively pure) as an abelian Anderson $A$-module if and only if $\ulE$ is so as an $A$-finite Anderson $A$-module. In this case its weight filtrations and weights as an abelian, respectively $A$-finite Anderson $A$-module coincide.
\end{enumerate}
\end{theorem}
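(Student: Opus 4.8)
The plan is to deduce both assertions from the exactness and (anti-)equivalence properties of the functors $\ulE\mapsto\ulM(\ulE)$ and $\ulE\mapsto\uldM(\ulE)$ recorded in Proposition~\ref{PropQuotientAMod}, together with Anderson's equivalence (Remark~\ref{RemRankE}.3), Theorem~\ref{theorem:equivalence}, and the characterizations of mixedness and uniqueness of the weight filtration in Propositions~\ref{PropPure} and \ref{PropDualPure}; part~(2) will then follow by combining part~(1) with Proposition~\ref{PropDualMixed} and the canonical isomorphism $\Xi\colon\uldM(\ulM(\ulE))\isoto\uldM(\ulE)$ of Theorem~\ref{ThmMandDMofE}.

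For the $A$-finite case of part~(1), suppose first that $\ulE$ is mixed with weight filtration $(W_{\mu\,}\ulE)_\mu$ by $A$-finite Anderson $A$-submodules. Applying the covariant exact functor $\uldM(\,.\,)$ (Proposition~\ref{PropQuotientAMod}\ref{PropQuotientAMod_B}) yields an increasing filtration $W_{\mu\,}\uldM(\ulE):=\uldM(W_{\mu\,}\ulE)\subset\uldM(\ulE)$; it is by saturated dual $A$-sub-motives because $\uldM(\ulE)/\uldM(W_{\mu\,}\ulE)=\uldM(\ulE/W_{\mu\,}\ulE)$ is torsion-free, and $\Gr_\mu^W\uldM(\ulE)\cong\uldM(\Gr_\mu^W\ulE)$ is pure of weight $\mu$ by Definition~\ref{DefMixedAModule}\ref{DefMixedAModuleC}. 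Using that $\{W_{\mu\,}\ulE\}$ is a finite exhaustive and separated chain with rational jumps (the remarks following Definition~\ref{DefMixedAModule}) and the rank-additivity of $\uldM(\,.\,)$ on short exact sequences, the filtration $W_{\mu\,}\uldM(\ulE)$ has the same properties, so $\sum_\mu\rk\Gr_\mu^W\uldM(\ulE)=\rk\uldM(\ulE)$ and $\uldM(\ulE)$ is mixed by Proposition~\ref{PropDualPure}. Conversely, if $\uldM(\ulE)$ is mixed with (unique) weight filtration $(W_{\mu\,}\uldM(\ulE))_\mu$, then each $W_{\mu\,}\uldM(\ulE)$ is a saturated dual $A$-sub-motive, finitely generated over the left Noetherian skew polynomial ring $\BC\{\sdtau\}$ as a submodule of a finitely generated one; hence by Theorem~\ref{theorem:equivalence}\ref{theorem:equivalence_B} it equals $\uldM(\ulF_\mu)$ for an $A$-finite $\ulF_\mu$, and by full faithfulness the inclusion into $\uldM(\ulE)$ comes from a morphism $\ulF_\mu\to\ulE$; factoring this morphism through its image and applying Proposition~\ref{PropQuotientAMod}\ref{PropQuotientAMod_B} shows it is a closed immersion, so $W_{\mu\,}\uldM(\ulE)=\uldM(W_{\mu\,}\ulE)$ for an Anderson $A$-submodule $W_{\mu\,}\ulE\subset\ulE$. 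These form a finite increasing filtration (as $\uldM$ is faithful and exact), its successive quotients satisfy $\uldM(\Gr_\mu^W\ulE)\cong\Gr_\mu^W\uldM(\ulE)$ — using finiteness to identify $\bigcup_{\mu'<\mu}W_{\mu'}\uldM(\ulE)$ with $\uldM(\bigcup_{\mu'<\mu}W_{\mu'}\ulE)$ — hence $\Gr_\mu^W\ulE$ is pure of weight $\mu$, and additivity of dimension gives $\dim\ulE=\sum_\mu\dim\Gr_\mu^W\ulE$, so $\ulE$ is mixed with weights the jumps of this filtration. The abelian case is the identical argument with $\ulM(\,.\,)$ for $\uldM(\,.\,)$, Anderson's equivalence for Theorem~\ref{theorem:equivalence}, and Proposition~\ref{PropPure} for Proposition~\ref{PropDualPure}; since $\ulM(\,.\,)$ is contravariant, an increasing weight filtration of $\ulM(\ulE)$ corresponds to a decreasing chain of Anderson $A$-submodules of $\ulE$ and the convention $\weight(\text{pure abelian }\ulE)=-\weight\ulM(\ulE)$ produces the indexing $W_{-\mu}\ulM(\ulE)=\ker\bigl(\ulM(\ulE)\onto\ulM(\bigcup_{\mu'<\mu}W_{\mu'}\ulE)\bigr)$ exactly as in the proof of Proposition~\ref{PropDualMixed}, whence the weights of $\ulE$ are the negatives of those of $\ulM(\ulE)$.

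For part~(2) let $\ulE$ be both abelian and $A$-finite. By part~(1), $\ulE$ is mixed as an abelian Anderson $A$-module iff $\ulM(\ulE)$ is mixed, and mixed as an $A$-finite Anderson $A$-module iff $\uldM(\ulE)$ is mixed. Now $\ulM(\ulE)$ is mixed iff $\uldM(\ulM(\ulE))$ is mixed, because $\ulM(\uldM(\ulM(\ulE)))\cong\ulM(\ulE)$ and Proposition~\ref{PropDualMixed} applies; and $\uldM(\ulM(\ulE))\cong\uldM(\ulE)$ canonically by Theorem~\ref{ThmMandDMofE}, so $\ulM(\ulE)$ is mixed iff $\uldM(\ulE)$ is, and the two notions of mixedness for $\ulE$ coincide. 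For the weight filtrations: the weight filtration of $\ulE$ as an abelian module is the one induced via $\ulM(\,.\,)$ from the unique weight filtration of $\ulM(\ulE)$ (Proposition~\ref{PropPure}\ref{PropPure_b}); the latter corresponds under the anti-equivalence $\uldM(\,.\,)$ of Proposition~\ref{PropDualizing} to the weight filtration of $\uldM(\ulM(\ulE))$, which $\Xi$ carries to the weight filtration of $\uldM(\ulE)$ (an isomorphism of mixed dual $A$-motives respects the unique weight filtration, by Proposition~\ref{PropDualPure}\ref{PropDualPure_b}), which in turn induces the weight filtration of $\ulE$ as an $A$-finite module; the sign flip $\mu\leftrightarrow-\mu$ between $\ulE$ and $\ulM(\ulE)$ and the sign flip $\nu\leftrightarrow-\nu$ of Proposition~\ref{PropDualMixed} between $\ulM(\ulE)$ and $\uldM(\ulM(\ulE))$ cancel, so both the filtrations and their jumps agree. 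The statement for purity is the special case of a weight filtration with a single jump.

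The hard part will be the implications ``$\ulM(\ulE)$ mixed $\Rightarrow$ $\ulE$ mixed'' and its $A$-finite analogue: one must check that saturated sub-(dual-)$A$-motives of $\ulM(\ulE)$, resp.\ $\uldM(\ulE)$, lie in the essential image of the relevant functor — this is where Noetherianity of $\BC\{\tau\}$ and $\BC\{\sdtau\}$ enters — that a morphism of Anderson $A$-modules which becomes injective after applying $\ulM(\,.\,)$ or $\uldM(\,.\,)$ is already a closed immersion, and then match the ``successive quotient'' constructions $\Gr_\mu^W$ through a functor that is contravariant in the abelian case; this bookkeeping is made tractable only because all weight filtrations in sight are finite chains.
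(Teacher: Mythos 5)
Your proof is correct and follows essentially the same route as the paper's: applying Proposition~\ref{PropQuotientAMod} to transport weight filtrations forward, using Anderson's equivalence (resp.\ Theorem~\ref{theorem:equivalence}) to realize the quotients $\ulM/W_{\nu_i}\ulM$ (resp.\ the sub-motives $W_{\mu\,}\uldM$) as coming from Anderson $A$-submodules in the converse direction, and deducing part~(2) from $\Xi$ of Theorem~\ref{ThmMandDMofE} together with Proposition~\ref{PropDualMixed}. Your treatment of why the morphism $\ulF_\mu\to\ulE$ is a closed immersion is if anything slightly more explicit than the paper's, which simply asserts the existence of the submodule with the prescribed (dual) $A$-motive.
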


\begin{proof}
First let $\ulE$ be abelian and let $\ulM=\ulM(\ulE)$. Assume that $\ulE$ is mixed. We set 
\[
\TS W_{-\mu\,}\ulM\;:=\;\ker\bigl(\ulM\onto\ulM(\bigcup\limits_{\mu'<\mu}W_{\mu'}\ulE)\bigr)\,.
\]
Then $W_\bullet\ulM$ is an increasing filtration of $\ulM$ by saturated $A$-sub-motives. Equivalently, if $\mu_1<\ldots<\mu_n$ are the jumps of the weight filtration $W_\bullet\ulE$, set in addition $\mu_0:=-\infty, \mu_{n+1}:=+\infty$, and $W_{\mu_0\,}\ulE=(0)$. Then $W_{\mu_i\,}\ulE=W_{\mu'}\ulE\subsetneq W_{\mu_{i+1}\,}\ulE$ for all $\mu_i\le\mu'<\mu_{i+1}$ and hence, for any $\mu$ with $\mu_i<\mu\le\mu_{i+1}$ we have $\bigcup_{\mu'<\mu}W_{\mu'}\ulE=W_{\mu_i\,}\ulE$ and $W_{-\mu\,}\ulM=\ker\bigl(\ulM\onto\ulM(W_{\mu_i\,}\ulE)\bigr)$. In particular, if $\mu_i\le\mu<\tilde\mu\le\mu_{i+1}$, then 
 \begin{equation}\label{EqRemMixedEandM}
 W_{-\tilde\mu\,}\ulM\;=\;\ker\bigl(\ulM\onto\ulM(W_{\mu_i\,}\ulE)\bigr)\;=\;\ker\bigl(\ulM\onto\ulM(W_{\mu\,}\ulE)\bigr)\,.
 \end{equation}
 This yields the following diagram with exact rows
 \[
 \xymatrix {
 0 \ar[r] & **{!U(0.36) =<6pc,2pc>} \objectbox{\bigcup\limits_{-\tilde\mu<-\mu}W_{-\tilde\mu\,}\ulM} \ar[r] \ar@{^{ (}->}[d] & \ulM \ar[r] \ar@{=}[d] & \ulM(W_{\mu\,}\ulE) \ar[r] \ar@{->>}[d] & 0\,\, \\
 0 \ar[r] & **{!U(0.06) =<3.5pc,2pc>} \objectbox{W_{-\mu\,}\ulM} \ar[r] & \ulM \ar[r] & **{!U(0.31) =<6.3pc,2pc>} \objectbox{\ulM\bigl(\bigcup\limits_{\mu'<\mu}W_{\mu'}\ulE\bigr)} \ar[r] & 0 \,.
 }
 \]
So Proposition~\ref{PropQuotientAMod} and the snake lemma yield $\Gr_{-\mu}^W\ulM\cong\ker\bigl(\ulM(W_{\mu\,}\ulE)\onto\ulM(\bigcup_{\mu'<\mu}W_{\mu'}\ulE)\bigr)\cong\ulM(\Gr_\mu^W\ulE)$. The latter is pure of weight $-\mu$ and therefore $\ulM(\ulE)$ is mixed with weight filtration $W_\bullet\ulM$ which jumps at $-\mu_n<\ldots<-\mu_1$. 

Conversely, let $\ulM$ be mixed with weight filtration $W_\mu\ulM$. If $\nu_1<\ldots<\nu_n$ are the weights of $\ulM$ set in addition $\nu_0:=-\infty$ and $\nu_{n+1}:=+\infty$ and set $W_{\nu_0}\ulM=(0)$ and $W_{\nu_{n+1}}:=\ulM$. For $\mu\in\BQ$ with $-\nu_{i+1}\le\mu<-\nu_i$ for some $i$ let $W_\mu\ulE$ be the abelian Anderson $A$-submodule of $\ulE$ with $\ulM(W_\mu\ulE)=\ulM/W_{\nu_i}\ulM$. (Note that $\ulM(W_\mu\ulE)$ is effective and finite free over $\BC\{\tau\}$ because $\ulM$ is.) Then the considerations above show that $\ulE$ is mixed with respect to this weight filtration.

Now let $\ulE$ be $A$-finite and let $\uldM=\uldM(\ulE)$. If $\ulE$ is mixed then setting 
\begin{equation}\label{EqThmMixedEandMWeights}
W_\mu\uldM\;:=\;\uldM(W_\mu\ulE)
\end{equation}
and applying Proposition~\ref{PropQuotientAMod} shows that $\Gr_\mu^W\uldM(\ulE)=\uldM(\Gr_\mu^W\ulE)$. Therefore, $\uldM$ is mixed with the same weights than $\ulE$. Conversely, if $\uldM=\uldM(\ulE)$ is mixed with weight filtration $W_\mu\uldM$ let $W_\mu\ulE$ be the $A$-finite Anderson $A$-submodule of $\ulE$ with $\uldM(W_\mu\ulE)=W_\mu\uldM$. (Note that $W_\mu\uldM$ is effective and finite free over the noetherian ring $\BC\{\sdtau\}$ because $\uldM$ is.) Then $\uldM(\Gr_\mu^W\ulE)=\Gr_\mu^W\uldM(\ulE)$ by Proposition~\ref{PropQuotientAMod}, and $\ulE$ is mixed with the same weights than $\uldM$.

If $\ulE$ is both abelian and $A$-finite then $\uldM\bigl(\ulM(\ulE)\bigr)=\uldM(\ulE)$ by Theorem~\ref{ThmMandDMofE}. The last statement of the theorem therefore follows from Proposition~\ref{PropDualMixed}.
\end{proof}

\subsection{Uniformizability}\label{SectAModUniformizability}

\begin{definition}\label{DefAModuleUniformizable}
An Anderson $A$-module is called \emph{uniformizable} if its exponential $\exp_\ulE$ is surjective.
\end{definition}

\begin{remark}\label{RemAModuleUniformizable}
(a) If $\ulE$ is uniformizable and $a\in A$, the snake lemma applied to
\[
\xymatrix @C+3pc {
0 \ar[r] & \Lambda(\ulE) \ar[r] \ar[d]_{\TS\Lie\phi_a} & \Lie E \ar[r]^{\TS\exp_\ulE} \ar[d]_{\TS\Lie\phi_a} & E(\BC) \ar[r] \ar[d]^{\TS\phi_a} & 0 \\
0 \ar[r] & \Lambda(\ulE) \ar[r] & \Lie E \ar[r]^{\TS\exp_\ulE} & E(\BC) \ar[r] & 0
}
\]
together with the fact that $\Lie\phi_a$ is an automorphism of $\Lie E$ yields $\ulE[a](\BC)\cong \Lambda(\ulE)/a\Lambda(\ulE)$.

\medskip\noindent
(b) By \cite[Theorem~4]{Anderson86} an \emph{abelian} Anderson $A$-module $\ulE$ is uniformizable if and only if $\Lambda(\ulE):=\ker(\exp_\ulE)$ is a locally free $A$-module of rank equal to $\rk\ulE$, if and only if its associated $A$-motive $\ulM(\ulE)$ is uniformizable. The analog for dual $A$-motives is the following theorem of Anderson.
\end{remark}

\begin{theorem}[{\cite{ABP_Rohrlich}}]\label{ThmUnifAModandDM}
Let $\ulE$ be an $A$-finite Anderson $A$-module and let $\uldM=\uldM(\ulE)$ be its associated dual $A$-motive. Then the following are equivalent
\begin{enumerate}
\item \label{ThmUnifAModandDM_A}
$\ulE$ is uniformizable,
\item \label{ThmUnifAModandDM_B}
$\ker(\exp_\ulE)$ is a locally free $A$-module of rank equal to $\rk\ulE$,
\item \label{ThmUnifAModandDM_C}
$\uldM(\ulE)$ is uniformizable.
\end{enumerate}
If these conditions hold then the map $\delta_0$ from Corollary~\ref{CorTo_Boiler2.5.5} provides an isomorphism of $A$-modules $\delta_0\colon\Lambda(\uldM)\isoto\Lambda(\ulE)$.
\end{theorem}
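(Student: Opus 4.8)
The plan is to deduce this theorem from the results already assembled in the excerpt rather than redoing the analytic work. The key observation is that we have two independent characterizations of uniformizability available: for the $A$-motive $\ulM(\uldM)$ (which exists by Theorem~\ref{theorem:equivalence}, since $\uldM=\uldM(\ulE)$ is an effective dual $A$-motive coming from an $A$-finite Anderson $A$-module, hence finitely generated over $\BC\{\sdtau\}$) we have Remark~\ref{RemAModuleUniformizable}.2, and for the dual $A$-motive $\uldM$ itself we have Proposition~\ref{PropDualizingUnif}, which says $\uldM$ is uniformizable if and only if $\ulM(\uldM)$ is. So the strategy is: show \ref{ThmUnifAModandDM_A} $\Leftrightarrow$ \ref{ThmUnifAModandDM_B} directly from the exponential sequence, show \ref{ThmUnifAModandDM_B} $\Leftrightarrow$ \ref{ThmUnifAModandDM_C} via Corollary~\ref{CorTo_Boiler2.5.5}, and finally identify the resulting isomorphism.

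First I would record \ref{ThmUnifAModandDM_A} $\Rightarrow$ \ref{ThmUnifAModandDM_B}: if $\exp_\ulE$ is surjective, the exact sequence $0\to\Lambda(\ulE)\to\Lie E\xrightarrow{\exp_\ulE} E(\BC)\to0$ together with Remark~\ref{RemAModuleUniformizable}.1 gives $\ulE[a](\BC)\cong\Lambda(\ulE)/a\Lambda(\ulE)$ for all $a\in A$, while Remark~\ref{Rem2RankE} (Anderson's ``switcheroo'' via Proposition~\ref{PropSwitcheroo}) gives $\ulE[a](\BC)\cong(A/(a))^{\oplus\rk\ulE}$. Since $\Lambda(\ulE)$ is discrete in the finite-dimensional $\BC$-vector space $\Lie E$ by Lemma~\ref{LemmaLog}, it is a finitely generated torsion-free $A$-module, hence projective; the torsion computation pins its rank to $\rk\ulE$, giving \ref{ThmUnifAModandDM_B}. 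For \ref{ThmUnifAModandDM_B} $\Rightarrow$ \ref{ThmUnifAModandDM_A} one argues conversely: surjectivity of $\exp_\ulE$ is equivalent to $\exp_\ulE$ hitting every $a$-torsion point for some non-constant $a$ (using convergent $a$-division towers and Theorem~\ref{ThmDivTowersAndLie}), and a rank count of $\Lambda(\ulE)/a\Lambda(\ulE)$ against $\ulE[a](\BC)$ via Remark~\ref{RemAModuleUniformizable}.1 and Remark~\ref{Rem2RankE} forces surjectivity onto $\ulE[a](\BC)$, whence onto $E(\BC)$ by the division-tower correspondence.

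Next comes the link to the dual $A$-motive. The main bridge is Corollary~\ref{CorTo_Boiler2.5.5}, which provides a canonical $A$-isomorphism
\[
\delta_0\colon\bigl(\dM\otimes_{A_\BC}\CO(\dotFC_\BC\setminus{\TS\bigcup_{i\in\BN_{>0}}}\Var(\ssigma^{i\ast}J))\bigr)^\sdtau\;\isoto\;\Lambda(\ulE)\,,
\]
and the observation from Proposition~\ref{PropMaphdM}\ref{PropMaphdMA} that the left-hand side is exactly $\Lambda(\uldM)$ when $\uldM$ is uniformizable, and in general is the $\sdtau$-invariant submodule computed over the larger domain. Concretely: by Lemma~\ref{LemmaDualUniformizable}\ref{LemmaDualUniformizableB}, $\uldM$ is uniformizable iff $\rk_A\Lambda(\uldM)=\rk\uldM=\rk\ulE$. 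If $\uldM$ is uniformizable, Proposition~\ref{Prop4.23}\ref{PropMaphdMA} identifies $\Lambda(\uldM)$ with the source of $\delta_0$, so $\delta_0\colon\Lambda(\uldM)\isoto\Lambda(\ulE)$ is an $A$-module isomorphism, forcing $\rk_A\Lambda(\ulE)=\rk\ulE$ and hence \ref{ThmUnifAModandDM_B}. Conversely, if \ref{ThmUnifAModandDM_B} holds, then the source of $\delta_0$ has $A$-rank $\rk\ulE$; one checks it is contained in $\Lambda(\uldM)$ (an a priori larger module, but of rank $\le\rk\uldM=\rk\ulE$ by the analog of \cite[Lemma~4.2(b)]{BoeckleHartl}), so it equals $\Lambda(\uldM)$, giving $\rk_A\Lambda(\uldM)=\rk\ulE$ and hence \ref{ThmUnifAModandDM_C} by Lemma~\ref{LemmaDualUniformizable}\ref{LemmaDualUniformizableB}. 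Alternatively and more cleanly, one uses Theorem~\ref{ThmUnifAModandDM} applied to $\ulM=\ulM(\uldM)$: by Theorem~\ref{ThmMandDMofE} one has $\uldM(\ulM)\cong\uldM=\uldM(\ulE)$, and $\ulM=\ulM(\uldM)$ is the $A$-motive associated to the abelian Anderson $A$-module underlying $\ulE$ (this needs checking: $\ulE$ is $A$-finite, and one must see it is also abelian so that $\ulM(\ulE)$ makes sense and agrees with $\ulM(\uldM)$ — but in fact the statement is symmetric and one can simply invoke Proposition~\ref{PropDualizingUnif}), so uniformizability of $\uldM$ is equivalent to uniformizability of $\ulM$, which by Remark~\ref{RemAModuleUniformizable}.2 is equivalent to $\Lambda(\ulE)$ being locally free of rank $\rk\ulE$, i.e. \ref{ThmUnifAModandDM_B}.

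The main obstacle I anticipate is the bookkeeping around which $\tau$-invariant module the map $\delta_0$ naturally lands in: $\Lambda(\uldM)$ is defined using $\CO(\FC_\BC\setminus\Disc)$, whereas $\delta_0$ is defined on invariants over the larger ring $\CO(\dotFC_\BC\setminus\bigcup_{i>0}\Var(\ssigma^{i*}J))$, and one must invoke Proposition~\ref{Prop4.23}\ref{PropMaphdMA} (which in turn rests on Proposition~\ref{PropMaphM} via the duality of Proposition~\ref{PropDualizingUnif}) to see these coincide for uniformizable $\uldM$; in the non-uniformizable direction one needs the rank bound to force the containment to be an equality. Once the three equivalences are in place, the final sentence is immediate: under the equivalent conditions $\uldM$ is uniformizable, so $\Lambda(\uldM)$ is the source of $\delta_0$ by Proposition~\ref{Prop4.23}\ref{PropMaphdMA}, and $\delta_0$ is an $A$-module isomorphism onto $\Lambda(\ulE)=\ker(\exp_\ulE)$ by Corollary~\ref{CorTo_Boiler2.5.5}. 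I would keep the write-up short, citing Corollary~\ref{CorTo_Boiler2.5.5}, Proposition~\ref{Prop4.23}, Lemma~\ref{LemmaDualUniformizable}, Remark~\ref{RemAModuleUniformizable}, and Remark~\ref{Rem2RankE}, and leaving the routine rank counts to the reader.
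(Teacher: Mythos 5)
Your chain \ref{ThmUnifAModandDM_A}$\Rightarrow$\ref{ThmUnifAModandDM_B}$\Leftrightarrow$\ref{ThmUnifAModandDM_C} is sound and matches the paper: \ref{ThmUnifAModandDM_A}$\Rightarrow$\ref{ThmUnifAModandDM_B} via Proposition~\ref{PropSwitcheroo} and the snake lemma, and \ref{ThmUnifAModandDM_B}$\Leftrightarrow$\ref{ThmUnifAModandDM_C} by comparing ranks through the map $\delta_0$ of Corollary~\ref{CorTo_Boiler2.5.5} together with Lemma~\ref{LemmaDualUniformizable}\ref{LemmaDualUniformizableB}; your treatment of the last assertion via Proposition~\ref{Prop4.23} is also exactly the paper's. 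The genuine gap is the return implication to \ref{ThmUnifAModandDM_A}, i.e.\ the surjectivity of $\exp_\ulE$. Your direct argument for \ref{ThmUnifAModandDM_B}$\Rightarrow$\ref{ThmUnifAModandDM_A} rests on the claim that $\exp_\ulE$ is surjective as soon as its image contains all $a$-torsion; Theorem~\ref{ThmDivTowersAndLie} does not give this, since it only says that $x\in\im\exp_\ulE$ if and only if there is a \emph{convergent} $a$-division tower above $x$, and knowing that torsion points (equivalently, division towers above $0$) lie in the image does not produce a convergent tower above an arbitrary $x\in E(\BC)$. Moreover Remark~\ref{RemAModuleUniformizable}.1, which you invoke for the rank count, is itself derived under the hypothesis that $\ulE$ is uniformizable, so as written the step is circular.

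The paper closes the cycle with \ref{ThmUnifAModandDM_C}$\Rightarrow$\ref{ThmUnifAModandDM_A}, and this is where the real analytic input sits: given $x\in E(\BC)$, one chooses $\dm\in\dM$ with $\delta_1(\dm)=x$ (surjectivity of $\delta_1$ in Proposition~\ref{prop:commutingdiagrams}), uses Lemma~\ref{LemmaDualUniformizable}\ref{LemmaDualUniformizableC} --- the surjectivity of $\dm'\mapsto\sdtau_\dM(\sdsigma^*\dm')-\dm'$ on $\dM\otimes_{A_\BC}\CO(\FC_\BC\setminus\Disc)$, proved by an Artin--Schreier-type argument and available precisely because $\uldM$ is uniformizable --- to solve $\sdtau_\dM(\sdsigma^*\dm')-\dm'=\dm$, and then concludes $\exp_\ulE\bigl(\delta_0(\dm'+\dm)\bigr)=x$ from Theorem~\ref{ThmDivisionTower} (Corollary~\ref{Cor=Boiler2.5.5}). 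None of this appears in your write-up. Your ``cleaner'' alternative via $\ulM(\uldM)$ and Anderson's criterion (Remark~\ref{RemAModuleUniformizable}.2) also fails as stated: it needs $\ulM(\uldM)$ to be the $A$-motive of $\ulE$, hence needs $\ulE$ to be abelian, which is not among the hypotheses; Proposition~\ref{PropDualizingUnif} relates $\uldM$ to $\ulM(\uldM)$ but says nothing about $\exp_\ulE$.
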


\begin{proof}
If $\ulE$ is $A$-finite, that is $\uldM=(\dM,\sdtau_\dM)$ is finite locally free over $A_\BC$ then for every $a\in A\setminus\BF_q$ Proposition~\ref{PropSwitcheroo} implies $\ulE[a](\BC)\cong(\dM/a\dM)^\sdtau\cong\bigl(A/(a)\bigr)^{\oplus\rk\ulE}$. 

If we assume \ref{ThmUnifAModandDM_A}, this observation together with Remark~\ref{RemAModuleUniformizable}(a) implies that the discrete $A$-submodule $\Lambda(\ulE):=\ker(\exp_\ulE)\subset\Lie E$ is locally free of rank $\rk\ulE$, whence \ref{ThmUnifAModandDM_B}; compare the proof of \cite[Theorem~4.6.9]{Goss}.

By Corollary~\ref{CorTo_Boiler2.5.5}, condition \ref{ThmUnifAModandDM_B} implies that $\Lambda(\uldM)$ is a locally free $A$-module of rank $\rk\ulE:=\rk\uldM$. So \ref{ThmUnifAModandDM_C} follows from Lemma~\ref{LemmaDualUniformizable}\ref{LemmaDualUniformizableB}.

Finally, assume \ref{ThmUnifAModandDM_C} and let $x\in E(\BC)$. By \eqref{EqDualMAndE1} in Proposition~\ref{prop:commutingdiagrams} there is an $\dm\in\dM$ with $\delta_1(\dm)=x$. By Lemma~\ref{LemmaDualUniformizable}\ref{LemmaDualUniformizableC} there is an $\dm'\in\dM\otimes_{A_\BC}\CO(\FC_\BC\setminus\Disc)$ with $\sdtau_\dM(\sdsigma^*\dm')-\dm'=\dm$. By Theorem~\ref{ThmDivisionTower} the element $\xi:=\delta_0(\dm'+\dm)$ satisfies $\exp_\ulE(\xi)=x$. This proves that $\exp_\ulE$ is surjective, that is \ref{ThmUnifAModandDM_A}.

The last assertion follows from Corollary~\ref{CorTo_Boiler2.5.5} and Proposition~\ref{Prop4.23}.
\end{proof}

The following corollary is the analog of \cite[Corollary~3.3.6]{Anderson86} for uniformizable $A$-finite Anderson $A$-modules.

\begin{corollary}\label{CorUnifGeneratesLieE}
If $\ulE$ is a uniformizable $A$-finite Anderson $A$-module, then the set $\Lambda(\ulE)$ generates the $\BC$-vector space $\Lie E$.
\end{corollary}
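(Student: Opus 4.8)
The plan is to transport everything to the dual $A$-motive $\uldM=\uldM(\ulE)$ and to combine the rigid analytic trivialization $h_\uldM$ with the surjection $\delta_0$ onto $\Lie E$. Since $\ulE$ is $A$-finite, $\uldM$ is a dual $A$-motive, and since $\ulE$ is uniformizable, Theorem~\ref{ThmUnifAModandDM} shows that $\uldM$ is uniformizable, that $\Lambda(\uldM)$ is finite locally free over $A$ of rank $\rk\ulE$, and that the map $\delta_0$ of Corollary~\ref{CorTo_Boiler2.5.5} restricts to an isomorphism $\delta_0\colon\Lambda(\uldM)\isoto\Lambda(\ulE)$.

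First I would set up two extensions over the ring $R:=\CO\bigl(\dotFC_\BC\setminus\bigcup_{i\in\BN_{>0}}\Var(\ssigma^{i\ast}J)\bigr)$. By Proposition~\ref{Prop4.23}\ref{PropMaphdMA} the map $h_\uldM$ extends to an isomorphism of locally free sheaves on $\dotFC_\BC\setminus\bigcup_{i\in\BN_{>0}}\Var(\ssigma^{i\ast}J)$, hence on global sections to an isomorphism of $R$-modules $h_\uldM\colon\Lambda(\uldM)\otimes_A R\isoto\dM\otimes_{A_\BC}R$, $\check\lambda\otimes f\mapsto f\cdot\check\lambda$, with $\Lambda(\uldM)=(\dM\otimes_{A_\BC}R)^\sdtau$ sitting inside the target. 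On the other hand, by Proposition~\ref{prop:commutingdiagrams} the exact sequence \eqref{EqDualMAndE2} presents $\delta_0$ as a surjection $\dM\onto\Lie E$ which, because $\Lie E$ is annihilated by $J^d$, factors through $\dM/J^d\dM$ and, since $R/J^dR=A_\BC/J^d$, extends to a surjective $R$-linear map $\delta_0\colon\dM\otimes_{A_\BC}R\onto\Lie E$, where $R$ acts on $\Lie E$ through $R\onto A_\BC/J^d\to\End_\BC(\Lie E)$. Composing, $\delta_0\circ h_\uldM\colon\Lambda(\uldM)\otimes_A R\onto\Lie E$ is surjective, and by Corollary~\ref{CorTo_Boiler2.5.5} it sends $\Lambda(\uldM)\otimes1$ isomorphically onto $\Lambda(\ulE)$.

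The final step---the one needing a small extra observation---is to show that the image of $\delta_0\circ h_\uldM$ already lies in the $\BC$-span of $\Lambda(\ulE)$; together with surjectivity this forces $\BC\cdot\Lambda(\ulE)=\Lie E$, which is the assertion. A general element of $\Lambda(\uldM)\otimes_A R$ is a finite sum of elementary tensors $\check\lambda\otimes f$ with $\check\lambda\in\Lambda(\uldM)$ and $f\in R$, and by $R$-linearity $\delta_0\bigl(h_\uldM(\check\lambda\otimes f)\bigr)=\delta_0(f\cdot\check\lambda)=\bar f\cdot\delta_0(\check\lambda)$, where $\bar f\in A_\BC/J^d$ is the image of $f$. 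Now $A_\BC=A\otimes_{\BF_q}\BC$ is spanned over $\BC$ by the pure tensors $a\otimes1$, $a\in A$ (any $\BF_q$-basis of $A$ gives such a $\BC$-basis), so $\bar f=\sum_j c_j\,\overline{a_j\otimes1}$ with $c_j\in\BC$ and $a_j\in A$; since $a_j\otimes1$ acts on $\Lie E$ as $\Lie\phi_{a_j}$, this gives
\[
\delta_0\bigl(h_\uldM(\check\lambda\otimes f)\bigr)\;=\;\sum_j c_j\,\Lie\phi_{a_j}\bigl(\delta_0(\check\lambda)\bigr)\,.
\]
But $\delta_0(\check\lambda)\in\Lambda(\ulE)$ by Corollary~\ref{CorTo_Boiler2.5.5}, and $\Lambda(\ulE)=\ker(\exp_\ulE)$ is an $A$-submodule of $\Lie E$ for the action of $A$ through $\Lie\phi$, so each $\Lie\phi_{a_j}\bigl(\delta_0(\check\lambda)\bigr)$ again lies in $\Lambda(\ulE)$, hence $\delta_0\bigl(h_\uldM(\check\lambda\otimes f)\bigr)\in\BC\cdot\Lambda(\ulE)$. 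Thus the image of $\delta_0\circ h_\uldM$ lies in $\BC\cdot\Lambda(\ulE)$, and combined with its surjectivity we obtain $\Lie E=\BC\cdot\Lambda(\ulE)$. I expect the main obstacle to be conceptual rather than computational: one has to work locally at the point $\Var(J)$---using that the extension of $h_\uldM$ over $R$ is an isomorphism there even though $\sdsigma^\ast h_\uldM$ need not be---since the trivialization over $\CO(\FC_\BC\setminus\Disc)$ does not see $\Lie E$; and one has to recognize that expressing $A_\BC/J^d$ through images of $A\otimes1$ turns its action on $\Lie E$ into the $\Lie\phi$-action, which visibly preserves $\Lambda(\ulE)$.
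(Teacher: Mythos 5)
Your proof is correct, and at its core it is the same argument as the paper's, but reorganized in a coordinate-free way that is arguably cleaner. The paper fixes $\xi\in\Lie E$, produces an element $\dm'$ with $\xi=\delta_0(\dm'+\dm)$ via the convergent division-tower machinery of Theorem~\ref{ThmDivisionTower}, then reduces to $\wt A=\BF_q[t]$, chooses bases, and expresses the coordinate vector $\check\Psi\cdot(\dm'+\dm)$ modulo $(t-\charmorph(t))^{\dim\ulE}$ as a $\BC$-linear combination of vectors in $\BF_q[t]^r$, whose preimages under $\check\Psi$ lie in $\Lambda(\uldM)$ and hence map under $\delta_0$ into $\Lambda(\ulE)$. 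You bypass the division-tower step by noting that $\delta_0$ is already surjective on $\dM$ by the exact sequence \eqref{EqDualMAndE2} and extends $R$-linearly over $R=\CO\bigl(\dotFC_\BC\setminus\bigcup_{i\in\BN_{>0}}\Var(\ssigma^{i\ast}J)\bigr)$, and that $h_\uldM$ is an isomorphism over $R$ --- correctly using $h_\uldM$ rather than $\sdsigma^\ast h_\uldM$, since only the former is defined at $\Var(J)$; and you replace the basis computation by the intrinsic observation that $A_\BC/J^{\dim\ulE}$ is spanned over $\BC$ by the images of $a\otimes1$ for $a\in A$, which act on $\Lie E$ as $\Lie\phi_a$ and therefore preserve the $A$-submodule $\Lambda(\ulE)$. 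This is exactly the mechanism hidden in the paper's reduction of $f$ modulo $(t-\charmorph(t))^{\dim\ulE}$ to a $\BC$-span of $\BF_q[t]$-vectors. What your version buys is a proof valid for general $A$ without passing to $\BF_q[t]$ or invoking scattering matrices; the only ingredient you use beyond Proposition~\ref{prop:commutingdiagrams}, Proposition~\ref{Prop4.23} and Corollary~\ref{CorTo_Boiler2.5.5} is the (standard, and implicitly used throughout the paper) fact that taking global sections of the sheaf isomorphism $h_\uldM$ over the quasi-Stein space $\dotFC_\BC\setminus\bigcup_{i\in\BN_{>0}}\Var(\ssigma^{i\ast}J)$ yields an isomorphism of $R$-modules.
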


\begin{proof}
This question does not depend on the ring $A$, so we fix an element $t\in A\setminus\BF_q$ and the finite flat inclusion $\wt A:=\BF_q[t]\subset A$. Let $\xi\in\Lie E$ and consider $x:=\exp_\ulE(\xi)\in E(\BC)$ and the convergent $t$-division tower $x_{(n)}:=\exp_\ulE\bigl(\Lie\phi_t^{-n-1}(\xi)\bigr)$ above $x$ from Theorem~\ref{ThmDivTowersAndLie}. We set $\uldM(\ulE)=(\dM,\sdtau_\dM)$ and choose an element $\dm\in\dM$ with $\delta_1(\dm)=x$; see Proposition~\ref{prop:commutingdiagrams}. By Theorem~\ref{ThmDivisionTower} there exists an $\dm'\in\dM\otimes_{A_\BC}\CO\bigl(\dotFC_\BC\setminus\bigcup_{i\in\BN_{>0}}\Var(\ssigma^{i\ast}J)\bigr)$ such that $\xi=\delta_0(\dm'+\dm)$.

Now choose a basis $\check\CB$ of $\dM$ over $\wt A_\BC=\BC[t]$ and write $\sdtau_\dM$ with respect to $\check\CB$ as a matrix $\check\Phi\in\GL_r\bigl(\BC[t][\tfrac{1}{t-\charmorph(t)}]\bigr)$. By Theorem~\ref{ThmUnifAModandDM} and Lemma~\ref{LemmaDualUniformizableBF_q[t]} there is a rigid analytic trivialization $\check\Psi\in\GL_{r}(\BC\langle t\rangle)$ satisfying $\sdsigma^\ast\check\Psi=\check\Psi\cdot\check\Phi$. We set $f:=\check\Psi\cdot(\dm'+\dm)\in\BC\langle t\rangle^r$, where we denote the column vectors representing $\dm'$ and $\dm$ with respect to $\check\CB$ again by $\dm'$, respectively $\dm$. We now consider $f\mod(t-\charmorph(t))^{\dim\ulE}$ as a $\BC$-linear combination of elements $f_1,\ldots,f_n\in\BF_q[t]^r$. Then $\xi=\delta_0(\check\Psi^{-1}\cdot f)$ lies in the $\BC$-span of the $\delta_0(\check\Psi^{-1}\cdot f_i)$ by Proposition~\ref{prop:commutingdiagrams}, because $\delta_0$ is $\BC$-linear. Since the $\check\Psi^{-1}\cdot f_i$ lie in $\Lambda(\uldM)$ by Lemma~\ref{LemmaDualUniformizableBF_q[t]} the corollary follows from Theorem~\ref{ThmUnifAModandDM}.
\end{proof}

\begin{remark}\label{RemScattering}
We review Anderson's theory of scattering matrices \cite[Chapter 3]{Anderson86}. Let $\ulE$ be an abelian Anderson $A$-module over $\BC$ and let $\ulM=\ulM(\ulE)$ be its associated effective $A$-motive. Assume that $\ulE$, and hence also $\ulM$ are uniformizable. In particular $\Lambda(\ulE)=\ker(\exp_\ulE)$ and $\Lambda(\ulM)=\bigl(M\otimes_{A_{\BC}}\CO(\dotFC_\BC)\bigr)^\tau$ are locally free $A$-modules of rank equal to $\rk\ulE$. By \cite[Corollary~2.12.1]{Anderson86} there is an isomorphism
\begin{equation}\label{EqAnderson2.12.1}
\beta_A\colon\Lambda(\ulE)\isoto\Hom_A(\Lambda(\ulM),\Omega^1_{A/\BF_q})\,,\es\lambda\longmapsto m\dual_{A,\lambda}\,,\qquad\text{where}\quad m\dual_{A,\lambda}\colon m\mapsto \omega_{A,\lambda,m}
\end{equation}
is determined by the residues $\Res_\infty(a\cdot\omega_{A,\lambda,m})=-m\bigl(\exp_\ulE(\Lie\phi_a(\lambda))\bigr)\in\BF_q$ for all $a\in Q$. Note that indeed $m\bigl(\exp_\ulE(\Lie\phi_a(\lambda))\bigr)\in\BF_q$ is well defined. Namely, we choose an $a'\in A$ with $aa'\in A$ and we approximate $m\in\Lambda(\ulM)$ by an element $m'\in M(\ulE)$ such that $m- m'\in a'\cdot(M\otimes_{A_{\BC}}\CO(\dotFC_\BC))$. Then we define $m\bigl(\exp_\ulE(\Lie\phi_a(\lambda))\bigr):=m'\bigl(\exp_\ulE(\Lie\phi_a(\lambda))\bigr)\in\BC$ which is independent of $a'$ and $m'$. Since $m\in\Lambda(\ulM)$ we conclude that $m\bigl(\exp_\ulE(\Lie\phi_a(\lambda))\bigr)=\bigl(\tau_M(\sigma^*m)\bigr)\bigl(\exp_\ulE(\Lie\phi_a(\lambda))\bigr):=m\bigl(\exp_\ulE(\Lie\phi_a(\lambda))\bigr)^q\in\BF_q$ as desired.

We next reduce to the situation of abelian $t$-modules in which Anderson defines scattering matrices. By Lemma~\ref{LemmaTSep}\ref{LemmaTSep_B} there is a $t\in A$ such that $Q$ is a finite separable extension of $\wt Q:=\BF_q(t)$. Then $\Omega^1_{Q/\BF_q}=\Omega^1_{A/\BF_q}\otimes_AQ=Q\,dt$ by \cite[Theorems~25.1 and 25.3]{MatsumuraRingTh}. We set $\wt A:=\BF_q[t]\subset A$. This inclusion corresponds to a morphism $C\to\BP^1_{\BF_q}$ under which the preimage of $\Spec\BF_q[t]$ is $\Spec A$ and the preimage of $\wt\infty:=\Var(\frac{1}{t})$ is $\infty$. We view all $A$-modules as $\wt A$-modules and all $A_{\BC}$-modules as modules over $\wt A_{\BC}=\BC[t]$. Then the trace map 
\begin{equation}\label{EqTraceQ}
\Tr_{Q/\wt Q}\colon\,\Omega^1_{Q/\BF_q}=\,Q\,dt\;\longto\;\Omega^1_{\BF_q(t)/\BF_q}=\,\BF_q(t)\,dt\,,\quad a\,dt\mapsto\Tr_{Q/\wt Q}(a)\,dt
\end{equation}
satisfies $\Res_\infty(\omega)=\Res_{\wt\infty}(\Tr_{Q/\wt Q}\omega)$ for all $\omega\in\Omega^1_{Q/\BF_q}$ by \cite[Formula (9.16) on p.~299]{VillaSalvador}. In particular, consider the isomorphism of $\BF_q[t]$-modules which is analogous to \eqref{EqAnderson2.12.1}
\[
\beta_{\wt A}\colon \Lambda(\ulE)\isoto\Hom_{\BF_q[t]}(\Lambda(\ulM),\BF_q[t]dt),\es\lambda\longmapsto m\dual_{\wt A,\lambda}\,,
\]
where $m\dual_{\wt A,\lambda}\colon m\mapsto \omega_{\wt A,\lambda,m}$ is determined by $\Res_{\wt\infty}(a\cdot\omega_{\wt A,\lambda,m})=-m\bigl(\exp_\ulE(\Lie\phi_a(\lambda))\bigr)\in\BF_q$ for all $a\in\wt Q$. It satisfies $\omega_{\wt A,\lambda,m}=\Tr_{Q/\wt Q}(\omega_{A,\lambda,m})$ because in $\wt Q_{\wt\infty}\:dt=\BF_q\dpl\frac{1}{t}\dpr dt$ both can be written in the form $\omega_{\wt A,\lambda,m}=\sum_k\tilde b_k t^k dt$ and $\Tr_{Q/\wt Q}(\omega_{A,\lambda,m})=\sum_k b_k t^k dt$ with
\begin{eqnarray}\label{EqEandM3}
b_k&=&-\Res_{\wt\infty}(t^{-k-1}\Tr_{Q/\wt Q}\omega_{A,\lambda,m})\nonumber\\[2mm]
&=&-\Res_\infty(t^{-k-1}\omega_{A,\lambda,m})\nonumber\\[2mm]
&=&m\bigl(\exp_\ulE(\Lie\phi_t^{-k-1}(\lambda))\bigr)\\[2mm]
&=&-\Res_{\wt\infty}(t^{-k-1}\omega_{\wt A,\lambda,m})\nonumber\\[2mm]
&=& \tilde b_k\,.\nonumber
\end{eqnarray}
Note that in particular, $b_k=0$ for $k<0$.

Now Anderson's theory of scattering matrices \cite[\S3]{Anderson86} proceeds as follows. Fix an $\BF_q[t]$-basis $(\lambda_1,\ldots,\lambda_r)$ of $\Lambda(\ulE)$, where $r=\rk_{\BF_q[t]}\Lambda(\ulE)=\rk_{\BC[t]}M$, and a $\BC[t]$-basis $\CB=(m_1,\ldots,m_r)$ of $M$, and define the scattering matrix $\Psi$, where $i$ is the row index and $j$ is the column index
\begin{equation}\label{EqScatteringMatrix}
\Psi\;:=\;\Bigl(\sum_{k=0}^\infty m_i\bigl(\exp_\ulE(\Lie\phi_t^{-k-1}(\lambda_j))\bigr)t^k\Bigr)_{i,j=1,\ldots,r}\es.
\end{equation}
Its entries lie in $\BC\langle\tfrac{t}{\theta^s}\rangle$ for all $s<1$, see \eqref{EqConvPowerSeries}, because for any isomorphism $\rho\colon E\isoto\BG_{a,\BC}^d$ of $\BF_q$-module schemes, Lemma~\ref{LemmaLog} implies for $k\to\infty$ the estimate
\[
\|\rho\bigl(\exp_\ulE(\Lie\phi_t^{-k-1}(\lambda_j))\bigr)\|\;=\;\|\Lie\rho\bigl(\Lie\phi_t^{-k-1}(\lambda_j)\bigr)\|\;=\;O(|\theta|^{-k-1}\cdot\|\Lie\rho(\lambda_j)\|)\,,
\]
and then Lemma~\ref{Lemma5.16} applied to $f=m_i\circ\rho^{-1}$ implies 
\[
\|m_i\bigl(\exp_\ulE(\Lie\phi_t^{-k-1}(\lambda_j))\bigr)\|\;=\;O(|\theta|^{-k-1})\qquad\text{for }k\to\infty\,.
\]
Note that our scattering matrix $\Psi$ is the negative of Anderson's \cite[\S\,3.2]{Anderson86}. This is motivated by Example~\ref{exmp:scatteringmatrix} and Theorems~\ref{ThmHPofEandM} and \ref{ThmPeriodIsomForE} below.

If $\tau_M$ is represented with respect to the basis $\CB$ by the matrix $\Phi\in M_r\bigl(\BC[t]\bigr)\cap\GL_r\bigl(\BC[t][\frac{1}{t-\theta}]\bigr)$, then Anderson \cite[Proof of Lemma~3.2.1]{Anderson86} shows that $\sigma^\ast\Psi^T=\Psi^T\Phi$ and that the columns of $(\Psi^{-1})^T$ form an $\BF_q[t]$-basis $\CC=(n_1,\ldots,n_r)$ of $\Lambda(\ulM)$. In particular $\Psi\in\GL_r(\BC\langle t\rangle)$. In terms of Lemma~\ref{LemmaUniformizableBF_q[t]} this means that $\Psi$ is a rigid analytic trivialization of $\Phi$. More precisely, the $\ell$-th column of $(\Psi^{-1})^T$ is the coordinate vector of $n_\ell$ with respect to the basis $\CB$. Therefore, with respect to the bases $\CC$ and $\CB$ the morphism $h_\ulM\colon\Lambda(\ulM)\otimes_A\CO(\dotFC_\BC)\to M\otimes_{A_{\BC}}\CO(\dotFC_\BC)$ is represented by $(\Psi^{-1})^T\in M_r\bigl(\CO(\dotFC_\BC)\bigr)$. Since $\coker\Phi\cong\coker\tau_M$ is a $\BC[t]/(t-\theta)^d$-module with dimension $d=\dim\ulE$ as $\BC$-vector space, it follows by the elementary divisor theorem that $\det\Phi\in(t-\theta)^d\cdot\BC\mal$\!. Together with $\sigma^*\Psi^T\in M_r\bigl(\BC\langle\tfrac{t}{\theta}\rangle\bigr)$ this implies that $\Psi^T\in M_r\bigl((t-\theta)^{-d}\BC\langle\frac{t}{\theta}\rangle\bigr)$. 

In fact, we show that the $\BF_q[t]$-basis $(\lambda_1,\ldots,\lambda_r)$ of $\Lambda(\ulE)$ is even mapped under $\beta_{\wt A}$ to the basis of $\Hom_{\BF_q[t]}(\Lambda(\ulM),\BF_q[t]dt)$ which is dual to $\CC$. Namely, if $e_\ell=(\delta_{1\ell},\ldots,\delta_{r\ell})^T$ is the $\ell$-th standard basis vector and $(\Psi^{-1})^T e_\ell=(g_1,\ldots,g_r)^T$ is the $\ell$-th column of $(\Psi^{-1})^T$, then $n_\ell=\sum_i g_i m_i$ and by \eqref{EqEandM3} we obtain 
\begin{equation}\label{EqEandM5}
\omega_{\wt A,\lambda_j,n_\ell}\;=\;\sum_{k=0}^\infty n_\ell\bigl(\exp_\ulE(\Lie\phi_t^{-k-1}(\lambda_j))\bigr)t^k\,dt\;=\;(g_1,\ldots,g_r)\cdot\Psi\cdot e_j \,dt\;=\;e_\ell^T e_j\,dt\;=\;\delta_{\ell j}\,dt\,.
\end{equation}
\end{remark}

\begin{example}[{Cf.~\cite[\S 4.2]{pellarin-08}}]
\label{exmp:scatteringmatrix}
We continue with our Example~\ref{exampleDModDMotdDmot}(a) of Drinfeld $\BF_q[t]$-modules. Let $\lambda\in\Lambda(\ulE)$ be a period. The corresponding convergent $t$-division tower from Theorem~\ref{ThmDivTowersAndLie} is $\bigl(\exp_\ulE(\theta^{-n-1}\lambda)\bigr)_{n=0}^\infty$ and the corresponding \emph{Anderson generating function} from Corollary~\ref{CorDivisionTower} is
\begin{equation}\label{EqExampleScatteringmatrix1}
f_\lambda(t)\;:=\;\sum_{n=0}^\infty\exp_\ulE(\theta^{-n-1}\lambda)t^n \;\in\; \BC\langle t\rangle\,.
\end{equation}
Multiplying the equation 
\begin{eqnarray*}
\exp_\ulE(\theta^{-n}\lambda) &=& \phi_t\bigl(\exp_\ulE(\theta^{-n-1}\lambda)\bigr)\\
&=&\theta\cdot\exp_\ulE(\theta^{-n-1}\lambda)+\alpha_1\cdot\exp_\ulE(\theta^{-n-1}\lambda)^{q}+\ldots+\alpha_r\cdot\exp_\ulE(\theta^{-n-1}\lambda)^{q^r},
\end{eqnarray*}
by $t^n$ and summing up we get 
\begin{eqnarray}
\nonumber
\theta \,f_\lambda(t)+\alpha_1 \,\sigma^*f_\lambda(t)+\ldots+\alpha_r \,\sigma^{r*}f_\lambda(t)
&=&\sum_{n=0}^\infty \exp_\ulE(\theta^{-n}\lambda) t^n\\
&=& \exp_\ulE(\lambda)+tf_\lambda(t) \es=\es tf_\lambda(t).
\label{eq:useful}
\end{eqnarray}
We claim that the solution $\dm'_\lambda\in\uldM_t$ corresponding to $\lambda$ by Corollary~\ref{CorDivisionTower} of the equation $\sdtau_\dM(\sdsigma^*\dm')=\dm'$ is given with respect to the basis $(\dm_k)_k$ by the coordinate vector
\[
X\cdot\left(\begin{array}{c} \sigma^*(f_\lambda)\\ \vdots \\ \sigma^{r*}(f_\lambda) \end{array}\right),
\]
where $X$ is the matrix from \eqref{EqMatrixXi}, that is $\dm'_\lambda=-\sum_{\nu=1}^r\sum_{k=0}^{r-1}\sdsigma^{k*}(\alpha_{\nu+k})\cdot\sigma^{\nu*}(f_\lambda)\cdot\dm_k$. Indeed, the $1\times 1$-matrix $\sum_{j\ge0}\Delta_{t,\nu+j}\tau^j\in\BC\{\tau\}$ from Corollary~\ref{CorDivisionTower} is identified with the element $\sum_{k\ge0}\sdsigma^{k*}(\alpha_{\nu+k})\cdot\dm_k\in\dM$ for all $\nu=1,\ldots,r$. Note that the inverse assignment $\dm'_\lambda\mapsto\lambda$ is given by the map $\delta_0$ from Corollary~\ref{CorTo_Boiler2.5.5}; see also Theorem~\ref{ThmUnifAModandDM}.

Let $\lambda_1,\ldots,\lambda_r$ be an $\BF_q[t]$-basis of $\Lambda(\ulE)$. For $i=1,\ldots,r$ we write $f_i:=f_{\lambda_i}$ and $\dm'_i:=\dm'_{\lambda_i}$ for the corresponding solutions. From the linear independence of the sets $\{\lambda_1,\ldots,\lambda_r\}$ and $\{\dm'_1,\ldots,\dm'_r\}:=\{\delta_0^{-1}(\lambda_1),\ldots,\delta_0^{-1}(\lambda_r)\}$ it follows that $\{f_1,\ldots,f_r\}$ is linearly independent over $\BF_q[t]$.  From the description of $f_\lambda$ in \eqref{EqExampleScatteringmatrix1} we see that the matrix
\[
\Psi\;:=\; \left(\begin{array}{cccc}
       f_1^{}    &f_2     &\cdots& f_r\\
       \sigma^*f_1&\sigma^*f_2&\cdots&\sigma^*f_r\\
       \vdots &\vdots   &      &\vdots  \\
       \sigma^{(r-1)*}f_1 &\sigma^{(r-1)*}f_2&\cdots& \sigma^{(r-1)*}f_r
     \end{array}\right)
\]
is the scattering matrix from \eqref{EqScatteringMatrix}, and equation \eqref{eq:useful} shows that indeed $\Psi^T\cdot\Phi=\sigma^*\Psi^T$. In particular the columns of $(\Psi^T)^{-1}$ are the coordinate vectors with respect to the basis $(m_i)$ of an $\BF_q[t]$-basis $\CC$ of $\Lambda(\ulM)$; see Lemma~\ref{LemmaUniformizableBF_q[t]}.

We set $\check\Psi:=\sigma^*\Psi^{-1}\cdot X^{-1}\in\GL_r(\BC\langle t\rangle)$ so that the columns of $\check\Psi^{-1}=X\cdot\sigma^*\Psi$ are the coordinate vectors of the $\BF_q[t]$-basis $(\dm'_1,\ldots\dm'_r)$ of $\Lambda(\uldM)$ with respect to the basis $(\dm_k)_k$. Moreover, equation~\eqref{EqRelationMatrixXi}, that is $X\cdot\Phi^T=\check\Phi\cdot\sdsigma^*X$ shows that
\[
\check\Phi\cdot\sdsigma^*\check\Psi^{-1} \;=\; \check\Phi\cdot\sdsigma^*X\cdot\Psi \;=\; X\cdot\Phi^T\cdot\Psi \;=\; X\cdot\sigma^*\Psi \;=\; \check\Psi^{-1}
\]
and $\check\Psi$ is hence a rigid analytic trivialization of $\check\Phi$ in the sense of Lemma~\ref{LemmaDualUniformizableBF_q[t]}. By Corollaries~\ref{CorLambdaConvRadius} and \ref{CorLambdaDualConvRadius} the entries of the matrices $\Psi^{-1}$ and $\check\Psi$ even converge for all $t\in\BC$. Note that the matrix equations obtained above correspond to the isomorphisms of $\BC\langle t\rangle$-modules from Theorem~\ref{ThmMandDMofE}, Propositions~\ref{PropDualizingUnif} and \ref{Prop4.23} in the following diagram
\[
\xymatrix @C+2pc @R+1pc {
\Hom_{\BC[t]}\bigl(\sigma^*\ulM,\,\Omega^1_{\BC[t]/\BC}\bigr)\otimes_{\BC[t]}\BC\langle t\rangle \ar[rr]^{\TS\Xi} & & \uldM\otimes_{\BC[t]}\BC\langle t\rangle \\
\Hom_{\BF_q[t]}\bigl(\Lambda(\ulM),\,\Omega^1_{\BF_q[t]/\BF_q}\bigr)\otimes_{\BF_q[t]}\BC\langle t\rangle \ar[u]^{\TS(\sigma^*h_\ulM\dual)^{-1}} & \Lambda(\ulE)\otimes_{\BF_q[t]}\BC\langle t\rangle \ar[l]^{\TS\qquad\qquad\quad\beta_{\BF_q[t]}} & \Lambda(\uldM)\otimes_{\BF_q[t]}\BC\langle t\rangle \ar[l]^{\TS\delta_0\otimes\id} \ar[u]^{\TS h_\uldM}
}
\]
Namely $(\delta_0\otimes\id)^{-1}$, respectively $\beta_{\BF_q[t]}$, send the basis $(\lambda_i)_i$ of $\Lambda(\ulE)$ to the basis $(\dm'_i)$ of $\Lambda(\uldM)$, respectively to the dual of the basis of $\CC$ of $\Lambda(\ulM)$; see Remark~\ref{RemScattering}. Moreover, $h_\ulM$ is represented with respect to the basis $\CC$ and the basis $(m_j)_j$ by the matrix $(\Psi^T)^{-1}$, so $(\sigma^*h_\ulM\dual)^{-1}$ is represented by the matrix $\sigma^*\Psi$ with respect to the basis $(\eta_\ell)_\ell$ of $\Hom_{\BC[t]}\bigl(\sigma^*\ulM,\,\Omega^1_{\BC[t]/\BC}\bigr)$ which is dual to $(\sigma^*m_j)_j$ and the basis dual to $\CC$. And finally $\Xi$ is represented with respect to the bases $(\eta_\ell)_\ell$ and $(\dm_i)_i$ by the matrix $X$.
\end{example}

This example also suggests that the columns of the matrices $\sum_{j\ge0}\Delta_{t,\nu+j}\tau^j\in\BC\{\tau\}^{d\times d}$ from Corollary~\ref{CorDivisionTower}, when viewed as elements of $\BC\{\tau\}^d\cong\dM$ via an isomorphism $\rho\colon E\isoto\BG_{a,\BC}^d$ of $\BF_q$-module schemes, are relevant for an explicit description of the isomorphism $\Xi$ from Theorem~\ref{ThmMandDMofE} and the pairing of Question~\ref{QuestMandDMofE}.

\subsection{The associated Hodge-Pink structure}\label{SectAModHPStr}

Let $\ulE=(E,\phi)$ be a uniformizable mixed abelian, respectively $A$-finite Anderson $A$-module of dimension $d$ and rank $r$ over $\BC$. Consider the exponential exact sequence
\[
0\;\longto\;\Lambda(\ulE)\;\longto\;\Lie E\;\xrightarrow{\;\exp_\ulE}\;E(\BC)\;\longto\;0\,,
\]
where $\Lambda:=\Lambda(\ulE):=\ker(\exp_\ulE)$. It is a discrete $A$-submodule which is projective of rank $r$ by \cite[Theorem~4]{Anderson86}, respectively Theorem~\ref{ThmUnifAModandDM}. We extend the action of $A$ on the $\BC$-vector space $\Lie E$ to an action of $Q_\BC=Q\otimes_{\BF_q}\BC$ by letting $\tilde a/a\in Q$ with $\tilde a,a\in A$ act via $\Lie\phi_{\tilde a/a}:=(\Lie\phi_{\tilde a})\circ(\Lie\phi_a)^{-1}$. Note that $\Lie\phi_a$ is invertible for $a\ne0$ because $(\Lie\phi_a-\charmorph(a))^d=0$ on $\Lie E$ and $\charmorph(a)\ne0$. Since $J^d=0$ on $\Lie E$ and $A_{\BC}/J^d=\BC\dbl z-\zeta\dbr/(z-\zeta)^d$ by Lemma~\ref{LemmaZ-Zeta}, we may view $\Lie E$ as a $\BC\dbl z-\zeta\dbr$-module. We obtain a well defined $\BC\dbl z-\zeta\dbr$-homomorphism $\gamma$ on the right in the sequence
\begin{equation}\label{Eq1.1}
\xymatrix @R=0.1pc {
0 \ar[r] & \Fq \ar[r] & \Lambda(\ulE)\otimes_A \BC\dbl z-\zeta\dbr \ar[r]^{\DS\gamma} & \Lie E \ar[r] & 0 \\
& & \lambda\otimes\sum_i b_i(z-\zeta)^i \ar@{|->}[r] & \sum_i b_i\cdot(\Lie\phi_z-\zeta)^i(\lambda)\,,
}
\end{equation}
and we let $\Fq$ be its kernel. The sequence \eqref{Eq1.1} is exact on the right by Anderson~\cite[Corollary~3.3.6]{Anderson86} when $\ulE$ is abelian, respectively by Corollary~\ref{CorUnifGeneratesLieE} when $\ulE$ is $A$-finite. So the pair $(\Lambda,\Fq)$ determines the $A$-module $\Lie E$ and via $\exp_\ulE$ also $\ulE$. We further set 
\[H\;:=\;\Hodge_1(\ulE)\;:=\;\Lambda(\ulE)\otimes_A Q\qquad \text{and}\qquad W_\mu H := \Hodge_1(W_{\mu\,}\ulE)\,.
\]
Then $\ulHodge_1(\ulE):=(H,W_\bullet H,\Fq)$ is a $Q$-pre Hodge-Pink structure all of whose weights are negative. It satisfies $(z-\zeta)^d\Fp\subset\Fq\subset\Fp$ and hence, $F^{-d}H_\BC=H_\BC$ and $F^1H_\BC=(0)$. Recall that if $\ulE$ is pure, then, by our convention, its weight is $-\frac{d}{r}$ and so $\ulHodge_1(\ulE)$ is a pure $Q$-pre Hodge-Pink structure of weight $-\frac{d}{r}$. By Theorem~\ref{ThmHodgeConjecture} and Theorem~\ref{ThmHPofEandM} below, respectively by Theorem~\ref{ThmDualHodgeConjecture} and Theorem~\ref{ThmHPofEandDualM} below, $\ulHodge_1(\ulE)$ is in fact a $Q$-Hodge-Pink structure.

\begin{definition}\label{DefHPStructOfE}
Let $\ulE$ be a uniformizable mixed abelian, respectively $A$-finite Anderson $A$-module over $\BC$. The $Q$-Hodge-Pink structure $\ulHodge_1(\ulE)$ constructed above is called the \emph{$Q$-Hodge-Pink structure associated with $\ulE$}. We also set $\ulHodge^1(\ulE):=\ulHodge_1(\ulE)\dual$ in $\QHodgeCat$. The functor $\ulHodge_1$ is covariant and $\ulHodge^1$ is contravariant in $\ulE$.
\end{definition}

\begin{remark}
This construction parallels the classical situation in which abelian Anderson $A$-modules are replaced by abelian varieties and rational mixed Hodge structures are associated with them. Let $E$ be an abelian variety of dimension $d$ over the (classical) complex numbers (which we denote $\BC$ in the rest of this remark). Then $E(\BC)=\Lie E/\Lambda(E)$ where $\Lambda(E)$ is a $\BZ$-lattice of rank $2d$ in $\Lie E$. This lattice is functorially in $E$ described as the Betti-homology group $\Lambda(E)=\Koh_1(E,\BZ)$. There is a natural surjection on the right in the sequence
\[
\xymatrix @R=0pc {
0 \ar[r] & \ol{\Lie E} \ar[r] & \Koh_1(E,\BZ)\otimes_\BZ\BC \ar[r] & \Lie E \ar[r] & 0\,, \\
& & \lambda\otimes b \ar@{|->}[r] & b\cdot\lambda\,.
}
\]
The subspace $F^0\Koh_1(E,\BZ)\otimes_\BZ\BC:=\ol{\Lie E}$ constitutes the Hodge filtration on the Betti-homology of $E$.
\end{remark}

\begin{theorem}\label{ThmHPofEandDualM}
Let $\ulE$ be a uniformizable mixed $A$-finite Anderson $A$-module over $\BC$ and let $\uldM=\uldM(\ulE)$ be its associated effective mixed dual $A$-motive. Then $\ulHodge_1(\ulE)$ and $\ulHodge_1(\uldM)$ are canonically isomorphic. In particular, $\ulHodge_1(\ulE)$ and $\ulHodge^1(\ulE)$ are mixed $Q$-Hodge-Pink structures.
\end{theorem}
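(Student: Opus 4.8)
The plan is to compare the two constructions of the Hodge-Pink lattice directly, using the isomorphism $\delta_0\colon\Lambda(\uldM)\isoto\Lambda(\ulE)$ from Theorem~\ref{ThmUnifAModandDM} as the map on underlying $Q$-vector spaces, and then to show it is compatible with the weight filtrations and with the Hodge-Pink lattices. First I would observe that both $\ulHodge_1(\ulE)$ and $\ulHodge_1(\uldM)$ have the same underlying $Q$-vector space up to the canonical isomorphism $\Hodge_1(\ulE)=\Lambda(\ulE)\otimes_A Q\cong\Lambda(\uldM)\otimes_A Q=\Hodge_1(\uldM)$ induced by $\delta_0$. For the weight filtrations: by Definition~\ref{DualDef2.6} the filtration on $\ulHodge_1(\uldM)$ is $W_\mu=\Lambda(W_\mu\uldM)\otimes_A Q$, and by Definition~\ref{DefHPStructOfE} the filtration on $\ulHodge_1(\ulE)$ is $\Hodge_1(W_\mu\ulE)$. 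By Theorem~\ref{ThmMixedEandM}\ the weight filtration $W_\mu\uldM$ equals $\uldM(W_\mu\ulE)$ (equation~\eqref{EqThmMixedEandMWeights}), and since $\delta_0$ is functorial in $\ulE$ (it comes from the map $\delta_0\colon\dM\to\Lie E$ of Proposition~\ref{prop:commutingdiagrams}, which is compatible with the inclusions $\uldM(W_\mu\ulE)\hookrightarrow\uldM(\ulE)$ and $\Lie W_\mu E\hookrightarrow\Lie E$), the isomorphism $\delta_0$ carries $\Lambda(W_\mu\uldM)$ onto $\Lambda(W_\mu\ulE)$. Hence the weight filtrations correspond.

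The heart of the matter is the comparison of Hodge-Pink lattices. On the dual $A$-motive side, $\Fq_{\uldM}=(\sdsigma^*h_\uldM\otimes\id_{\BC\dpl z-\zeta\dpr})^{-1}\bigl(\sdsigma^*\dM\otimes_{A_\BC}\BC\dbl z-\zeta\dbr\bigr)$ inside $\Lambda(\uldM)\otimes_A\BC\dpl z-\zeta\dpr$, using the commutative diagram \eqref{EqhdM}. On the Anderson-module side, $\Fq_\ulE$ is defined by \eqref{Eq1.1} as the kernel of the evaluation map $\gamma\colon\Lambda(\ulE)\otimes_A\BC\dbl z-\zeta\dbr\to\Lie E$, $\lambda\otimes\sum_i b_i(z-\zeta)^i\mapsto\sum_i b_i\cdot(\Lie\phi_z-\zeta)^i(\lambda)$. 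The key identity I would establish is that, under $\delta_0\otimes\id$, the map $\gamma$ corresponds precisely to the reduction $\delta_0\colon\dM\to\Lie E$ composed with $(\sdsigma^*h_\uldM)^{-1}$ in the following sense: the evaluation map \eqref{Eq1.1} factors through $\Lambda(\uldM)\otimes_A\BC\dbl z-\zeta\dbr\xrightarrow{\sdsigma^*h_\uldM}\sdsigma^*\dM\otimes_{A_\BC}\BC\dbl z-\zeta\dbr\twoheadrightarrow\sdsigma^*\dM\otimes_{A_\BC}A_\BC/J^d=\sdsigma^*\dM/J^d\sdsigma^*\dM$, and then via the isomorphism $\sdsigma^*\dM/J^d\sdsigma^*\dM\cong\Lie E$ coming from sequence \eqref{EqDualMAndE2} (after applying $\sdtau_\dM$) or directly from Corollary~\ref{CorTo_Boiler2.5.5}. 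Concretely, Corollary~\ref{Cor=Boiler2.5.5} together with the last statement of Theorem~\ref{ThmDivisionTower} tells us that for $\check m'$ a $\tau$-invariant in $\dM\otimes_{A_\BC}\CO$ we have $\xi=\delta_0(\check m')$, and the $z$-adic expansion of $\check m'$ near $\Var(J)$ under $h_\uldM$ reads off as the coefficients $b_i$ appearing in \eqref{Eq1.1} — this is exactly the content that makes $\gamma$ and $\delta_0\circ(\sdsigma^*h_\uldM)^{-1}\bmod J^d$ agree. Granting this, $\Fq_\ulE=\ker\gamma$ is carried by $\delta_0\otimes\id$ onto $(\sdsigma^*h_\uldM\otimes\id)^{-1}\bigl(\ker(\sdsigma^*\dM\otimes\BC\dpl z-\zeta\dpr\to\Lie E)\bigr)$; since the kernel of $\sdsigma^*\dM\otimes_{A_\BC}\BC\dpl z-\zeta\dpr\to\Lie E$ (extending the surjection of \eqref{EqDualMAndE2} pulled back along $\sdtau_\dM$) is $\sdsigma^*\dM\otimes_{A_\BC}\BC\dbl z-\zeta\dbr$, this is precisely $\Fq_{\uldM}$. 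Thus $\delta_0$ upgrades to an isomorphism of $Q$-pre Hodge-Pink structures $\ulHodge_1(\ulE)\cong\ulHodge_1(\uldM)$.

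Finally, the "in particular" statement is immediate: by Theorem~\ref{ThmUnifAModandDM} the dual $A$-motive $\uldM=\uldM(\ulE)$ is uniformizable, and since $\ulE$ is mixed, $\uldM$ is mixed by Theorem~\ref{ThmMixedEandM}; hence Theorem~\ref{ThmDualHodgeConjecture}\ref{ThmDualHodgeConjectureA} says $\ulHodge_1(\uldM)$ is locally semistable, i.e.\ a genuine mixed $Q$-Hodge-Pink structure, and therefore so is the isomorphic object $\ulHodge_1(\ulE)$; dualizing, $\ulHodge^1(\ulE)=\ulHodge_1(\ulE)\dual$ lies in $\QHodgeCat$ as well. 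I expect the main obstacle to be the bookkeeping in the third paragraph: pinning down the identification $\sdsigma^*\dM/J^d\sdsigma^*\dM\cong\Lie E$ compatibly, and checking that the $z$-adic expansion used to define $\gamma$ in \eqref{Eq1.1} matches the expansion of a $\tau$-invariant section under $h_\uldM$ near $\Var(J)$ — this requires carefully tracing through Proposition~\ref{PropMaphdM}, diagram~\eqref{EqhdM}, and the explicit description of $\delta_0$ on division towers in Theorem~\ref{ThmDivisionTower}, rather than any deep new idea.
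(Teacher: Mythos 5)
Your overall route is the paper's: the isomorphism on underlying vector spaces is $\delta_0\colon\Lambda(\uldM)\isoto\Lambda(\ulE)$ from Corollary~\ref{CorTo_Boiler2.5.5}, the weight filtrations match via \eqref{EqThmMixedEandMWeights}, the Hodge-Pink lattices are compared by exhibiting both as kernels of maps onto $\Lie E$, and the ``in particular'' clause follows from Theorem~\ref{ThmDualHodgeConjecture}. However, the key identity you propose to establish in your third paragraph is false as stated, for two reasons. First, $\sdsigma^*\dM/J^d\sdsigma^*\dM$ is \emph{not} isomorphic to $\Lie E$: its $\BC$-dimension is $d\cdot\rk\uldM$, whereas $\dim_\BC\Lie E=d$. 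The isomorphism supplied by sequence \eqref{EqDualMAndE2} is $\delta_0\colon\dM/\sdtau_\dM(\sdsigma^*\dM)\isoto\Lie E$, and ``applying $\sdtau_\dM$'' cannot convert one into the other since the composite $\delta_0\circ\sdtau_\dM$ is zero by that very sequence. Second, $\sdsigma^*h_\uldM$ does not carry the tautological lattice $\Fp=\Lambda(\uldM)\otimes_A\BC\dbl z-\zeta\dbr$ into $\sdsigma^*\dM\otimes_{A_\BC}\BC\dbl z-\zeta\dbr$: by Proposition~\ref{PropMaphdM} it fails to be an isomorphism at $\Var(J)$, and by Definition~\ref{DualDef2.6} the preimage of $\sdsigma^*\dM\otimes_{A_\BC}\BC\dbl z-\zeta\dbr$ is precisely $\Fq$, which for the effective dual $A$-motive $\uldM(\ulE)$ is a \emph{proper} sublattice of $\Fp$ (their quotient is $\Lie E\ne0$). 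So the factorization you write down does not exist.

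The repair is to use the untwisted $h_\uldM$: it maps $\Fp$ isomorphically onto $\dM\otimes_{A_\BC}\BC\dbl z-\zeta\dbr$, and by diagram~\eqref{EqhdM} the preimage under $h_\uldM$ of $\sdtau_\dM(\sdsigma^*\dM)\otimes_{A_\BC}\BC\dbl z-\zeta\dbr$ equals $\Fq$; see Remark~\ref{DualRem2.7}(b). Hence $\Fq$ is the kernel of the composite $\Fp\xrightarrow{\;h_\uldM\;}\dM\otimes_{A_\BC}\BC\dbl z-\zeta\dbr\longto\dM/\sdtau_\dM(\sdsigma^*\dM)\isoto\Lie E$, the last map being $\delta_0$ from \eqref{EqDualMAndE2}. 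That this composite equals $\gamma\circ(\delta_0\otimes\id)$ with $\gamma$ from \eqref{Eq1.1} is then immediate from the $A_\BC$-linearity of $\delta_0$ (Proposition~\ref{prop:commutingdiagrams}) together with the fact that the $\BC\dbl z-\zeta\dbr$-structure on $\Lie E$ used in \eqref{Eq1.1} is exactly the one given by $z\mapsto\Lie\phi_z$ and $A_\BC/J^d\cong\BC\dbl z-\zeta\dbr/(z-\zeta)^d$; no expansion of $\sdtau$-invariant sections and no division towers are needed for this square. Theorem~\ref{ThmDivisionTower} enters only through Corollary~\ref{CorTo_Boiler2.5.5}, i.e.\ to know that $\delta_0$ restricts to a bijection $\Lambda(\uldM)\isoto\Lambda(\ulE)$ in the first place. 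With that substitution your argument coincides with the paper's proof.
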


\begin{proof}
Since $\Lambda(\uldM)=\bigl(\dM\otimes_{A_\BC}\CO\bigl(\dotFC_\BC\setminus{\TS\bigcup_{i\in\BN_{>0}}}\Var(\ssigma^{i\ast}J)\bigr)\bigr)^\sdtau$ by Proposition~\ref{PropMaphdM}\ref{PropMaphdMA}, Corollary~\ref{CorTo_Boiler2.5.5} provides an $A$-isomorphism $\delta_0\colon\Lambda(\uldM)\isoto\Lambda(\ulE)$. By \eqref{EqThmMixedEandMWeights} in the proof of Theorem~\ref{ThmMixedEandM} it satisfies $W_\mu\Lambda(\uldM)=\Lambda(W_\mu\uldM)=\Lambda\bigl(\uldM(W_\mu\ulE)\bigr)\isoto\Lambda(W_\mu\ulE)=W_\mu\Lambda(\ulE)$, that is, it is compatible with the weight filtrations. Moreover, $\delta_0$ fits into the commutative diagram
\[
\xymatrix @C+2pc {
0 \ar[r] & \Fq \ar[r]\ar[d]^{\TS\cong} & \Lambda(\uldM)\otimes_A\BC\dbl z-\zeta\dbr \ar[r]_{\qquad\TS h_\dM}\ar[d]^{\TS\cong}_{\TS\delta_0\otimes\id} & \dM/\sdtau_\dM(\sdsigma^*\dM) \ar[r]\ar[d]^{\TS\cong}_{\TS\delta_0} & 0\;\\
0 \ar[r] & \Fq \ar[r] & \Lambda(\ulE)\otimes_A \BC\dbl z-\zeta\dbr \ar[r] & \Lie E \ar[r] & 0 \,,
}
\]
that is, it is compatible with the Hodge-Pink lattices. The last statement follows from Theorem~\ref{ThmDualHodgeConjecture}.
\end{proof}

\begin{theorem}\label{ThmHPofEandM}
Let $\ulE$ be a uniformizable mixed abelian Anderson $A$-module over $\BC$ and let $\ulM=\ulM(\ulE)$ be its associated mixed $A$-motive. Consider the $Q$-Hodge-Pink structure $\ul\Omega=(H,W_\bullet H,\Fq)$ which is pure of weight $0$ and given by $H=\Omega^1_{Q/\BF_q}=Q\,dz$ and $\Fq=\BC\dbl z-\zeta\dbr dz$. Then $\ulHodge_1(\ulE)$ and $\ulHodge_1(\ulM)\otimes\ul\Omega=\CHom(\ulHodge^1(\ulM),\ul\Omega)$ are canonically isomorphic.
\end{theorem}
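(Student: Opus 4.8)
The plan is to route the claimed isomorphism through the dual $A$-motive $\uldM:=\uldM(\ulM)$ associated to $\ulM:=\ulM(\ulE)$ by Proposition~\ref{PropDualizing}, and then quote Theorem~\ref{ThmHofMandDualM}. First I would collect the preliminaries: since $\ulE$ is abelian, $\ulM$ is an effective $A$-motive, which is mixed by Theorem~\ref{ThmMixedEandM} and uniformizable by Remark~\ref{RemAModuleUniformizable}; hence, by Propositions~\ref{PropDualizing}, \ref{PropDualMixed} and~\ref{PropDualizingUnif}, $\uldM$ is an effective, mixed, uniformizable dual $A$-motive with $\ulM(\uldM)\cong\ulM$. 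Theorem~\ref{ThmHofMandDualM} then yields a canonical isomorphism $\ulHodge_1(\uldM)\cong\ulHodge_1(\ulM)\otimes\ul\Omega=\CHom(\ulHodge^1(\ulM),\ul\Omega)$, so it remains only to construct a canonical isomorphism of $Q$-Hodge-Pink structures $\ulHodge_1(\ulE)\isoto\ulHodge_1(\uldM)$. (When $\ulE$ happens to be $A$-finite one could instead invoke Theorems~\ref{ThmHPofEandDualM} and~\ref{ThmMandDMofE} directly; in general $\uldM(\ulM)$ need not be finitely generated over $\BC\{\sdtau\}$, so a direct construction is unavoidable.)

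For the underlying $Q$-vector spaces, Proposition~\ref{PropDualizingUnif} identifies $\Lambda(\uldM)=\Lambda(\uldM(\ulM))$ with $\Hom_A(\Lambda(\ulM),\Omega^1_{A/\BF_q})$, while Anderson's period isomorphism of Remark~\ref{RemScattering}, equation~\eqref{EqAnderson2.12.1}, is an isomorphism $\beta_A\colon\Lambda(\ulE)\isoto\Hom_A(\Lambda(\ulM),\Omega^1_{A/\BF_q})$. Composing gives an $A$-module isomorphism $\beta\colon\Lambda(\ulE)\isoto\Lambda(\uldM)$, hence after $\otimes_AQ$ an isomorphism $\Hodge_1(\ulE)\isoto\Hodge_1(\uldM)$. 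Compatibility with the weight filtrations is bookkeeping: $\beta_A$ and the pairing of Proposition~\ref{PropDualizingUnif} are functorial, so $W_\bullet\ulE$ induces, via Proposition~\ref{PropQuotientAMod} and Lemma~\ref{Lemma2.3}, matching filtrations on $\Lambda(\ulE)$ and $\Lambda(\uldM)$; that these agree with $W_\bullet\Hodge_1(\ulE)$ and $W_\bullet\Hodge_1(\uldM)$ follows by comparing the descriptions of the weight filtrations on $\ulM$ and $\uldM$ in Theorem~\ref{ThmMixedEandM} and Proposition~\ref{PropDualMixed} with the definition of the weight filtration on an internal hom in Definition~\ref{Def1.3}\ref{Def1.3_B}.

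The heart of the proof, and the main obstacle, is compatibility with the Hodge-Pink lattices. Since $\uldM$ is effective, Remark~\ref{DualRem2.7}(b) (compare Proposition~\ref{PropMaphdM}\ref{PropMaphdMB}) gives a short exact sequence
\[
0\longto\Fq_{\uldM}\longto\Lambda(\uldM)\otimes_A\BC\dbl z-\zeta\dbr\xrightarrow{\,h_{\uldM}\,}\dM/\sdtau_\dM(\sdsigma^*\dM)\longto0,
\]
while the Hodge-Pink lattice $\Fq_E$ of $\ulHodge_1(\ulE)$ sits by construction in the exact sequence~\eqref{Eq1.1}
\[
0\longto\Fq_E\longto\Lambda(\ulE)\otimes_A\BC\dbl z-\zeta\dbr\xrightarrow{\,\gamma\,}\Lie E\longto0.
\]
It therefore suffices to produce a canonical isomorphism $\epsilon\colon\Lie E\isoto\dM/\sdtau_\dM(\sdsigma^*\dM)=\coker(\tau_M\dual)$ fitting into a commutative square with $\gamma$, $h_{\uldM}$ and $\beta\otimes\id$; then $\beta\otimes\id$ carries $\Fq_E=\ker\gamma$ onto $\Fq_{\uldM}=\ker h_{\uldM}$, as required. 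For $\epsilon$ I would combine the isomorphism $M/\tau_M(\sigma^*M)\isoto\Hom_\BC(\Lie E,\BC)$ of~\eqref{EqIsomLieE} with the identification $\coker(\tau_M\dual)=\Ext^1_{A_\BC}(M/\tau_M(\sigma^*M),\Omega^1_{A_\BC/\BC})$, obtained by applying $\Hom_{A_\BC}(-,\Omega^1_{A_\BC/\BC})$ to $\sigma^*M\xrightarrow{\tau_M}M\onto M/\tau_M(\sigma^*M)$ and using that $M$ is projective; local (residue) duality at $\Var(J)$ identifies this $\Ext^1$ canonically with $\Hom_\BC(M/\tau_M(\sigma^*M),\BC)\cong\Lie E$. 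Commutativity of the square then says that Anderson's period pairing intertwines $\gamma$ — built from the operator $\Lie\phi_z-\zeta$, which acts as multiplication by $z-\zeta$ on $\Lie E$ — with $h_{\uldM}=(\sigma^*h_\ulM\dual)^{-1}$, built from $\tau_M$; this is precisely the residue computation already carried out in steps~1 and~9 of the proof of Theorem~\ref{ThmMandDMofE} and in Corollary~\ref{CorMandDMofE}, and the only genuine delicacy is keeping track of signs, since $\beta_A$ is normalized by $\Res_\infty(a\cdot\omega_{A,\lambda,m})=-m(\exp_\ulE(\Lie\phi_a(\lambda)))$.

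Since all the statements involved are insensitive to the ring $A$, in practice I would verify this last compatibility after base change along a finite separable inclusion $\BF_q[t]\into A$ as in Lemma~\ref{LemmaTSep}\ref{LemmaTSep_B}, reducing it to a matrix identity. In the notation of Lemma~\ref{LemmaUniformizableBF_q[t]} and Remark~\ref{RemScattering}, $h_\ulM$ is represented by $(\Psi^T)^{-1}$ for the scattering matrix $\Psi$, so $h_{\uldM}=(\sigma^*h_\ulM\dual)^{-1}$ is represented by $\sigma^*\Psi$; Anderson's $\beta_A$ sends a chosen period basis of $\Lambda(\ulE)$ to the basis dual to the corresponding $\BF_q[t]$-basis of $\Lambda(\ulM)$ by equation~\eqref{EqEandM5}; and $\sigma^*\Psi^T=\Psi^T\Phi$. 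Using $\Phi^T=\sigma^*\Psi\cdot\Psi^{-1}$, the Hodge-Pink statement becomes the assertion that $\Fq_E$, expressed in the period basis, equals $\Psi^{-1}\cdot\BC\dbl z-\zeta\dbr^{r}$, which is exactly $\Fq_{\uldM}$ transported through $\beta$ (compare Example~\ref{exmp:scatteringmatrix}). Carrying out this last computation — that is, describing $\ker\gamma$ explicitly through the Anderson generating functions whose columns constitute $\Psi$ — is where the real work lies.
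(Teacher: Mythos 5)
Your overall strategy coincides with the paper's: the underlying $Q$-linear isomorphism is Anderson's $\beta_A$ from \eqref{EqAnderson2.12.1}, the weight filtrations are matched by the routine comparison you sketch, and the Hodge--Pink compatibility is reduced, after a finite separable base change $\BF_q[t]\into A$, to an identity expressed through the scattering matrix $\Psi$. Your detour through $\uldM(\ulM)$ and Theorem~\ref{ThmHofMandDualM} is only a repackaging of the paper's direct comparison with $\CHom(\ulHodge^1(\ulM),\ul\Omega)$, since Theorem~\ref{ThmHofMandDualM} identifies $\ulHodge_1(\uldM(\ulM))$ with that internal hom by unwinding the same definitions; it introduces no circularity, and your construction of $\epsilon\colon\Lie E\isoto\dM/\sdtau_\dM(\sdsigma^*\dM)$ via \eqref{EqIsomLieE} and local duality matches the right-hand column of the paper's diagram \eqref{EqEandM2}.

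There is, however, a genuine gap at exactly the point you flag as ``where the real work lies.'' The commutativity of your square --- equivalently the assertion that in the period basis $(\lambda_j)$ the lattice $\ker\gamma$ equals $\Psi^{-1}\cdot\BC\dbl z-\zeta\dbr^{\oplus r}$ --- \emph{is} the theorem, and you do not prove it. It does not follow from steps~1 and~9 of Theorem~\ref{ThmMandDMofE} or from Corollary~\ref{CorMandDMofE}: those compute $\Res_\infty$ of $a^{-1}\eta(\sigma^*m)$ and identify it with a value of $\delta_1$, i.e.\ with torsion points of $\ulE$, whereas the Hodge--Pink lattice lives at $\Var(J)$ and requires the residue at $z=\zeta$ of the meromorphic section $(m\dual_{A,\lambda}\otimes1)\bigl((h_\ulM\otimes1)^{-1}(\kappa_\ell\otimes1)\bigr)$. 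Bridging the two residues requires the residue theorem applied to the global differential forms $\sum_i f_i\,\omega_{A,\lambda_j,n_i}$ --- holomorphic away from $\Var(J)$ and $\infty$ precisely because $h_\ulM^{-1}$ is an isomorphism there --- together with the trace formula \eqref{EqEandM4} and Anderson's identity $-\Res_{t=\theta}\bigl(e_j^T\Psi^T\cdot{}[\kappa_\ell]\,dt\bigr)=\ol\kappa_\ell(\lambda_j)$ relating the scattering matrix to the map $\Lambda(\ulE)\to\Lie E$ (the paper's Claim~2). Without carrying out that computation your argument only restates the desired equality in the language of $\sigma^*\Psi$; in particular the sign normalization of $\beta_A$ that you rightly single out can only be settled by doing it.
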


Before we prove the theorem note that $\BC\dbl z-\zeta\dbr dz=\wh\Omega^1_{\BC\dbl z-\zeta\dbr/\BC}$ is the $\BC\dbl z-\zeta\dbr$-module of continuous differentials. Further note that $\ul\Omega\cong\UOne(0)$ and hence, $\ulHodge_1(\ulE)\cong\ulHodge_1(\ulM)$ and $\ulHodge^1(\ulE)\cong\ulHodge^1(\ulM)$.
Combining the theorem with Theorem~\ref{ThmHodgeConjecture} leads to the following

\begin{corollary}\label{CorHPofEandM}
If $\ulE$ is a uniformizable mixed abelian Anderson $A$-module, then $\ulHodge_1(\ulE)$ and $\ulHodge^1(\ulE)$ are mixed $Q$-Hodge-Pink structures.\qed
\end{corollary}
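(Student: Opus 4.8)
The plan is to obtain the isomorphism by factoring through the dual $A$-motive $\uldM(\ulM)$ attached to $\ulM=\ulM(\ulE)$ by Proposition~\ref{PropDualizing}; this works for every abelian $\ulE$, whereas the more direct comparison with $\uldM(\ulE)$ in Theorem~\ref{ThmHPofEandDualM} is only available when $\ulE$ is in addition $A$-finite. Since $\ulE$ is uniformizable and abelian, $\ulM=\ulM(\ulE)$ is effective, uniformizable (Remark~\ref{RemAModuleUniformizable}) and mixed (Theorem~\ref{ThmMixedEandM}); hence by Propositions~\ref{PropDualMixed} and~\ref{PropDualizingUnif} the dual $A$-motive $\uldM(\ulM)=\bigl(\Hom_{A_\BC}(\sigma^*M,\Omega^1_{A_\BC/\BC}),\tau_M\dual\bigr)$ is again effective, mixed and uniformizable, so $\ulHodge_1(\uldM(\ulM))$ is defined. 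Applying Theorem~\ref{ThmHofMandDualM} to $\uldM:=\uldM(\ulM)$ together with the canonical identification $\ulM(\uldM(\ulM))\cong\ulM$ of Proposition~\ref{PropDualizing} gives a canonical isomorphism $\ulHodge_1(\uldM(\ulM))\cong\CHom(\ulHodge^1(\ulM),\ul\Omega)=\ulHodge_1(\ulM)\otimes\ul\Omega$ in $\QHodgeCat$. So it remains to construct a canonical isomorphism $\ulHodge_1(\ulE)\isoto\ulHodge_1(\uldM(\ulM))$.

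For the underlying $Q$-vector spaces I would use Anderson's isomorphism $\beta_A\colon\Lambda(\ulE)\isoto\Hom_A(\Lambda(\ulM),\Omega^1_{A/\BF_q})$ from~\eqref{EqAnderson2.12.1}, composed with the canonical identification $\Hom_A(\Lambda(\ulM),\Omega^1_{A/\BF_q})=\Lambda(\uldM(\ulM))$ of Proposition~\ref{PropDualizingUnif}, and tensor with $Q$. For the weight filtrations I would first verify that $\beta_A$ is functorial in $\ulE$, which is a short computation from its defining residue property $\Res_\infty(a\cdot\omega_{A,\lambda,m})=-m(\exp_\ulE(\Lie\phi_a(\lambda)))$ and the functoriality $f\circ\exp_{\ulE'}=\exp_\ulE\circ\Lie f$; applying this to the inclusions $W_{\mu\,}\ulE\hookrightarrow\ulE$ and the quotients $\ulE\twoheadrightarrow\ulE/\bigcup_{\mu'<\mu}W_{\mu'}\ulE$, and combining with the explicit weight filtrations of $\ulM$ (Theorem~\ref{ThmMixedEandM}) and of $\uldM(\ulM)$ (the symmetric version of~\eqref{EqWeightOfDualM} in Proposition~\ref{PropDualMixed}), identifies $W_\bullet\Hodge_1(\ulE)$ with $W_\bullet\Hodge_1(\uldM(\ulM))$.

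The main work, and the step I expect to be the obstacle, is the compatibility of the Hodge-Pink lattices. On the $\ulE$-side the lattice is $\Fq=\ker\bigl(\gamma\colon\Lambda(\ulE)\otimes_A\BC\dbl z-\zeta\dbr\to\Lie E\bigr)$ from~\eqref{Eq1.1}, sitting in a short exact sequence with tautological lattice $\Fp=\Lambda(\ulE)\otimes_A\BC\dbl z-\zeta\dbr$ and quotient $\Lie E$. On the $\uldM(\ulM)$-side, since $\uldM(\ulM)$ is effective, Proposition~\ref{PropMaphdM}\ref{PropMaphdMB} produces a short exact sequence with the tautological lattice of $\uldM(\ulM)$ and quotient $\dM(\ulM)/\sdtau_\dM(\sdsigma^*\dM(\ulM))$. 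The key lemma I would establish is a canonical isomorphism $\dM(\ulM)/\sdtau_\dM(\sdsigma^*\dM(\ulM))\cong\Lie E$: one has $\dM(\ulM)/\sdtau_\dM(\sdsigma^*\dM(\ulM))\cong\Ext^1_{A_\BC}\bigl(M/\tau_M(\sigma^*M),\Omega^1_{A_\BC/\BC}\bigr)$, which by Grothendieck--Serre duality at $\Var(J)$ for the length-$d$ torsion module $M/\tau_M(\sigma^*M)$ is canonically $\Hom_\BC\bigl(M/\tau_M(\sigma^*M),\BC\bigr)$, and the latter is $\Lie E$ by~\eqref{EqIsomLieE}. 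One then checks that under $\beta_A$ on the tautological lattices (which already agree by the previous step) and this isomorphism on the quotients, the reduction $h_{\dM}$ of Proposition~\ref{PropMaphdM}\ref{PropMaphdMB} is carried to $\gamma$; equivalently, via Proposition~\ref{PropMaphM}\ref{PropMaphMB} and the defining formula~\eqref{EqThmMandDMofE} of $\Xi$, that Anderson's global residue pairing at $\infty$ used to define $\beta_A$ coincides, locally at $\Var(J)$, with the Grothendieck--Serre residue pairing between $M/\tau_M(\sigma^*M)$ and $\Lie E$. This local/global residue identity is the delicate point; crucially one must work with $\uldM(\ulM)$ throughout rather than with $\uldM(\ulE)$, so that the argument does not presuppose $A$-finiteness. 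When $\ulE$ happens to be both abelian and $A$-finite the resulting isomorphism will coincide with $\delta_0\circ\Lambda(\Xi)$ (Theorem~\ref{ThmMandDMofE}, Corollary~\ref{CorTo_Boiler2.5.5}) by Theorem~\ref{ThmCompatXi}, which confirms its canonicity.

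Finally, once $\ulHodge_1(\ulE)\cong\ulHodge_1(\ulM)\otimes\ul\Omega$ is established, the assertion that $\ulHodge_1(\ulE)$, and hence its dual $\ulHodge^1(\ulE)$, is a genuine $Q$-Hodge-Pink structure follows from Theorem~\ref{ThmDualHodgeConjecture}\ref{ThmDualHodgeConjectureA} applied to $\uldM(\ulM)$ (or Theorem~\ref{ThmHodgeConjecture}\ref{ThmHodgeConjectureA} applied to $\ulM$), using that $\QHodgeCat$ is closed under duals and tensor products by Theorem~\ref{ThmPinkTannaka}; this is precisely Corollary~\ref{CorHPofEandM}.
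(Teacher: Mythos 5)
Your concluding paragraph is exactly the paper's proof of this corollary: the paper deduces it in one line by combining the canonical isomorphism $\ulHodge_1(\ulE)\cong\ulHodge_1(\ulM)\otimes\ul\Omega$ of Theorem~\ref{ThmHPofEandM} with Theorem~\ref{ThmHodgeConjecture}\ref{ThmHodgeConjectureA} and the closure of $\QHodgeCat$ under tensor products and duals (Theorem~\ref{ThmPinkTannaka}). Since Theorem~\ref{ThmHPofEandM} is stated and proved immediately before the corollary, citing it would have sufficed. The bulk of your proposal is instead a re-derivation of that isomorphism, routed through $\uldM(\ulM)$ and Theorem~\ref{ThmHofMandDualM} rather than through a direct comparison with $\CHom(\ulHodge^1(\ulM),\ul\Omega)$. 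This detour is structurally sound, and your key lemma
\[
\dM(\ulM)/\sdtau_\dM\bigl(\sdsigma^*\dM(\ulM)\bigr)\;\cong\;\Ext^1_{A_\BC}\bigl(M/\tau_M(\sigma^*M),\Omega^1_{A_\BC/\BC}\bigr)\;\cong\;\Hom_\BC\bigl(M/\tau_M(\sigma^*M),\BC\bigr)\;\cong\;\Lie E
\]
is correct; in fact it is only a repackaging of what the paper already does in diagram \eqref{EqEandM2}, where the dashed isomorphism identifies $\Ext^1_{\BC\dbl z-\zeta\dbr}(\coker\tau_M,\BC\dbl z-\zeta\dbr dz)$ with $\Hom_\BC(\coker\tau_M,\BC)$. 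So the two routes are closer than you suggest.

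The caveat is that the step you defer as ``one then checks'' --- that $\beta_A$, defined by residues at $\infty$, carries the map $\gamma$ of \eqref{Eq1.1} to the local description of the Hodge--Pink lattice at $\Var(J)$, i.e.\ that the global residue pairing at $\infty$ agrees with the local residue pairing at $\Var(J)$ --- is not a routine verification. It is precisely Claims~1 and~2 in the paper's proof of Theorem~\ref{ThmHPofEandM}, and establishing it consumes essentially the entire proof: one needs the reduction to $\BF_q[t]$ via the trace \eqref{EqTraceQ}, the residue theorem \eqref{EqEandM4}, and Anderson's scattering matrix together with his formula (3.3.3) in \cite{Anderson86}. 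As a self-contained argument your re-derivation therefore has a gap at exactly the point where all the work lies; the corollary itself is nonetheless safe, because the isomorphism you need is available as the already-proved Theorem~\ref{ThmHPofEandM}.
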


\begin{proof}[Proof of Theorem~\ref{ThmHPofEandM}]
Let $\ulM=\ulM(\ulE)=(M,\tau_M)$ and write $\ulHodge_1(\ulE)=(\Hodge_1(\ulE),W_\bullet\Hodge_1(\ulE),\Fq_\ulE)$ and $\ulHodge^1(\ulM)=(\Hodge^1(\ulM),W_\bullet\Hodge^1(\ulM),\Fq^\ulM)$ and $\CHom(\ulHodge^1(\ulM),\ul\Omega)=(\wt H_M,W_\bullet\wt H_M,\tilde\Fq_\ulM)$.

\medskip
\noindent
1. The isomorphism $\Hodge_1(\ulE)=\Lambda(\ulE)\otimes_AQ\isoto \wt H_M=\Hom_A(\Lambda(\ulM),\Omega^1_{A/\BF_q})\otimes_AQ$ in question will be induced from the isomorphism \cite[Corollary~2.12.1]{Anderson86}
\[
\beta_A\colon\Lambda(\ulE)\isoto\Hom_A(\Lambda(\ulM),\Omega^1_{A/\BF_q})\,,\es\lambda\longmapsto m\dual_{A,\lambda}\,,\qquad\text{where}\quad m\dual_{A,\lambda}\colon m\mapsto \omega_{A,\lambda,m}
\]
is determined by the residues $\Res_\infty(a\cdot\omega_{A,\lambda,m})= -m\bigl(\exp_\ulE(\Lie\phi_a(\lambda))\bigr)\in\BF_q$ for all $a\in Q$; see \eqref{EqAnderson2.12.1}. We verify its compatibility with the weight filtrations
\begin{eqnarray*}
W_\mu \Hodge_1(\ulE) & = & \Lambda(W_{\mu\,}\ulE)\otimes_AQ\qquad\text{and}\\[2mm]
W_\mu \wt H_M & = & \bigl\{\,h\in\Hom_Q(\Hodge^1(\ulM),\Omega^1_{Q/\BF_q}):\es h(W_{-\tilde\mu}\Hodge^1(\ulM))=0 \es\text{for all }\mu<\tilde\mu\,\bigr\}\,.
\end{eqnarray*}
Let $\mu_1<\ldots<\mu_n$ be the weights of $\ulE$ and set $\mu_0:=-\infty$ and $\mu_{n+1}:=+\infty$. If $\mu_i\le\mu<\tilde\mu\le\mu_{i+1}$, then $W_{-\tilde\mu}\Hodge^1(\ulM):=\ulHodge^1(W_{-\tilde\mu\,}\ulM)=\ker\bigl(\Hodge^1(\ulM)\onto\Hodge^1(\ulM(W_{\mu\,}\ulE))\bigr)$ by \eqref{EqRemMixedEandM} and Lemma~\ref{Lemma2.3}. This implies $W_\mu \wt H_M\;=\;\Hom_Q\bigl(\ulHodge^1(\ulM(W_{\mu\,}\ulE)),\Omega^1_{Q/\BF_q}\bigr)$. Therefore, $\beta_A\otimes\id_Q$ maps $W_\mu \Hodge_1(\ulE)$ isomorphically onto $W_\mu \wt H_M$ as desired.

\medskip
\noindent
2. We must show that $\beta_A\otimes\id_Q$ satisfies the compatibility $(\beta_A\otimes\id_{\BC\dpl z-\zeta\dpr})(\Fq_\ulE)=\tilde\Fq_\ulM$ with the Hodge-Pink lattices. As $\ulM$ is effective, $\Fq^\ulM$ sits in the exact sequence
\begin{equation}\label{EqEandM1}
\xymatrix{ 0 \ar[r] & \Fp^\ulM \ar[r] & \Fq^\ulM \ar[r] & \coker\tau_M \ar[r] & 0\,, }
\end{equation}
where $\Fp^\ulM=\Lambda(\ulM)\otimes_A\BC\dbl z-\zeta\dbr$ and $\coker\tau_M:=M/\tau_M(\sigma^*M)$. We also set 
\[
\tilde\Fp_\ulM\;:=\;\Hom_{\BC\dbl z-\zeta\dbr}(\Fp^\ulM,\BC\dbl z-\zeta\dbr dz)\;=\;\Hom_A(\Lambda(\ulM),\Omega^1_{A/\BF_q})\otimes_A\BC\dbl z-\zeta\dbr\,.
\]
Applying $\Hom_{\BC\dbl z-\zeta\dbr}(\fdot,\BC\dbl z-\zeta\dbr dz)$ to \eqref{EqEandM1} and observing that $J^{\dim\ulE}\cdot M\subset\tau_M(\sigma^*M)$ implies $\Hom_{\BC\dbl z-\zeta\dbr}(\coker\tau_M,\BC\dbl z-\zeta\dbr dz)=0$, yields the upper row in the following diagram of $\BC\dbl z-\zeta\dbr$-modules with exact rows
\begin{equation}\label{EqEandM2}
\xymatrix @R+1pc { 0 \ar[r] & \tilde\Fq_\ulM \ar[r]\ar@{=}[d] & \tilde\Fp_\ulM \ar[r]\ar@{=}[d] & \Ext^1_{\BC\dbl z-\zeta\dbr}(\coker\tau_M,\BC\dbl z-\zeta\dbr dz) \ar[r]\ar@{-->}[d]^{\DS\cong} & 0\\
 0 \ar[r] & \tilde\Fq_\ulM \ar[r] & \tilde\Fp_\ulM \ar[r]^{\DS\tilde\gamma_A\qquad} & \Hom_\BC(\coker\tau_M,\BC) \ar[r] & 0\\
0 \ar[r] & \Fq_\ulE \ar[r]\ar@{-->}[u]^{\DS\cong} & \Lambda(\ulE)\otimes_A\BC\dbl z-\zeta\dbr \ar[r]^{\DS\qquad\gamma} \ar[u]^{\DS\cong}_{\DS\beta_A\otimes\id_{\BC\dbl z-\zeta\dbr}} & \Lie E \ar[r]\ar[u]^{\DS\cong}_{\DS\alpha} & 0\,.
}
\end{equation}
In this diagram $\alpha$ is the isomorphism from \eqref{EqIsomLieE}
\[
\alpha\colon\Lie E\isoto \Hom_\BC(\coker\tau_M,\BC)\,,\es\lambda\longmapsto \bigl(\ol m\dual_\lambda\colon \ol m\mapsto(\Lie m)(\lambda)\bigr)
\]
The map $\gamma$ was defined in \eqref{Eq1.1} and the isomorphism $\beta_A\otimes\id_{\BC\dbl z-\zeta\dbr}$ is induced from the above isomorphism $\beta_A$. Finally, the map $\tilde\gamma_A$ is given by 
\newcommand{\EltOfPMDual}{m\dual}
\begin{eqnarray*}
& \tilde\gamma_A\colon \Hom_{\BC\dbl z-\zeta\dbr}(\Fp^\ulM,\BC\dbl z-\zeta\dbr dz) & \longto \es \Hom_\BC(\coker\tau_M,\BC)\,,\\[1mm]
& \mbox{\qquad\qquad\qquad}\EltOfPMDual & \longmapsto \es \bigl(\ol m\mapsto -\Res_{z=\zeta}(\EltOfPMDual(m))\bigr)\,.
\end{eqnarray*}
Here $\ol m=m\mod\tau_M(\sigma^*M)$ and $\EltOfPMDual(m)\in\BC\dpl z-\zeta\dpr dz$ is defined as 
\[
\EltOfPMDual(m)\;:=\;(\EltOfPMDual\otimes\id_{\BC\dpl z-\zeta\dpr})\bigl((h_\ulM\otimes\id_{\BC\dpl z-\zeta\dpr})^{-1}(m\otimes1)\bigr)
\]
where $(h_\ulM\otimes\id_{\BC\dpl z-\zeta\dpr})^{-1}(m\otimes1)$ is the preimage of $m\otimes1\in M\otimes_{A_{\BC}}\BC\dpl z-\zeta\dpr$ under the isomorphism $h_\ulM\otimes\id_{\BC\dpl z-\zeta\dpr}\colon \Lambda(\ulM)\otimes_A\BC\dpl z-\zeta\dpr\isoto M\otimes_{A_{\BC}}\BC\dpl z-\zeta\dpr$ from \eqref{EqhM}. Note that $\Res_{z=\zeta}(\EltOfPMDual(m))=0$ for all $m\in\tau_M(\sigma^*M)$ because for them $(h_\ulM\otimes\id_{\BC\dpl z-\zeta\dpr})^{-1}(m\otimes1)\in\Fp^\ulM$ and then $\EltOfPMDual\in\tilde\Fp_\ulM$ implies $\EltOfPMDual(m)\in\BC\dbl z-\zeta\dbr dz$. This proves that $\Res_{z=\zeta}(\EltOfPMDual(m))$ only depends on $\ol m$ and that the map $\tilde\gamma_A$ is well defined.

We show that $\tilde\gamma_A$ is $\BC\dbl z-\zeta\dbr$-linear. The $\BC\dbl z-\zeta\dbr$-action on $\Hom_\BC(\coker\tau_M,\BC)$ is induced from the action of $A_{\BC}$ on $\coker\tau_M$ which factors through $\BC\dbl z-\zeta\dbr/(z-\zeta)^d=A_{\BC}/J^d$ for $d=\dim\ulE$ by \eqref{EqIsomLieE} and the discussion thereafter. For $\EltOfPMDual\in\tilde\Fp_\ulM$, $f\in\BC\dbl z-\zeta\dbr$, and $\ol m\in\coker\tau_M$ this implies
\begin{eqnarray*}
\bigl(f\cdot\tilde\gamma_A(\EltOfPMDual)\bigr)(\ol m) & = & \tilde\gamma_A(\EltOfPMDual)(f\cdot\ol m) \\[2mm]
& = & -\Res_{z=\zeta}\bigl(\EltOfPMDual(fm)\bigr) \\[2mm]
& = & -\Res_{z=\zeta}\bigl((f\cdot \EltOfPMDual)(m)\bigr) \\[2mm]
& = & \tilde\gamma_A(f\cdot \EltOfPMDual)(\ol m)
\end{eqnarray*}
proving the $\BC\dbl z-\zeta\dbr$-linearity of $\tilde\gamma_A$.

To prove that $\tilde\Fq_\ulM:=\Hom_{\BC\dbl z-\zeta\dbr}(\Fq^\ulM,\BC\dbl z-\zeta\dbr dz)$ is the kernel of $\tilde\gamma_A$ first note that $\EltOfPMDual\in\tilde\Fq_\ulM$ and $(h_\ulM\otimes\id_{\BC\dpl z-\zeta\dpr})^{-1}(m\otimes1)\in\Fq^\ulM$ imply $\EltOfPMDual(m)\in\BC\dbl z-\zeta\dbr dz$ and hence, $\Res_{z=\zeta}(\EltOfPMDual(m))=0$ and $\tilde\Fq_\ulM\subset\ker\tilde\gamma_A$. Conversely, let $\EltOfPMDual\in\ker\tilde\gamma_A$. It follows for any $m\in M$ and any $n\in\BN_0$ that
\[
\Res_{z=\zeta}\Bigl((z-\zeta)^n\cdot (\EltOfPMDual\otimes\id_{\BC\dpl z-\zeta\dpr})\bigl((h_\ulM\otimes\id_{\BC\dpl z-\zeta\dpr})^{-1}(m\otimes1)\bigr)\Bigr)\;=\;\tilde\gamma_A\bigl((z-\zeta)^n\cdot \EltOfPMDual\bigr)(\ol m)\;=\;0\,.
\]
Therefore, $\EltOfPMDual\otimes\id_{\BC\dpl z-\zeta\dpr}\bigl((h_\ulM\otimes\id_{\BC\dpl z-\zeta\dpr})^{-1}(m\otimes1)\bigr)$ belongs to $\BC\dbl z-\zeta\dbr dz$. Since the $\BC\dbl z-\zeta\dbr$-module $\Fq^\ulM$ is generated by the elements $(h_\ulM\otimes\id_{\BC\dpl z-\zeta\dpr})^{-1}(m\otimes1)$ for $m\in M$ it follows that $\EltOfPMDual\in\Hom_{\BC\dbl z-\zeta\dbr}(\Fq^\ulM,\BC\dbl z-\zeta\dbr dz)=\tilde\Fq_\ulM$. We will show in step~3 below that the lower right square in of diagram~\eqref{EqEandM2} commutes. Therefore $\tilde\gamma_A$ is surjective, because $\gamma$ is. We conclude that also the middle row \eqref{EqEandM2} is exact and that $\Ext^1_{\BC\dbl z-\zeta\dbr}(\coker\tau_M,\BC\dbl z-\zeta\dbr dz)$ and $\Hom_\BC(\coker\tau_M,\BC)$ are isomorphic as quotients of $\tilde\Fp_\ulM$.

\medskip
\noindent
3. To prove the theorem it remains to show that $(\beta_A\otimes\id_{\BC\dpl z-\zeta\dpr})(\Fq_\ulE)=\tilde\Fq_\ulM$. For this it suffices to show that the lower right square in \eqref{EqEandM2} commutes; that is, $\alpha\circ\gamma=\tilde\gamma_A\circ(\beta_A\otimes\id_{\BC\dpl z-\zeta\dpr})$. By the $\BC\dbl z-\zeta\dbr$-linearity of the four maps this is equivalent to the following

\medskip\noindent
\emph{Claim~1.} The inverse isomorphism $\beta_A^{-1}\colon\Hom_A(\Lambda(\ulM),\Omega^1_{A/\BF_q})\isoto\Lambda(\ulE)$ is determined by the composition $\alpha\circ\gamma\circ\beta_A^{-1}$ which is given by $\tilde\gamma_A$; that is, by
\[
\xymatrix @R=0pc @C+1pc{
\Hom_A(\Lambda(\ulM),\Omega^1_{A/\BF_q}) \ar[r]^{\qquad\qquad\DS\beta_A^{-1}} & \Lambda(\ulE) \ar@{^{ (}->}[r] & \Lie E \ar[r]^{\DS\alpha\qquad\quad} & \Hom_\BC(\coker\tau_M,\BC)\\
m\dual\ar@{|->}[rrr] & & & \bigl(\ol m\mapsto -\Res_{z=\zeta}(m\dual\otimes 1)(m)\bigr)\,,
}
\]
where $m\dual\otimes1\in\Hom_A(\Lambda(\ulM),\Omega^1_{A/\BF_q})\otimes_A\BC\dbl z-\zeta\dbr=\tilde\Fp_\ulM$ is induced from $m\dual$.

\medskip

This can be made more explicit by choosing a coordinate system; that is, an isomorphism $\kappa=(\kappa_1,\ldots,\kappa_d)^T\colon E\isoto\BG_{a,\BC}^d$ of $\BF_q$-module schemes. The $\kappa_i\in\Hom_{\BF_q,\BC}(E,\BG_{a,\BC})=M$ then form a $\BC\{\tau\}$-basis of $M$, where $\tau$ is the $\sigma^\ast$-linear map $\tau\colon M\to M,m\mapsto\tau_M(\sigma^\ast m)$, and the $\ol\kappa_i:=\kappa_i\mod\tau_M(\sigma^*M)$ form a $\BC$-basis of $\coker\tau_M$ and yield an isomorphism $(\ol\kappa_1,\ldots,\ol\kappa_d)^T\colon\Lie E\isoto\BC^d$. In these terms the isomorphism $\alpha$ has the inverse
\[
\alpha^{-1}\colon \Hom_\BC(\coker\tau_M,\BC)\isoto\Lie E\,,\es \ol m\dual\longmapsto\bigl(\ol m\dual(\ol\kappa_1),\ldots,\ol m\dual(\ol\kappa_d)\bigr)^T
\]
and Claim~1 is equivalent to 

\medskip\noindent
\emph{Claim~2.} The inverse isomorphism $\beta_A^{-1}\colon\Hom_A(\Lambda(\ulM),\Omega^1_{A/\BF_q})\isoto\Lambda(\ulE)$ is given by 
\[
m\dual\;\longmapsto\; \bigl(-\Res_{z=\zeta}(m\dual\otimes 1(\kappa_1)),\ldots,-\Res_{z=\zeta}(m\dual\otimes 1(\kappa_d))\bigr)^T\,.
\]

\medskip

To prove Claim~2 we apply Anderson's theory of scattering matrices. We recall the notation introduced in Remark~\ref{RemScattering}. In particular $\wt A=\BF_q[t]\subset A$ is a finite flat ring extension for which the corresponding morphism of curves $C\to\BP^1_{\BF_q}$ is separable, $\CB=(m_1,\ldots,m_r)$ is a basis of $M$ over $\wt A_\BC=\BC[t]$, and $(\lambda_1,\ldots,\lambda_r)$ is an $\BF_q[t]$-basis of $\Lambda(\ulE)$, where $r=\rk_{\BF_q[t]}\Lambda(\ulE)=\rk_{\BC[t]}M$. Then
\[
\Psi\;:=\;\Bigl(\sum_{k=0}^\infty m_i\bigl(\exp_\ulE(\Lie\phi_t^{-k-1}(\lambda_j))\bigr)t^k\Bigr)_{i,j=1,\ldots,r}\;\in\; M_r\bigl((t-\theta)^{-d}\BC\langle\tfrac{t}{\theta}\rangle\bigr)\;.
\]
is Anderson's scattering matrix, where $\theta=\charmorph(t)\in\BC$ and $d=\dim\ulE$. The matrix $(\Psi^{-1})^T$ belongs to $M_r\bigl(\CO(\dotFC_\BC)\bigr)$ and its columns form an $\BF_q[t]$-basis $\CC=(n_1,\ldots,n_r)$ of $\Lambda(\ulM)$. With respect to the bases $\CC$ and $\CB$ the morphism $h_\ulM\colon\Lambda(\ulM)\otimes_A\CO(\dotFC_\BC)\to M\otimes_{A_{\BC}}\CO(\dotFC_\BC)$ is represented by $(\Psi^{-1})^T$. 

Under the induced morphism $C_\BC\to\BP^1_\BC$ the point $\Var(J)\in C_\BC$ maps to $\Var(t-\theta)\in\BP^1_\BC$. We extend the trace map from \eqref{EqTraceQ} to $\Tr_{\Quot(A_{\BC})/\BC(t)}\colon \Omega^1_{\Quot(A_{\BC})/\BC}\to\Omega^1_{\BC(t)/\BC}$. Then again by \cite[Formula (9.16) on p.~299]{VillaSalvador}
\begin{equation}\label{EqEandM4}
\Res_{t=\theta}(\Tr_{\Quot(A_{\BC})/\BC(t)}\omega)\;=\;\sum_{P\,|\Var(t-\theta)}\Res_{_{\SC P}}\omega
\end{equation}
for all $\omega\in\Omega^1_{\Quot(A_{\BC})/\BC}$ where the sum runs over all points $P\in C_\BC$ mapping to $\Var(t-\theta)$. Consider the rigid analytic closed disc $\Spm\BC\langle\frac{t}{\theta}\rangle=\{|t|\le|\theta|\}$ inside $(\BP^1_\BC)^\rig$ and its preimage $\Spm A_{\BC}\otimes_{\BC[t]}\BC\langle\frac{t}{\theta}\rangle$ inside $\FC_\BC$. By $\BC$-linearity and continuity \eqref{EqEandM4} extends to all differential forms $\omega\in(t-\theta)^{-d}A_{\BC}\otimes_{\BC[t]}\BC\langle\frac{t}{\theta}\rangle\,dt$ with pole above $\Var(t-\theta)$ of order at most $d$ and holomorphic on $(\Spm A_{\BC}\otimes_{\BC[t]}\BC\langle\frac{t}{\theta}\rangle)\setminus \Var(t-\theta)$. 

If we denote by ${}_{_{\SC\CB}}[\kappa_\ell]\in\BC[t]^r$ the coordinate vector of $\kappa_\ell$ with respect to the basis $\CB$ and by ${}_{_{\SC\CC}}\bigl[(h_\ulM\otimes\id_{\BC\dpl z-\zeta\dpr})^{-1}(\kappa_\ell\otimes1)\bigr]$ the coordinate vector with respect to the basis $\CC$, then
\[
{}_{_{\SC\CC}}\bigl[(h_\ulM\otimes\id_{\BC\dpl z-\zeta\dpr})^{-1}(\kappa_\ell\otimes1)\bigr]\;=\;\Psi^T\cdot{}_{_{\SC\CB}}[\kappa_\ell]\;=:\;(f_1,\ldots,f_r)^T\;\in\;(t-\theta)^{-d}\BC\langle{\TS\frac{t}{\theta}}\rangle^{\oplus r}\,.
\]
The map $\beta_A(\lambda_j)=m\dual_{A,\lambda_j}$ sends $n_i$ to $\omega_{A,\lambda_j,n_i}$ and hence, $(h_\ulM\otimes\id_{\BC\dpl z-\zeta\dpr})^{-1}(\kappa_\ell\otimes1)\;=\;\sum_i n_i\otimes f_i$ to $\sum_i f_i\,\omega_{A,\lambda_j,n_i}$. The latter is a differential form in $(t-\theta)^{-d} A_{\BC}\otimes_{\BC[t]}\BC\langle\frac{t}{\theta}\rangle\,dt$ which is holomorphic outside $\Var(J)$, because $h_\ulM^{-1}$ is an isomorphism on $(\Spm A_{\BC}\otimes_{\BC[t]}\BC\langle\frac{t}{\theta}\rangle)\setminus\Var(J)$. This differential form has trace 
\[
\TS\Tr_{\Quot(A_{\BC})/\BC(t)}(\sum_i f_i\,\omega_{A,\lambda_j,n_i})\;=\;\sum_i f_i\cdot\Tr_{Q/\wt Q}(\omega_{A,\lambda_j,n_i})\;=\;\sum_i f_i \,\omega_{\wt A,\lambda_j,n_i}\;=\;f_j\,dt\,;
\]
see \eqref{EqEandM5}. Applying \eqref{EqEandM4} yields
\begin{eqnarray*}
-\Res_{z=\zeta}(m\dual_{A,\lambda_j}\otimes1(\kappa_\ell))&:=& -\Res_{z=\zeta}\bigl((m\dual_{A,\lambda_j}\otimes\id_{\BC\dpl z-\zeta\dpr})(h_\ulM\otimes\id_{\BC\dpl z-\zeta\dpr})^{-1}(\kappa_\ell\otimes1)\bigr)\\[2mm]
&=& -\TS\Res_{_{\SC\Var(J)}}(\sum_i f_i\,\omega_{A,\lambda_j,n_i})\\[2mm]
&=& -\TS\Res_{t=\theta}\,\Tr_{\Quot(A_{\BC})/\BC(t)}(\sum_i f_i\,\omega_{A,\lambda_j,n_i})\\[2mm]
&=& -\TS\Res_{t=\theta}(f_j\,dt)\\[2mm]
&=& -\Res_{t=\theta}(e_j^T\Psi^T{}_{_{\SC\CB}}[\kappa_\ell]\,dt)\\[2mm]
&=& \ol\kappa_\ell(\lambda_j)\,.
\end{eqnarray*}
Here the last equation is \cite[Formula (3.3.3)]{Anderson86} taking into account that our scattering matrix $\Psi$ differs from Anderson's by a minus sign. This shows that 
\[
\lambda_j\;=\;\bigl(-\Res_{z=\zeta}(\beta_A(\lambda_j)\otimes1(\kappa_1)),\ldots,-\Res_{z=\zeta}(\beta_A(\lambda_j)\otimes1(\kappa_d))\bigr)^T
\]
and indeed the inverse isomorphism $\beta_A^{-1}$ has the form described in Claim~2. This finishes the proof of Claim~2, Claim~1 and the theorem.
\end{proof}

We also record the following theorem, which we will prove after Lemma~\ref{LemmaTraceOmega} below.

\begin{theorem}\label{ThmCompatXi}
Let $\ulE$ be a uniformizable mixed Anderson $A$-module over $\BC$ which is both abelian and $A$-finite, and let $\ulM=\ulM(\ulE)$ and $\uldM=\uldM(\ulE)$ be the associated (dual) $A$-motive. Then the isomorphisms above are also compatible with the isomorphisms from Theorems~\ref{ThmHofMandDualM}, \ref{ThmHPofEandDualM} and \ref{ThmHPofEandM} and the isomorphism $\Xi\colon\uldM(\ulM)\isoto\uldM(\ulE)$ from Theorem~\ref{ThmMandDMofE}, in the sense that the following diagram commutes
\begin{equation}\label{EqCompatXiHodge}
\xymatrix @C=9pc { 
\ulHodge_1\bigl(\uldM(\ulM)\bigr)  \ar[r]_{\TS\cong}^{\TS\ulHodge_1(\Xi)} & \ulHodge_1\bigl(\uldM(\ulE)\bigr) \ar[d]_{\TS\cong}^{\TS\rm Theorem~\ref{ThmHPofEandDualM}\es} \\
\ulHodge_1(\ulM)\otimes\ul\Omega \ar[r]_{\TS\cong}^{\TS\rm\es Theorem~\ref{ThmHPofEandM}} \ar[u]_{\TS\cong}^{\TS\rm Theorem~\ref{ThmHofMandDualM}\es} &  \ulHodge_1(\ulE)
}
\end{equation}
\end{theorem}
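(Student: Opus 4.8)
The plan is to strip off the Tannakian formalism, reduce the commutativity of \eqref{EqCompatXiHodge} to an identity of $A$-linear maps between the four relevant $\Lambda$-modules, then reduce that identity to the case $A=\BF_q[t]$ and recognise it there as the restriction to $\tau$-invariants of the commutative matrix diagram displayed at the end of Example~\ref{exmp:scatteringmatrix}. All four arrows of \eqref{EqCompatXiHodge} are morphisms in $\QHodgeCat$ (the three labelled ones by Theorems~\ref{ThmHofMandDualM}, \ref{ThmHPofEandDualM}, \ref{ThmHPofEandM}, and $\ulHodge_1(\Xi)$ because $\ulHodge_1$ is a functor on $\dualAMUMotCatIsog$ and $\Xi$ lives there), so by the faithfulness of the fiber functor $\omega_0$ (Theorem~\ref{ThmPinkTannaka}) it is enough to prove that the square commutes after applying $\omega_0$, i.e.\ on underlying $Q$-vector spaces. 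Each of the four maps is there the $Q$-linear extension of an $A$-linear map: $\omega_0\bigl(\ulHodge_1(\Xi)\bigr)=\Lambda(\Xi)\otimes_AQ$, where $\Lambda(\Xi)\colon\Lambda(\uldM(\ulM))\to\Lambda(\uldM(\ulE))$ is the map induced by $\Xi$ on $\sdtau$-invariants; the Theorem~\ref{ThmHPofEandDualM}-arrow is $\delta_0\otimes_AQ$ with $\delta_0\colon\Lambda(\uldM(\ulE))\isoto\Lambda(\ulE)$ from Corollary~\ref{CorTo_Boiler2.5.5}; the Theorem~\ref{ThmHPofEandM}-arrow is $\beta_A^{-1}\otimes_AQ$ with $\beta_A\colon\Lambda(\ulE)\isoto\Hom_A(\Lambda(\ulM),\Omega^1_{A/\BF_q})$ Anderson's isomorphism \cite[Corollary~2.12.1]{Anderson86}; and the Theorem~\ref{ThmHofMandDualM}-arrow is $\iota^{-1}\otimes_AQ$, where $\iota\colon\Lambda(\uldM(\ulM))\isoto\Hom_A(\Lambda(\ulM),\Omega^1_{A/\BF_q})$ is the perfect-pairing identification of Proposition~\ref{PropDualizingUnif}. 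Thus the whole theorem reduces to the $A$-module identity $\delta_0\circ\Lambda(\Xi)=\beta_A^{-1}\circ\iota$, equivalently $\beta_A\circ\delta_0\circ\Lambda(\Xi)=\iota$.

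Next I would reduce to $A=\BF_q[t]$. Choose $t\in A$ with $Q/\BF_q(t)$ finite separable (Lemma~\ref{LemmaTSep}\ref{LemmaTSep_B}) and regard $\ulE$ as an abelian and $A$-finite Anderson $\BF_q[t]$-module by restriction of scalars; this leaves $E$, $\exp_\ulE$, $\Lambda(\ulE)$ and $\delta_0$ unchanged, and also $\Lambda(\ulM)$, $\Lambda(\uldM(\ulE))$, $\Lambda(\uldM(\ulM))$ since $\CO(\FC_\BC\setminus\Disc)=A_\BC\otimes_{\BC[t]}\BC\langle t\rangle$. Via the canonical identification $\Omega^1_{A_\BC/\BC}\cong\Hom_{\BC[t]}(A_\BC,\Omega^1_{\BC[t]/\BC})$ (the relative dualizing module of the finite flat map $\Spec A_\BC\to\Spec\BC[t]$), the dual $A$-motive $\uldM(\ulM)$ and the map $\Xi$ — the latter being characterised by formula~\eqref{EqThmMandDMofE} — are matched with their $\BF_q[t]$-analogues, and by the trace–residue formula $\Res_\infty=\Res_{\wt\infty}\circ\Tr_{Q/\BF_q(t)}$ (\cite[(9.16)]{VillaSalvador}, as in Remark~\ref{RemScattering}) the maps $\beta_A$ and $\iota$, both valued in differentials, are compatible with their $\BF_q[t]$-analogues under $\Tr_{Q/\BF_q(t)}\colon\Omega^1_{Q/\BF_q}\to\Omega^1_{\BF_q(t)/\BF_q}$. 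Hence it suffices to prove $\delta_0\circ\Lambda(\Xi)=\beta_{\BF_q[t]}^{-1}\circ\iota$ when $A=\BF_q[t]$.

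For $A=\BF_q[t]$ I would bring in the matrices. Fix $\BC[t]$-bases $\CB$ of $M=M(\ulE)$ and $\check\CB$ of $\dM=\dM(\ulE)$, representing $\tau_M,\sdtau_\dM$ by $\Phi,\check\Phi\in\GL_r(\BC[t][\tfrac1{t-\theta}])$. By Theorem~\ref{ThmMandDMofE}, $\Xi$ is represented with respect to the basis $(\eta_\ell)$ of $\dM(\ulM)=\Hom_{\BC[t]}(\sigma^*M,\Omega^1_{\BC[t]/\BC})$ dual to $\sigma^*\CB$ and to $\check\CB$ by a matrix $X\in\GL_r(\BC[t])$, and the compatibility \eqref{EqThmMandDMofECompatib} reads $X\Phi^T=\check\Phi\,\sdsigma^*X$ — this is exactly \eqref{EqRelationMatrixXi}, which is a formal consequence of \eqref{EqThmMandDMofECompatib} valid for every abelian and $A$-finite $\ulE$ and not only in the Drinfeld situation of Example~\ref{exampleDModDMotdDmot}. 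Let $\Psi$ be the scattering matrix of $\ulE$ (Remark~\ref{RemScattering}): then $\sigma^*\Psi^T=\Psi^T\Phi$, the columns of $(\Psi^T)^{-1}$ form an $\BF_q[t]$-basis $\CC$ of $\Lambda(\ulM)$ representing $h_\ulM$, and $\beta_{\BF_q[t]}$ carries the chosen basis of $\Lambda(\ulE)$ to the basis of $\Hom_{\BF_q[t]}(\Lambda(\ulM),\Omega^1_{\BF_q[t]/\BF_q})$ dual to $\CC$ (equation~\eqref{EqEandM5}). Putting $\check\Psi:=\sigma^*\Psi^{-1}\cdot X^{-1}$, the three matrix identities give $\sdsigma^*\check\Psi=\check\Psi\check\Phi$, so $\check\Psi$ is a rigid analytic trivialization of $\check\Phi$ and $h_\uldM$ is represented by $\check\Psi^{-1}$. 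With all these matrices in hand, the commutative diagram of $\BC\langle t\rangle$-module isomorphisms at the end of Example~\ref{exmp:scatteringmatrix} holds for $\ulE$: its verification there is nothing but the manipulation of $\sigma^*\Psi^T=\Psi^T\Phi$, $X\Phi^T=\check\Phi\,\sdsigma^*X$, $\check\Psi=\sigma^*\Psi^{-1}X^{-1}$ together with Remark~\ref{RemScattering}, and is blind to the dimension of $\ulE$. Restricting that diagram to $\tau$-/$\sdtau$-invariant $\BF_q[t]$-submodules — $h_\uldM$ becomes the inclusion $\Lambda(\uldM(\ulE))\into\dM(\ulE)\otimes_{\BC[t]}\BC\langle t\rangle$, the map $(\sigma^*h_\ulM\dual)^{-1}$ identifies the invariant part $\Hom_{\BF_q[t]}(\Lambda(\ulM),\Omega^1_{\BF_q[t]/\BF_q})$ with $\Lambda(\uldM(\ulM))$ via $\iota^{-1}$ (Proposition~\ref{PropDualizingUnif}), $\Xi$ restricts to $\Lambda(\Xi)$, $\delta_0\otimes\id$ to the Corollary~\ref{CorTo_Boiler2.5.5}-isomorphism $\delta_0$, and $\beta_{\BF_q[t]}\otimes\id$ to $\beta_{\BF_q[t]}$ — yields $\Lambda(\Xi)\circ\iota^{-1}\circ\beta_{\BF_q[t]}\circ\delta_0=\mathrm{id}_{\Lambda(\uldM(\ulE))}$, i.e.\ $\delta_0\circ\Lambda(\Xi)=\beta_{\BF_q[t]}^{-1}\circ\iota$, as required.

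The hard part is bookkeeping rather than ideas, and it lives in two places. First, one must check that the $\BC\langle t\rangle$-diagram of Example~\ref{exmp:scatteringmatrix} genuinely commutes for an arbitrary abelian and $A$-finite $\ulE$ over $\BF_q[t]$ and not merely for a Drinfeld module: this needs that the matrix $X$ of $\Xi$ lies in $\GL_r(\BC[t])$ and satisfies $X\Phi^T=\check\Phi\,\sdsigma^*X$ (a consequence of \eqref{EqThmMandDMofECompatib} that ought to be written out once), and that the restriction of that diagram to $\tau$-invariants is literally the square of the first paragraph — in particular that $(\sigma^*h_\ulM\dual)^{-1}$ restricts to $\iota^{-1}$ and $h_\uldM$ to the natural inclusion. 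Second, in the reduction to $\BF_q[t]$ one must carry the several $\sigma^*$'s, $\sdsigma^*$'s and the Kähler differentials faithfully through the identification $\Omega^1_{A_\BC/\BC}\cong\Hom_{\BC[t]}(A_\BC,\Omega^1_{\BC[t]/\BC})$ and through the trace–residue compatibility for both $\beta_A$ and $\iota$; matching Anderson's residue-characterised $\beta_A$ over $A$ with its $\BF_q[t]$-version while simultaneously matching $\Xi$ via formula~\eqref{EqThmMandDMofE} is where I expect the real care to be needed.
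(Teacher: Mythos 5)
Your reduction is sound up to its last step: passing to underlying $Q$-vector spaces, identifying the four arrows with $\Lambda(\Xi)$, $\delta_0$ (Corollary~\ref{CorTo_Boiler2.5.5}), $\beta_A^{-1}$ and the pairing isomorphism $\iota^{-1}$ of Proposition~\ref{PropDualizingUnif}, and reducing $\delta_0\circ\Lambda(\Xi)=\beta_A^{-1}\circ\iota$ to $A=\BF_q[t]$ via the trace is exactly how the paper proceeds (its proof invokes the commutativity of diagram~\eqref{EqDiagCompatXi} for $\Koh_{1,\Betti}$, established in Proposition~\ref{PropCompTateModEandDualM}, and the reduction to $\wt A=\BF_q[t]$ there uses Lemma~\ref{LemmaTraceOmega}).

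The gap is in the final step. The claim that the $\BC\langle t\rangle$-diagram at the end of Example~\ref{exmp:scatteringmatrix} commutes for an arbitrary abelian and $A$-finite $\ulE$ ``by formal manipulation of $\sigma^*\Psi^T=\Psi^T\Phi$, $X\Phi^T=\check\Phi\,\sdsigma^*X$ and $\check\Psi:=\sigma^*\Psi^{-1}X^{-1}$'' is circular: that commutativity \emph{is} the identity $\delta_0\circ\Lambda(\Xi)=\beta_{\BF_q[t]}^{-1}\circ\iota$ you are trying to prove. The matrix identities only show that $X\sigma^*\Psi$ is the inverse of \emph{some} rigid analytic trivialization of $\check\Phi$, hence that its columns form \emph{some} $\BF_q[t]$-basis of $\Lambda(\uldM(\ulE))$ (Lemma~\ref{LemmaDualUniformizableBF_q[t]}, uniqueness only up to $\GL_r(\BF_q[t])$); they do not identify $\delta_0$ of those columns with the $\beta$-dual basis $(\lambda_j)$. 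Even in Example~\ref{exmp:scatteringmatrix} this identification is not formal: it rests on recognizing the columns of $X$ as the division-tower operators $\sum_{j\ge0}\Delta_{t,\nu+j}\tau^j$ of Corollary~\ref{CorDivisionTower}, which was obtained in Example~\ref{exampleDModDMotdDmot} by an explicit residue computation specific to Drinfeld modules (and the paper explicitly flags the general case as open in the discussion after Example~\ref{exmp:scatteringmatrix} and in Question~\ref{QuestMandDMofE}). To close the gap you must compute directly, as the paper does in the $\Koh_{1,\Betti}$ part of Proposition~\ref{PropCompTateModEandDualM}: write $\dn_j=\Xi(\eta_j)=\dm''_{j,k}+t^{k+1}\dm'_{j,k}$, use Corollary~\ref{Cor=Boiler2.5.5} to express $\exp_\ulE\bigl(\Lie\phi_t^{-k-1}(\delta_0(\dn_j))\bigr)=\delta_1(\dm_{j,k})$, and evaluate $n_i$ on it via the residue formula~\eqref{EqThmMandDMofE} and the telescoping identity~\eqref{Eqm(dm'_eta)(1)} to get $\delta_{ij}\delta_{k0}$, whence $\omega_{\wt A,\delta_0(\dn_j),n_i}=\delta_{ij}\,dt$ by~\eqref{EqEandM3}. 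That computation is the actual content of the theorem and is missing from your argument.
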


\subsection{Cohomology realizations}\label{SectCohAMod}

Let $\ulE$ be an Anderson $A$-module over $\BC$ with exponential function $\exp_\ulE\colon\Lie E\to E(\BC)$ and let $\Lambda(\ulE):=\ker(\exp_\ulE)$. We assume that $\ulE$ is abelian or $A$-finite. Anderson defined the \emph{Betti cohomology realization} of $\ulE$ to be
\[
\Koh_{1,\Betti}(\ulE,B):=\Lambda(\ulE)\otimes_A B
\quad\text{and}\quad \Koh_\Betti^1(\ulE,B):=\Hom_A(\Lambda(\ulE),B)
\]
for any $A$-algebra $B$; see \cite[Definition~1.3.6]{Goss94}. This is most useful when $\ulE$ is uniformizable, in which case both are locally free $B$-modules of rank equal to $\rk\ulE$ and $\Hodge_1(\ulE)=\Koh_{1,\Betti}(\ulE,Q)$; see Remark~\ref{RemAModuleUniformizable}(b), respectively Theorem~\ref{ThmUnifAModandDM}. By \cite[Corollary~2.12.2]{Anderson86} (respectively Theorem~\ref{ThmUnifAModandDM}) this realization provides for $B=Q$ an exact faithful functor on abelian (respectively $A$-finite) uniformizable Anderson $A$-modules.

\medskip

Moreover, let $v$ be a finite place of $C$, that is a closed point $v\in\dotC$ and let $A_v$ be the $v$-adic completion of $A$, and $Q_v$ the fraction field of $A_v$. Let $T_v\ulE:=\Hom_A\bigl(Q_v/A_v,\,\ulE(\BC)\bigr)$ be the \emph{$v$-adic Tate module} of $\ulE$. The \emph{$v$-adic cohomology realization} of $\ulE$ is defined as
\[
\begin{array}{lllllll}
\Koh_{1,v}(\ulE,A_v) & := & T_v\ulE & \quad\text{and}\quad & \Koh_{1,v}(\ulE,Q_v) & := & T_v\ulE\otimes_{A_v}Q_v\qquad\text{and}\\[2mm]
\Koh^1_v(\ulE,A_v) & := & \Hom_{A_v}(T_v\ulE,A_v) & \quad\text{and}\quad & \Koh^1_v(\ulE,Q_v) & := & \Hom_{A_v}(T_v\ulE,Q_v)\,;
\end{array}
\]
see \cite[\S\,1.2]{Goss94}. These are free $A_v$-modules, respectively $Q_v$-vector spaces of rank equal to $\rk\ulE$ by Remarks~\ref{RemRankE} and \ref{Rem2RankE}. Indeed, after fixing an integer $e$ such that $v^e\subset A$ is a principal ideal and choosing a generator $a$ of $v^e$ we can identify $A[\tfrac{1}{a}]/A\isoto Q_v/A_v$. Then there is an isomorphism
\begin{eqnarray}\label{EqTateModDivisionTower}
T_v\ulE \es\isoto\es \invlim[n]\bigl(\ulE[a^n](\BC),\phi_a\bigr)& := & \bigl\{(P_n)_n\in\TS\prod\limits_{n\in\BN}\ulE[a^n](\BC)\colon \phi_a(P_{n+1})=P_n\bigr\}\\%[2mm]
& = & \bigl\{\,\text{$a$-division towers }(P_n)_n\text{ above }0\,\bigr\}\,. \nonumber
\end{eqnarray}
This isomorphism sends $f\in\Hom_A\bigl(Q_v/A_v,\,\ulE(\BC)\bigr)$ to the tuple $P_n:=f(a^{-n})$. It is indeed an isomorphism, because from $(P_n)_n$ we can reconstruct $f\colon A[\tfrac{1}{a}]/A\to\ulE(\BC)$ as $f(c\,a^{-n})=\phi_c(P_n)$ for $c\in A, n\in\BN$. 

By Proposition~\ref{PropCompTateModEandM}\ref{PropCompTateModEandM_B} below, respectively Proposition~\ref{PropCompTateModEandDualM}\ref{PropCompTateModEandDualM_A} below, we obtain covariant functors \linebreak $\Koh_{1,v}(\,.\,,A_v)$ on abelian, respectively $A$-finite Anderson $A$-modules, which are exact and faithful, because they can be compared with the corresponding functors on the associated (dual) $A$-motives. If $\ulE$ is defined over a subfield $L$ of $\BC$ then $\Koh_{1,v}(\ulE,A_v)$ carries a continuous action of $\Gal(L^\sep/L)$ and the $v$-adic realization factors through the category ${\tt Mod}_{A_v[\Gal(L^\sep/L)]}$. Moreover, if $L$ is a \emph{finitely generated} field then
\begin{equation}\label{EqTateConjAModule}
\Koh_{1,v}(\,.\,,A_v)\colon \; \Hom(\ulE,\ulE')\otimes_A A_v \;\isoto\; \Hom_{A_v[\Gal(L^\sep/L)]}\bigl(\Koh_{1,v}(\ulE,A_v),\Koh_{1,v}(\ulE',A_v)\bigr)
\end{equation}
is an isomorphism for abelian, respectively $A$-finite, Anderson $A$-modules $\ulE$ and $\ulE'$. This is the analog of the \emph{Tate conjecture} and follows by Proposition~\ref{PropCompTateModEandM}\ref{PropCompTateModEandM_B}, respectively Proposition~\ref{PropCompTateModEandDualM}\ref{PropCompTateModEandDualM_A} from \eqref{EqTateConjAMotives}, respectively \eqref{EqTateConjDualAMotives}.

\begin{proposition}\label{PropWeightsTateModuleAModule}
Let $\ulE$ be a pure or mixed Anderson $A$-module, which is defined over a \emph{finite field extension} $L$ of $Q$ and is abelian or $A$-finite. Let $\CP$ be a place of $L$, not lying above $\infty$ or $v$, where $\ulE$ has good reduction, and let $\BF_\CP$ be its residue field. Then the geometric Frobenius $\Frob_\CP$ of $\CP$ has a well defined action on $\Koh_{1,v}(\ulE,A_v)$ and each of its eigenvalues lies in the algebraic closure of $Q$ in $\BC$ and has absolute value $(\#\BF_\CP)^\mu$ for a weight $\mu$ of $\ulE$. Dually every eigenvalue of $\Frob_\CP$ on $\Koh^1_v(\ulE,A_v)$ has absolute value $(\#\BF_\CP)^{-\mu}$ for a weight $\mu$ of $\ulE$. These eigenvalues are independent of $v$.
\end{proposition}

\noindent
{\it Remark.} The \emph{geometric Frobenius} $\Frob_\CP$ of $\CP$ is the inverse of the \emph{arithmetic Frobenius} $\Frob_\CP^{-1}$, which satisfies $\Frob_\CP^{-1}(x) \equiv x^{\#\BF_\CP}\mod\CP$ for $x\in\CO_L$.

\begin{proof}
This follows by Proposition~\ref{PropCompTateModEandM}\ref{PropCompTateModEandM_B}, respectively Proposition~\ref{PropCompTateModEandDualM}\ref{PropCompTateModEandDualM_A} from the corresponding facts for $\ulM(\ulE)$, respectively $\uldM(\ulE)$ proved in Propositions~\ref{PropWeightsTateModule}, respectively \ref{PropWeightsTateModuleDual}.
\end{proof}

\medskip

Finally, if $\ulE$ is \emph{abelian}, let $\ulM=(M,\tau_M)=\ulM(\ulE)$ be the associated $A$-motive. Then the \emph{de Rham cohomology realization} of $\ulE$ is defined to be
\begin{eqnarray*}
\Koh^1_\dR(\ulE,\BC) & := & \Hom_A(\Omega^1_{A/\BF_q},\,\sigma^*M/J\cdot\sigma^*M)\,,\\[2mm]
\Koh^1_\dR(\ulE,\BC\dbl z-\zeta\dbr) & := & \Hom_A\bigl(\Omega^1_{A/\BF_q},\,\sigma^*M\otimes_{A_\BC}\BC\dbl z-\zeta\dbr\bigr)\quad\text{and}\\[2mm]
\Koh_{1,\dR}(\ulE,\BC\dbl z-\zeta\dbr) & := & \Hom_{A_\BC}(\sigma^*M,\,\wh\Omega^1_{\BC\dbl z-\zeta\dbr/\BC})\,,
\end{eqnarray*}
where $\wh\Omega^1_{\BC\dbl z-\zeta\dbr/\BC}=\BC\dbl z-\zeta\dbr dz$ is the $\BC\dbl z-\zeta\dbr$-module of continuous differentials. We define the \emph{Hodge-Pink lattices} of $\ulE$ as the $\BC\dbl z-\zeta\dbr$-submodules 
\[
\begin{array}{rcccl}
\Fq^\ulE & := & \Hom_A\bigl(\Omega^1_{A/\BF_q},\,\tau_M^{-1}(M)\otimes_{A_\BC}\BC\dbl z-\zeta\dbr\bigr) & \subset & \Koh^1_\dR\bigl(\ulE,\BC\dpl z-\zeta\dpr\bigr) \quad\text{and}\\[2mm]
\Fq_\ulE & := & (\tau_M\dual\otimes\id_{\BC\dpl z-\zeta\dpr})\bigl(\Hom_{A_\BC}(M,\,\wh\Omega^1_{\BC\dbl z-\zeta\dbr/\BC})\bigr) & \subset & \Koh_{1,\dR}\bigl(\ulE,\BC\dpl z-\zeta\dpr\bigr)\,.
\end{array}
\]

On the other hand, if $\ulE$ is \emph{$A$-finite}, let $\uldM=(\dM,\sdtau_\ulM)=\uldM(\ulE)$ be the associated dual $A$-motive. Then the \emph{de Rham cohomology realization} of $\ulE$ is defined to be
\begin{eqnarray*}
\Koh^1_\dR(\ulE,\BC) & := & \Hom_\BC(\dM/J\dM,\BC)\,,\\[2mm]
\Koh^1_\dR(\ulE,\BC\dbl z-\zeta\dbr) & := & \Hom_{A_\BC}(\dM,\BC\dbl z-\zeta\dbr)\quad\text{and}\\[2mm]
\Koh_{1,\dR}(\ulE,\BC\dbl z-\zeta\dbr) & := & \dM\otimes_{A_\BC}\BC\dbl z-\zeta\dbr\,.
\end{eqnarray*}
We define the \emph{Hodge-Pink lattices} of $\ulE$ as the $\BC\dbl z-\zeta\dbr$-submodules 
\[
\begin{array}{rcccl}
\Fq^\ulE & := & (\sdtau_\dM\dual)^{-1}\bigl(\Hom_{A_\BC}(\sdsigma^*\dM,\,\BC\dbl z-\zeta\dbr)\bigr) & \subset & \Koh^1_\dR\bigl(\ulE,\BC\dpl z-\zeta\dpr\bigr) \quad\text{and}\\[2mm]
\Fq_\ulE & := & \sdtau_\dM(\sdsigma^*\dM)\otimes_{A_\BC}\BC\dbl z-\zeta\dbr & \subset & \Koh_{1,\dR}\bigl(\ulE,\BC\dpl z-\zeta\dpr\bigr)\,.
\end{array}
\]
In both cases the Hodge-Pink filtrations $F^i \Koh^1_\dR(\ulE,\BC)$ and $F^i \Koh_{1,\dR}(\ulE,\BC)$ of $\ulE$ are recovered as the images of $\Koh^1_\dR\bigl(\ulE,\BC\dbl z-\zeta\dbr\bigr)\cap(z-\zeta)^i\Fq^\ulE$ in $\Koh^1_\dR(\ulE,\BC)$ and of $\Koh_{1,\dR}\bigl(\ulE,\BC\dbl z-\zeta\dbr\bigr)\cap(z-\zeta)^i\Fq_\ulE$ in $\Koh_{1,\dR}(\ulE,\BC)$ like in Remark~\ref{Rem1.4}. All these structures are compatible with the natural duality between $H^1_\dR$ and $H_{1,\dR}$.

\begin{remark}\label{RemDeRhamOfE}
Let $\ulE$ be an abelian Anderson $A$-module and let $\ulM=(M,\tau_M)=\ulM(\ulE)$ be its associated $A$-motive. Our definition of $\Koh^1_\dR(\ulE,\BC)$ and its Hodge filtration coincides with the one of Goss~\cite[Definition~2.6.1]{Goss94}. If $\ulE$ is a \emph{Drinfeld $A$-module}, the de Rham cohomology realization $\Koh^1_\dR(\ulE,\BC)$ of a $\ulE$ was earlier defined by Deligne, Anderson, Gekeler and Jing Yu as the $\BC$-vector space of extension classes 
\[
0 \longto \BG_{a,\BC} \longto E^* \longto \ulE \longto 0
\]
of group schemes with $A$-action together with an $A$-equivariant splitting of the induced exact sequence of Lie algebras. Here $a\in A$ acts on $\BG_{a,\BC}$ via $\psi_{\charmorph(a)}$; see for example \cite[\S\,1.5]{Goss94} or \cite[\S\,3.4]{BrownawellPapanikolas16}.

There is an equivalent formulation as follows, see \cite[\S\,2]{Gekeler89} and \cite{Yu90}, which was extended to abelian Anderson $A$-modules by Brownawell and Papanikolas~\cite[\S\,3]{BrownawellPapanikolas02}. An \emph{$\BF_q$-linear biderivation} of $A$ into $\tau_M(\sigma^*M)$ is an $\BF_q$-homomorphism
\[
\eta\colon A\to \tau_M(\sigma^*M)\,,\es a\mapsto \eta_a\quad\text{such that}\quad \eta_{ab}=\charmorph(a)\cdot\eta_b+b\cdot\eta_a
\]
$\eta$ is called \emph{inner} if there is an element $m\in M$ with $\eta_a=\charmorph(a)\cdot m-a\cdot m\in\tau_M(\sigma^*M)$ for all $a\in A$. The condition $\charmorph(a)\cdot m-a\cdot m\in\tau_M(\sigma^*M)$ holds for example if $m\in\tau_M(\sigma^*M)$ in which case $\eta$ is called \emph{strictly inner}. Let $D(\ulE,\BC)$ (respectively $D_i(\ulE,\BC)$, respectively $D_{si}(\ulE,\BC)$) be the $\BC$-vector space of $\BF_q$-linear biderivations of $A$ into $\tau_M(\sigma^*M)$ (respectively inner, respectively strictly inner ones). Then define
\[
\Koh^1_\dR(\ulE,\BC)\es:=\es D(\ulE,\BC)/D_{si}(\ulE,\BC)\,.
\]
For Drinfeld $A$-modules $\ulE$ the isomorphism between these two definitions of $\Koh^1_\dR(\ulE,\BC)$ is given by sending $\eta\in D(\ulE,\BC)$ to the extension $E^*=\BG_{a,\BC}\times_\BC E$ with the action of $a\in A$ by $\left(\begin{smallmatrix} \psi_{\charmorph(a)}\; & \;\eta_a \\ 0\; & \;\phi_a \end{smallmatrix}\right)$ and observing $\eta_a\in M(\ulE)=\Hom_{\BF_q,\BC}(E,\BG_{a,\BC})$; see \cite[Theorem~1.5.4]{Goss94}.
Finally, Gekeler~\cite[(2.13)]{Gekeler89} defined the Hodge filtration of the Drinfeld $A$-module $\ulE$ by setting $F^0\Koh^1_\dR(\ulE,\BC)\;=\;\Koh^1_\dR(\ulE,\BC)$ and $F^2\Koh^1_\dR(\ulE,\BC)=(0)$ and
\[
F^1\Koh^1_\dR(\ulE,\BC)\es:=\es D_i(\ulE,\BC)/D_{si}(\ulE,\BC)\es\subset\es\Koh^1_\dR(\ulE,\BC)\,.
\]
For general abelian Anderson $A$-modules the relation to extension classes of group schemes was developed by Brownawell and Papanikolas~\cite[\S\,3.3]{BrownawellPapanikolas02} but they did not define the Hodge filtration.
\end{remark}

The following result, which justifies our definition of $\Koh^1_\dR(\ulE,\BC)$ and $\Fq^\ulE$ above, can be found in \cite[Lemmas~4.3 and 4.4]{Gekeler90}.

\begin{lemma}\label{LemmaGekeler}
Let $\ulE$ be an abelian Anderson $A$-module over $\BC$. Then there is a canonical isomorphism
\begin{equation}\label{EqLemmaGekeler1}
D(\ulE,\BC)/D_{si}(\ulE,\BC) \; \isoto \; \Hom_A\bigl(\Omega^1_{A/\BF_q},\,\sigma^*M/J\cdot\sigma^*M\bigr)\,.
\end{equation}
If $\ulE$ is a Drinfeld $A$-module over $\BC$ then \eqref{EqLemmaGekeler1} restricts to an isomorphism
\begin{equation}\label{EqLemmaGekeler2}
D_i(\ulE,\BC)/D_{si}(\ulE,\BC) \; \isoto \; \Hom_A\bigl(\Omega^1_{A/\BF_q},\,\tau_M^{-1}(J\cdot M)/J\cdot\sigma^*M\bigr)\,.
\end{equation}
In particular our definition of $\Koh^1_\dR(\ulE,\BC)$ and of $F^i\Koh^1_\dR(\ulE,\BC)$ coincides with the definition of Deligne, Anderson, Gekeler, Yu, Brownawell and Papanikolas which we recalled in Remark~\ref{RemDeRhamOfE}.
\end{lemma}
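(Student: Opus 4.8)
The plan is to realise both isomorphisms \eqref{EqLemmaGekeler1} and \eqref{EqLemmaGekeler2} as the single map ``reduce modulo $J$ after transporting through $\tau_M$'', and then to deduce bijectivity from one structural statement about twisted biderivations into a finite projective $A_\BC$-module. Write $\ulM=\ulM(\ulE)=(M,\tau_M)$; since $\ulE$ is abelian this is effective, so $\tau_M$ restricts to an $A_\BC$-linear isomorphism $\sigma^*M\isoto\tau_M(\sigma^*M)$. Given an $\BF_q$-linear biderivation $\eta\colon A\to\tau_M(\sigma^*M)$ with $\eta_{ab}=\charmorph(a)\eta_b+b\eta_a$, set $\tilde\eta:=\tau_M^{-1}\circ\eta\colon A\to\sigma^*M$, which satisfies the same identity. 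Reducing modulo $J\sigma^*M$ and using $b\equiv\charmorph(b)$ in $A_\BC/J$, the resulting $\bar\eta\colon A\to\sigma^*M/J\sigma^*M$ obeys $\bar\eta_{ab}=\charmorph(a)\bar\eta_b+\charmorph(b)\bar\eta_a$, i.e.\ it is an $\BF_q$-linear derivation into the $A$-module $\sigma^*M/J\sigma^*M$ on which $A$ acts through $\charmorph$. By the universal property of $\Omega^1_{A/\BF_q}$ it corresponds to a unique element $\Phi(\eta)\in\Hom_A(\Omega^1_{A/\BF_q},\sigma^*M/J\sigma^*M)$. For a strictly inner biderivation $\eta^{(m)}$ with $m\in\tau_M(\sigma^*M)$ and $\eta^{(m)}_a=\charmorph(a)m-am$, one finds $\tilde\eta_a=(\charmorph(a)-a)\tau_M^{-1}m\in J\sigma^*M$, so $\Phi$ kills $D_{si}(\ulE,\BC)$ and descends to $\bar\Phi\colon D(\ulE,\BC)/D_{si}(\ulE,\BC)\longto\Hom_A(\Omega^1_{A/\BF_q},\sigma^*M/J\sigma^*M)$.

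For bijectivity I would prove, writing $\mathrm{B}(A,N)$ for the $A_\BC$-module of $\BF_q$-linear $\eta\colon A\to N$ with $\eta_{ab}=\charmorph(a)\eta_b+b\eta_a$ and $\eta^{(n)}_a:=\charmorph(a)n-an$, that for every finitely generated projective $A_\BC$-module $N$ the sequence
\[ 0\longto N\xrightarrow{\,n\mapsto\eta^{(n)}\,}\mathrm{B}(A,N)\xrightarrow{\,\eta\mapsto\bar\eta\,}\Hom_A(\Omega^1_{A/\BF_q},N/JN)\longto 0 \]
is exact. Applied to $N=\sigma^*M$ and pulled through $\tau_M$ this yields \eqref{EqLemmaGekeler1}: injectivity of $\bar\Phi$ is exactness in the middle (an $\eta$ whose $\tilde\eta$ is valued in $J\sigma^*M$ equals $\eta^{(\tau_M n)}$ for some $n$, hence is strictly inner), and surjectivity of $\bar\Phi$ is exactness on the right. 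Exactness on the left is immediate since $N$ is torsion free over the Dedekind domain $A_\BC$ and $J\neq(0)$. Because all three functors commute with finite direct sums and every finitely generated projective $N$ is a direct summand of some $A_\BC^m$, it suffices to treat $N=A_\BC$. Here $A_\BC/J$ is supported at the generic point of $\Spec A$ (the composite $A\to A_\BC\to A_\BC/J$ being $\charmorph$, which is injective), so $\Hom_A(\Omega^1_{A/\BF_q},A_\BC/J)=\Hom_Q(\Omega^1_{Q/\BF_q},\BC)$ and, by the separability furnished by Lemma~\ref{LemmaTSep}, $\Omega^1_{Q/\BF_q}=Q\,dt$ is free; one thereby reduces to $A=\BF_q[t]$, $N=\BC[t]$, where a twisted biderivation is determined freely by its value at $t$ through the recursion $\eta_{t^n}=\tfrac{t^n-\theta^n}{t-\theta}\,\eta_t$ with $\theta=\charmorph(t)$, from which the two remaining exactness statements are immediate.

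For \eqref{EqLemmaGekeler2}, note that for a Drinfeld $A$-module $\dim\ulE=1$, hence $J\cdot M\subseteq\tau_M(\sigma^*M)\subseteq M$; thus $\tau_M^{-1}(JM)$ lies between $J\sigma^*M$ and $\sigma^*M$, every $m\in M$ satisfies $Jm\subseteq\tau_M(\sigma^*M)$, and $D_i(\ulE,\BC)/D_{si}(\ulE,\BC)\cong M/\tau_M(\sigma^*M)$, which is one-dimensional over $\BC$. A direct computation identifies $\bar\Phi(\eta^{(m)})$ for $m\in M$ with the derivation $a\mapsto-\overline{(a-\charmorph(a))\tau_M^{-1}m}$, whose values lie in $\tau_M^{-1}(JM)/J\sigma^*M$ because $\tau_M\bigl((a-\charmorph(a))\tau_M^{-1}m\bigr)=(a-\charmorph(a))m\in JM$. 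So $\bar\Phi$ restricts to an injection $D_i(\ulE,\BC)/D_{si}(\ulE,\BC)\into\Hom_A(\Omega^1_{A/\BF_q},\tau_M^{-1}(JM)/J\sigma^*M)$ (the target embeds into $\Hom_A(\Omega^1_{A/\BF_q},\sigma^*M/J\sigma^*M)$ since $J\sigma^*M\subseteq\tau_M^{-1}(JM)$), and as the target is likewise one-dimensional over $\BC$ this injection is an isomorphism. Finally, still for Drinfeld modules $\sigma^*M\cap J\cdot\tau_M^{-1}(M)=\tau_M^{-1}(JM)$, so the filtration attached to the Hodge-Pink lattice $\Fq^\ulE$ satisfies $F^0=\Koh^1_\dR(\ulE,\BC)$, $F^1=\Hom_A(\Omega^1_{A/\BF_q},\tau_M^{-1}(JM)/J\sigma^*M)$ and $F^2=(0)$; combined with \eqref{EqLemmaGekeler1}--\eqref{EqLemmaGekeler2} and the identification $\Koh^1_\dR(\ulE,\BC)=\Hom_A(\Omega^1_{A/\BF_q},\sigma^*M/J\sigma^*M)$ this reproduces the definitions of Deligne, Anderson, Gekeler, Yu, Brownawell and Papanikolas recalled in Remark~\ref{RemDeRhamOfE}.

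I expect the main obstacle to be the surjectivity half of the displayed exact sequence, i.e.\ genuinely lifting a derivation into $\sigma^*M/J\sigma^*M$ to an $\BF_q$-linear twisted biderivation into $\sigma^*M$: the direct-summand reduction disposes of the rank and the explicit recursion settles $A=\BF_q[t]$, but the descent from $\BF_q[t]$ to a general $A$ still has to be run through, using that $\Omega^1_{A/\BF_q}$ becomes free of rank one over $Q$ by separability and that the modules occurring are supported at the generic point of $\Spec A$.
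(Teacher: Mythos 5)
Your construction of the map is the same canonical one as in the paper, and your displayed sequence
\[
0\longto N\longto \mathrm{B}(A,N)\longto \Hom_A\bigl(\Omega^1_{A/\BF_q},\,N/JN\bigr)\longto 0
\]
is precisely what the paper's proof establishes: there one identifies $\mathrm{B}(A,N)=D(\ulE,\BC)$ with $\Hom_{A\otimes A}(\Delta,\tau_M(\sigma^*M))\cong\Hom_{A_\BC}(J,\sigma^*M)$ and then uses that $J$ is an invertible ideal with $J/J^2=\Omega^1_{A/\BF_q}\otimes_{A,\charmorph}\BC$, which gives left, middle and right exactness for arbitrary $A$ in one stroke. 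Your treatment of strict innerness, of the Drinfeld case via the identification $D_i(\ulE,\BC)/D_{si}(\ulE,\BC)\cong M/\tau_M(\sigma^*M)$ and a dimension count, and of the Hodge filtration is fine.

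The genuine gap is the final reduction from $A=\BF_q[t]$ to general $A$. The reasons you offer — that $\Omega^1_{Q/\BF_q}=Q\,dt$ is free and that ``the modules occurring are supported at the generic point of $\Spec A$'' — apply only to the right-hand term $\Hom_A(\Omega^1_{A/\BF_q},N/JN)$; the terms $N$ and $\mathrm{B}(A,N)$ are genuine $A_\BC$-modules supported everywhere, so neither middle exactness nor surjectivity for $A$ follows from the $\BF_q[t]$ case by these remarks. The concrete obstruction is that $(t-\theta)A_\BC\subsetneq J$ in general: by Lemma~\ref{LemmaZ-Zeta} the element $t-\theta$ generates $J$ only locally at $\Var(J)$, while its zero locus in $\Spec A_\BC$ contains further points. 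Hence your recursion controls $(t-\theta)N$ rather than $JN$; for instance, for middle exactness you are handed $\eta_t\in JN$ and must produce $n\in N$ with $\eta_t=(\theta-t)n$, but $\eta_t/(\theta-t)$ a priori only lies in $(t-\theta)^{-1}JN\not\subset N$. To close the gap, either adopt the paper's identification $\mathrm{B}(A,N)\cong\Hom_{A_\BC}(J,N)$, or observe that all terms and maps are $A_\BC$-linear and check exactness after localizing at each maximal ideal $\Fm\subset A_\BC$, using the identity $(\charmorph(a)-a)\eta_b=(\charmorph(b)-b)\eta_a$ (obtained by computing $\eta_{ab}=\eta_{ba}$ in two ways): at $\Fm\neq J$ some $\charmorph(a)-a$ is a unit and $\eta=\eta^{(n)}$ with $n=\eta_a/(\charmorph(a)-a)$, while at $\Fm=J$ one has $J_\Fm=(t-\theta)_\Fm$ and your explicit $\BF_q[t]$ computation applies.
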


\begin{proof}
Let $\Delta:=\ker(A\otimes_{\BF_q}A\to A,\,a\otimes b\mapsto ab)$. Then $\Delta\otimes_{A\otimes A}A_\BC=J\subset A_\BC$. When we view $\tau_M(\sigma^*M)$ as an $A\otimes_{\BF_q}A$-module with $(a\otimes b)\cdot m := a\charmorph(b)\cdot m:=\psi_b\circ m\circ\phi_a$ for $m\in\tau_M(\sigma^*M)\subset M(\ulE)$ then by \cite[\S\,III.10.10, Proposition~17]{BourbakiAlgebra}
\begin{equation}\label{EqLemmaGekeler}
\begin{array}{rcccl}
D(\ulE,\BC) & \isoto & \Hom_{A\otimes A}\bigl(\Delta,\,\tau_M(\sigma^*M)\bigr) & \isoto & \Hom_{A_\BC}(J,\,\sigma^*M)\\[2mm]
\eta & \longmapsto & \bigl((a\otimes1-1\otimes a) \mapsto \eta_a\bigr) & \longmapsto & \bigl((a\otimes1-1\otimes\charmorph(a)) \mapsto \tau_M^{-1}(\eta_a)\bigr)\,.
\end{array}
\end{equation}
The last isomorphism is induced from $\tau_M\colon\sigma^*M\isoto\tau_M(\sigma^*M)$ and from the fact that $\tau_M(\sigma^*M)$ is an $A_\BC$-module. If $x\in J$ and  $\sum_i x_ig_i\in J\cdot\Hom_{A_\BC}(J,\sigma^*M)$ with $x_i\in J$ and $g_i\in\Hom_{A_\BC}(J,\sigma^*M)$, then $(\sum_i x_ig_i)(x):=\sum_i g_i(x_i x)=x\cdot m$ for $m=\sum_i g_i(x_i)\in\sigma^*M$. Therefore, $\sum_ix_ig_i$ corresponds under the isomorphism \eqref{EqLemmaGekeler} to the strictly inner derivation $\bigl(\eta\colon a\mapsto(\charmorph(a)-a)\cdot\tau_M(-m)\bigr)\in D_{si}(\ulE,\BC)$. On the other hand, since $J$ is an invertible $A_\BC$-module, we may identify $\sigma^*M\cong J\cdot\Hom_{A_\BC}(J,\sigma^*M)$ and write every $m\in\sigma^*M$ in the form $\sum_i x_i g_i\in J\cdot\Hom_{A_\BC}(J,\sigma^*M)$. This shows that $D_{si}(\ulE,\BC)\isoto J\cdot\Hom_{A_\BC}(J,\sigma^*M)$ under the isomorphism \eqref{EqLemmaGekeler}. Finally, \eqref{EqLemmaGekeler1} follows from $\Delta/\Delta^2=\Omega^1_{A/\BF_q}$ and the induced identification
\begin{eqnarray*}
\Hom_{A_\BC}(J,\sigma^*M)\big/J\cdot\Hom_{A_\BC}(J,\sigma^*M) & = & \Hom_{\BC}(J/J^2,\,\sigma^*M/J\cdot\sigma^*M)\\[2mm]
& = & \Hom_A(\Omega^1_{A/\BF_q},\,\sigma^*M/J\cdot\sigma^*M)\,.
\end{eqnarray*}
Moreover, if $\ulE$ is a Drinfeld $A$-module then $J\cdot M\subset\tau_M(\sigma^*M)$. Therefore, we can consider the morphism induced from \eqref{EqLemmaGekeler} 
\[
\begin{array}{rcccl}
D(\ulE,\BC) & \linj & \Hom_{A_\BC}\bigl(J,\tau_M^{-1}(J\cdot M)\bigr)) & \leftisoto & M\\[2mm]
\bigl(\eta\colon a\mapsto(\charmorph(a)-a)\cdot(-m)\bigr) & \longleftmapsto & \bigl(x\mapsto \tau_M^{-1}(xm)\bigr) & \longleftmapsto & m\,.
\end{array}
\]
Its image equals $D_i(\ulE,\BC)$ and $M\isoto\Hom_{A_\BC}\bigl(J,\tau_M^{-1}(J\cdot M)\bigr))$ is an isomorphism because $J$ is an invertible $A_\BC$-module. Therefore, $D_i(\ulE,\BC)\cong \Hom_{A_\BC}\bigl(J,\tau_M^{-1}(J\cdot M)\bigr))$ and 
\begin{eqnarray*}
D_i(\ulE,\BC)/D_{si}(\ulE,\BC) & \isoto & \Hom_{A_\BC}\bigl(J,\tau_M^{-1}(J\cdot M)\bigr)\big/J\cdot\Hom_{A_\BC}(J,\sigma^*M) \\[2mm]
& \isoto & \Hom_{\BC}\bigl(J/J^2,\,\tau_M^{-1}(J\cdot M)/J\cdot\sigma^*M\bigr)
\\[2mm]
& \isoto & \Hom_A\bigl(\Omega^1_{A/\BF_q},\,\tau_M^{-1}(J\cdot M)/J\cdot\sigma^*M\bigr)\,.
\end{eqnarray*}
This proves the lemma.
\end{proof}

\begin{proposition}\label{PropCompTateModEandM}
Let $\ulE$ be an abelian Anderson $A$-module over $\BC$ and let $\ulM=\ulM(\ulE)$ be the associated $A$-motive. 
\begin{enumerate}
\item\label{PropCompTateModEandM_B} 
There is a perfect pairing of $A_v$-modules
\[
\Koh_{1,v}(\ulE,A_v)\times \Koh^1_v(\ulM,A_v)\;\longto\;\Hom_{\BF_q}(Q_v/A_v,\BF_q)\,,\quad(f, m) \longmapsto m\circ f\,,
\]
where $m\circ f\colon Q_v/A_v\to\BG_{a,\BC}(\BC)=\BC$ factors through $\BF_q$ by the $\tau$-invariance of $m$. It induces isomorphisms
\begin{eqnarray*}
\Koh^1_v(\ulM,A_v) & \isoto & \Koh^1_v(\ulE,A_v)\otimes_{A_v}\Hom_{\BF_q}(Q_v/A_v,\BF_q)\qquad\text{and}\\[2mm]
\Koh_{1,v}(\ulE,A_v) & \isoto & \Koh_{1,v}(\ulM,A_v)\otimes_{A_v}\Hom_{\BF_q}(Q_v/A_v,\BF_q)\,.
\end{eqnarray*}
\item \label{PropCompTateModEandM_C}
There is a canonical isomorphism of $\BC\dbl z-\zeta\dbr$-modules
\[
\Koh^1_\dR(\ulM,\BC\dbl z-\zeta\dbr) \es \isoto \es \Koh^1_\dR(\ulE,\BC\dbl z-\zeta\dbr)\otimes_{\BC\dbl z-\zeta\dbr}\;\wh\Omega^1_{\BC\dbl z-\zeta\dbr/\BC}\,,
\]
which is compatible with the Hodge-Pink lattices.
\item \label{PropCompTateModEandM_A} 
If $\ulE$ is uniformizable, there is a perfect pairing of $A$-modules
\[
\Koh_{1,\Betti}(\ulE,A)\times \Koh^1_\Betti(\ulM,A)\;\longto\;\Omega^1_{A/\BF_q}\,,\quad(\lambda,m)\longmapsto \omega_{A,\lambda,m}
\]
where $\omega_{A,\lambda,m}$ is determined by the residues $\Res_\infty(a\cdot\omega_{A,\lambda,m})=-m\bigl(\exp_\ulE(\Lie\phi_a(\lambda))\bigr)\in\BF_q$ for all $a\in Q$. 
\end{enumerate}
\end{proposition}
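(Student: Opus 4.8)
The plan is to prove the three parts separately, each time reducing to a result already available in the excerpt or in \cite{Anderson86}. For Part~\ref{PropCompTateModEandM_B} I would first fix an integer $e$ with $v^e\subset A$ principal and a generator $a$ of $v^e$, and use \eqref{EqTateModDivisionTower} to identify $\Koh_{1,v}(\ulE,A_v)=T_v\ulE$ with $\varprojlim_n\bigl(\ulE[a^n](\BC),\phi_a\bigr)$. An element $m\in M_v^\tau=\Koh_v^1(\ulM,A_v)$ is represented by a compatible system $m_n\in M/a^nM$, and for $P\in\ulE[a^n](\BC)$ the value $m_n(P)\in\BC$ is well defined (any $a^nM$-ambiguity of a lift evaluates to $0$ on $\ker\phi_a^n$). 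Since $\tau$ acts on $M(\ulE)=\Hom_{\BF_q,\BC}(E,\BG_{a,\BC})$ by $m\mapsto\Frob_{q,\BG_a}\circ m$ and $\sigma^\ast$ fixes $a$, the congruence $\tau(m_n)\equiv m_n\pmod{a^n}$ forces $m_n(P)^q=m_n(P)$, so $m_n(P)\in\BF_q$; this produces the pairing $(f,m)\mapsto m\circ f$, which is $A$-bilinear by $\phi$-equivariance of evaluation. For perfectness I would pass to the inverse limit over $n$ of Anderson's isomorphisms $\ulE[a^n](\BC)\isoto\Hom_{A/(a^n)}\bigl((M/a^nM)^\tau,\Omega^1_{A/\BF_q}/a^n\Omega^1_{A/\BF_q}\bigr)$ from \cite[Proposition~1.8.3]{Anderson86} — the left vertical arrow of diagram~\eqref{EqDiagMandDMofE} — and combine them with the residue duality identifying $\wh\Omega^1_{A_v/\BF_v}$ with $\Hom_{\BF_q}(Q_v/A_v,\BF_q)$ via $\omega\mapsto\bigl(c\mapsto\Tr_{\BF_v/\BF_q}\Res_v(c\,\omega)\bigr)$. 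Under these identifications Anderson's isomorphism becomes the above pairing, up to the sign already visible in Corollary~\ref{CorMandDMofE}, so the pairing is perfect; the two displayed isomorphisms then follow since $\Hom_{\BF_q}(Q_v/A_v,\BF_q)$ is an invertible $A_v$-module.

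For Part~\ref{PropCompTateModEandM_C} the argument will be formal. Using $\wh\Omega^1_{\BC\dbl z-\zeta\dbr/\BC}=\BC\dbl z-\zeta\dbr\,dz=\Omega^1_{A/\BF_q}\otimes_A\BC\dbl z-\zeta\dbr$ together with the adjunction between extension of scalars and $\Hom$, one gets canonically $\Koh^1_\dR(\ulE,\BC\dbl z-\zeta\dbr)=\Hom_A\bigl(\Omega^1_{A/\BF_q},\,\sigma^\ast M\otimes_{A_\BC}\BC\dbl z-\zeta\dbr\bigr)=\Hom_{\BC\dbl z-\zeta\dbr}\bigl(\wh\Omega^1_{\BC\dbl z-\zeta\dbr/\BC},\,\Koh^1_\dR(\ulM,\BC\dbl z-\zeta\dbr)\bigr)$. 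Tensoring with the invertible module $\wh\Omega^1_{\BC\dbl z-\zeta\dbr/\BC}$ and applying the evaluation isomorphism yields the asserted canonical isomorphism, and by the very definitions of $\Fq^\ulM$ and $\Fq^\ulE$ it carries $\Fq^\ulE\otimes\wh\Omega^1_{\BC\dbl z-\zeta\dbr/\BC}$ onto $\Fq^\ulM$, i.e.\ it is compatible with the Hodge-Pink lattices (and hence with the Hodge-Pink filtrations).

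For Part~\ref{PropCompTateModEandM_A}: this is \cite[Corollary~2.12.1]{Anderson86}; equivalently it is the isomorphism $\beta_A$ of \eqref{EqAnderson2.12.1}, reconstructed through Anderson's scattering matrices in Remark~\ref{RemScattering}, where its well-definedness (the argument that $m\bigl(\exp_\ulE(\Lie\phi_a(\lambda))\bigr)\in\BF_q$) and its bijectivity — both $\Koh_{1,\Betti}(\ulE,A)=\Lambda(\ulE)$ and $\Koh^1_\Betti(\ulM,A)=\Lambda(\ulM)$ being locally free $A$-modules of rank $\rk\ulE$ — are already established. I would simply record this.

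The real work is concentrated in Part~\ref{PropCompTateModEandM_B}: checking that $m\circ f$ genuinely takes values in $\BF_q$ with the correct normalization, and then matching the explicit pairing with Anderson's abstract isomorphism \cite[Proposition~1.8.3]{Anderson86} compatibly with all the transition maps, including the residue–trace identification of $\Hom_{\BF_q}(Q_v/A_v,\BF_q)$ with $\wh\Omega^1_{A_v/\BF_v}$ when $\BF_v\neq\BF_q$. Parts~\ref{PropCompTateModEandM_C} and \ref{PropCompTateModEandM_A} are routine — the former is pure linear algebra over $\BC\dbl z-\zeta\dbr$, the latter a citation.
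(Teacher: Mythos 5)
Your proposal is correct and follows the same route as the paper: part~\ref{PropCompTateModEandM_B} is deduced from Anderson's \cite[Proposition~1.8.3]{Anderson86} (you merely unwind the inverse limit over $a^n$-torsion and the residue--trace identification $\wh\Omega^1_{A_v/\BF_v}\cong\Hom_{\BF_q}(Q_v/A_v,\BF_q)$ that the paper leaves implicit in ``the rest follows from this''), part~\ref{PropCompTateModEandM_C} is the same one-line adjunction $\Hom_A(\Omega^1_{A/\BF_q},\sigma^*M\otimes\BC\dbl z-\zeta\dbr)=\Hom_{\BC\dbl z-\zeta\dbr}(\wh\Omega^1_{\BC\dbl z-\zeta\dbr/\BC},\Koh^1_\dR(\ulM,\BC\dbl z-\zeta\dbr))$, and part~\ref{PropCompTateModEandM_A} is the same citation of \cite[Corollary~2.12.1]{Anderson86}.
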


\begin{proof}
\ref{PropCompTateModEandM_B} The existence of the perfect pairing follows from Anderson~\cite[Proposition~1.8.3]{Anderson86}. The rest follows from this.

\medskip\noindent
\ref{PropCompTateModEandM_C} By the universal property of the tensor product $\wh\Omega^1_{\BC\dbl z-\zeta\dbr/\BC}=\BC\dbl z-\zeta\dbr\otimes_A\Omega^1_{A/\BF_q}$ and our definitions
\begin{eqnarray*}
\Koh^1_\dR(\ulE,\BC\dbl z-\zeta\dbr) & := & \Hom_A\bigl(\Omega^1_{A/\BF_q},\,\sigma^*M\otimes_{A_\BC}\BC\dbl z-\zeta\dbr\bigr)\\[2mm]
& \,= & \Hom_{\BC\dbl z-\zeta\dbr}\bigl(\wh\Omega^1_{\BC\dbl z-\zeta\dbr/\BC},\,\Koh^1_\dR(\ulM,\BC\dbl z-\zeta\dbr)\bigr)
\end{eqnarray*}
and this is compatible with the Hodge-Pink lattices.

\medskip\noindent
\ref{PropCompTateModEandM_A} The perfect pairing was established by Anderson~\cite[Corollary~2.12.1]{Anderson86} and already used by us in \eqref{EqAnderson2.12.1} and in Theorem~\ref{ThmHPofEandM}.
\end{proof}

\begin{remark}\label{RemContDiff}
Let $\BF_v$ be the residue field of $A_v$. Then there is a canonical isomorphism of $A_v$-modules
\begin{equation}\label{EqTraceOnHom}
\Hom_{\BF_v}(Q_v/A_v,\BF_v) \;\isoto\;\Hom_{\BF_q}(Q_v/A_v,\BF_q)\,,\quad f\;\longmapsto\; \Tr_{\BF_v/\BF_q}\circ f
\end{equation}
given by composition with the trace map $\Tr_{\BF_v/\BF_q}\colon\BF_v\to\BF_q$. Indeed, $Q_v/A_v=\bigcup_n v^{-n}A_v/A_v$ is a union of finite dimensional $\BF_v$-vector spaces, and the $A_v$-homomorphism 
\[
\TS \Hom_{\BF_v}\bigl(\bigcup\limits_n v^{-n}A_v/A_v,\BF_v\bigr) \;\isoto\;\Hom_{\BF_q}\bigl(\bigcup\limits_n v^{-n}A_v/A_v,\BF_q\bigr)\,,\quad f \;\mapsto\; \Tr_{\BF_v/\BF_q}\circ f
\]
is injective, whence bijective by dimension reasons, because an element of these Hom sets is non-zero if and only if it is surjective onto $\BF_v$, respectively $\BF_q$. So the injectivity follows from the surjectivity of $\Tr_{\BF_v/\BF_q}$.

Furthermore, the $A_v$-module $\Hom_{\BF_v}(Q_v/A_v,\BF_v)$ is canonically isomorphic to the module of continuous differential forms $\wh\Omega^1_{A_v/\BF_v}$ under the map
\begin{equation}\label{EqOmegaAndHom}
\wh\Omega^1_{A_v/\BF_v} \; \isoto \; \Hom_{\BF_v}(Q_v/A_v,\BF_v)\,,\quad\omega\longmapsto\bigl(a\mapsto \Res_v^{\BF_v}(a\omega)\bigr)\,,
\end{equation}
where $\Res_v^{\BF_v}\colon\wh\Omega^1_{A_v/\BF_v}\to\BF_v$ is the residue map.
After choosing a uniformizing parameter $z$ of $A_v$ we can identify $A_v=\BF_v\dbl z\dbr$ and $\wh\Omega^1_{A_v/\BF_v}\cong\BF_v\dbl z\dbr dz$ and the inverse map is given by $\Hom_{\BF_v}(Q_v/A_v,\BF_v)\to\BF_v\dbl z\dbr dz,\,f\mapsto \sum_{i=0}^\infty f(z^{-1-i})z^idz$.

Combining \eqref{EqTraceOnHom} and \eqref{EqOmegaAndHom} and putting $\Res_v:=\Tr_{\BF_v/\BF_q}\circ\Res_v^{\BF_v}\colon\wh\Omega^1_{A_v/\BF_v}\to\BF_q$ yields the isomorphism
\begin{equation}\label{EqOmegaAndHom2}
\wh\Omega^1_{A_v/\BF_v} \; \isoto \; \Hom_{\BF_q}(Q_v/A_v,\BF_q)\,,\quad\omega\longmapsto\bigl(a\mapsto \Res_v(a\omega)\bigr)\,.
\end{equation}
\end{remark}

To obtain a comparison isomorphism between Betti cohomology and de Rham cohomology of Drinfeld modules, Gekeler~\cite[\S\,2]{Gekeler89} defined a kind of ``cycle integration'' as follows. He shows that for each $\eta\in D(\ulE,\BC)$ there exists a uniquely determined power series $F_\eta(X)=\sum_{i=0}^\infty f_i X^{q^i}$ in one variable $X$ such that
\begin{equation}\label{EqFEta}
F_\eta(\charmorph(a)\cdot X)-\charmorph(a)\cdot F_\eta(X)\;=\;\eta_a(\exp_\ulE(X))
\end{equation}
for all $a\in A$. (See \cite[\S\,3.2]{BrownawellPapanikolas02} for the generalization to abelian Anderson $A$-modules.) This defines a pairing
\begin{equation}\label{EqGekelersPairing}
\Koh_{1,\Betti}(\ulE,A)\times\Koh^1_\dR(\ulE,\BC)\;\longto\;\BC\,,\quad(\lambda,\eta)\;\mapsto\;\TS\int_\lambda\eta\;:=\;F_\eta(\lambda)\;\in\;\BC\,.
\end{equation}
We generalize this as follows.

\begin{theorem} \label{ThmPeriodIsomForE}
If $\ulE$ is a uniformizable abelian Anderson $A$-module there are canonical \emph{comparison isomorphisms}, sometimes also called \emph{period isomorphisms} for all $v$
\begin{eqnarray*}
h_{\Betti,v}\colon \Koh_{1,\Betti}(\ulE,A_v) \; = \; \Lambda(\ulE)\otimes_A A_v & \isoto & \Koh_{1,v}(\ulE,A_v) \; = \; \Hom_A\bigl(Q_v/A_v,\,\ulE(\BC)\bigr)\,,\\[2mm]
\lambda\otimes y & \longmapsto & \bigl(x\mapsto \exp_\ulE(\Lie\phi_{xy}(\lambda))\bigr)
\end{eqnarray*}
where $xy$ is viewed as an element of $A[\tfrac{1}{a}]/A$ for $v^e=(a)$ as above, and 
\[
\begin{array}[b]{rccl}
h_{\Betti,\,\dR}\colon & \Koh^1_\Betti(\ulE,\BC\dbl z-\zeta\dbr) & \isoto & \Koh^1_\dR(\ulE,\BC\dbl z-\zeta\dbr) \quad\text{and}\\[2mm]
h_{\Betti,\,\dR}\colon & \Koh^1_\Betti(\ulE,\BC) & \isoto & \Koh^1_\dR(\ulE,\BC)
\end{array}
\]
which are compatible with the Hodge-Pink lattices and Hodge-Pink filtration provided on the Betti realization $\Koh^1_\Betti(\ulE,Q)=\Hodge^1(\ulE)$ via the associated Hodge-Pink structure $\ulHodge^1(\ulE)$. All these isomorphisms are compatible with the comparison isomorphisms from Theorem~\ref{ThmCompIsomBettiDRAMotive} and Proposition~\ref{PropCompTateModEandM}.

Moreover, if $\ulE$ is a Drinfeld $A$-module, our comparison isomorphism $h_{\Betti,\,\dR}$ coincides with Gekeler's which is given by ``cycle integration'' 
\[
h_{\Betti,\,\dR}^{-1}\colon \Koh^1_\dR(\ulE,\BC)\;\isoto\;\Koh^1_\Betti(\ulE,\BC)\;=\;\Hom_A(\Lambda(\ulE),\BC)\,,\quad\eta\longmapsto(\lambda\mapsto\TS\int_\lambda\eta)\,.
\]
\end{theorem}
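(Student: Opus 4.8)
\emph{Overall strategy.} The plan is to deduce all assertions except the last one from the corresponding facts for the associated $A$-motive $\ulM:=\ulM(\ulE)$, using the comparison isomorphisms of Theorem~\ref{ThmCompIsomBettiDRAMotive} together with the identifications of cohomology realizations in Proposition~\ref{PropCompTateModEandM}. Throughout, $\ulE$ uniformizable and abelian forces $\ulM$ uniformizable (Remark~\ref{RemAModuleUniformizable}), so those results apply. The only genuinely new content will be the final statement, that in the Drinfeld case our $h_{\Betti,\dR}$ agrees with Gekeler's cycle-integration pairing.

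\emph{The $v$-adic comparison.} Writing $v^e=(a)$, an element of $\Koh_{1,v}(\ulE,A_v)=T_v\ulE$ is by \eqref{EqTateModDivisionTower} the same datum as an $a$-division tower above $0$. For $\lambda\in\Lambda(\ulE)$ and $y\in A_v$ the tower $\bigl(\exp_\ulE(\Lie\phi_{a^{-n}y}(\lambda))\bigr)_{n\ge1}$ is such a tower: its entries are $a$-power torsion because $\exp_\ulE(\lambda)=0$, it is independent of the chosen representatives, and it is convergent by Lemma~\ref{LemmaLog}. This defines the stated map $\lambda\otimes y\mapsto\bigl(x\mapsto\exp_\ulE(\Lie\phi_{xy}(\lambda))\bigr)$. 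To see it is an isomorphism compatible with the $A$-motive picture, I would use the perfect pairing of Proposition~\ref{PropCompTateModEandM}\ref{PropCompTateModEandM_B} and the residue identifications of Remark~\ref{RemContDiff} to recognize it as the adjoint of the isomorphism $h_{\Betti,v}$ for $\ulM$ from Theorem~\ref{ThmCompIsomBettiDRAMotive}, reduced modulo the powers $v^n$; the verification is a direct computation with the switcheroo formula of Proposition~\ref{PropSwitcheroo} (for $\dm=0$) and Theorem~\ref{ThmDivTowersAndLie}.

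\emph{The de Rham comparison, Hodge--Pink compatibility, and the remaining compatibilities.} Here I would transport $h_{\Betti,\dR}$ for $\ulM$ (Theorem~\ref{ThmCompIsomBettiDRAMotive}) through the canonical identifications $\Koh^1_\dR(\ulM,\BC\dbl z-\zeta\dbr)\cong\Koh^1_\dR(\ulE,\BC\dbl z-\zeta\dbr)\otimes_{\BC\dbl z-\zeta\dbr}\wh\Omega^1_{\BC\dbl z-\zeta\dbr/\BC}$ of Proposition~\ref{PropCompTateModEandM}\ref{PropCompTateModEandM_C} and $\Koh^1_\Betti(\ulM,A)\cong\Koh^1_\Betti(\ulE,A)\otimes_A\Omega^1_{A/\BF_q}$ coming from the perfect pairing of Proposition~\ref{PropCompTateModEandM}\ref{PropCompTateModEandM_A} (equivalently from $\beta_A$ in \eqref{EqAnderson2.12.1}), and then untwist by the invertible module $\wh\Omega^1$; reduction modulo $z-\zeta$ gives the variant with $\BC$-coefficients. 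Compatibility with the Hodge--Pink lattices is built into Proposition~\ref{PropCompTateModEandM}\ref{PropCompTateModEandM_C}, which carries $\Fq^\ulM$ to $\Fq^\ulE\otimes\wh\Omega^1$, combined with the $\ulM$-case of the statement; alternatively it follows from the canonical isomorphism $\ulHodge^1(\ulE)\cong\ulHodge^1(\ulM)$ of Theorem~\ref{ThmHPofEandM} together with $\ul\Omega\cong\UOne(0)$. All the compatibility assertions relating $h_{\Betti,v}$, $h_{\Betti,\dR}$ and the $A$-motive comparison isomorphisms of Theorem~\ref{ThmCompIsomBettiDRAMotive} and Proposition~\ref{PropCompTateModEandM} are then formal consequences of these constructions and of the compatibilities already recorded there, obtained by a diagram chase.

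\emph{The identification with Gekeler's cycle integration --- the main obstacle.} This is the substantive part. Assume $\ulE$ is a Drinfeld $A$-module. Using Lemma~\ref{LemmaGekeler} I would identify $\Koh^1_\dR(\ulE,\BC)=D(\ulE,\BC)/D_{si}(\ulE,\BC)$ with $\Hom_A(\Omega^1_{A/\BF_q},\sigma^\ast M/J\sigma^\ast M)$, so that $h_{\Betti,\dR}^{-1}$ becomes the assignment sending a class $\bar m\in\sigma^\ast M/J\sigma^\ast M$ to the $\BC$-linear functional on $\Lambda(\ulE)$ obtained, via $(\sigma^\ast h_\ulM)^{-1}$ reduced mod $J$ and the residue pairing $\omega_{A,\lambda,m}$ of Proposition~\ref{PropCompTateModEandM}\ref{PropCompTateModEandM_A}, by pairing with $\lambda$. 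After reducing to the case $A=\BF_q[t]$ by the trace-map device used in the proof of Theorem~\ref{ThmHPofEandM} (so that $\Omega^1_{A/\BF_q}$ acquires the trivialization $dt$), this functional is, by Lemma~\ref{LemmaUniformizableBF_q[t]} and Remark~\ref{RemScattering}, expressed through the scattering matrix $\Psi$ whose columns are built from the Anderson generating functions $f_\lambda(t)=\sum_{n\ge0}\exp_\ulE(\theta^{-n-1}\lambda)t^n$ of Corollary~\ref{CorDivisionTower} and Example~\ref{exmp:scatteringmatrix}. On the other side, for a biderivation $\eta$ I would take the power series $F_\eta$ of \eqref{EqFEta} and iterate its functional equation $F_\eta(\charmorph(a)X)-\charmorph(a)F_\eta(X)=\eta_a(\exp_\ulE X)$ at $X=\theta^{-n-1}\lambda$, in the same way as \eqref{eq:useful}; the resulting telescoping sum --- convergent because $\theta^{-n}\lambda\to0$ and $F_\eta$ vanishes to order $\ge1$ at the origin --- rewrites $F_\eta(\lambda)=\int_\lambda\eta$ as precisely the same $\BC$-linear combination of values of the $\sigma^{j\ast}f_\lambda$ at $t=\theta$ that the residue formula for $h_{\Betti,\dR}^{-1}$ produces. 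Matching the two expressions term by term --- essentially \cite[Theorem~5.14]{Gekeler89} recast in the present language --- finishes the proof. The hardest bookkeeping will be tracking the Frobenius twists and fixing the normalizations (in particular the sign distinguishing our scattering matrix from Anderson's, cf.\ Remark~\ref{RemScattering}) so that the two pairings agree exactly rather than up to a scalar.
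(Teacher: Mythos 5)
Your proposal is correct and follows essentially the same route as the paper: the $v$-adic and de Rham isomorphisms are obtained by transporting the $A$-motive comparison isomorphisms of Theorem~\ref{ThmCompIsomBettiDRAMotive} through the pairings of Proposition~\ref{PropCompTateModEandM} (the $v$-adic case reducing to the residue identity $\Res_v(xy\,\omega_{A,\lambda,m})=-\Res_\infty(xy\,\omega_{A,\lambda,m})=m(\exp_\ulE(\Lie\phi_{xy}\lambda))$, the Hodge--Pink compatibility to Theorem~\ref{ThmHPofEandM}), and the Gekeler identification is carried out exactly as you describe, via the trace reduction to $\BF_q[t]$, the scattering matrix, and the telescoping of the functional equation \eqref{EqFEta} modulo $J$. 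The only cosmetic deviation is your appeal to Proposition~\ref{PropSwitcheroo} and Theorem~\ref{ThmDivTowersAndLie} in the $v$-adic step, which concern $A$-finite rather than abelian modules and are not needed once the diagram with the residue pairing is in place.
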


\begin{proof}
Clearly, the $A_v$-homomorphism $h_{\Betti,v}$ is well defined. In order to show that $h_{\Betti,v}$ is an isomorphism it suffices to prove that it is compatible with the comparison isomorphisms from Theorem~\ref{ThmCompIsomBettiDRAMotive} and Proposition~\ref{PropCompTateModEandM}. For this purpose we show that the following diagram commutes
\begin{equation}\label{EqThmPeriodIsomForE}
\xymatrix { 
\Koh_{1,\Betti}(\ulE,A) \otimes_A A_v \ar[d]_{\cong}\ar[r]^{\qquad h_{\Betti,v}} & \Koh_{1,v}(\ulE, A_v) \ar[d]^{\cong}\\
\Hom_A\bigl(\Koh^1_\Betti(\ulM,A),\,\Hom_{\BF_q}(Q_v/A_v,\BF_q)\bigr) & \Hom_{A_v}\bigl(\Koh^1_v(\ulM,A_v),\,\Hom_{\BF_q}(Q_v/A_v,\BF_q)\bigr) \ar[l]_{\es\cong}
}
\end{equation}
By Proposition~\ref{PropCompTateModEandM}\ref{PropCompTateModEandM_A} and the identification $\wh\Omega^1_{A_v/\BF_v}=\Hom_{\BF_q}(Q_v/A_v,\BF_q)$ from Remark~\ref{RemContDiff}, the left vertical arrow is given for $\lambda\in\Koh_{1,\Betti}(\ulE,A)$ and $y\in A_v$ and $m\in\Koh^1_\Betti(\ulM,A)$ by the assignment \mbox{$\lambda\otimes y\longmapsto(m\mapsto\omega_{A,\lambda,m}\otimes y)$} where $\omega_{A,\lambda,m}\otimes y\in\Omega^1_{A/\BF_q}\otimes_A A_v=\wh\Omega^1_{A_v/\BF_v}$ is identified with the map $\omega_{A,\lambda,m}\otimes y\colon Q_v/A_v\to\BF_q,$ $x\mapsto \Res_v(\omega_{A,\lambda,m}\otimes xy):=\Tr_{\BF_v/\BF_q}\bigl(\Res_v^{\BF_v}(\omega_{A,\lambda,m}\otimes xy)\bigr)$; see Remark~\ref{RemContDiff}. When we view $xy$ as an element of $A[\tfrac{1}{a}]/A$ for $v^e=(a)$ as above then the global differential form $xy\cdot\omega_{A,\lambda,m}\in\Omega^1_{Q/\BF_q}$ is holomorphic outside $v$ and $\infty$, and therefore $\Res_v(xy\cdot\omega_{A,\lambda,m})=-\Res_\infty(xy\cdot\omega_{A,\lambda,m})=m\bigl(\exp_\ulE(\Lie\phi_{xy}(\lambda))\bigr)$ by \cite[Definition~9.3.10 and Theorem~9.3.22]{VillaSalvador}. According to Proposition~\ref{PropCompTateModEandM}\ref{PropCompTateModEandM_B} and Theorem~\ref{ThmCompIsomBettiDRAMotive} this coincides with the composition of the other three maps in diagram~\eqref{EqThmPeriodIsomForE} as claimed.

We define $h_{\Betti,\,\dR}$ to be the composition of the isomorphisms
\begin{eqnarray*}
\Koh^1_\Betti(\ulE,\BC\dbl z-\zeta\dbr) & \isoto & \Hom_{\BC\dbl z-\zeta\dbr}\bigl(\wh\Omega^1_{\BC\dbl z-\zeta\dbr/\BC},\,\Koh^1_\Betti(\ulM,\BC\dbl z-\zeta\dbr)\bigr) \\[2mm]
& \isoto & \Hom_{\BC\dbl z-\zeta\dbr}\bigl(\wh\Omega^1_{\BC\dbl z-\zeta\dbr/\BC},\,\Koh^1_\dR(\ulM,\BC\dbl z-\zeta\dbr)\bigr) \\[2mm]
& \isoto & \Koh^1_\dR(\ulE,\BC\dbl z-\zeta\dbr)
\end{eqnarray*}
from Theorem~\ref{ThmCompIsomBettiDRAMotive} and Proposition~\ref{PropCompTateModEandM}\ref{PropCompTateModEandM_A} and \ref{PropCompTateModEandM_C}. The compatibility with the Hodge-Pink lattices was established in Theorem~\ref{ThmHPofEandM} for the first of these isomorphisms, and in Theorem~\ref{ThmCompIsomBettiDRAMotive} and Proposition~\ref{PropCompTateModEandM} for the other two.

\medskip
To prove that for a Drinfeld $A$-module our period isomorphism $h_{\Betti,\,\dR}$ is equal to Gekeler's, we describe the pairing
\begin{equation}\label{EqCompGekeler}
\Koh^1_\dR(\ulE,\BC\dbl z-\zeta\dbr)\times\Koh_{1,\Betti}(\ulE,A) \;\longto\;\BC\dbl z-\zeta\dbr
\end{equation}
induced by our $h_{\Betti,\,\dR}$. Let $f\in\Koh^1_\dR(\ulE,\BC\dbl z-\zeta\dbr):=\Hom_A\bigl(\Omega^1_{A/\BF_q},\,\sigma^*M\otimes_{A_\BC}\BC\dbl z-\zeta\dbr\bigr)=\Hom_{\BC\dbl z-\zeta\dbr}\bigl(\wh\Omega^1_{\BC\dbl z-\zeta\dbr/\BC},\,\Koh^1_\dR(\ulM,\BC\dbl z-\zeta\dbr)\bigr)$. Under the period isomorphism $h_{\Betti,\,\dR}=\sigma^*h_\ulM$ for $\ulM$ from Theorem~\ref{ThmCompIsomBettiDRAMotive} this $f$ is sent to $\sigma^*h_\ulM^{-1}\circ f\in\Hom_{\BC\dbl z-\zeta\dbr}\bigl(\wh\Omega^1_{\BC\dbl z-\zeta\dbr/\BC},\,\Koh^1_\Betti(\ulM,\BC\dbl z-\zeta\dbr)\bigr)$. For $\lambda\in\Koh_{1,\Betti}(\ulE,A):=\Lambda(\ulE)$ consider the element $\beta_A(\lambda)=m_{A,\lambda}\dual\in\Hom_A(\Lambda(\ulM),\Omega^1_{A/\BF_q})$ from \eqref{EqAnderson2.12.1}, which sends $m\in\Lambda(\ulM)$ to the differential form $\omega_{A,\lambda,m}\in\Omega_{A/\BF_q}$. Then our pairing \eqref{EqCompGekeler} sends $(f,\lambda)$ to 
\begin{equation}\label{EqCompGekeler2}
(m_{A,\lambda}\dual\otimes_A\id_{\BC\dbl z-\zeta\dbr})\circ\sigma^*h_\ulM^{-1}\circ f\es\in\es\End_{\BC\dbl z-\zeta\dbr}(\wh\Omega^1_{\BC\dbl z-\zeta\dbr/\BC})\es=\es\BC\dbl z-\zeta\dbr\,.
\end{equation}

To compute this element we apply Anderson's theory of scattering matrices \cite[\S\,3]{Anderson86} and recall the notation from Remark~\ref{RemScattering}. In particular $\wt A=\BF_q[t]\subset A$ is a finite flat ring extension for which the corresponding morphism of curves $C\to\BP^1_{\BF_q}$ is separable, $\CB=(m_1,\ldots,m_r)$ is a basis of $M$ over $\wt A_\BC=\BC[t]$, and $(\lambda_1,\ldots,\lambda_r)$ is an $\BF_q[t]$-basis of $\Lambda(\ulE)$, where $r=\rk_{\BF_q[t]}\Lambda(\ulE)=\rk_{\BC[t]}M$. Then
\[
\Psi\;:=\;\Bigl(\sum_{k=0}^\infty m_i\bigl(\exp_\ulE(\theta^{-k-1}\lambda_j)\bigr)t^k\Bigr)_{i,j=1,\ldots,r}\;\in\; M_r\bigl((t-\theta)^{-d}\BC\langle\tfrac{t}{\theta}\rangle\bigr)\;.
\]
is Anderson's scattering matrix, where $\theta=\charmorph(t)\in\BC$ and $d=\dim\ulE$. The matrix $(\Psi^{-1})^T$ belongs to $M_r\bigl(\CO(\dotFC_\BC)\bigr)$ and its columns form an $\BF_q[t]$-basis $\CC=(n_1,\ldots,n_r)$ of $\Lambda(\ulM)$. With respect to the bases $\CC$ and $\CB$ the morphism $h_\ulM\colon\Lambda(\ulM)\otimes_A\CO(\dotFC_\BC)\to M\otimes_{A_{\BC}}\CO(\dotFC_\BC)$ is represented by $(\Psi^{-1})^T$. At every point $P\in C_\BC$ lying above $\Var(t-\theta)\in\BP^1_\BC$ the element $t-\theta$ is a uniformizing parameter by Lemma~\ref{LemmaZ-Zeta}. Therefore, $P$ is unramified and $A\otimes_{\BF_q[t]}\BC\dbl t-\theta\dbr=\prod_{P|\Var(t-\theta)}\wh\CO_{C_\BC,P}=\prod_{P|\Var(t-\theta)}\BC\dbl t-\theta\dbr$. Let $pr\colon A\otimes_{\BF_q[t]}\BC\dbl t-\theta\dbr\onto\wh\CO_{C_\BC,\Var(J)}=\BC\dbl z-\zeta\dbr$ be the projection onto the factor for $P=\Var(J)$. The trace map $\Tr_{Q/\BF_q(t)}\colon Q\to\BF_q(t)$ corresponds under this product decomposition to the map 
\[
\Tr_{Q/\BF_q(t)}\otimes_{\BF_q(t)}\id_{\BC\dbl t-\theta\dbr}\colon\prod_{P|\Var(t-\theta)}\BC\dbl t-\theta\dbr\;\longto\;\BC\dbl t-\theta\dbr\,,\quad(f_P)_P\;\mapsto\;\sum_P f_P\,.
\]

We now view $\beta_A(\lambda)=m_{A,\lambda}\dual\in\Hom_A(\Lambda(\ulM),\Omega^1_{A/\BF_q})$ as an element of $\Hom_{\BF_q[t]}(\Lambda(\ulM),\Omega^1_{A/\BF_q})$ and consider $(m_{A,\lambda}\dual\otimes_{\BF_q[t]}\id_{\BC\dbl t-\theta\dbr})\in\Hom_{\BC\dbl t-\theta\dbr}\bigl(\Lambda(\ulM)\otimes_{\BF_q[t]}\BC\dbl t-\theta\dbr,\,\Omega^1_{A/\BF_q}\otimes_{\BF_q[t]}\BC\dbl t-\theta\dbr\bigr)$. Let $f_i\in\BC\dbl t-\theta\dbr$ be such that $\sum_in_i\otimes f_i\in\Lambda(\ulM)\otimes_{\BF_q[t]}\BC\dbl t-\theta\dbr=\Lambda(\ulM)\otimes_A\prod_{P|\Var(t-\theta)}\wh\CO_{C_\BC,P}$ is the element whose component at $P=\Var(J)$ is $(\sigma^*h_\ulM^{-1}\circ f)(dt)$ and whose components at $P\ne\Var(J)$ are $0$. Writing $\lambda=\sum_j c_j\lambda_j$ with $c_j\in\BF_q[t]$ we obtain
\begin{eqnarray*}
\bigl((m_{A,\lambda}\dual\otimes_A\id_{\BC\dbl z-\zeta\dbr})\circ\sigma^*h_\ulM^{-1}\circ f\bigr)(dt) & = & pr\circ(m_{A,\lambda}\dual\otimes_{\BF_q[t]}\id_{\BC\dbl t-\theta\dbr})(\TS\sum\limits_in_i\otimes f_i)\\[2mm]
& = & (\Tr_{Q/\BF_q(t)}\otimes_{\BF_q(t)}\id_{\BC\dbl t-\theta\dbr})(\TS\sum\limits_i f_i\cdot \omega_{A,\lambda,n_i})\\[2mm]
& = & \TS\sum\limits_i f_i\cdot \omega_{\wt A,\lambda,n_i}\\[2mm]
& = & \TS\sum\limits_i f_i\cdot c_i\,dt\,,
\end{eqnarray*}
where the third equality was proved in \eqref{EqEandM3} and the last equality in \eqref{EqEandM5}. Thus by \eqref{EqCompGekeler2} our pairing \eqref{EqCompGekeler} sends $(f,\lambda)$ to $\sum_i f_i c_i\in\BC\dbl t-\theta\dbr\cong\BC\dbl z-\zeta\dbr$. 

We compare this to Gekeler's pairing \eqref{EqGekelersPairing}. For our $f\in\Koh^1_\dR(\ulE,\BC\dbl z-\zeta\dbr)$ consider the element $\tau_M(f(dt))\in\tau_M(\sigma^*M)\otimes_{A_\BC}\BC\dbl z-\zeta\dbr$. Its reduction modulo $z-\zeta$ in $\tau_M(\sigma^*M)\otimes_{A_\BC}A_\BC/J$ induces by \eqref{EqLemmaGekeler1} and \eqref{EqLemmaGekeler} an element $\eta\mod D_{si}(\ulE,\BC)$ in $D(\ulE,\BC)/D_{si}(\ulE,\BC)$ with $\eta_t\equiv\tau_M(f(dt))\mod (z-\zeta)$, because $J/J^2=\BC\cdot(t-\theta)$. So modulo $J$ we have $\sum_i n_i\otimes f_i\equiv(\sigma^*h_\ulM^{-1}\circ f)(dt)\equiv(\sigma^*h_\ulM^{-1}\circ\tau_M^{-1})(\eta_t)=h_\ulM^{-1}(\eta_t)\mod J$. Let ${}_\CB[\eta_t]\in\BC[t]^r$ be the coordinate vector of $\eta_t\in\tau_M(\sigma^*M)\subset M$ with respect to the $\BC[t]$-basis $\CB$ of $M$. Then $\Psi^T\cdot{}_\CB[\eta_t]$ is the coordinate vector of $h_\ulM^{-1}(\eta_t)$ with respect to the $\BC\langle\tfrac{t}{\theta}\rangle$-basis $\CC$ of $\Lambda(\ulM)\otimes_{\BF_q[t]}\BC\langle\tfrac{t}{\theta}\rangle$. Therefore, using \eqref{EqFEta} we compute modulo $J$
\begin{eqnarray*}
(f_1,\ldots,f_r)^T & \equiv & \Psi^T\cdot{}_\CB[\eta_t]\\[2mm]
& \equiv & \left(\sum_{k=0}^\infty \eta_t\bigl(\exp_\ulE(\theta^{-k-1}\lambda_j)\bigr)t^k\right)_{j=1\ldots r}^T\\[2mm]
& \equiv & \left(t\cdot\sum_{k=0}^\infty F_\eta(\theta^{-k}\lambda_j)t^{k-1}-\sum_{k=0}^\infty \theta\cdot F_\eta(\theta^{-k-1}\lambda_j)t^k\right)_{j=1\ldots r}^T\\[2mm]
& \equiv & \left(F_\eta(\lambda_j) + (t-\theta)\cdot\sum_{k=1}^\infty F_\eta(\theta^{-k}\lambda_j)t^{k-1}\right)_{j=1\ldots r}^T\\[2mm]
& \equiv & \bigl(F_\eta(\lambda_j)\bigr)_{j=1\ldots r}^T\es\mod J\,.
\end{eqnarray*}
Since $(c_1,\ldots,c_r)^T$ is the coordinate vector of $\lambda$ with respect to the $\BF_q[t]$-basis $(\lambda_1,\ldots,\lambda_r)$ of $\Lambda(\ulE)$ we conclude $\sum_j c_j f_j\equiv \sum_j c_j F_\eta(\lambda_j)\equiv F_\eta(\lambda)\mod J$. So modulo $J\cdot\BC\dbl z-\zeta\dbr=(z-\zeta)$ our pairing \eqref{EqCompGekeler} specializes to Gekeler's pairing \eqref{EqGekelersPairing}. This completes the proof of the theorem.
\end{proof}

\medskip

\begin{proposition}\label{PropCompTateModEandDualM}
Let $\ulE$ be an $A$-finite Anderson $A$-module over $\BC$ and let $\uldM=\uldM(\ulE)$ be the associated dual $A$-motive. 
\begin{enumerate}
\item \label{PropCompTateModEandDualM_A}
The isomorphism \eqref{EqThmDivisionTower} from Theorem~\ref{ThmDivisionTower} induces canonical isomorphisms of $A_v$-modules
\[
\Koh_{1,v}(\uldM,A_v) \; \isoto \; \Koh_{1,v}(\ulE,A_v) \qquad\text{and}\qquad \Koh^1_v(\uldM,A_v) \; \isoto \; \Koh^1_v(\ulE,A_v) \,.
\]
\item \label{PropCompTateModEandDualM_C}
There are canonical isomorphisms of $\BC\dbl z-\zeta\dbr$-modules
\begin{eqnarray*}
\Koh^1_{\dR}(\uldM,\BC\dbl z-\zeta\dbr) & \isoto & \Koh^1_{\dR}(\ulE,\BC\dbl z-\zeta\dbr)\qquad\text{and}\\[2mm]
\Koh_{1,\dR}(\uldM,\BC\dbl z-\zeta\dbr) & \isoto & \Koh_{1,\dR}(\ulE,\BC\dbl z-\zeta\dbr)\,,
\end{eqnarray*}
which are compatible with the Hodge-Pink lattices.
\item \label{PropCompTateModEandDualM_B}
If $\ulE$ is uniformizable, the map $\delta_0$ from Proposition~\ref{prop:commutingdiagrams} and Corollary~\ref{CorTo_Boiler2.5.5} provides canonical isomorphisms of $A$-modules
\begin{eqnarray*}
\delta_0\colon & \Koh_{1,\Betti}(\uldM,A) & \isoto \es \Koh_{1,\Betti}(\ulE,A)\qquad\text{and}\\[2mm]
(\delta_0\dual)^{-1}\colon & \Koh^1_{\Betti}(\uldM,A) & \isoto \es \Koh^1_{\Betti}(\ulE,A)
\end{eqnarray*}
\end{enumerate}
If $\ulE$ is both abelian and $A$-finite and $\ulM=\ulM(\ulE)$ is the $A$-motive of $\ulE$ from Definition~\ref{DefAbelianAMod}, the isomorphisms above are also compatible with the isomorphisms from Propositions~\ref{PropCohAMot} and \ref{PropCompTateModEandM} and the isomorphism $\Xi\colon\uldM(\ulM)\isoto\uldM$ from Theorem~\ref{ThmMandDMofE}, in the sense that the diagram
\begin{equation}\label{EqDiagCompatXi}
\xymatrix @C+7pc {
\Koh_{1,*}\bigl(\uldM(\ulM)\bigr) \ar[r]_{\TS\cong}^{\TS\Koh_{1,*}(\Xi)} & \Koh_{1,*}\bigl(\uldM(\ulE)\bigr) \ar[d]_{\TS\cong} \\
\Hom\bigl(\Koh^1_*(\ulM),\Omega\bigr) \ar[r]_{\TS\cong}^{\TS\rm\es Proposition~\ref{PropCompTateModEandM}} \ar[u]_{\TS\cong}^{\TS\rm Proposition~\ref{PropCohAMot}\es} &  \Koh_{1,*}(\ulE)
}
\end{equation}
is commutative where $*\in\{\Betti,\dR,v\}$ and $\Omega=\Omega^1_{A/\BF_q}$ for $*=\Betti$, respectively $\Omega=\wh\Omega^1_{\BC\dbl z-\zeta\dbr/\BC}=\BC\dbl z-\zeta\dbr dz$ for $*=\dR$, respectively $\Omega=\wh\Omega^1_{A_v/\BF_v}=\Hom_{\BF_q}(Q_v/A_v,\BF_q)$ for $*=v$.
\end{proposition}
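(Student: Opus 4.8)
The plan is to treat the three realizations one at a time --- each being a near-immediate consequence of a result already in the excerpt --- and then to establish the commutativity of \eqref{EqDiagCompatXi} by reducing the Betti and de Rham cases to the $v$-adic case, which in turn is an inverse limit of Corollary~\ref{CorMandDMofE}. For part~\ref{PropCompTateModEandDualM_A} I would fix the place $v$ and, as in Section~\ref{SectCohAMod}, an integer $e$ with $v^e=(a)$ principal; then $\dM_a=\dM\otimes_{A_\BC}A_{\BC,v}=\dM_v$ because $v$ is the only maximal ideal of $A$ containing $a$. Specialising Theorem~\ref{ThmDivisionTower}\ref{ThmDivisionTower_PartA} to $\dm=0$ yields a bijection between $\dM_v^\sdtau=\{\dm'\in\dM_a:\sdtau_\dM(\sdsigma^\ast\dm')=\dm'\}$ and the set of $a$-division towers above $0$, which under \eqref{EqTateModDivisionTower} is $T_v\ulE$. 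Since $\dM_v^\sdtau=\varprojlim_n(\dM/a^n\dM)^\sdtau$ (by left-exactness of $\varprojlim$ applied to the surjective tower $\dM/a^n\dM$) and $T_v\ulE=\varprojlim_n\ulE[a^n](\BC)$, and since Proposition~\ref{PropSwitcheroo} (with $a^n$ in place of $a$) makes the finite-level bijections $(\dM/a^n\dM)^\sdtau\isoto\ulE[a^n](\BC)$ into isomorphisms of $A/(a^n)$-modules, the resulting bijection $\Koh_{1,v}(\uldM,A_v)\isoto\Koh_{1,v}(\ulE,A_v)$ is $A_v$-linear; applying $\Hom_{A_v}(\,\cdot\,,A_v)$ gives the companion isomorphism on $\Koh^1_v$. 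Part~\ref{PropCompTateModEandDualM_C} requires no argument: with $\uldM=\uldM(\ulE)$, the definitions in Section~\ref{SectCohAMod} of $\Koh_{1,\dR}(\ulE,\BC\dbl z-\zeta\dbr)$, $\Koh^1_\dR(\ulE,\BC\dbl z-\zeta\dbr)$ and of the lattices $\Fq_\ulE,\Fq^\ulE$ coincide verbatim with those of $\uldM$ from Section~\ref{CohdualAMot}, and likewise over $\BC$. Part~\ref{PropCompTateModEandDualM_B} is immediate from Theorem~\ref{ThmUnifAModandDM} via Corollary~\ref{CorTo_Boiler2.5.5}, which provides the $A$-isomorphism $\delta_0\colon\Lambda(\uldM)\isoto\Lambda(\ulE)$: base change along $A\to B$ gives $\delta_0$ on $\Koh_{1,\Betti}(\,\cdot\,,B)$, and $\Hom_A(\,\cdot\,,A)$ gives $(\delta_0\dual)^{-1}$ on $\Koh^1_\Betti(\,\cdot\,,A)$.

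Now assume $\ulE$ is both abelian and $A$-finite and $\ulM=\ulM(\ulE)$. For $\ast=v$ I would deduce \eqref{EqDiagCompatXi} by passing to the inverse limit over $n$ in Corollary~\ref{CorMandDMofE} with $a$ replaced by $a^n$. Reduction modulo $a^n$ identifies the four corners of \eqref{EqDiagCompatXi} with those of \eqref{EqDiagMandDMofE} (using $M_v^\tau/a^n\cong(\ulM/a^n\ulM)^\tau$, $\dM_v^\sdtau/a^n\cong(\uldM/a^n\uldM)^\sdtau$, $T_v\ulE/a^n\cong\ulE[a^n](\BC)$ and $\wh\Omega^1_{A_v/\BF_v}/a^n\cong\Omega^1_{A/\BF_q}/a^n\Omega^1_{A/\BF_q}$, cf.\ Remark~\ref{RemContDiff}), and it identifies the four maps with the arrows of \eqref{EqDiagMandDMofE}: the left horizontal arrow $\eta\mapsto(\eta\circ\sigma^\ast)|_{(\ulM/a^n\ulM)^\tau}$ is the reduction of the inverse of the $v$-adic isomorphism of Proposition~\ref{PropCohAMot} (built in \eqref{EqPropCohAMot} out of $M=\Hom_{A_\BC}(\sdsigma^\ast\dM,\Omega^1_{A_\BC/\BC})$); Anderson's isomorphism \cite[Proposition~1.8.3]{Anderson86} in \eqref{EqDiagMandDMofE} is the reduction of the pairing of Proposition~\ref{PropCompTateModEandM}\ref{PropCompTateModEandM_B}, whose existence rests on that result, the residue descriptions being matched by Remark~\ref{RemContDiff}; the switcheroo map $\dm\mapsto\delta_1\bigl(a^{-n}(\dm-\sdtau_\dM(\sdsigma^\ast\dm))\bigr)$ is the reduction of the isomorphism of part~\ref{PropCompTateModEandDualM_A}; and $\Xi$ reduces to $\Xi\bmod a^n$. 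Since Corollary~\ref{CorMandDMofE} gives the commutativity of \eqref{EqDiagMandDMofE} for every $n$ and all arrows respect the transition maps, the inverse limit commutes.

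For $\ast=\Betti$ --- where $\ulE$ is in addition uniformizable --- all four Betti realizations are finite projective $A$-modules, so $\Hom_A(P,P')$ injects into $\prod_v\Hom_{A_v}(P\otimes A_v,P'\otimes A_v)$; hence it suffices to know that \eqref{EqDiagCompatXi} with $\ast=\Betti$ base-changes, compatibly with the period isomorphisms $h_{\Betti,v}$, to \eqref{EqDiagCompatXi} with $\ast=v$ for every $v$. For $\Koh_{1,\Betti}(\Xi)$ this is the functoriality of $h_{\Betti,v}$ in the dual $A$-motive; for the left vertical and bottom arrows it is the period-compatibility assertions of Propositions~\ref{PropCohAMot}, \ref{PropCompTateModEandM} and Theorems~\ref{ThmCompIsomBettiDRAMotive}, \ref{ThmPeriodIsomForE}; and for the right vertical arrow it is the last assertion of Theorem~\ref{ThmDivisionTower}\ref{ThmDivisionTower_PartB} together with Corollary~\ref{Cor=Boiler2.5.5}, which identify the $a$-division tower attached to $\dm'\in\Lambda(\uldM)$ with $x_{(n)}=\exp_\ulE\bigl(\Lie\phi_a^{-n-1}(\delta_0(\dm'))\bigr)$ --- precisely the output of the period map for $\ulE$ applied to $\delta_0(\dm')\otimes 1$. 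The case $\ast=\dR$ (over $\BC\dbl z-\zeta\dbr$) goes the same way via $h_{\Betti,\dR}$ (Theorems~\ref{ThmCompIsomBettiDRAMotive}, \ref{ThmCompIsomBettiDRDualAMotive}, \ref{ThmPeriodIsomForE}); alternatively one verifies it directly, all four maps being canonical identifications coming from $M=\Hom_{A_\BC}(\sdsigma^\ast\dM,\Omega^1_{A_\BC/\BC})$, $\tau_M=\sdtau_\dM\dual$ and the fact that for $\ulE$ both abelian and $A$-finite its two de Rham realizations are matched by $\Xi$ (Theorem~\ref{ThmMandDMofE}), so commutativity is a diagram chase using \eqref{EqThmMandDMofECompatib}.

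The genuinely hard inputs --- the switcheroo, Anderson's duality pairings, and the limit description of $T_v\ulE$ --- are already packaged in Proposition~\ref{PropSwitcheroo}, Theorem~\ref{ThmDivisionTower} and Corollary~\ref{CorMandDMofE}, so the real work is bookkeeping: each of the four canonical isomorphisms in \eqref{EqDiagCompatXi}, all defined by residue pairings or the switcheroo, must be matched precisely with an arrow of \eqref{EqDiagMandDMofE} or of the period-comparison theorems, and one must then check that the $h_{\Betti,v}$ and $h_{\Betti,\dR}$ intertwine the three diagrams. I expect the delicate point to be the choice of signs and dualisation conventions: the excerpt has deliberately altered Anderson's isomorphism \cite[Proposition~1.8.3]{Anderson86} (cf.\ the minus sign in Corollary~\ref{CorMandDMofE}) and Anderson's scattering matrix precisely so that these diagrams commute, so the verification must be carried out with all conventions of the excerpt held fixed throughout.
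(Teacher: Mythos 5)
Your proposal is correct, and for parts \ref{PropCompTateModEandDualM_A}--\ref{PropCompTateModEandDualM_B} and the $v$-adic compatibility it coincides with the paper's proof (which likewise reduces (a) to the switcheroo of Theorem~\ref{ThmDivisionTower}, treats (c) as definitional, quotes Theorem~\ref{ThmUnifAModandDM} for (b), and obtains the $v$-adic square from Corollary~\ref{CorMandDMofE} together with the residue identification of Remark~\ref{RemContDiff}). Where you genuinely diverge is the Betti compatibility: the paper proves it by a direct computation, choosing $\wt A=\BF_q[t]$, dual bases $(n_i)$, $(\lambda_j)$, $(\eta_j)$ and showing $\omega_{\wt A,\lambda'_j,n_i}=\delta_{ij}\,dt$ via a telescoping residue calculation in the style of \eqref{Eqm(dm'_eta)(1)}, whereas you exploit that all four corners are finite projective $A$-modules, so that $\Hom_A(P,P')\into\Hom_{A_v}(P\otimes A_v,P'\otimes A_v)$, and reduce to the already-established $v$-adic square using the intertwining properties of $h_{\Betti,v}$ packaged in diagram~\eqref{EqThmPeriodIsomForE}, the last clause of Proposition~\ref{PropCohAMot}, and the final statement of Theorem~\ref{ThmDivisionTower}\ref{ThmDivisionTower_PartB} (which indeed identifies the division tower of $\dm'\in\Lambda(\uldM)$ with $h_{\Betti,v}(\delta_0(\dm')\otimes1)$). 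Your route is shorter and avoids re-doing the residue bookkeeping, at the price of leaning on those compatibility assertions, whose own proofs contain essentially the same residue arguments; the paper's computation is self-contained and is what pins down the sign conventions you rightly flag. One caution on $*=\dR$: the paper does not \emph{verify} that square but \emph{defines} the bottom arrow to be $\Koh_{1,\dR}(\Xi,\BC\dbl z-\zeta\dbr)$ (there is no prior identification of $\Hom(\Koh^1_\dR(\ulM),\Omega)$ with $\dM(\ulE)\otimes\BC\dbl z-\zeta\dbr$ other than through $\Xi$), so your fallback ``direct verification'' is the intended argument; your first-listed route via $h_{\Betti,\dR}$ for $\Koh_{1,\dR}(\ulE)$ would implicitly invoke Theorem~\ref{ThmPeriodIsomForAFiniteE}, which the paper derives only afterwards from the present proposition, so it should be avoided or replaced by the explicit composite $(h_\uldM\otimes\id)\circ(\delta_0^{-1}\otimes\id)$ built from earlier results.
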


\begin{proof}
\ref{PropCompTateModEandDualM_A} Let $e$ be a positive integer such that $v^e=(a)\subset A$ is a principal ideal. Then $\Koh_{1,v}(\ulE,A_v)=\{\,\text{$a$-division towers }(P_n)_n\text{ above }0\,\}$ by \eqref{EqTateModDivisionTower}. Note that our definition of the map $\Koh_{1,v}(\uldM,A_v)\to\Koh_{1,v}(\ulE,A_v)$ corresponds to Anderson's ``switcheroo''; see \cite{ABP_Rohrlich} or \cite[Lemma~4.1.23 and Theorem~4.1.24(i)]{JuschkaDipl}. 

\medskip\noindent
\ref{PropCompTateModEandDualM_C} By definition $\Koh^1_{\dR}(\uldM,\BC\dbl z-\zeta\dbr) \, := \, \Hom_{A_\BC}(\dM,\,\BC\dbl z-\zeta\dbr)\,=:\,\Koh^1_{\dR}(\ulE,\BC\dbl z-\zeta\dbr)$.

\medskip\noindent
\ref{PropCompTateModEandDualM_B} was proved in Theorem~\ref{ThmUnifAModandDM}.

\medskip
Let now $\ulE$ be both abelian and $A$-finite. For $\Koh_{1,\dR}$ we enforce the compatibility by defining the isomorphism on the bottom of diagram~\eqref{EqDiagCompatXi} between 
\begin{eqnarray*}
\Hom_{\BC\dbl z-\zeta\dbr}\bigl(\Koh^1_\dR(\ulM,\BC\dbl z-\zeta\dbr),\,\BC\dbl z-\zeta\dbr dz\bigr) & = & \Hom_{A_\BC}(\sigma^*M,\Omega^1_{A_\BC/\BC})\otimes_{A_\BC}\BC\dbl z-\zeta\dbr\\[2mm]
& =: & \Koh_{1,\dR}\bigl(\uldM(\ulM),\BC\dbl z-\zeta\dbr\bigr)
\end{eqnarray*}
and $\Koh_{1,\dR}(\ulE,\BC\dbl z-\zeta\dbr):=\Koh_{1,\dR}\bigl(\uldM(\ulE),\BC\dbl z-\zeta\dbr\bigr)$ to be $\Koh_{1,\dR}\bigl(\Xi,\BC\dbl z-\zeta\dbr\bigr)$. 

The compatibility for $\Koh_{1,v}$ follows from Corollary~\ref{CorMandDMofE} and the isomorphism~\eqref{EqOmegaAndHom2}, taking into account that $\Res_v\bigl(a^{-1}h(m)\bigr)=-\Res_\infty\bigl(a^{-1}h(m)\bigr)$ for $(a)=v^e\subset A$ by \cite[Definition~9.3.10 and Theorem~9.3.22]{VillaSalvador}.

For $\Koh_{1,\Betti}$ we fix an element $t\in A\setminus\BF_q$ such that $Q$ is separable over $\BF_q(t)$ and consider the finite flat ring homomorphism $\wt A:=\BF_q[t]\into A$. Let $\wt\infty$ be the complement of $\Spec\wt A$ in $\BP^1_{\BF_q}$. All members of diagram~\eqref{EqDiagCompatXi} are finite projective $A$-modules and we consider them as finite projective $\wt A$-modules% by restriction of scalars
. We use the identification $\Koh^1_\Betti(\ulM,A)=\Lambda(\ulM)=\Koh^1_\Betti(\ulM,\wt A)$ and the isomorphism $\Tr_{A/\wt A}\colon\Hom_{A}(\Koh^1_{\Betti}(\ulM,A),\Omega^1_{A/\BF_q})\isoto\Hom_{\wt A}(\Koh^1_{\Betti}(\ulM,\wt A),\Omega^1_{\wt A/\BF_q})$ from Lemma~\ref{LemmaTraceOmega} below. Let $(n_i)$ be an $\wt A$-basis of $\Koh^1_\Betti(\ulM,\wt A)$ and let $(\lambda_j)$ be the $\wt A$-basis of $\Koh_{1,\Betti}(\ulE,\wt A)$ which is dual to $(n_i)$ under the pairing from Proposition~\ref{PropCompTateModEandM}\ref{PropCompTateModEandM_A}, that is $\omega_{\wt A,\lambda_j,n_i}=\delta_{ij}\,dt$. Let $(\eta_j)$ with $\eta_j\in\Lambda(\uldM(\ulM))\subset\dM(\ulM)\otimes_{A_\BC}\CO\bigl(\dotFC_\BC\setminus\bigcup_{i\in\BN_{>0}}\Var(\ssigma^{i\ast}J)\bigr)$ be the $\wt A$-basis of $\Hom_{\wt A}(\Koh^1_{\Betti}(\ulM,\wt A),\Omega^1_{\wt A/\BF_q})$ which is the image of $(\lambda_j)$ under the isomorphism from Proposition~\ref{PropCompTateModEandM}, that is $\eta_j\colon m\mapsto\omega_{\wt A,\lambda_j,m}$. Let $\dn_j:=\Xi(\eta_j)\in\Koh_{1,\Betti}(\uldM(\ulE),\wt A)$ and $\lambda'_j:=\delta_0(\dn_j)\in\Koh_{1,\Betti}(\ulE,\wt A)$. We must show that $\lambda'_j=\lambda_j$ for all $j$, or equivalently $\omega_{\wt A,\lambda_j,n_i}=\omega_{\wt A,\lambda'_j,n_i}=\sum_{k=0}^\infty n_i\bigl(\exp_\ulE(\Lie\phi_t^{-k-1}(\lambda'_j))\bigr)t^k dt$ for all $i$ and $j$; see \eqref{EqEandM3}.

%To this end fix
Fix a $k$ and write $\dn_j=\dm''_{j,k}+t^{k+1}\dm'_{j,k}$ with $\dm'_{j,k}\in\dM(\ulE)\otimes_{A_\BC}\CO\bigl(\dotFC_\BC\setminus\bigcup_{i\in\BN_{>0}}\Var(\ssigma^{i\ast}J)\bigr)$ and $\dm''_{j,k}\in\dM(\ulE)$. Then $\dm''_{j,k}+t^{k+1}\dm'_{j,k}=\dn_j=\sdtau_\dM(\sdsigma^*\dn_j)=\sdtau_\dM(\sdsigma^*\dm''_{j,k})+t^{k+1}\sdtau_\dM(\sdsigma^*\dm'_{j,k})$ and
\begin{equation}\label{Eqlambda'j}
\lambda'_j\;=\;\delta_0(\dn_j)\;=\;\delta_0\bigl(\dm''_{j,k}+t^{k+1}\dm'_{j,k}-\sdtau_\dM(\sdsigma^*\dm''_{j,k})\bigr)\;=\;\Lie\phi_t^{k+1}\delta_0\bigl(\sdtau_\dM(\sdsigma^*\dm'_{j,k})\bigr)\,,
\end{equation}
because $\delta_0\bigl(\sdtau_\dM(\sdsigma^*\dm''_{j,k})\bigr)=0$. Let $\dm_{j,k}:=\sdtau_\dM(\sdsigma^*\dm'_{j,k})-\dm'_{j,k}=t^{-k-1}(\dm''_{j,k}-\sdtau_\dM(\sdsigma^*\dm''_{j,k}))\in\dM(\ulE)$. Then \eqref{Eqlambda'j} and Corollary~\ref{Cor=Boiler2.5.5} imply
\[
\exp_\ulE\bigl(\Lie\phi_t^{-k-1}(\lambda'_j)\bigr)\;=\;\exp_\ulE\bigl(\delta_0(\dm'_{j,k}+\dm_{j,k})\bigr)\;=\;\delta_1(\dm_{j,k})\,.
\]
We write $n_i=m_{i,k}+t^{k+1}m'_{i,k}$ for an $m_{i,k}\in M$ and an $m'_{i,k}\in M\otimes_{A_{\BC}}\CO(\dotFC_\BC)$. Then we obtain $m_{i,k}-\tau_M(\sigma^*m_{i,k})=n_i-t^{k+1}m'_{i,k}-\tau_M(\sigma^*n_i)+t^{k+1}\tau_M(\sigma^*m'_{i,k})=t^{k+1}(\tau_M(\sigma^*m'_{i,k})-m'_{i,k})$. Setting $\eta''_{j,k}:=\Xi^{-1}(\dm''_{j,k})\in\dM(\ulM)=\Hom_{A_\BC}(\sigma^*M,\Omega^1_{A_\BC/\BC})$ and using \eqref{Eqm(dm'_eta)(1)}, we compute
\begin{eqnarray*}
n_i\bigl(\exp_\ulE(\Lie\phi_t^{-k-1}(\lambda'_j))\bigr) &:=& m_{i,k}\bigl(\exp_\ulE(\Lie\phi_t^{-k-1}(\lambda'_j))\bigr) \\[2mm]
&=& (m_{i,k}\circ\dm_{j,k})(1) \\[2mm]
&=& -\Res_{\wt\infty}t^{-k-1}\eta''_{j,k}(\sigma^*m_{i,k}) \\[2mm]
&=& \Res_{t=0}t^{-k-1}\eta''_{j,k}(\sigma^*m_{i,k}) \\[2mm]
&=& \Res_{t=0}t^{-k-1}\eta_j(\sigma^*n_i) \\[2mm]
&=& -\Res_{\wt\infty}t^{-k-1}\eta_j(n_i) \\[2mm]
&=& -\Res_{\wt\infty}t^{-k-1}\delta_{ij}\,dt \\[2mm]
&=& \delta_{ij}\delta_{k0}\,,
\end{eqnarray*}
where in lines four and six we use \cite[Theorem~9.3.22]{VillaSalvador} and that $\eta''_{j,k}(\sigma^*m_{i,k})\in\Omega^1_{A_\BC/\BC}$ and $\eta_j(n_i)=\eta_j(\sigma^*n_i)\in\Omega^1_{A/\BF_q}$, as $n_i=\sigma^*n_i$ in $\Lambda(\ulM)$, and in line five we use that $t^{-k-1}\bigl(\eta_j(\sigma^*n_i)-\eta''_{j,k}(\sigma^*m_{i,k})\bigr)=t^{-k-1}\bigl(\eta_j(\sigma^*n_i-\sigma^*m_{i,k})+(\eta_j-\eta''_{j,k})(\sigma^*m_{i,k})\bigr)=\eta_j(\sigma^*m'_{i,k})+\Xi^{-1}(\dm'_{j,k})(\sigma^*m_{i,k})$ is holomorphic at $t=0$. By \eqref{EqEandM3} this implies 
\[
\omega_{\wt A,\lambda'_j,n_i} \;=\; \sum_{k=0}^\infty n_i\bigl(\exp_\ulE(\Lie\phi_t^{-k-1}(\lambda'_j))\bigr)t^k dt \;=\; \delta_{ij}dt \;=\; \omega_{\wt A,\lambda_j,n_i}
\]
as desired.
\end{proof}

\begin{example}\label{ExCarlitzModulePeriod}
Let $C=\BP^1_{\BF_q}$, $A=\BF_q[t]$, $z=\frac{1}{t}$, $\theta=\charmorph(t)=\frac{1}{\zeta}\in\BC$, and let $\ulE=(\BG_{a,\BC},\phi_t=\theta+\tau)$ be the \emph{Carlitz module}. It is uniformizable, abelian and $A$-finite and its (dual) $A$-motive was described in Examples~\ref{ExCarlitz}, \ref{ExCarlitzPeriod}, \ref{ExDualCarlitz}, and \ref{ExDualCarlitzPeriod}. Let $\eta\in\BC$ satisfy $\eta^{q-1}=-\zeta$. By \cite[p.~47 bottom]{Thakur} the period lattice $\Lambda(\ulE):=\ker(\exp_\ulE)$ is generated by the \emph{Carlitz period} $\wt\pi:=\bigl(\eta^q\prod_{i=1}^\infty(1-\zeta^{q^i-1})\bigr)^{-1}$, which is the function field analog of $2i\pi$. In particular, the Carlitz period equals $\delta_0\bigl((\eta^q\check\ell_\zeta^{\SSC -})^{-1}\bigr)$ for the generator $(\eta^q\check\ell_\zeta^{\SSC -})^{-1}$ of $\Lambda\bigl(\uldM(\ulE)\bigr)$ from Example~\ref{ExDualCarlitzPeriod}. The compatibility of Proposition~\ref{PropCompTateModEandDualM} implies various interesting identities, like for example
\[
\sum_{k=0}^\infty\exp_\ulE(\theta^{-k-1}\wt\pi)t^k\;=\;(\eta\ell_\zeta^{\SSC -})^{-1}\;=\;\eta^{-1}\cdot\prod_{i=0}^\infty(1-\zeta^{q^i}t)^{-1}
\]
for the ($1\times1$-)scattering matrix; see Remark~\ref{RemScattering}
\end{example}

\begin{lemma}\label{LemmaTraceOmega}
Let $\wt A\into A$ be a finite flat morphism such that $Q/\Quot(\wt A)$ is a separable field extension (where $\Spec A$ and $\Spec\wt A$ are smooth affine curves over $\BF_q$). Then for any field extension $k/\BF_q$ and any finite projective $A_k$-module $P$ the map
\[
\Tr_{A/\wt A}\colon \Hom_{A_k}(P,\Omega^1_{A_k/k}) \;\longto\;\Hom_{\wt A_k}(P,\Omega^1_{\wt A_k/k})\,,\quad f\;\longmapsto\;\Tr_{A/\wt A}\circ f
\]
is an isomorphism of $A_k$-modules.
\end{lemma}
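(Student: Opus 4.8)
The plan is to identify $\Omega^1_{A/\BF_q}$, as an $A$-module, with the co-induced module $\Hom_{\wt A}(A,\Omega^1_{\wt A/\BF_q})$ in such a way that the trace on differentials becomes evaluation at $1\in A$, and then to invoke the co-induction (tensor–hom) adjunction for the finite ring extension $\wt A_k\into A_k$. Put $\wt Q:=\Quot(\wt A)$. The hypothesis that $Q/\wt Q$ is separable says that $\pi\colon\Spec A\to\Spec\wt A$ is a finite flat, generically étale morphism of smooth affine $\BF_q$-curves, so that $\Omega^1_{\wt A/\BF_q}\otimes_{\wt A}Q\isoto\Omega^1_{Q/\BF_q}$ (see \cite[Theorems~25.1 and 25.3]{MatsumuraRingTh}) and the cotangent exact sequence of $\BF_q\to\wt A\to A$ refines to a short exact sequence
\[
0\longrightarrow\Omega^1_{\wt A/\BF_q}\otimes_{\wt A}A\longrightarrow\Omega^1_{A/\BF_q}\longrightarrow\Omega^1_{A/\wt A}\longrightarrow 0
\]
with $\Omega^1_{A/\wt A}\cong A/\Fd$, where $\Fd\subset A$ is the different of $A/\wt A$. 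By the classical theory of the different for finite separable extensions of Dedekind domains, the inverse different $\Fd^{-1}=\{\,x\in Q:\Tr_{Q/\wt Q}(xA)\subseteq\wt A\,\}$ is an invertible $A$-module, and the pairing $(x,a)\mapsto\Tr_{Q/\wt Q}(xa)$ identifies it with $\Hom_{\wt A}(A,\wt A)$; hence $\Omega^1_{A/\BF_q}\cong\Fd^{-1}\otimes_A\bigl(\Omega^1_{\wt A/\BF_q}\otimes_{\wt A}A\bigr)\cong\Hom_{\wt A}(A,\Omega^1_{\wt A/\BF_q})$. Tracking these identifications one checks that this isomorphism is
\[
\Theta\colon\Omega^1_{A/\BF_q}\;\isoto\;\Hom_{\wt A}\bigl(A,\Omega^1_{\wt A/\BF_q}\bigr)\,,\qquad\omega\longmapsto\bigl(a\mapsto\Tr_{A/\wt A}(a\,\omega)\bigr)\,,
\]
where $\Tr_{A/\wt A}\colon\Omega^1_{A/\BF_q}\to\Omega^1_{\wt A/\BF_q}$ is the trace on differentials extended from the $\wt A$-linear map $\Omega^1_{Q/\BF_q}=Q\otimes_{\wt Q}\Omega^1_{\wt Q/\BF_q}\to\Omega^1_{\wt Q/\BF_q}$ (which indeed carries regular differentials on $\Spec A$ to regular differentials on $\Spec\wt A$, again by the inverse-different description; cf.\ \cite{VillaSalvador}). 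In particular $\Tr_{A/\wt A}$ is recovered as $\Theta$ followed by evaluation at $1$.

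Now base change to $k$. Since $k$ is flat over $\BF_q$ and $A$ is finitely presented over $\wt A$, applying $-\otimes_{\BF_q}k$ to $\Theta$ and using $\Omega^1_{A/\BF_q}\otimes_{\BF_q}k=\Omega^1_{A_k/k}$, $\Omega^1_{\wt A/\BF_q}\otimes_{\BF_q}k=\Omega^1_{\wt A_k/k}$, and $\Hom_{\wt A}(A,-)\otimes_{\BF_q}k=\Hom_{\wt A_k}(A_k,-\otimes_{\BF_q}k)$, we obtain a canonical isomorphism of $A_k$-modules $\Theta_k\colon\Omega^1_{A_k/k}\isoto\Hom_{\wt A_k}(A_k,\Omega^1_{\wt A_k/k})$ which still satisfies $\Tr_{A/\wt A}=(\text{evaluation at }1)\circ\Theta_k$ on $\Omega^1_{A_k/k}$. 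For the ring map $\wt A_k\to A_k$ and any $A_k$-module $P$, the co-induction adjunction gives a natural isomorphism of $A_k$-modules
\[
\Hom_{A_k}\bigl(P,\Hom_{\wt A_k}(A_k,\Omega^1_{\wt A_k/k})\bigr)\;\isoto\;\Hom_{\wt A_k}\bigl(P,\Omega^1_{\wt A_k/k}\bigr)\,,\qquad g\longmapsto\bigl(p\mapsto g(p)(1)\bigr)\,,
\]
with $P$ viewed as $\wt A_k$-module by restriction along $\wt A_k\into A_k$. Composing $\Hom_{A_k}(P,\Theta_k)$ with this adjunction yields an isomorphism $\Hom_{A_k}(P,\Omega^1_{A_k/k})\isoto\Hom_{\wt A_k}(P,\Omega^1_{\wt A_k/k})$, and unwinding the two maps sends $f\colon P\to\Omega^1_{A_k/k}$ to $p\mapsto\Theta_k(f(p))(1)=\Tr_{A/\wt A}(f(p))$, i.e.\ to $\Tr_{A/\wt A}\circ f$. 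This is exactly the map in the statement, so it is an isomorphism; its $A_k$-linearity is automatic since $\Theta_k$ and the adjunction are morphisms of $A_k$-modules. In particular this applies to $P$ finite projective.

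The only non-formal ingredient is the first paragraph: the two classical facts about the Dedekind different of a finite separable extension of Dedekind domains, namely that $\Hom_{\wt A}(A,\wt A)$ is the invertible fractional ideal $\Fd^{-1}\subset Q$, and that consequently the trace on differentials preserves regularity and corresponds to evaluation at $1$ under $\Theta$. I expect checking these compatibilities (rather than any of the subsequent homological algebra) to be the main obstacle; once they are in hand — references as above, or, alternatively, after reducing via Lemma~\ref{LemmaTSep} and transitivity of the trace to the case $\wt A=\BF_q[t]$ — Steps two and three are purely formal.
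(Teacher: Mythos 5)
Your argument is correct. The paper itself gives no proof beyond a citation to duality for finite morphisms (Hartshorne, \emph{Residues and Duality}, Cor.~3.4(c), reproved in Sinha and Anderson), and what you have written is precisely a self-contained unwinding of that cited statement: the identification $\Omega^1_{A/\BF_q}\cong\Hom_{\wt A}(A,\Omega^1_{\wt A/\BF_q})$ via the inverse different with the trace becoming evaluation at $1$, followed by flat base change to $k$ and the tensor--hom adjunction. So the route is essentially the same as the references' proof, only made explicit; the two classical facts you isolate in your first paragraph (that $\Hom_{\wt A}(A,\wt A)\cong\Fd^{-1}$ under the trace pairing, and that $\Omega^1_{A/\BF_q}=\Fd^{-1}\cdot\pi^*\Omega^1_{\wt A/\BF_q}$ inside $\Omega^1_{Q/\BF_q}$) are indeed the only non-formal inputs and are correctly sourced.
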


\begin{proof}
This is a special case of \cite[Corollary 3.4(c), p.~384]{HartshorneResidues}, which is reproved in elementary terms by \cite[Theorem~4.1.5]{Sinha97}, respectively \cite[Lemma~4.2.1]{Anderson86} when $\wt A=\BF_q[t]$.
\end{proof}

\begin{proof}[{Proof of Theorem~\ref{ThmCompatXi}}]
The commutativity of diagram~\eqref{EqDiagCompatXi} for $\Koh_{1,\Betti}$ implies the commutativity of diagram~\eqref{EqCompatXiHodge} on the level of the underlying $Q$-vector spaces. This suffices, because the compatibility with the weight filtrations and the Hodge-Pink lattices was proved in Theorems~\ref{ThmHofMandDualM}, \ref{ThmHPofEandDualM} and \ref{ThmHPofEandM}.
\end{proof}

\begin{theorem} \label{ThmPeriodIsomForAFiniteE}
If $\ulE$ is a uniformizable $A$-finite Anderson $A$-module there are canonical \emph{comparison isomorphisms}, sometimes also called \emph{period isomorphisms} for all $v$
\begin{eqnarray*}
h_{\Betti,v}\colon \Koh_{1,\Betti}(\ulE,A_v) \; = \; \Lambda(\ulE)\otimes_A A_v & \isoto & \Koh_{1,v}(\ulE,A_v) \; = \; \Hom_A\bigl(Q_v/A_v,\,\ulE(\BC)\bigr)\,,\\[2mm]
\lambda\otimes y & \longmapsto & \bigl(x\mapsto \exp_\ulE(\Lie\phi_{xy}(\lambda))\bigr)
\end{eqnarray*}
where $xy$ is viewed as an element of $A[\tfrac{1}{a}]/A$ for $v^e=(a)$ as above, and 
\[
\begin{array}[b]{rccl}
h_{\Betti,\,\dR}\colon & \Koh_{1,\Betti}(\ulE,\BC\dbl z-\zeta\dbr) & \isoto & \Koh_{1,\dR}(\ulE,\BC\dbl z-\zeta\dbr) \quad\text{and}\\[2mm]
h_{\Betti,\,\dR}\colon & \Koh_{1,\Betti}(\ulE,\BC) & \isoto & \Koh_{1,\dR}(\ulE,\BC)
\end{array}
\]
which are compatible with the Hodge-Pink lattices and Hodge-Pink filtration provided on the Betti realization $\Koh_{1,\Betti}(\ulE,Q)=\Hodge_1(\ulE)$ via the associated Hodge-Pink structure $\ulHodge_1(\ulE)$. All these isomorphisms are compatible with the comparison isomorphisms from Theorem~\ref{ThmCompIsomBettiDRDualAMotive} and Proposition~\ref{PropCompTateModEandDualM}.
\end{theorem}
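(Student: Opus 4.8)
The plan is to reduce the entire statement to the corresponding assertions for the associated effective dual $A$-motive $\uldM=\uldM(\ulE)$, which is uniformizable by Theorem~\ref{ThmUnifAModandDM}. First I would collect the canonical identifications of the cohomology realizations of $\ulE$ with those of $\uldM$: the isomorphism $\delta_0$ of Corollary~\ref{CorTo_Boiler2.5.5} gives $\Koh_{1,\Betti}(\ulE,B)=\Lambda(\ulE)\otimes_A B\cong\Lambda(\uldM)\otimes_A B=\Koh_{1,\Betti}(\uldM,B)$ for every $A$-algebra $B$ (in particular for $B=A_v$, $Q$ and $\BC\dbl z-\zeta\dbr$); Proposition~\ref{PropCompTateModEandDualM}\ref{PropCompTateModEandDualM_A} gives $\Koh_{1,v}(\ulE,A_v)\cong\Koh_{1,v}(\uldM,A_v)$; and by the very definitions in Section~\ref{SectCohAMod} one has $\Koh_{1,\dR}(\ulE,\BC\dbl z-\zeta\dbr)=\dM\otimes_{A_\BC}\BC\dbl z-\zeta\dbr=\Koh_{1,\dR}(\uldM,\BC\dbl z-\zeta\dbr)$, cf.\ Proposition~\ref{PropCompTateModEandDualM}\ref{PropCompTateModEandDualM_C}. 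Moreover Theorem~\ref{ThmHPofEandDualM} identifies $\ulHodge_1(\ulE)$ with $\ulHodge_1(\uldM)$, compatibly with weight filtrations and Hodge-Pink lattices. I would then \emph{define} $h_{\Betti,v}$ and $h_{\Betti,\dR}$ for $\ulE$ by transporting the period isomorphisms of $\uldM$ from Theorem~\ref{ThmCompIsomBettiDRDualAMotive} along these identifications. With this definition the compatibility with the comparison isomorphisms of Theorem~\ref{ThmCompIsomBettiDRDualAMotive} and Proposition~\ref{PropCompTateModEandDualM} holds by construction, and the compatibility of $h_{\Betti,\dR}$ with the Hodge-Pink lattices follows from Theorem~\ref{ThmCompIsomBettiDRDualAMotive} combined with the commutative diagram in the proof of Theorem~\ref{ThmHPofEandDualM}; tensoring with $\BC\dbl z-\zeta\dbr\onto\BC$, $z-\zeta\mapsto0$, yields the statement over $\BC$.

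The one point that is not a formality is to verify that the transported $h_{\Betti,v}$ is given by the explicit formula $\lambda\otimes y\mapsto\bigl(x\mapsto\exp_\ulE(\Lie\phi_{xy}(\lambda))\bigr)$. By $A_v$-linearity it is enough to treat $y=1$; fix $e$ with $v^e=(a)$ principal and use the identification $\Koh_{1,v}(\ulE,A_v)=\Hom_A(Q_v/A_v,\ulE(\BC))\cong\{a\text{-division towers above }0\}$ of \eqref{EqTateModDivisionTower}, under which a homomorphism $f$ corresponds to the tower $x_{(n)}=f(a^{-n-1})$. Given $\lambda\in\Lambda(\ulE)$, set $\check\lambda:=\delta_0^{-1}(\lambda)\in\Lambda(\uldM)$; by Proposition~\ref{PropMaphdM}\ref{PropMaphdMA} it lies in $\dM\otimes_{A_\BC}\CO\bigl(\dotFC_\BC\setminus\bigcup_{i\in\BN_{>0}}\Var(\ssigma^{i\ast}J)\bigr)$ and hence in $\dM_v$, and its image under $h_{\Betti,v}$ of Theorem~\ref{ThmCompIsomBettiDRDualAMotive} in $\dM_v^\sdtau=\Koh_{1,v}(\uldM,A_v)$ is $\check\lambda$ itself. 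Since $\sdtau_\dM(\sdsigma^*\check\lambda)-\check\lambda=0$, Theorem~\ref{ThmDivisionTower}\ref{ThmDivisionTower_PartA} (which is exactly the ``switcheroo'' bijection \eqref{EqThmDivisionTower} underlying Proposition~\ref{PropCompTateModEandDualM}\ref{PropCompTateModEandDualM_A}) attaches to $\check\lambda$ an $a$-division tower above $0$; it is convergent by the equivalence \ref{ThmDivisionTower_A}$\Leftrightarrow$\ref{ThmDivisionTower_C} of Theorem~\ref{ThmDivisionTower}\ref{ThmDivisionTower_PartB}, and the last assertion of that theorem together with Theorem~\ref{ThmDivTowersAndLie} identifies it with $\bigl(\exp_\ulE(\Lie\phi_a^{-n-1}(\xi))\bigr)_{n\ge0}$ where $\xi=\delta_0(\check\lambda+0)=\lambda$. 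Via \eqref{EqDivTowersAndLie} this is the tower attached to the homomorphism $x\mapsto\exp_\ulE(\Lie\phi_x(\lambda))$, which is the claimed formula; the general case follows by $A_v$-linearity in $y$. The de Rham comparison is even more direct: $h_{\Betti,\dR}$ is $h_\uldM\otimes\id$ precomposed with $\delta_0^{-1}\otimes\id$, and its compatibility with the Hodge-Pink lattice $\Fq_\ulE=\sdtau_\dM(\sdsigma^*\dM)\otimes_{A_\BC}\BC\dbl z-\zeta\dbr$ is read off diagram~\eqref{EqhdM}.

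The main obstacle is purely notational: one must keep straight the two indexing conventions for $a$-division towers (Definition~\ref{DefDivisionTower}, with $\phi_a(x_{(n)})=x_{(n-1)}$ and $\phi_a(x_{(0)})=0$, versus the one implicit in \eqref{EqTateModDivisionTower} via $f(a^{-n})$) and the bijection \eqref{EqThmDivisionTower}, and check they are aligned so that the composite $\Koh_{1,v}(\uldM,A_v)\isoto\Koh_{1,v}(\ulE,A_v)$ used above really is the one of Proposition~\ref{PropCompTateModEandDualM}\ref{PropCompTateModEandDualM_A}. Once this alignment is made explicit the proof is a formal concatenation of Theorems~\ref{ThmUnifAModandDM}, \ref{ThmDivTowersAndLie}, \ref{ThmDivisionTower}, \ref{ThmHPofEandDualM}, \ref{ThmCompIsomBettiDRDualAMotive} and Propositions~\ref{PropCompTateModEandDualM}, \ref{PropMaphdM} together with Corollary~\ref{CorTo_Boiler2.5.5}, and it parallels almost verbatim the proof of the abelian case Theorem~\ref{ThmPeriodIsomForE}.
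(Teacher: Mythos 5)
Your proposal is correct and follows essentially the same route as the paper: the paper likewise defines $h_{\Betti,\dR}$ as $(h_\uldM\otimes\id)\circ(\delta_0^{-1}\otimes\id)$ and deduces the $v$-adic statement from the commutativity of the diagram relating $\delta_0$ to the comparison isomorphisms of Theorem~\ref{ThmCompIsomBettiDRDualAMotive} and Proposition~\ref{PropCompTateModEandDualM}, which it attributes to Theorem~\ref{ThmDivisionTower}. Your explicit verification of the formula $\lambda\otimes y\mapsto\bigl(x\mapsto\exp_\ulE(\Lie\phi_{xy}(\lambda))\bigr)$ via the switcheroo and Theorem~\ref{ThmDivTowersAndLie} just spells out what the paper leaves implicit.
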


\begin{proof}
From Theorem~\ref{ThmDivisionTower} we obtain the commutativity of the diagram
\[
\xymatrix @C=7pc {
\Koh_{1,\Betti}(\uldM,A)\otimes_A A_v \ar[r]_{\TS\cong}^{\TS h_{\Betti,v}} \ar[d]_{\TS\delta_0\otimes 1}^{\TS\cong} & \Koh_{1,v}(\uldM,A_v) \ar[d]_{\TS\cong} \\
\Koh_{1,\Betti}(\ulE,A)\otimes_A A_v \ar[r]^{\TS h_{\Betti,v}} & \Koh_{1,v}(\ulE,A_v)
}
\]
where the right vertical isomorphism was defined in Proposition~\ref{PropCompTateModEandDualM}. This proves that $h_{\Betti,v}$ is an isomorphism and compatible with the comparison isomorphisms from Theorem~\ref{ThmCompIsomBettiDRDualAMotive} and Proposition~\ref{PropCompTateModEandDualM}. 

We define $h_{\Betti,\,\dR}$ as the composition $(h_\dM\otimes\id_{\BC\dbl z-\zeta\dbr})\circ(\delta_0^{-1}\otimes\id_{\BC\dbl z-\zeta\dbr})\colon$
\[
\Koh_{1,\Betti}(\ulE,\BC\dbl z-\zeta\dbr)\;\isoto\;\Koh_{1,\Betti}(\uldM,\BC\dbl z-\zeta\dbr) \;\isoto\; \Koh_{1,\dR}(\uldM,\BC\dbl z-\zeta\dbr)\;=:\;\Koh_{1,\dR}(\ulE,\BC\dbl z-\zeta\dbr)\,.
\]
All compatibilities follow immediately.
\end{proof}

\begin{remark}\label{RemChangeOfA}
Let $\ulE$ be an abelian, respectively $A$-finite Anderson $A$-module. Then the various comparison isomorphisms between the cohomology realizations of $\ulE$, of $\ulM=\ulM(\ulE)$ and $\uldM=\uldM(\ulE)$ are compatible with a change of the ring $A$ as follows. Let $\wt A\subset A$ be a subring such that $Q$ is a finite \emph{separable} extension of $\wt Q=\Quot(\wt A)$ and let $\pi\colon C\to\wt C$ be the corresponding finite flat morphism of projective curves. Then $\wt\infty:=\pi(\infty)$ is the complement of $\Spec\wt A\subset\wt C$ and $\pi^{-1}(\wt\infty)=\{\infty\}$, and so $\pi^{-1}(\Spec\wt A)=\Spec A$ and $A$ is a finite locally free $\wt A$-module of rank $\rk_{\wt A}A=[Q:\wt Q]$. In this way $\ulE$ becomes an abelian (respectively $\wt A$-finite) Anderson $\wt A$-module and $\ulM$ (respectively $\uldM$) is its associated (dual) $\wt A$-motive. We have $\rk_{\wt A}\ulE=[Q:\wt Q]\cdot\rk_A\ulE$ and $\dim_{\wt A}\ulE=\dim_A\ulE$. When we compute the cohomology modules of $\ulE$ as an $A$-module (respectively $\wt A$-module) we add the index $A$ (respectively $\wt A$) to the notation, and similarly for $\ulM$ and $\uldM$. 

\medskip\noindent
(a) \es Then the Betti (co)homology satisfies
\begin{eqnarray*}
\Koh_{1,\Betti,A}(\ulE,A) \es = & \Lambda(\ulE) & = \es \Koh_{1,\Betti,\wt A}(\ulE,\wt A)\,,\\[2mm]
\Koh^1_{\Betti,A}(\ulM,A) \es = & \Lambda(\ulM) & = \es \Koh^1_{\Betti,\wt A}(\ulM,\wt A)\,,\quad\text{and}\\[2mm]
\Koh_{1,\Betti,A}(\uldM,A) \es = & \Lambda(\uldM) & = \es \Koh_{1,\Betti,\wt A}(\uldM,\wt A)\,.
\end{eqnarray*}
The isomorphisms from Propositions~\ref{PropCompTateModEandM} and \ref{PropCompTateModEandDualM} and \ref{PropCohAMot} are compatible with the change of rings $\wt A\subset A$ via the following commutative diagrams
\[
\xymatrix @C+1pc @R=0.8pc {
\Koh_{1,\Betti,A}(\uldM,A) \ar@{=}[d] \ar[r]^{\delta_0}_\cong & \Koh_{1,\Betti,A}(\ulE,A)\; \ar@{=}[d] \\
\Koh_{1,\Betti,\wt A}(\uldM,\wt A) \ar[r]^{\delta_0}_\cong & \Koh_{1,\Betti,\wt A}(\ulE,\wt A)
}
\]
and
\[
\xymatrix @C+1pc {
\Koh_{1,\Betti,A}(\ulE,A)\otimes_A\Koh^1_{\Betti,A}(\ulM,A) \ar[r]^{\qquad\qquad\qquad\beta_A} & \Omega^1_{A/\BF_q}\,, \ar[d]^{\Tr_{A/\wt A}} & (\lambda,m) \longmapsto \omega_{A,\lambda,m}\,,\\
\Koh_{1,\Betti,\wt A}(\ulE,\wt A)\otimes_{\wt A}\Koh^1_{\Betti,\wt A}(\ulM,\wt A) \ar[r]^{\qquad\qquad\qquad\beta_{\wt A}} \ar@{->>}[u] & \Omega^1_{\wt A/\BF_q}\,, & (\lambda,m) \longmapsto \omega_{\wt A,\lambda,m}\,.
}
\]
The proof is similar to \eqref{EqEandM3} and also follows from Lemma~\ref{LemmaTraceOmega}.

\medskip\noindent
(b) \es For the de Rham cohomology let $\tilde z$ be a uniformizing parameter of $\wt C$ at $\wt\infty$ and let $\tilde\zeta:=\charmorph(\tilde z)$. Then $\tilde z-\tilde\zeta$ is a uniformizing parameter at the point $\wt J:=(\tilde a\otimes 1-1\otimes\charmorph(\tilde a):\tilde a\in\wt A)\subset\wt A_\BC$, and also at every point $P\in C_\BC$ lying above $\Var(\wt J)\in\wt C_\BC$ by Lemma~\ref{LemmaZ-Zeta}. Therefore, $P$ is unramified and
\begin{equation}\label{EqRemChangeOfADecomp}
A\otimes_{\wt A}\BC\dbl\tilde z-\tilde\zeta\dbr=\prod_{P|\Var(\wt J)}\wh\CO_{C_\BC,P}=\prod_{P|\Var(\wt J)}\BC\dbl\tilde z-\tilde\zeta\dbr\,. 
\end{equation}
Let $pr\colon A\otimes_{\wt A}\BC\dbl\tilde z-\tilde\zeta\dbr\onto\wh\CO_{C_\BC,\Var(J)}=\BC\dbl z-\zeta\dbr$ be the projection onto the factor for $P=\Var(J)$. This induces the left column in the following diagram
\[
\xymatrix @C+4pc @R-0.5pc {
\Koh^1_{\dR,A}\bigl(\ulM,\BC\dbl z-\zeta\dbr\bigr) & \Koh^1_{\Betti,A}\bigl(\ulM,\BC\dbl z-\zeta\dbr\bigr) \ar[l]^\cong_{h_{\Betti,\,\dR,A}} \\
\sigma^*M\otimes_{A_\BC}\BC\dbl z-\zeta\dbr \ar@{=}[u] & \Lambda(\ulM)\otimes_A\BC\dbl z-\zeta\dbr \ar[l]^\cong_{h_{\Betti,\,\dR,A}} \ar@{=}[u]\\
\sigma^*M\otimes_{A_\BC}\bigl(A_\BC\otimes_{\wt A_\BC}\BC\dbl\tilde z-\tilde\zeta\dbr\bigr) \ar@{->>}[u]_{pr} & \Lambda(\ulM)\otimes_A\bigl(A\otimes_{\wt A}\BC\dbl\tilde z-\tilde\zeta\dbr\bigr) \ar[l]^\cong_{h_{\Betti,\,\dR,\wt A}} \ar@{->>}[u]^{pr} \\
\Koh^1_{\dR,\wt A}\bigl(\ulM,\BC\dbl\tilde z-\tilde\zeta\dbr\bigr) \ar@{=}[u] & \Koh^1_{\Betti,\wt A}\bigl(\ulM,\BC\dbl\tilde z-\tilde\zeta\dbr\bigr)\,. \ar[l]^\cong_{h_{\Betti,\,\dR,\wt A}} \ar@{=}[u] 
}
\]
If moreover $\ulM$ is uniformizable, also the right column exists and the diagram is commutative, where the horizontal isomorphisms are the period isomorphisms from Theorem~\ref{ThmCompIsomBettiDRAMotive}. There are similar diagrams for (uniformizable) dual $A$-motives and for (uniformizable) abelian or $A$-finite Anderson $A$-modules, which fit into the comparison diagrams
\[
\xymatrix @C+1pc @R-1pc {
\Hom_{\wt Q}\bigl(\Omega^1_{\wt Q/\BF_q},\,\Koh^1_{\dR,\wt A}(\ulM,\BC\dbl\tilde z-\tilde\zeta\dbr)\bigr) \ar@{->>}[r]^{pr} \ar@{=}[d] & \Hom_Q\bigl(\Omega^1_{Q/\BF_q},\,\Koh^1_{\dR,A}(\ulM,\BC\dbl z-\zeta\dbr)\bigr) \ar@{=}[d] \\
\Koh^1_{\dR,\wt A}(\ulE,\BC\dbl\tilde z-\tilde\zeta\dbr) \ar@{->>}[r]^{pr} & \Koh^1_{\dR,A}(\ulE,\BC\dbl z-\zeta\dbr) \\
}
\]
respectively
\[
\xymatrix @C+2pc @R-1pc {
\Koh^1_{\dR,\wt A}(\ulE,\BC\dbl\tilde z-\tilde\zeta\dbr) \ar@{->>}[r]^{pr} \ar@{=}[d] & \Koh^1_{\dR,A}(\ulE,\BC\dbl z-\zeta\dbr)  \ar@{=}[d] \\
\Koh^1_{\dR,\wt A}(\uldM,\BC\dbl\tilde z-\tilde\zeta\dbr) \ar@{->>}[r]^{pr}& \Koh^1_{\dR,A}(\uldM,\BC\dbl z-\zeta\dbr)
}
\]
Note that $\Omega^1_{\wt Q/\BF_q}=\wt Q\,d\tilde z$ and $\Omega^1_{Q/\BF_q}=Q\,d\tilde z$.

\medskip\noindent
(c) \es For the Hodge-Pink structures, (a) and (b) imply the compatibility 
\[
(\id_H,\id_{W_\bullet H},pr)\colon\;\ulHodge^1_{\wt A}(\ulM)\;=\;(H,W_\bullet H,\Fq)\;\longto\;\ulHodge^1_A(\ulM)\;=\;\bigl(H,W_\bullet H,pr(\Fq)\bigr)\,, 
\]
where $pr$ is the projection of \eqref{EqRemChangeOfADecomp} onto the factor for $P=\Var(J)$.

\medskip\noindent
(d) \es For $\Koh^1_v$ let $\tilde v\in\wt C\setminus\{\wt\infty\}$ be a closed point and let $v_1,\ldots,v_n$ be the points of $C\setminus\{\infty\}$ lying above $\tilde v$. Then $A\otimes_{\wt A}\wt A_{\tilde v}=\prod_{i=1}^n A_{v_i}$. This induces the decomposition 
\[
\Koh^1_{\tilde v,\wt A}(\ulM,\wt A_{\tilde v})\;=\;\prod_{i=1}^n\Koh^1_{v_i,A}(\ulM,A_{v_i})
\]
and similarly for dual $A$-motives and for abelian or $A$-finite Anderson $A$-modules. All comparison isomorphism are compatible with these product decompositions.
\end{remark}

%%%%%%%%%%%%%%%%%%%%%%%%%%%%%%%%%%%%%%%%%%%%%%%%%%%%%%%%%%%%% 
%% 
%%     Applications
%% 
%%%%%%%%%%%%%%%%%%%%%%%%%%%%%%%%%%%%%%%%%%%%%%%%%%%%%%%%%%%%% 
 
\section{Applications}\label{Sect3}
\setcounter{equation}{0}

Theorem~\ref{ThmHodgeConjecture}, that is the Hodge conjecture, has consequences for the motivic Galois groups of (dual) $A$-motives and Anderson $A$-modules from Definitions~\ref{DefMotGp} and \ref{DefMotGpDual} and the Hodge-Pink groups of mixed $Q$-Hodge-Pink structures from Definition~\ref{DefHPGp}. For a uniformizable dual $A$-motive $\uldM$ our motivic Galois group $\Gamma_\uldM$ equals the motivic Galois group defined by Papanikolas~\cite[\S\,3.5.2]{Papanikolas}. We also explain further results known about this group.

\begin{theorem}\label{ThmHodgeConjectureE}
Let $\ulM$ (respectively $\uldM$) be a uniformizable mixed (dual) $A$-motive and let $\ulH:=\ulHodge^1(\ulM)$ (respectively $\ulH:=\ulHodge^1(\uldM)$) be the associated mixed $Q$-Hodge-Pink structure. Then the motivic Galois group $\Gamma_\ulM$ (respectively $\Gamma_\uldM$) is canonically isomorphic to the Hodge-Pink group $\Gamma_{\ulH}$.
\end{theorem}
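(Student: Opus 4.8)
The proof will be a formal consequence of the ``Hodge Conjecture over function fields'' (Theorems~\ref{ThmHodgeConjecture} and \ref{ThmDualHodgeConjecture}) together with Tannakian duality. Recall that by Definitions~\ref{DefMotGp}, \ref{DefMotGpDual} and \ref{DefHPGp} the groups $\Gamma_\ulM$ and $\Gamma_{\ulHodge^1(\ulM)}$ are the affine $Q$-group schemes attached by Theorem~\ref{theorem:theorem-deligne-211} to the neutral Tannakian categories $\llangle\ulM\rrangle\subset\AMUMotCatIsog$ and $\llangle\ulHodge^1(\ulM)\rrangle\subset\QHodgeCat$, with respect to the neutral fiber functors $\ulM\mapsto\Hodge^1(\ulM)$ (Theorem~\ref{TheoremAMotTannakian}) and $\omega_0\colon(H,W_\bullet H,\Fq)\mapsto H$ (Theorem~\ref{ThmPinkTannaka}). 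The plan is: first to record that the exact tensor equivalence $\ulHodge^1\colon\llangle\ulM\rrangle\isoto\llangle\ulHodge^1(\ulM)\rrangle$ of Theorem~\ref{ThmHodgeConjecture}\ref{ThmHodgeConjectureD} is strictly compatible with these fiber functors; second to invoke Lemma~\ref{cor:defnmu} to turn this into a morphism of group schemes; third to use a quasi-inverse together with the uniqueness clause of Lemma~\ref{cor:defnmu} to see that this morphism is an isomorphism.

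For the first step one only has to observe that by Definition~\ref{Def2.6} the underlying $Q$-vector space of $\ulHodge^1(\ulM)$ is $\Hodge^1(\ulM)=\Lambda(\ulM)\otimes_AQ$, and that morphisms and tensor products of Hodge-Pink structures (Definitions~\ref{Def1.1} and \ref{Def1.3}) are defined on the level of underlying $Q$-vector spaces; hence $\omega_0\circ\ulHodge^1$ \emph{equals} (not merely is isomorphic to) the fiber functor $\ulM\mapsto\Hodge^1(\ulM)$, compatibly with the tensor structures. Therefore Lemma~\ref{cor:defnmu}, applied to $\ulHodge^1$ and to a quasi-inverse tensor functor $G$, produces homomorphisms of affine $Q$-group schemes $f\colon\Gamma_\ulM\to\Gamma_{\ulHodge^1(\ulM)}$ and $g\colon\Gamma_{\ulHodge^1(\ulM)}\to\Gamma_\ulM$ with $\omega^f\cong\ulHodge^1$ and $\omega^g\cong G$; since $\omega^{g\circ f}\cong G\circ\ulHodge^1\cong\id$ and likewise $\omega^{f\circ g}\cong\id$, the uniqueness in Lemma~\ref{cor:defnmu} forces $g\circ f=\id$ and $f\circ g=\id$. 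By Lemma~\ref{lem:prop-deligne-220} both groups are realized as closed subgroups of $\GL_Q(\Hodge^1(\ulM))$ and $f$ is compatible with these embeddings, so the isomorphism is canonical. This settles the case of $\ulM$.

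The dual $A$-motive case reduces to the one just treated. By Definition~\ref{DualDef2.6} we have $\ulHodge^1(\uldM)=\ulHodge_1(\uldM)\dual$, and since the generating construction of Definition~\ref{DefSubcategory} is symmetric under $X\leftrightarrow X\dual$ via biduality, $\llangle\ulHodge^1(\uldM)\rrangle=\llangle\ulHodge_1(\uldM)\rrangle$, whence $\Gamma_{\ulHodge^1(\uldM)}=\Gamma_{\ulHodge_1(\uldM)}$. Now Theorem~\ref{ThmDualHodgeConjecture}\ref{ThmDualHodgeConjectureD} gives an exact tensor equivalence $\ulHodge_1\colon\llangle\uldM\rrangle\isoto\llangle\ulHodge_1(\uldM)\rrangle$ which, by Definition~\ref{DualDef2.6}, is again strictly compatible with the neutral fiber functors $\uldM\mapsto\Hodge_1(\uldM)$ (Theorem~\ref{TheoremDualAMotTannakian}) and $\omega_0$, because the underlying $Q$-vector space of $\ulHodge_1(\uldM)$ is $\Hodge_1(\uldM)=\Lambda(\uldM)\otimes_AQ$. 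Arguing exactly as before yields a canonical isomorphism $\Gamma_\uldM\isoto\Gamma_{\ulHodge_1(\uldM)}=\Gamma_{\ulHodge^1(\uldM)}=\Gamma_\ulH$. (Alternatively one may combine Proposition~\ref{PropSameGroup}, which gives $\Gamma_\uldM\cong\Gamma_{\ulM(\uldM)}$, with Theorem~\ref{ThmHofMandDualM}, which identifies $\ulHodge^1\bigl(\ulM(\uldM)\bigr)$ with $\ulHodge^1(\uldM)\otimes\ul\Omega$ and hence their Hodge-Pink groups, and with the case of $A$-motives already proved.)

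The genuine mathematical content here — that the essential image of $\ulHodge^1$, respectively $\ulHodge_1$, is closed under subquotients, proved via the theory of $\sigma$-bundles — is entirely absorbed into Theorems~\ref{ThmHodgeConjecture} and \ref{ThmDualHodgeConjecture}, so no real difficulty remains. The only point deserving care, and the reason the argument is routed through a quasi-inverse and the uniqueness statement of Lemma~\ref{cor:defnmu} rather than simply quoting that the equivalence induces a homomorphism, is the passage from ``tensor equivalence over the fiber functor'' to ``isomorphism of Tannakian fundamental group schemes''.
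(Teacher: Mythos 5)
Your proposal is correct and follows essentially the same route as the paper, which simply observes that the theorem is a direct consequence of the exact tensor equivalences $\llangle\ulM\rrangle\isoto\llangle\ulH\rrangle$ and $\llangle\uldM\rrangle\isoto\llangle\ulH\rrangle$ from Theorems~\ref{ThmHodgeConjecture}\ref{ThmHodgeConjectureD} and \ref{ThmDualHodgeConjecture}\ref{ThmDualHodgeConjectureD}; you have merely spelled out the standard Tannakian bookkeeping (compatibility of fiber functors, Lemma~\ref{cor:defnmu}, and the quasi-inverse) that the paper leaves implicit.
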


\begin{proof}
This is a direct consequence of the canonical equivalence $\llangle\ulM\rrangle\isoto\llangle\ulH\rrangle$ (respectively $\llangle\uldM\rrangle\isoto\llangle\ulH\rrangle$) from Theorem~\ref{ThmHodgeConjecture}\ref{ThmHodgeConjectureD} (respectively Theorem~\ref{ThmDualHodgeConjecture}\ref{ThmDualHodgeConjectureD}).
\end{proof}

\begin{proposition}\label{PropPropertiesofGalGp}
The motivic Galois group $\Gamma_\ulM$ of a uniformizable mixed $A$-motive $\ulM$ is smooth and connected.
\end{proposition}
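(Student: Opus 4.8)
The plan is to transport the statement to Pink's category of Hodge-Pink structures, where the analogue is essentially known. By Theorem~\ref{ThmHodgeConjectureE} the motivic Galois group $\Gamma_\ulM$ is canonically isomorphic to the Hodge-Pink group $\Gamma_\ulH$ of the mixed $Q$-Hodge-Pink structure $\ulH:=\ulHodge^1(\ulM)$, which really is a $Q$-Hodge-Pink structure by Theorem~\ref{ThmHodgeConjecture}\ref{ThmHodgeConjectureA}. So it suffices to prove that the Hodge-Pink group of an arbitrary mixed $Q$-Hodge-Pink structure is smooth and connected. Connectedness, and the fact that $\Gamma_\ulH$ is reduced, are exactly Pink's \cite[Propositions~9.4 and~9.12]{PinkHodge} (see the discussion following Definition~\ref{DefHPGp}); thus the reduction step is immediate and the only genuine content is to make sure that reducedness is upgraded to smoothness.

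For the smoothness point I would recall how Pink's connectedness argument actually proceeds, since it yields more than it states: up to a faithfully flat base change, $\Gamma_\ulH$ is generated, as an algebraic subgroup of $\GL_Q(H)$, by images of one-parameter multiplicative subgroups — roughly, the weight cocharacter attached to $W_\bullet H$ together with the subgroups governing the relative position of $\Fp$ and $\Fq$. Since $\BG_{m}$ is smooth and connected, and the subgroup of a linear algebraic group generated by a family of smooth connected subgroups is again smooth and connected, one gets smoothness and connectedness after the base change; both properties then descend to $Q$. This is the step that uses something beyond formal Tannakian arguments: over the imperfect field $Q=\BF_q(C)$ a reduced connected linear algebraic group need not be smooth, so Cartier's theorem is unavailable and one must exploit the explicit generation by $\BG_m$'s rather than argue abstractly from reducedness.

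The hard part is therefore neither the reduction (which is a one-line appeal to Theorem~\ref{ThmHodgeConjectureE}) nor connectedness (Pink), but the bookkeeping around imperfectness: one must verify that Pink's construction of $\Gamma_\ulH$ genuinely presents it via smooth connected generating subgroups, and not merely that the underlying reduced scheme is irreducible. Granting Theorem~\ref{ThmHodgeConjecture} and the results of \cite[\S9]{PinkHodge}, the remaining argument is short, and I would conclude by transporting smoothness and connectedness back along the canonical isomorphism $\Gamma_\ulM\cong\Gamma_\ulH$.
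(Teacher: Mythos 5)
Your proposal follows exactly the paper's route: reduce via Theorem~\ref{ThmHodgeConjectureE} to the Hodge-Pink group $\Gamma_{\ulHodge^1(\ulM)}$ and invoke Pink. The only difference is that the paper cites \cite[Propositions~6.2 and 6.5]{PinkHodge}, which establish smoothness and connectedness of $\Gamma_\ulH$ directly, so the upgrade from ``reduced'' to ``smooth'' over the imperfect field $Q$ that you (rightly) flag as the delicate point is already handled in Pink's cited results rather than needing a separate argument.
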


\begin{proof}
It was proved by Pink~\cite[Proposition~9.4 and 9.6]{PinkHodge} that $\Gamma_{\ulHodge^1(\ulM)}$ is connected and reduced and satisfies $\Gamma_{\Frob_{q^n}^*\ulHodge^1(\ulM)}\cong\Gamma_{\ulHodge^1(\ulM)}\times_{Q,\Frob_{q^n}}Q$ for every $n\in\BN$. So in particular, also $\Gamma_{\ulHodge^1(\ulM)}\times_{Q,\Frob_{q^n}}Q$ is reduced. Since every finite purely inseparable extension of $Q$ is contained in an extension of the form $\Frob_{q^n}\colon Q\to Q$ by \cite[Proof of Corollary~II.2.12]{Silverman}, this implies by \cite[IV$_2$, Proposition~4.6.1(d)]{EGA} that $\Gamma_{\ulHodge^1(\ulM)}$ is geometrically reduced, and hence smooth. The statement for $\Gamma_\ulM$ follows from Theorem~\ref{ThmHodgeConjectureE}.
\end{proof}

The $v$-adic cohomology realization $\Koh^1_v(\ulM,Q_v)$ of a (uniformizable) $A$-motive $\ulM$ defines an exact tensor functor \eqref{EqVAdicFiberFunctor}. If $\ulM$ is defined over a subfield $L\subset\BC$, the elements of $\Gal(L^\sep/L)$ act on $\Koh^1_v(\ulM',Q_v)$ for $\ulM'\in\llangle\ulM\rrangle$ as tensor automorphisms. If $\ulM$ is uniformizable this action is compatible with the comparison isomorphism $h_{\Betti,v}\colon\Koh^1_\Betti(\,.\,,A)\otimes_A Q_v\isoto\Koh^1_v(\,.\,,Q_v)$. This induces homomorphisms of groups
\begin{equation}\label{EqGalRep}
\Gal(L^\sep/L)\;\longto\;\Gamma_\ulM(Q_v) \qquad\text{and}\qquad \Gal(L^\sep/L)\;\longto\;\Gamma_\ulM(\BA_Q^f)\,,
\end{equation}
where $\BA_Q^f:=\wh A\otimes_A Q$ denotes the finite adeles of $Q$. Here $\wh A:=\invlim A/I$ is the projective limit where $I$ runs over the ideals of $A$ different from $(0)$. Richard Pink and his group also proved the following

\begin{theorem}\label{PinksResults}
Let $\ulE$ be a Drinfeld $A$-module and let $\ulH=(H,W_\bullet H,\Fq):=\ulHodge_1(\ulE)$ be its $Q$-Hodge-Pink structure. Then
\begin{enumerate}
\item \label{PinksResults_A}
$\Gamma_\ulH$ equals the centralizer $\Cent_{\GL(H)}\End_\BC(\ulE)$ of $\End_\BC(\ulE)$ inside $\GL(H)$.
\end{enumerate}
Assume now that $\ulE$ is defined over a finitely generated subfield $L\subset\BC$ such that $\End_L(\ulE)=\End_\BC(\ulE)$.
\begin{enumerate}
\stepcounter{enumi}
\item \label{PinksResults_B}
For every place $v$ the image of $\Gal(L^\sep/L)\to\Cent_{\GL(\Koh_{1,v}(\ulE,Q_v))}\End_\BC(\ulE)$ is $v$-adically open.
\item \label{PinksResults_C}
The image of $\Gal(L^\sep/L)\to\Gamma_\ulH(\BA_Q^f)$ is open in the adelic topology.
\end{enumerate}
\end{theorem}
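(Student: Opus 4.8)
\textbf{Proof proposal for Theorem~\ref{PinksResults}.}

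For \ref{PinksResults_A}, the inclusion $\Gamma_\ulH\subseteq\Cent_{\GL(H)}\End_\BC(\ulE)$ is immediate from Lemma~\ref{lem:prop-deligne-220}: the realization functor $\ulHodge_1$ embeds $\End_\BC(\ulE)$ into $\End(\ulH)$, so the Tannakian group $\Gamma_\ulH$ factors through the centralizer of $\End(\ulH)$ in $\GL_Q(H)$, a fortiori through that of $\End_\BC(\ulE)$. For the reverse inclusion one uses that a Drinfeld $A$-module of rank $r$ is pure of weight $-\tfrac1r$, so $\ulH$ is a pure $Q$-Hodge-Pink structure and $\Gamma_\ulH$ is connected and (being attached to a pure structure) reductive, by Pink's results quoted after Definition~\ref{DefHPGp}. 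With $K:=\End_\BC(\ulE)\otimes_AQ$, which is a field, $H$ is a $K$-vector space and $\Cent_{\GL(H)}\End_\BC(\ulE)=\Res_{K/Q}\GL_{H/K}$. The Hodge-Pink lattice, satisfying $(z-\zeta)\Fp\subseteq\Fq\subseteq\Fp$ with $\dim_\BC\Fp/\Fq=1$, defines a ``Hodge-Pink cocharacter'' $\mu$ of $\Gamma_{\ulH}$ over an extension of $Q$ with weights $0$ and $1$; I would then follow \cite{PinkHodge} to show that the subgroup of $\Cent_{\GL(H)}\End_\BC(\ulE)$ generated by the $\Gal(\ol Q/Q)$-conjugates of $\mu$ is the whole centralizer, by analysing the action of these cocharacters on $H\otimes_Q\ol Q$ as a $K\otimes_Q\ol Q$-module. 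Since all conjugates of $\mu$ lie in $\Gamma_\ulH$, equality follows.

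For \ref{PinksResults_B}, fix $v$ and let $G_v$ be the image of $\rho_v\colon\Gal(L^\sep/L)\to\Cent_{\GL(\Koh_{1,v}(\ulE,Q_v))}\End_\BC(\ulE)(Q_v)$; it is a compact $v$-adic Lie group. The Tate conjecture for Anderson $A$-modules \eqref{EqTateConjAModule} (equivalently \eqref{EqTateConjAMotives} via Proposition~\ref{PropCompTateModEandM}) identifies the commutant of $\rho_v(\Gal(L^\sep/L))$ in $\End(\Koh_{1,v}(\ulE,Q_v))$ with $K\otimes_QQ_v$, so the commutant of $\Lie G_v$ is $K\otimes_QQ_v$ as well. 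Next, the comparison isomorphism $h_{\Betti,v}$ identifies the $v$-adic fibre functor on $\llangle\ulE\rrangle$ with $\omega_0\otimes_QQ_v$, so by Pink's Mumford--Tate theorem for Drinfeld modules together with \ref{PinksResults_A} and Theorem~\ref{ThmHodgeConjectureE} (using $\End_L(\ulE)=\End_\BC(\ulE)$) the Zariski closure of $G_v$ is $\Cent_{\GL}\End_\BC(\ulE)$ base-changed to $Q_v$. Thus $\Lie G_v$ is a Zariski dense $Q_v$-Lie subalgebra of $\Lie\Cent_{\GL}\End_\BC(\ulE)$ with commutant $K\otimes_QQ_v$; producing in $G_v$ sufficiently generic Frobenius elements $\Frob_\CP$ by Chebotarev---using that over a finite residue field the reduced Drinfeld module has commutative endomorphism algebra, so that $\Frob_\CP$ generates a maximal torus of the centralizer for a Zariski dense set of $\CP$---shows $\Lie G_v$ contains regular semisimple elements, hence is reductive, and the commutant condition then forces $\Lie G_v=\Lie\Cent_{\GL}\End_\BC(\ulE)$. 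A compact subgroup of a $v$-adic Lie group with full Lie algebra is open, proving \ref{PinksResults_B}.

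For \ref{PinksResults_C}, combine the local statements with an independence-of-$v$ argument. By \ref{PinksResults_B} each $\rho_v(\Gal(L^\sep/L))$ is open in $\Gamma_\ulH(Q_v)$; what remains is that the diagonal image in the restricted product $\Gamma_\ulH(\BA_Q^f)$ is open. Writing $\Gamma_\ulH=\Res_{K/Q}\GL_{H/K}$ with derived group $\Res_{K/Q}\SL_{H/K}$ when $\dim_KH>1$, whose almost-simple factors are simply connected, strong approximation shows the commutators of the local images fill up the $\SL$-part for almost all $v$, Goursat's lemma rules out diagonal coincidences among the simple factors, and the abelian quotient $\Res_{K/Q}\BG_m$ is matched against the cyclotomic-type Hecke character of $\ulE$ on torsion points via \eqref{EqTateModDivisionTower} and class field theory; one also needs surjectivity of the mod-$v$ reductions onto the reduction of the maximal compact for almost all $v$. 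This is the Pink--R\"utsche adelic open image theorem. I expect the heart of the difficulty to be in \ref{PinksResults_B}, specifically the step that $\Lie G_v$ is the full centralizer Lie algebra: it is the function-field analogue of Serre's open image theorem and requires either a supply of generic Frobenii (itself a nontrivial Chebotarev-plus-reduction input) or a classification of the relevant $Q_v$-Lie subalgebras with prescribed commutant.
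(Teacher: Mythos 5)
You should know that the paper does not actually prove this theorem: its ``proof'' consists of citations. Part~\ref{PinksResults_A} is quoted from Pink \cite[Theorem~11.3]{PinkHodge}, combined only with the observation that $\End_\BC(\ulE)\otimes_AQ\cong\End_\BC(\ulH)$ (which follows from full faithfulness of the realization functor, Theorem~\ref{ThmHodgeConjecture}\ref{ThmHodgeConjectureB}, so that Pink's statement about the centralizer of $\End(\ulH)$ translates into the stated one); part~\ref{PinksResults_B} is \cite[Theorem~0.2]{Pink97a}; part~\ref{PinksResults_C} is the Pink--R\"utsche adelic openness theorem \cite{PR09b}. Your proposal instead tries to sketch the internal arguments of those three papers. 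The outline is broadly faithful to their strategies (Hodge-Pink cocharacters generating the centralizer for~\ref{PinksResults_A}; Tate conjecture plus analysis of the Lie algebra of the image and Frobenius tori for~\ref{PinksResults_B}; strong approximation, Goursat, and the abelianized/determinant part for~\ref{PinksResults_C}), but at every decisive juncture you defer back to the same sources (``I would then follow \cite{PinkHodge}\dots'', ``This is the Pink--R\"utsche adelic open image theorem''), so what you have written is a road map of the cited proofs rather than an independent argument. Since the paper itself only cites, this is not a defect relative to the paper, but you should present it as such.

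Two concrete corrections. First, your justification that $\Gamma_\ulH$ is reductive ``being attached to a pure structure, by Pink's results quoted after Definition~\ref{DefHPGp}'' is wrong as stated: the text there says only that $\Gamma_\ulH$ is connected and reduced and explicitly warns that $\Gamma_\ulH$ need not be reductive in general. For a Drinfeld module reductivity does hold, but it has to come from the semisimplicity of the category of pure $A$-motives up to isogeny (or a posteriori from~\ref{PinksResults_A} itself), not from that remark. Second, in~\ref{PinksResults_B} the steps you flag as the heart of the matter --- that Frobenius elements generate maximal tori of the centralizer for a Zariski-dense set of primes, and that a $Q_v$-Lie subalgebra with commutant $K\otimes_QQ_v$ and enough regular semisimple elements must be all of $\Lie\Cent_{\GL}\End_\BC(\ulE)$ --- are precisely the content of \cite{Pink97a} and are not established by your sketch; as written this is a genuine gap if the proposal is meant to stand on its own.
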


\begin{proof}
\ref{PinksResults_A} was proved by Pink~\cite[Theorem~10.3]{PinkHodge} taking into account that $\End_\BC(\ulE)\otimes_AQ\cong\End_\BC(\ulH)$ by Theorems~\ref{ThmAnderson} and \ref{ThmHodgeConjecture}\ref{ThmHodgeConjectureB}, respectively Theorems~\ref{theorem:equivalence} and \ref{ThmDualHodgeConjecture}\ref{ThmDualHodgeConjectureB}.

\medskip\noindent
\ref{PinksResults_B} was proved by Pink~\cite[Theorem~0.2]{Pink97a}.

\medskip\noindent
\ref{PinksResults_C} was proved in the formulation that the image of $\Gal(L^\sep/L)\to\bigl(\Cent_{\GL(H)}\End_\BC(\ulE)\bigr)(\BA_Q^f)$ is open by Pink and R\"utsche~\cite[Theorems~0.1 and 0.2]{PR09b} after previous work by Pink, Breuer, R\"utsche and Traulsen \cite{Pink97a,BreuerPink,PT06b,PR09a}. Using \ref{PinksResults_A} yields our formulation.
\end{proof}

\begin{remark}
Note that for $\ulM=\ulM(\ulE)$ when $\ulE$ is a Drinfeld module, Theorem~\ref{PinksResults}\ref{PinksResults_B} implies that $\Gamma_\ulM=\Cent_{\GL(H)}\End_\BC(\ulM)$. This point of view is taken in \cite[Theorem~3.5.4]{ChangPapa12}. Indeed, the inclusion $\Gamma_\ulM\subset\Cent_{\GL(H)}\End_\BC(\ulM)$ is automatic by Lemma~\ref{lem:prop-deligne-220}. Since the commutation with $\End_\BC(\ulM)$ is a linear condition, $\Cent_{\GL(H)}\End_\BC(\ulM)$ is an irreducible group. Therefore, if $\Gamma_\ulM$ was a proper subgroup, the image of \eqref{EqGalRep} could not be open in $\Cent_{\GL(H)}\End_\BC(\ulM)$ in contradiction to \ref{PinksResults_B}.

Therefore, Theorem~\ref{PinksResults}\ref{PinksResults_A} is equivalent to Theorem~\ref{ThmHodgeConjectureE} for Drinfeld modules.
\end{remark}

\forget{
The following was proved by the second named author.

\begin{theorem}[{\cite[Theorem~5.1.2]{JuschkaDipl}}]
Let $\uldM$ be a pure uniformizable dual $A$-motive with complex multiplication in the sense that there is a commutative $Q$-algebra $E\subset\End_\BC(\uldM)\otimes_AQ$ which is a product of fields with $[E:Q]=\rk\uldM$. Assume that $E$ is either separable or purely inseparable over $Q$. Then 
\[
\Gamma_\uldM\;=\;\Cent_{\GL(H)}\End_\BC(\ulM)\;=\;\Res_{E/Q}\BG_{m,E}\,,
\]
where $\Res$ denotes Weil restriction, that is $(\Res_{E/Q}\BG_{m,E})(B):=(E\otimes_QB)\mal$ for $Q$-algebras $B$.
\end{theorem}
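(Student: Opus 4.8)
By Proposition~\ref{PropSameGroup} and Theorem~\ref{ThmHodgeConjectureE} it suffices to determine $\Gamma_\ulH$ for $\ulH:=\ulHodge_1(\uldM)$, the pure $Q$-Hodge-Pink structure attached to $\uldM$, with underlying space $H:=\Hodge_1(\uldM)=\Lambda(\uldM)\otimes_AQ$; since $\uldM$ is uniformizable, $\dim_QH=\rk\uldM=[E:Q]$ by Lemma~\ref{LemmaDualUniformizable}\ref{LemmaDualUniformizableB}, and as $E$ (a product of fields) acts faithfully and $Q$-linearly on $H$, a dimension count shows $H$ is free of rank one over $E$. The $E$-action lies in $\End_{\QHodgeCat}(\ulH)$, equivalently (via the anti-equivalence of Proposition~\ref{PropDualizing}) in $\End_\BC(\ulM)\otimes_AQ$ for $\ulM=\ulM(\uldM)$. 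Hence by Lemma~\ref{lem:prop-deligne-220}
\[
\Gamma_\uldM\;\subseteq\;\Cent_{\GL_Q(H)}\!\bigl(\End_\BC(\ulM)\bigr)\;\subseteq\;\Cent_{\GL_Q(H)}(E)\;=\;\Res_{E/Q}\BG_{m,E},
\]
the last equality because $\End_E(H)=E$ for $H$ free of rank one over $E$ (and componentwise if $E$ is a product of fields). Thus everything is reduced to the reverse inclusion $\Gamma_\uldM\supseteq\Res_{E/Q}\BG_{m,E}$; once this is established, all three groups coincide.

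\textbf{The rank-one case over $E$.} We may assume $E$ is a field. The $E$-action endows $\uldM$ with the structure of a dual $A_E$-motive $\uldM_E$ of rank one, where $A_E$ is the integral closure of $A$ in $E$ (finite flat over $A$, with $C_E$ smooth over $\BF_q$); up to isogeny $\uldM_E$ is a twist $\check\UOne(n)$ of a dual Carlitz--Hayes $A_E$-motive, cf.\ Example~\ref{ExDualCHMotive} and Proposition~\ref{PropUnifDualAMotive}. Here $n=\dim\uldM\neq0$, since $\uldM$ is pure of weight $-\dim\uldM/\rk\uldM$ and the degenerate isotrivial case $\dim\uldM=0$ (for which $\ulH$ is the trivial object $\UOne(0)^{\oplus r}$ and the statement is vacuous) is excluded; cf.\ Remark~\ref{DualRem2.7}(c). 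The argument of Example~\ref{ExGalGpOfCarlitz} then gives that the motivic Galois group of $\uldM_E$ over $E$ equals $\BG_{m,E}$: it is a closed subgroup of $\GL_1=\BG_{m,E}$, and were it finite it would annihilate some tensor power of $\uldM_E$, forcing the Hodge--Pink weight to be zero, a contradiction. (Alternatively this is Theorem~\ref{PinksResults}\ref{PinksResults_A} applied to a rank-one Drinfeld $A_E$-module after the appropriate twist.)

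\textbf{Transfer back to $Q$ and conclusion.} It remains to pass from $\Gamma_{\uldM_E/E}=\BG_{m,E}$ to $\Gamma_{\uldM/Q}=\Res_{E/Q}\BG_{m,E}$. In the \emph{separable} case this is the compatibility of the Tannakian formalism with the change of ring $A\subseteq A_E$: applying Remark~\ref{RemChangeOfA} (and Remark~\ref{RemPapanikolasGaloisGp}) with $\widetilde A:=A$, $A:=A_E$ — whose separability hypothesis is precisely $E/Q$ separable — the cohomology realizations and fiber functors over $A$ recover, after the adelic/$\Omega$-bookkeeping, those over $A_E$, and on Tannakian groups this amounts to restriction of scalars along $\Spec E\to\Spec Q$, which is exact and compatible with the forgetful fiber functors; hence $\Gamma_{\uldM/Q}=\Res_{E/Q}(\Gamma_{\uldM_E/E})=\Res_{E/Q}\BG_{m,E}$. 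In the \emph{purely inseparable} case Remark~\ref{RemChangeOfA} does not apply (it rests on separability, cf.\ Lemma~\ref{LemmaTSep}); instead one works directly with $\ulH$: after base change to a perfect closure $\ol Q$ of $Q$, the algebra $E\otimes_Q\ol Q$ is local with residue field $\ol Q$, the $\ol Q$-form $\ulH\otimes_Q\ol Q$ is free of rank one over it, and one shows — using again that the single Hodge--Pink jump is nonzero — that the base-changed Hodge--Pink group is the full group $(E\otimes_Q\ol Q)^\times$; faithfully flat descent then yields $\Gamma_\uldM=\Res_{E/Q}\BG_{m,E}$. This is consistent with Proposition~\ref{PropPropertiesofGalGp}, as $\Res_{E/Q}\BG_{m,E}$ is smooth and connected even when $E/Q$ is inseparable.

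\textbf{Main obstacle.} The only nontrivial content is the lower bound $\Gamma_\uldM\supseteq\Res_{E/Q}\BG_{m,E}$, since Lemma~\ref{lem:prop-deligne-220} gives the upper bound for free. In the separable case this follows cleanly by reducing to the rank-one dual $A_E$-motive and invoking Remark~\ref{RemChangeOfA}. The delicate point is the purely inseparable case: there the ``torus'' $\Res_{E/Q}\BG_{m,E}$ has a nontrivial unipotent part over $\ol Q$, the change-of-ring machinery is unavailable, and one must control the Hodge--Pink group directly from the Hodge--Pink lattice over $\BC\dbl z-\zeta\dbr$ and then descend from $\ol Q$ back to $Q$; carrying out this descent and verifying that the base-changed Hodge--Pink group is everything is the crux of the proof.
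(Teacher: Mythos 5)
The paper does not actually contain a proof of this theorem---it is quoted from the second author's thesis---so I can only assess your argument on its own terms. Your upper bound is correct and cheap: $H$ is free of rank one over $E$, so $\Cent_{\GL_Q(H)}(E)(B)=(E\otimes_QB)\mal=\Res_{E/Q}\BG_{m,E}(B)$ functorially, and Lemma~\ref{lem:prop-deligne-220} gives $\Gamma_\uldM\subseteq\Cent_{\GL(H)}(\End)\subseteq\Res_{E/Q}\BG_{m,E}$. The lower bound, however, is where all the content sits, and your argument for it breaks at the step ``up to isogeny $\uldM_E$ is a twist $\check\UOne(n)$ of a dual Carlitz--Hayes $A_E$-motive.'' A dual $A$-motive with an $A_E$-action is, up to isogeny, a rank-one module $\dM$ over $A_E\otimes_{\BF_q}\BC$ with $\sdtau_\dM(\sdsigma^*\dM)=I\cdot\dM$ for a fractional ideal $I$ supported on the \emph{preimage of} $\Var(J)$, which for $E/Q$ separable consists of $[E:Q]$ distinct points, one for each embedding $E\to\BC$ extending $\charmorph$. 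The tuple of exponents of $I$ at these points is the analogue of a CM type; it is recorded by the relative position of $\Fq$ and $\Fp$ as modules over $E\otimes_Q\BC\dbl z-\zeta\dbr$ and is therefore an isogeny invariant. The classification of Example~\ref{ExDualCHMotive} only applies when $\sdtau$ is invertible away from the \emph{single} point $\ker(A_E\otimes_{\BF_q}\BC\to\BC)$, i.e.\ when the type is concentrated at the place induced by $\charmorph$---which is exactly what you silently assume. This cannot be repaired formally: for $\uldM=\check\UOne(1)\otimes_AA_E$ the type is $(1,\dots,1)$, one has $\uldM\cong\check\UOne(1)\otimes_QE\cong\check\UOne(1)^{\oplus[E:Q]}$ up to isogeny and hence $\Gamma_\uldM=\BG_{m,Q}$, not $\Res_{E/Q}\BG_{m,E}$. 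Exactly as for CM abelian varieties, where the Mumford--Tate group is the $\BQ$-subtorus of $\Res_{E/\BQ}\BG_m$ generated by the Galois orbit of the CM cocharacter and is often proper, controlling this type datum is the whole theorem; your proof never sees it.

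The transfer step has the same defect: the identity $\Gamma_{\uldM/Q}=\Res_{E/Q}\Gamma_{\uldM_E/E}$ is asserted by appeal to Remark~\ref{RemChangeOfA}, but that remark compares cohomology realizations and period isomorphisms under change of $A$; it says nothing about motivic Galois groups, and no purely formal argument can, because $\llangle\uldM\rrangle$ and $\llangle\uldM_E\rrangle$ carry different tensor products (already $\uldM\otimes_{A_\BC}\uldM$ has rank $r^2$ over $A_\BC$ while $\uldM_E\otimes_{A_E\otimes\BC}\uldM_E$ has rank $r$). The workable route is the one you only gesture at in the inseparable case: pass to the Hodge--Pink group via Theorem~\ref{ThmHodgeConjectureE} and show by a direct character/cocharacter computation on $\Res_{E/Q}\BG_{m,E}$, using the explicit shape of $\Fq$ over $E\otimes_Q\BC\dbl z-\zeta\dbr$, that no proper $Q$-subgroup supports $\ulHodge_1(\uldM)$; this is also the only way to capture the unipotent part of $\Res_{E/Q}\BG_{m,E}$ when $E/Q$ is purely inseparable, which you leave as an unproved sketch. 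Two further gaps: ``we may assume $E$ is a field'' requires a Goursat-type independence argument, since the Galois group of a direct sum is a priori only a subgroup of the product of the Galois groups of the summands; and the case $\dim\uldM=0$ is not ``excluded'' by the stated hypotheses but must be ruled out (or the statement sharpened), since there $\Gamma_\uldM$ is trivial.
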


{\it Remark.} The theorem is false. The error is in the last paragraph of the proof: It is true that the projection of $\lambda'$ onto each factor is surjective. But this does not imply that $\lambda'$ is surjective. For example if all Hodge-Pink weights are equal, then all Galois conjugates of $\lambda$ are equal and then $G_\ulH$ is a one-dimensional torus.

Frage: Funktioniert der Beweis des gelöschten Thm. 6.5 auch nicht, wenn man die Hodge-Pink-Gewichte als verschieden annimmt? Was geht denn schief?

Nein, das funktioniert leider auch nicht: Betrachte Q = F_q(t) und E = Q(\sqrt{t},\sqrt{t-1}) für ungerades q. Dann ist E/Q Galoisch mit Gal(E/Q) = \{ \id, \phi, \psi, \phi\circ\psi \}, wobei \{ \id, \phi \} = Gal( E/Q(\sqrt{t-1}) ) und \{ \id, \psi \} = Gal( E/Q(\sqrt{t}) ). Betrachte einen Q-Homomorphismus \eta: E \to Q^{sep}. Dann ist \Sigma := Hom_Q(E,Q^{sep}) = \eta\circ Gal(E/Q) \cong Gal(E/Q). In deinem Beweis betrachtest du einen Gruppenhomomorphismus \lambda'. Dieser ist von der Form 
\lambda'(x) = ( x^{d_\eta} )_{\eta\in \Sigma}
wobei die d_\eta die (geeignet sortierten) Hodge-Pink-Gewichte sind. Die Galois-Konjugierten von \lambda' sind aber linear abhängig:
\lambda' \cdot \phi\psi(\lambda') = \phi(\lambda') \cdot \psi(\lambda') und deshalb erzeugen diese Galois-Konjugierten nicht die gesamte Gruppe G^{amb}. Das bedeutet, dass G_H ≠ G^{amb} ist.

}

The motivic Galois group also carries information about transcendence. For example Papanikolas~\cite[Theorem~1.7]{Papanikolas} proved the following analog of Grothendieck's period conjecture.

\begin{theorem}\label{ThmPapanikolasPeriodConj}
Let $\uldM$ be a uniformizable dual $\BF_q[t]$-motive which is defined over the algebraic closure $L\subset\BC$ of $\BF_q(\theta)$ where $\theta=\charmorph(t)$. Let $\check\Psi$ be a rigid analytic trivialization of $\uldM$ as in Lemma~\ref{LemmaDualUniformizableBF_q[t]} and let $L_\uldM$ be the field extension of $L$ generated by the entries of the matrix $\check\Psi|_{t=\theta}$. Then the transcendence degree of $L_\uldM$ over $L$ is equal to the dimension of the algebraic group $\Gamma_\uldM$.
\end{theorem}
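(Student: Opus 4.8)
The plan is to carry out the argument of Papanikolas, translating it into the language of dual $A$-motives used above. Write $k:=\BF_q$, $\theta=\charmorph(t)$ and $L=\overline{k(\theta)}\subset\BC$. Fix a $\BC[t]$-basis of $\uldM$ as in Lemma~\ref{LemmaDualUniformizableBF_q[t]}, so that $\sdtau_\dM$ is given by a matrix $\check\Phi\in\GL_r\bigl(\BC[t][\tfrac1{t-\theta}]\bigr)$ which (since $\uldM$ is defined over $L$) may be taken with entries in $L[t]$ and $\det\check\Phi=c\,(t-\theta)^s$ for some $c\in L^\times$, $s\ge0$; let $\check\Psi\in\GL_r(\BC\langle t\rangle)$ be a rigid analytic trivialization, $\sdsigma^\ast\check\Psi=\check\Psi\check\Phi$. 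Reducing to the case where $\uldM$ is effective (cf.\ Example~\ref{ExDualCHMotive}), Corollary~\ref{CorLambdaDualConvRadius} shows that the entries of $\check\Psi$ extend to entire functions of $t$, so $\check\Psi|_{t=\theta}$ makes sense and $L_\uldM=L(\check\Psi_{ij}(\theta))$. By Remark~\ref{RemPapanikolasGaloisGp} the group $\Gamma_\uldM$ of Definition~\ref{DefMotGpDual} agrees with the one Papanikolas attaches to $\llangle\uldM\rrangle$.

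First I would set up the Frobenius–difference Galois group attached to $\check\Psi$ and identify it with $\Gamma_\uldM$. Let $\mathbb{L}$ be the fraction field of the ring of entire functions, put $\check\Psi_1:=\check\Psi\otimes 1$ and $\check\Psi_2:=1\otimes\check\Psi$ over $\mathbb{L}\otimes_{L(t)}\mathbb{L}$, and let $\Gamma_{\check\Psi}\subset\GL_{r,L(t)}$ be the closed subscheme whose coordinate ring is the $L(t)$-subalgebra of $\mathbb{L}\otimes_{L(t)}\mathbb{L}$ generated by the entries of $\check\Psi_2\check\Psi_1^{-1}$ together with the inverse of its determinant. Specializing the general Tannakian formalism of Section~\ref{SectTannaka} to the neutral Tannakian category $\dualAUMotCatIsog$ of Theorem~\ref{TheoremDualAMotTannakian} and its subcategory $\llangle\uldM\rrangle$ — this is Papanikolas's Tannakian duality — one shows that $\Gamma_{\check\Psi}$ is a group scheme and that there is a canonical isomorphism $\Gamma_{\check\Psi}\cong\Gamma_\uldM\times_{\BF_q(t)}L(t)$. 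The structural consequence I need is that $\bar V:=\Spec\bigl(L(t)[\check\Psi_{ij},\det\check\Psi^{-1}]\bigr)$ is a $\Gamma_{\check\Psi}$-torsor over $\Spec L(t)$ under right translation: the equation $\sdsigma^\ast\check\Psi=\check\Psi\check\Phi$ forces the two $\mathbb{L}$-points $\check\Psi_1,\check\Psi_2$ of $\bar V$ to differ by an $\mathbb{L}$-point of $\Gamma_{\check\Psi}$, which is the difference-algebra analogue of the basic Picard–Vessiot theorem. A torsor under an affine group scheme over a field has the same dimension as the group, so
\[
\operatorname{trdeg}_{L(t)}L(t)(\check\Psi_{ij})\;=\;\dim\bar V\;=\;\dim\Gamma_\uldM .
\]

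Second I would compare this generic transcendence degree with $\operatorname{trdeg}_L L_\uldM$. The inequality $\operatorname{trdeg}_L L_\uldM\le\dim\Gamma_\uldM$ is elementary: $R:=L[t][\check\Psi_{ij}]$ is an integral domain (it lies inside the domain of entire functions), $t-\theta$ is a nonzero non-unit of $R$, so the proper closed subscheme $\Spec(R/(t-\theta))$ of the irreducible $\Spec R$ has dimension $\le\dim\Spec R-1=\operatorname{trdeg}_L\operatorname{Frac}(R)-1=\operatorname{trdeg}_{L(t)}L(t)(\check\Psi_{ij})=\dim\Gamma_\uldM$ (using that $t$ is transcendental over $L$); and the evaluation $R\to\BC$, $t\mapsto\theta$, $\check\Psi_{ij}\mapsto\check\Psi_{ij}(\theta)$ kills $t-\theta$, hence its image $L[\check\Psi_{ij}(\theta)]$ has dimension at most that. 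The reverse inequality $\operatorname{trdeg}_L L_\uldM\ge\dim\Gamma_\uldM$ is the heart of the matter and is where the Anderson–Brownawell–Papanikolas linear independence criterion enters: for a Frobenius difference system $\psi^{(-1)}=\check\Phi\psi$ with $\check\Phi$ over $L[t]$ and $\det\check\Phi=c\,(t-\theta)^s$, every $L$-linear relation $\rho\,\psi(\theta)=0$ among the values at $\theta$ lifts to an $L[t]$-linear relation $P\psi=0$ with $P(\theta)=\rho$. Applying this criterion not only to $\uldM$ itself but, via the torsor picture, to all objects of $\llangle\uldM\rrangle$ — so that algebraic relations among the periods $\check\Psi_{ij}(\theta)$ are repackaged as linear relations over symmetric and tensor powers — one shows that the defining ideal of $\check\Psi(\theta)$ in $\GL_{r,L}$ is exactly the specialization at $\theta$ of the defining ideal of $\bar V$. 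Hence the Zariski closure of $\check\Psi(\theta)$ is $\Spec(R/(t-\theta))$ and $\operatorname{trdeg}_L L_\uldM=\dim\Spec(R/(t-\theta))=\dim\bar V=\dim\Gamma_\uldM$, proving the theorem.

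The main obstacle is this last step: both the ABP criterion itself — an essentially analytic statement, proved by an intricate growth-and-vanishing argument on the Frobenius-twisted functional equation, about when an entire vector-valued solution that is annihilated by a linear form after specializing at $\theta$ must already be annihilated by a linear difference operator over $L[t]$ — and its Tannakian deployment, which requires tracking carefully how the torsor $\bar V$ and its $\Gamma_\uldM$-action behave under the fibre-at-$\theta$ functor in order to upgrade linear independence over the whole category $\llangle\uldM\rrangle$ to algebraic independence of the individual period entries. By contrast the first two steps — the construction of $\Gamma_{\check\Psi}$, the torsor property, and the inequality ``$\le$'' — are comparatively formal once the difference-Galois correspondence is in hand, which in turn rests only on the Tannakian machinery of Section~\ref{SectTannaka} and Theorem~\ref{TheoremDualAMotTannakian}.
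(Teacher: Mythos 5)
The paper itself does not prove this statement: it is quoted from Papanikolas \cite[Theorem~1.7]{Papanikolas}, and your outline reproduces, in substance, the proof given there. The three pillars you describe --- the difference Galois group $\Gamma_{\check\Psi}$ together with the torsor $\Spec L(t)[\check\Psi_{ij},\det\check\Psi^{-1}]$ under it and the identification $\Gamma_{\check\Psi}\cong\Gamma_\uldM\times_{\BF_q(t)}L(t)$ (which the paper records separately as Papanikolas's Theorem~4.5.10 and Remark~\ref{RemPapanikolasGaloisGp}); the upper bound obtained by specializing the generic torsor along $t\mapsto\theta$; and the lower bound via the Anderson--Brownawell--Papanikolas criterion deployed across all of $\llangle\uldM\rrangle$ --- are exactly Papanikolas's. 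As you say yourself, what you have is a roadmap rather than a proof: the torsor theorem and, above all, the ABP criterion and its conversion into the statement that the defining ideal of $\check\Psi(\theta)$ is the fibre at $t=\theta$ of the defining ideal of the torsor constitute the entire content of the cited paper and are not re-derived. That is acceptable here, and your upper-bound paragraph (Krull's principal ideal theorem applied to $t-\theta$ in the finitely generated domain $L[t][\check\Psi_{ij}]$) is complete and correct as written.

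The one step you should not wave through is the ``reduction to the effective case.'' Twisting $\uldM$ into $\uldN:=\uldM\otimes\check\UOne(1)^{\otimes d}$ multiplies $\check\Psi$ by a power of $\eta^q\check\ell_\zeta^{\SSC -}$, whose value at $t=\theta$ is essentially the Carlitz period $\wt\pi$, and it replaces $\llangle\uldM\rrangle$ by $\llangle\uldN\rrangle$; both $\operatorname{trdeg}_L L_\uldM$ and $\dim\Gamma_\uldM$ can change under this operation (already for $\uldM=\dUOne(0)$ versus $\uldN=\check\UOne(d)$ each side jumps from $0$ to $1$). Deducing the theorem for $\uldM$ from the theorem for $\uldN$ therefore requires knowing, for instance, that $\wt\pi$ is transcendental over $L_{\uldN}$ exactly when $\check\UOne(1)$ does not lie in $\llangle\uldN\rrangle$ --- which is itself an instance of the statement being proved. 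Papanikolas states and proves the result under effectivity hypotheses ($\check\Phi$ with entries in $L[t]$, $\det\check\Phi=c\,(t-\theta)^s$, $\check\Psi$ entire), so you should either impose those hypotheses or supply the (short but not contentless) argument that both sides of the asserted equality change compatibly under twisting by $\check\UOne(1)$; note also that for non-effective $\uldM$ the value $\check\Psi|_{t=\theta}$ is defined only via the rigid analytic continuation of Proposition~\ref{Prop4.23}, since Corollary~\ref{CorLambdaDualConvRadius} gives convergence of the power series only for $|t|<|\theta|^{1/q}$. Apart from this, the route is the right one.
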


Papanikolas~\cite[Theorem~4.5.10]{Papanikolas} also shows that $\Gamma_\uldM$ equals the Galois group $\Gamma_{\check\Psi}$ of the \emph{Frobenius difference equation} $\sdsigma^\ast\check\Psi=\check\Psi\cdot\check\Phi$ corresponding to $\uldM$. The group $\Gamma_{\check\Psi}$ can be computed explicitly in many cases. This is a powerful tool which already lead to several transcendence results. For example it was applied to determine all algebraic relations among Carlitz logarithms by Papanikolas~\cite[Theorem~1.2.6]{Papanikolas}, respectively among Carlitz (Multi-)Zeta-values and Gamma-values by Anderson, Brownawell, Chang, Mishiba, Papanikolas, Thakur and Yu~\cite{ABP,ChangYu07,ChangPapaYu10,ChangPapaThakurYu,ChangPapaYu11,MishibaMZV}, respectively among periods and logarithms of Drinfeld-modules by Chang and Papanikolas~\cite{ChangPapa11,ChangPapa12,ChangThirdKind}; see the article of Chang~\cite{Chang12} in this volume for an overview of these results. 

There is also a comparison isomorphism between the $v$-adic cohomology and the de Rham cohomology of an $A$-motive defined over an extension of $Q_v$; see \cite[Remark~4.16]{HartlKim}. Analogous to and inspired by Theorem~\ref{ThmPapanikolasPeriodConj}, Mishiba~\cite{Mishiba12} related the transcendence degree of that comparison isomorphism to the dimension of the motivic Galois group of $\ulM$ and applied this to the Carlitz $A$-motive; see \cite[Remark~4.17 and Example~4.19]{HartlKim}.

\begin{example}\label{Ex2.9GalGp}
To end this section we compute the motivic Galois group of the uniformizable mixed $\BF_q[t]$-motive $\ulM=(M,\tau_M)$ with $M=A_{\BC}^{\oplus2}$ and $\tau_M=\Phi:=\left( \begin{array}{cc}t-\theta&b\\0&(t-\theta)^3 \end{array}\right)$ from Example~\ref{Ex2.9} and the associated dual $\BF_q[t]$-motive $\uldM=\uldM(\ulM)$ from Example~\ref{ExDual2.9}. Since $\ulM$ is an extension 
\begin{equation}\label{EqSeqMsplitsOrNot}
0\,\longto\,\UOne(1)\,\longto\,\ulM\,\longto\,\UOne(3)\,\longto\,0\,,
\end{equation}
the representation $\rho$ of $\Gamma_\ulM$ on $\Hodge^1(\ulM)$ can be written in upper diagonal matrix form such that the diagonal entries are representations corresponding to the simple constituents of $\ulM$. Therefore, $\Gamma_\ulM$ is a subgroup of $\bigl\{\left( \begin{smallmatrix} u&*\\0&\;u^3 \end{smallmatrix}\right)\bigr\}\subset\GL_{2,Q}$. There are now two cases, according to whether the extension \eqref{EqSeqMsplitsOrNot} splits or not. We will discuss a criterion for the splitting in Example~\ref{EqDiagramExMixedWeights} below.

If the extension splits, then $\llangle\ulM\rrangle=\llangle\UOne(1)\rrangle$ and $\Gamma_\ulM\cong\Gamma_{\UOne(1)}=\BG_{m,Q}$ by Example~\ref{ExGalGpOfCarlitz}. In this case $*=0$ and the isomorphism is given by $\BG_{m,Q}\isoto\Gamma_\ulM$, $u\mapsto\text{diag}(u,u^3)$.

Conversely, if $*=0$ the inclusion $\llangle\UOne(1)\rrangle\subset\llangle\ulM\rrangle$ is an equivalence of categories by Theorem~\ref{theorem:theorem-deligne-211}\ref{theorem:theorem-deligne-211_B}, because the corresponding group homomorphism $\Gamma_\ulM\isoto\BG_{m,Q}$, $\left( \begin{smallmatrix} u&0\\0&\;u^3 \end{smallmatrix}\right)\mapsto u$ is an isomorphism. This implies that \eqref{EqSeqMsplitsOrNot} splits. We conclude that if \eqref{EqSeqMsplitsOrNot} does not split, then $\Gamma_\ulM$ is the semi-direct product $\BG_{a,Q}\rtimes\BG_{m,Q}$, where $\BG_{m,Q}$ acts on $\BG_{a,Q}$ by multiplication with the character $u\mapsto u^2$.
\end{example}

%%%%%%%%%%%%%%%%%%%%%%%%%%%%%%%%%%%%%%%%%%%%%%%%%%%%%%%%%%%%% 
%% 
%%     $\sigma$-bundles
%% 
%%%%%%%%%%%%%%%%%%%%%%%%%%%%%%%%%%%%%%%%%%%%%%%%%%%%%%%%%%%%% 
 
\section{\texorpdfstring{$\sigma$}{sigma}-bundles}\label{Sect4}
\setcounter{equation}{0}

In this section we give the proof of Theorems~\ref{ThmHodgeConjecture} and \ref{ThmDualHodgeConjecture}, which uses in particular the classification of $\sigma$-bundles associated with uniformizable mixed (dual) $A$-motives.

\subsection{Definition of \texorpdfstring{$\sigma$}{sigma}-bundles} \label{SectDefSigmaBd}

Recall the punctured open unit disc $\PDisc=\{0<|z|<1\}$ around $\infty$ introduced at the beginning of Section~\ref{SectAMotUniformizability} and set 
\[
\dotCO\;:=\;\Gamma(\PDisc,\CO_{\PDisc})\;=\;\bigl\{\,\sum_{i\in\BZ}b_i z^i\colon b_i\in\BC, \;\lim_{i\to\pm\infty}|b_i|\,|\zeta|^{si}= 0\text{ for all }s>0\,\bigr\}\,. 
\]
This disc can be exhausted by the closed annuli $\{|\zeta|^s\le|z|\le|\zeta|^{s'}\}$ for $s,s'\in\BQ$ with $0<s'\le s$. Hence, $\PDisc$ is a quasi-Stein space in the sense of Kiehl~\cite[\S2]{KiehlAB}. In particular, the functor $\CF\mapsto\Gamma(\PDisc,\CF)$ is an equivalence between the category of locally free coherent sheaves on $\PDisc$ and the category of finite projective $\dotCO$-modules; see Gruson~\cite[Chapter V, Theorem~1 and Remark on p.~85]{Gruson68}. Note further, that the rings 
\begin{eqnarray*}
\BC\langle\tfrac{z}{\zeta^{s'}},\tfrac{\zeta^s}{z}\rangle & := & \Gamma\bigl(\{|\zeta|^s\le|z|\le|\zeta|^{s'}\}\,,\,\CO_{\{|\zeta|^s\le|z|\le|\zeta|^{s'}\}}\bigr)\\[2mm]
& = & \bigl\{\sum_{i\in\BZ}b_i z^i\colon b_i\in\BC,\;\lim_{i\to\pm\infty}|b_i|\,|\zeta|^{s''i}=0\text{ for all }s'\le s''\le s\,\bigr\} \qquad \text{and}\\[2mm]
\BC\langle\tfrac{z}{\zeta^s}\rangle & := & \Gamma\bigl(\{|z|\le|\zeta|^s\}\,,\,\CO_{\{|z|\le|\zeta|^s\}}\bigr)\\[2mm]
& = & \bigl\{\sum_{i\in\BN_0}b_i z^i\colon b_i\in\BC,\;\lim_{i\to+\infty}|b_i|\,|\zeta|^{si}=0\,\bigr\}
\end{eqnarray*}
are principal ideal domains by \cite[Proposition~4]{Lazard}. 

\begin{definition}
A \emph{$\sigma$-bundle (over $\dotCO$)} is a pair $\ulCF=(\CF,\tau_\CF)$ consisting of a finite projective $\dotCO$-module $\CF$ (or, equivalently, locally free coherent sheaf on $\PDisc$) together with an isomorphism $\tau_\CF\colon \sigma^\ast\CF\isoto\CF$. We define the \emph{rank} of $\ulCF$ as $\rk\ulCF:=\rk_\dotCO\CF$.

A \emph{homomorphism} $f\colon(\CF,\tau_\CF)\to(\CG,\tau_\CG)$ between $\sigma$-bundles is a homomorphism $f\colon\CF\to\CG$ of $\dotCO$-modules which satisfies $\tau_\CF\circ\sigma^*f=f\circ\tau_\CG$.

The \emph{$\tau$-invariants} of $(\CF,\tau_\CF)$ are defined as $\ulCF^\tau:=\{\,f\in\CF:\tau_\CF(\sigma^\ast f)=f\,\}$.
\end{definition}

If follows from Theorem~\ref{ThmHartlPink1}\ref{ThmHartlPink1a} below that the module $\CF$ underlying a $\sigma$-bundle is actually free.

\begin{example}\label{Ex4.2}
(a) The trivial $\sigma$-bundle is $(\CF,\tau_\CF)=(\dotCO,\id_{\dotCO})$. Its $\tau$-invariants are $(\dotCO,\id_{\dotCO})^\tau=\{\,f\in\dotCO:\sigma^\ast(f)=f\,\}=\BF_q\dpl z\dpr=Q_\infty$, because $f=\sum_{i\in\BZ}b_iz^i=\sigma^*(f)=\sum_{i\in\BZ}b_i^qz^i$ implies $b_i=b_i^q$, whence $b_i\in\BF_q$, and $\lim\limits_{i\to\pm\infty}|b_i|\,|\zeta|^{si}=0$ implies that there is an integer $n$ with $b_i=0$ for all $i<n$.

\medskip\noindent
(b) More generally, for relatively prime integers $d,r$ with $r>0$ we let $\ulCF_{d,r}$ be the $\sigma$-bundle consisting of $\CF_{d,r}=\dotCO^{\oplus r}$ with 
\[
\tau_{\CF_{d,r}}:= \left( \raisebox{6.2ex}{$
\xymatrix @C=0.3pc @R=0.3pc {
0 \ar@{.}[ddd]\ar@{.}[drdrdrdr] & 1\ar@{.}[drdrdr] & 0\ar@{.}[rr]\ar@{.}[drdr] & & 0 \ar@{.}[dd]\\
& & & & \\
& & & & 0 \\
0 & & & & 1\\
z^{-d} & 0 \ar@{.}[rrr]  & & & 0\\
}$}
\right).
\]

\medskip\noindent
(c) We exhibit the following $\tau$-invariants of $\ulCF_{_{\SC 1,1}}=(\dotCO,z^{-1})$. Let $\alpha\in\BC$ with $0<|\alpha|<1$. Then the product $\ell_\alpha^{\SSC -}:=\prod\limits_{i\in\BN_0}(1-{\TS\frac{\alpha^{q^i}}{z}})\in\dotCO$ has simple zeroes exactly at $z=\alpha^{q^i}$ for $i\in\BN_0$ and satisfies $(1-\frac{\alpha}{z})\sigma^\ast(\ell_\alpha^{\SSC -})=\ell_\alpha^{\SSC -}$. To obtain a non-zero $\ell_\alpha=\ell_\alpha^{\SSC +}\cdot \ell_\alpha^{\SSC -}\in\ulCF_{1,1}{}^\tau$ satisfying $z^{-1}\sigma^\ast(\ell_\alpha)=\ell_\alpha$ we need a function $\ell_\alpha^{\SSC +}=\sum\limits_{i\ge0}b_i z^i\in\dotCO$ with $b_0\ne0$ satisfying $\sigma^\ast(\ell_\alpha^{\SSC +})=(z-\alpha)\ell_\alpha^{\SSC +}$. The latter amounts to the equations $b_0^{q-1}=-\alpha$ and $b_i^q=b_{i-1}-\alpha b_i$ for $i>0$. Since $\BC$ is algebraically closed these equations can be solved recursively, yielding an element $\ell_\alpha\in\ulCF_{1,1}{}^\tau$, which due to $z^{-1}\sigma^\ast(\ell_\alpha)=\ell_\alpha$ has simple zeroes exactly at $z=\alpha^{q^i}$ for all $i\in\BZ$. Note that $\ell_\alpha$ is not canonically defined but depends on the chosen solutions $b_i$. A different choice replaces $\ell_\alpha^{\SSC +}$ by $\tilde\ell_\alpha^{\SSC +}=u\cdot\ell_\alpha^{\SSC +}$ for $u\in\BF_q\dbl z\dbr\mal$ because $u=\tilde\ell_\alpha^{\SSC +}/\ell_\alpha^{\SSC +}\in \BC\dbl z\dbr\mal$ satisfies $\sigma^\ast(u)=u$. One can prove that in fact, all $\tau$-invariants in $\ulCF_{1,1}{}^\tau$ are obtained in this way; see \cite[Theorem~5.4]{HartlPink1}.

\medskip\noindent
(d) On the other hand $\ulCF_{d,r}{}^\tau=(0)$ for $d<0$. Indeed, since $(\tau_{\CF_{d,r}})^r=z^{-d}\Id_r$, any such $\tau$-invariant $(f_1,\ldots,f_r)^T$ satisfies $f_j=z^{-d}\sigma^{r*}(f_j)$ for all $j$. If we write $f_j=\sum_{i\in\BZ}b_iz^i$ with $b_i\in\BC$ this implies $b_i=b_{i+d}^{q^r}=b_{i+kd}^{q^{kr}}$ for all $i,k\in\BZ$. As $|b_{i+kd}|\to0$ for $(i+kd)\to-\infty$, that is for $k\to +\infty$, this implies $b_i=0$ for all $i$.
\end{example}

The structure theory of $\sigma$-bundles was developed in \cite{HartlPink1}. 

\begin{theorem}\label{ThmHartlPink1}
\begin{enumerate}
\item \label{ThmHartlPink1a}
Any $\sigma$-bundle $\ulCF$ is isomorphic to $\bigoplus_i\ulCF_{d_i,r_i}$ for pairs of relatively prime integers $d_i,r_i$ with $r_i>0$, which are uniquely determined by $\ulCF$ up to permutation. They satisfy $\rk\ulCF=\sum_i r_i$ and we define the \emph{degree} of $\ulCF$ as $\deg\ulCF:=\sum_i d_i$.
\item  \label{ThmHartlPink1b}
There is a non-zero morphism $\ulCF_{d',r'}\to\ulCF_{d,r}$ if and only if $\frac{d'}{r'}\le\frac{d}{r}$.
\item  \label{ThmHartlPink1c}
Any $\sigma$-sub-bundle $\ulCF'\subset\ulCF_{d,r}{}^{\oplus n}$ satisfies $\deg\ulCF'\le\frac{d}{r}\cdot\rk\ulCF'$.
\item  \label{ThmHartlPink1d}
If $\ulCF'\subset\ulCF$ is an inclusion of $\sigma$-bundles with $\rk\ulCF'=\rk\ulCF=r$, then for any $s>0$ we have
\[
\deg\ulCF-\deg\ulCF'\,=\,\dim_\BC(\CF/\CF')|_{\{\,|\zeta|^{sq}<|z|\le|\zeta|^s\}}\,.
\]
\end{enumerate}
\end{theorem}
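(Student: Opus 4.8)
\textbf{Plan of proof for Theorem~\ref{ThmHartlPink1}.}
The plan is to reduce everything to the classification statement \ref{ThmHartlPink1a} and to the two standard annuli / discs that exhaust $\PDisc$, on which the relevant rings are principal ideal domains by \cite[Proposition~4]{Lazard}. First I would establish \ref{ThmHartlPink1a}: this is the main workhorse and the genuine obstacle. The proof follows \cite{HartlPink1} and proceeds roughly as follows. Given a $\sigma$-bundle $\ulCF=(\CF,\tau_\CF)$ of rank $r$, one trivializes $\CF$ on the closed annulus $\{|\zeta|^s\le|z|\le|\zeta|^{s'}\}$ and on the closed disc $\{|z|\le|\zeta|^s\}$, choosing bases so that $\tau_\CF$ is represented by a matrix in $\GL_r$ of the Laurent ring $\BC\langle\tfrac{z}{\zeta^{s'}},\tfrac{\zeta^s}{z}\rangle$; using that this is a PID one performs an elementary-divisor normalization and then a successive-approximation (Frobenius descent) argument, exploiting that $|b_i|\,|\zeta|^{si}\to0$ for the relevant $s$, to bring $\tau_\CF$ into a block diagonal form with blocks $\tau_{\CF_{d_i,r_i}}$. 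The uniqueness of the slopes $d_i/r_i$ up to permutation comes from the Harder--Narasimhan formalism: one shows the $\sigma$-bundles $\ulCF_{d,r}$ are the semistable objects, that $\Hom$ and slopes behave as in \ref{ThmHartlPink1b}, and that a semistable subquotient of a given slope forces the multiset of slopes to be an invariant. I expect this step to be the hard part and I would cite \cite[Theorem~11.1]{HartlPink1} for it rather than reproving it, since the excerpt already refers to that classification.

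Granting \ref{ThmHartlPink1a}, the remaining parts are short. For \ref{ThmHartlPink1b}, a morphism $\ulCF_{d',r'}\to\ulCF_{d,r}$ is a matrix $F$ over $\dotCO$ with $\tau_{\CF_{d,r}}\cdot\sigma^*F=F\cdot\tau_{\CF_{d',r'}}$; iterating $\operatorname{lcm}(r,r')$ times reduces to comparing the scalars $z^{-d}$ and $z^{-d'}$ on the appropriate powers, and reading off the orders of zeros/poles of the entries of $F$ at the circles $|z|=|\zeta|^{q^i s}$ (as in Example~\ref{Ex4.2}(c),(d)) shows a nonzero $F$ can exist only when $d'/r'\le d/r$; conversely one constructs an explicit nonzero morphism using the functions $\ell_\alpha$ of Example~\ref{Ex4.2}(c). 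For \ref{ThmHartlPink1c}, given $\ulCF'\subset\ulCF_{d,r}{}^{\oplus n}$ of rank $r'$, take the top exterior power $\wedge^{r'}\ulCF'\subset\wedge^{r'}(\ulCF_{d,r}{}^{\oplus n})$; by \ref{ThmHartlPink1a} the right-hand side is a direct sum of $\ulCF_{d r', r'}$'s (all of slope $dr'/r'=d$ after normalizing), so a rank-one sub-$\sigma$-bundle of it, being some $\ulCF_{e,1}$, satisfies $e\le d r'$ by part \ref{ThmHartlPink1b} applied to the inclusion $\ulCF_{e,1}\hookrightarrow\ulCF_{dr',r'}$; unwinding $\deg\wedge^{r'}\ulCF'=\deg\ulCF'$ gives $\deg\ulCF'\le \tfrac{d}{r}\rk\ulCF'$.

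Finally, for \ref{ThmHartlPink1d} I would argue as follows. Since $\CF'\subset\CF$ with equal rank $r$, the quotient $\CF/\CF'$ is a torsion coherent sheaf on $\PDisc$, supported on a discrete $\sigma$-invariant set of points; choose $\det$ of an inclusion and reduce to the rank-one case, i.e.\ to comparing $\ulCF_{\deg\ulCF',1}\hookrightarrow\ulCF_{\deg\ulCF,1}$ by \ref{ThmHartlPink1a}. An inclusion $(\dotCO,z^{-\deg\ulCF'})\hookrightarrow(\dotCO,z^{-\deg\ulCF})$ is multiplication by some $g\in\dotCO$ with $z^{-\deg\ulCF}\sigma^*(g)=g\,z^{-\deg\ulCF'}$, i.e.\ $\sigma^*(g)=z^{\deg\ulCF-\deg\ulCF'}g$; such a $g$ has, on each fundamental annulus $\{|\zeta|^{sq}<|z|\le|\zeta|^s\}$, exactly $\deg\ulCF-\deg\ulCF'$ zeros counted with multiplicity (this is the defining functional equation of the $\ell$-type functions, Example~\ref{Ex4.2}(c),(d)), because passing from one annulus to the $\sigma$-translated one multiplies the valuation-count by the factor $z^{\deg\ulCF-\deg\ulCF'}$. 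Hence $\dim_\BC(\CF/\CF')|_{\{|\zeta|^{sq}<|z|\le|\zeta|^s\}}=\deg\ulCF-\deg\ulCF'$, independently of $s>0$, as claimed. The $\sigma$-equivariance is exactly what makes the count on a single fundamental annulus independent of the chosen $s$, which is the point to be careful about.
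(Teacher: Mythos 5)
Your handling of \ref{ThmHartlPink1a} and \ref{ThmHartlPink1b} coincides with the paper's, which simply cites \cite[Theorem~11.1 and Proposition~8.5]{HartlPink1}. For \ref{ThmHartlPink1c}, however, your exterior-power argument contains a slope error that breaks the bound as written: $\wedge^{r'}\bigl(\ulCF_{d,r}{}^{\oplus n}\bigr)$ decomposes into semistable summands of slope $r'\cdot\frac{d}{r}$, not into copies of $\ulCF_{dr',r'}$ of slope $d$; and the inequality $e\le dr'$ you extract would only yield $\deg\ulCF'\le dr'$, which is weaker than $\frac{d}{r}\rk\ulCF'$ unless $r=1$. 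The corrected bound $e\le r'\frac{d}{r}$ does give the claim, but the paper's route avoids exterior powers entirely and is a one-liner: decompose $\ulCF'\cong\bigoplus_i\ulCF_{d_i,r_i}$ by \ref{ThmHartlPink1a}; each summand admits a nonzero map to some $\ulCF_{d,r}$, so $\frac{d_i}{r_i}\le\frac{d}{r}$ by \ref{ThmHartlPink1b}, and summing over $i$ gives \ref{ThmHartlPink1c}. You should either adopt that argument or repair the slope computation.

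For \ref{ThmHartlPink1d} your strategy --- reduce to rank one via determinants and count zeros of a solution of $\sigma^*(g)=z^e g$ with $e=\deg\ulCF-\deg\ulCF'$ on one fundamental annulus --- is essentially the second half of the paper's proof, where the inclusion is expressed by a matrix $S$ in the bases coming from \ref{ThmHartlPink1a} and $f:=\sqrt[q-1]{\pm1}\cdot\det S$ satisfies $f=z^{-e}\sigma^*(f)$. The step you assert rather than prove is exactly the crux: that such an $f$ has precisely $e$ zeros, with multiplicity, in $\{|\zeta|^{sq}<|z|\le|\zeta|^s\}$. The remark that ``the valuation-count is multiplied by $z^{e}$'' is a heuristic, not a proof; what is needed is the factorization $f=g\cdot\ell_{\alpha_1}\cdots\ell_{\alpha_e}$ with $g\in\BF_q\dpl z\dpr\mal$ and $|\zeta|^{qs}<|\alpha_i|\le|\zeta|^s$, which the paper takes from \cite[Proposition~1.4.4]{HartlPSp} (the case $e=1$ is \cite[Theorem~5.4]{HartlPink1}, alluded to in Example~\ref{Ex4.2}(c)); cite one of these. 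The paper also settles a boundary subtlety you pass over: it performs the elementary-divisor computation over the \emph{closed} annulus ring $\BC\langle\tfrac{z}{\zeta^s},\tfrac{\zeta^{qs}}{z}\rangle$ (a PID by \cite[Proposition~4]{Lazard}) and then discards those elementary divisors whose zeros lie on the excluded circle $|z|=|\zeta|^{qs}$, since they become units on the half-open annulus; this is what guarantees that the count on the half-open fundamental domain matches $e$ exactly.
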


\begin{proof}
Statements \ref{ThmHartlPink1a} \ref{ThmHartlPink1b} and \ref{ThmHartlPink1c} are \cite[Theorem~11.1, Proposition~8.5, and Proposition~7.6, respectively]{HartlPink1}, but \ref{ThmHartlPink1c} also easily follows from \ref{ThmHartlPink1a} and \ref{ThmHartlPink1b}. Namely, $\ulCF'\cong\bigoplus_i\ulCF_{d_i,r_i}$ by \ref{ThmHartlPink1a} with $\frac{d_i}{r_i}\le\frac{d}{r}$ by \ref{ThmHartlPink1b} yields \ref{ThmHartlPink1c}.

\medskip\noindent
\ref{ThmHartlPink1d} We use the results of Lazard~\cite{Lazard} and normalize his valuation $v$ such that $v(\zeta)=1$. Then his ring $L_\BC[s,qs[$ is the ring of rigid analytic functions on $\{\,|\zeta|^{sq}<|z|\le|\zeta|^s\}$ and his ring $L_\BC[s,qs]$ is our $\BC\langle\tfrac{z}{\zeta^s},\tfrac{\zeta^{qs}}{z}\rangle$. Since the latter is a principal ideal domain we may choose bases of $\CF'\otimes_\dotCO\BC\langle\tfrac{z}{\zeta^s},\tfrac{\zeta^{qs}}{z}\rangle$ and $\CF\otimes_\dotCO\BC\langle\tfrac{z}{\zeta^s},\tfrac{\zeta^{qs}}{z}\rangle$ and write the inclusion $\ulCF'\subset\ulCF$ with respect to these bases as a matrix $T$. By the elementary divisor theorem there are matrices $U,V\in\GL_r\bigl(\BC\langle\tfrac{z}{\zeta^s},\tfrac{\zeta^{qs}}{z}\rangle\bigr)$ such that $UTV=\text{diag}(f_1,\ldots,f_r)$ is a diagonal matrix with diagonal entries $f_i\in\BC\langle\tfrac{z}{\zeta^s},\tfrac{\zeta^{qs}}{z}\rangle$. Changing $U$ we can multiply the $f_i$ with units and by \cite[Proposition~4]{Lazard} we may assume that they are monic polynomials in $\BC[z]$, all of whose zeroes $\alpha$ satisfy $|\zeta|^{qs}\le|\alpha|\le|\zeta|^s$. Considering those zeroes $\alpha$ of all the $f_i$ which satisfy $|\zeta|^{qs}<|\alpha|$ they even satisfy $|\zeta|^{s'}\le|\alpha|\le|\zeta|^s$ for an $s'$ with $s\le s'<qs$. We write $f_i=f_i'\cdot\tilde f_i$ with $f_i',\tilde f_i$ monic such that all zeros $\alpha$ of $f_i'$, respectively of $\tilde f_i$, satisfy $|\zeta|^{s'}\le|\alpha|\le|\zeta|^s$, respectively $|\alpha|=|\zeta|^{qs}$. Then $\tilde f_i$ is a unit in $L_\BC[s,qs[$ and 
\[
(\CF/\CF')|_{\{\,|\zeta|^{sq}<|z|\le|\zeta|^s\}}\;\cong\;\prod_{i=1}^r L_\BC[s,qs[\;/(f_i')\;=\;\prod_{i=1}^r\BC[z]/(f_i')\,,
\]
where the last equality follows by Euclidean division in $L_\BC[s,qs[$ in the style of \cite[Lemma~2]{Lazard}. This implies 
\[
\dim_\BC(\CF/\CF')|_{\{\,|\zeta|^{sq}<|z|\le|\zeta|^s\}}\;=\;\sum_{i=1}^r\deg_z f_i'\;=\;\dim_\BC \BC[z]/(f_1'\cdots f_r')\;=\;\dim_\BC (L_\BC[s,qs[)/(\det T)\,,
\]
because $\det T$ differs from $f_1'\cdots f_r'$ by a unit in $L_\BC[s,qs[$. 

We now compute $\det T$ in a different way. Namely, by Theorem~\ref{ThmHartlPink1}\ref{ThmHartlPink1a} there are isomorphisms $\ulCF\cong\bigoplus_i \ulCF_{d_i,r_i}$ and $\ulCF'\cong\bigoplus_j \ulCF_{d'_j,r'_j}$. These provide $\dotCO$-bases of $\ulCF$ and $\ulCF'$ with respect to which the inclusion $\ulCF'\subset\ulCF$ is given by a matrix $S$. Then $S\cdot\tau_{\CF'}=\tau_\CF\cdot\sigma^*S$ implies $\det S\cdot\pm z^{-\deg\ulCF'}=\pm z^{-\deg\ulCF}\cdot\sigma^*(\det S)$, and hence, $f:=\sqrt[q-1]{\pm1}\cdot\det S=z^{-e}\cdot\sigma^*(f)$ with $e:=\deg\ulCF-\deg\ulCF'$. From \cite[Proposition~1.4.4]{HartlPSp} it follows that $f=g\cdot\ell_{\alpha_1}\cdot\ldots\cdot\ell_{\alpha_e}$ with $g\in\BF_q\dpl z\dpr\mal$ and $|\zeta|^{qs}<|\alpha_i|\le|\zeta|^s$. Since $\ell_{\alpha_i}(z-\alpha_i)^{-1}$ is a unit in $L_\BC[s,qs[$, and the matrices $T$ and $S$ differ by a base change over $L_\BC[s,qs[$, we conclude that
\[
\dim_\BC L_\BC[s,qs[\;/(\det T)\;=\;\dim_\BC L_\BC[s,qs[\;/(f)\;=\;\dim_\BC L_\BC[s,qs[\;/\prod_{i=1}^e(z-\alpha_i)\;=\;e.
\]
The theorem follows.
\end{proof}

\subsection{The pair of \texorpdfstring{$\sigma$}{o}-bundles associated with an $A$-motive}
Consider a uniformizable $A$-motive $\ulM$ over $\BC$. Then $\ulCE(\ulM):=(\CE(\ulM),\tau_\CE):=\Lambda(\ulM)\otimes_A\ulCF_{0,1}$ is a $\sigma$-bundle with $\CE(\ulM):=\Lambda(\ulM)\otimes_A\dotCO$ and $\tau_\CE=\id$. By Proposition~\ref{PropMaphM}, $\CE(\ulM)$ coincides via $h_\ulM$ with $M\otimes_{A_\BC}\dotCO$ on $\PDisc\setminus\bigcup_{i\in\BN_0}\{z=\zeta^{q^i}\}$ and via $\sigma^*h_\ulM$ it coincides with $\sigma^*M\otimes_{A_\BC}\dotCO$ on $\PDisc\setminus\bigcup_{i>0}\{z=\zeta^{q^i}\}$. So it can be obtained as a modification of $M\otimes_{A_\BC}\dotCO$ at all places $z=\zeta^{q^i}$ for $i\ge0$.

But $\ulM$ also gives rise to a second $\sigma$-bundle as follows. The isomorphism $\tau_M$ is an isomorphism between $\sigma^\ast M$ and $M$ outside $z=\zeta$. So one can modify $M\otimes_{A_\BC}\dotCO$ at $z=\zeta^{q^i}$ for $i<0$ to obtain a $\sigma$-bundle $\ulCF(\ulM)=(\CF(\ulM),\tau_\CF)$ with 
\begin{eqnarray}\label{Eq4.1}
\CF(\ulM)\es:=&\bigl\{\,f\in M\otimes_{A_\BC}\dotCO[\ell_\zeta^{-1}]:&\tau_M^i(\sigma^{i\ast}f)\in M\otimes_{A_\BC}\BC\dbl z-\zeta\dbr\text{ for all }i\in\BZ\,\bigr\}\\[2mm]
=&\bigl\{\,f\in \CE(\ulM)[\ell_\zeta^{-1}]:&\tau_M^i(\sigma^{i\ast}f)\in M\otimes_{A_\BC}\BC\dbl z-\zeta\dbr\text{ for all }i\in\BZ\,\bigr\}\nonumber
\end{eqnarray}
and $\tau_\CF=\tau_M\otimes\id$. To see that this is indeed a $\sigma$-bundle, we view it as a sheaf. Then 
\begin{eqnarray*}
\Gamma\bigl(\{|\zeta|^s\le|z|\le|\zeta|^{s'}\}\,,\,\CF(\ulM)\bigr)&=& \TS\bigl\{\,f\in\Lambda(\ulM)\otimes_A\BC\langle\frac{z}{\zeta^{s'}},\frac{\zeta^s}{z}\rangle[\ell_\zeta^{-1}]:\es\tau_M^i(\sigma^{i\ast}f)\in M\otimes_{A_\BC}\BC\dbl z-\zeta\dbr\\[1mm]
&&\qquad\text{ for all }i\in\BZ\text{ with }|\zeta|^{q^is}\le|\zeta|\le|\zeta|^{q^is'}\,\bigr\}\,.
\end{eqnarray*}
The latter is a finite free module over the principal ideal domain $\BC\langle\frac{z}{\zeta^{s'}},\frac{\zeta^s}{z}\rangle$, because by Proposition~\ref{PropMaphM} it is contained in the free module $\ell_\zeta^{-d}\cdot\Lambda(\ulM)\otimes_A\BC\langle\frac{z}{\zeta^{s'}},\frac{\zeta^s}{z}\rangle$ if $J^d\cdot M\subset\tau_M(\sigma^*M)$.

Again by Proposition~\ref{PropMaphM}, $\CF(\ulM)$ coincides via $h_\ulM$ with $M\otimes_{A_\BC}\dotCO$ on $\PDisc\setminus\bigcup_{i<0}\{z=\zeta^{q^i}\}$ and via $\sigma^*h_\ulM$ it coincides with $\sigma^*M\otimes_{A_\BC}\dotCO$ on $\PDisc\setminus\bigcup_{i\le0}\{z=\zeta^{q^i}\}$. 

\begin{definition}\label{DefPairOfSigmaBundles}
The pair $(\ulCF(\ulM),\ulCE(\ulM))$ constructed above is called the \emph{pair of $\sigma$-bundles} associated with the uniformizable $A$-motive $\ulM$.
\end{definition}

Assume that $\ulM$ is effective with $\tau_M(\sigma^*M)\subsetneq M$. Then we visualize these $\sigma$-bundles over $\dotCO$ by the following diagram, in which the thick lines represent sheaves on $\PDisc$:
\begin{equation}\label{EqDiagSigmaBdOfM}
\newpsobject{showgrid}{psgrid}{subgriddiv=1,griddots=10,gridlabels=0pt}
 \psset{unit=3.1cm} % This allows to scale the image
 \begin{pspicture}(0,-0.4)(3.7,0.85) 
   \rput(0.4,0.61){$\CF(\ulM)$}
   \psplot[plotstyle=curve,plotpoints=200,linewidth=1pt,linecolor=black]{0.1}%
                {3.7}{0.7}
   \rput(3.4,0.08){$\CE(\ulM)$}
   \psplot[plotstyle=curve,plotpoints=200,linewidth=1pt,linecolor=black]{0.1}%
                {3.7}{6 7 div 0.7 mul 0.6 sub}
   \rput(1.62,0.3){$M\!\otimes_{A_\BC}\dotCO$}
   \psplot[plotstyle=curve,plotpoints=200,linewidth=1pt,linecolor=black]{0.45}{3.7}%
                {x 0.45 sub 16 exp 6 mul x 0.45 sub 16 exp 7 mul 1 add div 0.7 mul 0.2 3 div add}
   \psline[linewidth=1pt](0.1,0.066667)(0.45,0.066667)
   \rput(2.52,0.3){$\sigma^* M\!\otimes_{A_\BC}\dotCO$}
   \psplot[plotstyle=curve,plotpoints=200,linewidth=1pt,linecolor=black]{1.2}{3.7}%
                {x 1.3 sub 16 exp 6 mul x 1.3 sub 16 exp 7 mul 1 add div 0.7 mul 0.1 3 div add}
   \psline[linewidth=1pt](0.1,0.033333)(1.3,0.033333)
   %
   %\rput(2.8,0.3){$(F^*)^2\textnormal{M}(\infty)$}
   %\psplot[plotstyle=curve,plotpoints=200,linewidth=1pt,linecolor=black]{1.2}{3.7}%
   %             {x 1.6 sub 16 exp 6 mul x 1.4 sub 16 exp 7 mul 1 add div 0.7 mul} 
   %
   \rput(3.85,-0.15){$\PDisc$}
   \psline[linewidth=0.5pt](0.1,-0.15)(3.7,-0.15)
   \rput(0.5,-0.3){$\dots$}
   \rput(1.13,-0.3){$z=\zeta^{1/q}$}
   \psline[linewidth=0.5pt](1.05,-0.25)(1.05,0.75)
   \rput(1.95,-0.3){$z=\zeta$}
   \psline[linewidth=0.5pt](1.95,-0.25)(1.95,0.75)
   \rput(2.88,-0.3){$z=\zeta^{q}$}  
   \psline[linewidth=0.5pt](2.85,-0.25)(2.85,0.75)
   \rput(3.4,-0.3){$\dots$}  
 \end{pspicture}
\end{equation}
% \vspace{1cm}
\noindent
Sheaves drawn higher contain the ones drawn below. All sheaves coincide outside $\bigcup_{i\in\BZ}\{z=\zeta^{q^i}\}$. At those points in $\bigcup_{i\in\BZ}\{z=\zeta^{q^i}\}$ where two sheaves are drawn at almost the same height, they also coincide. Indeed, $\CE(\ulM)$ coincides via $h_\ulM$ with $M\otimes_{A_\BC}\dotCO$ outside $\bigcup_{i\in\BN_0}\{z=\zeta^{q^i}\}$ and via $\sigma^*h_\ulM$ with $\sigma^\ast M\otimes_{A_\BC}\dotCO$ outside $\bigcup_{i\in\BN_{>0}}\{z=\zeta^{q^i}\}$ and is contained in these modules by Proposition~\ref{PropMaphM}. Via $\tau_M$ also $M$ contains $\sigma^\ast M$ and differs from it only at $z=\zeta$. Finally, one sees that $M\otimes_{A_\BC}\dotCO$ is via $h_\ulM^{-1}$ contained in $\CF(\ulM)$ and they coincide outside $\bigcup_{i<0}\{z=\zeta^{q^i}\}$. Namely, the condition $\tau_M^i(\sigma^{i\ast}f)\in M\otimes_{A_\BC}\BC\dbl z-\zeta\dbr$ for $i<0$ is equivalent to (setting $j:=-i>0$)
\[
f\,\in\,\tau_M^j\bigl(\sigma^{j\ast}(M\otimes_{A_\BC}\BC\dbl z-\zeta\dbr)\bigr)\,=\,\tau_M^j(\sigma^{j\ast}M)\otimes_{A_\BC}\BC\dbl z-\zeta^{q^j}\dbr\,=\,M\otimes_{A_\BC}\BC\dbl z-\zeta^{q^j}\dbr\,.
\]
In particular, for $f\in M\otimes_{A_\BC}\dotCO$ the condition is satisfied for $i<0$ and obviously for $i\ge0$ proving $M\otimes_{A_\BC}\dotCO\subset\CF(\ulM)$. In terms of Definition~\ref{Def2.6} this also shows $\CE(\ulM)\otimes_\dotCO\BC\dbl z-\zeta\dbr=\Hodge^1(\ulM)\otimes_Q\BC\dbl z-\zeta\dbr=\Fp$ and $\CF(\ulM)\otimes_\dotCO\BC\dbl z-\zeta\dbr=\Fq$.

\begin{proposition}\label{Prop4.4}
Let $\ulM$ be a uniformizable mixed $A$-motive, $(\ulCF(\ulM),\ulCE(\ulM))$ the associated pair of $\sigma$-bundles and let $\ulHodge^1(\ulM)=(H,W_\bullet H,\Fq)$ be its mixed Hodge-Pink structure.
\begin{enumerate}
 \item \label{Prop4.4_a}
The $\tau$-invariants of $\ulCE(\ulM)$ are
\[
\ulCE(\ulM)^\tau\;=\;\bigl(\Lambda(\ulM)\otimes_A\ulCF_{0,1}\bigr)^\tau\;=\;\Lambda(\ulM)\otimes_A Q_\infty\;=\;H\otimes_Q Q_\infty\;=\;H_\infty\,.
\]
\item \label{Prop4.4_b}
We have $\deg\ulCE(\ulM)\;=\;0$ and $\deg\ulCF(\ulM)\;=\;\dim\ulM\;=\;\deg_\Fq\ulHodge^1(\ulM)$.

\item \label{Prop4.4_c}
If $\ulM$ is pure of weight $\mu=\frac{k}{l}$ with $(k,l)=1$, then $\ulCF(\ulM)\cong\ulCF_{k,l}^{\oplus\rk\ulM/l}$. In particular, 
\[
\deg_\Fq\ulHodge^1(\ulM)\;=\;\deg\ulCF(\ulM)\;=\;\frac{k\cdot\rk\ulM}{l}\;=\;\mu\cdot\rk\ulHodge^1(\ulM)\;=\;\deg^W\ulHodge^1(\ulM)\,.
\]
\item \label{Prop4.4_d} If $\ulM$ is mixed, then also $\deg_\Fq\ulHodge^1(\ulM)=\deg^W\ulHodge^1(\ulM)$ and $\deg_\Fq W_\mu\ulHodge^1(\ulM)=\deg^W W_\mu\ulHodge^1(\ulM)$ for all $\mu$.
\end{enumerate}
\end{proposition}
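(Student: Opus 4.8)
The plan is to establish parts (a)--(d) in that order, deducing (c) from (b) and (d) from (c).

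\textbf{Part (a) and $\deg\ulCE(\ulM)=0$.} Since $\ulM$ is uniformizable, $\Lambda(\ulM)$ is finite projective of rank $r:=\rk\ulM$ over $A$, and every nonzero $a\in A$ is a unit of $\dotCO$: a zero of $a$ on $C_\BC$ would reduce to a point $\ne\infty_{\kappa_\BC}$ and so could not lie in $\Disc$. Hence $A\to\dotCO$ factors through $Q$ and even through $Q_\infty=\BF_q\dpl z\dpr\subset\dotCO$, so $\CE(\ulM)=\Lambda(\ulM)\otimes_A\dotCO=H\otimes_Q\dotCO=H_\infty\otimes_{Q_\infty}\dotCO$ is free over $\dotCO$; a $Q_\infty$-basis of $H_\infty$ then exhibits an isomorphism of $\sigma$-bundles $\ulCE(\ulM)\cong\ulCF_{0,1}^{\oplus r}$ compatible with $\tau_\CE=\id$, and Example~\ref{Ex4.2}(a) identifies its $\tau$-invariants with $Q_\infty^{\oplus r}=H_\infty$. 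The same isomorphism gives $\deg\ulCE(\ulM)=r\cdot\deg\ulCF_{0,1}=0$.

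\textbf{The equality $\deg\ulCF(\ulM)=\dim\ulM$ in (b).} Via $h_\ulM$ the $\sigma$-bundles $\ulCE(\ulM)$ and $\ulCF(\ulM)$ have the same localization after inverting $\ell_\zeta$ and coincide on $\PDisc\setminus\bigcup_{i\in\BZ}\{z=\zeta^{q^i}\}$, while their completed stalks at $z=\zeta$ are $\Fp$ and $\Fq$ (Definition~\ref{Def2.6} and diagram~\eqref{EqhM}). The $\dotCO$-submodule $\CE(\ulM)+\CF(\ulM)$ of the common $\ell_\zeta$-localization is again a $\sigma$-bundle (torsion free, coherent, and $\tau$-stable) containing both with torsion cokernels, so two applications of Theorem~\ref{ThmHartlPink1}\ref{ThmHartlPink1d} over the fundamental annulus $\{|\zeta|^q<|z|\le|\zeta|\}$, which meets $\bigcup_i\{z=\zeta^{q^i}\}$ only at $z=\zeta$, will give
\[
\deg\ulCF(\ulM)-\deg\ulCE(\ulM)\;=\;\dim_\BC\frac{\Fq+\Fp}{\Fp}-\dim_\BC\frac{\Fq+\Fp}{\Fq}\;=\;\dim_\BC\frac{\Fq}{\Fp\cap\Fq}-\dim_\BC\frac{\Fp}{\Fp\cap\Fq}\,.
\]
By Definition~\ref{Def1.5} the right-hand side is $\deg_\Fq\ulHodge^1(\ulM)$, which equals $\dim\ulM$ by Remark~\ref{Rem2.7}(c); together with $\deg\ulCE(\ulM)=0$ this completes (b).

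\textbf{Part (c).} Now let $\ulM$ be pure of weight $\mu=\tfrac kl$ with $(k,l)=1$. By Proposition~\ref{PropWeights}\ref{PropWeights_c} it extends to a locally free sheaf on $C_\BC$ on whose stalk at $\infty_{\SSC\BC}$ the operator $z^k\tau_M^l$ is an isomorphism. I would feed this ``isoclinic at $\infty$'' datum, together with the classification Theorem~\ref{ThmHartlPink1}\ref{ThmHartlPink1a} and the slope bound Theorem~\ref{ThmHartlPink1}\ref{ThmHartlPink1c} (applied to $\ulCF(\ulM)$ and to its dual, to exclude sub- and quotient-bundles of the wrong slope), into the conclusion that $\ulCF(\ulM)$ is isoclinic of slope $\tfrac kl$, i.e. $\ulCF(\ulM)\cong\ulCF_{k,l}^{\oplus r/l}$. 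Then $\deg\ulCF(\ulM)=\tfrac kl\cdot r$, and combining with (b) and $\weight\ulM=\dim\ulM/\rk\ulM$ (Proposition~\ref{PropPure}\ref{PropPure_h}) gives $\deg_\Fq\ulHodge^1(\ulM)=\tfrac{k\cdot\rk\ulM}{l}=\mu\cdot\rk\ulHodge^1(\ulM)$; since $\Gr^W_\mu H=H$ the latter is also $\deg^W\ulHodge^1(\ulM)=\sum_\nu\nu\dim_Q\Gr^W_\nu H=\mu\dim_Q H$. I expect this step to be the main obstacle: transporting the isoclinicity from the $z$-isocrystal at $\infty_{\SSC\BC}$ to the $\sigma$-bundle $\ulCF(\ulM)$ on the whole punctured disc, in particular ruling out sub-$\sigma$-bundles of larger slope, is the only genuinely non-formal point; everything else reduces to lattice-index bookkeeping.

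\textbf{Part (d).} Let $\ulM$ be mixed with weight filtration $W_\bullet\ulM$. First I would check that $\ulHodge^1$ carries the filtration of $\ulM$ by the saturated sub-motives $W_\nu\ulM$ to a filtration of $\ulHodge^1(\ulM)$ by strict sub-$Q$-pre-Hodge-Pink-structures whose $\nu$-th graded piece is $\ulHodge^1(\Gr^W_\nu\ulM)$: exactness of $\Lambda(\,\cdot\,)$ on short exact sequences of $A$-motives (Lemma~\ref{Lemma2.3}), functoriality of $h_\ulM$ under inclusions of $A$-motives, and saturatedness of $W_\nu\ulM$ in $\ulM$ together yield strictness on the underlying vector spaces, the weight filtrations, and the Hodge-Pink lattices. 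Since both $\deg_\Fq$ (of induced strict $Q_\infty$-subobjects, computed via Remark~\ref{Rem1.4}/Definition~\ref{Def1.5} with an auxiliary lattice $\Fr\subseteq\Fp\cap\Fq$) and $\deg^W$ are additive along strict short exact sequences of $Q$-pre Hodge-Pink structures, for every $\mu$ one obtains
\[
\deg_\Fq W_\mu\ulHodge^1(\ulM)\;=\;\sum_{\nu\le\mu}\deg_\Fq\ulHodge^1(\Gr^W_\nu\ulM)\;=\;\sum_{\nu\le\mu}\deg^W\ulHodge^1(\Gr^W_\nu\ulM)\;=\;\deg^W W_\mu\ulHodge^1(\ulM)\,,
\]
the middle equality being part (c) applied to each pure $\Gr^W_\nu\ulM$; taking $\mu\gg0$ gives the first assertion of (d), and the displayed chain for finite $\mu$ gives the second.
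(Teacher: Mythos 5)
Parts (a), (b) and (d) of your proposal are correct. In (b) you use the common over-bundle $\CE(\ulM)+\CF(\ulM)$ and apply Theorem~\ref{ThmHartlPink1}\ref{ThmHartlPink1d} twice, whereas the paper uses the common sub-bundle $\ulCE'=H\otimes_Q\ulCF_{-d,1}$ (for $d$ with $\tau_M(J^d\sigma^*M)\subset M$) and applies it once; the two computations are equivalent, and your identification of the only point of discrepancy on the annulus $\{|\zeta|^q<|z|\le|\zeta|\}$ with the lattices $\Fp,\Fq$ is the same bookkeeping. In (d) you argue by additivity of $\deg_\Fq$ and $\deg^W$ on strict exact sequences of Hodge-Pink structures; the paper argues by additivity of $\deg$ in the exact sequence \eqref{EqExactWeightSeq} of $\sigma$-bundles, but the two are interchangeable given the strict exactness of $\ulHodge^1$ on the weight filtration.

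Part (c), however, is not proved, and you have correctly located exactly the step you are missing. The conclusion ``$\ulM$ pure of weight $k/l$ $\Rightarrow$ $\ulCF(\ulM)\cong\ulCF_{k,l}^{\oplus\rk\ulM/l}$'' does not follow from the classification plus the slope bound as you sketch: Theorem~\ref{ThmHartlPink1}\ref{ThmHartlPink1c} bounds the slopes of sub-bundles of a bundle \emph{already known} to be of the form $\ulCF_{d,r}^{\oplus n}$, so invoking it for $\ulCF(\ulM)$ and its dual presupposes what is to be shown. Nor does the isoclinicity of the $z$-isocrystal $\ulHM$ at $\infty$ formally transport to the $\sigma$-bundle: Example~\ref{ExMixedWeights} exhibits two motives with the same isocrystal $\ulHM_{1,1}\oplus\ulHM_{3,1}$ at $\infty$ whose $\sigma$-bundles are $\ulCF_{2,1}^{\oplus2}$ in one case and $\ulCF_{1,1}\oplus\ulCF_{3,1}$ in the other, so the formal datum at $z=0$ cannot determine the type. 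What the paper actually uses is the stronger input from Proposition~\ref{PropWeights}\ref{PropWeights_c}, namely a lattice over the ring $\BC\langle\frac{z}{\zeta}\rangle$ of the \emph{closed rigid disc} $\{|z|\le|\zeta|\}$ (coming from the extension $\olM$ of $\ulM$ to $C_\BC$) on which $z^k\tau_M^l$ acts by a matrix $U\in\GL_r\bigl(\BC\langle\frac{z}{\zeta^{q^l}}\rangle\bigr)$, and then the genuinely analytic Claim that the difference equation $U\cdot\sigma^{l*}(S)=S$ has a solution $S\in\GL_r\bigl(\BC\langle\frac{z}{\zeta}\rangle\bigr)$. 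This is proved by Lang's theorem for the constant term, a recursive system of Artin--Schreier equations for the higher coefficients, and an explicit estimate $|S_j\zeta^j|\le c^{1/(q^l-1)}$ guaranteeing convergence on the closed disc; only then does one get a basis with $\tau_\CF^l=z^{-k}$ and conclude via Theorem~\ref{ThmHartlPink1}\ref{ThmHartlPink1a}. This construction is the entire substance of part (c) and is absent from your proposal, so as it stands (c) — and hence also the appeal to (c) inside your proof of (d) — is not established.
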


\begin{proof}
\ref{Prop4.4_a} is obvious from the construction of $\ulCE(\ulM)$. 

\medskip\noindent
\ref{Prop4.4_b} Since $\ulCE(\ulM)=H\otimes_Q\ulCF_{0,1}\cong\ulCF_{0,1}{}^{\oplus\dim_QH}$ it has degree zero. There is an integer $d\in\BN_0$ with $\tau_M(J^d\cdot\sigma^*M)\subset M$. It follows that $\tau_\CE(\ell_\zeta^d\cdot\CE)\subset\CF$. We consider the $\sigma$-bundle $\ulCE':=H\otimes_Q\ulCF_{-d,1}$ and the inclusions $\ulCE'\into\ulCE(\ulM)$, $f\mapsto\ell_\zeta^d\cdot f$ and $\ulCE'\into\ulCF(\ulM)$, $f\mapsto\tau_\CE(\ell_\zeta^d\cdot f)$. If $r=\rk\ulM=\dim_QH$ then $\deg\ulCE'=-dr$. By Theorem~\ref{ThmHartlPink1}\ref{ThmHartlPink1d} and Remark~\ref{RemPolygons}(a) we compute
\[
\deg\ulCF(\ulM)\;=\;\deg\ulCF(\ulM)-\deg\ulCE'-dr\;=\;\dim_\BC\Fq/(z-\zeta)^d\Fp-dr\;=\;\deg_\Fq\ulHodge^1(\ulM)
\]
because on the annulus $\{|\zeta|^q<|z|\le|\zeta|\}$ the quotient $\CF(\ulM)/\CE'$ equals $\Fq/(z-\zeta)^d\Fp$. The equality $\dim\ulM=\deg_\Fq\ulHodge^1(\ulM)$ follows directly from the definitions.

\medskip\noindent
\ref{Prop4.4_c} To prove that $\ulCF(\ulM)\cong\ulCF_{k,l}^{\oplus\rk\ulM/l}$, recall from Proposition~\ref{PropWeights}\ref{PropWeights_c} that $M$ extends to a locally free sheaf $\olM$ on $C_\BC$ on which $z^k\tau_M^l$ is an isomorphism locally at $\infty$. We consider the ring of rigid analytic functions $\BC\langle\frac{z}{\zeta}\rangle$ on the closed disc $\{|z|\le|\zeta|\}\subset\FC_\BC$ of radius $|\zeta|$ around $\infty$. Then we obtain an isomorphism $z^k\tau_M^l\colon \sigma^{l\ast}\bigl(\olM\otimes_{\CO_{C_\BC}}\BC\langle\frac{z}{\zeta}\rangle\bigr)\;=\;(\sigma^{l\ast}\olM)\otimes_{\CO_{C_\BC}}\BC\langle\frac{z}{\zeta^{q^l}}\rangle\;\isoto\;\olM\otimes_{\CO_{C_\BC}}\BC\langle\frac{z}{\zeta^{q^l}}\rangle$, because $z$ has no other poles or zeroes besides $\infty$ on the disc $\Disc$. Since $\BC\langle\frac{z}{\zeta}\rangle$ is a principal ideal domain, we can choose a basis $\{e_1,\ldots,e_r\}$ of $\olM\otimes_{\CO_{C_\BC}}\BC\langle\frac{z}{\zeta}\rangle$ with respect to which $\tau_M$ is given by a matrix $\Phi\in\BC\langle\frac{z}{\zeta^q}\rangle[z^{-1}]^{r\times r}$ and $z^k\tau_M^l$ by the matrix $U:=z^k\cdot \Phi\cdot\sigma^\ast(\Phi)\cdot\ldots\cdot\sigma^{(l-1)\ast}(\Phi)\in\GL_r\bigl(\BC\langle\frac{z}{\zeta^{q^l}}\rangle\bigr)$. We will prove the following:

\medskip\noindent
{\it Claim.} There is a matrix $S=\sum_{i=0}^\infty S_iz^i\in\GL_r\bigl(\BC\langle\frac{z}{\zeta}\rangle\bigr)$ with $U\cdot\sigma^{l\ast}(S)=S$.

\medskip\noindent
The equation is equivalent to $\sigma^{l\ast}(S)=U^{-1}S$. Writing $U^{-1}=\sum_{i=0}^\infty U_iz^i$ with $U_0\in\GL_r(\BC)$ we can solve the equation $\sigma^{l\ast}(S_0)=U_0S_0$ for $S_0\in\GL_r(\BC)$ by Lang's theorem~\cite[Corollary on p.~557]{Lang57} and then recursively solve the system of Artin-Schreier equations 
\[
\sigma^{l\ast}(S_0^{-1}S_j)-S_0^{-1}S_j\;=\;\sum_{i=0}^{j-1} S_0^{-1}U_0^{-1}U_{j-i} S_i
\]
for $S_j\in\BC^{r\times r}$. To compute the radius of convergence of $S$, let $c\ge1$ be a constant with $|U_i \zeta^{q^li}|\leq c$ for all $i$ where $|U_i \zeta^{q^li}|$ denotes the maximal absolute value of the entries of the matrix $U_i \zeta^{q^li}$. Then
\[
\sigma^{l\ast}(S_j \zeta^{j})\; = \; \sum_{i=0}^j \bigl(U_{j-i} \zeta^{q^l(j-i)}\bigr)(S_i \zeta^{i}) \,\zeta^{i(q^l-1)}\,.
\]
This implies the estimate $|S_j\zeta^{j}|^{q^l}\,=\,|\sigma^{l\ast}\bigl(S_j \zeta^{j}\bigr)|\,\le\,c\cdot \max\{\,|S_i\zeta^{i}|:\,0\leq i\leq j\,\}$, from which induction yields $|S_j\zeta^{j}|\leq c^{1/(q^l-1)}$ for all $j\ge0$. 
In particular $S\in \GL_r\bigl(\BC\langle\frac{z}{\zeta^{q^l}}\rangle\bigr)$. But now the equation $\sigma^{l\ast}(S)=U^{-1}S$ shows that $\sigma^{l\ast}(S)\in\GL_r\bigl(\BC\langle\frac{z}{\zeta^{q^l}}\rangle\bigr)$, hence, $S\in\GL_r\bigl(\BC\langle\frac{z}{\zeta}\rangle\bigr)$ proving the claim.

A consequence of the claim is that we may use $S$ to produce a new basis of $\olM\otimes_{\CO_{C_\BC}}\BC\langle\frac{z}{\zeta}\rangle$ with respect to which $z^k\tau_M^l=\Id_r$ is the identity matrix. Thus also $\ulM\otimes_{A_\BC}\CO_{\{0<|z|\le|\zeta|\}}\;=\;\ulCF(\ulM)\otimes_{\dotCO}\CO_{\{0<|z|\le|\zeta|\}}$ has a basis with respect to which $\tau_\CF^l=z^{-k}$. By Theorem~\ref{ThmHartlPink1}\ref{ThmHartlPink1a} this is only possible if $\ulCF(\ulM)\cong\ulCF_{k,l}^{\oplus\rk\ulM/l}$. In particular, $\deg\ulCF(\ulM)=k\cdot\rk\ulM/l$. This is what we wanted to prove.

\medskip\noindent
\ref{Prop4.4_d} If $\ulM$ is mixed, the construction of $\ulCF(\ulM)$ applies to $W_{\mu\,}\ulM$ and $\Gr^W_{\mu\,}\ulM$ to yield an exact sequence
\begin{equation}\label{EqExactWeightSeq}
0\es\longto\es \DS\bigcup_{\mu'<\mu} \ulCF(W_{\mu'}\ulM)\es\longto\es \ulCF(W_{\mu\,}\ulM)\es\longto\es \ulCF(\Gr^W_{\mu\,}\ulM)\es\longto\es 0
\end{equation}
of $\sigma$-bundles. Indeed, the restriction of \eqref{EqExactWeightSeq} to $\{0<|z|\le|\zeta|\}$ equals the tensor product over $A_\BC$ of $0\to\bigcup_{\mu'<\mu}W_{\mu'}\ulM\to W_{\mu\,}\ulM\to\Gr^W_{\mu\,}\ulM\to0$ with $\CO_{\{0<|z|\le|\zeta|\}}$. Therefore, it is exact because $\Gr^W_{\mu\,}\ulM$ is locally free over $A_\BC$. Since $\sigma^{-1}(\{0<|z|\le|\zeta|\})=\{0<|z|\le|\zeta|^{1/q}\}$ successive application of the isomorphism $\tau_\CF^{-1}$ yields exactness of \eqref{EqExactWeightSeq} on all of $\PDisc$. In particular, $\CF(W_{\mu\,}\ulM)$ equals the intersection of $\CE(W_{\mu\,}\ulM)[\ell_\zeta^{-1}]$ with $\CF(\ulM)$ inside $\CE(\ulM)[\ell_\zeta^{-1}]$.

Since the degree is additive in the sequence~\eqref{EqExactWeightSeq}, \ref{Prop4.4_b} and \ref{Prop4.4_c} imply inductively for increasing $\mu$ that $\deg_\Fq W_{\mu\,}\ulHodge^1(\ulM)=\deg\ulCF(W_\mu\ulM)=\sum_{\mu'\le\mu}\mu'\cdot\rk(\Gr_{\mu'}^W\ulM)=:\deg^W W_{\mu\,}\ulHodge^1(\ulM)$ and so also $\deg_\Fq\ulHodge^1(\ulM)=\deg^W\ulHodge^1(\ulM)$.
\end{proof}

The reader should be warned however, that in the mixed case the weights $\frac{d_i}{r_i}$ of $\ulCF(\ulM)\cong\bigoplus_i\ulCF_{d_i,r_i}$ do not need to coincide with the weights of $\ulM$. 

\begin{example}\label{ExMixedWeights}
Recall the mixed $A$-motive $\ulM$ with weights 1 and 3 from Example~\ref{Ex2.9}, whose Hodge-Pink structure $\ulHodge^1(\ulM)$ has Hodge-Pink weights $(1,3)$ or $(0,4)$ if $(t-\theta)|b$ or $(t-\theta)\nmid b$, respectively. The motivic Galois group $\Gamma_\ulM$ of $\ulM$ was computed in Example~\ref{Ex2.9GalGp}.

The associated $\sigma$-bundles can be described by the following diagram. 
\begin{equation}\label{EqDiagramExMixedWeights}
\xymatrix @R=0.8pc {
0 \ar[r] & \ulCE(W_1\ulM) = \ulCF_{0,1}\ar@{^{ (}->}[dddd]^{\TS\cdot\eta\ell_\zeta^{\SSC -}} \ar[r] & \ulCE(\ulM) = \ulCF_{0,1}^{\oplus2}\ar@{^{ (}->}[dddd]^{\cdot\left( \begin{array}{cc}\eta\ell_\zeta^{\SSC -}&f\\0&(\eta\ell_\zeta^{\SSC -})^3 \end{array}\right)} \ar[r] & \ulCE(\Gr^W_3\ulM) = \ulCF_{0,1}\ar@{^{ (}->}[dddd]^{\TS\cdot(\eta\ell_\zeta^{\SSC -})^3} \ar[r] & 0 \\ \\ \\ \\
0 \ar[r] & W_1\ulM\otimes_{A_\BC}\dotCO \ar[r]\ar@{=}[d] & \ulM\otimes_{A_\BC}\dotCO \ar[r]\ar@{=}[d] & \Gr^W_3\ulM\otimes_{A_\BC}\dotCO \ar[r]\ar@{=}[d] & 0\\
& (\dotCO,t-\theta) \ar[ddd]^{\TS \cdot\eta^{-1}\ell_\zeta^{\SSC +}}& (\dotCO{}^{\oplus 2},\Phi) \ar[ddd] & (\dotCO,(t-\theta)^3) \ar[ddd]^{\TS \cdot(\eta^{-1}\ell_\zeta^{\SSC +})^3} & \\ \\ \\
0 \ar[r] & \ulCF_{1,1} = (\dotCO,z^{-1}) \ar[r] & \ulCF(\ulM)\cong\bigoplus_i\ulCF_{d_i,r_i} \ar[r] & \ulCF_{3,1} = (\dotCO,z^{-3}) \ar[r] & 0}
\end{equation}
where $\ell_\zeta^{\SSC -}$ and $\ell_\zeta^{\SSC +}$ were defined in Example~\ref{Ex4.2}(c). In particular, one sees that $\ulCE(\ulM)\into\ulM\otimes_{A_\BC}\dot\CO$ is an isomorphism outside $\bigcup_{j\in\BN_{0}}\{z=\zeta^{q^j}\}$ and $\ulM\otimes_{A_\BC}\dot\CO\into\ulCF(\ulM)$ is an isomorphism outside $\bigcup_{j<0}\{z=\zeta^{q^j}\}$.

We determine the isomorphy type of $\ulCF(\ulM)$ as in Theorem~\ref{ThmHartlPink1}\ref{ThmHartlPink1a}. If $\frac{d_i}{r_i}>3$, then by Theorem~\ref{ThmHartlPink1}\ref{ThmHartlPink1b} the map of $\ulCF_{d_i,r_i}$ to $\ulCF_{3,1}$ is zero, so $\ulCF_{d_i,r_i}\subset\ulCF_{1,1}$ and again by Theorem~\ref{ThmHartlPink1}, $\frac{d_i}{r_i}\le1$, a contradiction. Similarly, if $\frac{d_i}{r_i}<1$ then the map $\ulCF_{1,1}\to\ulCF_{d_i,r_i}$ is zero, and $\ulCF_{3,1}\onto\ulCF_{d_i,r_i}$, a contradiction. So $1\le\tfrac{d_i}{r_i}\le3$. Since $\deg\ulCF(\ulM)=4$ the only possibilities are $\ulCF(\ulM)\cong\ulCF_{2,1}^{\oplus 2}$ or $\ulCF(\ulM)\cong\ulCF_{1,1}\oplus\ulCF_{3,1}$. 

The latter occurs if and only if the bottom horizontal sequence splits, that is if and only if there are $u,v\in\dotCO$ not both zero which define the map $(u,v)\colon\ulM\otimes_{A_\BC}\dotCO\to\ulCF_{1,1}$, ${x \choose y}\mapsto ux+vy$. This implies that 
\[
(u,v)\left( \begin{array}{cc}\eta\ell_\zeta^{\SSC -}&f\\0&(\eta\ell_\zeta^{\SSC -})^3 \end{array}\right)\;=\;\Bigl(\eta\ell_\zeta^{\SSC -}\,u\,,\;uf+(\eta\ell_\zeta^{\SSC -})^3\,v\Bigr)
\]
defines a morphism $\ulCF_{0,1}^{\oplus 2}\to\ulCF_{1,1}$. Since $\eta\ell_\zeta^{\SSC -}\,u=z^{-1}\sigma^*(\eta\ell_\zeta^{\SSC -}\,u)$ and it vanishes at $z=\zeta$, it also vanishes at $z=\zeta^{q^i}$ for all $i\in\BZ$. Therefore, it is divisible by $\ell_\zeta$, whence $u=\eta^{-1}\ell_\zeta^{\SSC +}\cdot\tilde u$ with $\tilde u=\sigma^*\tilde u\in\BF_q\dpl z\dpr$. If $\tilde u=0$ then $(\eta\ell_\zeta^{\SSC -})^3\,v=z^{-1}\sigma^*((\eta\ell_\zeta^{\SSC -})^3\,v)$. Since this vanishes at $z=\zeta$ of order three, it also vanishes at $z=\zeta^{q^i}$ for all $i\in\BZ$ of order three, and hence, it is divisible by $(\ell_\zeta)^3$, that is $v=(\eta^{-1}\ell_\zeta^{\SSC +})^3\cdot\tilde v$ with $\tilde v=z^2\sigma^*(\tilde v)\in\ulCF_{-2,1}{}^\tau$. By Example~\ref{Ex4.2}(d) this implies $\tilde v=0$ in contradiction to $v\ne0$. So to split the bottom horizontal sequence we must have $u\ne0$. 

We claim that in the case where $(t-\theta)|b$ in $\BC[t]$, the bottom horizontal sequence splits if and only if the sequence 
\begin{equation}\label{EqDiagramExMixedWeights2}
0\longto W_1\ulHodge^1(\ulM)\otimes_QQ_\infty\longto\ulHodge^1(\ulM)\otimes_QQ_\infty\longto\ulHodge^1(\Gr^W_3\ulM)\otimes_QQ_\infty\longto0
\end{equation}
of ``$Q_\infty$-Hodge structures'' splits. Namely, in this case $f$ is divisible by $\ell_\zeta^{\SSC -}$ by Example~\ref{Ex2.9} and therefore $uf+(\eta\ell_\zeta^{\SSC -})^3\,v$ vanishes at $z=\zeta^{q^i}$ for all $i\in\BN_0$ and moreover for all $i\in\BZ$ because it is a $\tau$-invariant in $\ulCF_{1,1}$. Thus $uf+(\eta\ell_\zeta^{\SSC -})^3\,v=\ell_\zeta\cdot\tilde h$ for an element $\tilde h=\sigma^*\tilde h\in\BF_q\dpl z\dpr$. This shows that in diagram~\eqref{EqDiagramExMixedWeights} the top row is split by the morphism $(\tilde u,\tilde h)\colon\ulCE(\ulM)\to\ulCE(W_1\ulM)$. Since $\ulHodge^1(\ulM)\otimes_QQ_\infty=\ulCE(\ulM)^\tau$ this defines the splitting of \eqref{EqDiagramExMixedWeights2} on the level of the underlying $Q_\infty$-vector spaces. It is automatically compatible with the weight filtration here. Moreover, the splitting $(\tilde u, \tilde h)$ is compatible with the splitting of the bottom row in diagram~\eqref{EqDiagramExMixedWeights} and this shows that the splitting respects the Hodge-Pink lattices.

Conversely, by construction of the $\sigma$-bundles $\ulCE(\ulM)$ and $\ulCF(\ulM)$ every splitting of \eqref{EqDiagramExMixedWeights2} induces a compatible splitting of the top and bottom row in diagram~\eqref{EqDiagramExMixedWeights}. Therefore, $\ulCF(\ulM)\cong\ulCF_{1,1}\oplus\ulCF_{3,1}$.

Note that when the extension $0\longto(\BC[t],t-\theta)\longto\ulM\longto\bigl(\BC[t],(t-\theta)^3\bigr)\longto0$, see \eqref{EqSeqMsplitsOrNot}, splits then also \eqref{EqDiagramExMixedWeights2} splits, but the converse is false in general. Namely, by Theorem~\ref{ThmHodgeConjecture}\ref{ThmHodgeConjectureB} which we are going to prove, the former occurs if and only if the associated sequence of $Q$-Hodge-Pink structures analogous to \eqref{EqDiagramExMixedWeights2} splits. This is the case if and only if $\tilde h/\tilde u\in Q\subset Q_\infty=\BF_q\dpl z\dpr$.
\end{example}

\begin{remark}\label{RemSigmaPolygon}
In general, one defines the \emph{$\sigma$-bundle polygon} $SP(\ulM)$ of $\ulM$ as the piecewise linear function on $[0,n]$ whose slope on $[j-1,j]$ is the $j$-th smallest of the weights $\frac{d_i}{r_i}$ where $\ulCF(\ulM)\cong\bigoplus_i\ulCF_{d_i,r_i}$. Then the $\sigma$-bundle polygon lies above the weight polygon $WP(\ulM)$ from Remark~\ref{RemPolygons}(b) and both have the same endpoint, $SP(\ulM)\ge WP(\ulM)$; see \cite[Proposition~1.6.6]{HartlPSp} or Theorem~\ref{ThmImageOfH} below. In particular, after we have proved Theorem~\ref{ThmHodgeConjecture}\ref{ThmHodgeConjectureA}, Remark~\ref{RemPolygons}(b) yields $SP(\ulM)\ge WP(\ulM)\ge HP(\ulM)$ and Example~\ref{ExMixedWeights} illustrates this.
\end{remark}

\subsection{The pair of \texorpdfstring{$\sigma$}{o}-bundles associated with a dual $A$-motive}
To a uniformizable dual $A$-motive $\uldM=(\dM,\sdtau_\dM)$ we assign the pair of $\sigma$-bundles, which was associated in the previous section with the corresponding $A$-motive $\ulM:=\ulM(\uldM)=\bigl((\sdsigma^\ast\dM)\dual,\sdtau_\dM\dual\bigr)$. More precisely, we set 
\[
\ulCE(\uldM)\;:=\;\Lambda(\uldM)\dual\otimes_A\ulCF_{0,1}\;\cong\;\Lambda(\ulM)\otimes_A \ulCF_{0,1}\,.
\]
It is a $\sigma$-bundle with $\CE(\uldM):=\Lambda(\uldM)\dual\otimes_A\dotCO$ and $\stau_\CE=\id$. By Proposition~\ref{PropMaphdM}, $\CE(\uldM)$ coincides via $\sdsigma^*h_\uldM\dual$ with $(\sdsigma^\ast\dM)\dual\otimes_{A_\BC}\dotCO$ on $\PDisc\setminus\bigcup_{i\in\BN_0}\{z=\zeta^{q^i}\}$ and via $h_\uldM\dual$ with $\dM\dual\otimes_{A_\BC}\dotCO$ on $\PDisc\setminus\bigcup_{i>0}\{z=\zeta^{q^i}\}$. So it can be obtained as a modification of $(\sdsigma^\ast\dM)\dual\otimes_{A_\BC}\dotCO$ at all places $z=\zeta^{q^i}$ for $i\ge 0$.

Again $\uldM$ gives rise to a second $\sigma$-bundle as follows. The isomorphism $\sdtau_\dM\dual$ is an isomorphism between $\dM\dual$ and $(\sdsigma^\ast\dM)\dual$ outside $z=\zeta$. So one can modify $(\sdsigma^\ast\dM)\dual\otimes_{A_\BC}\dotCO$ at $z=\zeta^{q^i}$ for $i<0$ to obtain a $\sigma$-bundle $\ulCF(\uldM)=(\CF(\uldM),\stau_\CF)$ with 
\begin{eqnarray}\label{DualEq4.1}
\CF(\uldM)\es:=&\bigl\{\,f\in (\sdsigma^\ast\dM)\dual\otimes_{A_\BC}\dotCO[\ell_\zeta^{-1}]:&(\sdtau_\dM\dual)^{i}(\sdsigma^{-i\ast}f)\in (\sdsigma^\ast \dM)\dual\otimes_{A_\BC}\BC\dbl z-\zeta\dbr\text{ for all }i\in\BZ\,\bigr\}\nonumber\\[2mm]
=&\bigl\{\,f\in \CE(\uldM)[\ell_\zeta^{-1}]:&(\sdtau_\dM\dual)^{i}(\sdsigma^{-i\ast}f)\in (\sdsigma^* \dM)\dual\otimes_{A_\BC}\BC\dbl z-\zeta\dbr\text{ for all }i\in\BZ\,\bigr\},
\end{eqnarray}
and $\stau_\CF=\sdtau_\dM\dual\otimes\id$.

This is indeed a $\sigma$-bundle, because it can be viewed like $\CF(\ulM)$ above as a sheaf. Namely,
\begin{eqnarray*}
&& \Gamma\bigl(\{|\zeta|^s\le|z|\le|\zeta|^{s'}\}\,,\,\CF(\uldM)\bigr)\es=\es \TS\bigl\{\,f\in\Lambda(\uldM)\dual\otimes_A\BC\langle\frac{z}{\zeta^{s'}},\frac{\zeta^s}{z}\rangle%[\ell_\zeta^{-1}]
\colon\\[1mm]
&&\qquad\qquad(\sdtau_\dM^{\SSC\vee})^{i}(\sdsigma^{-i\ast}f)\in \sdtau_\dM(\sdsigma^* \dM)\otimes_{A_\BC}\BC\dbl z-\zeta\dbr\text{ for all }i\text{ with }|\zeta|^{q^is}\le|z|\le|\zeta|^{q^ir}\,\bigr\}\,.
\end{eqnarray*}
The latter is a finite free module over the principal ideal domain $\BC\langle\frac{z}{\zeta^{s'}},\frac{\zeta^s}{z}\rangle$, because by Proposition~\ref{PropMaphdM} it is contained in the free module $\ell_\zeta^{-d}\cdot\Lambda(\uldM)\otimes_A\BC\langle\frac{z}{\zeta^{s'}},\frac{\zeta^s}{z}\rangle$ if $J^d\cdot\dM\subset\sdtau_\dM(\sdsigma^*\dM)$.

Again by Proposition~\ref{PropMaphdM}, $\CF(\uldM)$ coincides via $\sdsigma^*h_\uldM\dual$ with $(\sdsigma^\ast\dM)\dual\otimes_{A_\BC}\dotCO$ on the space \\
$\PDisc\setminus\bigcup_{i<0}\{z=\zeta^{q^i}\}$ and via $h_\uldM\dual$ with $\dM\dual\otimes_{A_\BC}\dotCO$ on $\PDisc\setminus\bigcup_{i\le0}\{z=\zeta^{q^i}\}$.

\begin{definition}\label{DefDualPairOfSigmaBundles}
The pair $(\ulCF(\uldM),\ulCE(\uldM))$ constructed above is called the \emph{pair of $\sigma$-bundles} associated with the uniformizable dual $A$-motive $\uldM$.
\end{definition}

\begin{remark}\label{RemSigmaBdOfMAndDM}
The choice of a uniformizing parameter $z\in Q$ at $\infty$ will give rise to isomorphisms $\Omega^1_{Q/\BF_q}=Q\,dz\cong Q$ and $\Lambda\bigl(\ulM(\uldM)\bigr)\otimes_AQ=(\Lambda(\uldM)\dual\otimes_AQ)\otimes_Q\Omega^1_{Q/\BF_q}\cong\Lambda(\uldM)\dual\otimes_AQ$ and \\
$\bigl(\ulCE(\ulM(\uldM)),\ulCF(\ulM(\uldM))\bigr)\cong\bigl(\ulCE(\uldM),\ulCF(\uldM)\bigr)$; see Proposition~\ref{PropDualizingUnif}.
\end{remark}

Assume that $\uldM$ is effective with $\sdtau_\dM(\sdsigma^*\dM)\subsetneq \dM$. As in diagram~\eqref{EqDiagSigmaBdOfM} we visualize these $\sigma$-bundles over $\dotCO$ by the following diagram, in which the thick lines represent sheaves on $\PDisc$:

%\newpsobject{showgrid}{psgrid}{subgriddiv=1,griddots=10,gridlabels=0pt}
\psset{unit=3.1cm} %This allows to scale the image
 \begin{pspicture}(0,-0.4)(3.7,0.85) 
   \rput(0.4,0.61){$\CF(\uldM)$}
   \psplot[plotstyle=curve,plotpoints=200,linewidth=1pt,linecolor=black]{0.1}%
                {3.7}{0.7}
   \rput(3.4,0.08){$\CE(\uldM)$}
   \psplot[plotstyle=curve,plotpoints=200,linewidth=1pt,linecolor=black]{0.1}%
                {3.7}{6 7 div 0.7 mul 0.6 sub}
   \rput(0.83,0.3){$(\sdsigma^*\dM)\dual\!\otimes_{A_\BC}\dotCO$}
   \psplot[plotstyle=curve,plotpoints=200,linewidth=1pt,linecolor=black]{0.45}{3.7}%
                {x 0.45 sub 16 exp 6 mul x 0.45 sub 16 exp 7 mul 1 add div 0.7 mul 0.2 3 div add}
   \psline[linewidth=1pt](0.1,0.066667)(0.45,0.066667)
   \rput(2.48,0.3){$\dM\dual\!\otimes_{A_\BC}\dotCO$}
   \psplot[plotstyle=curve,plotpoints=200,linewidth=1pt,linecolor=black]{1.2}{3.7}%
                {x 1.3 sub 16 exp 6 mul x 1.3 sub 16 exp 7 mul 1 add div 0.7 mul 0.1 3 div add}
   \psline[linewidth=1pt](0.1,0.033333)(1.3,0.033333)
   %
   %\rput(2.8,0.3){$(F^*)^2\textnormal{M}(\infty)$}
   %\psplot[plotstyle=curve,plotpoints=200,linewidth=1pt,linecolor=black]{1.2}{3.7}%
   %             {x 1.6 sub 16 exp 6 mul x 1.4 sub 16 exp 7 mul 1 add div 0.7 mul} 
   %
   \rput(3.85,-0.15){$\PDisc$}
   \psline[linewidth=0.5pt](0.1,-0.15)(3.7,-0.15)
   \rput(0.5,-0.3){$\dots$}
   \rput(1.13,-0.3){$z=\zeta^{1/q}$}
   \psline[linewidth=0.5pt](1.05,-0.25)(1.05,0.75)
   \rput(1.95,-0.3){$z=\zeta$}
   \psline[linewidth=0.5pt](1.95,-0.25)(1.95,0.75)
   \rput(2.88,-0.3){$z=\zeta^{q}$}  
   \psline[linewidth=0.5pt](2.85,-0.25)(2.85,0.75)
   \rput(3.4,-0.3){$\dots$}  
 \end{pspicture}

%\vspace{1cm}
\noindent
Indeed, $\CE(\uldM)$ coincides via $h_\uldM\dual$ with $\dM\dual\otimes_{A_\BC}\dotCO$ outside $\bigcup_{i\in\BN_{>0}}\{z=\zeta^{q^i}\}$ and via $\sdsigma^*h_\uldM\dual$ it coincides with $(\sdsigma^\ast \dM)\dual\otimes_{A_\BC}\dotCO$ outside $\bigcup_{i\in\BN_0}\{z=\zeta^{q^i}\}$ and is contained in these modules by Proposition~\ref{PropMaphdM}. Via $\sdtau_\dM\dual$ also $(\sdsigma^\ast \dM)\dual$ contains $\dM\dual$ and differs from it only at $z=\zeta$. Finally, one sees that $(\sdsigma^* \dM)\dual\otimes_{A_\BC}\dotCO$ is contained in $\CF(\uldM)$ via $\sdsigma^*h_\uldM\dual$ and they coincide outside $\bigcup_{i< 0}\{z=\zeta^{q^i}\}$.

\begin{proposition}\label{PropDual4.4}
Let $\uldM$ be a uniformizable mixed dual $A$-motive, $(\ulCF(\uldM),\ulCE(\uldM))$ the associated pair of $\sigma$-bundles and $\ulHodge^1(\uldM)=(H,W_\bullet H,\Fq)$ its mixed Hodge-Pink structure.
\begin{enumerate}
\item \label{PropDual4.4_e} $\CE(\uldM)\otimes_\dotCO\BC\dbl z-\zeta\dbr=\Hodge^1(\uldM)\otimes_Q\BC\dbl z-\zeta\dbr=\Fp$ and $\CF(\uldM)\otimes_\dotCO\BC\dbl z-\zeta\dbr=\Fq\;\subset\;\Fp[\tfrac{1}{z-\zeta}]$.
\item \label{PropDual4.4_a}
The $\tau$-invariants of $\ulCE(\uldM)$ are
\[
\ulCE(\uldM)^\sdtau\;=\;\bigl(\Lambda(\uldM)\dual\otimes_A\ulCF_{0,1}\bigr)^\sdtau\;=\;\Lambda(\uldM)\dual\otimes_A Q_\infty\;=\;H\otimes_Q Q_\infty\;=\;H_\infty\,.
\]
\item \label{PropDual4.4_b}
We have $\deg\ulCE(\uldM)\;=\;0$ and $\deg\ulCF(\uldM)\;=\;\dim\uldM\;=\;\deg_\Fq\ulHodge^1(\uldM)$.

\item \label{PropDual4.4_c}
If $\uldM$ is pure of weight $\mu=-\frac{k}{l}$ with $(k,l)=1$, then $\ulHodge^1(\uldM)$ is pure of weight $-\mu=\tfrac{k}{l}$ and $\ulCF(\uldM)\cong\ulCF_{k,l}^{\oplus\rk\uldM/l}$. In particular, 
\[
\deg_\Fq\ulHodge^1(\uldM)\;=\;\deg\ulCF(\uldM)\;=\;\frac{k\cdot\rk\uldM}{l}\;=\;-\mu\cdot\rk\ulHodge^1(\uldM)\;=\;\deg^W\ulHodge^1(\uldM)\,.
\]
\item \label{PropDual4.4_d} If $\uldM$ is mixed, then also $\deg_\Fq\ulHodge^1(\uldM)=\deg^W\ulHodge^1(\uldM)$ and $\deg_\Fq W_\mu\ulHodge^1(\uldM)=\deg^W W_\mu\ulHodge^1(\uldM)$ for all $\mu$.
\end{enumerate}
\end{proposition}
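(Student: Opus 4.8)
The plan is to deduce all of \ref{PropDual4.4_e}--\ref{PropDual4.4_d} from Proposition~\ref{Prop4.4}, applied to the uniformizable mixed $A$-motive $\ulM:=\ulM(\uldM)$ from Proposition~\ref{PropDualizing}, together with the dictionary relating $\uldM$ to $\ulM$. Recall from Remark~\ref{RemSigmaBdOfMAndDM} that the choice of the uniformizing parameter $z$ induces an isomorphism $\bigl(\ulCF(\uldM),\ulCE(\uldM)\bigr)\cong\bigl(\ulCF(\ulM),\ulCE(\ulM)\bigr)$ of pairs of $\sigma$-bundles, and from Theorem~\ref{ThmHofMandDualM} that $\ulHodge^1(\ulM)=\ulHodge^1(\uldM)\otimes\ul\Omega$, where $\ul\Omega$ is pure of weight $0$, of rank $1$, and has Hodge-Pink lattice $\Fp=\BC\dbl z-\zeta\dbr dz$. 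Since $\rk\ul\Omega=1$ and $\deg_\Fq\ul\Omega=\deg^W\ul\Omega=0$, tensoring with $\ul\Omega$ (or with $\ul\Omega\dual$) changes neither the rank, the virtual dimension, the weights and weight filtration, nor the numbers $\deg_\Fq$ and $\deg^W$; hence $\ulHodge^1(\uldM)$ and $\ulHodge^1(\ulM)$ carry the same such invariants, with correspondingly identified weight filtrations. Combined with $\rk\uldM=\rk\ulM$ and $\dim\uldM=\dim\ulM$, and with the fact that $\ulM$ is mixed, respectively pure of weight $-\mu$, exactly when $\uldM$ is mixed, respectively pure of weight $\mu$ (Propositions~\ref{PropDualizing} and \ref{PropDualMixed}), this turns parts \ref{Prop4.4_b}, \ref{Prop4.4_c} and \ref{Prop4.4_d} of Proposition~\ref{Prop4.4} directly into the assertions \ref{PropDual4.4_b}, \ref{PropDual4.4_c} and \ref{PropDual4.4_d}; here one also uses that $\ulHodge^1(\uldM)=\ulHodge_1(\uldM)\dual$ is pure of weight $-\mu$ when $\uldM$ is pure of weight $\mu$, so that $\deg^W\ulHodge^1(\uldM)=(-\mu)\cdot\rk\uldM$ as required, and that $\dim\uldM=\deg_\Fq\ulHodge^1(\uldM)$ by Remark~\ref{DualRem2.7}(c).

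Assertion \ref{PropDual4.4_a} is immediate from the definition $\ulCE(\uldM)=\Lambda(\uldM)\dual\otimes_A\ulCF_{0,1}$ together with $\ulCF_{0,1}{}^\tau=\BF_q\dpl z\dpr=Q_\infty$ (Example~\ref{Ex4.2}(a)), which gives $\ulCE(\uldM)^\sdtau=\Lambda(\uldM)\dual\otimes_AQ_\infty=H\otimes_QQ_\infty$. For \ref{PropDual4.4_e} I would argue directly at the single point $z=\zeta$, in exact parallel to the discussion following diagram~\eqref{EqDiagSigmaBdOfM}: Proposition~\ref{PropMaphdM} shows that $\sdsigma^*h_\uldM$ is an isomorphism of sheaves locally at $\Var(J)=\{z=\zeta\}$, so via $\sdsigma^*h_\uldM\dual\otimes\id$ the stalk of $\CE(\uldM)$ at $z=\zeta$ becomes $(\sdsigma^*\dM)\dual\otimes_{A_\BC}\BC\dbl z-\zeta\dbr$, while by \eqref{DualEq4.1} the stalk of $\CF(\uldM)$ there is cut out by the single condition coming from the index $i=1$; feeding this through diagram~\eqref{EqhdM} and Definition~\ref{DualDef2.6} identifies them with $\Fp$ and $\Fq$, respectively. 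Alternatively, \ref{PropDual4.4_e} follows by transporting the already-established equalities $\CE(\ulM)\otimes_\dotCO\BC\dbl z-\zeta\dbr=\Fp$ and $\CF(\ulM)\otimes_\dotCO\BC\dbl z-\zeta\dbr=\Fq$ for $\ulM$ through the isomorphism of Remark~\ref{RemSigmaBdOfMAndDM} and the identification of Theorem~\ref{ThmHofMandDualM}.

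The reduction is essentially bookkeeping, and I do not expect a serious obstacle: the one technical point in the proof of Proposition~\ref{Prop4.4}, namely the solvability of the Frobenius equation $U\cdot\sigma^{l*}(S)=S$ used to pin down $\ulCF$ of a pure object, has already been carried out for $A$-motives and is inherited here through Remark~\ref{RemSigmaBdOfMAndDM}. The only genuine care needed is with the signs of the weights under the anti-equivalence $\uldM\mapsto\ulM(\uldM)$, with matching the weight filtration of $\ulHodge^1(\uldM)=\ulHodge_1(\uldM)\dual$ against that of $\ulHodge^1(\ulM(\uldM))$ under the twist by $\ul\Omega$, and with the direct local check of \ref{PropDual4.4_e}; a reader preferring a self-contained argument may instead repeat the proof of Proposition~\ref{Prop4.4} verbatim in the dual setting, the modifications being purely notational.
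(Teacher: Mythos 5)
Your proposal is correct and is essentially the paper's own proof: the paper disposes of this proposition in one line by combining Remark~\ref{RemSigmaBdOfMAndDM} and Proposition~\ref{PropDualMixed} with Proposition~\ref{Prop4.4}, which is exactly the reduction you carry out (in more detail, also invoking Theorem~\ref{ThmHofMandDualM} to match the Hodge--Pink data, as one implicitly must). The only nitpick is in your optional direct argument for \ref{PropDual4.4_e}: the stalk of $\CF(\uldM)$ at $z=\zeta$ is cut out by the condition of index $i=0$ in \eqref{DualEq4.1}, not $i=1$, but this does not affect the validity of your main (transport) argument.
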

\begin{proof}
We could adapt the proof of Proposition~\ref{Prop4.4}. However, everything also follows from combining Remark~\ref{RemSigmaBdOfMAndDM} and Proposition~\ref{PropDualMixed} with~\ref{Prop4.4}.
\end{proof}

Again, the reader should be warned that in the mixed case the weights $\frac{d_i}{r_i}$ of $\ulCF(\uldM)\cong\bigoplus_i\ulCF_{d_i,r_i}$ do not need to coincide with the negatives of the weights of $\uldM$. The analog of Example~\ref{ExMixedWeights} for dual $A$-motives is a case where this happens.

\subsection{Proof of Theorem~\ref{ThmHodgeConjecture}}

\noindent{\bfseries Proof of Theorem~\ref{ThmHodgeConjecture}\ref{ThmHodgeConjectureA}.}
 We want to show that $\ulHodge^1(\ulM)=(H,W_\bullet H,\Fq)$ is locally semistable. So let $H'_\infty\subset H_\infty$ be a $Q_\infty$-subspace and let $\ulH'_\infty=(H'_\infty,W_\bullet H'_\infty,\Fq')$ be the induced strict $Q_\infty$-subobject as in Definition~\ref{Def1.5}. We have to show that $\deg_\Fq\ulH'_\infty\le\deg^W\ulH'_\infty$ with equality for $H'_\infty=(W_\mu H)_\infty$. We consider two $\sigma$-bundles associated with $\ulH'_\infty$:
\begin{eqnarray*}
\ulCE'&:=&(\CE',\tau_{\CE'})\es:=\es\ulCE(\ulH'_\infty)\es:=\es H'_\infty\otimes_{Q_\infty}\ulCF_{0,1}\es\subset\es H_\infty\otimes_{Q_\infty}\ulCF_{0,1}\es=\es\ulCE(\ulM)\quad\text{and}\\[2mm]
\ulCF'&:=&(\CF',\tau_{\CF'})\es:=\es\ulCF(\ulH'_\infty)\es:=\es\bigl\{\,f\in\CE'[\ell_\zeta^{-1}]:\es \tau_{\CE'}^i(\sigma^{i\ast}f)\in\Fq'\text{ for all }i\in\BZ\,\bigr\}\,.
\end{eqnarray*}
That these are $\sigma$-bundles is seen in the same way as for $\ulCF(\ulM)$ from \eqref{Eq4.1}. Note that $\Fq'=\Fq\cap\bigl(H'_\infty\otimes_{Q_\infty}\BC\dpl z-\zeta\dpr\bigr)$ implies that $\CF'$ is the intersection of $\CE'[\ell_\zeta^{-1}]$ and $\CF(\ulM)$ inside $\CE[\ell_\zeta^{-1}]$. The two $\sigma$-bundles $\ulCE'$ and $\ulCF'$ coincide outside $\bigcup_{i\in\BZ}\{z=\zeta^{q^i}\}$ and satisfy 
\[
\Fp'\;:=\;H'_\infty\otimes_{Q_\infty}\BC\dbl z-\zeta\dbr\;=\;\CE'\otimes_{\dotCO}\BC\dbl z-\zeta\dbr\quad\text{and}\quad\CF'\otimes_{\dotCO}\BC\dbl z-\zeta\dbr\;=\;\Fq'\;\subset\;\Fp'[\tfrac{1}{z-\zeta}]\,.
\]
Since $\deg\ulCE'=0$ we compute as in the proof of Proposition~\ref{Prop4.4}\ref{Prop4.4_b} using Theorem~\ref{ThmHartlPink1}\ref{ThmHartlPink1d} that
\[
\deg\ulCF'\;=\;\deg\ulCF'-\deg\ulCE'\;=\;\deg_\Fq\ulH'_\infty\,.
\]
From the weight filtration $W_\mu H'_\infty=H'_\infty\cap(W_\mu H)_\infty$ the $\sigma$-bundle $\ulCF'$ inherits a weight filtration with saturated $\sigma$-sub-bundles $W_{\mu\,}\ulCF'=\ulCF(W_{\mu\,}\ulH'_\infty)$ being the intersection of $(W_\mu H'_\infty)\otimes_{Q_\infty}\dotCO[\ell_\zeta^{-1}]$ and $\CF(\ulM)$ inside $\CE[\ell_\zeta^{-1}]$. Moreover, $W_{\mu\,}\ulCF'$ equals the intersection $\CF(W_{\mu\,}\ulM)\cap\CF'$ inside $\CF$, because $W_\mu\CF$ is the intersection of $(W_\mu H)_\infty\otimes_{Q_\infty}\dotCO[\ell_\zeta^{-1}]$ and $\CF(\ulM)$ inside $\CE[\ell_\zeta^{-1}]$; see the proof of Proposition~\ref{Prop4.4}\ref{Prop4.4_d}. From the exact sequence
\begin{equation}\label{EqExactWeightSeq2}
0\es\longto\es \DS\bigcup_{\mu'<\mu} W_{\mu'}\ulCF'\es\longto\es W_{\mu\,}\ulCF'\es\longto\es \Gr^W_{\mu\,}\ulCF'\es\longto\es 0
\end{equation}
it follows that the natural morphism $\Gr^W_{\mu\,}\ulCF'\to\Gr^W_{\mu\,}\ulCF(\ulM)$ is injective. Since \\
$\Gr^W_{\mu\,}\ulCF(\ulM)=\ulCF(\Gr^W_{\mu\,}\ulM)\cong\ulCF_{k,l}^{(\rk W_{\mu\,}\ulM)/l}$ for $\mu=\frac{k}{l}$ with $(k,l)=1$ by Proposition~\ref{Prop4.4}\ref{Prop4.4_c}, Theorem~\ref{ThmHartlPink1} implies $\deg(\Gr^W_{\mu\,}\ulCF')\le\mu\cdot\rk(\Gr^W_{\mu\,}\ulCF')$. Using $\rk(\Gr^W_{\mu\,}\ulCF')=\dim_{Q_\infty}(\Gr^W_\mu H'_\infty)$ and the additivity of the degree in the exact sequence \eqref{EqExactWeightSeq2} we compute
\[
\deg_\Fq\ulH'_\infty\;=\;\deg\ulCF'\;=\;\sum_{\mu\in\BQ}\deg(\Gr^W_{\mu\,}\ulCF')\;\le\;\sum_{\mu\in\BQ} \mu\cdot\dim_{Q_\infty}(\Gr^W_\mu H'_\infty)\;=\;\deg^W\ulH'_\infty\,.
\]
Moreover, if $H'_\infty=(W_{\tilde\mu}H)_\infty$, then $W_\mu H'_\infty=(W_\mu H)_\infty$ and $W_{\mu\,}\ulCF'\,=\,\ulCF(W_{\mu\,}\ulM)\cap\ulCF'\,=\,\ulCF(W_{\mu\,}\ulM)$ for all $\mu\le\tilde\mu$ and so all the above inclusions and inequalities are equalities. This shows that $\ulHodge^1(\ulM)$ is locally semistable and finishes the proof of Theorem~\ref{ThmHodgeConjecture}\ref{ThmHodgeConjectureA}.
\qed

\medskip
\noindent{\bfseries Proof of Theorem~\ref{ThmHodgeConjecture}\ref{ThmHodgeConjectureB}.}
By construction the functor $\ulHodge^1$ is $Q$-linear. To prove exactness of $\ulHodge^1$ let $0\to\ulM'\to\ulM\to\ulM''\to0$ be an exact sequence of mixed $A$-motives. Then it follows from Lemma~\ref{Lemma2.3} and Proposition~\ref{PropPure}\ref{PropPure_d} that $0\to\Hodge^1(\ulM')\to\Hodge^1(\ulM)\to\Hodge^1(\ulM'')\to0$ is exact and strictly compatible with the weight filtrations. Consider the commutative diagram with exact rows
\[
\xymatrix {
0\ar[r] & M'\otimes_{A_{\BC}}\BC\dbl z-\zeta\dbr \ar[r]\ar@{^{ (}->}[d]_{\TS h_{\ulM'}^{-1}} & M\otimes_{A_{\BC}}\BC\dbl z-\zeta\dbr \ar[r]\ar@{^{ (}->}[d]_{\TS h_{\ulM}^{-1}} & M''\otimes_{A_{\BC}}\BC\dbl z-\zeta\dbr \ar[r]\ar@{^{ (}->}[d]_{\TS h_{\ulM''}^{-1}} & 0\\
0\ar[r] & \Fp'[\frac{1}{z-\zeta}] \ar[r] & \Fp[\frac{1}{z-\zeta}] \ar[r] & \Fp''[\frac{1}{z-\zeta}] \ar[r] & 0\,,
}
\]
where the vertical maps come from Proposition~\ref{PropMaphM}. Their images are the Hodge-Pink lattices $\Fq\subset\Fp[\tfrac{1}{z-\zeta}]$. Since $M'\subset M$ is saturated the sequence $0\to\ulHodge^1(\ulM')\to\ulHodge^1(\ulM)\to\ulHodge^1(\ulM'')\to0$ is strict exact. 

To prove that $\ulHodge^1$ is faithful let $f\colon \ulM\to\ulM'$ be a morphism of $A$-motives with $\ulHodge^1(f)=0$. This implies that $\Lambda(f)\colon \Lambda(\ulM)\to\Lambda(\ulM')$ is the zero map. Then by Definition~\ref{DefLambda} the map $f\otimes\id\colon M\otimes_{A_{\BC}}\CO(\FC_\BC\setminus\Disc)\to M'\otimes_{A_{\BC}}\CO(\FC_\BC\setminus\Disc)$ is the zero map and this implies $f=0$.

To prove that $\ulHodge^1$ is full let $g\colon \ulHodge^1(\ulM)\to\ulHodge^1(\ulM')$ be a non-zero morphism of $Q$-Hodge-Pink structures. It can be interpreted as an injection $\UOne\into\ulHodge^1(\ulM')\otimes\ulHodge^1(\ulM)\dual=\ulHodge^1(\ulM'\otimes\ulM\dual)$. It suffices to show that this Hodge-Pink sub-structure $\UOne\subset\ulHodge^1(\ulM'\otimes\ulM\dual)$ is of the form $\ulHodge^1(\ulM'')=\UOne$ for an $A$-sub-motive $\ulM''\subset\ulM'\otimes\ulM\dual$ in the category $\AMUMotCatIsog$ of uniformizable mixed $A$-motives up to isogeny. Then necessarily $\ulM''$ has rank $1$ and virtual dimension $0$ and hence, equals $\UOne$; see Example~\ref{ExampleCHMotive}. Therefore, $\ulM''=\UOne$ can be reinterpreted as a morphism $f\colon \ulM\to\ulM'$ with $\ulHodge^1(f)=g$. So Theorem~\ref{ThmHodgeConjecture}\ref{ThmHodgeConjectureB} follows from Theorem~\ref{ThmHodgeConjecture}\ref{ThmHodgeConjectureC}.
\qed

\medskip
\noindent{\bfseries Proof of Theorem~\ref{ThmHodgeConjecture}\ref{ThmHodgeConjectureC}.}
To show that the essential image of the functor $\ulHodge^1\colon \ulM\to\ulHodge^1(\ulM)$ is closed under forming subquotients we only need to treat the case of a Hodge-Pink sub-structure 
\[
\ulH'\;=\;(H', W_\bullet H',\Fq')\;\subset\;\ulHodge^1(\ulM)\;=\;(\Lambda(\ulM)\otimes_A Q,W_\bullet H,\Fq)\,,
\]
because by the exactness of $\ulHodge^1$, quotient objects can be handled via their associated kernel subobjects. By \cite[Proposition~4.7(c)]{PinkHodge} the inclusion $\ulH'\subset\ulHodge^1(\ulM)$ is automatically strict. We will prove the following

\medskip\noindent
{\it Claim 1.} There is a saturated $A$-sub-motive $\ulM'\subset\ulM$ with $\Hodge^1(\ulM')=H'\subset H$ and such that the Hodge-Pink lattice of $\ulM'$ equals $\Fq'$.

\medskip\noindent
We use the claim to prove Theorem~\ref{ThmHodgeConjecture}\ref{ThmHodgeConjectureC} as follows. By Proposition~\ref{PropPure}\ref{PropPure_c} the $A$-sub-motive $\ulM'$ is mixed with $W_{\mu\,}\ulM'=\ulM'\cap W_{\mu\,}\ulM\subset\ulM$. Then the exactness of $\ulHodge^1$ implies that 
\[
\Hodge^1(W_{\mu\,}\ulM')\;=\;\Hodge^1(\ulM')\cap\Hodge^1(W_{\mu\,}\ulM)\;=\;H'\cap W_\mu H\;=\;W_\mu H'
\]
and in particular $\ulHodge^1(\ulM')=\ulH'$.

To prove the claim, we set $\Lambda':=H'\cap\Lambda(\ulM)$ and consider the $\sigma$-sub-bundle $\ulCE'\;:=\;\Lambda'\otimes_A\ulCF_{0,1}\;=\;H'\otimes_Q\ulCF_{0,1}\subset\ulCE(\ulM)$ whose underlying module $\CE'=H'\otimes_Q\dotCO$ is a saturated submodule of $\CE(\ulM)$. As above we modify $\ulCE'$ at $\bigcup_{i\in\BZ}\{z=\zeta^{q^i}\}$ according to the inclusion $\Fp'=H'\otimes_Q\BC\dbl z-\zeta\dbr\subset\Fq'$ to obtain the $\sigma$-sub-bundle
\[
\ulCF'\;:=\;(\CF',\tau_{\CF'})\;:=\;\ulCF(\ulH')\;:=\;\bigl\{\,f\in\CE'[\ell_\zeta^{-1}]:\; \tau_{\CE'}^i(\sigma^{i\ast}f)\in\Fq'\text{ for all }i\in\BZ\,\bigr\}\;\subset\;\ulCF(\ulM)\,.
\]
Since $\Fq'=\Fq\cap H'\otimes_Q\BC\dpl z-\zeta\dpr$ this sub-bundle is also saturated. We now consider the admissible covering $\dotFC_\BC=\{0<|z|<|\zeta|^{q^{-1}}\}\cup\FC_\BC\setminus\{|z|\le|\zeta|\}$ of the rigid analytic curve $\dotFC_\BC$, and we define a saturated locally free subsheaf $\CM'\subset M\otimes_{A_{\BC}}\CO(\dotFC_\BC)$ of finite rank on $\dotFC_\BC$ together with an isomorphism $\tau_{\CM'}\colon \sigma^\ast\CM'[J^{-1}]\isoto\CM'[J^{-1}]$ by setting
\[
\begin{array}{lll@{\qquad\text{with}\qquad}lll}
\CM'|_{\FC_\BC\setminus\{|z|\le|\zeta|\}}&:=& \Lambda'\otimes_A\CO_{\FC_\BC\setminus\{|z|\le|\zeta|\}}&\tau_{\CM'}&:=&\id\\[2mm]
\CM'|_{\{0<|z|<|\zeta|^{q^{-1}}\}}&:=&\CF'|_{\{0<|z|<|\zeta|^{q^{-1}}\}}&\tau_{\CM'}&:=&\tau_{\CF'}
\end{array}
\]
and glueing the two pieces on the overlap $\{|\zeta|<|z|<|\zeta|^{q^{-1}}\}$ via the isomorphism 
\[
\Lambda'\otimes_A\CO_{\{|\zeta|<|z|<|\zeta|^{q^{-1}}\}}\;=\;\CE'|_{\{|\zeta|<|z|<|\zeta|^{q^{-1}}\}}\;=\;\CF'|_{\{|\zeta|<|z|<|\zeta|^{q^{-1}}\}}\,.
\]
Since $M\otimes_{A_{\BC}}\CO_{\FC_\BC\setminus\{|z|\le|\zeta|\}}\;\cong\;\Lambda(\ulM)\otimes_A\CO_{\FC_\BC\setminus\{|z|\le|\zeta|\}}$ by Proposition~\ref{PropLambdaConvRadius} and $\Lambda'\subset\Lambda(\ulM)$ is saturated, the subsheaf $\CM'\subset M\otimes_{A_{\BC}}\CO(\dotFC_\BC)$ is saturated.
Note that 
\[
\begin{array}{llcll}
(\sigma^*\CM')|_{\FC_\BC\setminus\{|z|\le|\zeta|^q\}}&=& \sigma^*\bigl(\CM'|_{\FC_\BC\setminus\{|z|\le|\zeta|\}}\bigr)&=& \Lambda'\otimes_A\CO_{\FC_\BC\setminus\{|z|\le|\zeta|^q\}}\\[2mm]
(\sigma^*\CM')|_{\{0<|z|<|\zeta|\}}&=& \sigma^*\bigl(\CM'|_{\{0<|z|<|\zeta|^{q^{-1}}\}}\bigr)&=& (\sigma^*\CF')|_{\{0<|z|<|\zeta|\}}
\end{array}
\]
and
\begin{eqnarray*}
(\sigma^*\CM')|_{\{|\zeta|^q<|z|<|\zeta|^{q^{-1}}\}}[\tfrac{1}{z-\zeta}] & = & \CE'|_{\{|\zeta|^q<|z|<|\zeta|^{q^{-1}}\}}[\tfrac{1}{z-\zeta}]\\[2mm]
& = & \CF'|_{\{|\zeta|^q<|z|<|\zeta|^{q^{-1}}\}}[\tfrac{1}{z-\zeta}]\es=\es\CM'|_{\{|\zeta|^q<|z|<|\zeta|^{q^{-1}}\}}[\tfrac{1}{z-\zeta}]\,.
\end{eqnarray*}
Therefore, $\tau_{\CM'}$ is an isomorphism between $\sigma^*\CM'$ and $\CM'$ outside $z=\zeta$. At $z=\zeta$ we have
\begin{eqnarray*}
(\sigma^\ast\CM')\otimes_{\CO(\dotFC_\BC)}\BC\dbl z-\zeta\dbr & = & \CE'\otimes_\dotCO\BC\dbl z-\zeta\dbr \es=\es \Fp'\qquad\text{and}\\[2mm]
\CM'\otimes_{\CO(\dotFC_\BC)}\BC\dbl z-\zeta\dbr & = & \CF'\otimes_\dotCO\BC\dbl z-\zeta\dbr \es=\es\Fq'\es\subset\es\Fp'[\tfrac{1}{z-\zeta}]\,.
\end{eqnarray*}
So indeed $\tau_{\CM'}\colon\sigma^\ast\CM'[J^{-1}]\isoto\CM'[J^{-1}]$ is a isomorphism. If $\Fp'\subset\Fq'$, then $\ulCE'\subset\ulCF'$ and therefore $\tau_{\CM'}\colon\sigma^\ast\CM'\to\CM'$ is a morphism with $\coker\tau_{\CM'}\;\cong\;(\CF'/\CE')\otimes_\dotCO\BC\dbl z-\zeta\dbr\;\cong\;\Fq'/\Fp'$. We visualize this case as follows.

\newpsobject{showgrid}{psgrid}{subgriddiv=1,griddots=10,gridlabels=0pt}
 \psset{unit=3.1cm} % This allows to scale the image
 \begin{pspicture}(0,-0.4)(3.7,1) 
   \rput(1.85,0.71){$\CF'$}
   \psplot[plotstyle=curve,plotpoints=200,linewidth=1pt,linecolor=black]{1.62}%
                {3.87}{0.8}
   \rput(1.63,0.8){$\SC ($}
   \rput(3.93,0.8){\circle{0.06}}
   \rput(2.6,0.4){$\CM'$}
   \psplot[plotstyle=curve,plotpoints=200,linewidth=1pt,linecolor=black]{1.6}{3.87}%
                {x 1.6 sub 16 exp 6 mul x 1.6 sub 16 exp 7 mul 1 add div 0.7 mul 0.4 3 div add}
   \psline[linewidth=1pt](0.1,0.133333)(1.6,0.133333)
   \rput(3.93,0.733337){\circle{0.06}}
   \rput(3.35,0.4){$\tau_{\CM'}(\sigma^*\CM')$}
   \psplot[plotstyle=curve,plotpoints=200,linewidth=1pt,linecolor=black]{2.1}{3.87}%
                {x 2.1 sub 16 exp 6 mul x 2.1 sub 16 exp 7 mul 1 add div 0.7 mul 0.2 3 div add}
   \psline[linewidth=1pt](0.1,0.066667)(2.1,0.066667)
   \rput(3.93,0.666667){\circle{0.06}}
   \rput(3.65,0.08){$\CE'$}
   \psplot[plotstyle=curve,plotpoints=200,linewidth=1pt,linecolor=black]{1.62}%
                {3.87}{6 7 div 0.7 mul 0.6 sub}
   \rput(1.63,0){$\SC ($}
   \rput(3.93,0){\circle{0.06}}
   %
   %\rput(2.8,0.3){$(F^*)^2\textnormal{M}(\infty)$}
   %\psplot[plotstyle=curve,plotpoints=200,linewidth=1pt,linecolor=black]{1.2}{3.7}%
   %             {x 1.6 sub 16 exp 6 mul x 1.4 sub 16 exp 7 mul 1 add div 0.7 mul} 
   %
   \rput(4.12,-0.15){$\FC_\BC$}
   \psline[linewidth=0.5pt](0.1,-0.15)(3.97,-0.15)
   %
   %\rput(0.5,-0.3){$\dots$}
   %
   \rput(1.57,-0.3){$|z|=1$}
   \psline[linewidth=0.5pt](1.6,-0.25)(1.6,-0.13)
   \psline[linewidth=0.5pt](1.6,-0.07)(1.6,0.07)
   \psline[linewidth=0.5pt](1.6,0.13)(1.6,0.27)
   \psline[linewidth=0.5pt](1.6,0.33)(1.6,0.47)
   \psline[linewidth=0.5pt](1.6,0.53)(1.6,0.67)
   \psline[linewidth=0.5pt](1.6,0.73)(1.6,0.87)
   \rput(2.75,-0.3){$z=\zeta$}
   \psline[linewidth=0.5pt](2.75,-0.25)(2.75,0.87)
   %
   %\rput(2.88,-0.3){$z=\zeta^{q}$}  
   %\psline[linewidth=0.5pt](2.85,-0.25)(2.85,0.87)
   %
   %\rput(3.4,-0.3){$\dots$}  
   %
   \rput(3.9,-0.3){$z=0$}
   \psline[linewidth=0.5pt](3.9,-0.25)(3.9,0.87)
 \end{pspicture}

\noindent
This picture has to be interpreted in the same way as diagram~\eqref{EqDiagSigmaBdOfM}, except that here we see the entire rigid analytic curve $\dotFC_\BC=\FC_\BC\setminus\{z=0\}$ to which we have extended $\CM'$ and $\sigma^*\CM'$. Before we continue with the proof we make the following 

\begin{definition}\label{DefAnalyticAMotive}
The $\ul\CM(\ulH'):=(\CM',\tau_{\CM'})$ constructed above is called the \emph{analytic $A$-motive} and $(\ulCF(\ulH'),\ulCE(\ulH'))$ is called the \emph{pair of $\sigma$-bundles} associated with the $Q$-Hodge-Pink structure $\ulH'$.
\end{definition}

Especially for $H'=\Hodge(\ulM)$ we obtain $\Gamma\bigl(\dotFC_\BC,\ul\CM(\ulHodge^1(\ulM))\bigr)\cong\ulM\otimes_{A_{\BC}}\CO(\dotFC_\BC)$ by diagram~\eqref{EqDiagSigmaBdOfM} and Proposition~\ref{PropMaphM}. Recall that the $\tau$-invariants $\Lambda(\ulM)$ of $\ulM$ are computed as the $\tau$-invariants of $\ulM\otimes_{A_{\BC}}\CO\bigl(\dotFC_\BC\setminus\bigcup_{i\in\BN_0}\Var(\sigma^{i\ast}J)\bigr)$. For our $\ulH'$ the $\tau$-invariants of $\ul\CM(\ulH')$ are 
\[
\Bigl\{\,m\in\Gamma\bigl(\dotFC_\BC\setminus{\TS\bigcup\limits_{i\in\BN_0}}\Var(\sigma^{i\ast}J),\,\CM'\bigr):\tau_{\CM'}(\sigma^\ast m)=m\,\Bigr\}\;=\;\Lambda'\,;
\]
use \cite[Proposition~3.4]{BoeckleHartl}. We therefore must show that $\ul\CM(\ulH')\cong\ulM'\otimes_{A_{\BC}}\CO(\dotFC_\BC)$ for a saturated $A$-sub-motive $\ulM'\subset\ulM$. For this we use the following

\begin{lemma}\label{LemmaDescend}
The saturated analytic $A$-sub-motive $\ul\CM'\subset\ulM\otimes_{A_{\BC}}\CO(\dotFC_\BC)$ of rank $r':=\rk\CM'$ descends to a saturated $A$-sub-motive $\ulM'\subset \ulM$ with $\ul\CM'=\ulM'\otimes_{A_{\BC}}\CO(\dotFC_\BC)$ if and only if the saturated analytic $A$-sub-motive $\wedge^{r'}\ul\CM'\subset\wedge^{r'} \ulM\otimes_{A_{\BC}}\CO(\dotFC_\BC)$ descends to a saturated $A$-sub-motive $\ul N'\subset\wedge^{r'} \ulM$ with $\wedge^{r'}\ul\CM'=\ul N'\otimes_{A_{\BC}}\CO(\dotFC_\BC)$.
\end{lemma}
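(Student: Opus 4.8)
The strategy is to use the Plücker embedding: a rank-$r'$ saturated sub-bundle of $\ulM\otimes_{A_\BC}\CO(\dotFC_\BC)$ is the same datum as a Plücker-decomposable rank-$1$ saturated sub-bundle of $\wedge^{r'}\bigl(\ulM\otimes_{A_\BC}\CO(\dotFC_\BC)\bigr)$, and the latter carries enough information to recover the former by an algebraic formula. The implication "$\ul\CM'$ descends $\Rightarrow$ $\wedge^{r'}\ul\CM'$ descends" is the easy direction: if $\ul\CM'=\ulM'\otimes_{A_\BC}\CO(\dotFC_\BC)$ with $\ulM'\subset\ulM$ a saturated $A$-sub-motive, then $\ulM'$ is locally on $\Spec A_\BC$ a direct summand of $\ulM$, so $\wedge^{r'}\ulM'$ is a saturated rank-$1$ $A_\BC$-submodule of $\wedge^{r'}M$ stable under $\wedge^{r'}\tau_M$ (since $\tau_M$ preserves $M'[J^{-1}]$), and $\wedge^{r'}\ul\CM'=(\wedge^{r'}\ulM')\otimes_{A_\BC}\CO(\dotFC_\BC)$ by flatness of analytification; one takes $\ul N':=\wedge^{r'}\ulM'$.

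For the converse I would argue as follows. Assume $\wedge^{r'}\ul\CM'=\ul N'\otimes_{A_\BC}\CO(\dotFC_\BC)$ for a saturated rank-$1$ $A$-sub-motive $\ul N'=(N',\wedge^{r'}\tau_M|_{N'})\subset\wedge^{r'}\ulM$. Set $M':=\ker\bigl(M\to\Hom_{A_\BC}(N',\wedge^{r'+1}M),\ m\mapsto(n\mapsto m\wedge n)\bigr)$; since $\wedge^{r'+1}M$ is locally free, hence torsion-free, $M'$ is a saturated $A_\BC$-submodule of $M$. Applying $-\otimes_{A_\BC}\CO(\dotFC_\BC)$, which is flat and (as $N'$ is finitely presented over the Noetherian ring $A_\BC$) commutes with this $\Hom$, identifies $M'\otimes_{A_\BC}\CO(\dotFC_\BC)$ with the kernel of $\CE\to\Hom_{\CO(\dotFC_\BC)}(\wedge^{r'}\CM',\wedge^{r'+1}\CE)$, where $\CE:=M\otimes_{A_\BC}\CO(\dotFC_\BC)$. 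Because $\CM'\subset\CE$ is a locally free direct summand of rank $r'$, locally on $\dotFC_\BC$ the line $\wedge^{r'}\CM'$ is generated by a decomposable vector $e_1\wedge\dots\wedge e_{r'}$ with $e_1,\dots,e_{r'}$ part of a local basis of $\CE$, and then $\{\,v:v\wedge e_1\wedge\dots\wedge e_{r'}=0\,\}$ is exactly the span of $e_1,\dots,e_{r'}$; hence that kernel equals $\CM'$, so $M'\otimes_{A_\BC}\CO(\dotFC_\BC)=\CM'$. In particular $M'$ is saturated of rank $r'$ and $M'=M\cap\CM'$ inside $\CE$ ($\subseteq$ is clear, and $\supseteq$ uses the injectivity of $\wedge^{r'+1}M\hookrightarrow\wedge^{r'+1}\CE$). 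Finally, $\tau_M$ is an isomorphism on $\CE[J^{-1}]$ preserving $\CM'[J^{-1}]$ (as $\tau_{\CM'}$ is an isomorphism there), and since $\sigma^\ast$ is flat (Remark~\ref{RemSigmaIsFlat}) and $M'[J^{-1}]=M[J^{-1}]\cap\CM'[J^{-1}]$, the map $\tau_M$ restricts to an isomorphism $\sigma^\ast M'[J^{-1}]\isoto M'[J^{-1}]$; thus $\ulM':=(M',\tau_M|_{M'})$ is the required saturated $A$-sub-motive of $\ulM$ with $\ulM'\otimes_{A_\BC}\CO(\dotFC_\BC)=\ul\CM'$.

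I expect the main obstacle to be purely organizational: verifying the two GAGA-type inputs, namely the flatness of the analytification map $A_\BC\to\CO(\dotFC_\BC)$ (so that kernels and $\Hom$'s out of finitely presented modules are preserved by $-\otimes_{A_\BC}\CO(\dotFC_\BC)$) and the fact that a saturated analytic sub-bundle of $\CE$ is, locally on $\dotFC_\BC$, Plücker-decomposable. Neither point is difficult, but together they are the crux of the lemma: the whole purpose of passing to $r'$-th exterior powers is that this turns the descent of a rank-$r'$ sub-object into the descent of a line sub-object, which is the case treated directly in the remainder of the proof of Theorem~\ref{ThmHodgeConjecture}\ref{ThmHodgeConjectureC}.
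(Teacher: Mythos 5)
Your proof is correct and follows essentially the same route as the paper: the easy direction via $\ul N':=\wedge^{r'}\ulM'$, and the converse by defining $M'$ as the wedge-annihilator of $N'$ in $M$, identifying $M'\otimes_{A_\BC}\CO(\dotFC_\BC)$ with $\CM'$ via flatness of analytification and local Pl\"ucker-decomposability of the saturated sub-bundle, and then restricting $\tau_M$ using flatness of $\sigma^\ast$ and saturation. Your write-up merely makes explicit the two verifications that the paper compresses into ``cut out by the same linear conditions'' and ``because $\CO(\dotFC_\BC)$ is flat over $A_\BC$.''
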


\begin{proof}
Clearly, the existence of $\ulM'$ implies the existence of $\ulN':=\wedge^{r'}\ulM'$. Conversely, if $\ulN'=(N',\tau_{N'})$ exists, we define $M':=\{m\in M\colon m\wedge n=0\text{ for all }n\in N'\}$. Then the equality $\wedge^{r'}\CM'=N'\otimes_{A_{\BC}}\CO(\dotFC_\BC)$ implies that the submodule $M'\subset M$ is cut out by the same linear conditions as $\CM'\subset M\otimes_{A_{\BC}}\CO(\dotFC_\BC)$. Thus $\CM'=M'\otimes_{A_{\BC}}\CO(\dotFC_\BC)$ because $\CO(\dotFC_\BC)$ is flat over $A_{\BC}$. 

To construct $\tau_{M'}$ note that the isomorphism $\sigma^\ast\colon A_{\BC}\to A_{\BC}$ is flat. Therefore, $\tau_M$ induces a map $\tau_{M'}\colon \sigma^\ast M'\to\{m\in M:m\wedge\tilde n=0\text{ for all }\tilde n\in\tau_{N'}(\sigma^\ast N')\subset N'\}$. The target of this map contains $M'$ and even equals $M'$ because $M'\subset M$ is saturated and $J^d N'\subset\tau_{N'}(\sigma^\ast N')$. Hence, $\ulM':=(M',\tau_{M'})$ is the desired $A$-sub-motive of $\ulM$.
\end{proof}

By the lemma we may set $r':=\rk\ulH'=\dim_QH'$, consider the $r'$-th exterior powers of everything and thus reduce to the case that $\rk\ulH'=1$. Then $\ulH'$ is necessarily pure of some weight $\mu\in\BZ$ and satisfies $\Fq'=(z-\zeta)^{-\mu}\,\Fp'$. Clearly, the $A$-motive $\UOne(\mu)$ of rank $1$ satisfies $\ulH'=\ulHodge^1(\UOne(\mu))$. But we have to prove that $\UOne(\mu)$ is an appropriate sub-motive of $\ulM$.
Since $\rk\ulH'=1$ we have $\ulCF(\ulH')\cong\ulCF_{d,1}$ for 
\[
d\;=\;\deg\ulCF(\ulH')\;=\;\deg\ulCF(\ulH')-\deg\ulCE(\ulH')\;=\;\deg_\Fq\ulH'\;=\;\deg^W\ulH'\;=\;\mu\,.
\]
Since $\ulCF_{\mu,1}=(\dotCO,\tau_{\CF_{\mu,1}}=z^{-\mu})$ contains the tautological $\CO_{\FD_\BC}$-lattice $\CO_{\FD_\BC}$, $\CM'$ extends to a locally free rigid analytic sheaf $\olCM'$ on $\FC_\BC$ with $\tau_{\CM'}\colon \sigma^\ast\olCM'\isoto\olCM'\bigl(\mu\cdot\infty-\mu\cdot\Var(J)\bigr)$, where the notation $\bigl(\mu\cdot\infty-\mu\cdot\Var(J)\bigr)$ means that we allow poles at $\infty$ of order less than or equal to $\mu$ and at $\Var(J)$ of order less than or equal to $-\mu$. Note that a pole with negative order is a zero.
Also since $\ulH'\subset\ulH$ is a strict subobject it already lies in $W_{\mu\,}\ulH=\ulHodge^1(W_{\mu\,}\ulM)$. We replace $\ulM$ by $W_{\mu\,}\ulM$ and thus assume that all weights of $\ulM$ are less than or equal to $\mu$. By Proposition~\ref{PropWeights}\ref{PropWeights_b} there is an extension of $\ulM$ to a locally free sheaf $\olM$ on $C_\BC$ with $\tau_M\colon \sigma^\ast\olM\to\olM\bigl(\mu\cdot\infty+\tilde d\cdot\Var(J)\bigr)$ for some $\tilde d\in\BZ$. 

We want to show that the inclusion $\CM'\into M\otimes_{A_{\BC}}\CO(\dotFC_\BC)$ extends to an inclusion $\olCM'\into\olM\otimes_{\CO_{C_\BC}}\CO_{\FC_\BC}$. Consider the ring $\BC\langle\frac{z}{\zeta}\rangle$ of rigid analytic functions on $\{|z|\le|\zeta|\}\subset\FC_\BC$. It is a principal ideal domain. So the module $\olM\otimes_{\CO_{C_\BC}}\BC\langle\frac{z}{\zeta}\rangle$ has a basis $\{e_1,\ldots,e_n\}$ with respect to which $z^\mu\tau_M\colon \sigma^\ast(\olM\otimes\BC\langle\frac{z}{\zeta}\rangle)=(\sigma^\ast\olM)\otimes\BC\langle\frac{z}{\zeta^q}\rangle\to\olM\otimes\BC\langle\frac{z}{\zeta^q}\rangle$ is given by a matrix $A=\sum_{i=0}^\infty A_iz^i\in \BC\langle\frac{z}{\zeta^q}\rangle^{n\times n}$. After tensoring with the ring $\BC\langle\frac{z}{\zeta},\frac{\zeta^q}{z}\rangle$ of rigid analytic functions on $\{|\zeta|^q\le|z|\le|\zeta|\}$, the inclusion $\ulCF_{\mu,1}\isoto\ulCF(\ulH')\into\ulCF(\ulM),\;1\mapsto f$ induces a map $\BC\langle\frac{z}{\zeta},\frac{\zeta^q}{z}\rangle\to\ulCF(\ulM)\otimes_{\dotCO}\BC\langle\frac{z}{\zeta},\frac{\zeta^q}{z}\rangle=\olM\otimes_{A_\BC}\BC\langle\frac{z}{\zeta},\frac{\zeta^q}{z}\rangle$ with $\tau_M(\sigma^\ast f)=z^{-\mu}f$. Hence, the coordinate vector $x\in\BC\langle\frac{z}{\zeta},\frac{\zeta^q}{z}\rangle^n$ of $f$ with respect to the basis $\{e_1,\ldots,e_n\}$ satisfies $A\sigma^\ast(x)=x$. We write $x=\sum_{i\in\BZ}x_iz^i$ and make the 

\medskip\noindent
{\it Claim 2.} There is an integer $k\in\BZ$ with $x_i=0$ for all $i\le -k$, in particular, $z^kx\in\BC\langle\frac{z}{\zeta}\rangle^n$.

\medskip\noindent
To prove the claim, assume the contrary and let $c\ge1$ with $|A_i\zeta^{qi}|\leq c$ for all $i$. Since $x\in\BC\langle\frac{z}{\zeta},\frac{\zeta}{z}\rangle^n$ we can find a negative integer $m$ with $x_m\neq0$, $|x_m\zeta^m|=:\tilde c\leq c^{-1}\le1$, and $|x_{m-i}\zeta^{m-i}|\leq \tilde c$ for all $i\geq0$. From $x_m=\sum_{i=0}^\infty A_i\cdot \sigma^*(x_{m-i})$ we obtain
\[
|x_m\zeta^m|\es\leq\es|\zeta^{(1-q)m}|\,\max_{i\geq0}\bigl\{\,|A_i\zeta^{qi}|\,|\sigma^*(x_{m-i})\zeta^{qm-qi}|\,\bigr\}\es<\es c\,\tilde c^q\es\leq\es \tilde c\,,
\]
a contradiction. This proves Claim 2.

We now replace $\olM$ by $\olM(\mbox{$k\cdot\infty$})$ and thus the basis $\{e_i\}$ by $\{z^{-k}e_i\}$ and $x$ by $z^kx\in\BC\langle\frac{z}{\zeta}\rangle^n$. This shows that $f\in\olM\otimes\BC\langle\frac{z}{\zeta}\rangle$ and hence, the inclusion $\ul\CM'\into\ulM\otimes_{A_{\BC}}\CO(\dotFC_\BC)$ extends to an inclusion $\bar f\colon \olCM'\into\olM\otimes_{\CO_{C_\BC}}\CO_{\FC_\BC}$. By the rigid analytic GAGA principle (see L\"utkebohmert~\cite[Theorem~2.8]{Lubo}) for the projective curve $C_\BC$ there is an algebraic subsheaf $\olM'\into\olM$ over $C_\BC$ together with an isomorphism $\tau_{M'}\colon \sigma^\ast\olM'\isoto\olM'\bigl(\mu\cdot\infty-\mu\cdot\Var(J)\bigr)$ such that $\olCM'=\olM'\otimes_{\CO_{C_\BC}}\CO_{\FC_\BC}$ and $\tau_{\CM'}=\tau_{M'}\otimes\id$. In particular, $\ulM':=\bigl(\Gamma(\dotC_\BC,\olM'),\tau_{M'}\bigr)\subset\ulM$ is the desired $A$-sub-motive with $\ulHodge^1(\ulM')=\ulH'$. This proves Claim 1 and hence also Theorem~\ref{ThmHodgeConjecture}\ref{ThmHodgeConjectureC}.
\qed

\medskip
\noindent{\bfseries Proof of Theorem~\ref{ThmHodgeConjecture}\ref{ThmHodgeConjectureD}.}
This follows directly from Theorem~\ref{ThmHodgeConjecture}\ref{ThmHodgeConjectureC}.
\qed

\medskip

We want to end this section by discussing which $Q$-Hodge-Pink structures come from uniformizable mixed $A$-motives. We give a criterion in terms of $\sigma$-bundles and the polygons from Remarks~\ref{RemPolygons} and \ref{RemSigmaPolygon}.

\begin{theorem}\label{ThmImageOfH}
Let $\ulH$ be a $Q$-Hodge-Pink structure. Then $\ulH=\ulHodge^1(\ulM)$ for a uniformizable mixed $A$-motive $\ulM$ if and only if for every $\mu=\frac{k}{l}$ with $(k,l)=1$ the $\sigma$-bundle is $\ulCF(\Gr_\mu^W\ulH)\cong\ulCF_{k,l}^{\oplus(\rk\Gr_\mu^W\ulH)/l}$, that is, if and only if the $\sigma$-bundle polygon of $\Gr_\mu^W\ulH$ and the weight polygon of $\Gr_\mu^W\ulH$ are equal, $SP(\Gr_\mu^W\ulH)=WP(\Gr_\mu^W\ulH)$. Since $WP(\Gr_\mu^W\ulH)$ has one single slope $\mu$, the latter holds if and only if $SP(\Gr_\mu^W\ulH)$ lies above $WP(\Gr_\mu^W\ulH)$ and both have the same endpoints, $SP(\Gr_\mu^W\ulH)\ge WP(\Gr_\mu^W\ulH)$. In this case the $\sigma$-bundle polygon of $W_{\mu\,}\ulH$ lies above the weight polygon for every $\mu$ and both polygons have the same endpoint, i.e.\ $SP(W_{\mu\,}\ulH)\ge WP(W_{\mu\,}\ulH)$.
\end{theorem}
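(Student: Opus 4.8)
\textbf{Proof strategy for Theorem~\ref{ThmImageOfH}.}
The plan is to translate the question of realizability by an $A$-motive entirely into the language of $\sigma$-bundles, using the constructions and results already assembled in Section~\ref{Sect4}. Recall that to any $Q$-Hodge-Pink structure $\ulH=(H,W_\bullet H,\Fq)$ we have associated in Definition~\ref{DefAnalyticAMotive} a pair of $\sigma$-bundles $(\ulCF(\ulH),\ulCE(\ulH))$, with $\ulCE(\ulH)=H\otimes_Q\ulCF_{0,1}$ and $\ulCF(\ulH)$ obtained by modifying $\ulCE(\ulH)$ at the orbit $\bigcup_{i\in\BZ}\{z=\zeta^{q^i}\}$ according to the inclusion $\Fp\subset\Fq[\tfrac{1}{z-\zeta}]$; and if $\ulH=\ulHodge^1(\ulM)$ then this pair coincides with the pair associated with $\ulM$ in Definition~\ref{DefPairOfSigmaBundles}, with $\ulCE(\ulM)^\tau=H_\infty$. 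The first step is the ``only if'' direction: if $\ulH=\ulHodge^1(\ulM)$, then $\Gr_\mu^W\ulH=\ulHodge^1(\Gr_\mu^W\ulM)$ by exactness of $\ulHodge^1$ (Theorem~\ref{ThmHodgeConjecture}\ref{ThmHodgeConjectureB}), and $\Gr_\mu^W\ulM$ is pure of weight $\mu$ by Proposition~\ref{PropPure}, so Proposition~\ref{Prop4.4}\ref{Prop4.4_c} gives exactly $\ulCF(\Gr_\mu^W\ulH)\cong\ulCF_{k,l}^{\oplus(\rk\Gr_\mu^W\ulH)/l}$. The equivalences with the polygon statements are then formal: $WP(\Gr_\mu^W\ulH)$ has a single slope $\mu=k/l$ and total height $\mu\cdot\rk\Gr_\mu^W\ulH$, while by Proposition~\ref{Prop4.4}\ref{Prop4.4_d} (applied to $\Gr_\mu^W$, which has $\deg_\Fq=\deg^W$) the $\sigma$-bundle polygon $SP(\Gr_\mu^W\ulH)$ has the same endpoint; hence $SP(\Gr_\mu^W\ulH)\ge WP(\Gr_\mu^W\ulH)$ forces all slopes of $\ulCF(\Gr_\mu^W\ulH)$ to equal $\mu$, i.e.\ $\ulCF(\Gr_\mu^W\ulH)\cong\ulCF_{k,l}^{\oplus(\rk)/l}$, and conversely.

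The substantial direction is the ``if'': assuming $\ulCF(\Gr_\mu^W\ulH)\cong\ulCF_{k,l}^{\oplus(\rk\Gr_\mu^W\ulH)/l}$ for all $\mu=k/l$, produce a mixed uniformizable $A$-motive $\ulM$ with $\ulHodge^1(\ulM)=\ulH$. I would argue by induction on the number of weights of $\ulH$. In the pure case $\ulH=\Gr_\mu^W\ulH$ with $\ulCF(\ulH)\cong\ulCF_{k,l}^{\oplus r/l}$, the analytic $A$-motive $\ul\CM(\ulH)=(\CM',\tau_{\CM'})$ of Definition~\ref{DefAnalyticAMotive} extends over $\FC_\BC$ (since $\ulCF_{k,l}$ contains the tautological $\CO_{\FD_\BC}$-lattice and $\ulCE(\ulH)$ is trivial away from $\FD_\BC$), giving a locally free sheaf on $\FC_\BC$ with an isomorphism $\tau$ having the expected pole/zero behaviour; rigid analytic GAGA for the projective curve $C_\BC$ (L\"utkebohmert~\cite[Theorem~2.8]{Lubo}) then algebraizes it to an effective-up-to-twist $A$-motive $\ulM$, and one checks $\ulHodge^1(\ulM)=\ulH$ exactly as in the uniformizability discussion around Proposition~\ref{PropLambdaConvRadius} and in the proof of Theorem~\ref{ThmHodgeConjecture}\ref{ThmHodgeConjectureC}; purity of $\ulM$ follows from $\ulCF(\ulM)\cong\ulCF_{k,l}^{\oplus r/l}$ via Proposition~\ref{PropWeights}\ref{PropWeights_c}. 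For the inductive step, let $\mu_0$ be the smallest weight of $\ulH$; $W_{\mu_0}\ulH$ is pure and is realized by a pure $A$-motive $\ulM_0$, and $\ulH/W_{\mu_0}\ulH$ is realized by a mixed $\ulM_1$ by induction. The extension class of $0\to W_{\mu_0}\ulH\to\ulH\to\ulH/W_{\mu_0}\ulH\to0$ lives in $\Ext^1_{\QHodgeCat}(\ulH/W_{\mu_0}\ulH,W_{\mu_0}\ulH)$, and I would use the full faithfulness and exactness of $\ulHodge^1$ on $\AMUMotCatIsog$ (Theorem~\ref{ThmHodgeConjecture}\ref{ThmHodgeConjectureB},\ref{ThmHodgeConjectureC}) together with the identification $\Ext^1$ inside a Tannakian category with a suitable $\Hom$/cohomology group to lift this class to an extension of $A$-motives $0\to\ulM_0\to\ulM\to\ulM_1\to0$; the resulting $\ulM$ is mixed (Proposition~\ref{PropPure}\ref{PropPure_g} plus the fact that its weight filtration is forced, since $\ulM_0$ is pure of the minimal weight and the quotient is mixed) and uniformizable (Lemma~\ref{Lemma2.3}), and $\ulHodge^1(\ulM)\cong\ulH$ because both sides are the corresponding extension.

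Finally, the concluding assertion about $W_{\mu\,}\ulH$ is a consequence: once $\ulH=\ulHodge^1(\ulM)$ with $\ulM$ mixed uniformizable, $W_{\mu\,}\ulH=\ulHodge^1(W_{\mu\,}\ulM)$, and Proposition~\ref{Prop4.4}\ref{Prop4.4_d} gives $\deg_\Fq W_\mu\ulHodge^1(\ulM)=\deg^W W_\mu\ulHodge^1(\ulM)$, i.e.\ the $\sigma$-bundle polygon $SP(W_{\mu\,}\ulH)$ and the weight polygon $WP(W_{\mu\,}\ulH)$ have the same endpoint; and the exact sequence \eqref{EqExactWeightSeq} together with Theorem~\ref{ThmHartlPink1}\ref{ThmHartlPink1b},\ref{ThmHartlPink1c} shows that each graded piece of $\ulCF(W_{\mu\,}\ulM)$ injects into $\ulCF(\Gr^W_{\mu'}\ulM)\cong\ulCF_{k',l'}^{\oplus}$, forcing all its slopes to be $\le\mu'$, so $SP(W_{\mu\,}\ulH)\ge WP(W_{\mu\,}\ulH)$. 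I expect the main obstacle to be the extension-lifting step: one must be careful that the Tannakian $\Ext^1$ computed in $\QHodgeCat$ and in $\AMUMotCatIsog$ are genuinely identified by $\ulHodge^1$ (this uses that $\ulHodge^1$ is an exact fully faithful embedding whose essential image is closed under subquotients, but $\Ext$-compatibility is an extra point), and that the lift is again mixed with the expected weight filtration rather than an extension ``in the wrong direction'' as in Example~\ref{ExNotMixed}; here the minimality of $\mu_0$ and the hypothesis on the graded pieces are exactly what rules that out.
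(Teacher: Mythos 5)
Your argument is sound up to the inductive step, and the ``only if'' direction, the polygon equivalences, and the final assertion about $W_{\mu\,}\ulH$ are essentially as in the paper (the paper cites \cite[Proposition~1.5.18]{HartlPSp} for the slope behaviour in exact sequences of $\sigma$-bundles where you invoke Theorem~\ref{ThmHartlPink1}\ref{ThmHartlPink1b},\ref{ThmHartlPink1c}, but the content is comparable). The gap is precisely where you flag a worry and then talk your way past it: the extension-lifting step.

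You propose to realize the extension $0\to W_{\mu_0}\ulH\to\ulH\to\ulH/W_{\mu_0}\ulH\to0$ by lifting its class from $\Ext^1_{\QHodgeCat}$ to $\Ext^1_{\AMUMotCatIsog}$ using full faithfulness, exactness, and closure of the essential image under subquotients (Theorem~\ref{ThmHodgeConjecture}\ref{ThmHodgeConjectureB},\ref{ThmHodgeConjectureC}). Those properties give you \emph{injectivity} of $\ulHodge^1$ on $\Ext^1$, not surjectivity. Closure of a full exact subcategory under subquotients does not make it closed under extensions, and here that distinction is the whole point: the theorem you are proving is a nontrivial characterization of the essential image of $\ulHodge^1$, so $\Ext$-surjectivity cannot be free. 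Your appeal to the minimality of $\mu_0$ only addresses whether a lift (if it existed) would be \emph{mixed} in the sense of Example~\ref{ExNotMixed}; it says nothing about whether the extension of Hodge-Pink structures is in the image at all.

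The paper resolves this by working not in $\QHodgeCat$ but in the category of \emph{analytic} $A$-motives, where the extension is produced canonically from $\ulH$ via the construction $\ul\CM(\,\cdot\,)$ of Definition~\ref{DefAnalyticAMotive}, and then invoking Proposition~\ref{PropExtensions}: any extension of algebraizable analytic $A$-motives is itself algebraizable. That proposition is the real technical content. Its proof translates the analytic extension class into an element of the $\sigma$-bundle cohomology group $\Koh^1(\ulCH)$ and then shows, by an explicit contracting-iteration/norm-estimate argument on the ring of functions on the punctured disc, that this class can be represented by a cocycle converging on all of $\dotFC_\BC$; GAGA then algebraizes the resulting sheaf and its $\tau$. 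None of this follows formally from Tannakian generalities, so your inductive step as written does not close.
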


We remark that the condition $SP(W_{\mu\,}\ulH)\ge WP(W_{\mu\,}\ulH)$ on the polygons of $W_{\mu\,}\ulH$ in general does not imply the condition on the polygons of $\Gr^W_{\mu\,}\ulH$ and the existence of $\ulM$.

\begin{proof}[Proof of Theorem~\ref{ThmImageOfH}]
To prove the first direction let $\ulH=\ulHodge^1(\ulM)$. Then Proposition~\ref{Prop4.4}\ref{Prop4.4_c} yields $\ulCF(\Gr_\mu^W\ulH)\cong\ulCF_{k,l}^{\oplus(\rk\Gr_\mu^W\ulH)/l}$. In particular, $SP(\Gr_\mu^W\ulH)=WP(\Gr_\mu^W\ulH)$ is the polygon with one single slope $\mu$. Consider the exact sequence \eqref{EqExactWeightSeq}. Using the convention that the sum of two polygons is defined to be the polygon whose slope multiset is the union of the slope multisets of its summands, we compute by induction on $\mu$
\begin{eqnarray*}
WP(W_{\mu\,}\ulH)&=&WP(\bigcup_{\mu'<\mu}W_{\mu'}\ulH)\;+\;WP(\Gr_\mu^W\ulH)\\[2mm]
&\le&SP(\bigcup_{\mu'<\mu}W_{\mu'}\ulH)\;+\;SP(\Gr_\mu^W\ulH)\\[2mm]
&\le&SP(W_{\mu\,}\ulH)\,.
\end{eqnarray*}
Here the first equality follows from the definition of the weight polygon, the first inequality is the induction hypothesis, and the final inequality follows from \cite[Proposition~1.5.18]{HartlPSp}.

\medskip

Conversely, let $\ulCF(\Gr_\mu^W\ulH)\cong\ulCF_{k,l}^{\oplus(\rk\Gr_\mu^W\ulH)/l}$ for $\mu=\frac{k}{l}$ with $(k,l)=1$. Let $d\in\BN_0$ be such that $(z-\zeta)^d\Fp\subset\Fq$. Recall the construction of the associated analytic $A$-motive $\ul\CM(\Gr_\mu^W\ulH)=:(\CM,\tau_\CM)$ before Definition~\ref{DefAnalyticAMotive}. It satisfies 
\[
\ul\CM(\Gr_\mu^W\ulH)\otimes_{\CO(\dotFC_\BC)}\CO_{\{0<|z|\le|\zeta|\}}\;=\;\ulCF(\Gr_\mu^W\ulH)\otimes_\dotCO\CO_{\{0<|z|\le|\zeta|\}}\;\cong\;\ulCF_{k,l}^{\oplus(\rk\Gr_\mu^W\ulH)/l}\otimes_\dotCO\CO_{\{0<|z|\le|\zeta|\}}\,.
\]
Inside the right hand side the tautological $\BC\langle\frac{z}{\zeta}\rangle$-lattice $\BC\langle\frac{z}{\zeta}\rangle^{\oplus(\rk\Gr_\mu^W\ulH)}$ defines an extension of $\ul\CM(\Gr_\mu^W\ulH)$ to a locally free rigid analytic sheaf $\olCM$ on $\FC_\BC$ with $\tau_{\CM}\colon \sigma^\ast\olCM\to\olCM\bigl(k\cdot\infty-d\cdot\Var(J)\bigr)$ such that $z^k\tau_\CM^l$ is an isomorphism locally at $\infty$. By the rigid analytic GAGA principle (see L\"utkebohmert~\cite[Theorem~2.8]{Lubo}) on the projective curve $C_\BC$ there is a locally free algebraic sheaf $\olM$ together with a homomorphism $\tau_{M}\colon \sigma^\ast\olM\to\olM\bigl(k\cdot\infty-d\cdot\Var(J)\bigr)$ such that $\olCM=\olM\otimes_{\CO_{C_\BC}}\CO_{\FC_\BC}$ and $\tau_{\CM}=\tau_{M}\otimes\id$. This implies that $z^k\tau_M^l$ is an isomorphism locally at $\infty$. In particular, $\ulM(\Gr_\mu^W\ulH):=\bigl(\Gamma(\dotC_\BC,\olM),\tau_M\bigr)$ is a pure $A$-motive of weight $\mu$ with $\ul\CM(\Gr_\mu^W\ulH)=\ulM(\Gr_\mu^W\ulH)\otimes_{A_{\BC}}\CO(\dotFC_\BC)$.

We now consider the exact sequences
\[
0\es\longto\es \DS\bigcup_{\mu'<\mu} \ul\CM(W_{\mu'}\ulH)\es\longto\es \ul\CM(W_{\mu\,}\ulH)\es\longto\es \ul\CM(\Gr^W_{\mu\,}\ulH)\es\longto\es 0\,.
\]
By induction on $\mu$ and application of Proposition~\ref{PropExtensions} below we obtain a mixed (algebraic) $A$-motive $\ulM$ with $W_{\mu\,}\ulM\otimes_{A_{\BC}}\CO(\dotFC_\BC)\cong\ul\CM(W_{\mu\,}\ulH)$ for all $\mu$. Since $\ulHodge^1(\ulM)$ is computed from $\ulM\otimes_{A_{\BC}}\CO(\dotFC_\BC)$ we find $\ulHodge^1(\ulM)\cong\ulH$ and the theorem is proved.
\end{proof}

\begin{proposition}\label{PropExtensions}
Let $\ulM',\ulM''$ be $A$-motives and let $0\to\ulM'\otimes_{A_{\BC}}\CO(\dotFC_\BC)\to\ul\CM\to\ulM''\otimes_{A_{\BC}}\CO(\dotFC_\BC)\to0$ be an exact sequence of analytic $A$-motives. Then there is an exact sequence of (algebraic) $A$-motives $0\to\ulM'\to\ulM\to\ulM''\to0$ and an isomorphism of extensions of analytic $A$-motives
\[
\xymatrix {
0\ar[r] & \ulM'\otimes_{A_{\BC}}\CO(\dotFC_\BC)\ar[r]\ar@{=}[d] & \ulM\otimes_{A_{\BC}}\CO(\dotFC_\BC)\ar[r]\ar[d]_{\TS\cong} & \ulM''\otimes_{A_{\BC}}\CO(\dotFC_\BC)\ar[r]\ar@{=}[d] & 0\\
0\ar[r] & \ulM'\otimes_{A_{\BC}}\CO(\dotFC_\BC)\ar[r] & \ul\CM\ar[r] & \ulM''\otimes_{A_{\BC}}\CO(\dotFC_\BC)\ar[r] & 0\,.
}
\]
\end{proposition}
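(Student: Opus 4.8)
The plan is to reduce the statement to an \emph{algebraicity} claim about the extension class and then invoke the $\Ext$-comparison between algebraic and analytic $A$-motives, which in turn reduces via a $\CHom$-twist to the rigid analytic GAGA principle already used in the proof of Theorem~\ref{ThmHodgeConjecture}\ref{ThmHodgeConjectureC}. Concretely, write $\ulN:=\CHom(\ulM'',\ulM')=\ulM'\otimes\ulM''{}\dual$, an $A$-motive. The given short exact sequence of analytic $A$-motives defines a class in the group $\Ext^1_{\rm an}(\ulM''\otimes_{A_\BC}\CO(\dotFC_\BC),\,\ulM'\otimes_{A_\BC}\CO(\dotFC_\BC))$, and by the standard dévissage (tensoring the sequence with $\ulM''{}\dual$ and splicing) such extensions are classified by $\Koh^1$ of the two-term complex $\bigl[\ulN\otimes_{A_\BC}\CO(\dotFC_\BC)\xrightarrow{\,\id-\tau\,}\ulN\otimes_{A_\BC}\CO(\dotFC_\BC)\bigr]$; the analogous statement holds for algebraic $A$-motives with $\CO(\dotFC_\BC)$ replaced by $A_\BC$. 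So the proposition follows once I know that the natural map
\[
\Koh^1\bigl([\,N\xrightarrow{\id-\tau}N\,]\bigr)\;\longto\;\Koh^1\bigl([\,N\otimes_{A_\BC}\CO(\dotFC_\BC)\xrightarrow{\id-\tau}N\otimes_{A_\BC}\CO(\dotFC_\BC)\,]\bigr)
\]
is surjective (injectivity is not needed: any algebraic extension mapping to the analytic one, possibly after modifying by a coboundary, gives the desired $\ulM$).

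First I would set up the two-term-complex description of extensions carefully: for $A$-motives $\ulX,\ulY$ with $\ulY$ having underlying module $Y$, an extension of $\ulX$ by $\ulY$ is the same as a pair consisting of the split extension of $A_\BC$-modules $Y\oplus X$ together with a $\tau$ of the form $\left(\begin{smallmatrix}\tau_Y & c\\ 0 & \tau_X\end{smallmatrix}\right)$, and two such $c,c'$ give isomorphic extensions iff $c-c'=\tau_Y\circ\sigma^*(b)\circ\tau_X^{-1}-b$ for some $b\in\Hom_{A_\BC}(X,Y)[J^{-1}]=N[J^{-1}]$; this is exactly $\Koh^1$ of $[\,N\xrightarrow{\,\id-\tau_N\,}N\,]$ after identifying $\Hom(X,Y)$ with $N=\CHom(\ulM'',\ulM')$. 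The same computation verbatim over $\CO(\dotFC_\BC)$ gives the analytic $\Ext$. This reduces everything to the surjectivity of the comparison map above.

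Next I would prove that surjectivity. Here is where I would use the classification of $\sigma$-bundles together with GAGA, exactly as in the proof of Theorem~\ref{ThmHodgeConjecture}\ref{ThmHodgeConjectureC}. Given an analytic $1$-cocycle, i.e.\ an element $\tilde c\in N\otimes_{A_\BC}\CO(\dotFC_\BC)[J^{-1}]$ (modulo $(\id-\tau)$), the recipe of Definition~\ref{DefAnalyticAMotive} produces an analytic $A$-motive $\ul\CM$ sitting in $0\to\ulM'\otimes\CO(\dotFC_\BC)\to\ul\CM\to\ulM''\otimes\CO(\dotFC_\BC)\to0$; I need to show $\ul\CM$ descends to an algebraic $A$-motive. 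One extends $\ul\CM$ to a locally free rigid analytic sheaf $\olCM$ on $\FC_\BC$ by glueing an extension of $\olM'$ by $\olM''$ on the affinoid part $\FC_\BC\setminus\FD_\BC$ (which exists, e.g.\ the split one, since $\FC_\BC\setminus\FD_\BC$ is affinoid) with the tautological lattice of $\ul\CM$ near $\infty_{\SSC\BC}$, exactly as in the proof of Theorem~\ref{ThmHodgeConjecture}\ref{ThmHodgeConjectureC}; the cocycle controls $\tau$ on $\FD_\BC$ and the Newton-polygon argument (Claim~2 there) bounds the pole of the cocycle at $\infty_{\SSC\BC}$ so that $\olCM$ is genuinely coherent and $\tau_\CM\colon\sigma^*\olCM\to\olCM(n\cdot\infty_{\SSC\BC}+d\cdot\Var(J))$. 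Then rigid analytic GAGA on the projective curve $C_\BC$ (L\"utkebohmert~\cite[Theorem~2.8]{Lubo}) algebraizes $\olCM$ together with $\tau_\CM$, and $\ulM:=(\Gamma(\dotC_\BC,\olM_{\rm alg}),\tau_M)$ is the sought-after algebraic $A$-motive; restricting to $\dotC_\BC$ and analytifying recovers $\ul\CM$ together with the two horizontal maps. This yields the commutative diagram in the statement.

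I expect the main obstacle to be the bookkeeping in the extension step: one must check that the glueing of $\olM''$ and $\ul\CM$ on the overlap annulus $\{|\zeta|<|z|<|\zeta|^{q^{-1}}\}$ is compatible with $\tau$ and produces a \emph{saturated} subsheaf $\olM'\subset\olM$, so that the algebraized $\ulM$ really sits in a short exact sequence of $A$-motives (not merely a complex), and that the analytification of the resulting algebraic sequence agrees with the original one on the nose rather than up to a self-isomorphism — this is the content of the isomorphism of extensions in the diagram, and it follows because both the algebraic sequence and $\ul\CM$ realize the same cohomology class $\tilde c$, but one has to say this cleanly. The Newton-polygon pole-bound (the analogue of Claim~2 in the proof of Theorem~\ref{ThmHodgeConjecture}\ref{ThmHodgeConjectureC}) is routine once one observes that $\tau_N=\tau_{M'}\otimes(\tau_{M''}\dual)^{-1}$ has bounded denominators, so I would not dwell on it; the conceptual point is simply that extensions of (dual) $A$-motives are computed by a two-term $\tau$-complex and the algebraic-to-analytic comparison of such $\Koh^1$'s is surjective by GAGA.
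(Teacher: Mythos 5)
Your reduction of the proposition to the surjectivity of the comparison map on the two-term $\tau$-complexes is correct and is essentially the same starting point as the paper's proof: the paper identifies extensions of $\ulM''\otimes\CR$ by $\ulM'\otimes\CR$ (over the punctured disc $\CR=\Gamma(\{0<|z|\le|\zeta|\},\CO)$) with $\Koh^1(\ulCH)=\coker\bigl(h\mapsto h-\tau_{M'}\circ\sigma^*(h)\circ\tau_{M''}^{-1}\bigr)$ and then algebraizes by GAGA. The gap is in the step you declare routine. The Newton-polygon argument of Claim~2 in the proof of Theorem~\ref{ThmHodgeConjecture}\ref{ThmHodgeConjectureC} applies to a \emph{$\tau$-invariant} element, i.e.\ a solution of the eigenvector equation $A\,\sigma^*(x)=x$, and it is that equation which forces the coefficients $x_\nu$ to vanish for $\nu\ll0$. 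Your cocycle $f\in N\otimes_{A_\BC}\CO(\dotFC_\BC)[J^{-1}]$ satisfies no such equation: as a section over the punctured disc it may have an essential singularity at $\infty_{\SSC\BC}$ (infinitely many negative powers of $z$), and nothing about the bounded denominators of $\tau_N$ at $\Var(J)$ controls this. Without taming that singularity there is no tautological lattice of $\ul\CM$ "near $\infty$" to glue with, the extension $\olCM$ is not coherent on $\FC_\BC$, and GAGA does not apply.

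What is actually needed — and what occupies the second half of the paper's proof — is to show that the class of $f$ in $\Koh^1(\ulCH)$ has a representative $\tilde f$ with only a finite-order pole at $\infty_{\SSC\BC}$. The paper does this by an explicit successive-approximation construction of the correcting coboundary: with $\alpha$ the truncation to powers $z^\nu$, $\nu\le m$, one sets $g_0:=\alpha(f)$, $g_k:=\alpha\bigl(T\sigma^*(g_{k-1})\bigr)$ and proves the estimate $\|g_k\|_q\le C^{1+k/2}$ with $C<1$, so that $g=\sum_k g_k$ converges in $\CR$, $h:=f+T\sigma^*(g)-g$ lies in $\BC\langle\frac{z}{\zeta^q}\rangle[z^{-1}]^{\oplus n}$, and $\tilde f:=\sigma^{-1*}(T^{-1}h)$ is cohomologous to $f$ with bounded pole. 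This analytic lemma is the heart of the proposition and cannot be replaced by a citation of Claim~2; you would need to supply it (or an equivalent vanishing/representability statement for $\Koh^1$ of $\sigma$-bundles by bounded cocycles) to close the argument. Once that is in place, the remaining glueing, saturation and GAGA bookkeeping you describe goes through as you indicate.
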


\begin{proof}
1. Let $\CR:=\Gamma\bigl(\{0<|z|\le|\zeta|\},\CO_{\FC_\BC}\bigr)$ and $\CR^\sigma:=\sigma^\ast(\CR)=\Gamma\bigl(\{0<|z|\le|\zeta|^q\},\CO_{\FC_\BC}\bigr)$ be the rings of rigid analytic functions on the punctured discs $\{0<|z|\le|\zeta|\}$, respectively $\{0<|z|\le|\zeta|^q\}$. The exact sequence of projective $\CR$-modules $0\to M'\otimes_{A_{\BC}}\CR\to \CM\otimes_{\CO(\dotFC_\BC)}\CR\to M''\otimes_{A_{\BC}}\CR\to0$ splits and yields an isomorphism $\CM\otimes_{\CO(\dotFC_\BC)}\CR\cong (M'\oplus M'')\otimes_{A_{\BC}}\CR$ under which $\tau_\CM$ takes the form $\left(\begin{array}{cc} \tau_{M'} & f\circ\tau_{M''}\\ 0 & \tau_{M''} \end{array}\right)$ for a homomorphism $f\in\Hom_{A_{\BC}}(M'',M')\otimes_{A_{\BC}}\CR^\sigma$ which is in general not compatible with the $\tau$'s. Note that $f$ exists because $\tau_{M''}\otimes\id_{\CR^\sigma}$ is an isomorphism. A change of the splitting corresponds to an automorphism $\left(\begin{array}{cc} \id_{M'} & h\\ 0 & \id_{M''} \end{array}\right)$ of $(M'\oplus M'')\otimes_{A_{\BC}}\CR$. This replaces $\left(\begin{array}{cc} \tau_{M'} & f\circ\tau_{M''}\\ 0 & \tau_{M''} \end{array}\right)$ by 
\[
\left(\begin{array}{cc} \id_{M'} & h\\ 0 & \id_{M''} \end{array}\right) \cdot
\left(\begin{array}{cc} \tau_{M'} & f\circ\tau_{M''}\\ 0 & \tau_{M''} \end{array}\right) \cdot
\sigma^\ast\left(\begin{array}{cc} \id_{M'} & h\\ 0 & \id_{M''} \end{array}\right)^{-1}\es=\es\left(\begin{array}{cc} \tau_{M'} & \tilde f\circ\tau_{M''}\\ 0 & \tau_{M''} \end{array}\right)
\]
for $\tilde f=f+h-\tau_{M'}\circ\sigma^\ast(h)\circ\tau_{M''}^{-1}$.

By \cite[Proposition~1.4.1(b)]{HartlPSp} the functor $\ulCF=(\CF,\tau_\CF)\mapsto(\CF\otimes_\dotCO\CR,\tau_\CF\otimes\id_{\CR^\sigma})$ is an equivalence of categories between $\sigma$-bundles over $\dotCO$ and \emph{$\sigma$-bundles over $\CR$}. We now consider the $\sigma$-bundle $\ulCH:=(\CH,\tau_\CH\colon \sigma^\ast\CH\isoto\CH\otimes_\CR\CR^\sigma)$ over $\CR$ with $\CH:=\Hom_{A_{\BC}}(M'',M')\otimes_{A_{\BC}}\CR$ and $\tau_\CH\colon \sigma^\ast(h)\mapsto\tau_{M'}\circ\sigma^\ast(h)\circ\tau_{M''}^{-1}$. Then we just proved that the isomorphism classes of extensions of $\ulM''\otimes_{A_{\BC}}\CR$ by $\ulM'\otimes_{A_{\BC}}\CR$ are in bijection with
\[
\Koh^1(\ulCH)\;:=\;\coker\bigl(1-\tau_\CH\circ\sigma^\ast:\es\CH\to\CH\otimes_\CR\CR^\sigma,\,h\mapsto h-\tau_\CH(\sigma^\ast h)\bigr)\,;
\]
compare~\cite[Proposition~1.3.4]{HartlPSp} or \cite[Proposition~2.4]{HartlPink1}.

\medskip\noindent
2. To change the analytic extension $\ul\CM$ into an algebraic extension we now proceed as follows. We choose locally free sheaves $\olM'$ and $\olM''$ on $C_\BC$ which extend $M'$ and $M''$. Then $\tau_{M'}$ and $\tau_{M''}$ have poles of finite order on $\olM'$, respectively $\olM''$. Since $\BC\langle\frac{z}{\zeta}\rangle$ is a principal ideal domain we can choose a basis of $\olH:=\Hom_{\BC\langle\frac{z}{\zeta}\rangle}\bigl(\olM''\otimes\BC\langle\frac{z}{\zeta}\rangle,\olM'\otimes\BC\langle\frac{z}{\zeta}\rangle\bigr)$. With respect to this basis the element $f\in\Hom_{A_{\BC}}(M'',M')\otimes_{A_{\BC}}\CR^\sigma=\olH\otimes_{\BC\langle\frac{z}{\zeta}\rangle}\CR^\sigma$ associated with $\ul\CM$ in step 1 can be viewed as an element $f=\sum_{\nu\in\BZ}f_\nu z^\nu\in(\CR^\sigma)^{\oplus n}\subset \BC\langle\frac{z}{\zeta^q},\frac{\zeta^q}{z}\rangle^{\oplus n}$. Also $\tau_\CH$ is given by a matrix $T=(t_{ij})\in\GL_n\bigl(\BC\con[q]\bigr)$. Let $c>1$ be a constant with $\|T\|_q:=\max\{\|t_{ij}\|_q:1\le i,j\le n\}\le c$ where for $x=\sum_{\nu \in\BZ}x_\nu z^\nu\in\BC\langle\frac{z}{\zeta^q},\frac{\zeta^q}{z}\rangle$
\[
\|x\|_q\;:=\;\sup\{\,|x_\nu|\,|\zeta|^{\nu q}:\;\nu\in\BZ\,\}
\]
denotes the supremum norm on the annulus $\{|z|=|\zeta|^q\}$. By the convergence condition on $f$ there is an integer $m\le0$ with $\bigl\|\sum_{\nu \le m}f_\nu z^\nu \bigr\|_q\le C:=c^{\frac{2}{3-2q}}<1$. Consider the linear function $\alpha \colon \BC\langle\frac{z}{\zeta^q},\frac{\zeta^q}{z}\rangle^{\oplus n}\to \BC\langle\frac{\zeta^q}{z}\rangle^{\oplus n},\,x=\sum_{\nu\in\BZ}x_\nu z^\nu\mapsto\sum_{\nu\le m}x_\nu z^\nu$ which satisfies $\|\alpha (x)\|_q\le\|x\|_q$. Also note that any element $x=\sum_{\nu\le m}x_\nu z^\nu\in\BC\langle\frac{\zeta^q}{z}\rangle^{\oplus n}$ satisfies
\[
\|\sigma^\ast(x)\|_q \; =\; \sup\{\,|x_\nu^q|\,|\zeta|^{\nu q}:\;\nu\le m\,\}\;\leq\; \sup\{\,\bigl(|x_\nu|\,|\zeta|^{\nu q}\bigr)^q:\;\nu\le m\,\} \;=\; \|x\|_q^q\,.
\]
Recursively we define $g_0:=\alpha (f)$ and $g_k:=\alpha\bigl(T\sigma^\ast(g_{k-1})\bigr)\in \BC\langle\frac{\zeta^q}{z}\rangle^{\oplus n}$ for all $k\in\BN$. Then we show by induction that $\|g_k\|_q\le C^{1+\frac{k}{2}}$. Indeed $\|g_0\|_q\le C$ and we estimate
\[
\|g_k\|_q\;\le\;\|T\sigma^\ast(g_{k-1})\|_q\;\le\;\|T\|_q\,\|g_{k-1}\|_q^q\;\le\;c\cdot C^{(1+\frac{k-1}{2})q}\;\le\;C^{\frac{3}{2}-q+q+\frac{k-1}{2}}\;=\;C^{1+\frac{k}{2}}
\]
as claimed. This implies that $g:=\sum_{k=0}^\infty g_k$ converges in $\BC\langle\frac{\zeta^q}{z}\rangle^{\oplus n}\subset\BC\langle\frac{\zeta}{z}\rangle^{\oplus n}$. By construction $\alpha (g)=g$. We compute $\alpha (f+T\sigma^\ast(g)-g)=\alpha (f)+\sum_{k=0}^\infty \alpha (T\sigma^\ast(g_k))-\sum_{k=0}^\infty g_k=0$. Hence, $h:=f+T\sigma^\ast(g)-g\in \BC\con[q]^{\oplus n}$. From the formula $g=f+T\sigma^\ast(g)-h$ one inductively sees that $g\in \BC\langle\frac{\zeta^{q^j}}{z}\rangle^{\oplus n}$ for all $j\in\BN_0$, whence $g\in \CR^{\oplus n}$. 
Now consider the element
$
\tilde f:=\sigma^{-1\ast}(T^{-1}h)\in \BC\con^{\oplus n},
$
which satisfies $f-\tilde f=T\sigma^\ast(\tilde f-g)-(\tilde f-g)$.
This shows that the class of $f$ in $\Koh^1(\ulCH)$ is the same as the class of $\tilde f$ and we may identify $\CM\otimes_{\CO(\dotFC_\BC)}\CR= (M'\oplus M'')\otimes_{A_{\BC}}\CR$ such that $\tau_\CM=\left(\begin{array}{cc} \tau_{M'} & \tilde f\circ\tau_{M''}\\ 0 & \tau_{M''} \end{array}\right)$. Thus $\CM$ extends to a locally free rigid analytic sheaf $\olCM$ on $\FC_\BC$ with $\olCM\otimes\BC\langle\frac{z}{\zeta}\rangle=(\olM'\oplus \olM'')\otimes_{\CO_{C_\BC}}\BC\langle\frac{z}{\zeta}\rangle$. Since $\tau_{M'},\tau_{M''}$ and $\tilde f$ all have poles of finite order at $\infty$, also $\tau_\CM$ extends to $\tau_\CM\colon \sigma^\ast\olCM\to\olCM\bigl(l\cdot\infty-d\cdot\Var(J)\bigr)$ for some integers $l$ and $d$ with $\tau_\CM(\sigma^*\CM)\subset J^d\CM$. Again the rigid analytic GAGA principle \cite[Theorem~2.8]{Lubo} produces a locally free algebraic sheaf $\olM$ together with a homomorphism $\tau_{M}\colon \sigma^\ast\olM\to\olM\bigl(l\cdot\infty-d\cdot\Var(J)\bigr)$ such that $\olCM=\olM\otimes_{\CO_{C_\BC}}\CO_{\FC_\BC}$ and $\tau_{\CM}=\tau_{M}\otimes\id$. By construction $\ulM:=\bigl(\Gamma(\dotC_\BC,\olM),\tau_M\bigr)$ is the extension of $\ulM''$ by $\ulM'$ in the category of $A$-motives we were searching. This proves the proposition.
\end{proof}

%%%%%%%%%%%%%%%%%%%%%%%%%%%%%%%%%%%%%%%%%%%%%%%%%%%%%%%%%%%%% 
%% 
%%     References
%% 
%%%%%%%%%%%%%%%%%%%%%%%%%%%%%%%%%%%%%%%%%%%%%%%%%%%%%%%%%%%%% 

\vfill

\parbox[t]{8.2cm}{ 
Urs Hartl  \\ 
Universit\"at M\"unster\\
Mathematisches Institut \\
Einsteinstr.~62\\
D -- 48149 M\"unster
\\ Germany
\\[1mm]
{\small \href{http://www.math.uni-muenster.de/u/urs.hartl/}{www.math.uni-muenster.de/u/urs.hartl/}}
} 
\parbox[t]{6.2cm}{ 
Ann-Kristin Juschka  \\ 
Universit\"at Heidelberg\\
Im Neuenheimer Feld 368\\
D -- 69120 Heidelberg\\
Germany
}

\end{document}